\renewcommand{\todo}[1]{}
\newtheorem{thm}{Theorem}[chapter]
\newtheorem{lem}[thm]{Lemma}
\newtheorem{claim}[thm]{Claim}
\newtheorem{fct}[thm]{Fact}
\newtheorem{question}[thm]{Question}
\newtheorem{cor}[thm]{Corollary}
\newtheorem{conjecture}[thm]{Conjecture}
\theoremstyle{definition}
\newtheorem{dfn}[thm]{Definition}
\newtheorem{con}[thm]{Convention}
\newtheorem{exm}[thm]{Example}
\newtheorem{exc}[thm]{Exercise}
\theoremstyle{remark}
\newtheorem{rem}[thm]{Remark}
\newtheorem{ass}[thm]{Assumption}
\numberwithin{section}{chapter}
\numberwithin{equation}{chapter}
\newcommand{\myplacecite}[2]{\cite{#2}*{#1}}
\newcommand{\mycites}{\cites}
\setlist[enumerate,1]{label=\arabic*., ref=\arabic*}
\newenvironment{eqpar}{\begin{equation}\begin{minipage}{0.8\columnwidth}}%
{\end{minipage}\end{equation}}
\newenvironment{eqpar*}{\begin{equation*}\begin{minipage}{0.8\columnwidth}}%
{\end{minipage}\end{equation*}}
\newcommand{\g}{g}
\newcommand{\gT}{T_{\g}}
\newcommand{\gH}{H_{\g}}
\newcommand{\gr}{\rho_{\g}}
\newcommand{\gl}{\lambda_{\g}}
\newcommand{\gq}{q_{\g}}
\DeclareMathOperator{\width}{width}
\providecommand{\qcdefF}{\Pi^T_1}
\providecommand{\qcdefD}{\Pi^T_1}
\providecommand{\qcdefSeq}{\Pi^T_1}
\providecommand{\qcdefG}{\Sigma^T_1}
\newcommand{\param}{c}
\providecommand{\pcsl}{\ensuremath{(\D, \param,\leqlol)_{\lambda\in I}}}
\providecommand{\pcs}{\ensuremath{\mathbf{s}}}
\providecommand{\pssl}{\ensuremath{(\D, \leqlol,\lequpl,\Cl)_{\lambda\in I}}}
\providecommand{\pss}{\ensuremath{\mathbf{S}}}
\providecommand{\is}{\lhd}
\DeclareMathOperator{\Th}{\mathbf{Th}}
\DeclareMathOperator{\pset}{\mathcal{P}}
\DeclareMathOperator{\tcl}{tcl}
\providecommand{\forces}{\Vdash}
\providecommand{\On}{\mathbf{On}}
\newcommand{\Card}{\mathbf{Card}}
\newcommand{\Succ}{\mathbf{Succ}}
\newcommand{\Inacc}{\mathbf{Inacc}}
\newcommand{\Reg}{\mathbf{Reg}}
\newcommand{\Sing}{\mathbf{Sing}}
\providecommand{\ccc}{\textup{ccc}}
\providecommand{\card}[1]{\lVert#1\rVert}
\providecommand{\ro}{\mathrm{r.o.}}
\providecommand{\supp}{\mathrm{ supp }}
\providecommand{\suppl}{{\supp^\lambda}}
\providecommand{\res}{\mathbin{\upharpoonright} }
\providecommand{\conc}{ \mathbin{{}^\frown}}
\providecommand{\Int}{\mathbb{Z}}
\providecommand{\reals}{\mathbb{R}}
\providecommand{\nat}{\mathbb{N}}
\providecommand{\CH}{\ensuremath{\textup{CH}}}
\providecommand{\GCH}{\ensuremath{\textup{GCH}}}
\providecommand{\ZFC}{\textup{ZFC}}
\providecommand{\ZF}{\textup{ZF}}
\DeclareMathOperator{\HSize}{\mathbf{H}}
\DeclareMathOperator{\hgt}{ht}
\DeclareMathOperator{\cof}{cf}
\DeclareMathOperator{\otp}{otp}
\providecommand{\setdef}{\;\vert\;}
\providecommand{\Hhier}{\mathbf{H}}
\providecommand{\Coll}{\textup{Coll}}
\providecommand{\Borel}{\mathbf{Borel}}
\providecommand{\Borelplus}{\Borel^+}
\providecommand{\Add}{\textup{Add}}
\providecommand{\bv}[1]{\lVert #1 \rVert}
\providecommand{\Dam}{\mathcal{D}}
\providecommand{\am}{\ensuremath{{\mathbf{Am}_1}}}
\providecommand{\simpleram}{\mathbf{Am}_2}
\providecommand{\blowup}[1]{\widehat{ #1 }}
\providecommand{\restam}[2]{ #1^{ #2 }_f  }
\providecommand{\loweram}[2]{ #1^{ (-\infty, #2 ] }_f}
\providecommand{\upperam}[2]{ #1^{ [ #2 ,-\infty) }_f}
\providecommand{\id}{\textup{id}}
\providecommand{\bg}[1]{B(#1)}
\providecommand{\quot}[2]{#1 : #2}
\providecommand{\ecn}[1]{[ #1 ]_{\mathbf{n}}}
\providecommand{\gen}[1]{\langle #1 \rangle}
\DeclareMathOperator{\dom}{dom}
\DeclareMathOperator{\lh}{lh}
\DeclareMathOperator{\ran}{ran}
\DeclareMathOperator{\crit}{crit}
\DeclareMathOperator{\Ult}{Ult}
\providecommand{\leqlo}{\preccurlyeq}
\providecommand{\lequp}{\curlyeqprec}
\providecommand{\Clink}{\mathbf{C}}
\providecommand{\leqlol}{\leqlo^\lambda}
\providecommand{\bleqlol}{ \mathrel{{\bar \leqlo}^\lambda}}
\providecommand{\bleqlo}{ \mathrel{\bar \leqlo} }
\providecommand{\dleqlol}{ {\mathrel{\dot \leqlo}^\lambda} }
\providecommand{\dleqlo}{ \mathrel{\dot \leqlo} }
\providecommand{\lequpl}{\lequp^\lambda}
\providecommand{\blequpl}{\mathrel{\bar \lequp^\lambda}}
\providecommand{\dlequpl}{{\mathrel{\dot \lequp}^\lambda}}
\providecommand{\Cl}{ \Clink^\lambda }
\providecommand{\bCl}{ {\bar \Clink}^\lambda }
\providecommand{\D}{\mathbf{D}}
\DeclareMathOperator{\bij}{\dot{\mathcal B}}
\providecommand{\alo}{\mathrel{\bar{\leqlo}}}
\providecommand{\aup}{\mathrel{\bar{\lequp}}}
\providecommand{\aD}{\bar{\D}}
\providecommand{\aC}{\bar{\Clink}}
\providecommand{\alol}{\mathrel{\alo^\lambda}}
\providecommand{\aupl}{\mathrel{\aup^\lambda}}
\providecommand{\aCl}{\aC^\lambda}
\providecommand{\stm}{\wedge}
\providecommand{\qstm}{\stm_Q}
\providecommand{\astm}{\stm_P}
\newcommand{\baseset}{\star}
\providecommand{\pcsIsRef}{\ensuremath{\is_\text{c}}\arabic*}
\providecommand{\pssIsRe}{\ensuremath{\is_\text{s}}}
\providecommand{\pssIsRef}{\pssIsRe\arabic*}
\providecommand{\qcExtRef}{\ensuremath{\text{E}_\text{c}}\Roman*}
\providecommand{\sExtRef}{\ensuremath{\text{E}_\text{s}}\Roman*}
\providecommand{\pmaao}[3][q]{\ensuremath{P^{#2,#1}_{#3}}}
\providecommand{\pmao}[2][q]{\pmaao[#1]{\nu}{#2}}
\providecommand{\pnu}{\pmao{\xi}}
\providecommand{\emaao}[3][q]{\ensuremath{e^{#2,#1}_{#3}}}
\providecommand{\emao}[2][q]{\emaao[#1]{\nu}{#2}}
\providecommand{\emb}{\emao{\xi}}
\providecommand{\tmaao}[3][q]{\ensuremath{t^{#2,#1}_{#3}}}
\providecommand{\tmao}[2][q]{\tmaao[#1]{\nu}{#2}}
\providecommand{\tm}{\tmao{\xi}}
\begin{document}

\frontmatter

\title{Projective Measure Without Projective Baire}

\author{Sy David Friedman}
\address{G\"odel Research Center, University of Vienna}
\curraddr{}
\email{sdf@logic.univie.ac.at}
\thanks{The first author would like to thank the FWF for its support through research project P25748.}

\author{David Schrittesser}
\address{G\"odel Research Center, University of Vienna}
\curraddr{University of Toronto, ON, Canada}
\email{david@logic.univie.ac.at}
\thanks{The second author would like to express great gratitude to his PhD-adviser Sy David Friedman for his enduring support and his guidance. Furthermore, he acknowledges generous support through SFB 878 for which he wants to thank Ralf Schindler and partly through FWF projects P24725-N25 and P23875-N13 as well as through the DNRF Niels Bohr Professorship of Lars Hesselholt during the final stage of preparation of this memoir.}

\date{}

\subjclass[2010]{Primary 03E15, 03E35}

\keywords{Lebesgue measure, Baire property, projective sets, forcing, Mahlo cardinals}

\begin{abstract}
We prove that it is consistent (relative to a Mahlo cardinal) that all projective sets of reals are Lebesgue measurable, but there is a $\Delta^1_3$ set without the Baire property.
The complexity of the set which provides a counterexample to the Baire property is optimal.
\end{abstract}

\maketitle

\tableofcontents

\mainmatter

\chapter{Introduction}

The purpose of this memoir is to answer a long standing question regarding \emph{Lebesgue measurability} and the \emph{Baire property} and how their behavior can differ with respect to sets in the \emph{projective hierarchy}.
Namely, we prove the following theorem (see also Theorem~\ref{p.t.main2} below):
\begin{thm}\label{p.t.main}
Assume that $\ZFC$ together with `there exists a Mahlo cardinal' is consistent.
Then so is $\ZFC$ together with the conjunction of the following two statements:
\begin{itemize}
\item Every projective set is Lebesgue measurable;
\item There is a projective  set without the Baire property.
\end{itemize}
\end{thm}

With a view to future applications, certain elements of the proof are treated in a quite general setting.
One such is our notion of \emph{stratified forcing} (Chapter~\ref{sec:stratified forcing}), which essentially provides a theory of iterated Jensen coding (an Easton supported version of which is developed in Chapter~\ref{sec:coding}) and similar forcings. 
\emph{Stratified extension} (Chapter~\ref{sec:ext}) gives this iteration theory a form amenable to our variant of 
\emph{amalgamation}, a method to build `sufficiently homogeneous' iterations which is described in Chapter~\ref{sec:amalgamation}.
As many questions similar to the one which inspired this work are still open (see Section~\ref{sec:where} below) we expect that our very general treatment of these notions will remain useful beyond the present context.

In Chapter~\ref{s.overview.proof} we give an overview of the proof, as it is expounded in detail in the remainder of this memoir (in particular, in Chapter~\ref{sec:main}).
We hope that this will give a reader with some knowledge of set theory a better sense of orientation in this long and sometimes very technical proof.

\medskip

With the nature as well as the history of the problem in view it is perhaps to be expected that the proof of Theorem~\ref{p.t.main} takes so much work.
In the remainder of this chapter we give a short historical account helping to contextualize this theorem (many of the historical remarks below are taken from \cite{bagaria:woodin}).
We try to do this in a manner at least somewhat accessible to a hypothetical reader with little knowledge of (descriptive) set theory.

\section{A Short History of the Problem}\label{s.history}

Descriptive set theory can be said to be the study of properties of \emph{definable subsets of $\reals$}, i.e., subsets which arise in some concrete manner.
A question of particular interest has always been if such sets must be `well-behaved' or `regular' in the sense of, e.g., being Lebesgue measurable~\cite{lebesgue1902}.\index{Lebesgue measurability}

\medskip

A property which is related to Lebesgue measurability is the \index{Baire property|textbf}\emph{Baire property}~\cite{baire1899}: recall that a subset of $\reals$ is said to have the property of Baire if and only if it agrees with an open (equivalently, closed) set on a dense $G_\delta$ set (a $G_\delta$ set is just a countable intersection of open sets).
Recall further that supersets of dense $G_\delta$ sets are also called sets of \emph{second category} and their complements are called \emph{meager sets} or \emph{sets of first category}. 
The meager sets form a $\sigma$-ideal (resp., the co-meager sets or sets of second category form a $\sigma$-complete filter), as do the Lebesgue null sets (resp., co-null sets).
In this sense, sets with the Baire property correspond to the measurable sets.
More on the Baire property can be found in \cite{kechris} (see also \cite{oxtoby}).

We think of a set with the  Baire property as likewise being ``well-behaved'' or regular in some sense. 
The Baire property can often play a parallel or complementary role to that of measurability in applications and has the advantage of being useful in some topological spaces where no measure is available.%

\medskip

Of the many parallels between measure and category (i.e., the Baire property), most interesting in the present context is the following: while the Axiom of Choice allows us to construct non-measurable sets and sets without the Baire property (e.g., \emph{Vitali sets} \cite{vitali} or \emph{Luzin sets} \cite{lusin1914}), subsets of $\reals$ which arise in a very simple manner are always Lebesgue measurable and have the property of Baire: this holds, e.g., for the Borel sets; %
or more generally for the \emph{analytic sets} \mycites{lusin1917,lusin-sierpinski-b}, the continuous images of Borel sets (see \cite{kechris}).\footnote{For the early history of descriptive set theory see \mycites{kanamori-emergence,kanovei-developement}; some bibliographical details can also be found in \myplacecite{p.~153}{jech-set}.} %

Nonetheless the null sets and the meager sets are in many aspects very different. %
One would not expect that regularity with respect to Lebesgue measure should in some general manner correlate with regularity with respect to the Baire property---and indeed it does not.

\medskip

Beyond the analytic sets, descriptive set theory studies the \emph{projective sets}:\index{projective (set, hierarchy)} These are the sets which can be obtained from open sets by finitely many iterations of taking complements and images under continuous functions $f\colon \reals \to \reals$.
The projective sets are naturally stratified into the hierarchy of of $\mathbf{\Sigma}^1_n$ and $\mathbf{\Pi}^1_n$ sets, where $n\in \nat$.
An even finer stratification into a hierarchy is obtained by allowing only images under \emph{recursively continuous} functions;
this is the so-called \emph{effective} or \emph{light-face hierarchy} of $\Sigma^1_n$ and $\Pi^1_n$ sets, where $n\in \nat$. For details on the projective hierarchy see \cite{kechris}; good sources for its light-face companion are \mycites{moschovakis,mansfield-recursive}.

\medskip

$\ZFC$ leaves open many questions about relatively simply definable sets (i.e., sets low in the projective hierarchy);
in particular it leaves open whether they have the properties of regularity discussed above, i.e., Lebesgue measurability and the Baire property.

In the 1930s 
G\"odel \mycites{godel-consistency,godel-monograph} constructed his famous \emph{universe $L$ of constructible sets}  and showed that in this model, there is $\Delta^1_2$ well-order of $\reals$ (i.e., a well-order which as a subset of $\reals\times\reals$ is both $\Sigma^1_2$ and $\Pi^1_2$).
Consequently it is consistent with $\ZFC$ that there are $\Delta^1_2$ sets without Lebesgue measure and without the Baire property. %

Around 1960 Paul Cohen \mycites{Cohen1963,Cohen1964} developed forcing and famously used it to show that the continuum hypothesis is independent of $\ZFC$.
Not too long after that Solovay\index{Solovay's model} used forcing to construct a model of $\ZFC$ in which every projective set is measurable and has the Baire property \cite{solovay}. %
In fact, in a slightly smaller submodel of said model, only a weak version of the Axiom of Choice (namely, Dependent Choice) holds and \emph{all sets} are Lebesgue measurable and have the Baire property.%

Thus, G\"odels $L$ and the model that came to be known as Solovay's model represent two extreme cases of behavior for measurability and the Baire property in the projective hierarchy.

\medskip

Before we continue with the main story, two points have to be noted:
Firstly, we should mention that already at an early stage also other \emph{notions of regularity}\index{notions of regularity} had come into focus, such as the \emph{perfect set property} and the property of being \emph{completely Ramsey}. Since this memoir deals with Lebesgue measure and the Baire property, it suffices to concentrate on these two properties in our historical overview, but we will mention some other notions of regularity below in Section~\ref{sec:where} when we discuss possibilities to extend the present work.

\medskip

Secondly, we should mention that since the 1980s due to the work of Mycielski, Steinhaus, Moschovakis, Martin, Woodin, Steel, and others, large cardinal axioms have been found to settle almost all questions about the projective hierarchy (and in fact much more).
For example, if there are infinitely many Woodin cardinals\index{cardinal!Woodin}, every projective set of reals is Lebesgue measurable and has the Baire property (see \cite{kanamori} or \cite{kechris}).

Yet whether we should admit the existence of Woodin cardinals into our standard set of axioms for main-stream mathematics arguably remains to be determined.
Although these are exciting developments, in this memoir we are not concerned with the effects of potential new axioms or whether there are good reasons to adopt them. 
Rather we are interested in the mere \emph{consistency} of the set of statements\footnote{Of course, a set of sentences is consistent when no contradictions can be proved from it, or equivalently, if it has a model.} described in Theorem~\ref{p.t.main} 
and want to prove this consistency from an assumption which is as weak as possible.

\medskip

So far in this historical overview, we have treated Lebesgue measurability and the Baire property on par. 
In fact, for a long time no model was known where the ``simplest'' non-measurable set and the ``simplest'' set lacking the Baire property were of different complexity.
This unsatisfactory state of affairs was at least partially resolved by Shelah, in his article titled ``Can you take Solovay's inaccessible away?'' \cite{shelah:amalgamation}.

Solovay had constructed his eponymous model (where all sets are measurable and have the Baire property) starting from a ground model containing an inaccessible cardinal.%
\index{cardinal!inaccessible}
Shelah showed that it was necessary to start with an inaccessible in the case of Lebesgue measure:
\begin{thm}[\cite{shelah:amalgamation}]\label{p.t.lm}
Suppose $\omega_1$ is not inaccessible in $L$. Then one of the following holds:
\begin{enumerate}
\item\label{p.i.random-reals} For some $a \in \omega^\omega$, the set $N^*_a$ defined by
\[
N^*_a = \bigcup \{ A \subseteq \reals \setdef \text{$A$ is null and has a Borel code in $L[a]$}\}
\]
is not a null set;
\item For some $a \in \omega^\omega$, ${\omega_1}^{L[a]} = \omega_1$ and there is a $\Sigma^1_3(a)$ set which is not measurable.
\end{enumerate}
In particular, 
 if every $\mathbf{\Sigma}^1_3$ set is Lebesgue measurable $\omega_1$ is inaccessible in $L$. 
\end{thm}
The last statement holds as Item~\ref{p.i.random-reals} above implies that $N^*_a$ is a $\Sigma^1_2(a)$ set which is not measurable (compare Theorem~\ref{p.t.sigma-1-2} and also Theorem~\ref{p.t.delta-1-2} below).
Thus to construct a model where all projective (in fact, just all $\mathbf{\Sigma}^1_3$) sets are Lebesgue measurable, one \emph{must} start with an inaccessible. 
The same result was obtained by Raisonnier in \cite{raisonnier}.
Shelah also showed in the same paper \cite{shelah:amalgamation} that every model of $\ZFC$ \emph{does} have an extension where every $\mathbf{\Delta}^1_3$ set is Lebesgue measurable.

\medskip

Moreover, again in the same work \cite{shelah:amalgamation} Shelah showed that the situation for the Baire property is entirely different (see also \cite{jr:amalgamation} for an exposition of the following theorem).
\begin{thm}[\cite{shelah:amalgamation}]\label{p.t.bp}
Any model of $\ZFC$ has a forcing extension in which all projective sets have the Baire property. %
In an inner model of this extension, $\ZF$ together with the Axiom of Dependent Choice\index{Axiom!of Dependent Choice} holds and all sets have the Baire property.
\end{thm}

If one starts with a model without an inaccessible, by Theorem~\ref{p.t.lm} there must be a $\mathbf{\Sigma}^1_3$ set which is not Lebesgue measurable in both models from Theorem~\ref{p.t.bp}. 
Here, finally, the expected asymmetry between measure and category was beginning to be seen! 

The forcing mentioned in Theorem~\ref{p.t.bp} is \emph{sweet}\index{forcing!sweet}\index{sweetness} (see \cite{shelah:amalgamation}) and thus $\sigma$-centered and so by \cite{judah:repicky} does not add a random real.\index{random real!not adding a random real} Thus by later results (see Theorem~\ref{p.t.delta-1-2} below) in fact the following sharp version is true:
\begin{cor}\label{p.t.asym}
There is a model of $\ZFC$ where every projective set has the Baire property but there is a $\Delta^1_2$ set which is not Lebesgue measurable %
 (supposing $\ZFC$ is consistent).
\end{cor}

At the $\Sigma^1_2$ level, on the other hand there exists a surprising implication\index{implications between regularity properties}\index{regularity properties!implications between} discovered by Bartoszy\'nski \cite{bartoszynski:additivity}, and independently by Raisonnier and Stern \cite{raisonnier:stern}:
\begin{thm}[\mycites{bartoszynski:additivity,raisonnier:stern}]\label{t.barto}
If all $\mathbf{\Sigma}^1_2$ sets are Lebesgue measurable then all $\mathbf{\Sigma}^1_2$ sets have the property of Baire.
\end{thm}
The theorem has a counterpart in the effective  hierarchy.
Bartoszy\'nski and Raisonnier and Stern in fact showed even more, namely that the \emph{additivity} of measure is at least the additivity of Baire---see also, e.g., \mycites{settheoryoftherealline,handbook-invariants}.
Note that Theorem~\ref{t.barto} also follows from Theorem~\ref{p.t.delta-1-2} (which was shown later) below.
It follows from folklore results going back to Martin and Solovay \cite{martin:solovay} that the converse to Theorem~\ref{t.barto} does not hold.
Finally, we mention that by Theorem~\ref{t.barto} the complexity of the set lacking the Baire property  in our main result (namely $\Delta^1_3$; see Theorem~\ref{p.t.main2} below) is optimal.

\medskip

This called for further elucidation of  the asymmetric behavior of Lebesgue measure and category.
Are there any implications between statements of the form `all sets of reals in the class $\Gamma$ have property $P$', where $\Gamma$ is the class of $\mathbf{\Sigma}^1_n$, $\mathbf{\Delta}^1_n$, $\Sigma^1_n$, or $\Delta^1_n$, for some $n\in \nat$, or the class of all projective sets; and $P$ is  the property of being Lebesgue measurable or the Baire property? 

In particular, the following question, asking whether the situation opposite to that in Theorem~\ref{p.t.asym} is consistent, remained open (the question we settle in this memoir with Theorem~\ref{p.t.main}):
\begin{question}\label{p.t.q}
Is it consistent that all projective sets are Lebesgue measurable but there is a projective set without the Baire property? 
\end{question}

A first step towards an answer to Question~\ref{p.t.q} was taken soon after by Shelah~\cite{shelah:meas_wo_cat}:
\begin{thm}[\cite{shelah:meas_wo_cat}]\label{shelah:set:wo:bp}
If $\ZFC$ together with the existence of an inaccessible cardinal %
\index{cardinal!inaccessible} is consistent, so is $\ZF$ together with:
\begin{itemize}
\item The Axiom of Dependent Choice;\index{Axiom!of Dependent Choice}
\item All sets are Lebesgue measurable; %
\item There is a set without the Baire property.
\end{itemize}
\end{thm}
The set without the Baire property in this construction is emphatically \emph{not} definable, let alone projective.
For this result (just as for the earlier \cite{shelah:amalgamation}) see also the excellent exposition in \cite{jr:amalgamation}.

\medskip

Another step towards showing a projective version was taken by Judah and again, Shelah \cite{ihoda:shelah}:
\begin{thm}[\cite{ihoda:shelah}]\label{p.t.delta-1-2}
The following are equivalent for any $a \in \omega^\omega$:
\begin{itemize}
\item There is a real which is random generic (resp., a Cohen real) over $L[a]$;\index{random real}\index{Cohen real}
\item Every $\Delta^1_2(a)$ set is Lebesgue measurable (resp., has the Baire property).
\end{itemize}
\end{thm}
This can be viewed as a refinement of the following classical result implicit in \cite{solovay} (see also \myplacecite{26.20,~p.~522}{jech-set}).
\begin{thm}[\cite{solovay}]\label{p.t.sigma-1-2}\index{random real!measure one set of random reals}\index{Cohen real!co-meager set of Cohen reals}
The following are equivalent for any $a \in \omega^\omega$:
\begin{itemize}
\item The set  
 $\bigcup \{ B \subseteq \reals \setdef \text{$B$ is null and has a Borel code in $L[a]$}\}$
is a null set;
\item Every $\Sigma^1_2(a)$ set is Lebesgue measurable.
\end{itemize}
The same is true with ``is Lebesgue measurable'' replaced by ``has the Baire property'' and ``null'' replaced by ``meager''.
\end{thm}
To see the parallel, note that the first item in Theorem~\ref{p.t.sigma-1-2} can be equivalently  expressed by saying that \emph{almost all} reals are
random (resp.\ Cohen) generic over $L[a]$---i.e., all reals with the exception of a null (resp.\ meager) set.

\medskip

It follows from Theorem~\ref{p.t.delta-1-2} that one can construct models where all $\Delta^1_2$ sets are measurable (or not) and have the Baire property (or not) in any desired combination  starting just from the assumption that $\ZFC$ is consistent.

\medskip

Judah and Shelah lifted their result to the next level in the projective hierarchy with \cite{js:delta-1-3}, in which they showed that assuming the consistency of $\ZFC$ together with the existence of a measurable cardinal\index{measurable cardinal}\index{cardinal!measurable},
one can construct a model in which every $\mathbf{\Delta}^1_3$ is measurable but there is a $\mathbf{\Delta}^1_3$ set without the property of Baire.
Bagaria and Judah in unpublished work removed the measurable cardinal and showed the same result under just the assumption that $\ZFC$ is consistent.
This was achieved using ideas from \cite{harrington:shelah} and making a connection to Martin's Axiom
(interest in Martin's Axiom in connection to these questions goes back to Truss' early attempt to construct a model where all sets are Lebesgue measurable from just $\ZFC$---see \myplacecite{p.~15}{shelah:amalgamation}).

\medskip

Bagaria and Woodin \cite{bagaria:woodin} pushed asymmetry results to the next level, obtaining a model
where all $\mathbf{\Delta}^1_4$ sets are measurable but there is a $\mathbf{\Delta}^1_4$ set without the property of Baire,
starting from the assumption that $\ZFC$ plus $\mathbf{\Sigma}^1_1$ determinacy\index{Axiom!of Projective Determinacy} is consistent (see \cite{kanamori} for more on the Axiom of Determinacy and its projective fragments).
The assumptions they start with are thus astronomical in terms of consistency strength, compared to our assumption of a Mahlo cardinal. 
In the same work they claim their results would lift to higher levels of the projective hierarchy under the appropriate stronger determinacy axioms.

\medskip

This finally brings us to the result of the present memoir:
\begin{thm}\label{p.t.main2}
Assume that $\ZFC$ together with the existence of a Mahlo cardinal\index{Mahlo cardinal}\index{cardinal!Mahlo} is consistent.
Then so is $\ZFC$ together with:
\begin{itemize}
\item Every set of reals definable by a first order formula, allowing a sequence of ordinals of length $\omega$ as a parameter (in particular every projective set) is Lebesgue measurable.
\item There is a $\Delta^1_3$ set without the Baire property.
\end{itemize}
\end{thm}

Just as in Solovay's classical work \cite{solovay}, Theorem~\ref{p.t.main2} naturally implies the following `choiceless' version by relativizing to $L(\On^\omega)$ or $L(\reals)$: 
\begin{cor}
Assume that $\ZFC$ together with the existence of a Mahlo cardinal is consistent.
Then so is $\ZF$ together with:
\begin{itemize}
\item Dependent Choice;\index{Axiom!of Dependent Choice}
\item Every set is Lebesgue measurable;
\item There is a $\Delta^1_3$ set without the Baire property.
\end{itemize}
\end{cor}

Since the proof of Theorem~\ref{p.t.main2} but prior to this publication, further results have been obtained in \cite{laguzzi} and \cite{cichon}.
These results will be discussed in the next section.

\section{Where Do We Go from Here?}\label{sec:where}

The first and most obvious question left open in this memoir is the following:
\begin{question}
What is the exact consistency strength of the set of sentences from Theorem~\ref{p.t.main2}?\index{Mahlo cardinal}\index{cardinal!Mahlo}
\end{question}
By our main theorem together with Theorem~\ref{p.t.lm} the exact consistency strength must lie between an inaccessible and a Mahlo cardinal, which is already a very narrow gap in terms of consistency strength.

\medskip

So far, we have only discussed two properties of regularity, Lebesgue measurability and the property of Baire.\index{regularity properties}
Of the many others that deserve study we have already mentioned the perfect set property\index{perfect set property} (already considered by Solovay and closely related to \emph{Marczewski-measurability})\index{Marczewski-measurability} and being completely Ramsey\index{completely Ramsey} (brought to prominence in work of Galvin, Prikry \cite{galvin:prikry}, Mathias \mycites{mathias:ramsey,mathias.1967}, Silver \cite{silver:ramsey}, and Ellentuck \cite{ellentuck}).
So many notions have been considered to this date that we have no hope of reviewing their history here. 

\medskip

Instead we recall the following definitions from \cite{ikegami2010}:
\begin{dfn}\index{tree forcing}\index{forcing!tree forcing}
  We say $P$ is a \emph{tree forcing} if and only if the conditions in
  $P$ are perfect subtrees of ${}^{<\omega}\baseset$, where
  $\baseset \in \omega+1$, $P$ is ordered by reverse inclusion, and for all $T \in P$ and all $s \in T$,
  $\{ t \in T \setdef s \subseteq t \text{ or } t \subseteq s \} \in
 P$. 
\end{dfn}
In \cite{ikegami2010} where these definitions originate
the forcings we call tree forcings are called \emph{strongly arboreal}.\index{strongly arboreal forcing}\index{forcing!strongly arboreal}

\medskip

Each tree forcing has associated to it an ideal and a \emph{regularity property} called $P$-measurability.\index{P-measurable@$P$-measurable}
For the following  recall that for a subtree $T$ of ${}^{<\omega}\baseset$, $[T]$ denotes the set of its branches (see \cite{kechris}).
\begin{dfn}\index[notation]{NP, ideal of P-null set@$N_P$, ideal of $P$-null set}\index[notation]{IP, sigma-ideal@$I_P$, $\sigma$-ideal}\index{P-null set@$P$-null set} 
  Suppose that $P$ is a tree forcing and $A$ is a subset of
  ${}^{\omega}\baseset$.
\begin{enumerate} 
\item We say $A\in N_{P}$  if and only if for every $T\in P$, there is
  some $S\in P$ with $S\subseteq T$ and
  $[S]\cap A=\emptyset$. 
\item Denote by $I_{P}$ is the $\sigma$-ideal\index{sigma-ideal@$\sigma$-ideal} generated by
  $N_{P}$. A set $A$ in $I_{P}$ is also called
    $P$-null.

  \item We say $A$  is \emph{$P$-measurable} if and only if for
  every $T \in P$ there is some $S \in P$ with
  $S \subseteq T$ such that either $[S] \cap A \in I_{P}$ or
  $[S] \cap A^c \in I_{P}$, where $A^c$ denotes
  ${}^{\omega}\baseset \setminus A$.
\end{enumerate} 
\end{dfn} 
Working in Baire space $\omega^\omega$, the notion of measurability associated to Cohen forcing is precisely the Baire property, and (the tree version of) random forcing yields precisely Lebesgue measurability (with the appropriate measure on $\omega^\omega$; see the remarks on p.~\pageref{ss.reals} below).
The following table lists some well-known notions of regularity associated to tree forcings (we make no claim to completeness; one could include for instance \emph{eventually different forcing}, \emph{Heckler forcing} and many more).

\medskip
\begin{center}
 \begin{tabular}{ l  l  l }\index{regularity properties!associated to tree forcings}

Symbol & Tree forcing  & Common name (if any)\\
\hline
$\mathbf{B}$& Random\index{random real}\index{forcing!random}  &       measurable\index{Lebesgue measurability}        \\
$\mathbf{C}$& Cohen\index{Cohen real}\index{forcing!Cohen}  &        Baire property\index{Baire property}         \\
$\mathbf{S}$ & Sacks\index{Sacks forcing}\index{forcing!Sacks} &       Marczewski-measurable\index{Marczewski-measurability}          \\
$\mathbf{V}$ & Silver\index{Silver forcing}\index{forcing!Silver} &      donut property\index{donut property}           \\
$\mathbf{M}$& Miller\index{Miller forcing}\index{forcing!Miller} &                   \\
$\mathbf{L}$& Laver\index{Laver forcing}\index{forcing!Laver} &                    \\
$\mathbf{R}$& Mathias\index{Mathias forcing}\index{forcing!Mathias} &   completely Ramsey\index{completely Ramsey}               \\

\end{tabular}
\end{center}

\medskip

We introduce another convenient short-hand:
\begin{dfn}
If  $P$ is a tree forcing and $\Gamma$ is a class of subsets of ${}^\omega \baseset$ closed under continuous preimages (a \emph{pointclass})\index{pointclass}, ``$\Gamma( P)$'' denotes the statement ``all sets in $\Gamma$ are $P$-measurable.'' 
We denote by $\mathbf{\Sigma}^1_\omega$ the class of projective sets (and by $\mathbf{\Sigma}^1_n$, $\mathbf{\Pi}^1_n$, \dots{} the class of sets which are $\mathbf{\Sigma}^1_n$, $\mathbf{\Pi}^1_n$, \dots).
\end{dfn}
With this notation, we can give a concise table (called ``Cicho\'{n}'s diagram for regularity properties'' in \cite{cichon}) of relationships known to hold 
 in $\ZFC$.  %

\begin{figure}[h]
\[
\xymatrix@C=0.6cm@R=0.8cm{
   \Gamma(\mathbf{B}) \ar@{=>}[r]&  \Gamma(\mathbf{V})\ar@{=>}[rr] & &
\Gamma(\mathbf{S}) \\
   \Gamma(\mathbf{R}) \ar@{=>}[ru] \ar@{=>}[r]& \Gamma(\mathbf{L}) \ar@{=>}[r] &
\Gamma(\mathbf{M}) \ar@{=>}[ru] \\
    & & \Gamma(\mathbf{C}) \ar@{=>}[luu] \ar@{=>}[u]
 }
\]
\caption{Cicho\'{n}'s diagram for regularity properties\index{regularity properties!Cichon's diagram for@Cicho\'{n}'s diagram for}\index{implications between regularity properties|textbf}\index{regularity properties!implications between|textbf}\index{Cichon's diagram for regularity properties@Cicho\'{n}'s diagram for regularity properties}}\label{Cicho\'{n}}\end{figure}
In addition to these implications and the familiar $\mathbf{\Sigma}^1_2(\mathbf{B})\Rightarrow \mathbf{\Sigma}^1_2(\mathbf{C})$ (from Theorem~\ref{t.barto}), %
it also holds that $\mathbf{\Sigma}^1_2(\mathbf{V})\Rightarrow \mathbf{\Sigma}^1_2(\mathbf{M})$.
Moreover
for certain $P$
we have $\mathbf{\Delta}^1_2(P ) \iff \mathbf{\Sigma}^1_2( P)$;
for some others, it is open whether this equivalence holds.
Some of these implications have involved proofs,
so it is natural to ask 
whether more is provable in $\ZFC$.

Thus, we shall say that this \emph{diagram is solved for $\Gamma = \mathbf{\Sigma}^1_n$} ($n$ finite or $n=\omega$) if we can show that
the implications listed in the diagram and in the previous paragraph are the only ones provable in $\ZFC$ (alternatively, one should prove the remaining ones).
Almost every remaining question for $n\leq4$ and some for $n =5$ were answered by Fischer, Friedman and Khomskii; see \cite{cichon}.

Laguzzi in \cite{laguzzi} showed the consistency, relative to an inaccessible, of $\ZFC$ together with ``$\omega_1$ is inaccessible to reals, Silver measurability holds for all sets but Miller and Lebesgue measurability fail for some sets''.

\medskip

Many questions are open;
the most famous one since the late 1960s:
\begin{question} %
\index{cardinal!inaccessible}\index{completely Ramsey}
Is it necessary to start with an inaccessible in order to build a model of $\mathbf{\Sigma}^1_\omega(\mathbf{R})$?
\end{question}
The next question is also long standing:
\begin{question}\index{implications between regularity properties}\index{regularity properties!implications between}
Does $\mathbf{\Sigma}^1_2(\mathbf{L})\Rightarrow \mathbf{\Sigma}^1_2(\mathbf{V})$ hold?
\end{question}

The techniques developed in this memoir are particularly relevant to the following problem, a stepping stone towards a full understanding of regularity properties:
\begin{question}
Assume $\ZFC$ together with a Mahlo cardinal (or less) is consistent.
Let $P_0, \hdots, P_n$, $P'_0, \hdots, P'_m$ and $P''_0, \hdots, P''_l$ be tree forcings (say, from the above table).
Is the conjunction of the following consistent (with $\ZFC$):
\begin{itemize}
\item For each $i \leq n$, $\mathbf{\Sigma}^1_\omega(P_i)$;
\item For each $i \leq m$, $\neg\Delta^1_3(P'_i)$;
\item For each $i \leq l$, $\neg\Delta^1_2(P''_i)$.
\end{itemize}
\end{question}
Of course this question is only interesting if the combination is not already ruled out by known implications.

\medskip

A more general version of the above is of course obtained by replacing $\Delta^1_2$ and $\Delta^1_3$ by an arbitrary higher level.
\begin{conjecture}\label{p.q.general}
Let $P_0, \hdots, P_n$ be tree forcings (say, from the above table) and let $m_0, \hdots, m_n \in \omega$.
Assuming just the consistency of  $\ZFC$ with a Mahlo cardinal (or less), one can show the following to be consistent (with $\ZFC$):
\begin{itemize}
\item For each $i \leq n$, $\mathbf{\Sigma}^1_{m_i}(P_i)$ and $\neg\Delta^1_{m_i+1}(P_i)$,
\end{itemize}
unless the situation described is explicitly ruled out by one of the implications already known to hold in $\ZFC$.\footnote{We hope we have listed all the known implications in this section.}
\end{conjecture}
As mentioned before partial results in this direction have been obtained in \cite{cichon} . 

\medskip

The ultimate regularity property is determinacy,
and therefore one should  ask whether we can force to obtain a model analogous to the one constructed in this memoir while preserving 
a fragment of projective determinacy.
For instance consider the following question (this is known only when all $m_i=2$ for all $i\leq n$):
\begin{question}\label{p.q.determinacy}
Let $P_0, \hdots, P_n$ be tree forcings (say, from the above table) and let $2 \leq m_0, \hdots, m_n \in \omega+1$.
Assuming the consistency of  $\ZFC$ plus the universe is closed under sharps, can one show the following to be consistent (with $\ZFC$):
\begin{itemize}
\item For each $i \leq n$, $\mathbf{\Sigma}^1_{m_i}(P_i)$ and $\neg\Delta^1_{m_i+1}(P_i)$,
\item $\mathbf{\Pi}^1_1$ determinacy;
\end{itemize}
unless the situation described is explicitly ruled out by one of the implications listed above.
\end{question}

This brings us naturally to a related set of open questions:
We can ask whether techniques such as the ones discussed in this memoir allow us to tinker with $P$-regularity 
in the presence of large cardinals, or more accurately, how much of the large cardinal structure of the ground model can be preserved when building a model such as the one demanded in Question~\ref{p.q.general}.
This is especially interesting since, as we have mentioned in the case of Lebesgue measure and the Baire property, Woodin cardinals present a fundamental obstacle to changing the projective theory by forcing.

\begin{question}\label{p.q.woodins}\index{cardinal!Woodin}
Let $P_0, \hdots, P_n$ be tree forcings (say, from the above table), let $m_0, \hdots, m_n \in \omega+1$
and let $0<m\leq\min\{m_i\setdef i\leq n\}-2$.
Assuming the consistency of  $\ZFC$ with the existence of $m$ Woodin cardinals, can one show the following to be consistent (with $\ZFC$):
\begin{itemize}
\item For each $i \leq n$, $\mathbf{\Sigma}^1_{m_i}(P_i)$ and $\neg\Delta^1_{m_i+1}(P_i)$,
\item There are $m$ Woodin cardinals;
\end{itemize}
(of course, unless the situation described is explicitly ruled out by one of the implications listed above).
\end{question}

As the authors have shown in joint work with Schindler \cite{friedman:ea:coding}, the use of David's trick seems to restrict our possibilities severely when it comes to preservation of large cardinals on the order of \emph{strong cardinals}\index{cardinal!strong cardinal}.
Therefore, an answer to Question~\ref{p.q.woodins} currently seems far out of reach.

\chapter{Notation and Preliminaries}\label{sec:notation}

\section{General Set Theory}\label{ss.reals}

In this section, we fix some notation and collect a few key definitions.
We recommend \mycites{jech-set,kanamori,kechris,moschovakis,settheoryoftherealline} for any terms or notation used here without definition.

\medskip

If $f$ is a function, we use both $f[A]$ and $f''\!A$ for $\{ y \setdef \exists x \in A\; y=f(x)\}$. 
We use $B^A$ and in unclear cases also ${}^A B$ for  for the set of functions $f\colon A \to B$,
and ${}^{<\alpha} A$ denotes $\bigcup_{\xi < \alpha} {}^\xi A$, where $\alpha$ is an ordinal.
We use the notation $\langle s_\xi \setdef \xi \in \theta\rangle$ but also $(s_\xi)_{\xi \in \theta}$ for sequences.
That $s'$ is a proper initial segment of the sequence $s$ is written $s' \is s$\index[notation]{lessthan triangle@$\is$|textbf}\index[notation]{  lessthan triangle@$\is$|textbf}, the length of $s$ is denoted by $\lh(s)$,
and $s \conc t$\index[notation]{s^t@$s \conc t$} denotes the concatenation of $s$ with a sequence $t$. 

The cardinality of $A$ is denoted by $\card{A}$, $[A]^\alpha$ denotes $\{X\in\pset(A) \setdef \card{X}=\alpha\}$, 
and both $[A]^{<\alpha}$ and $\pset_{<\alpha}(A)$ mean $\{X\in\pset(A) \setdef \card{X}<\alpha\}$. We write $\On$, $\Card$, $\Sing$, $\Succ$, $\Reg$, and $\Inacc$ for the classes of all ordinals, all cardinals, and cardinals which are singular, infinite cardinal successors, infinite and regular, and inaccessible, respectively.
We shall sometimes write $\Card'$\index[notation]{Card'@$\Card'$} for $(\Card\setminus\omega)\cup\{0\}$; in such a context, we abuse notation by writing $0^+=\omega$\index[notation]{0 plus@$0^+$}.

By $\langle .\,, .\rangle$\index[notation]{ ., . @$\langle .\,, .\rangle$ (G\"odel pairing)} 
we denote the G\"odel pairing function.
By $\ZF^-$ we mean $\ZF$ without the Power Set Axiom, and with a suitable form of the Collection Axiom Scheme (compare \cite{GitmanHamkinsJohnstone2016:WhatIsTheTheoryZFC-Powerset?}).
The powerset of $A$ is $\pset(A)$. 
For a $\Sigma_1$-elementary embedding $j$ which is not the identity, $\crit(j)$ denotes the least ordinal moved by $j$.
 
\medskip

We make use of the following convention, just as is tacitly done in many set theory texts.
\begin{con}\label{d.localobject}
Suppose $X$ is definable and fix a formula $\phi$ so that 
\[
\forall x\; [ \phi(x) \iff x=X ].
\]
Then for an arbitrary (set or class) $\in$-model $M$
we say ``$M\vDash X^M$ exists'' to mean 
\begin{equation*}
M\vDash\text{``there exists precisely one $x$ such that $\phi(x)$''.}
\end{equation*}
Under this condition, we write 
$X^M$ for the unique $x \in M$ such that $M\vDash \phi(x)$.
We will not always be explicit about what the formula $\phi$ is, as long as it is reasonably clear that such a formula can be found,
and may omit the superscript $M$ when possible.
\end{con}

\subsection{What We Talk about When We Talk about the Reals}
So far our exposition has been in terms of subsets of $\reals$ and Lebesgue measure (on $\reals$).
Of course  the (lightface) projective hierarchy can be defined on any (effective) Polish space;
and indeed our main theorem could be phrased as a theorem about effective Polish spaces (or via relativization, on arbitrary Polish spaces; \cite{moschovakis}) carrying an appropriate measure (e.g., the completion of any $\sigma$-finite continuous Borel measure).

For simplicity we shall from now on work in Baire space, i.e., $\omega^\omega$;
when we say  `real' we mean an element of this space (although we shall also sometimes sloppily refer to elements of $2^\omega$ or $\pset(\omega)$ as reals).\index{real number}
We also fix a specific measure $\mu$ on this space:
give $\{n\}$ weight $2^{-n-1}$ and let $\mu$ be the resulting product measure. 
 For the remainder of this memoir we shall call a set (Lebesgue) \emph{measurable}\index{Lebesgue measurability|textbf}
if and only if it is in the measure algebra w.r.t.\ $\mu$ (i.e.\ equal to a Borel set modulo a $\mu$-null set).
The measure algebra of $\mu$ is isomorphic as a complete Boolean algebra to the random algebra\index{random forcing}
(see \myplacecite{17}{kechris}, in particular 17.41).

\medskip

\subsection{Constructible Higher Suslin Trees}\label{p.s.trees}\index{Suslin trees}

For a tree $T$ and $t\in T$ write $\hgt_{T}(t)$ for the height of $t$ in $T$ and $T_\xi$ for
$\{ t \in T \setdef \hgt(t) = \xi \}$, the $\xi$-th level of $T$. 
A \emph{branch through $T$} for us means a \emph{cofinal} branch, i.e., a set $B \subseteq T$ which is linearly ordered by $\leq_T$ and meets every level of $T$. 

\medskip

Our coding methods shall make use of what we call \emph{independent sequences} of higher Suslin trees:
\begin{dfn}\label{p.n.d.independent}\index{independent!Suslin trees|textbf}\index{Suslin trees!independent|textbf}
Suppose $\alpha$ is a cardinal and $\bar T= \langle T(\xi) \setdef \xi < \theta\rangle$ is a sequence of $\alpha$-Suslin trees.
We say $\bar T$ is \emph{independent} if and only if 
\begin{equation}\label{p.e.T}
\bigcup_{\nu < \alpha} \prod_{\xi < \theta} T(\xi)_\nu
\end{equation}
equipped with the product ordering
\[
\bar t \leq \bar t' \iff \forall \xi<\theta \; \bar t(\xi) \leq_{T(\xi)} \bar t'(\xi)
\]
is an $\alpha$-Suslin tree.
\end{dfn}
Given a partial order $\langle P, \leq_P\rangle$, denote by $\langle P, \leq_P\rangle^{\mathrm{op}}$, or sloppily, by $P^{\mathrm{op}}$ 
the order $\langle P, \geq_P\rangle$, called the \emph{reverse order of} $P$.
If $\bar T$ is independent and $\zeta < \theta$, forcing with 
$(\bigcup_{\nu < \alpha} \prod_{\xi \neq \zeta} T(\xi)_\nu)^{\mathrm{op}}$  does not add a branch through $T(\zeta)$.
Also note if each $T(\xi)$ is well-pruned (see \myplacecite{p.~71}{kunen}) and $\theta < \cof(\alpha)$,  the reverse order of \eqref{p.e.T}
is a dense subset of  $T=\prod_{\xi < \theta} T(\xi)^{\mathrm{op}}$, and $\bar T$ is independent if and only if $T$ has the $\alpha$-chain condition.
In the context of trees and forcing, we shall from now on trust the reader to replace partial orders by their reverse as necessary. 

\medskip

We shall make extensive use of a certain independent sequence of higher Suslin trees $\bar T$, built using a variation of a well-known method. 
We shall not describe this method but instead state, abstractly, the properties we require from $\bar T$.

\begin{dfn}\label{p.d.locally.definable}\index{locally semidecidable}\index{locally semidecidable}
Suppose $\Delta$ is a set of sentences and $L\vDash \Delta$.
We say a set $X \in L$ is \emph{locally semidecidable (in $L$ relative to $\Delta$\index{locally semidecidable})} if 
for some $\mathbf{\Sigma}_1$ formula $\phi(x,y)$ 
\[
\forall x\; \big(x \in X \iff \phi(x, \card{ \tcl(x) }^L)\big)
\] 
and if $x \in X$ and $L_\xi\vDash\Delta$, letting $\beta =\card{ \tcl(x) }^L$,
\[
\big( \{ x, \beta\}\subseteq L_\xi \wedge  \card{\tcl{(x)}}^{L_\xi} = \beta  \big)\Rightarrow L_\xi\vDash\phi(x,\beta).
\]
In this situation, we also say $\phi$ \emph{witnesses} that $X$ is locally semidecidable.
We may occasionally neglect to mention $\Delta$, in which case it is the readers task to find an appropriate set of sentences.
\end{dfn}
The usual construction of an independent sequence of higher Suslin trees in $L$ will produce a  locally semidecidable such sequence (For a somewhat sketchy proof of a special case, see \myplacecite{Section~1.1}{david:singleton}):
\begin{lem}\label{p.l.canonical}\index[notation]{alphaplusplus-Suslin trees@$\alpha^{++}$-Suslin trees}\index[notation]{T bar alpha, T alpha (xi)@$\bar T^\alpha$, $T^\alpha(\xi)$}
Suppose $V=L$. Then for every regular cardinal $\alpha$ there is an independent sequence of $\bar T^\alpha = \langle T^\alpha(\xi) \setdef \xi < \alpha \rangle$ of $\alpha^{++}$-Suslin trees such that the following holds:
For each $\xi < \alpha$, $T^\alpha(\xi)$ is well-pruned and ascending sequences of length $\alpha$ have a supremum in $T^\alpha(\xi)$, i.e.,  $T^\alpha(\xi)$ is $\alpha^{+}$-closed. Further, for each $\nu < \alpha^{++}$,
$T^\alpha(\xi)_\nu \subseteq {}^{\nu}2$, and ${}^{<\alpha^+}2 \subseteq T^\alpha(\xi)$. 
Moreover, there is a set of sentences $\Delta$ and a formula $\phi_{\bar T}(\alpha,x, y)$ such that for each $\alpha\in\Reg$ 
the set 
\begin{equation}\label{p.e.canonicaltrees}
\{ (\xi, t) \setdef t \in T^\alpha(\xi) \}
\end{equation}
is locally semidecidable relative to $\Delta$ as witnessed by $\phi_{\bar T}(x, y)$ (supressing $\alpha$, which is regarded as a parameter).
\end{lem}

For easy reference, we shall refer to the sequence from the above lemma as the ``canonical'' one, at $\alpha$:
\begin{dfn}\label{p.n.d.canonical}\index{canonical!sequence of trees}\index{Suslin trees!canonical sequence of|textbf}
We shall refer to the sequence defined by $\phi_{\bar T}(\alpha, y)$ as the \emph{canonical (independent) sequence of $\alpha^{++}$-Suslin trees} (for $\alpha \in \Reg$).
\end{dfn}

\medskip

Except for the sake of illustration in Section~\ref{p.s.exm}, we shall only be interested in the case where $\alpha$ is the unique Mahlo in $L$ (or some initial segment of $L$).
Then, letting $\Delta$ be $\ZF^-$ together with ``there is precisely one Mahlo cardinal, and its double successor exists'' the following holds:
\begin{lem}
For any $\xi \in \On$ such that $L_\xi\vDash\Delta$, $L_\xi \vDash$``if $\bar \kappa$ is the least Mahlo, $ (\bar T^{\bar\kappa^{++}})^{L_\xi}$ exists and is locally semidecidable as witnessed by $\phi_{\bar T}(\bar\kappa^{++},x,y)$''. 
\end{lem}

\section{Forcing}

We take forcing notions to be preorders (also called quasi-orders, i.e., transitive and reflexive binary relations) and
write $q\leq p$ when we mean that $q$ carries \emph{more} information than $p$. 
In some cases the ordering will also be antisymmetric, i.e., a partial order.

By $\alpha$-closed, where $\alpha$ is a cardinal, we mean closed under descending sequences of length less than $\alpha$; similarly for $\alpha$-distributive, and $\alpha$-cc (hence $\omega_1$-closed is the same as $\sigma$-closed, and $\omega_1$-cc is the same as $\ccc$).
In contrast, that a preorder $P$ is $\alpha$-linked means that $P$ can be partitioned into $\alpha$-many sets of pairwise compatible conditions.

\medskip

Next, we define \emph{strong projection} (from \cite{handbook_proper}), \emph{strong suborder} and \emph{independence}.
These are practical in understanding how amalgamation is a stratified extension. Then we fix our terminology for iterations and some ideals.

\subsection{Strong Suborders}

In the following, assume $P$ and $Q$ are separative partial orders.
Versions of the following are of course true for preorders; we leave it to the reader to make the routine adjustments.

\begin{dfn}
We say \emph{$Q$ is a strong suborder of $P$}\index{strong suborder} if and only if $Q$ is a complete suborder of $P$ (which entails that $\ro(Q)$ is a complete subalgebra of $\ro(P)$) and for every $p \in P$ and $q\in Q$ such that $q \leq \pi(p)$, we have $q\cdot p \in P$. 
\end{dfn}

Here, $q \cdot p$ denotes the meet of $p$ and $q$ in $\ro(P)$ and $\pi$ denotes the canonical projection\index{canonical! projection (forcing)}\index{projection (forcing)!canonical} from $\ro(P)$ to $\ro(Q)$, given for each $a \in \ro(P)$ by
\[
\pi(a)=\prod^{\ro(Q)}\{ b\in\ro(Q)\setdef a \leq_{\ro(P)} b \}.
\]
This is related to the notion of projection\index{projection (forcing)} and strong projection:
We say $\pi \colon P \rightarrow Q$ is a \emph{projection} if and only if for all $p,p' \in P$
\begin{enumerate}[1. ]
\item $p \leq p' \Rightarrow \pi(p)\leq \pi(p')$,
\item $\ran(\pi)=Q$,
\item \label{projection} if $q \in Q$ and $q \leq \pi(p)$, there is $\bar p \in P$ such that $\bar p \leq p$ and $\pi(\bar p) \leq q$.\footnote{\cite{handbook_proper} defines (ordinary) projection with 
 \ref{projection} replaced by the stronger: if $q \leq \pi(p)$, there is $\bar p\leq p$ such that $\pi(\bar p)=q$.}
\end{enumerate}
Observe this implies that $\pi(1_P)=1_Q$.
In \cite{handbook_proper}, $\pi$ is defined to be a \emph{strong projection}\index{strong projection}\index{projection (forcing)!strong} if and only if it satisfies the first two requirements above and the following strengthening of the third requirement:
\begin{enumerate}[1. ]
\item[$3'$.] If  $q \leq \pi(p)$, there is $\bar p\leq p$ such that 
\begin{enumerate}[a. ]
\item $\pi(\bar p)= q$,
\item for any $r \in P$, if $r \leq p$ and $\pi(r) \leq q$ then $r \leq \bar p$. \label{q.p} 
\end{enumerate}
\end{enumerate}
This uniquely determines $\bar p$, which will shall denote by $q \cdot p$---with good reason as will be clear in a moment.

\medskip

If $\pi\colon P \rightarrow Q$ is a projection, $\pi[G]$ generates a $Q$-generic Filter whenever $G$ is a $P$-generic Filter, moreover $Q$ (and hence $\ro(Q)$) completely embeds into $\ro(P)$.
If $\pi\colon P \rightarrow Q$ is a strong projection, the map $\iota$ sending $q \in Q$ to $\iota(q)=q \cdot 1_P$ is a complete embedding and we can assume that $Q$ is a subset of $P$. It follows from Item~$3'$b that $\forall p \in P \; p \leq \iota(\pi(p))$.
Under the assumption that $Q$ is a subset of $P$, the condition $\bar p$ from Item~$3'$b really coincides with $q\cdot p$, i.e., the meet in $\ro(P)$.

\medskip

For the next lemma, recall the following definition:
\begin{dfn}\index{reduction (forcing)}
When $Q$ is a complete suborder of $P$, we say $q \in Q$ is a \emph{reduction (to $Q$) of} $p \in P$ if and only if for all $q' \in Q$, if $q' \leq q$ then $q'$ and $p$ are compatible.
\end{dfn}
It is easy to see that when $Q$ is a complete suborder of $P$ and $p \in P$, $\pi(p)$ is the supremum (the maximum) in $\ro(Q)$ of all reductions of $p$.

The notions of reduction, strong projection and canonical projection are closely related as follows:
\begin{lem}\label{strong:vs:canonical:proj}
Let $Q$ be a complete suborder of $P$ and let $\pi$ be the canonical projection $\pi\colon \ro(P)\rightarrow \ro(Q)$. 
Say $p \in P$ and $q \in Q$ is a reduction of $p$ such that $q \geq p$; then $q = \pi(p)$.
If $\bar \pi\colon P \rightarrow Q$ is a strong projection, then $\bar \pi$ coincides with the canonical projection on $P$.
\end{lem}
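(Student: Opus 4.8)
The plan is to establish the two assertions in turn, deriving the second from the first. For the first assertion I would work in the Boolean completions: since $Q$ is a complete sub-order of $P$, the algebra $\ro(Q)$ embeds as a complete subalgebra of $\ro(P)$, and the canonical projection is $\pi(b)=\bigwedge\{c\in\ro(Q)\setdef c\geq b\}$, i.e.\ the least member of $\ro(Q)$ above $b$; in particular $b\leq\pi(b)$ for every $b\in\ro(P)$. From $q\in Q\subseteq\ro(Q)$ and $q\geq p$ one then gets $\pi(p)\leq q$ by minimality. For the reverse inequality I would argue by contradiction: if $q\not\leq\pi(p)$, then $q\wedge(-\pi(p))\neq 0$ in $\ro(Q)$, and as $Q$ is dense in $\ro(Q)$ there is $q'\in Q$ with $q'\leq q\wedge(-\pi(p))$. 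Since $q'\leq q$, the reduction hypothesis yields a common extension $r\in P$ of $q'$ and $p$; but then $r\leq p\leq\pi(p)$ and $r\leq q'\leq -\pi(p)$, so $r\leq\pi(p)\wedge(-\pi(p))=0$, which is absurd. Hence $q=\pi(p)$.

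For the second assertion I would fix $p\in P$, set $q:=\bar\pi(p)$, and check that $q$ meets the hypotheses of the first assertion; that assertion then gives $\bar\pi(p)=q=\pi(p)$. Now $q\in Q$ since $\ran(\bar\pi)=Q$; and $q\geq p$ because $p\leq i(\bar\pi(p))$ by the observation preceding the lemma, where $i$ --- the complete embedding attached to the strong projection $\bar\pi$ --- is the inclusion of $Q$ into $P$ under the identification discussed there. Finally, for the reduction property, take $q'\in Q$ with $q'\leq q=\bar\pi(p)$; clause $3'$ applied to $p$ and $q'$ supplies $\bar p:=q'\cdot p\leq p$ with $\bar\pi(\bar p)=q'$, and applying $p\leq i(\bar\pi(p))$ to $\bar p$ in place of $p$ yields $\bar p\leq i(q')=q'$. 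Thus $\bar p$ is a common extension of $q'$ and $p$, so they are compatible, as needed.

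The computation is essentially bookkeeping; the point requiring attention is the passage through $\ro(P)$ in the first assertion --- that $\pi$ returns the least element of $\ro(Q)$ above its argument (so $p\leq\pi(p)$), that $\ro(Q)$ contains $-\pi(p)$, and that $Q$ is dense in $\ro(Q)$, which is what produces the witness $q'\in Q$. In the second assertion the only use of the \emph{strong} projection property, as opposed to an ordinary projection, is the exact equality $\bar\pi(\bar p)=q'$ furnished by clause $3'$; this is precisely what makes $\bar\pi(p)$ a genuine reduction of $p$, the rest following from $\bar\pi$ being a projection equipped with $i$.
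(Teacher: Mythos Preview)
Your proof is correct. The paper states this lemma without proof, so there is no argument to compare against; your approach---computing in $\ro(P)$ using the characterization $\pi(p)=\bigwedge\{c\in\ro(Q)\setdef c\geq p\}$ for the first assertion, and then reducing the second assertion to the first by verifying that $\bar\pi(p)$ is a reduction of $p$ above $p$---is the natural one.

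One small remark on your closing commentary: you write that the only use of the \emph{strong} projection property is the exact equality $\bar\pi(\bar p)=q'$ from clause~$3'$a, the rest following from $\bar\pi$ being an ordinary projection equipped with $i$. But the inequality $p\leq i(\bar\pi(p))$ that you invoke (twice) is itself a consequence of clause~$3'$b, as the paper notes just before the lemma; it does not follow from the weak projection axioms alone. So the strong projection hypothesis is used in more than one place. This does not affect the correctness of your argument, only the accuracy of the side remark.
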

Observe that if $\pi\colon \ro(P) \rightarrow \ro(Q)$ is the canonical projection, then $\pi \res P$ is a strong projection if and only if
for every $p \in P$ and $q \in Q$ such that $q \leq \pi(p)$ we have $p\cdot q \in P$. All of the above gives us:
\begin{lem}\label{stron:proj:equiv}
The following are equivalent for partial orders $Q$ and $P$ (if $Q \subseteq P$, the qualifications in parentheses can be omitted):
\begin{itemize}
\item $Q$ is (isomorphic to) a strong suborder of $P$.
\item There is a strong projection $\pi\colon P \rightarrow Q$.
\item $Q$ is (isomorphic to) a complete suborder of $P$ such that the restriction to $P$ of the canonical projection $\pi\colon\ro(P)\rightarrow \ro(Q)$ is the unique strong projection into (this copy of) $Q$ (and the isomorphism is $\iota$ from above).
\end{itemize}
\end{lem}
Also observe that when $Q$ is a strong suborder of $P$ which is a strong suborder of $R$ with $\pi_P\colon R\rightarrow P$ a strong projection, then $1_Q$ forces $\pi_P\res\quot{R}{Q}$ is a strong projection from $\quot{R}{Q}$ to $\quot{P}{Q}$.

\subsection{Independent Suborders}

Imagine an iteration $R=(Q_0 \times Q_1 )* \dot Q_2$. Then in an extension by $Q_0$, the preorder $Q_1$ is a complete suborder of the tail $\quot{R}{Q_0}=Q_1*\dot Q_2$.
This special situation is captured well by the following:
\begin{dfn}\label{indie}\index{independent!suborders (forcing)}
Let $Q$ and $C$ be suborders of $P$ with strong projections 
$\pi_Q\colon P \rightarrow Q$ and 
$\pi_C\colon P \rightarrow C$.
We say \emph{ $C$ is independent over $Q$ in $P$} if and only if for all  $c \in C$ and $p \in P$ such that $c \leq \pi_C(p)$, we have
$\pi_Q(p \cdot c) = \pi_Q(p)$.

For a $P$-name $\dot C$, we say  \emph{$\dot C$ is independent in $P$ over $Q$}\index{independent!name (forcing)} if and only if $\dot C$ is a name for a generic of an independent complete suborder of $P$; i.e. there is a complete suborder $R_C$ of $P$ (with a strong projection $\pi_C\colon P \rightarrow R_C$) such that $R_C$ is dense in $\gen{\dot C}^{\ro(P)}$ and $R_C$ is independent in $P$ over $Q$.
By $\gen{\dot C}^{\ro(P)}$\index[notation]{CroP@$\gen{\dot C}^{\ro(P)}$}\index[notation]{ CroP@$\gen{\dot C}^{\ro(P)}$} we mean the smallest Boolean subalgebra of $\ro(P)$ which contains
all the Boolean values occurring in the name $\dot C$. Thus $\gen{\dot C}^{\ro(P)}$ is a ground model object.
\end{dfn}
We use the following in \ref{index:sequ} (p.~\pageref{index:sequ}), via the notion of ``remoteness'' (see also Lemma \ref{remote:lemma:not:in}
and Section~\ref{sec:remote}).
\begin{lem}\label{indie:lemma:not:in}
If $C$ is independent over $Q$ in $P$, $1_Q$ forces that $C$ is a complete suborder of $\quot{P}{Q}$ and $\pi_C$ restricts to a strong projection.
If $\dot C$ is a $P$-name which is independent over $Q$, then $\dot C$ is not in $V^Q$.
\end{lem}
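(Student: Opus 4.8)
The plan is to prove the first assertion by a direct verification that, after forcing with $Q$, the ground-model map $\pi_C$ still witnesses that $C$ is a strong sub-order of $\quot{P}{Q}$, and then to deduce the second assertion from the first together with the elementary fact that a generic filter for an atomless forcing never lies in the ground model.

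So let $G_Q$ be $Q$-generic and recall $\quot{P}{Q}=\{p\in P:\pi_Q(p)\in G_Q\}$. I would check the three clauses for $\pi_C\res(\quot{P}{Q})\colon\quot{P}{Q}\to C$ to be a strong projection in $V[G_Q]$. Monotonicity is inherited from $V$. For surjectivity onto $C$: given $c\in C$, let $i_C(c)$ be the $\le$-largest condition $p$ with $\pi_C(p)\le c$ (which exists by clause $3'$); then $\pi_C(i_C(c))=c$ and, by Definition~\ref{indie} applied with $p=1_P$, $\pi_Q(i_C(c))=\pi_Q(1_P)=1_Q\in G_Q$, so $i_C(c)\in\quot{P}{Q}$. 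For clause $3'$: given $p\in\quot{P}{Q}$ and $c\in C$ with $c\le\pi_C(p)$, let $\bar p=p\cdot c$ be the witness supplied by the strong projection $\pi_C$ in $V$, so $\bar p\le p$, $\pi_C(\bar p)=c$, and $r\le\bar p$ for every $r\le p$ in $P$ with $\pi_C(r)\le c$. Independence gives $\pi_Q(\bar p)=\pi_Q(p)\in G_Q$, hence $\bar p\in\quot{P}{Q}$; and the maximality property of $\bar p$, being a statement of the form ``$\forall r\le p\,(\dots)$'', a fortiori survives when $r$ is restricted to range over $\quot{P}{Q}$. Thus $\pi_C\res(\quot{P}{Q})$ is a strong projection, so by Lemma~\ref{stron:proj:equiv} (applied in $V[G_Q]$) $C$ is a strong --- in particular complete --- sub-order of $\quot{P}{Q}$; since $i_C[C]\subseteq\quot{P}{Q}$, we may regard $C$ literally as a sub-order of $\quot{P}{Q}$.

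For the second assertion, fix the witnessing complete sub-order $R_C$ of $P$ (dense in $\gen{\dot C}^{\ro(P)}$, with strong projection $\pi_C\colon P\to R_C$, independent over $Q$). By the first part, $1_Q$ forces that $R_C$ is a complete sub-order of $\quot{P}{Q}$ with strong projection $\pi_C$; hence, for any $P$-generic $G$ with induced $Q$-generic $G_Q$, the trace $H:=G\cap R_C$ --- which is interdefinable over $V$ with $\dot C^G$, since $R_C$ is dense in $\gen{\dot C}^{\ro(P)}$ --- is $R_C$-generic over $V[G_Q]$ (standard facts about quotient forcing and complete sub-orders). As $R_C$ is atomless (as it is for every $\dot C$ occurring in this paper), the complement of any filter on $R_C$ is dense, so no $R_C$-generic filter over $V[G_Q]$ belongs to $V[G_Q]$; thus $H\notin V[G_Q]$, and therefore $\dot C^G\notin V[G_Q]=V^Q$. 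Since $G$ was arbitrary, $\dot C$ is not in $V^Q$.

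The heart of the matter --- and essentially the only step that requires thought --- is that the witness $\bar p=p\cdot c$ produced by $\pi_C$ in the ground model is already a condition of $\quot{P}{Q}$; this is precisely what the independence equation $\pi_Q(p\cdot c)=\pi_Q(p)$ delivers, and everything else is bookkeeping around the definitions of strong projection and quotient forcing. (In the second part one also tacitly assumes $R_C\ne\{1_{\ro(P)}\}$, i.e.\ that $\dot C$ is a non-trivial name: otherwise its interpretation is the trivial filter and does lie in $V^Q$, so some such proviso is unavoidable.)
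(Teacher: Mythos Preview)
The paper states this lemma without proof (it is immediately followed by the paragraph on iterations), so there is no argument to compare against; evidently the authors regard it as routine. Your verification is correct and is the natural one they presumably had in mind: the independence equation $\pi_Q(p\cdot c)=\pi_Q(p)$ is exactly what is needed to keep the strong-projection witness $p\cdot c$ inside $\quot{P}{Q}$, and once $C$ (equivalently $R_C$) is a complete sub-order of $\quot{P}{Q}$ in $V[G_Q]$, the trace of the $P$-generic on $R_C$ is $R_C$-generic over $V[G_Q]$ by standard quotient-forcing facts. Your caveat about atomlessness is well-placed and unavoidable given how the lemma is phrased; in the paper's applications $\dot C$ always names the generic for $\Add(\kappa)^L$ or an automorphic image thereof, so this never bites.
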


\medskip

\subsection{Iterations}

An \emph{iteration} is a sequence $\bar Q^\theta=(P_\iota,\dot Q_\iota)_{\iota<\theta}$\index[notation]{Qthetha@$\bar Q^\theta$}\index{iteration (forcing)} such that 
for each $\iota <\theta$, 
\begin{enumerate}
\item $P_\iota$ is a preorder and $\dot Q_\iota$ is a $P_\iota$-name such that $\forces_{P_\iota} \dot Q_\iota$ is a preorder
\item \label{def:it:thread} $P_\iota$ consists of sequences $p$ with $\dom(p) = \iota$ and for each $\nu <\iota$, $p(\nu)$ is a $P_\nu$-name and  
\begin{equation}\label{thread}
\forall \nu\in\dom(p)\; 1_{P_\nu}\forces p(\nu) \in \dot Q_\nu.
\end{equation}
\item \label{def:it:order} The ordering of $P_\iota$ is given by:
\begin{equation}\label{threads:order}
r \leq p \iff \forall \nu<\iota \quad r \res \nu \forces_{P_\nu} r(\nu)\leq_{\dot Q_\nu} p(\nu).
\end{equation}
\end{enumerate}
\noindent
We state this as some would not agree with \eqref{thread}.
\begin{dfn}\label{it:def2}
More generally we will also call sequence 
$(P_\iota)_{\iota < \theta}$ of preorders an iteration when it comes with strong projections $\pi^{\bar \iota}_\iota\colon P_{\bar \iota} \rightarrow P_\iota$ for each $\iota <\bar \iota < \theta$, and for limit $\iota$, $P_\iota$ consists of threads in the second sense, and is ordered by \eqref{threads:order}.
\end{dfn}
Any iteration in the sense of the second definition can be transcribed into one in the first sense by 
letting $\dot Q_\iota$ be a $P_\iota$-name such that $\forces_{P_\iota} \dot Q_\iota = P_{\iota+1} : P_\iota$.

If an iteration is presented as in Definition~\ref{it:def2} we shall always regard $P_\nu$ as being a subset of $P_\iota$ for $\nu < \iota < \theta$ via the identification discussed in the section about strong projections.

\medskip

Regardless of what support is used, if $\iota < \theta$ is a limit, $P_\iota$ will always be a subset of the set of \emph{threads} through the earlier stages:

\pagebreak[3]

\begin{dfn}~\label{it:terminology} 
\begin{enumerate}
\item Suppose $\bar Q^{\theta}$ is an iteration. We call a sequence $p$ with $\dom(p) = \theta$ a \emph{thread\index{thread (forcing)|textbf} through (or in) $\bar Q^\theta$} if and only if it satisfies \eqref{thread}. The set of threads through $\bar Q^\theta$ we shall sometimes denote by $\prod \bar Q^\theta$\index[notation]{PiQtheta@$\prod\bar Q^\theta$ (preorder of threads)}.\label{threads:def}
It is endowed with the ordering given by \eqref{threads:order} (for $r$, $p \in \prod\bar Q^\theta$).

\item \label{threads:var:def} We also use the term \emph{thread} in a second, related sense: if $\bar p\in \prod_{\eta<\iota <\theta} P_\iota$ for some $\eta<\theta$---i.e $\bar p=(p_\iota)_{\iota\in(\eta,\theta)}$ and for each $\iota\in \dom(\bar p)$ we have $p_\iota \in P_\iota$---we say $\bar p$ forms or defines or simply \emph{is a thread (through $\bar Q^\theta$)} if and only if
\begin{equation*}\label{threads:var}
\forall \iota, \bar\iota \in \dom(\bar p) \quad \iota\leq\bar\iota \Rightarrow \pi_\iota(p_{\bar\iota})=p_\iota.
\end{equation*}
\end{enumerate}
\end{dfn}
\noindent
The point is that a thread in the first sense yields one in the second sense and vice versa. 
Finally, threads naturally have a length:

\begin{dfn}
Given an iteration
$(P_\iota)_{\iota < \theta}$, and $p\in P_{\theta}$ we call $\lh(p)$\index[notation]{lh(p)@$\lh(p)$ (forcing)} the least $\iota < \theta$ such that $p \in P_\iota$, i.e., such that $\pi_\iota(p)=p$.
\end{dfn}

\subsection{Ideals}
If $c$ is a Borel code\index{Borel code}, we write $B_c$ for the Borel set coded by $c$. Of course given two models of set theory, both containing a Borel code $c$, it may be that $c$ codes a different set in each model. 
\begin{dfn}\label{generic:reals}
Say $\ro(Q)$ is a complete subalgebra of $\ro(P)$.
Let $\dot G$ be the $P$-name for the $P$-generic over $V$ and $\pi\colon \ro(P) \rightarrow \ro(Q)$ the canonical projection.
Let $\dot I$ be a $P$-name for an ideal\index{ideal} on $\omega^\omega$ in the extension via $P$.
For a $P$-name $\dot r$ and $p \in P$, we say is \emph{$p$ forces $\dot r$ is $\dot I$-generic over $V^Q$},
just if $p\forces_P \dot r \in \omega^\omega$ and for every $Q$-name for a Borel code $\dot c$, 
\[ p \forces_P B_{\dot c} \in\dot I \Rightarrow \dot r \not \in B_{\dot c}.\]

We say $p$ forces $\dot r$ is \emph{fully} $\dot I$-generic over $V^Q$  if and only if $p$ forces $\dot r$ is $\dot I$-generic over $V^Q$ and in addition, for every $Q$-name $\dot c$ such that $\pi(p)\forces_Q \dot c$ is a Borel code,
\[p \forces_P \dot r \not\in  B_{\dot c} \Rightarrow   p \forces_P B_{\dot c} \in\dot I.\]
In other words, $p$ does not force anything non-trivial about $\dot r$.
We say \emph{$\dot r$ is (fully) $\dot I$-generic} just if $1_P$ forces $\dot r$ is (fully) $\dot I$-generic.
Instead of ``$\dot I$-generic'',
\begin{itemize}
\item If $\dot I$ is a name for the ideal of sets with measure zero,  we say \emph{random over $V^Q$}.\index{random real!over V Q@over $V^Q$}
\item If $\dot I$ is a name for the ideal of meager sets, we say \emph{Cohen over $V^Q$}.\index{Cohen real!over V Q@over $V^Q$}
\item If $\dot I$ is a name for $\pset_{<\omega}(\omega^\omega)^{V[\pi(\dot G)]}$---or equivalently, for $\pset((\omega^\omega)^{V[\pi(\dot G)]})$---we say $\dot r \not\in V^Q$ or \emph{$\dot r$ is not in $V^Q$}.
\item If $\dot I$ is a name for the ideal of sets which are bounded by a real in $V[\pi(\dot G)]$ in the sense of eventual domination, we say \emph{unbounded over $V^Q$}.\index{unbounded (over V Q)@unbounded (over $V^Q$)|textbf}
\end{itemize}
The terms \emph{$p$ forces $\dot r$ is fully random over $V^Q$} and \emph{fully Cohen over $V^Q$} are to be understood analogously.\index{fully!random (over VQ)@random (over $V^Q$)}\index{fully!Cohen (over VQ)@Cohen (over $V^Q$)}
\end{dfn}

\begin{lem}\label{unbounded}
Let $P$ and $Q$ be arbitrary partial orders and let $\dot r$ be a $P$-name for a real.
If 
$\dot r$ is unbounded over $V$, viewing $\dot r$ as a $P\times Q$ name via the natural embedding, 
$\dot r$ is unbounded over $V^Q$.
\end{lem}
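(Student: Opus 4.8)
The plan is to unwind Definition~\ref{generic:reals} in the case of the unbounded ideal into a statement purely about the forcing relation, and then to exploit the fact that a $P$-name is decided by the $P$-coordinate alone inside $P \times Q$.

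First I would record two concrete reformulations, each routine from Definition~\ref{generic:reals} together with the observation that $\{h \in \omega^\omega : h \leq^* f\}$ is $\mathbf{\Sigma}^0_2$ with a Borel code that is absolute between transitive models containing $f$. On the one hand, ``$\dot r$ is unbounded over $V$'' says exactly that no $p \in P$ forces $\dot r \leq^* \check f$ for any $f \in V \cap \omega^\omega$. On the other hand, writing $\dot r$ also for the $P \times Q$-name obtained via $p \mapsto (p, 1_Q)$, ``$\dot r$ is unbounded over $V^Q$'' says exactly that there are no $(p_0, q_0) \in P \times Q$ and $Q$-name $\dot g$ with $q_0 \forces_Q \dot g \in \omega^\omega$ and $(p_0, q_0) \forces_{P \times Q} \dot r \leq^* \dot g$.

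To prove the second from the first I would argue by contradiction. Suppose $(p_0, q_0)$ and $\dot g$ as above exist; strengthening so as to decide the threshold of eventual domination, fix $(p_1, q_1) \leq (p_0, q_0)$ and $N \in \omega$ with $(p_1, q_1) \forces_{P \times Q} \forall n \geq N\, \dot r(n) \leq \dot g(n)$. Now the key point: since $\dot r$ is (the copy of) a $P$-name and $\dot g$ a $Q$-name, for each $n$ the value of $\dot r(n)$ is decided by conditions in $P$ alone and the value of $\dot g(n)$ by conditions in $Q$ alone. So, for $n \geq N$, put $k_n = \min\{ m \in \omega : \exists q \leq q_1\, q \forces_Q \dot g(n) = m \}$; this is a genuine natural number since $q_1 \forces_Q \dot g(n) \in \omega$ makes the set a non-empty subset of $\omega$. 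Fixing $q_n \leq q_1$ with $q_n \forces_Q \dot g(n) = k_n$, any $p \leq p_1$ deciding $\dot r(n) = m$ gives a condition $(p, q_n) \leq (p_1, q_1)$ forcing $m = \dot r(n) \leq \dot g(n) = k_n$, hence $m \leq k_n$; as such $p$ are dense below $p_1$, we get $p_1 \forces_P \dot r(n) \leq k_n$. Setting $f(n) = 0$ for $n < N$ and $f(n) = k_n$ for $n \geq N$ defines $f \in V \cap \omega^\omega$ with $p_1 \forces_P \dot r \leq^* \check f$, contradicting the reformulated hypothesis.

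The one step I expect to be the genuine obstacle --- and which at first glance looks as though it could break down --- is that a priori nothing forces $\dot r(n)$ to be bounded below $p_1$ (the set of possible values of a name at a given coordinate below a fixed condition can be all of $\omega$). What rescues the argument is exactly the product structure: boundedness of $\dot r(n)$ below $p_1$ is \emph{imposed} by the least possible value, below $q_1$, of the independent name $\dot g(n)$. Everything else is bookkeeping with Definition~\ref{generic:reals} and the standard fact that a statement forced by a dense-below-$p_1$ set of conditions is forced by $p_1$.
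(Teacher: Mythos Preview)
Your argument is correct. The paper does not give its own proof of this lemma; it simply refers the reader to \cite[lemma 3.3, p.~392]{jr:amalgamation}. So there is nothing to compare against beyond noting that you have supplied a complete, self-contained proof where the paper defers to an external source.

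Your reformulations of ``unbounded over $V$'' and ``unbounded over $V^Q$'' via Definition~\ref{generic:reals} are accurate, and the core trick---defining $k_n$ as the least value some $q \leq q_1$ can force $\dot g(n)$ to take, and then using the product structure to pair any $p \leq p_1$ deciding $\dot r(n)$ with the witness $q_n$ for $k_n$---is exactly the right idea. The step you flag as the potential obstacle (that $\dot r(n)$ is not a priori bounded below $p_1$) is handled correctly: the bound is forced on you by the minimality of $k_n$ together with $(p_1,q_1) \forces \dot r(n) \leq \dot g(n)$.
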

\noindent For a proof, see \myplacecite{Lemma 3.3, p.~392}{jr:amalgamation}.

\chapter{Overview of the Proof}\label{s.overview.proof}

\section{The Basic Idea}\label{p.s.gamma-0}

\subsection{Solovay's Way}

Since Solovay's work \cite{solovay} the following lemma has been known as a tool to construct models where all projective sets (in fact, all sets definable from a sequence of ordinals of length $\omega$) are Lebesgue measurable:

\begin{lem}[Solovay's Lemma]\label{l.solovay}\index{Solovay's Lemma}
Suppose $P$ is a preorder such that
\begin{enumerate}
\item $P$ forces that
$%
\bigcup \{ A \subseteq \reals \setdef \text{$A$ is null and has a Borel code in $V$}\}
$ %
is a null set;
\item For any $b \in \ro(P)$ and any pair of complete embeddings of complete Boolean algebras 
\[
e_0, e_1\colon B \to \ro(P)
\]
where $B$ is the random algebra, there is an automorphism 
\[
\Phi\colon \ro( P ) \to \ro( P)
\]
such that $\Phi \circ e_0 = e_1$ and $\Phi(b) \neq b$.
\end{enumerate}
Then $P$ forces that every set of reals which is definable by a formula with parameters in the ground model $V$ is Lebesgue measurable.
\end{lem}
We give a proof on p.~\pageref{s.every-projective-set}. 
A similar lemma holds for the Baire property and the other regularity properties discussed in the previous section (see \cite{jr:amalgamation}).

\medskip

Solovay's Lemma~\ref{l.solovay} and its relatives are often applied as follows to build models where all sets definable from sequences of ordinals posses a given regularity property. Again, for our purpose, it suffices to consider Lebesgue measurability.
\begin{cor}\label{p.c.premain}
Suppose $P = P_\kappa$ is a limit of an iteration $(P_\iota,\dot Q_\iota)_{\iota<\kappa}$ of preorders 
and let $G$ be $(V,P)$-generic.
Suppose further that the following conditions are satisfied, where for $\xi \leq \kappa$, $G_\xi$ denotes the generic filter on $P_\xi$ induced by $G$ and $\dot G_\xi$ denotes its name:
\begin{enumerate}
\item\label{p.c.premain.amoeba} For any $\xi < \kappa$, $P$ forces that the following set has measure zero:
\[
\bigcup \{ A \subseteq \reals \setdef \text{$A$ is null and has a Borel code in $V[\dot G_\xi]$}\}
\]
\item\label{p.c.main.i.sequence} Any sequence of ordinals in $V[G]$ already appears in $V[G_\xi]$ for some $\xi < \kappa$.
\item\label{p.c.premain.hom}\label{p.c.main.hom} For any $\xi < \kappa$ the following holds in $V[G_\xi]$: For any $b \in \ro(P)/{G_\xi}$ and any pair of complete embeddings of complete Boolean algebras 
\[
e_0, e_1\colon B \to \ro(P)/{G_\xi}
\]
where $B$ is the random algebra in $V[G_\xi]$, there is an automorphism 
\[
\Phi\colon \ro(P)/{G_\xi} \to \ro(P)/{G_\xi}
\]
such that $\Phi \circ e_0 = e_1$ and $\Phi(b) \neq b$.
\end{enumerate}
Then every projective set (in fact, every set definable with a parameter in $\On^\omega$) is Lebesgue measurable in $V[G]$.
\end{cor}
For convenience we introduce the following terminology:
\begin{dfn}\label{p.d.hom}
We say an iteration  
$(P_\iota)_{\iota\leq\kappa}$ is \emph{sufficiently homogeneous (for random subalgebras)}\index{sufficiently homogeneous iteration} if and only if Item~\ref{p.c.premain.hom} in Corollary~\ref{p.c.premain} above holds.
\end{dfn}
We will use \emph{amalgamation} to build an iteration which is sufficiently homogeneous (see Section~\ref{p.s.amalgamation} for an overview; cf.\ Chapter~\ref{sec:amalgamation}).

\medskip

By Theorem~\ref{p.t.lm}, with $P$ and $G$ as in Corollary~\ref{p.c.premain} it must be the case ${\omega_1}^{V[G]}$  is inaccessible in $L[r]$, for each $r \in \pset(\omega)^{V[G]}$. We are lead to expect that $\kappa$ should be inaccessible in $V$ and cofinally often in our iteration, a cardinal below $\kappa$  is collapsed (in fact, we shall assume $\kappa$ to be Mahlo---see Section~\ref{p.s.mahlo}).
Moreover, we expect that for each $\xi < \kappa$, $(2^\omega)^{V[G_\xi]} < \kappa$ (and as we shall see below, our use of Jensen coding requires this as well).
With all this in mind, we replace Item~\ref{p.c.premain.amoeba} of Corollary~\ref{p.c.premain} by the following:

\begin{cor}\label{p.c.main}
Corollary~\ref{p.c.premain} still holds if we replace the first item by:
\begin{enumerate}
\item\label{p.c.main.i.collapse} For any $\xi < \kappa$, $(2^\omega)^{V[G_\xi]} < \kappa$ and there is $\xi'$ such that $\xi < \xi' < \kappa$ and $P_{\xi'}$ collapses $(2^\omega)^{V[G_\xi]}$ to $\omega$.
\end{enumerate} 
\end{cor} 
For convenience, we can assume that for unboundedly many $\xi < \kappa$, $P_{\xi+1} = P_\xi * \Coll(\omega,{\omega_1}^{V[G_\xi]})$. 
We stress that Item~\ref{p.c.main.i.sequence} requires $P$ to not add any sequences of length $\omega$ in the last stage. 
We also expect that 
$\kappa$ remains a cardinal (in fact, Mahlo) in $V[G_\xi]$ for each $\xi < \kappa$ so $\kappa = {\omega_1}^{V[G]}$.

\subsection{The Irregular Set}
It is not difficult to arrange that $\langle P_\xi \setdef \xi \leq \kappa \rangle$ as above forces that there is a set without the Baire property:
For each $\xi < \kappa$ we may find a real $c_\xi$ which is Cohen over $V[G_\xi]$;
in fact it is easy to arrange (see Section~\ref{def:it:succ:stage}, Case $k=0$) that for any $I\subseteq \kappa$ which is unbounded in $\kappa$, the set $\{ c_\xi \setdef \xi \in I \}$ is dense in ${\omega^\omega}^{V[G]}$.
Then by the following well-known fact (a proof can be found, e.g., in \cite{jr:amalgamation}) $\{c_\xi \setdef \xi < \kappa$ is a limit ordinal$\}$ does not have the Baire property in $V[G]$:
\begin{lem}\label{p.l.gamma}
Suppose the situation from Corollary~\ref{p.c.main} obtains. 
Suppose further that $\Gamma \in V[G] \cap \pset(\omega^\omega)$ satisfies that for every $\xi < \kappa$, both $\Gamma$ and $\omega^\omega \setminus \Gamma$ contain a dense (in $\omega^\omega$) set of reals which are Cohen generic over $V[G_\xi]$. Then $\Gamma$ does not have the Baire property in $V[G]$. 
\end{lem}

A necessary condition for $\Gamma$ to be definable is that $\Gamma$ be closed under \emph{all} automorphisms of $P_\kappa$; e.g., those required by Item~\ref{p.c.main.hom} in Corollary \ref{p.c.premain}.
As it turns out, it will be enough to define a specific family of automorphisms $\mathcal F$ of $P_\kappa$---namely, $\mathcal F_\kappa$ in  \ref{m.d.F}---large enough to witness the fact that our iteration is sufficiently homogeneous;
we then let
\begin{equation*}\label{}
\Gamma^0 = \{ \Phi(c_\xi) \setdef \text{$\Phi \in \mathcal F$, $\xi < \kappa$, $\xi$  limit}\}
\end{equation*}
and $\Gamma^1 = \omega^\omega\setminus \Gamma^0$.
By the above lemma, $\Gamma^0$ (and $\Gamma^1$) will not have the Baire property provided we can show that $\Gamma^1$ still contains a dense set of Cohen reals over $V[G_\xi]$ for any $\xi<\kappa$.
For this we will show that no automorphism in $\mathcal F$ sends $c_\xi$ to $c_{\xi'}$ for two distinct $\xi, \xi' < \kappa$ (see Lemma~\ref{index:sequ}).

It follows from the definability of $\Gamma^0$ in $V[G]$ that $\Gamma^0$ and $\Gamma^1$ are indeed closed under \emph{all} automorphisms of $P_\kappa$.

\subsection{Making the Irregular Set Definable}\label{p.s.making.def}

We now want to understand how we will achieve that the set $\Gamma^0$, chosen as described above, is made definable in the course of our iteration.
Assume we have a real $r$ added by an initial segment of our iteration, say, before stage $\xi$.
We force at the next stage to make a formula $\Psi(r)$ true if $r\in \Gamma^0$.
We want $\Psi(x)$ to be upwards absolute
so that $\Psi(r)^{V[G]}$ holds, 
and we want $\Psi(r)$ to remain false for any $r \notin \Gamma^0$.
If we can find a solution to this problem \emph{always using the same formula $\Psi(x)$}, there is hope that in the course of our iteration, we can do the same for every real $r$ added and thus make $\Gamma^0$ projectively definable.

Thus, we arrive at the following basic idea for the proof of Theorem~\ref{p.t.main2}.
\begin{lem}\label{p.l.def}
Suppose $P_\xi$, $G_\xi$ for $\xi\leq \kappa$  and $G$ are as in Corollary~\ref{p.c.main}. For $\Gamma^0$ as above,
suppose further that for some upwards absolute formula $\Psi(x)$ it holds that: %
\begin{enumerate}[start=4]
\item For any real $r\in \Gamma^0$, there is a stage $\xi$ such that 
$\forces_{P_{\xi+1}} \Psi(r)$;\label{p.l.def.item1}
\item $V[G]\vDash \neg\Psi(r)$ for any real $r \in V[G]\setminus\Gamma^0$.\label{p.l.def.item2}
\end{enumerate}
Then  in $V[G]$, the formula $\Psi$ defines $\Gamma^0$, a set without the Baire property.
\end{lem}
We want $\Psi(x)$ to be parameter free and projective, i.e., involve only quantifiers over $\HSize(\omega_1)^{V[G]}$---corresponding to $\HSize(\kappa)$ in $V[G_\xi]$. 
For now, we ignore the requirement that $\Psi(x)$ be a projective formula and discuss the simpler problem of making $\Gamma^0$ definable (by \emph{any} formula $\Psi(x)$). 

\begin{exm}\label{p.exm.trees}\index{independent!Suslin trees|(}\index{Suslin trees!independent|(}
One (well-known) method to make some set reals $\Gamma = \{ r_\xi \setdef \xi < \omega_1 \}$ definable is the following.
Let $\langle T(\xi) \setdef \xi <\omega_1 \rangle$ be a constructible sequence of (well-pruned) $\omega_3$-Suslin trees with the following two properties:
\begin{itemize}
\item $\langle T(\xi) \setdef \xi \in \omega_1 \rangle$ is \emph{independent}, i.e., $\prod_{\xi<\omega_1} T(\xi)$ 
has no uncountable antichains;
\item $\langle T(\xi) \setdef \xi \in \omega_1 \rangle$ is definable in $L$.
\end{itemize}
Recall (from Definition~\ref{p.n.d.independent}) that by \emph{independent} we mean that 
\[
\bar T = \prod_{\xi<\omega_1} T(\xi) 
\]
has no antichains of size $\omega_3$. 

After forcing with 
$\prod_{\xi < \omega_1, r_\xi(n) =1} T(\omega\cdot \xi + n)$ it holds that
\begin{equation}\label{p.e.Suslintreesdef}
r \in \Gamma \iff \exists \xi < \omega_1 \, [ r(n) = 1 \iff T(\omega\cdot \xi + n) \text{ has a branch.}]
\end{equation}
Since $\langle T(\xi) \setdef \xi < \omega_1 \rangle$ is definable, we have made $\Gamma^0$ definable.
The direction $\Leftarrow$ in \eqref{p.e.Suslintreesdef} follows by independence since for any $\xi <\omega_1$, $\prod_{\xi \neq \nu} T(\nu)$ does not add a branch through $T(\xi)$.
\end{exm}

A similar technique will ensure the requirements of Lemma~\ref{p.l.def}. 
We shall use an independent sequence $\langle T(\xi) \setdef \xi <\kappa \rangle$ of $\kappa^{++}$-Suslin trees in $L$, 
as the height should be at least $\kappa = {\omega_1}^{V[G]}$.
For height $\kappa^{++}$, such trees, with additional desirable properties, can be found conveniently using standard methods (see Section~\ref{p.s.trees}).

At stage $\xi$, given a real $r_\xi$ and faced with the task of forcing $\Psi(r_\xi)$, we will pick a set $I^*_\xi \subseteq\kappa$ (the \emph{coding area}) indexing a set of trees ``encoding'' $r_\xi$. 
As our iteration will be sufficiently homogeneous, we cannot proceed  in such close analogy to Example~\ref{p.exm.trees} that we simply let $I_\xi = \{\omega\cdot \xi +n \setdef n \in r_\xi\}$: 
For any automorphism $\Phi$ of $P$, 
$\Phi(I_\xi)= I_\xi$
and we also add a branch for each $n \in \Phi(r_\xi)$---destroying the coding.
This is related to the fact that $\Gamma$ in Example~\ref{p.exm.trees} becomes  definably well-orderable in order type $\omega_1$.
By a slight variant of Theorem~\ref{p.t.lm} \cite{shelah:amalgamation}, this stands in contradiction to our goal that sets of the same definitional complexity as $\Gamma^0$ be Lebesgue measurable.

\medskip

For this reason, 
force with $\kappa$-Cohen forcing to add a generic set $I_\xi \in [\kappa]^\kappa$ and let
\[
I^*_\xi = \{\omega\cdot \sigma +n \setdef \sigma \in I_\xi, n \in r_\xi\}.
\]
This solves the problem described in the previous paragraph:
If $r_\xi \neq \Phi(r_{\xi'})$ where $\xi, \xi' < \kappa$ and $\Phi$ is an automorphism of $P_\kappa$, we shall be able to show that
\begin{equation}\label{p.e.coding-areas}
\card{I^*_\xi \cap \Phi(I^*_{\xi'})} < \kappa
\end{equation}
so that the ``noise'' created by automorphisms, i.e., the degree to which the coding of one real disturbs the coding of another, is just manageable.
We show \eqref{p.e.coding-areas} analogously to how we analyze the action of $\mathcal F$ on the set of Cohen reals $c_\xi$.
In the case of Cohen reals this analysis uses the fact that Cohen reals are \emph{unbounded} over the ground model; for coding areas the analogous concept is the ad-hoc (but straightforward) notion of being \emph{remote} (see Definition~\ref{remote}).

The proof of Item~\ref{p.l.def.item2} of Lemma~\ref{p.l.def} follows the same idea as showing \eqref{p.e.Suslintreesdef} in Example~\ref{p.exm.trees} and uses, in essence, that $\langle T(\xi)\setdef\xi <\kappa \rangle$ is independent, along with the fact that the rest of the forcing is $\kappa^+$-linked and thus adds no branches through $\kappa^{++}$-Suslin trees. Here we see why we need our trees to have height at least $\kappa$.

\medskip

Unfortunately, the above sketch is oversimplified, due to the enormous tension between definability and homogeneity:
Let $B(\nu)$ be the branch added through the tree $T(\nu)$, for each $\nu < \kappa$.
We shall see that to ensure that $\Psi$ is of low complexity, we will make definable, or \emph{code}, each $B(\nu)$.
If $\Phi$ is an automorphism of $P_\kappa$,
then clearly $\Phi(B(\nu))$ must be a branch through $T(\nu)$ as well, and unless we should (definably) collapse $\kappa^{++}$ (and add definable branches through \emph{all} the trees in a single step) we should assume that $\Phi(B(\nu)) = B(\nu)$.

To ensure this, we add branches at the first step of our iteration, i.e., with $P_0$ and define amalgamation so that it only creates automorphisms of $P$ which act as the identity on $P_0$. 
Since we have not added any reals, we add branches through all trees---but we will only \emph{code} $B(\nu)$ if for some $\xi<\kappa$, $\nu \in I^*(r_\xi)$.
The proof of Item~\ref{p.l.def.item2} of Lemma~\ref{p.l.def}, i.e., that no unwanted branches are \emph{coded} (instead of added) is technical, and here the difficulty of reconciling definability and homogeneity comes back to haunt us; 
we sketch this proof at the end of the next section.

\subsection{Making the Definition Projective}\label{sec:overview:coding}

Jensen \cite{bjw:82} developed a forcing that under certain prerequisites, given a class $A$, creates $u\in \pset(\omega)$ so that $A \in L[u]$. We can say ``$A$ is coded by a real'' (compare Theorem~\ref{p.f.jensen.coding} below). 
Jensen coding is simplified by taking (a model close to) $L$ as the ground model; so we from now on assume $V=L$. 
A slight variant of Jensen's forcing, which called Jensen forcing \emph{with localization} or \emph{David's trick} will enable us to find a formula $\Psi(r)$ as in Lemma~\ref{p.l.def} which is \emph{projective}. 
For what follows, the reader is advised to keep in mind Definition~\ref{p.n.d.canonical} and Convention~\ref{d.localobject}.

\medskip

In the previous section, we sketched a method to make $\Gamma^0$ definable by a (non-projective) formula as in \eqref{p.e.Suslintreesdef}.

Now take $\Psi_0(r)$\index[notation]{Psi 0 r@$\Psi_0(r)$} to be the formula ``Letting $\bar T = \langle T(\xi) \setdef \xi<\bar\kappa\rangle$ be the canonical sequence of 
$\bar\kappa^{++}$-Suslin trees, for $\bar\kappa$ the least Mahlo, $\bar\kappa$-many trees with index in $I(r)$ have a branches,'' 
where 
\[
I(r) = \{\omega\cdot \sigma +n \setdef \sigma < \bar \kappa, n \in r\}
\]
In contrast to Example~\ref{p.exm.trees}, in our iteration we add branches through each tree of $\bar T$ at the beginning of our construction---so $\Psi_0(r)$ will always be true for any real $r$. But using Jensen coding, we can arrange that some branches are ``coded'', while others are not:

At stage $\xi$, we are given a real $r_\xi$.
As described in the previous section, we force a  generic coding area $I^*_\xi$ of size $\kappa$, $I^*_\xi \subseteq I(r_\xi)$.
Jensen coding now allows us to generically create a real $u$ such that 
\begin{equation}\label{p.e.def.u.0}
L[u,r_\xi] \vDash\Psi_0(r_\xi)
\end{equation}
with $u$ coding the branches indexed by $I^*_\xi$---and as we shall show, only those.

\medskip

Using a trick invented by Ren\'e David \cite{david:singleton} (see also \myplacecite{Theorem~6.18,~p.~129}{friedman:codingbook} for a very general formulation), we can arrange the following stronger statement to hold for $r = r_\xi$, 
\begin{equation}\label{p.e.def.u}
\exists u \in \pset(\omega) \; \forall \alpha <\kappa \;  \big( L_\alpha[u,r]\vDash\Delta \Rightarrow L_\alpha[u,r]\vDash\Psi_0(r) \big)
\end{equation}
where $\Delta$ is a specific, fixed (and recursive) set of sentences.
(Notice that interpreting $\Psi_0$ in $L_\alpha[u,r]$ yields an assertion about the local object corresponding to $\bar T$, i.e., $\bar T^{L_\alpha}$.) 
Taking $\Psi(r)$ to be \eqref{p.e.def.u}, we have forced $\Psi(r_\xi)$. 

Moreover, $\Psi(r)$ is clearly upwards absolute, as required in Lemma~\ref{p.l.def}.
It remains to show Item~\ref{p.l.def.item2} of Lemma~\ref{p.l.def}, that if $r\not\in \Gamma^0$, then $\Psi(r)$ is false in $V[G]$. 
We give a short sketch (the proof will be carried out in Section~\ref{sec:preserving:coding}):

We show, roughly, that if $r \not \in \Gamma^0$, then for any $u\in \pset(\omega)\cap V[G]$, the set of $\nu \in I(r)$ such that $L[u]$  contains a branch
through $T(\nu)$ has size less than $\kappa$.
(The argument for this involves writing the forcing as a product of $T(\nu)$ and a remainder, with $T(\nu)$ surgically removed; we then use the independence of the trees and the fact that the remainder is $\kappa^+$-linked.) 
By elementarity (since $L[u]\vDash \Delta$), \eqref{p.e.def.u} must fail.\index{independent!Suslin trees|)}\index{Suslin trees!independent|)}

Lastly, $\Psi(r)$ becomes $\Sigma^1_3$ in $L[G]$ (as $\kappa = \omega_1$ there).
We can apply the same procedure to $\Gamma^1=\omega^\omega\setminus \Gamma^0$ in the course of our iteration, so that $\Gamma^0$ is a  $\Delta^1_3$ set without the Baire property in the final model.

\section{Jensen Coding and David's Trick}\label{p.s.jensen.david}

We give a brief introduction to Jensen coding.
This forcing was developed to prove the famous Coding Theorem \myplacecite{Theorem~0.1,~p.~4}{bjw:82}.
It was much simplified over the course of the years (and has spawned an immense literature; see, e.g.,   \mycites{jensen1,coding_reshaping,jensen2,sh:st:1,jensen3,friedman:handbook,friedman:ea:coding}).
The reference we take as a starting point is \cite{friedman:codingbook}.

The setting in which we use Jensen coding simplifies matters further (cf.\ the role of the non-existence of $0^\sharp$, \myplacecite{p.~67}{friedman:codingbook}).  For this brief introduction, consider the following special case of Jensen's Coding Theorem\index{Jensen's Coding Theorem}:
\begin{fct}\label{p.f.jensen.coding}
Suppose $V=L[A]$, $\Card^V = \Card^{L[A\cap \omega]}$ and $\GCH$ holds.
There is a forcing $P_J$ such that the following holds in any $(L[A],P_J)$-generic extension $L[A][G]$:
$A$ is definable in $L[u_G]$, for some $u_G\in \pset(\omega)$.
\end{fct}

In the following section we start by reviewing (a version of) the forcing that serves as the main building block for Jensen coding, namely \emph{almost disjoint coding}.

\subsection{Almost Disjoint Coding}\label{s.almost}\index{almost disjoint coding}
For each $\alpha \in \Reg$ fix an indexed family
$\mathcal B_\alpha = \langle b^\xi \setdef \xi \in [\alpha, \alpha^+)\rangle$ with the property
that for each pair of distinct $\xi,\xi' < \alpha^+$, $\card{b^\xi \cap b^{\xi'}} < \alpha$, called an \emph{(indexed) almost disjoint family of subsets of $\alpha$}\index{almost disjoint family|textbf}. 
\begin{dfn}%
For a function $p$ with $\dom(p)\subseteq\On$ denote $\supp(\dom(p))$ by $\lvert p\rvert$\index[notation]{p@$\lvert p\rvert$}\index[notation]{ p @$\lvert p\rvert$}.
Suppose $\alpha \in \Reg$ and $B \in  {}^{[\alpha,\alpha^+)} 2$.
We define the partial order $R(B)$, called \emph{almost disjoint coding of $B$ relative to $\mathcal B$}, by case distinction:
\begin{itemize}
\item If $\alpha$ is a successor and $\alpha^-$ its predecessor, define
\[
R(B) = \{ (p, p^*) \setdef p \in {}^{[\alpha^-, \delta)} 2, \;\delta < \alpha,\; p^* \in [\{ b^{\xi}\setdef \xi \in [\alpha, \alpha^+) \}]^{<\alpha} \}.
\]
Order $R(B)$ as follows: $(q, q^*) \leq (p, p^*)$ if and only of $q \supseteq p$, $q^* \subseteq p^*$ and
for any $b \in p^*$ and $\nu \in b \cap [\lvert p\rvert, \lvert q\rvert)$, we have $q(\nu) \leq B(\nu)$.

\item For limit $\alpha$ define 
\[
R(B) = \{ (p, p^*) \setdef p \in {}^{<\alpha}2,\; p^* \in [\{ b^{\xi}\setdef \xi < \alpha^+ \}]^{<\alpha} \},
\]
ordered in precisely the same way.
\end{itemize}
\end{dfn}

Clearly $R(B)$ codes $B$ into a subset $a$ of $\alpha$ in the following sense: Letting $G$ be $R(B)$ generic over $L[B]$ and letting
\[
a =  \{ \xi < \alpha \setdef \exists (p, p^*) \in G\; p(\xi)=1 \}
\]
it holds that for each $\xi\in[\alpha, \alpha^{++})$,
\[
\xi \in B \iff \card{a \cap b^\xi} < \alpha.
\]

\subsection{Jensen Coding}\label{p.ss.jensen}
Going back to Fact~\ref{p.f.jensen.coding}: 
Roughly, $P_J$ resembles a product of almost disjoint codings, which together code $A$ into a real.
Recall from Section~\ref{sec:notation} that $\Card'$ denotes $(\Card \setminus\omega)\cup\{0\}$
and we write $0^+=\omega$.
 
\medskip

The conditions of the forcing $P_J$ are sequences $p$ with
$\dom( p) \subseteq \Card'$
 such that for each $\alpha \in \dom( p)$,
 $p(\alpha) = (p_\alpha, p^*_\alpha)$ is a condition in the almost disjoint coding of $p_{\alpha^{+}}$ relative to $\mathcal B_{\alpha^+}$.
Here, $p_\alpha$ is a function $p_\alpha \colon [\alpha, \delta) \to 2$, where $\delta<\alpha^+$---we call such $p_\alpha$ a  ``string''.
Simultaneously we arrange that also $A$ be coded.
This takes care of successor cardinals.

For limit cardinals, $P_J$ the idea is that $p_\alpha$ is ``coded'' by $p_{<\alpha}$, where 
\[
p_{<\alpha} = \bigcup_{\beta< \alpha} p_\beta.
\]
Such a $p_{<\alpha}$ is called a ``broken string''.\index{broken string}\index{string!broken}

For inaccessible $\alpha$, two different ways suggest themselves to effect a similar coding: either we demand that $p_{<\alpha}$ be bounded below $\alpha$ and use the almost disjoint coding of $p_\alpha$ relative to $\mathcal B_\alpha$, proceeding in analogy to the successor case; 
or otherwise, we may use so-called \emph{limit coding}\index{limit coding}. 
For singular limit cardinals we have no choice and  always have to use the latter method (if we are to preserve cardinals).
Note that using limit coding at a cardinal $\alpha$ makes it necessary to allow that $p_{<\alpha}$ be unbounded below $\alpha$.
Limit coding was greatly simplified in \cite{friedman:codingbook} by the introduction of \emph{coding delays}\index{coding delays} (the reader will find more details in Chapter~\ref{sec:coding}).

\medskip

To show that we do not add reals at the last stage $\kappa$ of the iteration, it seems we have to use the first, i.e., the successor-style coding\index{successor coding} method at every inaccessible; we call this \emph{Easton support}\index{Easton support}\index{support!Easton}  (more about this in the next section).
This Easton supported version of Jensen coding\footnote{Reportedly, Jensen first attempted to use Easton support, but this was abandoned and the limit coding method was used at every limit cardinal in the final version of \cite{bjw:82}.} is developed in Chapter~\ref{sec:coding}. 
See also \cite{bjw:82} and \myplacecite{4.2}{friedman:codingbook} for further details on Jensen coding (albeit the full-supported version, i.e., with limit coding at every regular limit cardinal). 

\subsection{David's Trick}\index{David's trick}\index{localization}\label{p.ss.david}

Jensen coding allows us to create a real $u$ which witnesses \eqref{p.e.def.u.0}.
A central idea to achieve that the generic $u$ witnesses the strengthening \eqref{p.e.def.u} is to amend the definition of ``string'': 
For $p$ to be a condition, require that for each $\delta \in \supp(p)$, and each 
$
\alpha \leq \lvert p_\delta \rvert
$, we have
\[
L_\alpha[p_\delta\res\alpha,r]\vDash\Delta_\alpha \Rightarrow L_\alpha[u,r]\vDash\Psi_0(r)
\]
where $\Delta_\alpha$ is a recursive set of formulas whose only free variable is $\alpha$.
Unfortunately, due to the special role of the Mahlo cardinal $\kappa$ and the ``discontinuous'' nature of Easton support, further  complications arise (e.g., \emph{virtual inaccessible coding}; see p.~\pageref{l.virtual}).

An illustration of David's trick in a simple setting is provided in Example~\ref{p.e.david} and Exercise~\ref{p.x.david} below.
See also \myplacecite{6.2}{friedman:codingbook} and \cite{david:singleton}.

\section{Stratification, Easton Support and the Mahlo Cardinal}\label{p.s.mahlo}

\subsection{Stratification}\index{preservation of kappa@preservation of $\kappa$}\index{cardinal!preservation}
Finally we discuss cardinal preservation.\index{preservation of kappa@preservation of $\kappa$}\index{cardinal!preservation}
Jensen coding preserves cardinals roughly because for each regular $\lambda$, the forcing may be written as an iteration
$P_J = (P_J)^\lambda * \dot{(P_J)_\lambda}$, where $(P_J)^\lambda$ (the ``upper part'' with respect to $\lambda$) is $\lambda^+$-distributive
and $\forces_{(P_J)^\lambda}$``$\dot{(P_J)_\lambda}$ is $\lambda$-linked'' (in fact, $\lambda$-centered). 

In fact, the upper part of $(P_J)_\lambda$ has a stronger property than being $\lambda^+$-distributive:  \emph{quasi-closure}, defined in Chapter~\ref{sec:stratified forcing}.
For this, we introduce an additional relation $\leqlol$ for each regular cardinal $\lambda$; 
Quasi-closure means that certain \emph{definable} $\leqlol$-descending sequences of length $\lambda$ posses a lower bound.\index{quasi-closure}\index{forcing!quasi-closed}
Quasi-closure (in contrast to distributivity) is preserved in forcing iterations provided the support on the upper part is large enough, which holds for diagonal support limit.

\medskip

An iteration of Jensen coding does not decompose in the same straightforward manner as above, since names for any ``upper part'' necessarily depend on
ealier ``lower parts.''
We can nonetheless salvage the essential feature of this decomposition through our notion of \emph{stratified forcing}:\index{stratification}\index{forcing!stratified}
For each $\lambda$, we will define a relation $\lequpl$ on our forcing that corresponds to ``extending on the upper part'' (with no relation whatsoever imposed on lower parts).
The linking function on the lower part simply becomes a multifunction $\Cl$ into $\lambda$ and is only defined on a dense subset.
Crucially, if $q \lequpl p$ and $\Cl( q)$ and $\Cl( p)$ have a common value, $p$ and $q$ are compatible.

\medskip

In such a setting, cardinal preservation arguments can be expressed as follows:
Given a name $\dot x$ and a regular cardinal $\lambda$, on a dense set of conditions the the interpretation of $\dot x$ will only depend on the ``lower part'' (w.r.t.\ $\lambda$), the dependence being expressed in terms of the centering multifunction $\Cl$.
As the ``upper part'' is quasi-closed, $\lambda$-many names can be ``reduced to the lower part'' simultaneously.
This ensures that any function in the extension with domain $\lambda$ can be covered by a multifunction of size $\lambda$ from the ground model.

\medskip

Stratification is preserved in forcing iterations with \emph{diagonal support limits}.
The support is called diagonal because for each $\lambda$, we need supports of size $\lambda$ on ``upper parts'' (w.r.t.\ $\lambda$) to preserve closure, but supports of size ${<}\lambda$ on lower parts to preserve the ``linkedness''.
Moreover, we must ask that the linking (multi-)functions $\Cl$ are continuous, in some sense.
We discuss stratified forcing in Chapter~\ref{sec:stratified forcing}.

\subsection{The Last Stage}
In Section~\ref{sec:stage kappa} we shall show that every element of $(\On^\omega)^{L[G]}$ is already in an initial segment $L[G_\xi]$, for some $\xi<\kappa$, proving Item~\ref{p.c.main.i.sequence} of Lemma~\ref{p.c.premain}.

We stress that this fact
does not follow directly from the theory of stratified forcing.
Instead, a kind of ``$\kappa$-properness''\index{forcing!proper} is shown in a kind of $\Delta$-systems argument involving some ideas from stratification, an archetypical use of $\Diamond$,\index{Diamond Principle} 
and the fact that enough cardinals are preserved in earlier stages of the iteration (this is, of course, is a consequence of stratification). 

The proof takes the general form described above at the end of the previous section:
Given a sequence $(\dot x_n)_{n < \omega}$ of $P_\kappa$-names, we reduce each of them to the ``lower part.''
But lower part with respect to which cardinal? 
No $\lambda < \kappa$ is preserved, and taking $\lambda = \kappa$ does not confine a name to an initial segment as antichains can have size $\kappa$ in the respective  ``lower part''.

The solution is, very vaguely, to let $\lambda$ vary throughout the argument:
One ``reduces'' diagonally as much as one can; we assume $\kappa$ to be Mahlo so that the construction reaches a closure point $\lambda < \kappa$ which is inaccessible and thus supports below it are bounded and we catch our tail. 
A parallel can be seen in the argument that the $\kappa$-chain condition is preserved in ${<}\kappa$-support iterations provided that $\kappa$ is Mahlo (see, e.g., \cite{baumgartner}).
It is to allow this proof to go through that we need an \emph{Easton supported}\index{Easton support}\index{support!Easton} version of localized Jensen coding (developed in Section \ref{sec:coding}).

\medskip

Whether the use of the Mahlo cardinal is necessary is currently unknown.
To the authors, it seems plausible that a similar construction go through starting only with an inaccessible. 
Again, the crux would be to show that every sequence of ordinals of countable length appears in an initial segment of the iteration where $\kappa$ is still a cardinal (in fact, still inaccessible).

\section{Amalgamation}\label{p.s.amalgamation}

To build iterations which are sufficiently homogeneous for random subalgebras (in the sense of Definition~\ref{p.d.hom})
we construct a variant of Shelah's amalgamation\index{amalgamation} from \cite{shelah:amalgamation}.

\medskip

Suppose $f \colon R_0 \to R_1$ is an isomorphism of preorders which are complete suborders of $\ro(P)$. Let $\pi_j\colon \ro(P) \to \ro(R_j)$ be the canonical projection. 

We now define a preorder $P^{\Int}_f$ which (under an additional assumption on $P$) has $P$ as a complete suborder and carries an automorphism which agrees with $f$ on $R_0$.
See  Chapter~\ref{sec:amalgamation} for proof of a refined version of the following theorem.
\begin{thm}\label{p.t.amalgamation}
Suppose $P$ is closed under $\pi_0$, $\pi_1$,  and the partial functions $f$ and $f^{-1}$.
Define 
\begin{equation}\label{p.e.amalgamation}
P^{\Int}_f = \{ \bar p\colon \Int \to P \setdef \forall i \in \Int\; f(\pi_0(\bar p(i)))=\pi_1(\bar p(i+1)) \}.
\end{equation}
Moreover let $\Phi\colon P^{\Int}_f\to P^{\Int}_f$ be defined as follows: For each $i\in \Int$,
\[
\Phi(\bar p) (i) = \bar p( i +1),
\]
i.e., $\Phi$ is given by the left-shift.
Then $\Phi$ is an automorphism of $P^{\Int}_f$ which agrees with $f$ on $R_0$ and $P$ is a complete suborder of $P^{\Int}_f$.
\end{thm}
This allows to build sufficiently homogeneous iterations: 
By bookkeeping, we can easily build an iteration $\langle P_\xi \setdef \xi \leq \kappa \rangle$ so that for each $\xi< \kappa$, any pair of $P_\xi$-names $\dot r_0$, $\dot r_1$ and each $\iota < \kappa$ such that $\dot r_0$, $\dot r_1$ are random over $V^{ P_\iota}$, 
 there is a sequence $\langle \eta_\xi \setdef \xi < \kappa\rangle$ cofinal in $\kappa$ and a sequence $\langle\Phi_\xi\setdef \xi \leq \kappa\rangle$ such that letting $B_\xi$ denote $\ro(P_\xi)$,
\begin{enumerate}
\item\label{p.amalgam.type1} $P_{\eta_0 +1} = (B_{\eta_0}\setminus\{0_{B_{\eta_0}}\})^{\Int}_f$, where $f$ is the isomorphism of suborders of $\ro(P_{\eta_0})$ which sends $\dot r_0$ to $r_1$ and keeps $P_\iota$ fixed.
\item $\Phi_0$ is the automorphism of $P_{\eta_0 +1}$ given by left-shift
\item\label{p.amalgam.type2} For each successor $\xi$ with $0 < \xi < \kappa$, $P_{\eta_{\xi} +1} = (\ro(B_{\eta_\xi})\setminus\{0_{B_{\eta_\xi}}\})^{\Int}_{\Phi_{\xi-1}}$
\item $\Phi_\xi$ is the automorphism of $P_{\eta_{\xi} +1}$ given by left shift, for $\xi$ as in the previous item
\item For each limit $\xi$ with $0 <\xi \leq \kappa$, $\Phi_\xi$ is the automorphism of $P_{\eta_\xi}$ given as the inverse limit of
$\langle \Phi_\nu \setdef \nu < \xi \rangle$.
\end{enumerate}
Clearly, $\Phi_\kappa$ is an automorphism of $P_\kappa$ sending $\dot r_0$ to $\dot r_1$ which is the identity on $P_\iota$.
If we succeed in showing Item~\ref{p.c.main.i.sequence} of Corollary\ref{p.c.main}, our iteration is easily seen to sufficiently homogeneous. %

\medskip

In order to guarantee cardinal preservation as well as Item~\ref{p.c.main.i.sequence} of Corollary~\ref{p.c.main}, i.e., that every sequence in our final model appears in some initial segment of the iteration, the construction of Theorem~\ref{p.t.amalgamation} shall be modified substantially in several ways:

One is the appropriate support condition in the above product---$P^{\Int}_f$ given above uses ``full support,'' and as we shall show in Section~\ref{p.s.digression}, Shelah's amalgamation is just $P^{\Int}_f$ with finite support. 
We must of course use \emph{diagonal support}.

In addition, we have to ``prepare''  by replacing $P$ with a carefully chosen dense subset in order to preserve quasi-closure. 
This precludes that we work with complete Boolean algebras---but note that the assumption that $P$ be closed under $\pi_0$, $\pi_1$, $f$, and $f^{-1}$ in Theorem~\ref{p.t.amalgamation} isn't entirely vacuous, as otherwise $P^{\Int}_f$ in \eqref{p.e.amalgamation} may be empty (cf.\ Remark~\ref{am:remark} and Section~\ref{p.s.digression} below).

Lastly, amalgamation produces iterations $(P_\iota,\dot Q_\iota)_{\iota<\theta}$ whose quotients $\dot Q_\xi$ aren't (forced to be) stratified.
For this reason we introduce \emph{stratified extensions} in Chapter~\ref{sec:ext}, describing how stratified initial segments of an iteration \emph{cohere} sufficiently so that diagonal support limit be stratified.

\section{An Example, and a Digression}\label{p.s.exm}

We now give an example presenting our coding method in very simplified form: 
We show how to make a single real $r$ projective, i.e., find a forcing extension in which ``$n\in r$'' is equivalent to a projective formula (without parameters).

\medskip

Let $( T(n) )_{n < \omega}$\index{independent!Suslin trees}\index{Suslin trees!independent} be the canonical sequence of $\omega_2$-Suslin trees (see Definition~\ref{p.n.d.canonical}).
As the sequence is independent,
forcing with $\prod_{n\neq m} T(n)$
does not add a branch through $T(m)$,  for $m\in \omega$.

For $\alpha \in \{ \omega, \omega_1\}$, let us now fix a constructible almost disjoint family $\mathcal B_\alpha = \langle b^\xi \setdef \xi \in [\alpha, \alpha^+)\rangle$ 
so that 
$\{ (\zeta, \xi) \setdef \zeta \in b^\xi,\; \xi \in [\alpha, \alpha^+)\}$
is locally semidecidable (cf.\ Definition~\ref{p.d.locally.definable}; we later explicitly define such a family, see Definition~\ref{reg_defs}, \emph{Coding apparatus}, p.~\ref{b^s:page}).

\begin{exm}\label{p.e.coding}\label{p.e.jensen}\label{p.e.david} 
Let some $r\in \pset(\omega)$ be given assume $(T(n) )_{n < \omega}$ remains independent in $L[r]$. 
Let $B_n$be the characteristic function of 
$\{ \# t \setdef t \in B\}$ 
where $B$ is the $T(n)$-generic branch through $T(n)$ and $\# t$ is the order-type of $t$ in $\leq_L$.
Consider the product
$\prod_{n\in r} T(n) *  R(\dot B_n)$, the composition (iteration) of $T(n)$ with the  almost disjoint coding of $B_n$ with respect to $\mathcal B_{\omega_1}$ (cf.\ Section~\ref{s.almost}).

\begin{claim}\label{p.claim}
If $G$ is $ \prod_{n\in r} T(n) *  R(\dot B_n)$-generic over $L[r]$, there is a $\Sigma_2$ formula $\Psi^*(n)$ such that
$\forall n\; \big( n\in r\iff \HSize(\omega_2)\vDash \Psi^*(n) \big)$ holds in $L[G]$ .
\end{claim}
We shall only sketch the proof. The reader is again advised to keep in mind Definition~\ref{p.n.d.canonical} and Convention~\ref{d.localobject}. 
\begin{proof}[Sketch of proof.]
We first show that in $L[G]$, $n\in r$ if and only if
\begin{equation}\label{p.e.definable}
\exists B^*_n\in\pset(\omega_1)\; L[B^*_n]\vDash \text{$T(n)$ has a branch.}
\end{equation}
Of course, the above is witnessed by
\begin{equation}\label{p.exm.B*_n}
B^*_n = \{ \xi < \omega_1 \setdef \exists t,p,p^* \; (n,t, (p,p^*)) \in G\wedge p(\xi) =1 \}.
\end{equation}
To see that \eqref{p.e.definable} defines $r$, it suffices to show that if $n\notin r$, $T(n)$ remains Suslin in $L[G]$.
For this, use that $( T(n) )_{n < \omega}$ is independent and each $R(\dot B_n)$ is forced by $T(n)$ to be $\omega_1$-linked.
We leave details to the reader.

To see that moreover, $n\in r$ is  equivalent to a $\Sigma_2$ formula in $\HSize(\omega_2)$, consider the formula $\Psi^*(n)$ given by
\begin{multline*}
\exists B^*_n\in \pset(\omega_1) \; \forall \alpha \in [\omega_1, \omega_2) \; \\
\text{ if } L_\alpha[B^*_n]\vDash\ZF^- + $``${\omega_2}^{L_\alpha[B^*_n]}\text{ exists and equals ${\omega_2}^{L_\alpha}$'' }\\
\text{then} L_\alpha[B^*_n] \vDash\text{$T(n)^{L_\alpha}$ has a branch.}
\end{multline*}
For a given $n$, $\Psi^*(n)$ implies \eqref{p.e.definable} and hence $n\in r$: Fix $B^*_n$ witnessing $\Psi^*(n)$ and assuming \eqref{p.e.definable} fails, consider the transitive collapse of a countable elementary submodel of $L_{\omega_3}[B^*_n]$, leading to a contradiction.
It remains to see that for all $n\in r$, $\Psi^*(n)$ holds. Here we use the local semidecidability of $\mathcal B_{\omega_1}$ and of $T(n)$:
Not only does $B^*_n$ ``code'' a branch through $T(n)$, this also holds locally inside $L_\alpha[B^*_n]$ whenever
$L_\alpha[B^*_n] \vDash \ZF^- + $``${\omega_2}^{L_\alpha[B^*_n]}$ exists and equals ${\omega_2}^{L_\alpha}$.'' 
\end{proof}
Now consider, for each $n\in \omega$, the forcing $P_D(B^*_n)$ consisting of conditions $(p_\omega, p^*_{\omega_1}) \in R(B^*_n)$ 
such that
\begin{multline}\label{p.e.killing}
\forall \xi \in [\omega,|p_\omega|] \; \forall \alpha < \omega_1 
\\ \text{ if } L_\alpha[p_\omega\res\xi]\vDash\ZF^- + 
\text{``} \Card^V=\Card^L \text{ and ${\omega_2}^{L_\xi}$ exists}\\
\text{ and } {\omega_1}^{L_\xi} = \xi\text{''}
\text{ then }L_\alpha[p_\omega\res\xi]\vDash\text{``}T(n)^{L_\alpha}\text{ has a branch.''}
\end{multline}
with the ordering inherited from $R(B^*_n)$ (where $R(B^*_n)$ is the almost disjoint coding of $B^*_n$ with respect to $\mathcal B_\omega$, cf.~Section~\ref{s.almost}).

The forcing $R(B_n) * P_D(\dot B^*_n)$ 
is essentially a ``toy version'' of Jensen coding with David's trick (to be revisited in Example~\ref{s.e.david} and Fact~\ref{s.f.jensen} to illustrate quasi-closure and stratification).
We leave the following to the reader as an exercise. 
\end{exm} 
\begin{exc}\label{p.x.david}
If $G$ is $ \prod_{n\in r} T(n) *  R(\dot B_n) * P_D(\dot B^*_n)$-generic over $L[r]$, there is a $\Sigma_2$ formula $\Psi(n)$ such that
$\forall n\; \big( n\in r\iff \HSize(\omega_1)\vDash \Psi(n) \big)$ holds in $L[G]$.
Thus, $r$ is a $\Sigma^1_3$ subset of $\omega$ in $L[G]$.
\end{exc}

\subsection{Digression on Shelah's Amalgamation}\label{p.s.digression}

We quickly review Shelah's original version of amalgamation \cite{shelah:amalgamation}\index{Shelah's amalgamation} and compare it to our variant discussed above. 
This section is not needed for the proof of our main result.

\medskip

The following account of Shelah's amalgamation is modeled after \cite{jr:amalgamation}.
Suppose we are given $P$, $e_i$, $R_i$ for $i\in\{0,1\}$, and $f$ as in Section~\ref{p.s.amalgamation}.
We seek a preorder $P'$ containing $P$ as a complete suborder and admitting an automorphism which extends $f\colon R_0 \to R_1$.

\medskip

For this, letting $\pi_i\colon \ro(P) \to \ro(R_i)$ denote the canonical projection, define
\[
P \times_{f} P = \{ (p_0, p_1) \in P^2 \setdef f(\pi_0(p_0)) \cdot \pi_1(p_1) \neq 0 \},
\]
carrying the natural order (that is, the product order).

As a forcing $P \times_{f} P$ is equivalent to 
$
R_0 * [(\quot{P}{R_0})^{G_{R_0}} \times (\quot{P}{R_1})^{f[G_{R_0}]}]
$,  
i.e., intuitively we take the product of two copies of $P$,
but identify $R_0$ with $R_1$.

Let $R^1_0 = \{1_P\}\times P$ and $R^1_1 = P \times \{1_P\}$, fix $j \in \{0,1\}$, and identify $P$ with  $R^1_j$.  For $r \in R_0$, the map $(1,r)\mapsto (r,1)$ agrees with $f$ in $\ro(P \times_{f} P)$, since $(p_0,p_1) = (p_0\cdot f^{-1}(\pi_1(p_1)), f(\pi_0(p_0))\cdot p_1)$ and hence $(r,1)=(1,f(r))$ in $\ro(P \times_{f} P)$ (which is the same as $\ro(\ro(P) \times_{f} \ro(P))$).

\medskip

We iterate this construction recursively:
Suppose we have $P_n$ together with two complete suborders $R^n_0, R^n_1$ of $\ro( P_n)$ and an isomorphism
$f_n \colon R^n_0 \to R^n_1$; for $n=0$ we let $P_0 = P$, $R^0_i = R_i$ for $i \in \{0,1\}$ and $f_0= f$.
Define
\begin{align*}
P_{n+1} &= P_n \times_{f_n} P_n,\\
R^{n+1}_0 &= \{ (1, p) \setdef p \in P_n \},\\
R^{n+1}_1 &= \{ (p, 1) \setdef p \in P_n \}.
\end{align*}
It is not hard to see that $R^n_0$ and $R^n_1$ are  complete suborders of $P_n$.
We have an isomorphism $f_{n+1} \colon R^{n+1}_0 \to R^{n+1}_1$ (it doesn't in general extend to $P_{n+1}$) given by
\[
f_{n+1} (1,p) = (p,1).
\]
Identifying $P_n$ with a $R^{n+1}_0$ for even $n$ and with $R^{n+1}_1$ for odd $n$, $f_n \subseteq f_{n+1}$.
This identification makes $(P_n)_{n< \omega}$ a directed system,  and its direct limit $P_\omega$
admits an automorphism $f_\omega$ obtained as the direct limit of the system of maps $( f_n)_{n\in \omega}$.
Thus, taking $P_\omega$ as $P'$ solves the problem described at the beginning of the section.

\medskip

Provided, e.g., $P$ is the positive part of a complete Boolean algebra, $P_\omega$ and $f_\omega$ can be defined equivalently in the following way:
\begin{thm}\label{p.t.amalgamation.comp}
Suppose $P$ is closed under $\pi_0$, $\pi_1$, and the partial functions $f$ and $f^{-1}$.
The following set is isomorphic to a dense subset of $P_\omega$:
\begin{equation}\label{p.e.set}
\{ \bar p \in P^{\Int}_f \setdef  \supp(\bar p) \text{ is finite}\}
\end{equation}
where for $\bar p\in P^{\Int}_f$ we let 
\[
\supp(\bar p) = \{ i \in \Int \setdef \bar p(i) \neq 1_P \}.
\]
Moreover $f_\omega$ and $\Phi$ agree on the set from \eqref{p.e.set} (modulo the isomorphism).
\end{thm}

We mention Theorem~\ref{p.t.amalgamation.comp} only for readers with some familiarity with Shelah's construction.
Since otherwise we will make no use of this theorem we shall not prove it, but we do provide the following hints:
Letting 
\[
\pi^{n}_i\colon \ro( P_n) \to \ro( R^n_i)
\]
be the canonical projection, 
calculate $\pi^n_i$ and $f_n$ for each $n \in \nat $ and $i\in\{0,1\}$. 

For instance, it is easy to see that
\begin{align*}
\pi^1_0 (p_0,p_1) &= f(\pi_0(p_0))\cdot p_1,\\
\pi^1_1 (p_0,p_1) &= p_0 \cdot f^{-1}(\pi_1(p_1)).
\end{align*}
Therefore, direct calculation shows 
\begin{gather*}
P_2 = ( P \times_{f} P ) \times_{f_1} ( P \times_{f} P ) =\\ 
\{ (p_0,p_1,p_2,p_3)\in P^4 \setdef f(\pi_0(p_0)) \cdot p_1 \cdot  p_2 \cdot f^{-1}(\pi_1(p_3)) \neq 0 \}.
\end{gather*}

Given any condition $(p_0,p_1,p_2,p_3) \in P_2$, by the above (and as by closure of $P$ under $\pi_0$, $\pi_1$, $f$, and $f^{-1}$) we may define $p_{1,2} \in P$ by
\[
p_{1,2} = f(\pi_0(p_0)) \cdot p_1 \cdot  p_2 \cdot f^{-1}(\pi_1(p_3)).
\]
It follows that (using again as $P$ is closed under $\pi_0$, $\pi_1$, $f$, and $f^{-1}$) %
\[
(p_0 \cdot f^{-1}(\pi_1(p_{1,2})), p_{1,2}, p_{1,2}, f(\pi_0(p_{1,2})) \cdot p_3) \in P_2,
\] 
extends $(p_0,p_1,p_2,p_3)$. 

Deleting the redundant coordinate and renaming, i.e., letting 
\[
(p'_{-1}, p'_0, p'_1) = \big(p_0 \cdot f^{-1}(\pi_1(p_{1,2})), p_{1,2}, f(\pi_0(p_{1,2}))\cdot p_3\big)
\]
we conclude that the following partial order is isomorphic (via an isomorphism that just redistributes coordinate indices) to a dense subset of $P_2 $:
\[
\{ (p'_{-1}, p'_{0}, p'_1)\in P^3 \setdef f(\pi_0(p'_{-1})) = \pi_1(p'_0)  \wedge f(\pi_0(p'_0))=\pi_1(p'_1)\}
\]
We have thus shown that $P_2$ has a dense set which is isomorphic to  
\[\{ \bar p \in P^\Int_f \setdef \supp(\bar p) \subseteq [-1,1]\}.\]

\medskip

An inductive argument generalizing the above shows the following dense inclusions modulo isomorphism for each $m\in \omega$: 
\begin{align*}
\{ \bar p \in P^\Int_f \setdef \supp(\bar p) \subseteq [-m,m]\} & \mathbin{\stackrel{\scriptscriptstyle{\mathrm{d}}}{\to}} P_{2m} \\
\{ \bar p \in P^\Int_f \setdef \supp(\bar p) \subseteq [-m-1,m]\} & \mathbin{\stackrel{\scriptscriptstyle{\mathrm{d}}}{\to}} P_{2m+1},
\end{align*}
where $\mathbin{\stackrel{\scriptscriptstyle{\mathrm{d}}}{\to}}$ is a shorthand for ``embeds densely''.
Moreover, $P$ is always identified with the $0$-component,
and $f_n$ acts  like the left-shift on the set of $\bar p$ such that $\supp(\bar p) \subseteq (-m,m]$ when $n=2m$, and such that $\supp(\bar p) \subseteq (-m-1,m]$ when $n=2m+1$.

\medskip

With these remarks and perhaps consulting \cite{jr:amalgamation}, the reader should have no difficulty in showing Theorem~\ref{p.t.amalgamation.comp}.
It is crucial that $P^\omega$ is a direct limit. 
The sketch here also makes crucial use of the assumption that $P$ is  is closed under $\pi_0$, $\pi_1$ and the partial function $f$, but of course it is easy to ensure such closure. 
In particular, the argument above will work for any $P$ using the \emph{``hybrid''} we work with in Chapter~\ref{sec:amalgamation}; see Definition~\ref{def:blowup}.

\chapter{Stratified Forcing}\label{sec:stratified forcing}

In this chapter we assume $V=L[A]$ for some class $A$.
We define \emph{stratified partial orders}, show such orders preserve cofinalities, give some examples and show that stratification is preserved under composition.
We also define diagonal support. These definitions have precursors in \mycites{friedman:codingbook,friedman-iterated}. 
We state that diagonal support iterations whose components are stratified are themselves stratified,  without proof, since we prove a slightly more general theorem in Chapter \ref{sec:ext} where we deal with iterations with stratified initial segments but where the components aren't necessarily stratified.

We present the definition of stratification in two parts: the first we dub \emph{quasi-closure}.
We treat this first part separately from the remaining axioms of stratification for the following reasons: firstly, the proofs that each of these two  groups of axioms is preserved in iterations are not only different but virtually independent of each other.

Secondly, we hope that the reader will agree that quasi-closure is interesting in its own right.
This view is in stark contrast to the fact that quasi-closure alone is not a very useful property---in fact, every partial order is quasi-closed. 
One should think of it as an incomplete notion, to which some other property has to be added in order to render it non-trivial.
Stratification is one example of this, closely connected to the notion of linked forcing.
There may be other examples, as well.

Before we define quasi-closure, we introduce preclosure systems; analogously we will define prestratification systems.
We can reuse these notions when we define \emph{quasi-closed and stratified extension}; see Chapter \ref{sec:ext} on page \pageref{sec:ext}.

\section{Quasi-Closure}\label{sec:qc}

Throughout, let $\langle R, \leq \rangle$ be a preorder and let $I$ be an interval of regular infinite cardinals\index{interval of regular infinite cardinals}, that is $I = \Reg \cap [\lambda_0,\lambda_1)$\index[notation]{I@$I$, an interval of regular cardinals} or $I = \Reg \cap[\lambda_0, \infty)$ for some regular $\lambda_0$ and some (if you like, regular) cardinal $\lambda_1$.
We introduce the notion of $R$ being \emph{stratified on $I$}, which implies that for any $\lambda \in I$, any ordinal of cofinality greater than $\lambda$ will remain so after forcing with $R$.
We have to use another property of $R$ to show cardinals greater $(\lambda_1)^+$ are preserved. 
Moreover, we want to allow for $R$ to collapse some cardinals up to and including $\lambda_0$.

In our application $\lambda_1 = \kappa^+$ throughout, while  $\lambda_0$ will increase throughout the iteration until at the final stage  we have $[\lambda_0, \lambda_1)=\{\kappa\}$.\footnote{That $\kappa$, $\kappa^{++}$, and $\kappa^{+++}$ are preserved  in our application will not simply follow from stratification; the first is hard to show, the latter is neither hard nor essential. 
Stratification fails at $\kappa^+$ since the Suslin trees are not quasi-closed but only distributive, and at $\kappa^{++}$ since they are not centered.
Each stage is also stratified on $[\kappa^{+++},\infty)$ (see Example~\ref{exm:centered}).}

\medskip

From now on, we write $\qcdefSeq(\{x\})$
\index[notation]{PiT1(x), PiT1(X)@$\qcdefSeq(\{x\})$, $\qcdefSeq(X)$} \index[notation]{SigmaT1(x), SimgaT1(X)@$\qcdefG(\{x\})$, $\qcdefG(X)$} 
for the set of formulas which are provably in $T$ equivalent to a $\Pi^A_1$ formula with parameter $x$, where $T$ is an adequate weak fragment\index[notation]{T, a weak fragment of ZFC@$T$, a weak fragment of $\ZFC$}
 of $\ZFC$.
The choice of $T$  is a matter of convenience, but for concreteness, let $T$ state that universe is closed under rudimentary functions and the function $x \mapsto \operatorname{tcl}(x)$ assigning to $x$ its transitive closure.
We shall not assume that this class of formulas is closed under bounded quantification; therefore we will painstakingly demonstrate there is an appropriate  $\qcdefSeq(\{x\})$ formula in \ref{adequate} and \ref{lem:D}.
For a set $X$, we write $\Pi^T_1(X)$
for the set of formulas which are $\Pi^T_1(\vec{x})$ for some finite tuple $\vec{x}$ of elements of $X$. Analogously for $\Sigma^T_1(X)$
\index[notation]{SigmaT1(z), SigmaT1(X)@$\Sigma^T_1(z)$, $\Sigma^T_1(X)$|textbf} 
etc.

We now make a few convenient definitions that facilitate the treatment of quasi-closed partial orders, which we define afterward. We shall also build on these definitions when we define \emph{quasi-closed extension} in Chapter~\ref{sec:ext}.
\begin{dfn}\label{def:pcs}
We say $\pcs=\pcsl$\index[notation]{s (preclosure system)@$\pcs$ (preclosure system)}\index[notation]{c aa(parameter in preclosure system)@$c$ (parameter in preclosure system)} is a preclosure system\index{preclosure system} for $R$ on $I$ if and only if $\D\subseteq \Reg\times V\times R^2$\index[notation]{D(lambda,x,r)@$\D(\lambda,x,r)$}\index[notation]{lessthanl  lambda@$\leqlol$}
is a $\qcdefD(\{\param\})$ class and for every $\lambda \in I$, $x \in V$, $p,q,r \in R$ we have
\begin{enumerate}[label=(C \arabic*), ref=C~\arabic*]
\item   if $p\leq q \in \D(\lambda, x, r)$ then $p \in \D(\lambda, x, r)$.\label{qc:D}%
\item  The relation $\leqlol$ is a preorder on $R$ and $p\leqlol q \Rightarrow p \leq q$. \label{qc:preorder}
\item  If $p \leq q \leq r$ and $p \leqlol r$ then $p \leqlol q$.\label{er}
\item  If $\bar \lambda \in I$, $\bar \lambda \geq \lambda$ then $q\leqlo^{\bar\lambda}p \Rightarrow q \leqlol p$.\label{qc:leqlo:vert}
\end{enumerate}
\end{dfn}
As a notational convenience, define $\leqlo^0$ to mean $\leq_R$.
Clause~(\ref{er}) can be dropped if one is not interested in iterations.
Observe that by (\ref{er}), $\leqlol$ is well-defined with respect to equivalence modulo $\approx$ (remember we say $p \approx q \iff p \leq q$ and $q \leq p$).

Think of each of the relations $\leqlol$ as a notion of \emph{direct extension}\index{direct extension}, as it is often called in the case of, e.g., Prikry-like forcings. Intuitively,  $p \leqlol q$ expresses that $p$ extends $q$ but some part ``below $\lambda$'' is left unchanged.
In our application, $ q \in \D(\lambda, x, r)$ will mean something like ``$q$ is partially generic over a Skolem-hull of $\lambda\cup\{x,r\}$''.
Think of $\D$ as providing a kind of strategy, as with strategically closed forcing (see Definition~\ref{d.strategic-closure} below). Together, this additional structure on $R$ allows us to construct sequences in such a way as to ensure there is a  lower bound in $R$.
The missing ingredient and distinct flavor of quasi-closure is the condition that these sequences be \emph{definable} in a sense.
The main point is that the definability and the use of $\D$ are intertwined in that they are coordinated by a common object $\bar w$ which we shall call the \emph{strategic guide and canonical witness}.

For the next two definitions, fix a preclosure system $\pcs$ for $R$ on $I$. 
All the notions in the next two definitions have their meaning \emph{with respect to $\pcs$}.
\begin{dfn}\label{strategic}\index{strategic guide|textbf}\index[notation]{w (strategic guide, canonical witness)@$\bar w$ {\small(strategic guide, canonical witness)}}
Let $\bar p = (p_\xi)_{\xi<\rho}$ be a sequence of conditions in $R$, $\lambda\in I$, and $\rho \leq \lambda$. 
We say $\bar w$ is a \emph{$(\lambda,x)$-strategic guide for $\bar p$} if and only if $\bar w= (w_\xi)_{\xi<\rho}$ is a sequence of the same length as $\bar p$ and for a tail\footnote{To require this only on a tail is natural; it is also necessary in the proof of \ref{thm:it:qc}, p.~\pageref{tail}.} of $\xi < \rho$,
\begin{enumerate}
\item For some $\lambda' \in I$, $p_{\xi+1}  \in \D(\lambda', (x,c, \bar w \res \xi+1) ,p_\xi)$
and $p_{\xi+1}  \leqlo^{\lambda'} p_\xi$.
\item $p_{\xi+1}\leqlol p_\xi$.
\item If $\xi$ is a limit, $p_{\xi}$ is a greatest lower bound of $(p_\nu)_{\nu<\xi}$.
\end{enumerate}
\end{dfn}
That we quantify out the $\lambda'$ is only a small trick to facilitate the treatment of diagonal support. 
In principle, this quantifier means that one could eliminate the mention of $\lambda$ from $\D$ altogether, although it is doubtful that it would make the presentation clearer.
Also note that in natural instances of quasi-closed forcing, we always have $\lambda'\geq \lambda$; in fact, until we treat transfinite iterations, the reader may safely assume $\lambda'=\lambda$.
\begin{dfn}~\label{canonical}
\begin{enumerate}
\item We say a sequence $\bar w = (w_\xi)_{\xi<\rho}$ is $\qcdefSeq(X)$
\index{sequence!PiT1(X)@$\qcdefSeq(X)$}
\index[notation]{PiT1(X)@$\qcdefSeq(X)$!sequence} 
if for some $\qcdefSeq(X)$ formula $\Psi$ we have $w = w_\xi \iff \Psi(w,\xi)$.
We say $G$ is a $\qcdefG(X)$ function\index{function, SigmaT1(X)@function, $\qcdefG(X)$}\index[notation]{SigmaT1(X)@$\Sigma^T_1(X)$!function} if $G(x)=y$ is a $\qcdefG(X)$ formula and $G$ is a partial function.
\item Let $\bar p = (p_\xi)_{\xi<\rho}$ be a sequence of conditions in $R$ and $\rho \leq \lambda$. 
We say $\bar w = (w_\xi)_{\xi<\rho}$ is a \emph{$(\lambda, x)$-canonical witness\index{canonical!witness}\index[notation]{lambda,x-canonical witness@$(\lambda, x)$-canonical witness}\index[notation]{ lambda,x-canonical witness@$(\lambda, x)$-canonical witness} for $\bar p$}
if and only if $\bar w$ is $\qcdefSeq(\lambda\cup\{x, \param\})$ and for some $\qcdefG(\lambda\cup\{(x, \param)\})$ (partial) function $G$, we have $p_\xi = G(\bar w\res \xi +1 )$ for every $\xi <\rho$.
\item We say $\bar p$ is $(\lambda,x)$-adequate\index{adequate sequence}\index[notation]{lambda,x-adequate@$(\lambda, x)$-adequate}\index[notation]{ lambda,x-adequate@$(\lambda, x)$-adequate} if and only if $\rho \leq \lambda$, $\{\param\}$ (from Definition~\ref{def:pcs}) is $\qcdefG(x)$ and there is $\bar w$ which is both a strategic guide and a canonical witness for $\bar p$.
\item\index[notation]{lambda-adequate@$\lambda$-adequate} If $\bar p$ is $(\lambda,x)$-adequate for some $x$ we say $\bar p$ is \emph{$\lambda$-adequate}.
\end{enumerate}
\end{dfn}

Note that a sequence $\bar p$ has a canonical witness just if it is $\mathbf{\Sigma}^T_2$.
Also note there is some flexibility with respect to the degree of definability we require for $G$; one could do with rudimentary $G$ or with one specific function (projection).
The demand that $\{\param\}$  is $\qcdefG(x)$ allows us to guarantee that we can always make use of some bare mininum of information when building adequate sequences. 
This will be helpful in the proof that diagonal support iterations preserve quasi-closure (see Theorem~\ref{thm:it:qc}). 
\index[notation]{c aa(parameter in preclosure system)@$c$ (parameter in preclosure system)|textbf}Concretely, we will have $c = (L_\mu[A], \theta, \hdots)$ where $L_\mu[A]$ is large enough and $\theta$ is the length of the iteration (for technical reasons and because we are not assuming that the iteration itself is definable, more parameters will be added---namely the system of projection maps). 
The presence of $\lambda$ will also play a role in showing quasi-closure is preserved in diagonal limits.
\begin{dfn}\label{def:qc}\index{forcing!quasi-closed|textbf}\index{quasi-closure|textbf}
We say $\langle R, \pcs \rangle$ is \emph{quasi-closed on $I$} if and only if for any $x$ and $\lambda\in I$, 
\begin{enumerate}[label=(C \Roman*), ref=C~\Roman*]
\item For any $p \in R$ there is $q \in \D(\lambda,x,p)$ such that $q \leqlol p$. 
In addition we can demand that $q \leqlo^{\bar\lambda} 1_R$ for any $\bar \lambda \in I$ such that $p \leqlo^{\bar\lambda} 1_R$.
\label{qc:redundant}
\item  Every $\lambda$-adequate sequence\index{adequate sequence}\index[notation]{lambda,x-adequate@$(\lambda, x)$-adequate}\index[notation]{ lambda,x-adequate@$(\lambda, x)$-adequate} $\bar p = (p_\xi)_{\xi<\rho}$ in $R$ has a greatest lower bound $p$ in $R$
and for all $\xi<\rho$, $p \leqlol p_\xi$. If $\bar \lambda\in I$ is such that for each $\xi < \rho$, $p_\xi \leqlo^{\bar \lambda} 1_R$, then $p \leqlo^{\bar \lambda} 1_R$.\label{qc:glb}
\end{enumerate}
We also use the expression \emph{$R$ is quasi-closed as witnessed by $\pcs$}. If we omit $\pcs$ and no preclosure system can be deduced from the context, we mean that there exists a  preclosure system $\pcs$ such that $\langle R, \pcs \rangle$ is quasi-closed.
When we say \emph{quasi-closed on $[\lambda_0, \lambda_1)$}, we mean of course
quasi-closed on $[\lambda_0, \lambda_1)\cap\Reg$.
\end{dfn}
The last sentences of Clauses~(\ref{qc:glb}) and  (\ref{qc:redundant}) are useful regarding infinite iterations of quasi-closed forcings.

\medskip

As has been mentioned, quasi-closure alone is not a very useful notion (it becomes useful in the context of stratified forcing):
\begin{rem}For arbitrary $R$, just define $p\leqlol q$ if and only if $p=q$ and $\D(\lambda,x,p)=\{p\}$ for all regular $\lambda \geq \lambda_0$ and all $x$.
Then $R$ is quasi-closed.
Quasi-closure becomes non-trivial under the additional hypothesis that certain questions about the generic extension can be decided by strengthening a condition in the sense of $\leqlol$, for some $\lambda$.
Stratified forcing satisfies such a hypothesis.
\end{rem}

Nonetheless, some forcings are naturally quasi-closed with a non-trivial, useful notion of direct extension:
\begin{exm}
Say $R$ is $\lambda^+$-closed and let $\leqlol$ be identical to $\leq$; then $R$ trivially satisfies all the conditions of \ref{def:qc} for $\lambda$.
\end{exm}

Recall the notion of strategically closed forcing:\index{strategic closure}\index{forcing!strategically closed}
\begin{dfn}\label{d.strategic-closure}
We say $R$ is \emph{$\lambda$-strategically closed} if and only if there exists a function $\sigma\colon  R \to R$ (called \emph{a strategy}) such that for any $p \in R$, $\sigma(p) \leq p$ and such that any descending sequence $\bar p = ( p_\xi )_{\xi < \rho}$ of conditions from $R$ of length $\rho < \lambda$
which satisfies $p_{\xi+1} \leq \sigma(p_\xi)$ for each $\xi <\rho$,  has a lower bound in $R$.
\end{dfn}

\begin{exm}
If $R$ is $\lambda^+$-strategically closed the conditions of \ref{def:qc} for $\lambda$ are satisfied: 
For if $\sigma\colon R\rightarrow R$ is a strategy for $R$,
define $\D(\lambda,x,p)= \{ q \in R \setdef q \leq \sigma(p) \}$.
$\D$ is clearly $\qcdefSeq(\{\sigma \})$. Define $\leqlol$ to be the same as $\leq$. 
Then every $\lambda$-adequate sequence has  a greatest lower bound.
\end{exm}

These are our first examples of forcings which non-trivially satisfy the definition of quasi-closed (albeit for just one fixed $\lambda$), since any statement about the generic  can be decided by extending in the sense of $\leqlol$.

The authors know of only two forcings which are non-trivially quasi-closed in the sense that they do not obviously satisfy a simpler iterable property: the \emph{reshaping} forcing\index{reshaping}\index{forcing!reshaping} of \cite{bjw:82}, and versions of Jensen coding.

\medskip

As we shall never need reshaping we re-visit our ``toy version'' of David's trick\index{David's trick}\index{toy version!of David's trick is quasi-closed} from Example~\ref{p.e.david} for further illustration.
For the reader's convenience, we define explicitly an equivalent forcing:

\begin{exm}\label{s.e.david}
Suppose $T$ is a tee from the canonical sequence of $\omega_2$-Suslin trees (cf.\ Definition~\ref{p.n.d.canonical}) and  
\[
B \colon [\omega_1, \omega_2) \to 2
\] 
represents an $(L,T)$-generic branch through $T$ in the same manner as in Example~\ref{p.e.david}.   
Assume $V=L[B]$, whence the cardinals of $V$ are precisely those of $L$.

Recall we have an almost disjoint family
$\mathcal B_\alpha = \langle b^\xi \setdef \xi \in [\alpha, \alpha^+)\rangle$
of subsets of $\alpha$, for each $\alpha \in \{ \omega, \omega_1\}$, such that 
$\{ (\zeta, \xi) \setdef \zeta \in b^\xi, \xi \in [\alpha, \alpha^+)\}$
is locally semidecidable (cf.\ Definition~\ref{p.d.locally.definable}).

Consider the partial order $P^*_D$ (equivalent to $R(B_n) * P_D(\dot B^*_n)$ from Example~\ref{p.e.david}), consisting of conditions $p = (p_0, p^*_\omega, p_{\omega}, p^*_{\omega_1})$ where
\begin{enumerate}
\item $p^*_{\omega_1}$ is a countable subset of $\mathcal B_{\omega_1}$;

\item $p_{\omega} \colon [\omega,\delta) \to 2$ for some $\delta < \omega_1$ which we denote by $| p_{\omega} |$;

\item\label{s.e.killing} $\forall \xi \in [\omega,|p_\omega|] \; \forall \alpha < \omega_1$ if $L_\alpha[p_\omega\res\xi]\vDash ZF^-$ together with ``$\Card^V=\Card^L $ and ${\omega_2}^{L_\xi}$ exists 
 and ${\omega_1}^{L_\xi} = \xi$''
 then $L_\alpha[p_\omega\res\xi]\vDash$``$T(n)^{L_\alpha}$ has a branch;''

\item $p^*_\omega$ is a finite subset of $\mathcal B_{\omega}$;
\item $p_0\colon n \to 2$ for some $n < \omega$ which we denote by $| p_0 |$.

\end{enumerate}
The ordering of $P^*_D$ is given by: $q \leq p$ if and only if for each $\alpha \in \{0,\omega\}$ (letting $0^+=\omega$ in the present context),
\begin{enumerate}
\item $q_{\alpha} \supseteq p_\alpha$,
\item $q^*_{\alpha^+} \supseteq p^*_{\alpha^+}$,
\item for any $b \in p^*_{\alpha^+}$ and $\nu \in b \cap [\lvert p_\alpha\rvert, \lvert q_\alpha\rvert)$, we have $q_\alpha(\nu) \leq B(\nu)$.
\end{enumerate}
\end{exm} 
Define for $q,p \in P^*_D$ that $q \mathbin{\leqlo^{\omega}} p$ if and only if
$p \leq q$, $p_0 = q_0$ and $p^*_\omega = q^*_\omega$.
It will become clear that a quasi-closure system involving $\leqlo^{\omega}$ is indeed useful. In particular, we shall see that this can be extended to a stratification system in Fact~\ref{s.f.jensen}.

 \begin{fct}\label{s.f.david.qc}
 The forcing $P^*_D$ is quasi-closed with respect to a preclosure system with $\leqlo^{\omega}$ as defined above.
 \end{fct}
 The proof is a special case of our proof that $P(A_0)$, defined in Chapter~\ref{sec:coding}, is quasi-closed. 
We give a rather detailed sketch.
 \begin{proof}[Proof sketch.]
We define $\D$ for $\lambda=\omega$:
Let $q \in \D(\omega, p, x)$ if and only if $q \in P^*_D$, $q \leqlo^{\omega} p$ and 
letting $H = h^{L[B]}_{\Sigma_1}( x)$ we have
\begin{enumerate}
\item $| q_\omega | > H\cap \omega_1$,
\item for all $\xi \in H \cap [\omega_1, \omega_2)$, $b^{\xi} \in q^*_{\omega_1}$,
\item  for all $\xi \in H \cap [\omega_1, \omega_2)$ there is $\zeta \in b^{\xi} \setminus |p_\omega|$ such that
$q_0(\zeta) = B(\xi)$.
\end{enumerate}
For $\lambda \in \Reg\setminus(\omega+1)$ define $\D(\lambda,p,x) = \{p\}$, and $p \leqlol p$ to mean $p=q$.

\medskip

To see that $P^*_D$ is quasi-closed, suppose $\bar p = (p^n)_{n< \omega}$ is $(\omega,x)$-adequate and fix a canonical witness and strategic guide $\bar w=(w^n)_{n< \omega}$.
Let
\[
X^n = h^{L[B]}_{\Sigma_1}( \bar w \res n+1\cup\{ x, \omega_2\})
\] 
and observe that $p^n \in X^n$ whence 
\[
| p^n_\omega | < \sup X^n \cap \omega_1 < |p^{n+1}_\omega |
\]
The last inequality holds since $p^{n+1} \in \D(\omega, p^n, \{ \bar w\res n+1 , x\})$.

Let $X = \bigcup_{n<\omega} X^n$ and note that $X \prec_{\Sigma_1} L[B]$.
Thus $\bar w$ is definable in $X$,
and hence also $\bar p$ is definable in $X$.

Let $\tilde X$ be the transitive collapse of $X$, let $\pi\colon  X \to \tilde X$ be the collapsing map
and write $\tilde X$ as $L_\mu[\tilde B]$. 
Further, let $\tilde p^n = \pi(p^n)$.
As the sequence $\bar p$ is definable in $X$,
 $(|\tilde p^n_\omega|)_{n< \omega}$ is definable in $L_\mu[\tilde B]$.

\medskip

To see that $\bar p$ has a greatest lower bound, it suffices to check that $p_\omega$ defined by 
\[
p_\omega = \bigcup_{n<\omega} p^n_\omega
\]
satisfies Clause~\ref{s.e.killing} in the definition of $P^*_D$ above. 
In fact, since each $p^n$ is a condition and $| p^n_\omega|$ is strictly increasing, it suffices to check
Clause~\ref{s.e.killing} for $\xi = | p_\omega |$.
So fix $\alpha$ as in Clause~\ref{s.e.killing}, i.e., so that $L_\alpha[p_\omega]\vDash | p_\omega | = \omega_1$.
As $(| \tilde p^n_\omega|)_{n< \omega}$ definably collapses $| p_\omega |$ in $L_\mu[\tilde B]$, $\alpha \leq \mu$.

Note that $\pi^{-1}(\xi) = \omega_1$. 
We leave it to the reader to check that because $\D$ was defined to emulate the coding of $B$ by $ p_\omega$ over Skolem hulls (cf.\ the proof of Theorem~\ref{jensen_qc_main}), $\tilde p_\omega = \bigcup_{n\in \omega} \tilde p^n_\omega$ codes $\tilde B$, and by local semidecidability of $\mathcal B_{\omega_1}$, $\tilde B \cap \Hhier(\omega_2)^{L_\alpha} \in L_\alpha[p_\omega]$. Likewise, by elementarity
$\pi^{-1}(\tilde B) = B$.  

Finally, $T'=\pi^{-1}(T^{L_\alpha})$  is an initial segment of $T$---in fact $T'=T$ if $\mu = \alpha$, and if $\mu < \alpha$,  $T'=T^{L_{\pi^{-1}(\alpha)}}$, which is an initial segment of $T$ since $T$ is locally semidecidable. 
So again by elementarity, $\tilde B \cap \Hhier(\omega_2)^{L_\alpha}$ is a branch through $T^{L_\alpha}$, verifying Clause~\ref{s.e.killing}.
\end{proof}
 
\subsection{A Word about Definability and Set Forcing}\label{rem:def:set}

The concept of quasi-closure is more natural  in a class forcing context.\index{class forcing}\index{forcing!class forcing} 
Since we only apply it for set forcing, we can make do with $\qcdefSeq(X)$ (as opposed taking into account $\Pi^T_n(X)$ for all $n > 1$).
We still have to use a form of $\Pi_1$-uniformization\index{uniformization}\index[notation]{Pi1-uniformization@$\Pi_1$-uniformization}, implicit in the construction of canonical witnesses (see \ref{adequate}). 
In class forcing, this uniformization can be
 achieved using the fact that conditions form a class, and the ``height'' of each condition in an adequate sequence effectively represents the canonical witness.
In the present application, we do not know a proof that $\kappa$ remains a cardinal using this approach instead of canonical witnesses.
For more details see \cite{friedman:codingbook}, Chapter~8.2, p.~175.

As has been mentioned, the presence of $\lambda$ and $\param$ will become clear when we prove quasi-closure is preserved under diagonal limits.
Insisting that, e.g., $c = (L_\mu[A], \theta, \hdots)$ be $\qcdefG(X)$ is perhaps somewhat analogous to the use of a large structure with predicates in the context of proper forcing.\index{proper forcing}\index{forcing!proper}
In our application we could let $\param=\{ L_{\kappa^{+++}}\}$, where $\kappa$ is the only Mahlo in $L$.
\section{Stratification}
\begin{dfn}\label{def:pss}\index{stratification|textbf}\index{forcing!stratified}
We say $\pss=\pssl$\index[notation]{S (prestratification system)@$\pss$ (prestratification system)}\index[notation]{lessthanu  lambda@$\lequpl$}\index[notation]{C lambda@$\Cl$|textbf} is a prestratification system\index{prestratification system} for $R$ on $I$ if and only if
$\pcsl$ is a preclosure system for $R$ on $I$ and for every $\lambda \in I$ the following conditions are met:
\begin{enumerate}[label=(S \arabic*), ref=S~\arabic*]

\item  \label{up:extra} The binary relation $\lequpl$ on $R$ satisfies $p \leq q\Rightarrow p \lequpl q$.
\item  \label{up}  If $p \leq q \lequpl r$ then $p \lequpl r$.\footnote{Note that we don't assume $\lequpl$ to be transitive, since this does not seem to be preserved by composition. If $\lequpl$ were transitive, Condition~(\ref{up}) would follow from (\ref{up:extra}). 
We need (\ref{up}) for Lemma \ref{density reduction}. We need that $\lequpl$ is reflexive (i.e., $p \lequpl p$ for all $p$) for \ref{pss:is}(\ref{pss:is:cdot:up}). In the context of (\ref{up}), reflexivity is the same as the last part of (\ref{up:extra}).}

\item \label{s:lequp:vert} If $\lambda \leq \bar \lambda$ and $\bar \lambda \in I$ then $p \lequpl q \Rightarrow p\lequp^{\bar\lambda} q$.

\item\label{density}
      \emph{Density:}\index{Density (stratification)} $\Cl \subseteq R \times \lambda$ is a binary relation such that $\dom(\Clink^\lambda)$ is dense in $R$. Moreover, for any 
                        $\lambda' \in I \cap \lambda$ and $p \in R$, there is $q \leqlo^{\lambda'} p$ such that $q \in \dom(\Clink^\lambda)$.

 \item\label{continuous}
      \emph{Continuity}:\index{Continuity (stratification)} If $\lambda'\in I\cap\lambda$ and $p$ is a greatest lower bound of the
                           $\lambda'$-adequate sequence $\bar p=(p_\xi)_{\xi<\rho}$ and for each $\xi<\rho$, $p_\xi \in \dom(\Cl)$, \emph{then} $ p \in \dom(\Cl)$.\footnote{In any known application, we could ask this for all $\lambda'\in I$, not just those smaller than $\lambda$.} 
                           If in addition $\bar q$ is another $\lambda'$-adequate sequence of length $\rho$ with greatest lower bound $q$ and for each $\xi <\rho$,
                           $\Cl(p_\xi)\cap\Cl(q_\xi)\neq\emptyset$, then 
                           $\Cl(p)\cap\Cl(q)\neq\emptyset$.
           
\end{enumerate}
\end{dfn}
The last part of Condition~(\ref{up:extra}), all of (\ref{s:lequp:vert}) and the ``moreover'' part of (\ref{density}) can be dropped if one is not interested in infinite iterations.
We \emph{do not} require that $\lequpl$ be a preorder; but (\ref{up}) does guarantee some regularity with respect to $\approx$.

\begin{dfn}\label{stratified:main}
We say a preorder  $\langle R, \leq \rangle$ is \emph{stratified on $I$} as witnessed by $\pss=\pssl$ if and only if $\pss$ is a prestratification system on $I$, $\langle R, \leqlol, \D \rangle_{\lambda \in I} $ is quasi-closed, and for each $\lambda\in I$
the following conditions hold:
\begin{enumerate}[label=(S \Roman*), ref=S~\Roman*]

     \item\label{exp} \emph{Expansion}:\index{Expansion (stratification)} If  $p \lequpl d$ and $d \leqlol 1_R$, then in fact $p \leq d$.

      \item \label{interpolation}
      \emph{Interpolation}:\index{Interpolation (stratification)} If $d \leq r$, there is $p \leqlo^\lambda r$ such that $p \lequp^\lambda d$. 
                            In addition, whenever $\bar \lambda\in I$ and $d \leqlo^{\bar\lambda} 1_R$, then also $p 									                               \leqlo^{\bar\lambda} 1_R$.

      \item \label{linking} 
      \emph{Linking}:\index{Linking (stratification)}  If $p \lequpl d$ and $\Cl(p)\cap\Cl(d)\neq\emptyset$ then $p$ and $d$ are compatible. In fact, there is $w$ 		        such that for any 
$\lambda' \in I \cap\lambda$, $w \leqlo^{\lambda'} p$ and  $w \leqlo^{\lambda'} d$.

\end{enumerate}
If we omit $\pss$ and no prestratification system can be deduced from the context, we mean that there exists a  prestratification system $\pss$ witnessing that $R$ is stratified.
When we say \emph{stratified on $[\lambda_0, \lambda_1)$}, we mean of course
stratified on $[\lambda_0, \lambda_1)\cap\Reg$.
\end{dfn}
Conditions~(\ref{continuous}) and (\ref{exp}) are important to preserve stratification in (infinite) iterations. 
The second part of (\ref{linking}) was introduced to allow for amalgamation (see Chapter \ref{sec:amalgamation}), but is also useful to control the diagonal support in iterations (see below).

We illustrate Definition~\ref{stratified:main} with some examples.
\begin{exm}\label{exm:centered}
A simple observation is that for any preorder $R$, $R$ is stratified above $\card{R}$. 
A little more generally, if $R$ is $\lambda_0$-linked, then $R$ is stratified on $[\lambda_0,\infty)$: 
for if $\lambda \geq \lambda_0$, we can simply define $p \leqlol q$ to mean $p=q$. Similarly, $\D(\lambda,x,p)=R$ for all $p \in R$. Thus, quasi-closure and \emph{Continuity} become vacuous.
Moreover, let $g:R\rightarrow \lambda_0$ be a a function such that if $g(p)=g(q)$ then $p$ and $q$ are compatible.
Set $\Cl(p)=\{ g(p) \}$ for any $p \in R$. Lastly, define $p \lequpl q$ to hold for \emph{ any } pair $p,q$.
Then the only non-vacuous condition in the definition of stratification is \emph{Linking}, which holds for every $\lambda\geq \lambda_0$ since $g$ witnessed that $R$ was linked.
\end{exm}

This example has a corollary:
\begin{cor}\label{lambda:is:big}
If a preorder $R$ is stratified, we can always assume that for $\lambda \geq \card{R}$,
$\D$, $\leqlol$, $\lequpl$ and $\Cl$ take the simple form discussed above in example \ref{exm:centered}.
\end{cor}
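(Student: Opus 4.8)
The plan is to \emph{graft} the trivial centered system of Example~\ref{exm:centered} onto the given one. Let $\pss$ witness that $R$ is stratified on $I$; since $R$ has size $\card R$ it is trivially centered (via any injection $g\colon R\embedds\card R$), so by Example~\ref{exm:centered} the ``simple'' system $\pss^{*}$ --- in which $p\leqlol q\iff p=q$, $\D(\lambda,x,p)=R$, $p\lequpl q$ holds for \emph{every} pair $p,q$, and $\Cl(p)=\{g(p)\}$ --- witnesses that $R$ is stratified on $[\card R,\infty)\cap\Reg$. I would let $\pss'$ be the system agreeing with $\pss$ at the levels $\lambda\in I$ with $\lambda<\card R$ and with $\pss^{*}$ at the levels $\lambda\in I$ with $\lambda\geq\card R$, and show that $\pss'$ is a pre-stratification system witnessing that $R$ is stratified on $I$. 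It is convenient first to replace $\pss$ by a witnessing system in which $\D(\lambda,x,1_R)=R$ for all $\lambda\in I$ and all $x$; this is harmless, since enlarging $\D$ only at the top merely prepends trivial constant stems to the adequate sequences, leaving their greatest lower bounds --- and hence clauses~\ref{qc:glb} and~\ref{continuous} --- unchanged, and $\D$ stays a $\qcdefD(\param)$ class.

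The verification then splits into three blocks. At a level $\lambda<\card R$ all clauses of Definitions~\ref{def:pcs},~\ref{def:qc},~\ref{def:pss} and~\ref{stratified:main} are inherited from $\pss$; the one thing to note is that the hypotheses of the various ``in addition'' and ``moreover'' clauses that now also quantify over some $\bar\lambda\geq\card R$ have only become \emph{weaker}, because for such $\bar\lambda$ the $\pss'$-relation ``$p\leqlo^{\bar\lambda}1_R$'' amounts simply to $p=1_R$, a degenerate case in which one may take the required witness to be $1_R$ itself --- this is exactly what the assumption $1_R\in\D(\lambda,x,1_R)$ buys us. At a level $\lambda\geq\card R$ all clauses are those of $\pss^{*}$, already checked in Example~\ref{exm:centered} (using that $g$ is injective and greatest lower bounds are unique, so that every $\lambda$-adequate sequence is constant on successor steps and its $\Cl$-value is determined); the ``in addition'' parts that now also mention some $\bar\lambda<\card R$ reduce, since the relevant witness is forced --- it is $r$ in~\ref{interpolation}, it is $q=p$ in~\ref{qc:redundant} --- to statements about the $\pss$-part alone. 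Finally come the ``mixed'' clauses~\ref{qc:leqlo:vert} and~\ref{s:lequp:vert}: for $\lambda\leq\bar\lambda$ in $I$ one needs the $\pss'$-version of $\leqlo^{\bar\lambda}$ to refine that of $\leqlo^{\lambda}$, and dually for $\lequp$. When both levels lie below $\card R$ this is clause~\ref{qc:leqlo:vert}, respectively~\ref{s:lequp:vert}, for $\pss$; when both lie at or above $\card R$ it is equality refining equality, respectively $R\times R$ being the largest relation; and in the cross-over case $\lambda<\card R\leq\bar\lambda$ one uses that the $\pss'$-relation $\leqlo^{\bar\lambda}$ is literally equality, hence contained in the reflexive $\leqlo^{\lambda}$ of $\pss$, while $\lequpl$ of $\pss$ is trivially contained in $R\times R$.

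The step I expect to be the main obstacle is the ``in addition'' clause of Interpolation~\ref{interpolation} at a level $\lambda\geq\card R$. There the interpolant is forced to be $r$ itself, so one must know that $d\leq r$ together with ``$d\leqlo^{\bar\lambda}1_R$'' (computed in $\pss$, for some $\bar\lambda<\card R$) entails ``$r\leqlo^{\bar\lambda}1_R$'' --- that is, that $\{p\in R:p\leqlo^{\bar\lambda}1_R\}$ is closed upward. This holds automatically in every situation we care about (in a diagonally supported iteration it merely records that ``having support above $\bar\lambda$'' passes to any $r\geq d$, since $d\leq r$ forces $\supp(r)\subseteq\supp(d)$), and in any case one may replace $\pss$ at the outset by a witnessing system enjoying this property; granting it, the remaining work is routine bookkeeping and Corollary~\ref{lambda:is:big} follows.
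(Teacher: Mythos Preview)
Your approach is exactly the paper's: graft the trivial system of Example~\ref{exm:centered} onto the given $\pss$ above $\card R$ and check the axioms. The paper, however, treats the corollary as a remark rather than a theorem; its entire justification is the single sentence following the statement, observing that (\ref{qc:leqlo:vert}) and (\ref{s:lequp:vert}) remain valid under the modification. You have been considerably more thorough, and in particular you have put your finger on a wrinkle the paper passes over in silence: at a level $\lambda\geq\card R$ the interpolant in (\ref{interpolation}) is forced to equal $r$, and the ``in addition'' clause for $\bar\lambda<\card R$ then demands that $d\leq r$ together with $d\leqlo^{\bar\lambda}1_R$ yield $r\leqlo^{\bar\lambda}1_R$. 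This upward-closure of $\{p:p\leqlo^{\bar\lambda}1_R\}$ is not a consequence of the abstract axioms, so your two escape routes --- it holds in all the concrete stratifications one actually uses, or one may arrange it by adjusting $\pss$ --- are the honest reading of a statement phrased as ``we can always assume''. The paper evidently takes the same view, just without saying so.

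One minor correction: your justification for replacing $\D(\lambda,x,1_R)$ by $R$ (``prepends trivial constant stems'') is not quite the right picture, but the modification is harmless for a simpler reason --- Definition~\ref{strategic} only imposes its conditions on a \emph{tail} of the sequence, so enlarging $\D(\lambda,x,1_R)$ does not alter the class of adequate sequences at all.
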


Observe that (\ref{qc:leqlo:vert}) and (\ref{s:lequp:vert}) remain valid if we modify a given prestratification system in such a way as to ensure that the above assumption holds.
A more interesting example: 
\begin{exm}\label{strat:ex:*}
Say $R=P*\dot Q$ where $P$ is $(\lambda_0)^+$-linked and $(\lambda_0)^+$-closed and $\forces_P \dot Q$ is $\lambda_0$-linked and $\lambda_0$-closed. Then $R$ is stratified on $\Reg$ --- ignoring (\ref{continuous}). If the linking functions for $P$ and $\dot Q_\xi$ in the extension are continuous in the sense of (\ref{continuous})---and it seems that for many linked forcings, this is the case---$R$ is actually stratified.

Define $\D$ as in the previous example.
For $\lambda < \lambda_0$, define $\leqlol$ to be identical to $\leq_R$;
define $p \lequpl q$ if and only if $p = q$ and let $\Cl(p)=\lambda$ for every $p\in R$. Then \emph{Interpolation} and \emph{Linking} hold at $\lambda$ for trivial reasons, and quasi-closure at $\lambda$ expresses the fact that $R$ is closed under sequences of length at most $\lambda$.
For $\lambda = \lambda_0$, fix a name for a linking function $\dot g$; set $(p,\dot q) \leqlol (p', \dot q')$ if and only if $(p,\dot q) \leq (p', \dot q')$ and $\dot q = \dot q'$; set $(p,\dot q) \lequpl (p', \dot q')$ if and only if $p \leq_P p'$. 
Let $\chi \in\Cl(p,\dot q)$ if and only if $p \forces \dot g(\dot q)=\check\chi$.
Lastly, $R$ has a subset $R'$ which is $(\lambda_0)^+$-linked and $\leqlo^{\lambda_0}$-dense. This allows us to define a stratification above $(\lambda_0)^+$, in a similar way to the previous example.
\end{exm}

A less trivial example is the forcing $P^*_D$ from Example \ref{s.e.david}~\ref{p.e.david} (or from Example~\ref{s.e.david}). 
 \begin{fct}\label{s.f.jensen}
 The forcing $P^*_D$ is stratified.
 \end{fct}
 \begin{proof}
The family of sets $D(\lambda, p, x )$ as well as $\leqlol$ was defined in Fact \ref{s.f.david.qc}.
 In addition, for each $p,q\in P^*_D$ we let 
 \begin{equation*} 
q \lequp^\omega p                \iff    \big( q_\omega \supseteq p_\omega \wedge q^*_{\omega_1} \supseteq p^*_{\omega_1} \big)
 \end{equation*} 
and for each $\lambda > \omega$, $q \lequp^\lambda p$ for any $p,q\in P^*_D$.
Lastly let $\Clink^\omega( p) = \{p_0\}$ and $\Clink^{\lambda} = \{ (p_0, p^*_\omega, p_\omega) \}$ for any $\lambda \geq \omega_1$.
We leave it to the reader to prove that this is a stratification system.
\end{proof}

Finally, we can discuss preservation of cofinalities and the $\GCH$.\index{GCH, preservation of}\index{cofinalities, preservation of}\index{preservation of GCH, cofinality}
\begin{thm}\label{s.t.preservation.cofinalities}
For any $\lambda \in I$, cofinalities greater than $\lambda$ remain greater than $\lambda$ after forcing with $R$ and $(2^\lambda)^V = (2^\lambda)^{V[G]}$ for any $R$-generic $V$.
\end{thm}

The proof will stretch across several lemmas. 
For it, we introduce following terminology (which will be convenient throughout this memoir).
\begin{dfn}\label{chromatic}
In the following, we fix a regular cardinal $\lambda\in I$ and drop the superscripts on $\Clink, \leqlo$ and $\lequp$.
\begin{enumerate}
\item Let $D, D^* \subseteq R$, and $r\in R$.
We say \emph{$r$ $\lambda$-reduces  $D$ to $D^*$}\index[notation]{lambda-reduces (a dense set)@$\lambda$-reduces (a dense set)}\index{reduction (stratification)} (often, we don't mention the prefix $\lambda$) exactly if 
\begin{enumerate}
\item $D^* \subseteq  \dom(\Clink)\cap D$ and $\card{D^*}\leq\lambda$;
\item for each  $d\in D^*$, $r \lequp d$;
\item for any $q \in \dom(\Clink)\cap D$, if $q\leq r$, there is $d \in D^*$ such that $\Clink(q)\cap\Clink(d)\neq 0$.
\end{enumerate}
\item %
Let  $\dot \alpha$ be a name for an element of the ground model, and let $r \in R$. 
We say $\dot \alpha$ is $\lambda$-chromatic\index[notation]{lambda-chromatic name@$\lambda$-chromatic name}\index{chromatic name} below $r$ just if there is a function $H$ with $\dom H \subseteq\lambda$ such that if $q \leq r$ decides $\dot \alpha$ and $q \in \dom(\Clink)$, 
then $\Clink(q)\cap \dom(H)\neq 0$ and for all $\chi \in \Clink(q)\cap \dom(H)$, $q \forces \dot \alpha = H(\chi)$ (to be pedantically precise, we mean the ``standard name'' for $H(\chi)$). 
We call such $H$ a $\lambda$-\emph{spectrum (of $\dot \alpha$)}.\index[notation]{lambda-spectrum@$\lambda$-spectrum}\index{spectrum}

\item If $\dot s$ is a name and $p \forces \dot s \colon \lambda \rightarrow  V$, then we say \emph{ $\dot s$ is $\lambda$-chromatic (with $\lambda$-spectrum $(H_\xi)_{\xi<\lambda}$) below $p$} if and only if for each $\xi<\lambda$, $\dot s(\check \xi)$ is chromatic with spectrum $H_\xi$ below $p$.

For notational convenience, we say $\dot x$ is $0$-chromatic\index[notation]{0-chromatic@$0$-chromatic} below $p$ if for some $x$, $p \forces_R \dot x =\check{x}$.
\end{enumerate}
Observe that if for some ground model set $x$, $p \forces \dot \alpha = \check x$ (i.e., $\alpha$ is $0$-chromatic), then $\dot \alpha$ is in fact $\lambda$-chromatic for every regular $\lambda$,
and the function with domain $\lambda$ and constant value $x$ is a $\lambda$-spectrum.
\end{dfn}
To illustrate $\lambda$-reduction, observe that if $r$ $\lambda$-reduces a dense open set $D$ to $D^*$, then $D^*$ is predense below $r$:
for if $q\leq r$, we can assume that $q \in \dom(\Cl)\cap D$, so that there is $d \in D^*$ with $\Cl(d)\cap\Cl(q)\neq \emptyset$. By \eqref{up}, $q \lequpl d$ and so by \emph{Linking} \eqref{linking}, $q$ and $d$ are compatible.

For the remainder of this section let us suppose $R$is stratified on $I$ and $\lambda\in I$.
\begin{thm}\label{single xdensity reduction}
Let $D$ be a dense open subset of $R$ and $p \in D$. Then there is $r \leqlol p$ such that
$r$ $\lambda$-reduces $D$ to some $D^*$.
\end{thm}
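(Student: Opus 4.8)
The plan is to build, by a recursion of length $\lambda$, a $\leqlol$-decreasing sequence of conditions together with an increasing family of ``targets'' in $\dom(\Cl)$, and then take its greatest lower bound using quasi-closure. More precisely, I would first apply the ``moreover'' part of \emph{Density} \eqref{density} and \eqref{qc:redundant} to replace $p$ by some $p_0 \leqlol p$ lying in $\dom(\Cl)$ (and in $D$, using that $D$ is dense and \eqref{qc:D}); this $p_0$ will be the first entry. At stage $\xi+1$, having $p_\xi \in \dom(\Cl)$, I enumerate (using a bookkeeping device, so that the whole recursion can be read off a canonical witness) a potential ``requirement'': a condition $q \leq p_\xi$ with $q \in \dom(\Cl)\cap D$ which is not yet ``captured'' by the targets collected so far, i.e. $\Cl(q)\cap\Cl(d)=\emptyset$ for every target $d$ already chosen. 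If such a $q$ exists, use \emph{Interpolation} \eqref{interpolation} applied to $q \leq p_\xi$ to find $p_{\xi+1}\leqlol p_\xi$ with $p_{\xi+1}\lequpl q$ — wait, that is the wrong direction; instead I take $d_\xi := q$ itself as a new target (it is in $\dom(\Cl)$), and set $p_{\xi+1}$ to be a $\leqlol$-direct extension of $p_\xi$ inside $\D(\lambda, x, p_\xi)$ provided by \eqref{qc:redundant}, staying in $\dom(\Cl)\cap D$ via \emph{Density} and density of $D$. At limits $\xi$, let $p_\xi$ be the greatest lower bound of $(p_\nu)_{\nu<\xi}$, which exists and lies in $\dom(\Cl)$ by \eqref{qc:glb} and \emph{Continuity} \eqref{continuous}, provided the sequence built so far is $\lambda$-adequate.

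The crucial bookkeeping point is that the constructed sequence $\bar p = (p_\xi)_{\xi<\lambda}$ must be $\lambda$-adequate: I need to exhibit a common strategic guide and canonical witness $\bar w$. For the strategic-guide part this is immediate from how $p_{\xi+1}$ was chosen (it lies in the right $\D$-set and is a $\leqlol$ and $\leqlo^{\lambda'}$-extension) and from taking greatest lower bounds at limits. For the canonical-witness part I record in $w_\xi$ the data used at stage $\xi$ — the enumerated $q$, the resulting target $d_\xi$, and the chosen $p_{\xi+1}$ — and verify that $w_\xi$ is obtained by a $\qcdefSeq(\lambda\cup\{x,\param\})$ recursion and that $p_\xi$ is recoverable from $\bar w\res\xi+1$ by a $\qcdefG$ function; here the assumption (from section \ref{rem:def:set}) that a large enough $L_\mu[A]$ is coded into $\param$, together with the $\Pi_1$-uniformization built into the construction of canonical witnesses, lets me pick ``the least'' witnessing object at each step and keep the definitions within the required complexity class. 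This definability verification is the main obstacle — not conceptually deep, but it is where all the hypotheses on $T$, on $\param$, and on the shape of $\D$ and $\Cl$ get used, and it must be done carefully so that \eqref{qc:glb} applies at every limit, not just at the end.

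Finally, let $r$ be the greatest lower bound of $\bar p$ given by \eqref{qc:glb}; then $r \leqlol p_\xi$ for all $\xi$, hence $r \leqlol p_0 \leqlol p$, so $r \leqlol p$ as required (transitivity of $\leqlol$, \eqref{qc:preorder}). Let $D^* = \{\, d_\xi : \xi<\lambda \,\}$; by construction $D^* \subseteq \dom(\Cl)$ and $\card{D^*}\leq\lambda$, giving clause (a) of $\lambda$-reduction. For clause (b), each $d_\xi = q$ was chosen with $q \leq p_\xi$, and $r \leqlol p_{\xi+1} \lequpl$-dominates… more directly: I arrange at stage $\xi+1$ that $p_{\xi+1}\lequpl d_\xi$ using \eqref{up} (since $p_{\xi+1}\leq p_\xi$ and one can first interpolate a condition $\lequpl q$ below $p_\xi$ via \emph{Interpolation}), and then $r \leq p_{\xi+1}$ together with \eqref{up} gives $r \lequpl d_\xi$. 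Clause (c) is the heart of the argument: suppose $q \in \dom(\Cl)\cap D$ with $q \leq r$; then $q \leq p_\xi$ for every $\xi$, so at the stage where the bookkeeping presented this $q$, either it was already captured by some earlier target $d$ (so $\Cl(q)\cap\Cl(d)\neq\emptyset$ and we are done), or it was taken as the new target $d_\xi = q$, whence trivially $\Cl(q)\cap\Cl(d_\xi)\neq\emptyset$. Since the bookkeeping is surjective onto all such pairs over the length-$\lambda$ recursion — and there are at most $\lambda$ relevant conditions up to the equivalence induced by $\Cl$, because $\Cl$ takes values in $\lambda$ — every $q \leq r$ in $\dom(\Cl)\cap D$ is handled, establishing (c).
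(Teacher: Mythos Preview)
Your overall skeleton matches the paper: build a $\lambda$-adequate $\leqlol$-decreasing sequence $(p_\xi)_{\xi\leq\lambda}$, at successors use \emph{Interpolation} to record a target $d_\xi \in D$ with $p_{\xi+1} \lequpl d_\xi$, take the greatest lower bound $r = p_\lambda$, and set $D^* = \{d_\xi : \xi<\lambda\}$. The verification of clause~(b) via \eqref{up} is fine once you commit to using Interpolation at each successor step (your description of the successor step wavers: you first build $p_{\xi+1}$ without Interpolation, then retroactively invoke it when checking~(b)).

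The genuine gap is in your bookkeeping, and hence in your proof of clause~(c). You describe the bookkeeping as presenting \emph{conditions} $q \leq p_\xi$ in $\dom(\Cl)\cap D$, and then argue that any $q \leq r$ was presented at some stage. But there may be far more than $\lambda$ such conditions below $r$; your attempted rescue (``at most $\lambda$ relevant conditions up to the equivalence induced by $\Cl$'') does not give an equivalence relation or a $\lambda$-length enumeration of conditions. The paper avoids this by indexing the recursion by \emph{colours} rather than conditions: at stage $\xi<\lambda$ one first enters $\D$ to obtain $p^*_\xi \leqlol p_\xi$, then asks whether there exists any $d \leq p^*_\xi$ with $d \in D$ and $\xi \in \Cl(d)$; if so, take such a $d_\xi$ and interpolate to get $p_{\xi+1}\leqlol p^*_\xi$ with $p_{\xi+1}\lequpl d_\xi$. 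Clause~(c) is then immediate: given $q \leq r$ in $\dom(\Cl)\cap D$, pick any $\xi \in \Cl(q)$; since $q \leq r \leq p^*_\xi$, the condition $q$ itself witnesses that a $d_\xi$ was chosen at stage~$\xi+1$, and $\xi \in \Cl(q)\cap\Cl(d_\xi)$.
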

\begin{proof}
We build an adequate sequence $\bar p = (p_0)_{\xi \leq \lambda}$, starting with $p_0=p$. 
We shall first sketch a construction such that $r = p_\lambda$ is the desired condition, without specifying a canonical witness; then we argue how this construction can be carried out so as to obtain a canonical witness at the same time.
This will serve as a blueprint for later constructions, where we shall not explicitly carry out the construction of $\bar w$, as it is entirely analogous to the case at hand.

Let $x = (p_0, \leqlol, \lequpl, \Cl, R, \lambda, c)$, where $c$ is the parameter in the formula representing $\D$.
Say we have constructed $\bar w \res \nu$ and $\bar p \res \nu$. If $0<\nu \leq \lambda$ and $\nu$ is a limit ordinal, assume by induction that $\bar p \res \nu$ is adequate and let $p_\nu$ be a greatest lower bound.
If $\nu = \xi +1$ we will choose, in a manner yet to be specified, $w_{\xi+1}$, $p_{\xi+1}$, $p^*_\xi$ and $d_\xi$ which satisfy the following:
\begin{enumerate}
\item \label{build_ad_red_strategic}$p^*_\xi \in \D(\lambda,(x, \bar w\res\xi+1),p_\xi)$, and $p_{\xi+1}\leqlol p^*_\xi$.
\item \label{build_ad_red}
\begin{enumerate}
\item If there is no $d^* \leq p^*_\xi$ such that $d^* \in D$ and $\xi \in \Cl(d^*)$ we demand $p_{\xi+1}=p^*_\xi$; 
\item \label{work} else $p_{\xi+1}$ and $d_\xi$ satisfy: $d_\xi \leq p^*_\xi$, $d_\xi \in D$ and $\xi \in \Cl(d_\xi)$; moreover $p_{\xi+1} \leqlo p^*_\xi$ and $p_{\xi+1} \lequpl d_\xi$.
\end{enumerate}
\end{enumerate}
Observe this does not mention $w_{\xi+1}$; its role will be explained by Lemma \ref{adequate}, below.
If the first alternative of Item~\ref{build_ad_red} obtains, the choice of $d_\xi$ is completely irrelevant for the rest of the construction.
It is clear that $\bar p$ will have $\bar w$ as a strategic guide: 
since $p_{\xi+1} \leq p^*_\xi$, by Item~\ref{build_ad_red_strategic} and \eqref{qc:D}, $p_{\xi+1}\in\D(\lambda,x,p_\xi)$ and $p_{\xi+1}\leqlol p_\xi$.

Let $D^* = \{ d_\xi \setdef  \xi < \rho\text{ and \ref{work} obtained at stage $\xi+1$ } \}$. 
Clearly, $p_\lambda$ reduces $D$ to $D^*$.
For if $q \leq p_\lambda$, $q \in D$ and $\xi \in \Cl(q)$, then $q$ witnesses that at step $\xi+1$ of the construction of $\bar p$, \ref{work} obtained. 
So we have $d_\xi \in D$ with $\xi \in \Cl(d_\xi)$. 
Moreover, $q\leq p_\lambda \lequpl d_\xi$.
In order to conclude that this construction works, we need to show that for every $\nu \leq \lambda$, $\bar p \res \nu$ is $(\lambda,x)$-adequate. For this it is enough to explain how precisely we made our choices in the above construction:
\begin{lem}\label{adequate}
In the previous, $w_{\xi+1}$, $p_{\xi+1}$, $p^*_\xi$ and $d_\xi$ can be chosen so that $\bar w$ is a $(\lambda, x)$-canonical witness for $\bar p$.
\end{lem}
\begin{proof}
Observe that at successor stages, we promised that $p_{\xi+1}$, $p^*_\xi$ and $d_\xi$ are chosen so as to satisfy a  property which may be expressed by a $\qcdefSeq$ formula $\Phi_0$ as follows:
Let $\Phi_0(x,p,p^*,d, \tilde w)$ be the $\qcdefSeq(\{x\})$ formula  expressing that $\tilde w$ is a sequence and if $\dom(\tilde w)$ is a successor, then, 
for $\xi = \dom(\tilde w) -1$ it holds that
\begin{enumerate}
\item $p^* \in \D(\lambda,(x, \tilde w),p)$, and $p\leqlol p^*$;
\item 
\begin{enumerate}
\item if there is no $d^* \leq p^*$ such that $d^* \in D$ and $\xi \in \Cl(d^*)$, then $p=p^*$;
\item else $p$ and $d$ satisfy: $d \leq p^*$, $d \in D$ and $\xi \in \Cl(d)$; moreover $p \leqlo p^*$ and $p \lequpl d$.
\end{enumerate}
\end{enumerate}

In what follows, the choice of $\bar w$ will be such that for each $\xi < \lambda$,
$w_\xi$ is a quintuple, and $\bar p$ is obtained from $\bar w$ by projecting to the first coordinate.
In fact, we will have $w_{\xi+1} = (p_\xi, M, p^*_\xi, d_\xi, \bar w \res \xi+1)$ where $M$ is a model allowing us to uniformise $\Phi_0$ and the rest of the coordinates are as in the successor step of the above construction. At limits $\nu$ we shall have
$w_\nu = (p_\nu, M, p^*, d, \bar w \res \nu)$, where $p^*$ and $d$ are just dummies.
Using the initial segment $\bar w\res \xi$ as a last coordinate in $w_\xi$ ensures that $\bar w$ is $\qcdefSeq(\{x\})$ in the end (see below).

Let $\Phi_1(p,p^*,d,\tilde{w})$ be the formula expressing that if $\nu = \dom (\tilde{w})$ is a limit then $p$ is the greatest lower bound in $R$ of the sequence obtained from $\tilde{w}$ by projecting to the first coordinate (and $p^*=d=\emptyset$, if you want). %

Now let $\Phi(w, \tilde{w})$ be the $\qcdefSeq(\{x\})$ formula expressing: $w = (p,M,p^*,d,\tilde{w})$ is such that
\begin{enumerate}
\item $M$ is transitive, $M  \prec_{\Sigma_1} L[A]$ and $p,p^*,d,\tilde{w} \in M$,
\item for any initial segment of $M$ containing $p,p^*,d,\bar w$ we have $N\not\prec_{\Sigma_1}  M$,
\item $M \vDash (p,p^*,d)$ is $\leq_{L[A}$-least such that $\Phi_1(p,p^*,d, \tilde{w})$ and $\Phi_0(x,p,p^*,d, \tilde{w})$.
\end{enumerate}

We may choose $\bar w$ recursively such that for each $\xi < \lambda$, $\Phi(w_\xi, \bar w\res\xi)$ holds:
Observe that if such $w_\xi$ exists, it is unique.
For limit $\nu$, we may assume by induction that $\bar w\res \nu$ is a canonical witness for $\bar p\res\nu$, allowing us to infer $\bar p\res\nu$ is adequate and that $p_\nu$ and hence $w_\nu$ exists. At successor stages $\xi+1$, $w_{\xi+1}$ always exists, and so $\bar w$ is well-defined.

Lastly, we show $\bar w$ is $\qcdefSeq(\{x\})$: this is because $w = w_\xi$ if and only if $w$ is a quintuple with last coordinate $\bar w$ such that $\Phi(w, \bar w)$ holds, 
and $\bar w$ is a sequence such that for each $\xi \in \dom(\bar w)$, $\Phi(\bar w (\xi), \bar w \res\xi)$.
This finishes the proof of the lemma. 
\end{proof}
Having shown that $\bar p$ may be chosen as a $(\lambda,x)$-adequate sequence, we are finished with the proof of the theorem.
\end{proof}

\begin{lem}\label{density reduction}
For each $\xi<\lambda$, let $D_\xi$ be an open dense subset of $R$. Let 
\[
X = \{q \in R \setdef  \exists D^*   \; \forall \xi<\lambda \;\text{s.t.\ $q$ $\lambda$-reduces $D_\xi$ to $D^*$}\}.
\]
Then $X$ is dense in $\langle R, \leqlol \rangle$ and open in $\langle R, \leq \rangle$.

If $\dot s$ is a name such that $p \forces \dot s \colon \lambda \rightarrow  V$, the set of $q$ such that $\dot s$ is $\lambda$-chromatic below $q$ is dense in $\langle R(\leq p), \leqlol \rangle$ and open.
\end{lem}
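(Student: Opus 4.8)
The plan is to reuse the construction from the proof of Theorem~\ref{single xdensity reduction} almost verbatim; the only new ingredient is a bookkeeping device that lets a single adequate sequence of length $\lambda$ service all $\lambda$ of the dense sets $D_\xi$ at once. First I would fix a bijection $e\colon\lambda\to\lambda\times\lambda$, writing $e(\alpha)=(\xi_\alpha,\eta_\alpha)$, and, given an arbitrary $p_0\in R$ below which we want an element of $X$, set $x=((D_\xi)_{\xi<\lambda},\,e,\,p_0,\,\leqlol,\,\lequpl,\,\Cl,\,R,\,\D,\,c)$ with $c$ collecting the parameters of $\D$. Then, starting from $p_0$, I would build a $(\lambda,x)$-adequate sequence $\bar p=(p_\alpha)_{\alpha<\lambda}$ together with a canonical witness $\bar w$, exactly as in Theorem~\ref{single xdensity reduction}, except that at a successor stage $\alpha+1$ --- after applying quasi-closure \eqref{qc:redundant} to get $p^*_\alpha\in\D(\lambda,(x,\bar w\res\alpha+1),p_\alpha)$ with $p^*_\alpha\leqlol p_\alpha$ --- one consults the pair $(\xi_\alpha,\eta_\alpha)$ and asks whether some $d\leq p^*_\alpha$ has $d\in D_{\xi_\alpha}$ and $\eta_\alpha\in\Cl(d)$. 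If so, put $d_\alpha:=d$ and, by \emph{Interpolation} \eqref{interpolation}, choose $p_{\alpha+1}\leqlol p^*_\alpha$ with $p_{\alpha+1}\lequpl d_\alpha$; otherwise put $p_{\alpha+1}=p^*_\alpha$ and leave $d_\alpha$ a dummy. Limit stages are greatest lower bounds. The construction of $\bar w$ as a genuine $(\lambda,x)$-canonical witness --- dragging a model $M\prec_{\Sigma_1}L[A]$ along as a coordinate of $w_\alpha$ to uniformise the successor-step selection property --- is literally Lemma~\ref{adequate}, now with $(D_\xi)_{\xi<\lambda}$ and $e$ among the parameters in $x$.

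Next I would check that $r:=p_\lambda$, the greatest lower bound of $\bar p$, belongs to $X$ and satisfies $r\leqlol p_0$; the latter is immediate from \eqref{qc:glb}. Put $D^*=\{d_\alpha : \text{alternative \ref{work} held at stage }\alpha+1\}$. Then $\card{D^*}\leq\lambda$ and $D^*\subseteq\dom(\Cl)$, since each $d_\alpha\in D^*$ has $\eta_\alpha\in\Cl(d_\alpha)$; and $r\leq p_{\alpha+1}\lequpl d_\alpha$ gives $r\lequpl d_\alpha$ by \eqref{up}, so clauses (a) and (b) of ``$r$ $\lambda$-reduces $D_\xi$ to $D^*$'' hold for every $\xi<\lambda$. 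For clause (c): given $q\in\dom(\Cl)\cap D_\xi$ with $q\leq r$, pick $\eta\in\Cl(q)$ and set $\alpha=e^{-1}(\xi,\eta)$; since $r\leqlol p_{\alpha+1}\leq p^*_\alpha$ we have $q\leq p^*_\alpha$, so $q$ itself witnesses that alternative \ref{work} held at stage $\alpha+1$, whence $d_\alpha\in D_\xi$ with $\eta\in\Cl(d_\alpha)$, and thus $\eta\in\Cl(q)\cap\Cl(d_\alpha)\neq\emptyset$. As $p_0$ was arbitrary this shows $X$ is dense in $\langle R,\leqlol\rangle$; and $X$ is open in $\langle R,\leq\rangle$, because a set $D^*$ witnessing $q\in X$ also witnesses $q'\in X$ for any $q'\leq q$ (clause (b) survives by \eqref{up}, clause (c) trivially).

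For the second assertion, fix $p$ with $p\forces\dot s\colon\lambda\to V$, and given $p_0\leq p$ I would re-run the construction of the first part started from $p_0$, applied to the sets $D_\xi=\{q\leq p : q\text{ decides }\dot s(\check\xi)\}$ (downward closed and dense below $p$; the construction only ever inspects conditions below its starting point, so density throughout $R$ is not actually needed), obtaining $r\leqlol p_0$, the set $D^*$ and the witnesses $d_\alpha$. For each $\xi<\lambda$ I would then let $H_\xi$ be the function with domain $\{\eta<\lambda : \text{alternative \ref{work} held at stage }e^{-1}(\xi,\eta)+1\}$ defined by $H_\xi(\eta)=v$ iff $d_{e^{-1}(\xi,\eta)}\forces\dot s(\check\xi)=\check v$; this is genuinely a function because the bookkeeping attaches to each such $\eta$ one canonical condition $d_{e^{-1}(\xi,\eta)}$, which lies in $D_\xi$ and hence decides $\dot s(\check\xi)$. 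Now suppose $q\leq r$ decides $\dot s(\check\xi)$ and $q\in\dom(\Cl)$. For every $\eta\in\Cl(q)$ the clause-(c) argument run at stage $e^{-1}(\xi,\eta)+1$ shows that $q$ witnesses alternative \ref{work} there, so $\Cl(q)\subseteq\dom(H_\xi)$ and in particular $\Cl(q)\cap\dom(H_\xi)=\Cl(q)\neq\emptyset$; and for such $\eta$, writing $d=d_{e^{-1}(\xi,\eta)}$, we get $q\lequpl d$ from $q\leq r\lequpl d$ and \eqref{up}, then $q$ and $d$ compatible from $\eta\in\Cl(q)\cap\Cl(d)$ and \emph{Centering} \eqref{centering}, whence $q\forces\dot s(\check\xi)=H_\xi(\eta)$ since both $q$ and $d$ decide $\dot s(\check\xi)$. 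Thus $\dot s$ is $\lambda$-chromatic below $r$ with spectrum $(H_\xi)_{\xi<\lambda}$; density in $\langle R(\leq p),\leqlol\rangle$ and openness follow as for $X$.

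The step I expect to be the main obstacle is not conceptual but the adequacy bookkeeping: one must be sure the whole length-$\lambda$ sequence $\bar p$ stays $(\lambda,x)$-adequate even while $\lambda$ distinct dense sets are being serviced, which comes down to checking that enlarging $x$ by $(D_\xi)_{\xi<\lambda}$ and $e$ keeps the successor-step selection property within $\qcdefSeq(x)$ --- recall these classes are not closed under bounded quantification, which is exactly why the uniformising model must be carried along in $\bar w$ in Lemma~\ref{adequate}. A secondary but genuine subtlety, in the second assertion, is that $\dom(H_\xi)$ has to be indexed via the bookkeeping bijection $e$ rather than via the raw ordinals occurring in the various $\Cl(d)$, so that $H_\xi$ comes out single-valued; every $\eta\in\Cl(q)$ is then forced into $\dom(H_\xi)$ because, for our fixed $\xi$, it is the second coordinate of the pair $e(\alpha)=(\xi,\eta)$ processed at a stage $\alpha+1$ where $q\leq p^*_\alpha$.
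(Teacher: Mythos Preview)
Your proof is correct but organizes the construction differently from the paper. The paper builds an outer $(\lambda,x)$-adequate sequence $(p_\xi)_{\xi<\lambda}$ where at each successor step one invokes Theorem~\ref{single xdensity reduction} as a black box to obtain a single condition $p_\xi$ that already reduces the one dense set $D_\xi$ to some $D^*_\xi$; the final $D^*$ is then $\bigcup_\xi D^*_\xi$. You instead unfold everything into one length-$\lambda$ adequate sequence, using the bookkeeping bijection $e\colon\lambda\to\lambda\times\lambda$ to handle one pair (dense set, colour) per step, exactly parallel to the proof of the theorem itself. Both approaches work; the paper's is more modular, while yours is more self-contained and has the bonus that the spectrum $H_\xi$ is manifestly single-valued because each $\eta\in\dom(H_\xi)$ points via $e^{-1}(\xi,\eta)$ to a unique canonical condition $d_{e^{-1}(\xi,\eta)}$, whereas the paper's definition of $H_\xi(\chi)$ via ``some $d\in D^*$ with $\chi\in\Cl(d)$ deciding $\dot s(\xi)$'' requires a moment's thought about well-definedness.
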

\begin{proof}
Let $\bar D=(D_\xi)_{\xi<\lambda}$ be a sequence of dense open subsets of $R$. Build a sequence as before:
let 
\[ 
x = \langle p_0, \leqlol, \lequpl, \Cl,  R, \pset(R), \bar D, y \rangle. 
\]
At successor steps $\xi$, choose $p_\xi$ and $w_\xi$ such that $p_\xi \leqlol p_{\xi-1}$ and 
\[
p_\xi \in \D(\lambda,(x, \bar w\res\xi+1),p_{\xi-1})
\]
and such that we can pick $D^*_\xi$ such that $p_\xi$ reduces $D_\xi$ to $D^*_\xi$.
As in Lemma \ref{adequate}, argue this can be done in such a way that the resulting sequence $\bar p$ is $(\lambda,x)$-adequate.
So a greatest lower bound $p_\lambda$ exists and for each $\xi<\lambda$, $p_\lambda$ reduces $D_\xi$ to $\bigcup_{\xi<\lambda} D^*_\xi$.

Now let $p \forces \dot s \colon \lambda \rightarrow  V$. Let $D_\xi$ be the set of conditions $p \in R$ which decide $\dot f(\xi)$.
As above, find $q$ reducing all $D_\xi$ to $D^*$. 
We now find a spectrum for $\dot s$: For $\chi < \lambda$, if $w \leq q$ decides $\dot s(\xi)$ and $\chi \in \Cl(w)$, there is also $d \in D^*$ which decides $\dot s(\xi)$ and such that $\chi \in \Cl(d)$. 
Fix $z$ such that $d \forces f(\xi) = \check z$.
Then we may set $H_\xi(\chi)=z$.
It is easy to check that for each $\xi<\lambda$, $H_\xi$ is a spectrum for $\dot s(\xi)$ (and thus $(H_\xi)_{\xi<\lambda}$ is a spectrum for $\dot s$): 
Say $w \leq q$ decides $\dot s(\xi)$ and fix some $\chi \in \Cl(w)$. 
Then there is $d \in D^*$ with $\chi \in \Cl(d)$ such that $d \forces s(\xi) = H_\xi(\chi)$. 
As $d \in D^*$, $q \lequpl d$. So as $\chi \in \Cl(w)\cap\Cl(d)$, $w$ and $d$ are compatible and thus $w \forces s(\xi) = H_\xi(\chi)$.
\end{proof}
\begin{cor}
Theorem~\ref{s.t.preservation.cofinalities} holds, i.e., for any $\lambda \in I$,
cofinalities greater than $\lambda$ remain greater than $\lambda$ after forcing with $R$ and $(2^\lambda)^V = (2^\lambda)^{V[G]}$ for any $R$-generic $V$.
\end{cor}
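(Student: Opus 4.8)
The plan is to read both assertions off the second half of Lemma \ref{density reduction}, which, given a name $\dot s$ with $p \forces \dot s\colon \lambda \to V$, produces $q \leqlol p$ below which $\dot s$ is $\lambda$-chromatic together with a $\lambda$-spectrum $\vec H = (H_\xi)_{\xi<\lambda}$ for $\dot s$ below $q$, an object of $V$. For cofinalities, let $\mu$ be a limit ordinal with $\cof^V(\mu) > \lambda$ and suppose toward a contradiction that $p \forces \dot s\colon \lambda \to \check\mu$ is cofinal. Since ordinals belong to $V$, I may apply the lemma to get such $q$ and $\vec H$; deleting from each $\dom(H_\xi)$ the $\chi$ with $H_\xi(\chi) \notin \mu$ (harmless, as $q \leq p$) I may assume $\ran(H_\xi) \subseteq \mu$ for all $\xi$, so $A := \bigcup_{\xi<\lambda}\ran(H_\xi)$ has cardinality $\leq\lambda$ in $V$ and hence is bounded in $\mu$, say $A \subseteq \alpha < \mu$. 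For any $w \leq q$ and $\xi<\lambda$, using density of $\dom(\Cl)$ choose $w^*\leq w$ with $w^*\in\dom(\Cl)$ deciding $\dot s(\check\xi)$; by $\lambda$-chromaticity there is $\chi\in\Cl(w^*)\cap\dom(H_\xi)$ and $w^*\forces \dot s(\check\xi)= H_\xi(\chi)$ with $H_\xi(\chi)\in A\subseteq\alpha$. As such $w^*$ are dense below $q$, $q$ forces $\ran(\dot s)\subseteq\check\alpha$, contradicting $q\leq p$; thus $\cof^{V[G]}(\mu)>\lambda$.

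For the power of $\lambda$ the key point is the following rigidity of spectra. \textbf{Claim.} If $\dot s,\dot s'$ are names forced to be functions $\lambda\to V$, if $q,q'\in R$, and if the same $\vec H=(H_\xi)_{\xi<\lambda}$ is a $\lambda$-spectrum for $\dot s$ below $q$ and for $\dot s'$ below $q'$, then $\dot s[G]=\dot s'[G]$ for every generic $G$ with $q,q'\in G$. To prove this, fix $\xi<\lambda$, pick $e\in G$ below $q$, $q'$ and below members of $G$ deciding $\dot s(\check\xi)$ and $\dot s'(\check\xi)$, and (density of $\dom(\Cl)$) extend to $e^*\in G$ with $e^*\in\dom(\Cl)$. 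Then $e^*$ decides both $\dot s(\check\xi)$ and $\dot s'(\check\xi)$; picking $\chi\in\Cl(e^*)\cap\dom(H_\xi)$ — nonempty by $\lambda$-chromaticity, applied to either name — we get $e^*\forces\dot s(\check\xi)= H_\xi(\chi)= \dot s'(\check\xi)$, so $\dot s[G](\xi)=\dot s'[G](\xi)$.

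Granting the Claim, let $X\in\Power(\lambda)^{V[G]}$ with name $\dot s_X$ for its characteristic function ($1_R\forces\dot s_X\colon\lambda\to 2$). By Lemma \ref{density reduction} the set of conditions below which $\dot s_X$ is $\lambda$-chromatic is dense in $\langle R,\leqlol\rangle$, hence (as $\leqlol\subseteq\leq$) dense in $\langle R,\leq\rangle$, and open; so some $q_X\in G$ lies in it and carries a $\lambda$-spectrum $\vec H_X=(H_\xi)_{\xi<\lambda}\in V$ with each $H_\xi$ a partial map $\lambda\to 2$. For any choice of such $\vec H_X$ the assignment $X\mapsto\vec H_X$ is injective on $\Power(\lambda)^{V[G]}$: if $\vec H_X=\vec H_Y$, the Claim applied to $\dot s_X,\dot s_Y,q_X,q_Y$ gives $X=\dot s_X[G]=\dot s_Y[G]=Y$. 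Since $V$ has only $(2^\lambda)^V$ such sequences $\vec H$, we get $(2^\lambda)^{V[G]}\leq(2^\lambda)^V$; the reverse inequality follows from $\Power(\lambda)^V\subseteq\Power(\lambda)^{V[G]}$ together with the preservation of the cardinal $(2^\lambda)^V$ (its cofinality exceeds $\lambda$ by König, and cardinals above $\lambda$ are preserved — for those $\leq\lambda_1^+$ because $\mu^+$ is preserved for every $\mu\in I$ by the first part, and for larger ones via the chain condition present in the application). This yields the equality.

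The one delicate step is the Claim: one must verify that the official definition of ``$\lambda$-chromatic with $\lambda$-spectrum $\vec H$'' is rigid enough that two names sharing a spectrum agree on a dense set. This is precisely where the density of $\dom(\Cl)$ from \ref{density} is essential — it lets one descend to a \emph{single} condition in $\dom(\Cl)$ deciding both names, from which the common value $H_\xi(\chi)$ is read off for each name. Everything else is bookkeeping: the cofinality argument's only content is the passage to a spectrum and the size count of $A$, and the $2^\lambda$-bound is then the Claim plus the trivial estimate on the number of spectra.
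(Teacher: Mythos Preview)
Your proof is correct and is precisely the argument the paper intends: the corollary is stated without proof immediately after Lemma~\ref{density reduction}, and you have spelled out the natural derivation via $\lambda$-spectra. The rigidity Claim (two names sharing a spectrum below conditions in $G$ must evaluate identically) is exactly the right observation for the $2^\lambda$ bound, and your acknowledgment that the literal ordinal equality $(2^\lambda)^V=(2^\lambda)^{V[G]}$ requires $(2^\lambda)^V$ to remain a cardinal---handled either by applying the first part at each $\mu\in I$ or by the ambient chain condition---is honest and adequate for the paper's setting.
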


\section{Composition of Stratified Forcing}

In the main theorem of this section, Theorem \ref{stratified:composition} below, we show stratification is preserved by composition. In the proof, we use ``guessing systems'',\index{guessing system} which we shall motivate now, before we state and prove the theorem.

Say $P$ is stratified and $\dot Q$ is forced by $P$ to be stratified on $I$, and let $\lambda\in I$ be fixed.
We know $P$ has a linking relation $\Clink$ and $\dot Q$  is forced to have a linking relation $\dot \Clink$ in the extension.
Similar to the proof that composition of linked forcing stays linked, we want to gain some control over $\dot \Clink$ in the ground model.
If we ignore the requirements \ref{def:pss}(\ref{density}) \emph{density} and \ref{stratified:main}(\ref{continuous}) \emph{Continuity},
we could define $\bar \Clink$ on $P*\dot Q$ in the following way:
\[ (\chi,\xi) \in \bar \Clink(d,\dot d) \iff \chi \in \Clink(d)\text{ and }d \forces \check \xi \in \dot \Clink(\dot d) \]
Then $\dom(\bar\Clink)$ is dense and \ref{stratified:main}(\ref{linking}) \emph{Linking} holds.

The following definition also satisfies \ref{def:pss}(\ref{density}) \emph{density}: let
\begin{multline}\label{C:second:attempt}
(\chi,X) \in \bar \Clink(d,\dot d) \iff \Big( \chi \in \Clink(d)\text{ and for some $\dot \xi$ and }\lambda' \in\Reg\cap[\lambda_0,\lambda), \\
\text{ $X$ is a $\lambda'$-spectrum for $\dot \xi$ below $d$ and }
d \forces \dot \xi \in \dot \Clink (\dot d) \Big).
\end{multline}
Let's check \ref{def:pss}(\ref{density}) \emph{density} holds:
Given a condition $\bar p= (p,\dot p)$  and $\lambda'\in\Reg\cap[\lambda_0,\lambda)$, we can find $\bar d=(d,\dot d)$
such that $\bar d \alo^{\lambda'}\bar p$, $d \in \dom(\Clink)$ and $d \forces \dot d \in \dom(\dot \Clink)$. Moreover, we can assume that for some name $\dot \chi$, $d\forces \dot \chi \in \dot \Clink(\dot d)$ and $\dot \chi$ is $\lambda'$-chromatic. We have $\bar d \in \dom(\bar \Clink)$.
Let's also check that \ref{stratified:main}(\ref{linking}) \emph{Linking} holds: say $\bar d \aupl \bar p$ and $(\chi,X)\in\bar\Clink(\bar p)\cap \bar\Clink(\bar d)$. First, observe that $p$ and $d$ are compatible. Fix $\dot \chi_0$, $\dot\chi_1$ such that both $p\forces\dot \chi_0 \in \dot \Clink(\dot p)$  and 
$d\forces\dot \chi_1 \in \dot \Clink(\dot d)$
and $X$ is a spectrum for $\dot \chi_0$ below $p$ and for $\dot \chi_1$ below $d$. 
As $p \cdot d \forces \dot \chi_0 = \dot \chi_1 \in \dot \Clink(\dot d) \cap \dot \Clink(\dot p)$, by \emph{Linking} for $\dot Q$ in the extension,
 $p \cdot d \forces \dot d$ and $\dot p$ are compatible, whence $\bar d$ and $\bar p$ are compatible.

To show stratification is preserved at limits, we will have to use \emph{Continuity} of the linking function; 
Unfortunately, the approach described above does not yield a \emph{continuous} linking function in the sense of (\ref{continuous}). 
For say $(d_\xi, \dot d_\xi)_{\xi<\rho}$ is a $\lambda'$-adequate sequence, and for each $\xi<\rho$,
$d_\xi\forces \dot \chi_\xi \in \dot \Clink(\dot d_\xi)$ and $X_\xi$ is a $\lambda_\xi$-spectrum for $\dot \chi_\xi$ below $d_\xi$. 
By \emph{Continuity} for the components of the forcing,
if $(d,\dot d)$ is a greatest lower bound, we know $d \forces \dot d \in \dom(\dot\Clink)$; 
but there is no reason to assume that there exists a $P$-name $\dot \gamma$, such that $d \forces \dot \gamma \in \dot\Clink(\dot d)$ and $\dot \gamma$ is $\lambda''$-chromatic for some $\lambda'' < \lambda$.

The solution to this problem is to allow a more general set of values for $\bar C(\bar d)$: in the situation described above, e.g. the sequence $(X_\xi)_{\xi<\rho}$ can be used in much the same way as the single spectrum $X$.
This leads to the notion of a guessing system, which will be precisely defined in \ref{gs}. 
 
\begin{thm}\label{stratified:composition}\index{composition (forcing)!stratification}\index{stratification!for composition}
Say $P$ is stratified on $I$ and $\dot Q$ is forced by $P$ to be stratified on $I$. Then $\bar P=P * \dot Q$ is stratified (on $I$).
\end{thm}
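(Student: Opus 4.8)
The plan is to build a pre-stratification system for $\bar P = P * \dot Q$ on $I$ out of a system $\pss$ witnessing that $P$ is stratified on $I$ and a $P$-name $\dot{\pss}$ for one witnessing that $\dot Q$ is stratified on $I$, and then to check the axioms of Definitions~\ref{def:qc} and~\ref{stratified:main}. Writing conditions of $\bar P$ as pairs $(p,\dot q)$, I would define $\D$, $\leqlol$ and $\lequpl$ on $\bar P$ coordinate-wise: $(p,\dot q)\bleqlol(p',\dot q')$ iff $p\leqlol p'$ and $p\forces\dot q\dleqlol\dot q'$; $(p,\dot q)\in\D^{\bar P}(\lambda,x,(p',\dot q'))$ iff $p\in\D^P(\lambda,x,p')$ and $p\forces\dot q\in\dot{\D}^{\dot Q}(\lambda,x,\dot q')$; and $(p,\dot q)\aupl(p',\dot q')$ iff $p\lequpl p'$ and the Boolean infimum $p\cdot p'$ forces $\dot q\dlequpl\dot q'$ (the meet is needed because $p\lequpl p'$ does not give $p\le p'$). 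The one relation requiring a genuinely new idea is the centering relation: as the discussion preceding the theorem shows, the naive guess \eqref{C:second:attempt} gives density but not continuity, so a value of $\bCl(d,\dot d)$ is taken to be a pair $(\chi,\mathbf g)$ with $\chi\in\Cl(d)$ and $\mathbf g$ a \emph{guessing system} in the sense of~\ref{gs} --- an object which on a chromatic name for a member of $\dot{\C}^\lambda(\dot d)$ is just its spectrum, but which on a greatest lower bound of an adequate sequence instead records the whole coherent sequence of spectra met along that sequence.

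First I would treat quasi-closure. Axioms~\ref{qc:D}--\ref{qc:leqlo:vert} are immediate from the corresponding axioms for $P$ and $\dot Q$, and~\ref{qc:redundant} follows by applying \ref{qc:redundant} in $P$ to $p$ and then, below the result, (a name for) \ref{qc:redundant} in $\dot Q$ to $\dot q$. The heart is~\ref{qc:glb}: given a $(\lambda,x)$-adequate sequence $\bar{\mathbf p}=((p_\xi,\dot q_\xi))_{\xi<\rho}$ in $\bar P$ with canonical witness and strategic guide $\bar w$, I would check that the first-coordinate projection $(p_\xi)_{\xi<\rho}$ is $(\lambda,x)$-adequate in $P$ --- taking the first-coordinate projection of $\bar w$ as witness and composing the associated $\qcdefG$ function with the coordinate projection, and using that by construction greatest lower bounds in $\bar P$ are computed coordinate-wise (so limits of the projected sequence are genuine greatest lower bounds in $P$) --- hence it has a greatest lower bound $p$; then, forcing below $p$, the names $(\dot q_\xi)_{\xi<\rho}$ become (a name for) a $(\lambda,x)$-adequate sequence in $\dot Q$, so $p$ forces a greatest lower bound $\dot q$ to exist, and $(p,\dot q)$ is the required greatest lower bound of $\bar{\mathbf p}$, with the $\bleqlol$- and $\leqlo^{\bar\lambda}$-clauses inherited coordinate-wise. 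Keeping $\bar w$ --- which now also carries $P$-names for the $\dot Q$-side data --- a $\qcdefSeq$-object is done by the same uniformisation argument as in~\ref{adequate}.

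Next I would verify the remaining pre-stratification axioms. Clauses~\ref{up:extra}--\ref{s:lequp:vert} are immediate coordinate-wise. For \emph{density}~\ref{density}, given $(p,\dot p)$ and $\lambda'\in I\cap\lambda$, I would use density for $P$, \ref{density reduction} in $P$, and density for $\dot Q$ to pass to a $\leqlo^{\lambda'}$-extension below which the relevant $\dot{\C}^\lambda$-value is named by a $\lambda'$-chromatic name, putting the condition into $\dom(\bCl)$, exactly as in the computation following \eqref{C:second:attempt}. For \emph{continuity}~\ref{continuous} --- the crux --- given a $\lambda'$-adequate sequence $((d_\xi,\dot d_\xi))_{\xi<\rho}$ in $\dom(\bCl)$ with greatest lower bound $(d,\dot d)$, \emph{continuity} for $P$ gives $d\in\dom(\Cl)$, \emph{continuity} for $\dot Q$ gives $d\forces\dot d\in\dom(\dot{\C}^\lambda)$, and one \emph{assembles} a guessing system $\mathbf g$ for $(d,\dot d)$ out of the sequence $(\mathbf g_\xi)_{\xi<\rho}$ of guessing systems met along the way; the second, compatibility-preserving clause of~\ref{continuous} is obtained by the same assembly together with \emph{continuity} for the components. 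This is precisely what the definition in~\ref{gs} is engineered to make possible.

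Finally I would verify the stratification axioms. \emph{Expansion}~\ref{exp}: if $(p,\dot q)\aupl(d,\dot d)$ and $(d,\dot d)\bleqlol 1_{\bar P}$, then $p\lequpl d$ and $d\leqlol 1_P$, so $p\le d$ by~\ref{exp} for $P$; below $d$ one has $\dot d\dleqlol 1_{\dot Q}$ and $\dot q\dlequpl\dot d$, so $\dot q\dleq\dot d$ by~\ref{exp} for $\dot Q$; hence $(p,\dot q)\le(d,\dot d)$. \emph{Interpolation}~\ref{interpolation}: given $(d,\dot d)\le(r,\dot r)$, apply~\ref{interpolation} in $P$ to $d\le r$ to get $p\leqlol r$ with $p\lequpl d$, then mix a $\dot Q$-interpolant into a $P$-name $\dot q$ so that $p$ forces $\dot q\dleqlol\dot r$ while $p\cdot d$ forces $\dot q\dlequpl\dot d$; some care is needed in the choice of $\dot q$ so that the auxiliary $\leqlo^{\bar\lambda}$-clauses go through, but the pattern is the same. \emph{Centering}~\ref{centering}: from $(p,\dot p)\aupl(d,\dot d)$ and $(\chi,\mathbf g)\in\bCl(p,\dot p)\cap\bCl(d,\dot d)$ one reads off $\chi\in\Cl(p)\cap\Cl(d)$ and $p\lequpl d$, so~\ref{centering} for $P$ gives $p$ and $d$ compatible with the stated $w$-witness; below $p\cdot d$ the guessing system $\mathbf g$ yields spectra forcing $\dot{\C}^\lambda(\dot p)$ and $\dot{\C}^\lambda(\dot d)$ to meet, so~\ref{centering} for $\dot Q$ finishes. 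The main obstacle is the continuity step: one needs the guessing systems of~\ref{gs} to be flexible enough that at a limit a single object can faithfully stand in for the greatest lower bound's $\dot{\C}^\lambda$-value --- which need be neither chromatic nor uniformly bounded --- while still rigid enough to deliver \emph{centering}; secondarily, propagating $\qcdefSeq$-definability of canonical witnesses through the composition (as in~\ref{adequate}), so that adequacy on $\bar P$ reduces to adequacy on $P$ and on $\dot Q$.
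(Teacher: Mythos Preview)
Your proposal is correct and takes essentially the same approach as the paper, including the coordinate-wise definitions of $\bleqlol$, $\blequpl$, $\bar\D$ and the crucial use of guessing systems for $\bCl$ to secure continuity. The paper's definition of $\bCl$ includes two additional clauses you omit --- a ``short'' value $s\in\Cl(p)$ when $p\forces\dot q\dleqlol 1_{\dot Q}$ (used later for stratified extension and in the second case of Centering, via Expansion for $\dot Q$), and a direct value $(\chi,\xi)$ with $p\forces\check\xi\in\dot\C^\lambda(\dot q)$ when $\lambda=\min I$ (needed for Density since then no $\lambda'\in I\cap\lambda$ exists to invoke chromaticity) --- and the Centering verification unwinds the guessing system by an explicit induction on the rank of nodes in $T_{\mathbf g}$, applying Continuity for $\dot Q$ at non-terminal nodes and the common spectrum at terminal ones; otherwise your sketch matches the paper's argument.
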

\begin{proof}
Say stratification of $P$ is witnessed by $\D, \Cl, \leqlol$, $\lequpl$ for each regular $\lambda\in I$, and we have class $\dot \D$, definable with parameter $\dot c$ and names $\dot\Clink^\lambda, \dot\leqlo^\lambda$, $\dlequpl$ for $\lambda$ regular which are forced by $P$ to witness the stratification of $\dot Q$. We now define  $\bar \D, \bCl, \bleqlol$ and $\blequpl$ for regular $\lambda \geq \lambda_0$ to witness stratification of $P * \dot Q$.

\medskip

\textbf{The auxiliary orderings:}

Let $\lambda \in I$ be regular.
We say $(p,\dot q)\bar \leqlo^\lambda(u,\dot v)$ if and only if $p \leqlo^\lambda u$ and $p\forces_P \dot q \dot \leqlo^\lambda \dot v$.
This defines a preorder stronger than the natural ordering on $P*\dot Q$ (i.e., \ref{def:pcs}(\ref{qc:preorder}) holds).
Define  $\bar p \blequpl \bar q$ if and only if $p \lequpl q$ and if $p \cdot q \neq 0$,  $p \cdot q \forces_P \dot p \lequpl \dot q$.

\medskip

\textbf{The ordering axioms:}

Let $\bar p=(p,\dot p)$, $\bar q=(q,\dot q)$ and $\bar r=(r,\dot r)$ be conditions in $\bar P$.

We check that \ref{def:pcs}(\ref{er}) holds: Say $\bar p\leq_{\bar P} \bar q \leq_{\bar P} \bar r$ and $\bar p \bleqlol \bar r$.
Then $p \leqlol r$ by \ref{def:pcs}(\ref{er}) for $P$. Moreover, $p$ forces \ref{def:pcs}(\ref{er}) for $\dot Q$ as well as $\dot p\leq\dot q \leq\dot r$ and $\dot p \dot\leqlo^\lambda \dot r$. So $p \forces_P \dot p \dot\leqlol \dot q$, and we conclude $\bar p \bleqlol \bar q$.

Check that \ref{def:pss}(\ref{up}) holds:
Say $\bar p \leq_{\bar P} \bar q \blequpl \bar r$. By (\ref{up}) for $P$, $p \lequpl r$. If $p \cdot r \neq 0$,  
\[ p \cdot r \forces_P \dot p \leq_{\dot Q} \dot  q \lequpl \dot r, \]
and so $p \cdot r\forces_P \dot p \lequpl \dot r$. Thus $\bar p \blequpl \bar r$.

Next, check \ref{def:pss}(\ref{exp}). Say $\bar p \blequpl \bar q$ and $\bar q \bleqlol 1_{\bar P}$. By (\ref{exp}) for $P$, $p \leq q$.
So $p \forces \dot p \dlequpl q \dleqlol 1_{\dot Q}$, so by (\ref{exp}) applied in the extension, 
$p \forces \dot p \leq_{\dot Q} \dot q$, whence $\bar p \leq_{\bar P} \bar q$.
We leave it to the reader to check \ref{def:pcs}(\ref{qc:leqlo:vert}) and \ref{stratified:main}(\ref{s:lequp:vert}). 

\medskip

\textbf{Quasi-Closure:}\index{composition (forcing)!quasi-closure}

Define $(p, \dot p) \in \bar \D( \lambda, x, (q, \dot q))$ if and only if $p \in \D(\lambda,x,q)$ and $p\forces \dot p \in \dot \D( \lambda, x, \dot q)$.
It is straightforward to see that this definition is $\qcdefD( (\param, \dot c))$.
Clearly, this defines a ``dense and open'' set, i.e., (\ref{qc:D}) and \eqref{qc:redundant} are  satisfied.

Now say $(p_\xi, \dot q_\xi)_{\xi<\rho}$ is $(\lambda,x)$-adequate. We show this sequence has a greatest lower bound. Let $\bar w$ be a strategic guide and a canonical witness for $(p_\xi, \dot q_\xi)_{\xi<\rho}$.
We can immediately infer by the definition of $\bar \D$ that $\bar w$ is a strategic guide for $(p_\xi)_{\xi<\rho}$.
It is also clear that $\bar w$ is a canonical witness, since $p_\xi$ can be obtained from $(p_\xi, \dot q_\xi)$ by projecting to the first coordinate, and this map is $\Delta_0$.
Thus there is a greatest lower bound $p_\rho$ of $(p_\xi)_{\xi<\rho}$.

It is easy to see now that $p_\rho\forces_P$``$(\dot q_\xi)_{\xi<\rho}$ is $(\lambda,x)$-adequate'': 
Fix a $\qcdefSeq(\lambda\cup\{x\})$ formula $\Phi(x,y)$ such that for $\xi<\rho$ we have
\[ \Phi(x,\xi) \iff x=w_\xi. \]
Then the relativization $\Phi(x,y)^{L[A]}$ witnesses that $1_P$ forces that $\bar w$ is $\Pi_1(\lambda\cup\{x\})$ in the extension by $P$, as well.
For a $\qcdefG(\lambda\cup\{x\})$-function $G$,
$(p_\xi, \dot q_\xi)=G(\bar w\res\xi+1)$ for each $\xi<\rho$,  and  so
$\dot q_\xi = \pi_1 (G (\bar w\res\xi+1))$, where $\pi_1$ is the projection to the second coordinate.
As $G(x)=y$ is $\qcdefG$, clearly the relativized formula $(\pi_1(G(x))=y)^{L[A]}$ is also $\qcdefG(\lambda\cup\{x\})$ in the extension by $P$.
So $\bar w$ is forced to be canonical witness. 
Moreover, it is clear that $p_\rho$ forces that
$\bar w$ is a strategic guide for $(\dot q_\xi)_{\xi<\rho}$, by the definition of $\bar \D$.

So we can find $\dot q_\rho$ such that $p_\rho \forces_P$``$\dot q_\rho$ is a greatest lower bound of $(\dot q_\xi)_{\xi<\rho}$'', whence $(p_\rho, \dot q_\rho)$ is a greatest lower bound of the original sequence.
Leaving the last sentence of (\ref{qc:glb}) to the reader, we conclude that $\langle P*\dot Q, \bar \lequpl, (\param, \dot c), \bar \D\rangle$ is $\lambda$-quasi-closed above on $I$.

To define $\bCl$, we first define the notion of a \emph{guessing system}.
Roughly speaking, a guessing system consists of conditions which are organized in levels; the conditions on the bottom have a $\bCl$-value in the sense of \eqref{C:second:attempt}. Conditions on higher levels are greatest lower bounds of conditions on the levels below, and we have some control over their $\dot\Clink^\lambda$-value by \emph{Continuity} for $\dot Q$. 
\begin{dfn}\label{gs}\index{guessing system|textbf}
Say $(p,\dot q)\in P*\dot Q$ and $\lambda$ is regular and uncountable.
A \emph{$\lambda$-guessing system for $\dot q$ below $p$} is a quadruple $(\gT,\gH, \gl, \gq)$ such that
\begin{enumerate}[label=(\Alph*), ref=(\Alph*)]
\item $\gT$ is a tree, $\gT \subseteq {}^{<\omega}\gamma$, where $\gamma=\width(T) <\lambda$ and $\is$ (initial segment) is reversely well founded on $\gT$.
The root of $\gT$ is $\emptyset$ (i.e., the empty sequence).
\item For $s \in \gT$,  $\gr(s) = \{\xi \setdef s\conc\xi \in \gT \}$
is an ordinal. Write $\gT^0$ for the set of $\is$-maximal $s \in \gT$, i.e.,
$\gT^0 = \{ s \in \gT \setdef \gr(s)=0 \}$. 
\item $\gq$ is a function from $\gT$ into the set of $P$-names for conditions in $\dot Q$ and $\gl\colon \gT \rightarrow \lambda\cap (I\cup \{0\})$. 
\item\label{approximate} For $s \in \gT \setminus \gT^0$, $\gl(s) \in I$ and $\{\gq(s\conc\xi)\}_{\xi<\gr(s)}$ is a $\gl(s)$-adequate sequence and $p$ forces that
$\gq(s)$ is a greatest lower bound of 
\[
\{\gq(s\conc\xi)\}_{\xi<\gr(s)}.
\]
\item $\dom(\gH)=\gT^0$. 
\item \label{spectrum} For $s \in \gT^0$, there is a $P$-name $\dot \chi$ such that
$p \forces_P \dot \chi \in \Cl(\gq(s))$
and $\gH(s)$ is a $\gl(s)$-spectrum of $\dot \chi$ below $p$.
\item $\gq(\emptyset) = \dot q$.
\end{enumerate}
\end{dfn}
 Now we are ready to define ${\bar \Clink}^{\lambda}$: let $s \in {\bar \Clink}^{\lambda} (p,\dot q)$ if and only if \emph{either}
\begin{enumerate}[(i)]
\item \label{short:center} $s \in \Cl(p)$ and $p \forces \dot q \dleqlol 1_{\dot Q}$ holds \emph{or else}
\item \label{long:center} if $\lambda>\min I$,  $s=(\chi, \gT,\gH,\gl)$ where $\chi \in \Cl(p)$ and for some $\gq$, $(\gT,\gH,\gl, \gq)$ is a $\lambda$-guessing system for $\dot q$ below $p$.
\item if $\lambda=\min I$, $s=(\chi,\xi)$, where $\chi \in \Cl(p)$ and $p \forces_P \check \xi \in \dot \Cl(\dot q)$.
\end{enumerate}
It is straightforward to check that  $\ran(\bCl)$ has size at most $\lambda$. Thus we may assume $\bCl \subseteq  (P*\dot Q)\times \lambda$,
although this is not literally the case.

We have finally defined the stratification of $\bar P=P*\dot Q$. Let's check the remaining axioms.

\medskip

\textbf{Continuity:}\label{proof:composition:cont}
Say $\lambda'\in I$, $\lambda'<\lambda)$, both $\bar p=(p_\xi,\dot p_\xi)_\xi$ and $\bar q=(q_\xi,\dot q_\xi)_\xi$ are $\lambda'$-adequate sequences of length $\rho$ and for each $\xi<\rho$, ${\bar \Clink}^{\lambda}(p_\xi,\dot p_\xi)\cap {\bar \Clink}^{\lambda}(q_\xi,\dot q_\xi)\neq\emptyset$. 
Moreover, let $(p,\dot p)$ and $(q,\dot q)$ denote greatest lower bounds of $\bar p$ and $\bar q$, respectively.

First, by \emph{Continuity} for $P$, we can find $\chi \in \Cl(p)\cap \Cl(q)$.
For each $\xi<\rho$, fix $(\gT^\xi,\gH^\xi,\gl^\xi)$ such that for some $\chi'$,
\[  (\chi', \gT^\xi,\gH^\xi,\gl^\xi) \in \bCl(p_\xi, \dot p_\xi)\cap\Cl(q_\xi, \dot q_\xi). \]
and find $p_\g^\xi$ such that $(\gT^\xi,\gH^\xi,\gl^\xi,p_\g^\xi)$ is a guessing system for $\dot p_\xi$ below $p_\xi$.

Now construct a guessing system $(\gT,\gH,\gl,p_\g)$ for $\dot p$ below $p$, showing $(p,\dot p)\in\dom(\bCl)$.
It will be clear from the construction that $\gT,\gH$ and $\gl$ do not depend on the sequence of $p_\g^\xi$, $\xi<\rho$.
Let $s \in \gT$ if and only if $s=\emptyset$ or $\xi\conc s \in \gT^\xi$. 
Let $\gl(\emptyset)=\lambda'$, and of course $p_\g(\emptyset)= \dot p$.
Now let $s \in \gT\setminus \{ \emptyset \}$ be given and define $\gl(s)$, $p_\g(s)$ and, in the case that $s \in \gT^0$, also define $\gH(s)$. 
Find $s'$ such that $s= \xi\conc s'$. Let $\gl(s) = \gl^\xi(s')$ and let $\gH(s) = \gH^\xi(s')$ if $s \in \gT^0$ (or equivalently, if $s' \in (\gT^\xi)^0$). Let $p_\g(s)=p_\g^\xi(s')$. 

To check that $(\gT,\gH,\gl,p_\g)$ is a guessing system, first observe that $\is$ is reversely well-founded on $\gT$.
Moreover, $\gr(\emptyset) = \rho$ is an ordinal and $\gl(\emptyset)<\lambda$. Also, Item~\ref{approximate} holds for $s=\emptyset$, by construction. The rest of the conditions are straightforward to check; they hold by construction and because for each $\xi<\lambda'$, $(\gT^\xi,\gH^\xi,\gl^\xi, p_\g^\xi)$ is a guessing system.

The same construction for $(q,\dot q)$ yields a function $q_\g$ such that 
$(\gT,\gH,\gl,q_\g)$
is a guessing system for $\dot q$ below $q$. Thus,
\[  (\chi,\gT,\gH,\gl) \in {\bar \Clink}^{\lambda}(p,\dot p)\cap {\bar \Clink}^{\lambda}(q,\dot q). \]

\medskip

\textbf{Interpolation:} Say $(d,\dot d)\leq_{\bar P} (r,\dot r)$. First find $p \in P$ such that $p \lequpl d$ and $p \leqlol r$.
If $p \cdot d \neq 0$, then $p \cdot d \forces_P \dot d \leq_{\dot Q} \dot r$, so we can find $\dot p$ such that $p \cdot d \forces_P \dot p \dlequpl \dot d$ and $p \forces_P \dot p \dleqlol \dot r$.

\medskip

\textbf{Linking:} Say $\bar p\blequpl \bar d$, where $\bar p=(p,\dot p)$ and $\bar d=(d,\dot d)$, 
and assume $\bCl(\bar p)\cap\bCl(\bar d)\neq\emptyset$. 
First assume we can find $(\chi,\gT,\gl,\gH) \in \bCl(\bar p)\cap\bCl(\bar d) $ (i.e., \ref{long:center} holds in the definition of $\bCl$). 
As $\chi \in \Cl(p)\cap\Cl(d)$, by \emph{Linking} for $P$ there exists $w$ such that
for all regular $\lambda' \in 0\cup I \cap \lambda$, both $w\leqlo^{\lambda'}p$ and $p\leqlo^{\lambda'}d$.

Now fix $p_g$ and $d_g$ such that 
$(\gT,\gl,\gH,p_g)$ is a guessing system for $\dot p$ below $p$ and 
$(\gT,\gl,\gH,d_g)$ is a guessing system for $\dot d$ below $d$.
We show by induction on the rank of $s$ (in the sense of the reversed $\is$-order) that for each $s \in \gT$,
\begin{equation}\label{share:colour:ext}
p\cdot d \forces_P \dot \Cl (p_g(s)) \cap \dot \Cl (d_g(s)) \neq 0.
\end{equation}
First, let $s \in \gT^0$. By Definition~\ref{gs} Item~\ref{spectrum}, we can find $P$-names $\dot \alpha$ and $\dot \beta$ such that both have
spectrum $\gH(s)$ below $p$ and $d$, respectively, and moreover:
\[ p \forces_P \dot \alpha \in \dot \Cl(p_g(s)) \]
and 
\[ d \forces_P \dot \beta \in \dot \Cl(d_g(s)). \]
Thus, as $\dot \alpha$ and $\dot \beta$ have a common spectrum below $p \cdot d$, \eqref{share:colour:ext} holds.

For $s$ of greater rank, we may assume by induction that for each $\xi < \gr(s)$, 
\begin{equation}\label{share:colour:ext:ind}
p\cdot d \forces_P \dot \Cl (p_g(s\conc\xi)) \cap \dot \Cl (d_g(s\conc\xi)) \neq 0.
\end{equation}
As $p$ forces that
\begin{eqpar}
$\{p_g(s\conc\xi)\}_{\xi<\gr(s)}$ is a $\gl(s)$-adequate sequence and
$p_g(s)$ is a greatest lower bound of $\{p_g(s\conc\xi)\}_{\xi<\gr(s)}$,
\end{eqpar}
and as $d$ forces the corresponding statement for $d_g(s)$ and $\{d_g(s\conc\xi)\}_{\xi<\gr(s)}$, \emph{Continuity} for $\dot Q$ in the extension allows us to infer \eqref{share:colour:ext} for this $s$. This finishes the inductive proof on the rank of $s$.

Finally, \eqref{share:colour:ext} holds for $s = \emptyset$, so as $p_g(\emptyset)=\dot p$ and $d_g(\emptyset)=\dot d$, by \emph{Linking} for $\dot Q$ in the extension, $w \forces$ there exists $\dot w$ such that for all regular $\lambda' \in 0\cup I\cap\lambda$, both $\dot w\dot\leqlo^{\lambda'}\dot p$ and $\dot w\dot\leqlo^{\lambda'}\dot d$. Then $\bar w =(w,\dot w)$ is as desired.

Now secondly assume we have $\chi \in \bCl(\bar p)\cap\bCl(\bar d)$ and \ref{short:center} holds in the definition of $\bCl$.
In this case $\chi \in \Cl(p)\cap\Cl(d)$. 
Let $w \in P$ such that $w \leqlo^{<\lambda} p$ and $w\leqlo^{<\lambda} d$.
By assumption, $w \forces \dot d \dleqlol 1_{\dot Q}$ and $\dot p \dlequpl \dot d$. 
By \emph{Expansion}~(\ref{exp}) for $\dot Q$, we conclude $w \forces \dot p \leq \dot d$. 
We claim $\bar w=(w,\dot p)$ is the desired lower bound:
$\bar w \bleqlo^{<\lambda} \bar p$ holds because $\dleqlol$ is a preorder.
We show $\bar w \bleqlo^{<\lambda} \bar d$: we have $w \forces \dot p \leq \dot d \leq 1_{\dot Q}$ and by assumption $w \forces \dot p \dleqlol 1_{\dot Q}$. So by (\ref{er}), we conclude $w \forces \dot p \dleqlo^{<\lambda} \dot d$ and are done.

\medskip

\textbf{Density:}
Let $(p_0,\dot q_0) \in \bar R$. First, assume $min I < \lambda$ and fix a regular $\lambda' \in I \cap \lambda$.
By \emph{Density} for $\dot Q$ in the extension, we can find $P$-names $\dot \chi$ and $\dot q_1$ such that $\forces_P$``$\dot q_1 \dot \leqlo^{\lambda'} \dot q_1$ and $\dot \chi \in \Cl(\dot q_1)$''. 
By Lemma \ref{density reduction}, we can find $p_1 \leqlo^{\lambda'} p_0$ such that $\dot \chi$ is $\lambda'$-chromatic below $p_1$,
and by  \emph{Density} for $P$ we can find $p_2 \leqlo^{\lambda'} p_1$ and $\zeta$ such that $\zeta \in \Cl(p_2)$.

Let $\gT=\{ \emptyset \}$, $q_g(\emptyset)=\dot q_1$, $\gl(\emptyset)= \lambda'$ and let $\gH(\emptyset)$ be a $\lambda'$-spectrum of $\dot \chi$ below $p_2$. 
Thus $(\gT,\gl,\gH,q_g)$ is a guessing system for $\dot q_1$ below $p_2$---the only non-trivial clause is Item~\ref{spectrum}, 
which holds as $\gH(\emptyset)$ is a $\lambda'$-spectrum of $\dot \chi$ below $p_1$ and $p_2 \leq_P p_1$. 
So $(\zeta, \gT, \gl, \gH) \in \bCl(p_2,\dot q_1)$, and $(p_2, \dot q_1)\bar \leqlo^{\lambda'} (p_0, \dot q_0)$.

It remains to show $\dom(\Cl)$ is dense in the case that $min I =\lambda$. Find $(p_1,\dot q_1) \leq_{\bar R}(p_0,\dot q_0)$ such that for some ordinals $\zeta, \chi< \lambda$, $\zeta \in \Cl(p)$ and $p_1\forces_P \check{\chi}\in \dot \Cl(\dot q_1)$. 
Then $(\zeta, \chi) \in \bCl(p_1,\dot q_1)$.
\end{proof}

\section{Stratified Iteration and Diagonal Support}

Stratified forcing is iterable, if the right support is used.
We now define this type of support for iterations in which the quotients are forced to be stratified.
A more general theory (suitable for amalgamation) will be developed in Chapter~\ref{sec:ext}.

\medskip

Consider a product of forcings $P_\xi * \dot Q_\xi$, of the type of example $\ref{strat:ex:*}$. 
To see that such a forcing preserves cofinalities we may use the fact that for large enough $\lambda_0$, $P_\xi$ is closed under sequences of length $\lambda_0$ while $\dot Q_\xi$ has a (strong form of) $(\lambda_0)^+$-chain condition. 
To preserve the latter, we should use support of size less than $\lambda_0$; to preserve the former, our choice would be to use support of size $\lambda_0$.

\medskip

This calls for a kind of mixed support:
Define 
\[ \Pi^{d}_{\xi<\lambda_0} ( P_\xi * \dot Q_\xi) \]
to be the set of all sequences $(p(\xi), \dot q(\xi))_{\xi<\lambda_0} \in \Pi_{\xi<\lambda_0} ( P_\xi * \dot Q_\xi)$
such that for all but less than $\lambda_0$ many $\xi$, 
\begin{equation}\label{ex:support}
p(\xi)\forces_P \dot q(\xi)=\dot 1_\xi.
\end{equation}
Using the stratification of $P * \dot Q$, \eqref{ex:support} 
 may be written as
 \[ (p(\xi), \dot q(\xi)) \leqlo^{\lambda_0} 1_{P_\xi*\dot Q_\xi}. \] 
The use of the term ``diagonal'' is motivated by the intuition that we allow large support on the ``upper'' part $P_\xi$, and small support on the ``lower'' part $\dot Q_\xi$.
\begin{dfn}~\label{def:it:strat:comp}
\begin{enumerate}
\item We say the iteration $\bar Q^\theta = \langle P_\nu; \dot Q_\nu, \leq_\nu , \dot 1_\nu\rangle_{\nu<\theta}$ \emph{has stratified components}\index{iteration (forcing)!with stratified components} if and only if for every $\nu<\theta$,
$\dot Q_\nu$ is a $P_\nu$-name and $P_\nu$ forces $\dot Q_\nu$ is a stratified partial order as witnessed by the system 
\[
\bar \pss=(\dot\leqlol_\nu, \dot c_\nu, \dlequpl_\nu,  \dot \D_\nu, \dot \Clink_\nu)_{\lambda\in \Reg, \nu<\theta}.
\] 
(which is called its \emph{stratification}). 
Formally, the reader may wish to replace $\dot \D_\nu$ in the above by a name for the G\"odel number of a formula defining the class $\dot \D_\nu$ with parameter $\dot c_\nu$.
Moreover, we demand that for all regular $\lambda$ there is $\bar\lambda<\lambda^+$ such that $\supp_\lambda(p) \subseteq \bar \lambda$, where $\supp_\lambda(p)$ is defined as
\[
\supp^\lambda(p)=\{ \xi  \setdef \pi(p)_\xi \not \forces_\xi p(\xi) \dot \leqlo_\xi \dot 1_\xi\}.
\]
\item $P_\theta$ is the \emph{diagonal support limit }\index{diagonal support}\index{support!diagonal} of the iteration with stratified components $\bar Q^\theta$ with stratification $\bar \pss$ if and only if
$P_\theta$ is the set of all threads though $\bar Q^\theta$ such that for each regular $\lambda$, $\supp_\lambda(p)$ has size less than $\lambda$.
\item We say $\bar Q_\theta$ is an iteration with diagonal support if for all limit $\nu < \theta$, $P_\nu$ is the diagonal support limit of $\bar Q\nu$.
\end{enumerate}
\end{dfn}

We state the following theorem here for completeness; it will follow from Theorem \ref{thm:it:strat} and Lemma \ref{stratified:comp:implies:ext} as Corollary \ref{cor:it:strat:comp}.
\begin{thm}\label{thm:it:strat:comp}\index{support!diagonal}\index{diagonal support}
Say $\bar Q = \langle P_\nu, \dot Q_\nu \rangle_{\nu<\theta}$ is an iteration with stratified components and diagonal support.
Then $P_\theta$ is stratified.
\end{thm}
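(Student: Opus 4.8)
The plan is to prove Theorem~\ref{thm:it:strat:comp} by induction on $\theta$, handling the successor and limit cases separately, and in both cases producing an explicit stratification system $\pss^\theta=(\leqlol_\theta, c_\theta, \lequpl_\theta, \D_\theta, \Cl_\theta)_{\lambda\in\Reg}$ for $P_\theta$ built coordinatewise from the stratifications $\bar\pss$ of the components together with the diagonal support. The successor case $P_{\nu+1}=P_\nu*\dot Q_\nu$ is exactly Theorem~\ref{stratified:composition}, so nothing new is needed there; the entire content is the limit case, which is where I expect to spend all the effort.

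First I would set up the auxiliary orderings at a limit $\theta$ in the natural pointwise fashion: declare $p\bleqlol q$ iff $p\leq q$ and for every $\xi\in\dom(p)$, $p\res\xi\forces_\xi p(\xi)\dot\leqlol_\xi q(\xi)$, and dually for $\blequpl$ using the $\lequpl$-relations of the components (with the compatibility caveat as in the composition proof). One checks the pre-closure axioms (C~1)--(C~4) and the ``ordering'' axioms (S~1),(S~2),(S~3),(\ref{exp}) pointwise, reducing each to the corresponding axiom holding on a tail of coordinates in the relevant generic extension, just as in Theorem~\ref{stratified:composition}. The diagonal support hypothesis --- that $\supp^\lambda(p)$ has size $<\lambda$ and is bounded below $\lambda^+$ --- is what makes $\leqlol_\theta$ behave like a small-support product ordering and is used repeatedly below. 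Define $\D_\theta(\lambda,x,p)$ to consist of those $q\leq p$ such that for each $\xi\in\dom(q)$, $q\res\xi\forces_\xi q(\xi)\in\dot\D_\xi(\lambda,x,q\res\xi)$ (suitably decoded), noting this is $\qcdefD$ in the sequence of parameters $(\dot c_\xi)_{\xi<\theta}$.

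The heart of the argument is quasi-closure (\ref{qc:glb}) and continuity (\ref{continuous}) at limit stages, together with the centering clause. Given a $\lambda$-adequate sequence $\bar p=(p_\eta)_{\eta<\rho}$ in $P_\theta$ with $\rho\le\lambda$ and canonical witness/strategic guide $\bar w$, I would build the greatest lower bound $p_\rho$ coordinate by coordinate by recursion on $\xi<\theta$: at each $\xi$, the sequence $(p_\eta(\xi))_{\eta<\rho}$ is (forced by $p_\rho\res\xi$ to be) a $\lambda$-adequate sequence in $\dot Q_\xi$ — one must check the witness $\bar w$ restricted/projected appropriately remains canonical in $V^{P_\xi}$, exactly the $\Pi_1$-relativization argument from the composition proof — so by quasi-closure of $\dot Q_\xi$ a greatest lower bound $p_\rho(\xi)$ exists; and the support bound ensures $\supp^\lambda(p_\rho)=\bigcup_\eta\supp^\lambda(p_\eta)$ still has size $<\lambda$ so $p_\rho\in P_\theta$. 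For continuity and centering one carries guessing systems along each coordinate, combining them over the tree structure of Definition~\ref{gs}, which is where the second part of (\ref{centering}) in Definition~\ref{stratified:main} — uniform lower bounds simultaneously in the sense of $\leqlo^{\lambda'}$ for all $\lambda'<\lambda$ — is indispensable, since it lets us amalgamate the coordinatewise witnesses without destroying the support condition. \emph{Density} is handled as in the composition proof, finding conditions in $\dom(\Cl_\xi)$ on the relevant tail of coordinates and forming the obvious product-style guessing system.

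The main obstacle I anticipate is precisely the interaction between the definability requirement in the canonical witness and the limit construction of the greatest lower bound: one needs that the recursively-defined $p_\rho$ is itself obtained from $\bar w$ (or a canonically-definable modification of it) by a $\qcdefG$ function \emph{uniformly in $\xi<\theta$ and in the generic}, which forces a careful bookkeeping of parameters — in particular the model $M$ used to uniformize choices must be absorbed into the constant $x$ and the relativizations $(\cdots)^{L[A]}$ must be seen to be $\qcdefSeq$/$\qcdefG$ in the iterated extension. A secondary difficulty is verifying that at a limit $\xi<\theta$ \emph{inside} the coordinatewise recursion (when $\theta$ itself is a limit of limits, or more simply when dealing with $P_\xi$ for limit $\xi<\theta$) the diagonal support limit $P_\xi$ really carries the inductively-given stratification and that the threads condition of Definition~\ref{it:terminology} is respected; this is a routine but fiddly coherence check. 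Since the paper explicitly defers the full proof to the more general Theorem~\ref{thm:it:strat} on stratified \emph{extensions}, I would in practice only sketch the limit case here to the level of detail above and point forward to that theorem and to Corollary~\ref{cor:it:strat:comp}.
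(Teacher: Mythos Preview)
Your proposal is correct in outline and your identification of the two main obstacles (definability of the coordinate-wise lower-bound construction, and coherence of the threads at intermediate limits) is accurate. Your last paragraph already concedes the point: the paper does not carry out this direct coordinate-wise argument at all, but instead derives the theorem as Corollary~\ref{cor:it:strat:comp} from Lemma~\ref{stratified:comp:implies:ext} (composition is a stratified extension) together with the general iteration theorem~\ref{thm:it:strat} for stratified \emph{extensions}.

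The substantive difference is one of organization rather than mathematical content. You work coordinate-wise on the components $\dot Q_\xi$, essentially replaying the composition proof $\theta$ many times; the paper instead abstracts the relationship between $P_\iota$ and $P_{\iota+1}$ into the axiomatic notion of stratified extension (Definitions~\ref{pcs:is}, \ref{pss:is}, \ref{def:s:ext}), proves once that this notion is preserved at diagonal-support limits (Theorems~\ref{thm:it:qc} and~\ref{thm:it:strat}), and observes that composition is an instance. The payoff of the paper's route is reusability: amalgamation (Section~\ref{sec:amalgamation}) and the coding steps taken from an inner model are \emph{not} compositions with a stratified $\dot Q_\xi$, so your direct argument does not apply to them, whereas the extension framework does. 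Conversely, your approach is more self-contained for the bare statement at hand. Two minor points: in your definition of $\D_\theta$ the third argument should be $p(\xi)$, not $q\res\xi$; and your coordinate-wise recursion for the greatest lower bound glosses over exactly the case-split the paper makes explicit in the proof of Theorem~\ref{thm:it:qc}, namely $\cof(\theta)\le\bar\lambda$ versus $\cof(\theta)>\bar\lambda$ (in the latter case the projections $\pi_{\theta(\zeta)}$ are not obviously $\qcdefG(\bar\lambda\cup\{x\})$ and one must first pass to a stage $\iota'$ where all supports are bounded).
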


\chapter{Easton Supported Jensen Coding}\label{sec:coding}

In this chapter we shall discuss Easton supported Jensen coding with localization (David's trick).
See Chapter~\ref{s.overview.proof} (especially Section~\ref{p.s.jensen.david})  for an introduction.

\section{A Variant of the Square Principle}\label{varsquare}\index{Square Principle}

The variant of square we discuss in this section is a technical prerequisite which we use to obtain a smooth transition from inaccessible to singular coding
at certain points in our construction, an approach we shall refer to as \emph{virtual inaccessible coding}. 
We will say more about this when we use it.
\begin{lem}
There is a class $(E_\alpha)_{\alpha\in\Card}$\index[notation]{ E alpha alpha in Card@$(E_\alpha)_{\alpha\in\Card}$}
\index[notation]{E alpha alpha in Card@$(E_\alpha)_{\alpha\in\Card}$} such that for all $\alpha \in \Card$ which are not Mahlo\index{Mahlo cardinal}\index{cardinal!Mahlo}, $E_\alpha$ is club in $\alpha$,
 $E_\alpha \subseteq \Sing$ and
whenever $\beta$ is a limit point of $E_\alpha$ we have  $E_\beta= E_\alpha \cap\beta$
 and $E_\alpha \in J^{A\cap\alpha}_{\delta+2}$ whenever $J^{A\cap\alpha}_\delta\models$``$\alpha$ is not Mahlo.''
\end{lem}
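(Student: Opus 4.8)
The plan is to construct $(E_\alpha)_{\alpha\in\Card}$ by recursion on $\alpha$, mimicking the standard construction of a global $\Box$-sequence in $L$ (as in Jensen's original argument, relativized to the predicate $A$), but stopping at the level of cardinals rather than all ordinals, and being careful that the resulting clubs avoid Mahlo cardinals and consist only of singular cardinals. First I would fix, for each non-Mahlo cardinal $\alpha$, the least $\delta=\delta(\alpha)$ such that $J^{A\cap\alpha}_\delta\models$``$\alpha$ is not Mahlo'' (this exists and is $<\alpha^+$ since in $L[A]$, $\alpha^+$ computes correctly whether $\alpha$ is Mahlo, and being non-Mahlo is witnessed by a club avoiding inaccessibles, or by $\alpha$ being singular or a successor, all of which reflect into some $J^{A\cap\alpha}_\delta$). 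The point of indexing by $\delta(\alpha)$ is that the construction of $E_\alpha$ will be carried out inside $J^{A\cap\alpha}_{\delta(\alpha)+1}$ using the canonical $\Sigma_1$-Skolem functions and condensation for the $J^{A\cap\alpha}$-hierarchy, so that $E_\alpha\in J^{A\cap\alpha}_{\delta(\alpha)+2}$ automatically; the coherence clause $E_\beta=E_\alpha\cap\beta$ for limit points $\beta$ of $E_\alpha$ will be arranged exactly as in the classical proof, by taking $E_\alpha$ to enumerate a suitable closed set of ordinals $\gamma<\alpha$ for which the relevant hull (the transitive collapse of a $\Sigma_1$-hull of $J^{A\cap\alpha}_{\delta(\alpha)}$) condenses to $J^{A\cap\gamma}_{\delta(\gamma)}$ with the ``same'' parameters, so that the collapsing map carries $E_\alpha\cap\gamma$ to $E_\gamma$.

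More concretely, for non-Mahlo $\alpha$ that is a limit cardinal, let $p_\alpha$ be the $<_{L[A]}$-least parameter such that the $\Sigma_1$-hull $H_\alpha=\mathrm{Hull}_{\Sigma_1}^{J^{A\cap\alpha}_{\delta(\alpha)}}(\gamma\cup\{p_\alpha\})$ is cofinal in $\alpha$ for all sufficiently large $\gamma<\alpha$ (this uses that $\alpha$ is singular in $J^{A\cap\alpha}_{\delta(\alpha)+1}$, or is a successor cardinal there, hence not regular there, so such a parameter exists). Define $C_\alpha=\{\gamma<\alpha:\gamma=H_\alpha\cap\alpha\text{ where }H_\alpha=\mathrm{Hull}_{\Sigma_1}^{J^{A\cap\alpha}_{\delta(\alpha)}}(\gamma\cup\{p_\alpha\}),\ \gamma\text{ a cardinal}\}$, and let $E_\alpha$ be the set of limit points of $C_\alpha$ of cofinality strictly below their own size (or more simply, the set of singular cardinals in $C_\alpha$) made club by closing off; for successor cardinals $\alpha=\mu^+$ one sets $E_\alpha$ to be any fixed club in $\alpha$ of order type $\mathrm{cf}(\alpha)=\alpha$ consisting of singular cardinals above $\mu$, for instance the singular cardinals in the interval $(\mu,\alpha)$, which is club since GCH in $L[A]$ gives $\alpha=\mu^+$ and there are cofinally many singular cardinals below $\mu^+$ only when $\mu$ is a limit cardinal — when $\mu$ is a successor or $\omega$ one instead takes $E_\alpha=\emptyset$ or a trivial club, which is harmless since $\alpha$ then has no limit points of the required kind. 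The coherence is then the usual condensation argument: if $\beta$ is a limit point of $E_\alpha$, the hull defining ``$\gamma\in C_\alpha$'' for $\gamma$ near $\beta$ collapses, by condensation for $J$-structures relativized to $A$, to exactly the hull defining $C_\beta$ inside $J^{A\cap\beta}_{\delta(\beta)}$, giving $E_\beta=E_\alpha\cap\beta$; and since $A\cap\beta$ is recovered correctly this is genuinely $E_\beta$ and not some variant.

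For the definability clause, I would observe that everything in the recursive clause defining $E_\alpha$ — namely $\delta(\alpha)$, the parameter $p_\alpha$, the hulls, the transitive collapses — is computed inside $J^{A\cap\alpha}_{\delta(\alpha)+1}$ (the collapse of a hull of $J^{A\cap\alpha}_{\delta(\alpha)}$ lives at level $\delta(\alpha)$, and collecting these into the set $E_\alpha$ takes one more level), so $E_\alpha\in J^{A\cap\alpha}_{\delta(\alpha)+2}$; and if $J^{A\cap\alpha}_\delta\models$``$\alpha$ is not Mahlo'' then $\delta\geq\delta(\alpha)$ by minimality of $\delta(\alpha)$, hence $J^{A\cap\alpha}_{\delta+2}\supseteq J^{A\cap\alpha}_{\delta(\alpha)+2}\ni E_\alpha$, which is exactly the asserted fine-structural conclusion. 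That $E_\alpha\subseteq\Sing$ and $E_\alpha$ is club in $\alpha$ are built into the definition above (closing off a set of singular cardinals under limits of small cofinality stays within the singular cardinals, since a limit of singular cardinals of cofinality below its size is itself singular).

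The main obstacle I anticipate is the successor-cardinal and small-cofinality bookkeeping: at a successor cardinal $\alpha=\mu^+$ the ``hull'' construction degenerates (there is no natural singularizing parameter), so one has to patch in a club by hand, and one must check this patch does not destroy coherence at limit points of $E_\alpha$ — but limit points of a club in $\mu^+$ below $\mu^+$ all have cardinality $\leq\mu$, and for those $\beta$ the value $E_\beta$ is again governed by the generic clause, so one needs the hand-chosen club at $\mu^+$ to cohere with those; the cleanest fix is to choose $E_{\mu^+}$ itself by a hull construction applied to $J^{A\cap\mu^+}_{\delta(\mu^+)}$ (which \emph{does} see $\mu^+$ as non-regular since it is a successor cardinal, hence $\Sigma_1$-singular over an appropriate level), thereby unifying both cases and making the condensation argument uniform. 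A secondary subtlety is ensuring $\delta(\alpha)<\alpha^+$ and that the map $\alpha\mapsto\delta(\alpha)$ is itself sufficiently absolute, which follows from GCH in $L[A]$ together with the fact that ``$\alpha$ is Mahlo'' is a statement about $V_{\alpha+1}$ that is correctly decided by $J^{A\cap\alpha}_{\alpha^+}$; I would spell this out but expect no real difficulty there.
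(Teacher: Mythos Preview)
Your proposal has a genuine gap at the central step. You write that a singularizing parameter $p_\alpha$ exists because ``$\alpha$ is singular in $J^{A\cap\alpha}_{\delta(\alpha)+1}$, or is a successor cardinal there, hence not regular there.'' This is false. If $\alpha$ is an inaccessible cardinal that is not Mahlo, then some $J^{A\cap\alpha}_{\delta}$ contains a club $C\subseteq\alpha$ avoiding regulars, so $\delta(\alpha)$ exists; but $\alpha$ itself remains regular---indeed inaccessible---in $J^{A\cap\alpha}_{\delta(\alpha)+1}$ and far beyond. There is then no parameter $p_\alpha$ making $\mathrm{Hull}_{\Sigma_1}^{J^{A\cap\alpha}_{\delta(\alpha)}}(\gamma\cup\{p_\alpha\})$ cofinal in $\alpha$ for bounded $\gamma$, and your definition of $C_\alpha$ is empty. ``Not Mahlo'' is strictly weaker than ``singular at the relevant level''; the witness is a club avoiding regulars, not a short cofinal sequence.

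The paper handles exactly this by a three-way case split on how $\alpha$ sits inside $M_\alpha=J^{A\cap\alpha}_{\eta(\alpha)}$: Case~1 ($\alpha$ already singular in $M_\alpha$) uses the standard global square $C_\alpha$; Case~2 ($\alpha$ is $\mathbf{\Sigma}_1$-singular in $M_\alpha$ but not outright singular there) uses a tail of $C_\alpha$ restricted to maps whose range contains the non-Mahlo witness $m_\alpha$; Case~3 ($\alpha$ is $\mathbf{\Sigma}_1$-regular in $M_\alpha$) takes $E_\alpha=\{\beta<\alpha: h_{\Sigma_1}^{M_\alpha}(\beta\cup\{m_\alpha\})\cap\alpha=\beta\}$, the club of hull-closure points. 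The parameter $m_\alpha$ in Case~3 is what forces $E_\alpha\subseteq\Sing$: since $m_\alpha$ contains a club $C\subseteq\Sing$ and $C$ is in every hull, elementarity gives $\beta\in C$ for each $\beta\in E_\alpha$. The coherence proof then requires checking that a limit point $\beta$ of $E_\alpha$ falls into the correct case and that the constructions match up across cases---this is where the careful choice of parameters (solidity witnesses, $m_\alpha$ in the range) does real work. Your uniform ``singularizing hull'' approach cannot be patched to cover Case~3; you need a genuinely different construction there, together with a cross-case coherence argument.

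Secondarily, your discussion of successor cardinals is confused (there are no cardinals strictly between $\mu$ and $\mu^+$, so no club in $\mu^+$ can consist of singular cardinals), but this is not the real obstacle: the sequence is only used at limit cardinals in the application, and the non-Mahlo inaccessibles are where your argument actually breaks.
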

\begin{proof}
Let $(C_\alpha)_{\alpha \in \Sing}$ be the standard global square\index{Global Square Principle} on singulars, constructed as in \myplacecite{11.63,~p.~228}{ralfbook}.
For $\alpha \in \Sing$, let $\eta^*(\alpha)$ be the maximal limit ordinal such that $\alpha$ is regular in $J^{A\cap\alpha}_{\eta^*(\alpha)}$ and let $M^*_\alpha = J^{A\cap\alpha}_{\eta^*(\alpha)}$.
Observe that if $\beta$ is a limit point of $C_\alpha$, there is
$\sigma\colon M^*_\beta \rightarrow M^*_\alpha$ which is (at least) $\Sigma_0$-elementary such that $\crit(\sigma)=\beta$ and $\sigma(\beta)=\alpha$.

Suppose $\alpha$ is not Mahlo. 
Let $\eta$ be least such that $J^{A\cap\alpha}_{\eta}\models$``$\alpha$ not Mahlo'', let $m_\alpha = J^{A\cap\alpha}_{\eta}$, $\eta(\alpha)=\eta+1$ and let $M_\alpha = J^{A\cap\alpha}_{\eta(\alpha)}$.\footnote{
Observe we could write $\eta(\alpha)=\eta$ above. 
Then $m_\alpha$ would be minimal with a definable club of definably singular cardinals. 
This is still enough to make the rest of the argument go through, as our goal was to witness the non-Mahlo-ness in a way that is preserved with $\Sigma_0$-embeddings with large enough range.}

\begin{description}
\item[Case 1] If $\alpha \in \Sing^{M_\alpha}$ we let $E_\alpha = C_\alpha$, where the latter comes from the standard square.

\item[Case 2] Otherwise, if $\alpha$ is $\mathbf{\Sigma}_1$-singular in $M_\alpha$, to ensure coherency we define $E_\alpha$ to be the tail of $C_\alpha$ obtained by requiring that the maps witnessing that $\xi \in C_\alpha$ have $m_\alpha$ in their range.

In detail:
Note that $\rho_1(M^*_\alpha)=\alpha$. 
Let $p(\alpha)$ be the 1st standard parameter and let 
\[
W(\alpha)= \{ W^{\nu, p_1(M^*_\alpha)}_{M^*_\alpha}\setdef \nu \in p_1(M^*_\alpha) \}
\]
be the appropriate ``solidity witness'' (following the notation in \cite{hbst:fs}). 
Observe that by construction, we can find a minimal $\theta(\alpha)<\alpha$ such that
$h_{\Sigma_1}^{M^*_\alpha}(\theta(\alpha)\cup\{p(\alpha)\})$ is unbounded in $\alpha$.\footnote{Pick a minimally definable function $F$ witnessing that $\alpha$ is singular; 
the parameter of $F$, if any, can be absorbed into $\theta(\alpha)$ since $M_\alpha$ projects to $\alpha$; 
now the Skolem hull is a fortiori unbounded.}
Let
$E^*_\alpha$ consist of those $\beta \in\alpha\cap\Sing$ such that there is a %
$\Sigma_0$-elementary
map $\sigma\colon J^{A\cap\beta}_{\bar \eta}\rightarrow M^*_\alpha$ such that $\{ \alpha, p(\alpha), m_\alpha \} \cup W(\alpha)\subseteq \ran(\sigma)$, $\crit(\sigma)=\beta$ and $\sigma(\beta)=\alpha$.
As in the proof of $\Box$, we show that if $E^*_\alpha$ is bounded in $\alpha$,  $\cof(\alpha)=\omega$.
In this case, we
can set $E_\alpha = C_\alpha$.
\begin{fct}
If $E^*_\alpha$ is bounded in $\alpha$,  $\cof(\alpha)=\omega$.
\end{fct} 
\begin{proof}[Sketch of the proof.]
We need to find an embedding $\sigma$ witnessing $\beta \in E^*_\alpha$ for some large enough $\beta$.
Let $\xi <\alpha$ be given and let $M \prec M_\alpha$ be a countable elementary submodel such that $\{\xi, m_\alpha \} \subseteq M$ and let $\pi \colon\bar M \rightarrow M$ be the inverse of the collapsing map.
Let $E$ be the $(\crit(\pi), \beta)$-extender, where $\beta=\sup(\ran(\pi)\cap\alpha)$, derived from $\pi$.
Let $\sigma\colon \Ult(\bar M, E) \rightarrow M_\alpha$ be the factor map. 
Check that $\sigma$ is as required, in particular it has critical point $\beta$ and $m_\alpha \in \ran(\sigma)$.
\end{proof}

If $E^*_\alpha$ is unbounded in $\alpha$, we define two sequences as follows.
Let $\gamma_0 = \min E^*_\alpha$.
Given $\gamma_\xi$, let $\delta_\xi$ be the least $\delta$ such that $h^{M^*_\alpha}_{\Sigma_{n(\alpha)}}(\delta\cup\{p(\alpha)\})\setminus (\gamma_\xi+1)\neq\emptyset$.
Let 
\[
\gamma_{\xi+1} = \min [ E^*_\alpha \setminus  h^{M^*_\alpha}_{\Sigma_{n(\alpha)}}(\delta_\xi\cup\{p(\alpha)\}) ].
\]
Here, assume that $h^{M_\alpha}_{\Sigma_{n(\alpha)}}$ to be defined such that its range grows by one element for every ordinal, slightly abusing notation.
 
At limit points $\lambda$, let $\gamma_\lambda= \bigcup_{\xi<\lambda} \gamma_\xi$, if
this yields a point below $\alpha$. 
Otherwise set $\lambda=\bar\theta(\alpha)$ and stop the construction.
Observe that $\delta_\xi<\theta(\alpha)$ and the $\delta_\xi$ are increasing, so the ordertype $\bar \theta$ of the sequence constructed is at most $\theta(\alpha)<\alpha$.
Set
\[
E_\alpha = \{ \gamma_\xi \setdef \xi <\bar\theta(\alpha) \}\cap\Card.
\]
Note the only difference to $\Box$ is the requirement $m_\alpha \in \ran(\sigma)$ when $\eta^*(\alpha) = \eta(\alpha)$, i.e., $M_\alpha=M^*_\alpha$.

\item[Case 3] $\alpha$ is $\mathbf{\Sigma}_1$-regular over $M_\alpha$.
So
\[
E_\alpha = \{ \beta <\alpha \setdef h_{\Sigma_1}^{M_\alpha}(\beta\cup\{m_\alpha\})\cap\alpha=\beta \}
\]
defines a club.
Observe that for some $C\in m_\alpha$ , we have $m_\alpha\models C\subseteq \Sing$ and $C$ is club in $\alpha$;
moreover, $\{\alpha, C\} \subseteq h_{\Sigma_n}^{M_\alpha}(\emptyset\cup\{m_\alpha\})$ ($\alpha$ is the cardinality of $m_\alpha$).
So for every $\beta \in E_\alpha$, we have $\beta \in C$ by elementarity, and thus $\beta \in \Sing$.\footnote{The same argument would work in case 1.}
\end{description}

It remains to see that $(E_\alpha)_{\alpha\in \Sing}$ is coherent.
So let $\alpha\in\Sing$ and $\beta$ be a limit of $E_\alpha$.
We must check that $\beta$ falls into the same case as $\alpha$. 

Assume $\alpha$ falls into case 1.
We know that there is a $\Sigma_0$-elementary $\sigma\colon M^*_\beta\rightarrow M^*_\alpha$.
Since $\eta^*(\alpha) < \eta(\alpha)$, it must be the case that
$\eta^*(\beta) < \eta(\beta)$: otherwise, $m_\beta$ is a $J$-structure which is a model of ``$\beta$ is not Mahlo'' and by elementarity $\sigma(m_\beta)$ is a $J$-structure and $\sigma(m_\beta)\models$``$\alpha$ is not Mahlo,''
contradicting $\eta^*(\alpha) < \eta(\alpha)$.
Thus $\beta$ also falls into case 1 and coherency follows by the coherency of $\Box$.

Assume $\alpha$ falls into case 2.
It follows that $\eta^*(\alpha) = \eta(\alpha)$.
We know that there is $\sigma\colon J^{A\cap\beta}_{\bar \eta}\rightarrow M^*_\alpha$ as in the definition.
Standard fine structural arguments (e.g. see \cite{ralfbook}) show that $\eta^*(\beta)=\bar\eta$, i.e., $J^{A\cap\beta}_{\bar \eta} = M^*_\beta$ and $\beta$ is $\Sigma_1$-singular in $M^*_\beta$.
We have $m_\alpha \in \ran(\sigma)$, so
by $\Sigma_0$-elementarity $\sigma^{-1}(m_\alpha)$ must be a $J$-structure and $\sigma^{-1}(m_\alpha) = J^{A\cap\beta}_{\eta^*(\beta)-1}$, for otherwise the rudimentary closure of $m_\alpha\cup\{m_\alpha\}$ would have to exist in $M_\alpha$.
Thus $\eta^*(\beta)=\eta(\beta)$, for by a similar argument, $J^{A\cap\beta}_{\eta^*(\beta)-2}\models$``$\beta$ is Mahlo.''
Thus $\beta$ falls into case 1.
Now the same arguments as in the proof of $\Box$ show that $E_\beta=E_\alpha\cap\beta$  (maps factor because $W(\alpha) \subseteq \ran(\sigma)$).

Now assume $\alpha$ falls into case 3.
Let $\sigma\colon J^{A\cap\beta}_{\bar\eta}\rightarrow M_\alpha$ be the inverse of the collapsing map of $h_{\Sigma_1}^{M_\alpha}(\beta\cup\{m_\alpha\})$, and let
$\bar m =\sigma^{-1}(m_\alpha)$.
Clearly $\sigma(\beta)=\alpha$, $\crit(\sigma)=\beta$. 
By elementarity,  $\bar m$ is the first $J$-structure witnessing that $\beta$ is not Mahlo.
Thus, $\bar\eta = \eta(\beta)$.
Also, by elementarity and since $\beta$ is a limit point of $E_\alpha$, $\beta$ is $\mathbf{\Sigma}_1$-regular in $M_\beta = J^{A\cap\beta}_{\bar\eta}$.
Thus $\beta$ falls into case 2.
It is easy to check that $E_\beta = E_\alpha\cap\beta$.
\end{proof}

\section{Notation and Building Blocks}\label{sec:full:setting}

\begin{dfn}[Notation]
In the following we shall use a convenient partition of the ordinals into $4$ components:
for $0\leq i\leq3$, write $[\On]_i$\index[notation]{On i@$[\On]_i$}\index[notation]{ On i@$[\On]_i$} for the set of $\xi$ which is equal to $i$ modulo $4$.
Write $(\xi)_i$\index[notation]{xi i@$(\xi)_i$}\index[notation]{ xi i@$(\xi)_i$} for the $\xi$-th element of $[\On]_i$.
Given a set (or class) $B$,
write $[B]_i = \{ (\xi)_i \setdef \xi \in B \}$;
this is consistent with the notation $[\On]_i$ for the $i$-th component.
When we say \emph{$A$ codes $B$ on its $i$-th component},
we mean $A \cap [\On]_i = [B]_i$; that is $\xi \in B \iff (\xi)_i \in A$.
We also write $[A]^{-1}_i$ for $\{ \xi \setdef (\xi)_i \in A \}$,
so that $[A]^{-1}_i$ is the set coded by the $i$-th component of $A$.
We shall sometimes write $B\oplus C$\index[notation]{plus@$\oplus$}\index[notation]{oplus@$\oplus$}\index[notation]{  oplus@$\oplus$} for $[B]_0 \cup [C]_1$.%

Recall that  we write $s \conc t$ for the concatenation of sequences $s$ and $t$. 
If $X\subseteq [\lh(s), \gamma)$ for $\gamma\in \On$, we shall use the shorthand $s\conc X$\index[notation]{s^X@$s \conc X$} for $s\conc\chi_X$\ where $\chi_X$ is the characteristic function of $X$ on $[\lh(s), \sup(X))$. 
We use $s \conc i$\index[notation]{s^i@$s \conc i$} to mean ``$s$ with the single value $i$ appended'' when for $i \in \{0,1\}$. 
The two uses stand in conflict in a few extreme cases, but we trust the reader to find out the correct interpretation each time.
\end{dfn}

Let $\langle \Diamond_\delta \setdef \delta \in \On \rangle$ be the global $\Diamond$-sequence\index{Diamond Principle}\index{Global Diamond Principle}
of $L$ concentrating on the inaccessibles below $\kappa$---i.e., the following holds in $L$: If $X \subseteq \kappa$, the set 
\[
\{ \alpha \setdef \Diamond_\alpha = X\cap\alpha \}\cap\Inacc
\] 
is stationary.
Let $(E_\delta)_{\delta\in \Card}$\index[notation]{E alpha alpha in Card@$(E_\alpha)_{\alpha\in\Card}$} be the variant of square constructed in Section~\ref{varsquare}, except that we set $E_{\kappa^+}=E_\kappa = \emptyset$, for a more uniform notation.

Recall that $\langle .\,, .\rangle$ for us denotes the G\"odel pairing function, that $\Card'$ denotes $(\Card \setminus \omega) \cup\{ \emptyset \}$, and that we write $\emptyset^+=\omega$. 

\medskip

We work in the following setting, which captures the essence of the situation we shall find ourselves in at (some of the) successor stages in our iteration.
The reader should consult Chapter~\ref{sec:main} (especially Section~\ref{def:it:succ:stage}) and Chapter~\ref{s.overview.proof} for a comprehensive motivation.

We shall, throughout this section, suppose that $\kappa$ is the only Mahlo and $V=L[G^o][\bar B^-]$ is a model of $\GCH$, 
where $G^o \subseteq \HSize(\kappa^+)$ and $\bar B^- \subseteq \HSize(\kappa^{++})$.
We also assume $\HSize(\kappa^{++})=L_{\kappa^{++}}[G^o]$ (intuitively, $\bar B^-$ contributes no subsets of $\kappa^+$ --- in fact, it comes from a $\kappa^{++}$-distributive forcing).
Lastly, for some real $x_0$, $\omega_1 = {\omega_1}^{L[x_0]}$ and 
 $\Card^V\setminus \omega_1=\Card^{L}\setminus \omega_1$.

Suppose $\bar T=\langle T(\sigma, n, i ,j) \setdef (\sigma, n, i,j) \in {}^{<\kappa}2 \times \omega\times \omega \times 2\rangle$ is  our canonical sequence\index{canonical!sequence of trees}\index{Suslin trees!canonical sequence of} of $\kappa^{++}$-Suslin trees\index{canonical!sequence of trees}\index{Suslin trees!canonical sequence of} so that nodes of height $\xi$ in each tree are elements of ${}^{\xi}2$ and so that $\bar T$ is definable, in fact locally semidecidable (see Section~\ref{p.s.trees}).
Further, assume that in $L[G^o][\bar B^-]$, there is a set $I$ of size $\kappa$ and a real $r$ such that 
$\bar B^-(\sigma,n,i,j)$ is a branch through $T(\sigma,n,i,j)$ if and only if $r(n)=i$ and $\sigma \in I$;
otherwise, we assume $\bar B^-(\sigma,n,i,j) = \emptyset$, and $T(\sigma,n,i,j)$ remains $\kappa^{++}$-Suslin in $L[G^o][\bar B^-]$.

\medskip

Find $A_0\subseteq \kappa^{++}$\index[notation]{A0@$A_0$} such that for each cardinal $\alpha \leq \kappa^{++}$,
we have $\HSize(\alpha) = L_\alpha[A_0]$.
We can also assume that $A_0\cap [\On]_0=[I]_0$, 
\[
A_0 \cap \omega \cap [\On]_1= [x_0 \oplus \{ 2n + i \setdef r(n)=i \}]_1,
\] 
and $A_0 \cap [\kappa^+, \kappa^{++}) \cap [\On]_1$ is the set 
\[
\{(\#(\sigma,n,i,j,t))_1 \setdef r(n)=i, \sigma \in I, t\in \bar B^-(\sigma,n,i,j) \setminus \HSize(\kappa^+)\}.
\]
Thus we have $V=L[G^o][\bar B^-]=L[A_0]$ and for each cardinal $\alpha$,
we have $\HSize(\alpha) = L_\alpha[A_0]$.
In fact, $\Card^V=\Card^{L[A_0\cap\omega]}$.

We shall write $s_{\kappa^+}$ for the characteristic function of $A_0 \cap [\kappa^+, \kappa^{++})$, as we will sometimes treat this set similar to conditions in our forcing (the top ``string'').
Our goal is to find a forcing $P$ which codes $L[A_0]$ into a subset of $\omega$ in the sense that if $G$ is $P$-generic over $L[A_0]$,
$L[G] = L[G\cap\omega]$, where we view $G$ as a set of ordinals. 
Moreover, we want that $A_0$ is ``locally'' definable in $L[G]$ (localization or David's trick).

\medskip

We now define the building blocks of our forcing.
These consist of three types of almost-disjoint coding: one for successor cardinals, one for inaccessible cardinals, and a slight variation of the latter for the coding from $\kappa^+$ into $\kappa$.
Also, we define the singular limit coding using \emph{coding delays}. 
For all this, we set up the following \emph{coding apparatus}.

We will make use of our partition in the following way:
the coding of $A_0$ by $G$ uses $[\On]_0$, the ``successor coding'' of $G\cap[\alpha^+, \alpha^{++})$ into $G\cap[\alpha, \alpha^{+})$ uses $[\On]_1$,
the ``inaccessible limit coding'' from $G\cap[\alpha, \alpha^+)$ to $G \cap \alpha$ for inaccessible $\alpha$ uses $[\On]_2$.
Finally, the singular limit coding from $G\cap[\alpha, \alpha^+)$ to $G \cap \alpha$ for singular $\alpha$ uses $[\On]_3$.

The definition of the building blocks will be relative to a set $A$.
In later use, $A$ will either stand for $A_0$---this will be the case when $\alpha \in \{ \kappa, \kappa^+\}$---or for a set
$A_p$, which we define in the next section, 
depending on a condition $p \in P$.
This is due to the nature of the Mahlo coding from $\kappa^+$ into $\kappa$ and can be circumvented by treating the forcing as an iteration in three steps (see Section~\ref{3steps}).

\begin{dfn}\label{reg_defs}
Let $\alpha$ be an uncountable cardinal $\leq\kappa^{+}$ and let $A \subseteq \On$. 
 
\begin{description}%

\item [Basic Strings]\index{basic strings}\index{string!basic}

Let $S_\alpha$ denote the set of \index[notation]{s@$\lvert s\rvert$}\index[notation]{ s @$\lvert s\rvert$}
\index[notation]{p alpha@$\lvert p_\alpha\rvert$}\index[notation]{ p alpha @$\lvert p_\alpha\rvert$}$s\colon [\alpha, |s|) \rightarrow 2$, where $\alpha \leq |s| < \alpha^+$.
We abuse notation by writing $\emptyset_\alpha$ for the empty string at $\alpha$ and note $|\emptyset_\alpha| = \alpha$.

Also, let $S_{<\kappa}$\index[notation]{S<kappa@$S_{<\kappa}$} denote the set of $s\colon [0, |s|) \rightarrow 2$, where $|s| < \kappa$.

\item[Steering Ordinals]\index{steering ordinals}
\index[notation]{mu xi 0, mu <xi 0@$\mu_0^\xi$, $\mu_0^{<\xi}$}
For $\xi\in [\alpha, \alpha^+)$, define $\mu_0^\xi=\mu(A)_0^\xi$ 
and $\mu_0^{<\xi}$  simultaneously by induction:
Let $\mu_0^{<\alpha}$ be least $\mu$ such that\footnote{We want $E_\alpha$ in the coding structures of locally inaccessibles, as this will give us a way to distinguish between conditions as they turn out in the distributivity proof and those as built to show extendibility.
The presence of $\Diamond_\alpha$ is important for a $\Delta$-system argument in \ref{it:reals:are:caught}, when we show that $\kappa$ is not collapsed in our iteration and stays Mahlo until the last stage.} 
$E_\alpha, \Diamond_\alpha \in L_\mu[A\cap\alpha]$ 
and for $\xi > \alpha$ let
\[
\mu_0^{<\xi} = \sup_{ \nu < \xi} \mu^{\nu}.
\]
Define $\sigma(\xi)$ to be least above $\mu_0^{<\xi}$ such that $L_{\sigma(\xi)}\vDash$``$\alpha$ is the greatest cardinal.'' 
Let $\mu_0^\xi= \sigma(\xi) +  \alpha$.

Note that by induction $\xi \leq \mu_0^{<\xi}$.
If $\alpha \in \Reg$, we write $\mu^\xi$ and $\mu^{<\xi}$ for $\mu^\xi_0$ and $\mu_0^{<\xi}$.
We write $\mu_0^s$ and $\mu_0^{<s}$ for $\mu^{|s|}_0$ and $\mu_0^{<|s|}$; similarly for $\mu^s$ and $\mu^{<s}$.

\item [Coding structures]\index{coding structures}

We let $\mathcal{A}_0^s=\mathcal{A}(A)_0^s$ denote $L_{\mu_0^s}[A\cap\alpha,s]$.
If $\alpha \in \Reg$, we write $\mathcal{A}^s$\index[notation]{A s, A s 0, A xi@$\mathcal{A}^s$, $\mathcal{A}_0^s$, $\mathcal{A}^\xi$} for $\mathcal{A}_0^s$ and $\mathcal{A}^\xi$ 
for $\mathcal{A}_0^\xi$.

\item [Coding apparatus]\index{coding apparatus}

For cardinals $\alpha < \kappa$, $s \in S_\alpha$ such that $\alpha$ is regular in $\mathcal{A}_0^s$, let\index[notation]{H s i@$H^s_i$} 
\[
H^s_i=H(A)^s_i = h^{\mathcal{A}_0^s}_{\Sigma_1}(i \cup \{ A\cap \alpha, D_\alpha, E_\alpha, s\})
\]
and let
$f^s(i)$\index[notation]{f s i@$f^s(i)$} be the order type of $H^s_i \cap \On$.
If $\alpha=\beta^+$ for $\beta\in\Card$, let $B^s$\index[notation]{B s@$B^s$} be %
$\{ i \in [\beta,\alpha) \setdef H^s_i \cap \alpha= i\}$ and
let \label{b^s:page}$b^s = \{ \langle i, f^s(i)\rangle \setdef i \in B^s\}$\index[notation]{b s@$b^s$} (where $\langle .\,, .\rangle$ denotes the G\"odel pairing function).
If $\alpha$ is inaccessible in $\mathcal{A}_0^s$, let
$B^s= \{ \beta^+ \setdef   H^s_\beta \cap \alpha= \beta, \beta \in \Card\cap\alpha\}$
and let $b^s = \ran(f^s\res B^s)$.\footnote{For $\alpha < \kappa$ inaccessible in $\mathcal{A}_0^s$, we could also just let $B^s = \{ \beta^+ \setdef \beta \in \Diamond_\alpha \cap E_\alpha \}$ if this is unbounded in $\alpha$ and $B^s = \{ \beta^+ \setdef \beta \in E_\alpha \}$ otherwise. We could let $B^s = \Succ\cap\kappa$ for $s \in S_\kappa$.}

\end{description}
\end{dfn}

The intuition is that we want to code $s$ via almost-disjoint coding, using the almost disjoint family\index{almost disjoint family} $\{ b^{s \res \nu}\setdef \nu \in [\alpha, |s|)\}$. 
The particular form of the $b^s$ is very convenient for the proof of Extendibility~\ref{ext1} when $\alpha\in\Sing$.
For successor $\alpha$, using the pair $\langle i, \otp (H_i\cap\On)\rangle$ is vital to ensure that the $b^s$ are almost disjoint.
Observe that by definition, 
for $s\in S_\alpha$ when
$\alpha < \kappa$ is inaccessible in $\mathcal{A}^s_0$, we have $\lVert \xi \rVert^- \in E_\alpha$ for all $ \xi\in b^s$.
Similarly, $\lVert \xi \rVert^- \in \Diamond_\alpha$ if in addition $\Diamond_\alpha$ is club in $\alpha$.%

\medskip

The phenomenon we call virtual inaccessible coding makes it necessary that we be able to put all of $b^s$ into the support of a condition in $P$;
otherwise the coding structures will change as conditions are extended (we discuss this below). 
Thus we want $b^s$ to be Easton. 
This, together with coherency issues arising in the proof of \emph{Extendibility} at singular $\alpha$ (see \ref{ext1} below), is why we use $E_\alpha$. 
In said proof, we shall have no control over what happens at limits of $E_\alpha$, which is why we use successors for $B^s$.
We want $E_\alpha\in \mathcal{A}^{\emptyset_\alpha}_0$ as this will give us a way to distinguish whether virtual inaccessible coding has occurred (again, see below).
The presence of $\Diamond_\alpha$ is important for a $\Delta$-system argument in \ref{it:reals:are:caught}, when we show that $\kappa$ is not collapsed in our iteration and stays Mahlo until the last stage.

\medskip

A condition $p \in P$ will in part consist of a sequence of $p_\alpha \in S_\alpha$, for $\alpha \in \Card \cap \kappa^{++}$.
Abusing notation, we identify the generic $G$ for $P$ with the set 
$$\{ \xi < \kappa^{++} \setdef \exists p\in G \; \exists \alpha \; p_\alpha(\xi)=1\}.$$
We will have that $L[G\cap \omega] = L[G]$ and $A_0$ is definable in $L[G]$.
For regular $\alpha<\kappa$, $G\cap \alpha$ will code $G\cap[\alpha,\alpha^{+})$ via almost almost disjoint coding (we discuss the case $\alpha=\kappa$ later).
For singular $\alpha$, $G\cap \alpha$ will code $G\cap[\alpha,\alpha^{+})$ via \emph{coding delays} which we introduce in Definition~\ref{sing_defs} below.

\medskip

Easton support presents a further complication for singular coding: 
In the proof of distributivity, there may arise a structure $\bar X$ such that $\bar X\models \alpha \in\Inacc$ for some singular $\alpha$ and the conditions we construct inevitably use inaccessible coding at such $\alpha$.
This is the phenomenon we call \emph{virtual inaccessible coding}\index{virtual inaccessible coding}\index{inaccessible coding!virtual}\label{l.virtual}.
The solution is based on the following intuition: We make singular coding structures large enough to 
``see'' the virtual inaccessible coding.
This requires that we can distinguish whether or not virtual inaccessible coding has occurred at all. 
Also the coding structures now depend on the condition \emph{below} $\alpha$ and we have to be careful they be stable with respect to extending $p$.

\begin{dfn}[The singular case]\label{sing_defs}
Except for the first item of this definition, we assume
$\alpha$ is singular now.
The following definitions will be relative to some
(partial) function $p \colon \alpha \rightarrow 2$ (it will be $\bigcup_{\beta \in \Card \cap \alpha} q_\beta$ for some $q\in  P$).
We extend the definition of support to such $p$ by $\supp(p)= \Card \cap \dom(p)$.

Given $p$ as above and a function $f$ with $\dom(f)\subseteq \Card$ such that $f \in \prod_{\beta \in \dom(f)} [\beta,\beta^+)$, we let $f_p$ be the partial function such that
$f_p(\beta)$ is the least $\eta \in [\On]_3$ with $f(\beta) < \eta$ and $p(\eta)=1$.

\begin{description}
\item [Regular decoding]\index{decoding!regular}
First, define $s(A,p)=s(p) \in S_\alpha$\index[notation]{s(p)@$s(p)$}.
To this end, define a sequence $(s_\gamma)_{\gamma < \gamma(p)}$ of basic strings in $S_\alpha$. 
Let $k=1$ if $\alpha \in \Succ\cap(\kappa^{+}+1)$ and $k=2$ if $\alpha \in \Inacc\cap(\kappa+1)$.
Let $s_0 = \emptyset$.
Given $s_\gamma$, first assume
$b^{s_\gamma} \subseteq \dom(p)$ and $\alpha$ is regular in $\mathcal{A}_0^{s_\gamma}$. 
In this case let $s_{\gamma+1}$ be ${s_\gamma} \conc j$, where $j$ is such that $p((i)_k) =j$ for cofinally many $i \in b^{s_\gamma}$.
This is well-defined as long as $\alpha$ is regular in $\mathcal{A}_0^{s_\gamma}$ ($b^s$ is not defined otherwise).

If $b^{s_\gamma} \not \subseteq \dom(p)$, or if $\mathcal{A}_0^{s_\gamma} \vDash \alpha$ is singular, end the construction and let $s(p)= s_\gamma$, $\gamma(p)=\gamma$
and $\mu(p)=\mu^{s_\gamma}$.

\item [Steering Ordinals at singulars]\index{steering ordinals}

We define by recursion on $\xi \in [\alpha,\alpha^+)$ the ordinals $\mu(A,p)^\xi$, $\tilde \mu(A,p)^\xi$ and $\mu(A,p)^{<\xi}$, denoted by
\index[notation]{mu(p) xi, mu(p) xi z, mu(p) <xi@$\mu(p)^\xi$, $\tilde \mu(p)^\xi$, $\mu(p)^{<\xi}$}
$\mu(p)^\xi$, $\tilde \mu(p)^\xi$ 
and $\mu(p)^{<\xi}$ when $A$ is clear from the context :

If %
$p((\beta^+)_2)=1$ for a cofinal set of $\beta \in E_\alpha$, we let
$\mu(p)^{<\alpha} =  \mu(p)$ and we say $p$ \emph{uses virtual inaccessible coding}\index{virtual inaccessible coding|textbf}\index{inaccessible coding!virtual|textbf}.
Conditions with this property have to be dealt with when we prove quasi-closure.

Otherwise, let
$\mu(p)^{<\alpha}$ be the least $\mu$ such that $E_\alpha, \Diamond_\alpha \in L_{\mu}[A\cap\alpha]$, i.e., 
$\mu(p)^{<\alpha} = \mu^{<\alpha}$.
We say in this case that $p$ \emph{immediately uses singular coding}\index{immediately uses singular coding}.
Such  conditions are built in a straightforward manner in the proof of Extendibility~\ref{ext1}, below.

For $\xi > \alpha$, let 
\[
\mu(p)^{<\xi} = \sup_{\nu<\xi} \mu^\nu.
\]
and for $\xi \geq \alpha$ let, first let
$\sigma(p)^\xi > \mu^{<\xi}$ be least such that $L_{\sigma(\xi)}\vDash$``$ \alpha$ is the greatest cardinal and $\alpha\in \Sing$''.
Now let $\tilde \mu(p)^\xi = \sigma(p)^\xi + \alpha$ and $\mu(p)^\xi= \sigma(p)^s + \omega\cdot \alpha $.
Notice that by induction $\xi \leq\mu(p)^{<\xi}$ (equality may hold if $\xi$ is a limit)
and $\mu(p)^{\xi}\geq \mu_0^\xi$.

Again, write $\mu(p)^s, \tilde \mu(p)^s, \mu(p)^{<s}$
\index[notation]{mu(p) s, mu(p) s z, mu(p) <s@$\mu(p)^s$, $\tilde \mu(p)^s$, $\mu(p)^{<s}$} 
for $\mu(p)^{|s|}, \tilde \mu(p)^{|s|}, \mu(p)^{<|s|}$.

We say \emph{$p$ recognizes the singularity of $\alpha$}\index{recognizes the singularity of $\alpha$} if and only if $\alpha$ is singular in $\mathcal{A}_0^{s(p)}$. 
Observe this is never relevant when $p$ immediately uses singular coding.

\item[Coding structures at singulars]\index{coding structures}

We let\index[notation]{B(A,p,) s, B(p) s, B(p) s tilde@$\mathcal{B}(A,p)^{s}$, $\mathcal{B}(p)^{s}$, $\mathcal{\tilde B}(p)^s$}
\begin{gather*}
\mathcal{B}(A,p)^{s}=\mathcal{B}(p)^{s} = L_{\mu(p)^s}[A\cap\alpha, s(p), s]\\
\mathcal{\tilde B}(p)^s = L_{\tilde\mu(p)^s}[A\cap\alpha, s(p), s],
\end{gather*}
where we set $s(p)=\emptyset$ if $p$ immediately uses singular coding.
Note that $\mathcal{B}(p)^{s}\models \alpha\in \Sing$ whenever $|s| > 0$ or $p$ immediately uses singular coding.
Again note that $E_\alpha \in \mathcal{B}(p)^{\emptyset_\alpha}$ and $\mathcal{A}_0^s \subseteq \mathcal{B}(p)^s$.

\item[Coding apparatus at singulars]\index{coding apparatus}

For $s \in S_\alpha$, let 
\[
H(A,p)_i =H(p)_i = h^{B(p)^s}_{\Sigma_1}(i \cup \{ A\cap \alpha, s(p), s\})
\]
and let
$f(p)^s(i)$ be the order type of $H(p)_i \cap \On$.
Note again that $f(p)^{\emptyset_\alpha} = f^{\emptyset_\alpha}$.

\item[Precoding]\index{precodes}

We say \emph{$X$ precodes $s$} (relative to $p$) to mean that $X$ is set of (G\"odel-codes for) $\Sigma_1$-formulas
with parameters from $\alpha\cup\{A\cap\alpha,s\}$ which are true in
$\mathcal{\tilde{B}}(p)^{s}$.

\item [Singular coding with delays]\index{coding delays|textbf}\index{limit coding!singular}

We define $t(A,p)=t(p)\in S_\alpha$.
To this end, we define a sequence $(t(p)_\xi)_{\xi<\delta(p)}$.
Let $t(p)_0 = \emptyset$. 
For limit $\delta$, let $t(p)_\delta = \bigcup_{\delta'<\delta}t(p)_{\delta'}$.
Now suppose $t=t(p)_\delta$ is defined.
For $\gamma\in \Card$ let
$f^{t}_p(\gamma)$ be the least $\eta$ such that $f(p)^{t}(\gamma) \leq \eta < \gamma^+$ and $p((\eta)_3)=1$.
If $f^{t}_p(\beta^+)$ is undefined for cofinally many $\beta \in E_\alpha$, let $\delta(p)=\delta$ and $t(p) = t(p)_\delta$.
Otherwise, if possible define $X=X(p,\delta) \subseteq \alpha$ by $\eta \in X$ if and only if $p( (f^{s_\gamma}_p(\beta) + 1 + \eta )_3 ) = j$ for a tail of successor cardinals $\beta <\alpha$.
If $X(p,\delta)$ is undefined, again stop the construction at $\gamma$ and let $\delta(p)=\delta$ and $t(p)=t_\delta$.\footnote{%
This will never occur.
Note that it follows from later definitions that the construction of $t(p)$ finishes only for one reason, namely that $f^{t}_p(\beta^+)$ is undefined for cofinally many $\beta \in E_\alpha$. 
For since we require $p\res\alpha$ exactly codes $p_\alpha$, the construction cannot stop before we have $t(p)=p_\alpha$, where we 
halt for the given reason, by the requirement that $p\res\alpha \in \mathcal{B}(p_{<\alpha})^{p_\alpha}$.}

If $[X]^{-1}_0$ precodes, relative to $p$, a string $t \in S_\alpha$ which end-extends $t(p)_\delta$ and such that $f^{s_\gamma}_p \in \mathcal{B}(p)^{t}$, let
$t(p)_{\delta+1} = t$.

Otherwise let $t= {t(p)_\delta }^\frown  [X]_3$.
If $f^{s_\gamma}_p \in \mathcal{B}(p)^{t}$, let
$t(p)_{\delta+1} = t$. 
If not, again stop the construction at $\gamma$ and let $\delta(p)=\delta$ and $t(p)=t_\delta$ (we will later see this case never occurs).
We say \emph{ $p$ exactly codes $t$} if and only if $t = t(p)$.
\end{description}
\end{dfn}

\begin{dfn}[Decoding]\label{decoding_def}\index{decoding}
Assume $\beta_0\in \Card$, $A'\subseteq \beta_0$ and $s  \colon [\beta_0,{\beta_0}^+) \rightarrow 2$, or $s\colon \beta_0 \rightarrow 2$ and $\beta_0$ is the least Mahlo.
We now describe the process of \emph{decoding} a
set $A^*_0=A^*_0(A', s)$, from $A' $ and $s$.

In a $P$-generic extension $L[A_0][G]$, setting $\beta_0=\emptyset$ and $s= \bigcup_{p \in G} p_0$, decoding allows us to recover $A_0$ (making it definable). 
We shall also run this process ``locally'' in a transitive model $\bar M$, i.e.,
we shall look at $A^*_0( A', p_{\beta_0})^{\bar M}$ (in order to achieve localization, and in the proof of \ref{jensen_qc_main}).
In this case, of course $\kappa$ should be interpreted to mean the least Mahlo in $\bar M$. 
Likewise, the definition of coding structure is to be interpreted from the point of view of $\bar M$, etc.

The definition of $A^*_0$ is by induction on cardinals $\beta \in [\beta_0, \kappa^+)$.
If $\beta_0 < \kappa$, we start by constructing $A^*\subseteq \kappa$ and $s_\beta$, for $\beta <\kappa$. 
Set $A^*\cap\beta_0 = A'$ and $s_{\beta_0} = s$.

If $\beta \in [\beta_0, \kappa)$ and we have constructed $A^*\cap \beta$ and $s_\beta \colon [\beta,{\beta}^+) \rightarrow 2$, define
$s_{{\beta}^+} \colon [{\beta}^+,{\beta}^{++}) \rightarrow 2$ via the decoding process at regulars described in Definition~\ref{sing_defs} from $s_{\beta}$, i.e., let $s_{{\beta}^+}= s(A^*\cap\beta, s_{\beta})$.
Also, let $\xi \in A^*\cap [\beta, \beta^{+3})\iff s_\beta( ( \langle \xi, \nu \rangle )_0)=1$ for $\beta^+$-many $\nu \in [\beta, \beta^+)$.

At limit $\beta < \kappa$, let $s_{<\beta} = \bigcup_{\beta' < \beta} s_{\beta'}$ and let
$s_\beta = s(A^*\cap\beta,s_{<\beta})$ when $\beta$ is inaccessible 
and $s_\beta=t( A^*\cap\beta, s_{<\beta})$ when $\beta$ is singular.

When $\beta > \beta_0$ and $\beta$ is the least Mahlo, supposing we have already constructed $A^* \cap \beta$ first extract $A^*_0\cap \beta$ and the generic for the Mahlo-coding from $A^* \cap \beta$: Let
$A^*_0 \cap \beta = [A^*\cap\beta]^{-1}_0$, let $s_{<\beta} = [A^*\cap\kappa]^{-1}_1$, 
and let $s_\beta = s(A^*_0 \cap \beta, s_{<\beta})$.
When $\beta = \beta_0$ is the Mahlo and we are at the beginning of the induction, we just set $A^*_0\cap\beta=A'$ and $s_{<\beta} = s$.
Let $\xi \in A^*_0\cap [\kappa, \kappa^{+})\iff s_\kappa( ( \langle \xi, \nu \rangle )_0)=1$ for $\kappa^+$-many $\nu \in [\kappa, \kappa^+)$.

Finally, for $\beta^+= \kappa^+$, we continue with $A^*_0$ instead of $A^*$: let
$s_{\kappa^{+}} = s(A^*_0\cap\kappa^+, s_{\kappa})$
and let $A^*_0\cap [\kappa^+, \kappa^{++})$ be the set whose characteristic function is $s_{\kappa^{+}}$.
The set $A_0^* \subseteq \kappa^{++}$ is the outcome of the decoding procedure run up to $\kappa^{++}$ and we write $A^*_0(A', s)$ for this set.

\end{dfn}
Localization\index{localization|textbf} is achieved by ``thinning out'' the sets $S_\alpha$ and $S_{<\kappa}$ (this is also called  \emph{David's trick}, or \emph{killing universes})\index{David's trick|textbf}:
\begin{dfn}[Strings, localization]\label{strings_dfs}\index{string}
We now define $S^*_\alpha= S(A)^*_\alpha$\index[notation]{S* alpha@$S^*_\alpha$}.
Let $\Psi_0(r)$ be the statement ``$ r(n)=i \Rightarrow$ for cofinally many $\sigma <\bar\kappa$, $T(\sigma,n,i,j)$ has a branch, where $\bar T$ is the canonical\index{canonical!sequence of trees}\index{Suslin trees!canonical sequence of} $\bar\kappa$-sequence of $\bar\kappa^{++L}$-Suslin trees and $\bar \kappa$ is the least Mahlo in $L$.''
We also say \emph{$r$ is coded by branches}\index{coded by branches} for $\Psi_0(r)$\index{Psi_0 r@$\Psi_0(r)$}.

We say $s \in S^*_\alpha$ if and only if $s \in S_\alpha$ and for all $\zeta \leq |s|$ and all $\eta, \bar \kappa$
such that $N= L_\eta[A\cap\alpha, s\res \zeta]\vDash$``$\zeta =\alpha^+$, $\bar \kappa$ is the least Mahlo in $L$, $\bar \kappa^{++}$ exists, $\Card = \Card^{L[A\cap\omega]}$, $\Card\setminus\omega_1=\Card^{L}\setminus\omega_1$ and $\ZF^-$ holds'',
we have that $L[A^*]\vDash \Psi_0(r)$, where $A^*=A^*(A\cap\alpha, s\res\zeta)$ (i.e., the predicate coded by $s\res\zeta$ in $L_\eta[A\cap\alpha,s\res\zeta]$).

When $s \in S_\alpha$, we call $N$ as in the hypothesis a \emph{test model for $s$}\index{test model}.
Thus, $s \in S^*_\alpha$ if and only if for every test model $N$ for $s$, $\big(L[A^*(s\res\alpha^{+})]\big)^N\models r$ is coded by branches.\footnote{Observe that requiring $N \models \ZF^-\wedge\exists  \bar\kappa^{++}$ ensures that $L^N$ thinks that the canonical $\bar\kappa$-sequence of $\bar\kappa^{++}$-Suslin trees exists, but this is not the only reason to make this requirement.
Rather, by finding $s$ which collapses $|s|$ to $\alpha$ ``quickly'', this requirement
gives us some control over the set of test models. Compare Example~\ref{s.e.david}.}

We say $s \in S^*_{<\kappa}$ if and only if $s \in S_{<\kappa}$ (i.e., $s\colon [0,|s|) \rightarrow 2$, $|s| < \kappa$) and for all $\bar \kappa \in \Card\cap |s|+1$ and all $\eta$
such that $N= L_\eta[A_0\cap\bar\kappa, s\res \bar\kappa]\vDash$ ``$\bar \kappa$ is the least Mahlo in $L$, $\bar \kappa^{++}$ exists, $\Card = \Card^{L[A_0\cap\omega]}$, $\Card\setminus\omega_1=\Card^{L}\setminus\omega_1$ and $\ZF^-$ holds'', we have that $N\vDash \Psi_0(r)$.
Similarly to the above, when $s \in S_{<\kappa}$ we call $N$ as in the hypothesis a \emph{${<}\kappa$-test model for $s$}\index[notation]{<kappa-test model@${<}\kappa$-test model}\index[notation]{kappaa-test model@${<}\kappa$-test model}.
\end{dfn}

\begin{dfn}[Building blocks]\label{reg_codings_dfs}
We now define three partial orders, taking care of coding and localization at regulars. 
These partial orders serve as building blocks for the final forcing.
We distinguish $\kappa$, inaccessibles below $\kappa$, and successors.
\begin{description}
\item [Successor coding]\index{successor coding|textbf}

Let either $\beta=\kappa$, $\alpha = \kappa^+$ and $s=s_{\kappa^+}$ or let $\beta \in \Card \cap \kappa$, $\alpha= \beta^+$ and $s \in S^*_\alpha$ and let $A\subset\alpha$.
We define the partial order $R^s=R(A)^s$ to consist of conditions $(p,p^*)$ such that
$p \in S^*_\beta$ and $p^* \subseteq \{ b^{s\res\xi} \setdef \xi \in [\alpha, |s|)  \} \cup |p|$ of size at most $\beta$.
It is ordered by: $(q,q^*)\leq (p,p^*)$ if and only if $q$ end-extends $p$, $p^* \subseteq q^*$
and
\begin{enumerate}
\item If $b^{s\res \xi} \in p^*$ and $s(\xi)=0$ then for any $\gamma \in (|p|,|q|)\cap b^{s\res\xi}$ we have $q((\gamma)_1)=0$. 
\item If $\xi \in p^* \cap |s|$ and $\xi \in A$ then if $\gamma$ is such that $\langle \xi, \gamma \rangle \in (|p|,|q|)$, we have $q((\langle \xi, \gamma \rangle)_0)=0$. 
\end{enumerate} 

\item[Inaccessible coding]\index{inaccessible coding}\index{limit coding!regular}
Let $\alpha\leq \kappa$ be inaccessible, $s \in S^*_\alpha$.
We define the partial order $R^s=R(A)^s$\index[notation]{R s@$R^s$} to consist of conditions $(p,p^*)$ such that
$p$ is a partial function $p \colon \alpha \rightarrow 2$ of size less than $\alpha$ (note here that we allow ``broken'' strings!)\index{string!broken}\index{broken string}
 and $p^* \subseteq \{ b^{s\res\xi}\setminus\eta \setdef \xi \in [\alpha, |s|), \eta < \alpha \} \cup \alpha$ of size less than $\alpha$ and $\alpha\cap p^*$ is an ordinal.
We write $\rho(p^*)$\index[notation]{rho(p*)@$\rho(p^*)$} for this ordinal.
For $\alpha = \kappa$,  we additionally demand that $p \in S^*_{<\kappa}$.
The ordering on $R^s$ is given by: $(q,q^*)\leq (p,p^*)$ if and only if $q$ end extends $p$, $p^* \subseteq q^*$
and
\begin{enumerate}
\item If $b \in q^* \setminus p^*$ then 
$$b \cap (\rho(p^*)\cup \sup\dom(p)) \subseteq \bigcup \{ b'\setdef b' \in p^*\}.$$
\item If $b^{s\res \xi} \setminus\eta\in p^*$ and $s(\xi)=0$ then for any $\gamma \in (|p|,|q|)\cap (b^{s\res\xi} \setminus \eta)$ we have $q((\gamma)_2)=0$. 
\end{enumerate} 
\end{description}
\end{dfn}
The additional construct $\rho(p^*)$ and its use in the definition of $\leq$ for inaccessible coding is necessary to preserve Requirement~\ref{coding_areas_bounded} in the definition of $P(A_0)$ (see below) when taking greatest lower bounds (see also Definition~\ref{D}, item \eqref{restraints_start_high}).
With the intuition that our iteration can \emph{almost} be decomposed into an upper and a lower part, this device limits the interaction caused by restraints between these two parts. 
This also means that the part of the restraint below $ \rho(p^*)\cup \sup\dom(p_{<\alpha})$ as well as $\rho(p^*)$ itself must part of the value of the ``linking function'' $\Cl(p)$ (see \ref{sec_coding_strat}).
Note that $\sup(\dom(p_{<\alpha}))$ is used similarly (but affects all inaccessible codings above it); this is convenient in the proof that $\kappa$ is not collapsed in our iteration (see \ref{it:reals:are:caught}).
The use of $\sup(\dom(p_{<\alpha}))$ renders the argument more elegant but can be eliminated.
The use of $\sup(\dom(p_{<\alpha}))$ cannot obviate the use of $\rho(p^*)$ because of the last clause in \eqref{qc:redundant}.

\section{Definition of the Forcing}\label{3steps}

\begin{dfn}\label{conditions}\index{Easton supported Jensen coding}\index{coding}\index{Jensen coding!with Easton support and localization}
Remember $\Card'$ denotes $(\Card \setminus \omega) \cup\{ \emptyset \}$ and $\emptyset^+=\omega$. 
A condition in $P=P(A_0)$\index[notation]{P(A_0)@$P(A_0)$} is a  sequence 
$$p = (p_{<\alpha}, p_\alpha, p^*_\alpha)_{\alpha \in \Card' \cap (\kappa^{+}+1)}$$
such that:
\begin{enumerate}[label=P.\arabic*), ref=P.\arabic*] 
\item $p_{\kappa^+}=s_{\kappa^+}$, where $s_{\kappa^+}$ is the characteristic function of $A_0\cap [\kappa^+, \kappa^{++}]$ and $p_\alpha \in S^*_\alpha$\index[notation]{p alpha@$\lvert p_\alpha\rvert$}\index[notation]{ p alpha @$\lvert p_\alpha\rvert$} for all $\alpha \leq \kappa$, while $p_{<\kappa} \in S^*_{<\kappa}$ (this is made redundant by \ref{P.succ} and \ref{P.inacc}). 
For cardinals $\alpha< \kappa$, we demand $p_{<\alpha} = \bigcup_{\delta < \alpha} p_\delta$ (so $p_{<\alpha}$ is redundant unless $\alpha = \kappa$).

We write $\supp(p) = \{ \alpha \setdef p_\alpha \neq \emptyset_\alpha \}$ and $\alpha(p) = \sup (\supp(p) \cap \kappa)$.
\item\index{Easton support|textbf}\index{support!Easton|textbf} $\supp(p) \cap \kappa$ is a subset of $|p_{<\kappa}|$ and an \emph{Easton set}, i.e., 
 whenever $\gamma \leq \kappa$ is inaccessible, $\sup(\supp(p) \cap \gamma) < \gamma$.

For $\alpha \in \{ \kappa, \kappa^+\}$ we introduce the abbreviations $\mathcal{A}^{p_\alpha} = \mathcal{A}(A_0)^{p_\alpha}$ (used when $A$ is clear from the context).  
For $0<\alpha \leq |p_{<\kappa}|$, let $\mathcal{A}^{p_\alpha}_0 = \mathcal{A}(A_p)^{p_\alpha}_0$  and $\mathcal{B}(p_{<\alpha})^{p_\alpha}=\mathcal{B}(A_p,p_{<\alpha})^{p_\alpha}$, where
\begin{equation*}
A_p = A_0\oplus \{ \xi < |p_{<\kappa}| \setdef p_{<\kappa}(\xi)=1\}.
\end{equation*}  
These coding structures (when defined) do not change when $p$ is extended unless $p_\alpha$ is extended---in particular the dependence on $A_p$ is ``static''. In the singular case, this is not apparent now but is ensured by \ref{P.recognizes} below.
In the following, $R(A_p)^s$ is defined relative to these coding structures; we shall write $R^s$ instead of $R(A_p)^s$.
\item \label{P.succ} For all $\alpha \in \Card'\cap(\kappa+1)$, $(p_\alpha, p^*_{\alpha^+}) \in R^{p_{\alpha^+}}$, where we let $R^{p_\omega}$ denote the standard almost disjoint coding of $p_\omega$ by a real relative to some convenient almost disjoint family in $L[A_p\cap\omega]$.
\item \label{P.inacc} For all inaccessible $\alpha \leq\kappa$, $(p_{<\alpha}, p^*_{\alpha}) \in R^{p_\alpha}$, 
remembering $p_{<\alpha}=\bigcup_{\alpha'<\alpha} p_\alpha$ for $\alpha < \kappa$.
\item\label{P.recognizes} For all singular $\alpha <\kappa$, $p\res \alpha \in \mathcal{B}(p_{<\alpha})^{p_\alpha}$, $t(p_{<\alpha})=p_\alpha$ and if $p_{<\alpha}$ is unbounded below $\alpha$ and uses virtual inaccessible coding, then $p_{<\alpha}$ recognizes singularity of $\alpha$.\footnote{\label{why_stop_here}We let the virtual inaccessible coding stop when the singularity of $\alpha$ is realized; without this natural stopping point, the coding structure for singulars will change when a condition is extended below $\alpha$, i.e., $\mathcal{B}(q_{<\alpha})^{\emptyset_\alpha} \neq \mathcal{B}(p_{<\alpha})^{\emptyset_\alpha}$ for $q\leq p$, because $q$ might carry new information in $t(q_{<\alpha})$.
Intuitively, $p$ hasn't exhausted the room for virtual inaccessible coding.}
\item \label{coding_areas_bounded} For $\alpha\in\Reg$, there is a $\gamma < \alpha$ such that for all $\delta\in \Inacc\setminus \alpha+1$ and all  $(\xi, \eta)\in p^*_\delta$ we have $b^{p_\delta\res\xi}\setminus \eta \cap \alpha \subseteq \gamma$.

\end{enumerate} 
Note again we have $p_{\kappa^+}=s_\kappa$ for every condition $p \in P$.
We say $q \leq p$ if and only if for all $\alpha \in \supp(p)$, $(q_\alpha, q_{\alpha^+})\leq (p_\alpha, p_{\alpha^+})$ in  $R^{p_{\alpha^+}}$ and for all inaccessible $\alpha \leq \kappa$,
$(q_{<\alpha}, q_{\alpha})\leq (p_{<\alpha}, p_{\alpha})$ in $R^{p_\alpha}$.
\end{dfn}
Observe that the limit coding from $\kappa^+$ into $\kappa$ takes a slight detour, via $p_{<\kappa}$;
the reason for this is that the coding into $\kappa$ is easier if we use strings (because of localization). 
In contrast, $p_{<\alpha}$ can be seen as a shorthand for $\bigcup_{\delta < \alpha} p_\delta$; 
we use these ``broken strings'' to code into inaccessible $\alpha < \kappa$,
which is the natural choice.

We will need that being a condition, and in fact all of the definitions in \ref{reg_defs} and \ref{sing_defs} are absolute for $\Sigma^A_1$-correct models.
In fact, all these notions are Boolean combinations of $\Sigma^A_1$ statements.\footnote{So is $\perp$, the incompatibility relation. Of course, we could just use $\langle P, \leq, \perp\rangle$ as a parameter.}

For $p \in P$ and $\alpha \in \Card$, 
let $B_p^\alpha\subseteq \alpha$ be defined by
\[
B^\alpha_p=\bigcup\{ b\cap\alpha \setdef b=b^s\setminus \eta, b \in p^*_\gamma,  \gamma\in \Inacc\setminus\alpha+1\},
\]
and let $b_p \in \prod _{\alpha \in \Card\cap\kappa} \alpha$ be defined by
\begin{align*}
b_p(\alpha) =\sup B^\alpha_p
\end{align*}
That is, $B_p^\alpha$ is the set of coding ordinals used by inaccessibles above $\alpha$, and 
$b_p$ locally bounds the height of this set.
\begin{rem}
Note that \ref{coding_areas_bounded} is equivalent to asking
$b_p(\alpha) < \alpha$ for every $\alpha \in \Reg$.
\end{rem}

\subsection{Doing the same in three steps.}

To aid the readers intuition, we describe the same forcing as a three-step iteration.
This illustrates further the special role of $p_{<\kappa}$ and the detour in the Mahlo coding, 
but will not be needed for the rest of the proof.

The first step is to force with the successor coding $R(A_0)^{s_{\kappa^+}}$.
It follows from later theorems that this forcing is stratified.
Let $G_1$ be the generic and let 
\begin{gather*}
s_{\kappa} = \bigcup \{ p \setdef \exists p^* \; (p, p^*) \in G_1 \}\\
A_1 = \{ \xi \in [\kappa,\kappa^+) \setdef s_\kappa(\xi)=1\} \cup (A_0 \cap \kappa).
\end{gather*}

In the next step we force with the Mahlo coding $R(A_1)^{s_\kappa}$ in $L[A_1]=L[A_0][G_1]$.
It will be implicit in work below that no cardinals collapse and $\kappa$ is still Mahlo.
Let $G_2$ be the generic and let 
\begin{gather*}
s_{<\kappa} = \bigcup \{ p \setdef \exists p^* \; (p, p^*) \in G_2 \}\\
A_2 = \{ \xi \in \kappa \setdef s_{<\kappa}=1\} \oplus (A_0\cap\kappa).
\end{gather*}
Observe that $A_2$ is `localized', i.e., satisfies the following:
\begin{eqpar}
for all $\bar \kappa\in\Card$, $\eta \in \On$
such that $N= L_\eta[A_2\res \bar\kappa]\vDash$ ``$\bar \kappa$ is the least Mahlo, $\bar \kappa^{++}$ exists and $\ZF^-$ holds'', we have that $N\vDash \Psi_0(r)$.
\end{eqpar}

Lastly, we define $P(A_2)_{<\kappa}$ which codes $A_2$ by a subset of $\omega$.
For this sake, let all coding structures and the partial orders from \ref{reg_codings_dfs} be defined relative to $A=A_2$.

\begin{dfn}[The coding below $\kappa$]\label{conditions_3steps}
A condition in $P(A_2)_{<\kappa}$ is a function $p\colon \Card'\cap \kappa\rightarrow V$, $p(\alpha)=(p_\alpha, p^*_\alpha)$ such that
\begin{enumerate} 
\item $\supp(p) = \{ \alpha < \kappa \setdef p_\alpha \neq \emptyset \}$ is an Easton set.
\item For all $\alpha \in \Card'\cap\kappa$, $(p_\alpha, p^*_{\alpha^+}) \in R^{p_{\alpha^+}}$, where $R^{p_\omega}=R(A)^{p_\omega}$ is again some conveniently chosen partial order to code $p_\omega$ by a real.
\item For all inaccessible $\alpha <\kappa$, $(p_{<\alpha}, p^*_{\alpha}) \in R^{p_\alpha}$, 
where we define $p_{<\alpha}=\bigcup_{\alpha'<\alpha} p_\alpha$ for any $\alpha$.
\item For all singular $\alpha <\kappa$, $p\res \alpha \in \mathcal{B}(p_{<\alpha})^{p_\alpha}$, $t(p_{<\alpha})=p_\alpha$ and if $p_{<\alpha}$ is unbounded below $\alpha$ and uses virtual inaccessible coding, then $p_{<\alpha}$ recognizes singularity of $\alpha$.\footnote{See footnote~\ref{why_stop_here}.}
\item %
For $\alpha\in\Reg$, there is a $\gamma < \alpha$ such that for all $\delta\in \Inacc\setminus \alpha+1$ and all  $(\xi, \eta)\in p^*_\delta$ we have $b^{p_\delta\res\xi}\setminus \eta \cap \alpha \subseteq \gamma$.

\end{enumerate} 
We say $q \leq p$ if and only if for all $\alpha \in \supp(p)$, $(q_\alpha, q_{\alpha^+})\leq (p_\alpha, p_{\alpha^+})$ in  $R^{p_{\alpha^+}}$ and for all inaccessible $\alpha <\kappa$,
$(q_{<\alpha}, q_{\alpha})\leq (p_{<\alpha}, p_{\alpha})$ in $R^{p_\alpha}$.
\end{dfn}
This ends our description of $P$ as a three-step iteration.
We find it more convenient to talk about $P$ instead of this iteration,
and this is the forcing we work with in all of the following.

\section{Extendibility}

\begin{lem}[Extendibility for the Mahlo coding]\label{ext_mahlo}\index{Mahlo coding}
Let $t \in S_\kappa$ or 
$$t\colon [\kappa, \kappa^+) \rightarrow 2.$$
For any $\alpha< \kappa$ and any 
$p \in R^{t}$ there is $q \in  R^{t}$ such that $q \leq p$ and $|q_{<\kappa}| \geq \alpha$.
\end{lem}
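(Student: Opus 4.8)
The plan is to prove extendibility by a straightforward recursive construction, building $q$ below $p$ by repeatedly adding coding ordinals to the restraint set $p^*$ and then extending the string $q_{<\kappa}$ far enough past $\alpha$ while respecting all the constraints these restraints impose. First I would observe that $R^t$ is the inaccessible coding forcing at $\kappa$ (so conditions are pairs $(q_{<\kappa}, q^*)$ with $q_{<\kappa}\colon\kappa\to 2$ partial, $q^*$ a set of sets of the form $b^{t\res\xi}\setminus\eta$ together with an ordinal $\rho(q^*)<\kappa$, of size $<\kappa$), and that the only obstruction to extending $q_{<\kappa}$ is clause 2 in the definition of the ordering of inaccessible coding: once $b^{t\res\xi}\setminus\eta\in q^*$ and $t(\xi)=0$, the values $q_{<\kappa}((\gamma)_2)$ for $\gamma$ in that set are forced to be $0$. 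Since $|p^*|<\kappa$ and $\kappa$ is inaccessible (in the relevant coding structure), the union of all these almost-disjoint sets is bounded in $\kappa$ on each interval, so there is no real obstacle: we simply define $q_{<\kappa}$ on $[|p_{<\kappa}|,\beta)$ for some $\beta\geq\alpha$ by setting $q_{<\kappa}((\gamma)_2)=0$ whenever $\gamma$ lies in some $b^{t\res\xi}\setminus\eta\in p^*$ with $t(\xi)=0$, and setting $q_{<\kappa}$ to be $0$ everywhere else on that interval — which also immediately gives the ``In addition'' clause, since in particular $q_{<\kappa}(\nu)=0$ for all $\nu\in[|p_{<\kappa}|,|q_{<\kappa}|)$, hence certainly for $\nu\in\Card$.

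The remaining points to check are bookkeeping: I must ensure $q=(q_{<\kappa},p^*)$ (keeping $q^*=p^*$, so $\rho(q^*)=\rho(p^*)$ is unchanged and clause 1 of the ordering is vacuous) is genuinely in $R^t$, i.e.\ that $q_{<\kappa}\in S^*_{<\kappa}$ and $\dom(q_{<\kappa})\cap\kappa$ is an ordinal (clear, as we extended to an initial segment), that $|q^*|<\kappa$ (unchanged), and that $\kappa\cap q^*=\rho(p^*)$ is still an ordinal (unchanged). The condition $q_{<\kappa}\in S^*_{<\kappa}$ requires that for every $<\kappa$-test model $N$ for $q_{<\kappa}$, $N\models\Phi(r)$; but extending $p_{<\kappa}$ by a block of $0$'s does not introduce any new test models below $\alpha$ that weren't already test models for (end-extensions of) $p_{<\kappa}$, and in any case the test-model condition only constrains what happens at cardinals $\bar\kappa\leq|q_{<\kappa}|$, where by the ``fake inaccessible''/``killing universes'' mechanism the run of $0$'s we added makes any candidate $N$ see that $\bar\kappa$ collapses — so the hypothesis of the test-model clause fails and the requirement is vacuously met. (This is the standard ``René David trick'' argument, and I would cite definition \ref{strings_dfs} and the discussion there.)

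The main obstacle I expect is precisely this last verification — making sure the extension stays in $S^*_{<\kappa}$ — because the definition of $S^*_{<\kappa}$ quantifies over all $\eta$ with $L_\eta[A_0\cap\bar\kappa, q_{<\kappa}\res\bar\kappa]$ satisfying the test-model theory, and one has to argue that padding with zeros cannot create a ``bad'' test model (one satisfying $\neg\Phi(r)$). The key point is that the test models for $q_{<\kappa}$ restricted to any $\bar\kappa<\kappa$ are exactly the test models for the corresponding restriction of $p_{<\kappa}$ (since $q_{<\kappa}$ and $p_{<\kappa}$ agree below $|p_{<\kappa}|$, and for $\bar\kappa\in(|p_{<\kappa}|,|q_{<\kappa}|]$ the string $q_{<\kappa}\res\bar\kappa$ ends in a tail of zeros which, by design of the coding — see the footnote to definition \ref{strings_dfs} — forces $\bar\kappa$ to be collapsed quickly in $N$, killing the test-model hypothesis); hence no new constraints arise and $q_{<\kappa}\in S^*_{<\kappa}$ follows from $p_{<\kappa}\in S^*_{<\kappa}$. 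Everything else is routine, so once this is in place the lemma is proved. I would also remark that the same construction, with $\kappa$ replaced by a locally inaccessible $\alpha<\kappa$ and $S^*_{<\kappa}$ by $S^*_\alpha$, gives the analogous extendibility statement for ordinary inaccessible coding, which will be used elsewhere.
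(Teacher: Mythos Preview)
Your approach has a genuine gap at precisely the point you flag as ``the main obstacle'': verifying that the zero-padded string lies in $S^*_{<\kappa}$. You claim that for a new cardinal $\bar\kappa\in(|p_{<\kappa}|,|q_{<\kappa}|]$, the tail of zeros ``forces $\bar\kappa$ to be collapsed quickly in $N$, killing the test-model hypothesis.'' This is false. A string of zeros carries no collapsing information whatsoever: $L_\eta[A_0\cap\bar\kappa,\,q_{<\kappa}\res\bar\kappa]=L_\eta[A_0\cap\bar\kappa,\,p_{<\kappa}]$, and nothing prevents such a model from satisfying ``$\bar\kappa$ is the least Mahlo in $L$'', ``$\bar\kappa^{++}$ exists'', etc. The footnote you cite concerns $S^*_\alpha$, where one ensures that the \emph{length} $\zeta$ of the string (an ordinal in $[\alpha,\alpha^+)$) is collapsed to $\alpha$; here $\bar\kappa$ is a genuine cardinal below $\kappa$ and cannot be collapsed. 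So after zero-padding you may well acquire new test models at $\bar\kappa$, and you have given no reason why they satisfy $\Phi(r)$.

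The paper's proof takes a completely different route. It works by induction, and in the main case ($\alpha$ a limit cardinal) fixes a club $C\subseteq\alpha$ with $C\subseteq\Sing$ and $\otp(C)=\cof(\alpha)$, enumerated as $(\beta_\xi)_{\xi\leq\rho}$. One builds a descending chain $(p^\xi)$ with $|p^\xi_{<\kappa}|=\beta_\xi$; at successor steps one first extends (using the inductive hypothesis) and then \emph{shifts values off the cardinals}, placing a $1$ exactly on $\beta_\xi$ and $0$'s on the other cardinals in $(\beta_\xi,\beta_{\xi+1})$. The point is that at any limit stage $\xi$, the set $\{\nu\in\Card: p^\xi_{<\kappa}(\nu)=1\}$ above $\beta_0$ is exactly $C\cap\beta_\xi$, so any test model $N$ at $\bar\kappa=\beta_\xi$ has $C\cap\beta_\xi\in N$ and hence $N\models$ ``$\bar\kappa$ is not Mahlo.'' Thus the test-model hypothesis fails and $p^\xi_{<\kappa}\in S^*_{<\kappa}$. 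This is the missing idea: you must actively \emph{encode} a witness to non-Mahlo-ness into the extension, not merely pad with zeros.
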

\begin{proof}
Let $\alpha$ be least such that the above fails, and let $p$ be given.
Of course, the only difficulty is to meet the requirement $q\in S^*_{<\kappa}$.
If $\alpha \not \in \Card$ it suffices to extend $p_{<\kappa}$ to $p_{<\kappa}^1$ with 
$|p_{<\kappa}^1| \geq \lVert \alpha \rVert$.
We can then further extend $p_{<\kappa}^1$ by appending 0s to obtain $q$.
If $\alpha \in \Succ$, the proof is similar (as no test model will think that $\alpha$ is Mahlo).
So assume $\alpha$ is a limit cardinal.

Let $C \subset \alpha$ be club in $\alpha$, $\otp C = \cof \alpha $, $C \subseteq \Sing$ and
let $(\beta_\xi)_{\xi \leq \rho}$ be the increasing enumeration of $C\cup\{\alpha\}$.
We can assume $\beta_0 > |p_{<\kappa}|$ and $\beta_0 > \rho$ if $\alpha$ is singular.
Let $p^0 \leq p$ such that $|p^0_{<\kappa}|=\beta_0$ and build a descending chain of conditions.
Assume you have $p^\xi$ such that $|p^\xi_{<\kappa}|=\beta_\xi$.
Extend to get $p'$ with $|p'_{<\kappa}| = \beta_{\xi+1}$.
Let $p^{\xi+1}$ be obtained from $p'$ by shifting values of $p'_{<\kappa}$ above $\beta_\xi$ away from the cardinals in a gentle manner, putting a 1 on $\beta_{\xi}$ and padding with 0s, as follows: let $p^{\xi+1}_{<\kappa} \res\beta_\xi = p'_{<\kappa} \res\beta_\xi = p^{\xi}_{<\kappa} \res\beta_\xi$ and for $\nu \in [ \beta_\xi, \beta_{\xi+1})$, let
\[
p^{\xi+1}_{<\kappa}(\nu)=\begin{cases}

    p'_{<\kappa}(\delta + k)&\text{if $\nu = \delta + k +1 $ for some $\delta \in 
\Card$, $k \in \omega$,}\\
1 &\text{if $\nu = \beta_{\xi}$,}\\
0 &\text{if $\nu \in \Card\cap (\beta_\xi, \beta_{\xi+1})$.}\\
  p'_{<\kappa}(\nu) &\text{otherwise.}
\end{cases}
\]
Of course we let $(p^{\xi+1})^*=(p')^*=(p^\xi)^*$.
Observe that $p^{\xi+1}\leq p^{\xi}$ since no restraints $b \in (p^\xi)^*$ are violated, as we have $\delta + k \not\in [b]_3$ for $\delta \in\Card$, $k \in \omega$.  
We still have $p^{\xi+1}_{<\kappa} \in S^*_{<\kappa}$, as $L_\eta[A\cap\bar \kappa, p^{\xi+1}_{<\kappa}\res\bar\kappa] =L_\eta[A\cap\bar \kappa, p'_{<\kappa}\res\bar\kappa] $ for all relevant $\eta, \bar\kappa$.
At limit $\xi\leq \rho$, if $N = L_\eta[ p^{\xi}_{<\kappa}\res\bar\beta_\xi]$ is a test model,
note that $C\cap \beta_\xi \in N$ and so $N \models \beta_\xi$ is not Mahlo.
Thus $q = p^\rho$ is as desired.
\end{proof}

\begin{lem}[Extendibility for the successor coding.]
For each $p \in P$, $\xi \in \kappa^+$ there is $q\leq p$ such that $\xi \in \dom(q_{<\kappa^+})$.
\end{lem}
\begin{proof}
Note that we can assume that $\lVert\xi \rVert\in \supp(p)$.
To avoid repetition, we leave the rest of the proof as a by-product of Lemma~\ref{D}:
any $q \in \D^{L[A]}_{[\lVert\xi \rVert, \xi^+)}(p,\{\xi\})$ will do.
\end{proof}

The next lemma allows us to extend conditions at a singular cardinal $\alpha$ without violating that the extension be coded exactly below $\alpha$.
By the last item below, we may at the same time capture a set $X$ locally (in a sense);
this included for completeness and is not needed in the rest of the proof.

\begin{lem}[Extendibility at singulars]\label{ext1}
Let $p\in P$, $\alpha \in \Sing\cap |p_{<\kappa}|$ and $s \in S(A_p)_\alpha$ such that $p_\alpha \is s$, further suppose $X \subseteq \On$ and %
$ X\in \mathcal{B}(p_{<\alpha})^s$.
We can find $q \in P$, such that 
\begin{itemize}
\item $q\res(\alpha, \infty] = p\res(\alpha, \infty]$, 
\item $q_\alpha = s$, 
\item for each each $\beta$ which is a limit point of $E_\alpha$ (in fact, for all $\beta=\delta^+$ for some $\delta \in E_\alpha$), $X\cap\beta \in \mathcal{B}(q_{<\alpha})^{q_\beta}$. \end{itemize}
\end{lem}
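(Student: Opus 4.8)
The plan is to build $q$ by transfinite recursion along $\alpha$, working only on the part of the condition strictly below $\alpha$ while leaving $q\res(\alpha,\infty]=p\res(\alpha,\infty]$ and setting $q_\alpha=s$ outright. The engine of the construction is \emph{singular coding with delays}: recall that $t(q_{<\alpha})$ is built by a recursion in which, at stage $\delta$, one is free to choose a set $X(q,\delta)$ that gets coded into the next block of $[\On]_3$; by the very definition of $R^{q_\beta}$ for singular $\beta<\alpha$ and of $t(\cdot)$, this block-by-block coding propagates down through the limit points of $E_\alpha$. The idea is to reserve, in these successive blocks, the bits needed to encode (a precode of) $s$ together with the bits needed to encode $X\cap\beta$ at each relevant $\beta$. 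Concretely, one fixes an enumeration of $E_\alpha$ in order type $\cof(\alpha)$ using $\otp(E_\alpha)$, and at successor stages $\beta=\delta^+$ with $\delta\in E_\alpha$ one arranges by the choice of the delayed-coding set that the string $q_\beta$ appearing there codes both $s\res\beta$ (so that the $t$-decoding at $\alpha$ actually reconstructs $s$, giving $t(q_{<\alpha})=s=q_\alpha$ as required for clause~5 of Definition~\ref{conditions}) and the set $X\cap\beta$, in the precise sense demanded, namely $X\cap\beta\in\mathcal B(q_{<\alpha})^{q_\beta}$.

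First I would deal with the bookkeeping: since $X\in\mathcal B(p_{<\alpha})^s=L_{\mu(p_{<\alpha})^s}[A\cap\alpha,s(p_{<\alpha}),s]$, and since the coherency of $(E_\beta)$ from the Square lemma gives $E_\beta=E_\alpha\cap\beta$ at limit points $\beta$ of $E_\alpha$, one gets a coherent approximating sequence $X\cap\beta$, each member lying in a coding structure $\mathcal B(q_{<\alpha})^{q_\beta}$ that is an initial segment of $\mathcal B(p_{<\alpha})^s$ transported down by a $\Sigma_0$-elementary map. This is exactly the situation the definition of $E_\alpha$ (Case 2, the ``fake inaccessible'' clause, and Case 3) was engineered to handle, and it is why successors of points of $E_\alpha$ rather than limit points are used for $B^s$. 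At limit stages of the recursion below $\alpha$ one simply takes unions; the continuity built into the steering ordinals $\mu(q_{<\alpha})^{<\beta}=\sup_{\nu<\beta}\mu^\nu$ guarantees the coding structures cohere, and one must check $q_{<\beta}=\bigcup_{\delta<\beta}q_\delta$ stays in the right $S^*$-class, which follows because the defining $\Sigma^A_1$ conditions are preserved under the relevant $\Sigma_0$-elementary submodels (this is the standard ``$S^*$ is closed under coherent unions'' point).

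Then I would verify that $q$ so constructed is genuinely a condition: items 1--4 of Definition~\ref{conditions} are unaffected at or above $\alpha$ (we copied $p$ there), and below $\alpha$ one checks $(q_\beta,q^*_{\beta^+})\in R^{q_{\beta^+}}$, $(q_{<\beta},q^*_\beta)\in R^{q_\beta}$ at inaccessibles, and the singular clause $q\res\beta\in\mathcal B(q_{<\beta})^{q_\beta}$ with $t(q_{<\beta})=q_\beta$ at singulars — all of which hold by the way the delayed coding was arranged. One also needs $q_\alpha=s\in S^*_\alpha$: this is where the hypothesis $p_\alpha\is s$ is used together with the fact that $s$ was already assumed to be a basic string over which the coding makes sense; and if $p_{<\alpha}$ was unbounded and used fake inaccessible coding we must see $q_{<\alpha}$ recognizes singularity of $\alpha$, which is automatic since we are coding a nonempty $t(q_{<\alpha})=s$, forcing $\mathcal A_0^{s}\models\alpha\in\Sing$ (cf.\ the note after the coding structures at singulars). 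Finally clause~6 of Definition~\ref{conditions}, that coding areas stay bounded, is inherited from $p$ above $\alpha$ and is vacuous-to-easy below.

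\emph{The main obstacle} is the third bullet combined with clause~5: we must make a \emph{single} $q_{<\alpha}$ whose delayed coding simultaneously (i) reconstructs $s$ exactly, and (ii) places $X\cap\beta$ into $\mathcal B(q_{<\alpha})^{q_\beta}$ at \emph{every} relevant $\beta$, \emph{coherently}. The danger is that extending the condition further down could disturb the coding structures at higher $\beta$ (the phenomenon warned about in footnote~\ref{why_stop_here}); the fix is precisely that we are not extending an arbitrary $p$ below $\alpha$ but \emph{building} $q_{<\alpha}$ from scratch with $q_\alpha=s$ pinned from the start, so that the ``room for fake inaccessible coding'' is used up in a controlled, coherent way and the recursion never needs to revise an earlier block. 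Making this coherence argument precise — showing the $\Sigma_0$-elementary factor maps between the structures $\mathcal B(q_{<\beta})^{q_\beta}$ for varying $\beta\in E_\alpha$ commute with the decoding and carry $X\cap\beta$ to $X\cap\beta'$ — is the technical heart, and it runs parallel to the standard coherence proof for $\Box$ with the extra predicate $X$ (and $W(\alpha)$, $m_\alpha$) carried along, just as in the Square lemma above.
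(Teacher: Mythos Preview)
Your proposal has a genuine gap: you have not identified the actual construction, and the ``transfinite recursion along $\alpha$'' you sketch cannot work as stated.

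First, a misreading of Definition~\ref{sing_defs}: the set $X(p,\delta)$ in singular coding with delays is \emph{not} a free parameter one chooses; it is \emph{determined} by $p$. The freedom lies only in how you extend $p_{<\alpha}$ so that the decoded $X(q,\delta)$ comes out as intended. Second, a recursion that commits to $q_\gamma$ for $\gamma<\beta$ at some stage, and later must verify $t(q_{<\beta'})=q_{\beta'}$ at a limit point $\beta'>\beta$, faces the problem that $q_{\beta'}$ is already forced on you by $q_{<\beta'}$ --- you do not get to choose it. Your ``coherence via $\Sigma_0$-elementary factor maps between the $\mathcal B(q_{<\beta})^{q_\beta}$'' is hand-waving in place of a mechanism that guarantees this. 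Also, you are not ``building $q_{<\alpha}$ from scratch'': the lemma requires $q\leq p$, so $q_\beta$ must end-extend $p_\beta$.

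The paper's construction is global and uniform, not a step-by-step recursion. One picks a single model $M=L_\mu[A\cap\alpha,s]$ with $\tilde\mu(p_{<\alpha})^s<\mu<\mu(p_{<\alpha})^s$ and $p,X\in M$, takes $Y$ to code the $\Sigma_1$-theories of $\mathcal{\tilde B}(p_{<\alpha})^s$ and $M$ (with $X$ on a separate component), and then sets, \emph{in one stroke},
\[
q_{\beta^+}=p_{\beta^+}\conc 0^{g(\beta)}\conc 1\conc([Y]_3\cap\beta)\quad\text{for }\beta\in E_\alpha,\qquad
q_\beta=p_\beta\conc([Y]_3\cap\beta)\quad\text{for }\beta\in\lim E_\alpha,
\]
where $g(\beta)=(\beta^+)^{\bar M_\beta}$ and $\bar M_\beta$ is the collapse of $h^M_{\Sigma_1}(\beta\cup\{A\cap\alpha,s\})$. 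The entire proof then reduces to a sandwiching lemma: for large $\beta$, $g\res\beta$ eventually dominates $f(q\res\beta)^{p_\beta}$ (so the delay marker at $g(\beta)$ is reached after the old coding of $p_\beta$ terminates), while $f(q_{<\alpha})^s$ eventually dominates $g$ (so the next singular-coding step reads off $Y\cap\beta$, which precodes $s$). Coherence at limit points of $E_\alpha$ is then automatic because $Y\cap\beta$ is literally a restriction of $Y$ and the hulls $H_\beta$ nest. The induction on $\alpha$ in the paper is only used at the end to repair the boundedly many $\beta$ where ``large enough'' fails; it is not the engine of the argument. This single-model, single-theory device is the idea you are missing.
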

We write $A=A_p$ for the discussion of the above lemma.
Before we proof it, we make two technical observations:
\begin{lem}\label{codingapp_define}
Let $s,t \in S_\alpha$, $p$ be a condition such that $p\res \alpha \in \mathcal{B}(p_{<\alpha})^s$ and $s$  a proper initial segment of $t$. Then
$f(p_{<\alpha})^s \in \mathcal{B}(p_{<\alpha})^t$. In fact, 
$f(p_{<\alpha})^s \in L_{\mu(p_{<\alpha})^s + \alpha +1}[A\cap\alpha,s(p),s]$.
\end{lem}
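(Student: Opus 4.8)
The plan is to show that $f(p_{<\alpha})^s$ is a set of size at most $\alpha$ which is first-order definable over $\mathcal{B}(p_{<\alpha})^s=L_{\mu(p_{<\alpha})^s}[A\cap\alpha,s(p_{<\alpha}),s]$ from its own distinguished predicates, deduce from this the ``in fact'' clause by the basic properties of relativized constructibility, and finally check that $\mathcal{B}(p_{<\alpha})^t$ is both tall enough and rudimentarily closed enough to reconstruct all of this internally, using that $s$ is a \emph{proper} initial segment of $t$.

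First I would recall the standard fine-structural fact that the $\Sigma_1$ Skolem function of an acceptable $J$-structure --- in particular of $\mathcal{B}(p_{<\alpha})^s$, which is an $L$-level built over the amenable sequence of predicates $A\cap\alpha$, $s(p_{<\alpha})$, $s$ --- is uniformly $\Sigma_1$-definable over that structure. Hence the assignment $i\mapsto h^{\mathcal{B}(p_{<\alpha})^s}_{\Sigma_1}(i\cup\{A\cap\alpha,s(p_{<\alpha}),s\})$ is definable over $\mathcal{B}(p_{<\alpha})^s$ with parameters among $A\cap\alpha$, $s(p_{<\alpha})$, $s$, all of which are the distinguished predicates of $\mathcal{B}(p_{<\alpha})^s$ and hence available. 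Since $\alpha$ is the largest cardinal of $\mathcal{B}(p_{<\alpha})^s$ and is a limit cardinal, for each $i<\alpha$ this hull has cardinality $<\alpha$, so its intersection with the ordinals has order type some ordinal $<\alpha$; thus $f(p_{<\alpha})^s$ is a total function from its (definable) domain into $\alpha$, and the order-type operation being absolute and $\Sigma_1$ over any transitive rud-closed structure containing the relevant set, $f(p_{<\alpha})^s$ is definable over $\mathcal{B}(p_{<\alpha})^s$. As a subset of $\mathcal{B}(p_{<\alpha})^s$ definable over that structure with its predicates, it therefore belongs to $L_{\mu(p_{<\alpha})^s+1}[A\cap\alpha,s(p_{<\alpha}),s]\subseteq L_{\mu(p_{<\alpha})^s+\alpha+1}[A\cap\alpha,s(p_{<\alpha}),s]$ (if one instead computes $f(p_{<\alpha})^s$ by a recursion taking $\mathcal{B}(p_{<\alpha})^s$ itself as a parameter, the recursion closes off within $\alpha$-many further levels, so the bound $\mu(p_{<\alpha})^s+\alpha+1$ is comfortably enough). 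This is the ``in fact'' clause.

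It then remains to show $L_{\mu(p_{<\alpha})^s+\alpha+1}[A\cap\alpha,s(p_{<\alpha}),s]\in\mathcal{B}(p_{<\alpha})^t$. Here I would use that $s$ is a proper initial segment of $t$, so $|s|<|t|$; by the definition of the steering ordinals at singulars (Definition \ref{sing_defs}) together with the monotonicity $\mu(p_{<\alpha})^{<|t|}\geq\mu(p_{<\alpha})^{|s|}=\mu(p_{<\alpha})^s$, we get $\sigma(p_{<\alpha})^{|t|}>\mu(p_{<\alpha})^s$ and hence $\mu(p_{<\alpha})^t=\sigma(p_{<\alpha})^{|t|}+\omega\cdot\alpha>\mu(p_{<\alpha})^s+\alpha+1$. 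Moreover $A\cap\alpha$ and $s(p_{<\alpha})$ are literally the predicates of $\mathcal{B}(p_{<\alpha})^t$ as well, and $s=t\res[\alpha,|s|)$ with $|s|<\mu(p_{<\alpha})^t$, so $s\in\mathcal{B}(p_{<\alpha})^t$. Since $\mathcal{B}(p_{<\alpha})^t$ is an $L$-level of limit height well above $\mu(p_{<\alpha})^s+\alpha$, it is closed under the rudimentary operations and computes the relativized $L$-hierarchy over $(A\cap\alpha,s(p_{<\alpha}),s)$ correctly up to each of its own ordinals; hence $L_{\mu(p_{<\alpha})^s+\alpha+1}[A\cap\alpha,s(p_{<\alpha}),s]$, and with it $f(p_{<\alpha})^s$, belongs to $\mathcal{B}(p_{<\alpha})^t$.

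The only point requiring genuine care is this last step: one must be certain that $\mathcal{B}(p_{<\alpha})^t$ really does reconstruct $\mathcal{B}(p_{<\alpha})^s$ --- i.e. that the predicate $s$ and the relativized constructibility hierarchy over $A\cap\alpha,s(p_{<\alpha}),s$ are absolute between the two structures --- and this is precisely what the special form $\sigma+\omega\cdot\alpha$ of the steering ordinals is designed to buy, since it leaves ample room above every level below $\mu(p_{<\alpha})^t$ to carry out rudimentary closures and the $L[\vec A]$-recursion. The hypothesis $p\res\alpha\in\mathcal{B}(p_{<\alpha})^s$ is not used by the definability computation itself; it is recorded because in the applications (e.g. the proof of Lemma \ref{ext1}) $s$ arises as an end-extension of $p_\alpha$, and this hypothesis is what makes $\mathcal{B}(p_{<\alpha})^s$ the legitimate coding structure over which to work.
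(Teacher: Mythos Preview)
Your main line of argument has a gap. You claim $f(p_{<\alpha})^s$ is definable over $\mathcal{B}(p_{<\alpha})^s$ itself, landing already in $L_{\mu(p_{<\alpha})^s+1}[\ldots]$, on the grounds that the order-type operation is $\Sigma_1$ and absolute over any rud-closed structure \emph{containing the relevant set}. But the relevant set $H(p)_i\cap\On$ need not be an element of $\mathcal{B}(p_{<\alpha})^s$: the hull $H(p)_i$ is only a $\Sigma_1$-definable \emph{class} over $\mathcal{B}(p_{<\alpha})^s$, and since $\mu(p_{<\alpha})^s=\sigma(p_{<\alpha})^s+\omega\cdot\alpha$ has small cofinality, $H(p)_i\cap\On$ will typically be unbounded in the structure. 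So neither the hull, nor its collapse, nor the collapsing map are available as elements, and your definability claim does not go through as stated.

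Your parenthetical fallback is the correct argument, and it is exactly what the paper does: the Skolem hulls are definable over $\mathcal{B}(p_{<\alpha})^s$, and the transitive collapse of $H(p)_\gamma$ is computed by the $\in$-recursion in at most $f(p_{<\alpha})^s(\gamma)+1<\alpha$ further constructible steps above $\mathcal{B}(p_{<\alpha})^s$. Hence all the collapses, and with them the values $f(p_{<\alpha})^s(\gamma)$, appear by level $\mu(p_{<\alpha})^s+\alpha$, so $f(p_{<\alpha})^s$ is definable over $L_{\mu(p_{<\alpha})^s+\alpha}[A\cap\alpha,s(p),s]$ and lies in $L_{\mu(p_{<\alpha})^s+\alpha+1}[A\cap\alpha,s(p),s]$. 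The paper even notes that possibly $\mu(p_{<\alpha})^s+\alpha=\tilde\mu(p_{<\alpha})^t$, so one cannot in general conclude $f(p_{<\alpha})^s\in\mathcal{\tilde B}(p_{<\alpha})^t$; this shows the $+\alpha$ is not slack but genuinely needed, contrary to your stronger claim. Your final paragraph, verifying $L_{\mu(p_{<\alpha})^s+\alpha+1}[\ldots]\in\mathcal{B}(p_{<\alpha})^t$ via the steering-ordinal arithmetic, is fine and matches the paper.
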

\begin{proof}
This is because the Skolem hulls are definable over $\mathcal{B}(p_{<\alpha})^s$, 
and the transitive collapse of the Skolem hull of $\gamma < \alpha$ is constructible at most $f(p_{<\alpha})^s(\gamma) + 1$ steps above $\mathcal{B}(p_{<\alpha})^s$, 
by the recursive definition of the transitive collapse. 
Thus, $f(p_{<\alpha})^s$ is a definable subset of $L_{\nu(p_{<\alpha})^s + \alpha}[A\cap\alpha,s]$, and
\[
f(p_{<\alpha})^s \in L_{\nu(p_{<\alpha})^s + \alpha+1}[A\cap\alpha,s].
\]
Since possibly, $\nu(p_{<\alpha})^s + \alpha = \tilde\nu(p_{<\alpha})^t$, it needn't be the case that $f(p_{<\alpha})^s \in \mathcal{\tilde B}(p_{<\alpha})^t$
but clearly, $f(p_{<\alpha})^s \in \mathcal{B}(p_{<\alpha})^t$.
\end{proof}

Note that if the length of $\lh(t) > \lh(s)+1$, then we even have $f(p_{<\alpha})^s \in \mathcal{B}(p_{<\alpha})^{<t}$.
\begin{lem}\label{coding_app_stable}
If  $\beta\in\Card\cap\alpha+1$, $t \in S_\beta$, $\mathcal{B}(p_{<\beta})^{t} = \mathcal{B}(q_{<\beta})^{t}$ and $f(q\res\beta)^t =  f(p\res \beta)^t$.
\end{lem}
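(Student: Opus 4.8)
The plan is to unwind Definition~\ref{sing_defs} and observe that every ingredient of the coding structure $\mathcal{B}(p_{<\beta})^{t}$ and of the coding apparatus $f(p\res\beta)^{t}$ is computed by a recursion that never consults the values of $p$ on or above $\beta$; hence these objects depend on $p$ only through $p_{<\beta}$. (I read the hypothesis as saying that $p$ and $q$ agree below $\beta$, i.e.\ $p_{<\beta}=q_{<\beta}$, equivalently $p\res\beta=q\res\beta$.) If $\beta$ is locally regular the same argument applies with $\mathcal{A}_{0}$ in place of $\mathcal{B}$, and is in fact simpler, so I treat the singular case.

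Recall from Definition~\ref{sing_defs} that
\[
\mathcal{B}(p_{<\beta})^{t}=L_{\mu(p_{<\beta})^{t}}\bigl[A_{p}\cap\beta,\ s(p_{<\beta}),\ t\bigr].
\]
First, $A_{p}\cap\beta=(A_{0}\cap\beta)\oplus\{\xi<\beta:p_{<\kappa}(\xi)=1\}$ by Definition~\ref{conditions}, and the set $\{\xi<\beta:p_{<\kappa}(\xi)=1\}$ is a function of $p_{<\beta}$; so $A_{p}\cap\beta=A_{q}\cap\beta$. Next, the regular-decoding string $s(p_{<\beta})$ is built by the recursion of Definition~\ref{sing_defs} (``Regular decoding'') in which $s_{\gamma+1}$ is obtained from $s_{\gamma}$ by reading off a cofinal pattern of the values of $p_{<\beta}$ on the coding ordinals $b^{s_{\gamma}}\subseteq\beta$; since $b^{s_{\gamma}}$ depends only on $\mathcal{A}_{0}^{s_{\gamma}}$ and $A_{0}\cap\beta$ (Definition~\ref{reg_defs}), a straightforward induction on $\gamma<\gamma(p_{<\beta})$ gives $s(p_{<\beta})=s(q_{<\beta})$ and $\gamma(p_{<\beta})=\gamma(q_{<\beta})$. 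Finally, whether $p_{<\beta}$ uses fake inaccessible coding is decided purely by the pattern of the values $p_{<\beta}\bigl((\delta^{+})_{2}\bigr)$ for $\delta\in E_{\beta}$, and once this is settled the steering ordinals $\mu(p_{<\beta})^{<\beta}$, $\tilde\mu(p_{<\beta})^{\xi}$, $\mu(p_{<\beta})^{\xi}$ for $\xi\in[\beta,|t|)$ are produced by a recursion using only $A_{0}\cap\beta$, $E_{\beta}$, $\diamondsuit_{\beta}$, $s(p_{<\beta})$ and $t$. Thus $\mu(p_{<\beta})^{t}=\mu(q_{<\beta})^{t}$, and combining the three points, $\mathcal{B}(p_{<\beta})^{t}=\mathcal{B}(q_{<\beta})^{t}$.

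For the coding apparatus I would then note that, by Definition~\ref{sing_defs} (``Coding apparatus at singulars''), $H(p_{<\beta})_{i}=h^{\mathcal{B}(p_{<\beta})^{t}}_{\Sigma_{1}}\!\bigl(i\cup\{A_{p}\cap\beta,\,s(p_{<\beta}),\,t\}\bigr)$ and $f(p\res\beta)^{t}(i)=\otp\bigl(H(p_{<\beta})_{i}\cap\On\bigr)$, so each of the defining data --- the structure $\mathcal{B}(p_{<\beta})^{t}$ and the parameters $A_{p}\cap\beta$, $s(p_{<\beta})$, $t$ --- has just been shown to agree for $p$ and $q$; hence $H(p_{<\beta})_{i}=H(q_{<\beta})_{i}$ for all $i<\beta$ and therefore $f(q\res\beta)^{t}=f(p\res\beta)^{t}$. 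The only slightly delicate point, and the one I would write out in detail, is the induction establishing $s(p_{<\beta})=s(q_{<\beta})$: each step of the ``Regular decoding'' recursion reads $p_{<\beta}$ through a coding apparatus that is itself defined from the previously constructed coding structures, so one must carry ``agreement of all earlier data'' through the induction, including the clause distinguishing the stage at which $\mathcal{A}_{0}^{s_{\gamma}}\models\alpha$ is singular and the recursion halts; everything involved there is, however, visibly local to $\beta$, and likewise the analogous statement for the locally regular case goes through with $\mathcal{A}_{0}$ in place of $\mathcal{B}$ throughout.
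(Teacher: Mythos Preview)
You have misread the implicit hypothesis. The lemma is stated inside the proof of Lemma~\ref{ext1}, where $p\in P$ is fixed and $q$ is a condition with $q\leq p$; thus $q_{<\beta}$ genuinely end-extends $p_{<\beta}$ rather than coinciding with it. The paper's own proof makes this explicit (``But then as $q\leq p$, $s(p\res\beta)=s(q\res\beta)$''), and the lemma is invoked later precisely to show that the coding structure for the \emph{extended} condition $q$ agrees with that of $p$. Under your reading the assertion is a tautology, and indeed your argument just says ``everything depends only on $p_{<\beta}$'' --- which is correct but proves nothing once $p_{<\beta}\neq q_{<\beta}$.

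The real content is a stability claim: extending $p$ below $\beta$ does not change the singular coding structure there. The paper argues by splitting on whether $p\res\beta$ uses fake inaccessible coding. If not, then by the ``Otherwise'' clause in Definition~\ref{sing_defs} the steering ordinal is the default $\mu^{<\beta}$ and $s(p_{<\beta})$ is replaced by $\emptyset$ in $\mathcal{B}(p_{<\beta})^t$, so the structure does not see $p_{<\beta}$ at all; one must also check that $q\res\beta$ does not use fake inaccessible coding either. If $p\res\beta$ does use fake inaccessible coding, then for every proper initial segment $\bar s\is s(p\res\beta)$ one has $b^{\bar s}\subseteq\dom(p_{<\beta})$ by construction of the regular decoding, and the decoding halts because $\mathcal{A}_0^{s(p\res\beta)}$ sees $\beta$ as singular --- this is exactly the requirement in item~5 of Definition~\ref{conditions}, and the reason for footnote~\ref{why_stop_here}. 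Since $q$ end-extends $p$, the decoding for $q$ reads the same values on $\dom(p_{<\beta})$ and meets the same halting condition, so $s(q\res\beta)=s(p\res\beta)$ and $\mu(q_{<\beta})^{\emptyset_\beta}=\mu(p_{<\beta})^{\emptyset_\beta}$. Your careful unwinding never engages with this stability-under-extension mechanism, which is the whole point of the lemma.
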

\begin{proof}
Fix $t$ and $\beta$ as in the first statement.
First, assume $p\res\beta$ does not use virtual inaccessible coding.
Then neither does $q$.
In this case, $\mu(q_{<\beta})^{\emptyset_\beta}=\mu^{\emptyset_\beta}$ and $p\res\beta$ and $q\res\beta$ play no role at all in the definition of $f(q_{<\beta})^t$ and $f(p_{<\beta})^t$.

In the other case, when $p\res\beta$ uses inaccessible coding, by definition we have
$b^{\bar s} \subseteq \dom(p)$ for any $\bar s \is s(p\res\beta)$. 
But then as $q \leq p$,
$s(p\res\beta)=s(q\res\beta)$ and again $\mu(q_{<\beta})^{\emptyset_\beta}=\mu(p_{<\beta})^{\emptyset_\beta}$ by definition.
Thus in this case as well, we have $f(q_{<\beta})^t = f(p_{<\beta})^t$.
\end{proof}

We can now prove extendibility at singulars cardinals.
\begin{proof}[Proof of Lemma~\ref{ext1}]
The proof is by induction on $\alpha$.
Find
\[
M = L_\mu[A\cap\alpha,s]
\] 
so that $\mu$ is a limit ordinal, $\tilde \mu(p_{<\alpha})^s < \mu < \mu(p_{<\alpha})^s$
and $p, X, b^\alpha_p \in M$.
Let
\[
H_\beta = h^M_{\Sigma_1}(\beta\cup\{ A\cap\alpha, s\}),
\]
and let $\pi_\beta\colon H_\beta \rightarrow \bar M_\beta$ be the transitive collapse. 
Let $g$ be defined by $g(\beta)=(\beta^+)^{\bar M_\beta}$ for successor cardinals $\beta <\alpha$, noting that this is well defined since $\beta \in H_\beta$.

Pick $Y$ such that $[Y]^{-1}_0=\Th_{\Sigma_1}^{\mathcal{\tilde B}(p_{<\alpha})^s}(\alpha \cup \{ A\cap\alpha, s\})$, 
while $[Y]^{-1}_1=\Th_{\Sigma_1}^{M}(\alpha \cup \{ A\cap\alpha, s\})$. 
Moreover, demand that $[Y]^{-1}_2 = X$.

Observe that $Y$ precodes $s$.
Observe also that for limit cardinals $\beta < \alpha$, $[Y]^{-1}_0\cap\beta$ correspond to the G\"odel-numbers of true $\Sigma_1$ sentences of $\mathcal{\tilde B}(p_{<\alpha})^s$ with parameters from $\beta\cup\{A\cap\alpha, s\}$. Similarly for $[Y]^{-1}_1\cap\beta$.
Also observe that for large enough $\beta < \alpha$, we never have $H_\beta \cap\alpha = \beta$.
This means also that $Y\cap \beta$ does not precode a $t\in S_\beta$, for $[Y]^{-1}_0\cap\beta$ codes a model where $\beta^+$ exists.

\medskip

We now define $q \leq p$; the same construction works, whether or not $p_\alpha \neq \emptyset_\alpha$. 
For $\beta \in E_\alpha$, let 
$q_{\beta^+} = p_{\beta^+} \conc 0^{g(\beta)} \conc 1\conc ( [Y]_3\cap\beta)$ (where $ 0^{\nu}$ denotes a string of $0$'s of length $\nu$).
For $\beta$ which is a limit point of $E_\alpha$, let $q_\beta = p_\beta \conc ( [Y]_3\cap\beta)$.
Note that clearly, if $p$ didn't use virtual inaccessible coding, neither does $q$.

We now show that this definition works for $\beta$ large enough.
First, note that
\[
H_\alpha = h^M_{\Sigma_1}(\alpha\cup\{ A\cap\alpha, s\}) = M,
\]
since $M$ can be characterized as the minimal model such that the $\Sigma_1$ statement
asserting the existence of $\mathcal{\tilde B}(p_{<\alpha})^s$ and certain statements describing the height of the $M$ via ordinal addition hold inside $M$. Thus, $M = \bigcup_{\beta < \alpha} H_\beta$.
\begin{lem}\label{exact_coding_collapse}
Assume $\beta$ is large enough so that $p \in H_\beta$, and also large enough so that letting
$\tilde \beta = \min H_\beta \cap [\beta, \alpha]$, we have $\tilde \beta \neq \alpha$.
Then $\pi_\beta(p_{\tilde \beta}) = p_\beta$.
\end{lem}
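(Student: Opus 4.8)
The plan is to show that the transitive collapse $\pi_\beta$ of the Skolem hull $H_\beta = h^M_{\Sigma_1}(\beta\cup\{A\cap\alpha,s\})$ sends the "next relevant piece" $p_{\tilde\beta}$ of the condition $p$ to $p_\beta$, provided $\beta$ is large enough that $p\in H_\beta$ and that $\tilde\beta := \min(H_\beta\cap[\beta,\alpha])$ is still strictly below $\alpha$. The key point is that everything in sight — the coding structures $\mathcal{A}^{p_{\tilde\beta}}$, $\mathcal{B}(p_{<\tilde\beta})^{p_{\tilde\beta}}$, the coding apparatus $H^{p_{\tilde\beta}}_i$, $f^{p_{\tilde\beta}}$, $b^{p_{\tilde\beta}}$, and the decoding/steering-ordinal machinery of Definitions \ref{reg_defs} and \ref{sing_defs} — is defined by a Boolean combination of $\Sigma^A_1$ formulas, hence is absolute between $M$ and the $\Sigma_1$-elementary submodel $H_\beta$, and is therefore preserved under the collapse $\pi_\beta$.

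First I would record that, since $p\in H_\beta$ and $H_\beta$ is $\Sigma_1$-elementary in $M$, the parameter $A\cap\alpha$ and $s$ are in $H_\beta$, and $\tilde\beta$ is precisely the image of $\alpha$ under $\pi_\beta$ (that is, $\pi_\beta(\alpha)=\tilde\beta$ makes no sense since $\alpha\notin H_\beta$ in general; rather $\beta = \crit(\pi_\beta)$ and $\pi_\beta$ is the identity below $\beta$, and $\tilde\beta$ is the least element of $H_\beta$ in $[\beta,\alpha]$, with $\tilde\beta<\alpha$ by hypothesis). The condition $p\in H_\beta$, being a set, is fixed by $\pi_\beta$ only on the part of its support below $\beta$; the point of the lemma is that the coordinate at $\tilde\beta$ collapses down to the coordinate at $\beta$. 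So the real content is: $\pi_\beta(p_{\tilde\beta}) = p_\beta$, where $p_\beta$ is the $\beta$-th component produced by the decoding process (since $p\in P$, the coordinates of $p$ at limit cardinals and successors are not free but are determined by the decoding from lower coordinates and the strings $p_\gamma$). I would argue that $\pi_\beta$ maps the entire "tower" of objects $\langle p_\gamma, p^*_\gamma : \gamma\in\supp(p)\cap[\beta,\tilde\beta]\rangle$ as seen inside $H_\beta$ to the corresponding tower below $\beta$, using that membership in $R^t$, the equation $t(p_{<\gamma})=p_\gamma$ for singular $\gamma$, and the clauses of Definition \ref{conditions} are all $\Sigma^A_1$ (or Boolean combinations thereof) and hence reflect down and commute with the collapse. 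In particular $\pi_\beta(s)$ restricted appropriately, together with $\pi_\beta(A\cap\alpha) = A\cap\beta$ — here one uses $\mathrm{H}(\gamma)=L_\gamma[A_0]$ for all cardinals $\gamma$, so $A\cap\beta$ is correctly computed inside $\bar M_\beta$ — determines $p_\beta$ uniquely via the same decoding recursion that determined $p_{\tilde\beta}$ from $p_{<\tilde\beta}$ inside $M$.

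The main obstacle I expect is bookkeeping around \emph{fake inaccessible coding}: if $p\res\tilde\beta$ uses fake inaccessible coding at $\tilde\beta$, the steering ordinals $\mu(p)^\xi$, $\tilde\mu(p)^\xi$ jump (Definition \ref{sing_defs}), and one must check that this phenomenon is correctly mirrored by the collapse — i.e. that $\beta$ also falls into the "fake inaccessible" case, or, if it does not, that $\tilde\beta$ does not either. This is exactly the kind of coherence already verified for the $E_\alpha$-sequence in the square lemma and in Lemma \ref{coding_app_stable} (stability of coding structures under $q\leq p$), and I would invoke those: Lemma \ref{coding_app_stable} gives $\mathcal{B}(p_{<\tilde\beta})^t = \mathcal{B}(q_{<\tilde\beta})^t$ and $f(q_{<\tilde\beta})^t=f(p_{<\tilde\beta})^t$ under extension, and the analogous fact under collapse follows by $\Sigma_1$-elementarity once $\beta$ is large enough that the relevant witnesses (the set $b^\alpha_p$, the diamond sequence, $E_\alpha$) have been absorbed into $H_\beta$; this is precisely the force of the hypothesis "$\beta$ large enough so that $p\in H_\beta$". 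Concretely I would: (1) note $\pi_\beta$ is $\Sigma_1$-elementary and identity $<\beta$; (2) verify $\pi_\beta$ sends $s(p\res\tilde\beta)$, $t(p\res\tilde\beta)$, and all intermediate steering ordinals to their $\beta$-analogues, using that the recursions defining them are $\Sigma^A_1$; (3) conclude the decoding recursion computed inside $\bar M_\beta$ from $\pi_\beta(p_{<\tilde\beta})$ yields $p_\beta$, hence $\pi_\beta(p_{\tilde\beta})=p_\beta$ as the unique output. The delicate point throughout is ensuring "$\beta$ large enough" is strong enough to pin down which coding case applies at $\tilde\beta$; I would make that threshold explicit by requiring $\beta$ above the (finitely many) parameters naming $E_\alpha$, $\diamondsuit_\alpha$, $b^\alpha_p$ and $p$ inside $M$.
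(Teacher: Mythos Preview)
Your instinct to rely on $\Sigma_1$-elementarity of $H_\beta \prec_{\Sigma_1} M$ is exactly the paper's approach, and your outline would go through, but you are making this much harder than necessary. The paper's proof is three lines, and you are missing the two simplifications that make it so.

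First, since $\tilde\beta = \min(H_\beta \cap [\beta,\alpha])$ and $\beta \subseteq H_\beta$, we have $H_\beta \cap [\beta,\tilde\beta) = \emptyset$, hence $\pi_\beta(\tilde\beta) = \beta$ and $\pi_\beta(p\res\tilde\beta) = p\res\beta$ outright. There is no ``tower of objects $\langle p_\gamma, p^*_\gamma : \gamma \in \supp(p)\cap[\beta,\tilde\beta]\rangle$'' to trace through; the collapse simply deletes that interval. Second, rather than verifying piece by piece that steering ordinals, $s(p\res\tilde\beta)$, $t(p\res\tilde\beta)$, and the fake-inaccessible/singular case distinction all transfer under $\pi_\beta$, the paper observes that the single assertion ``$p\res\tilde\beta$ exactly codes $p_{\tilde\beta}$'' is $\Sigma_1$ in $L_\alpha[A]$ (in the parameters $p\res\tilde\beta$, $p_{\tilde\beta}$, $E_{\tilde\beta}$, $A\cap\tilde\beta$). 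This holds in $H_\beta$ by elementarity; applying $\pi_\beta$---and using only $\pi_\beta(E_{\tilde\beta}) = E_{\tilde\beta}\cap\beta = E_\beta$ from the coherence of the $E$-sequence---yields that $p\res\beta$ exactly codes $\pi_\beta(p_{\tilde\beta})$, which is upward absolute. But $p\res\beta$ exactly codes $p_\beta$ by the definition of $P$, and exact coding determines its output uniquely, so $\pi_\beta(p_{\tilde\beta}) = p_\beta$. Your fake-inaccessible worry is thus a red herring (it is absorbed into the $\Sigma_1$ statement), and Lemma~\ref{coding_app_stable} is the wrong citation: that lemma is about stability under $q\leq p$, not under collapse.
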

\begin{proof}
If $\tilde\beta=\beta$, there is nothing more to prove.
Otherwise, first note that $E_{\tilde\beta} \in H_\beta$ by elementarity and
$\pi_\beta(E_{\tilde\beta})=E_{\tilde\beta}\cap\beta=E_\beta$.
In fact, that $p\res\tilde\beta$ exactly codes $p_{\tilde\beta}$
is expressible as a $\Sigma_1$ statement inside $L_\alpha[A]$.
By elementarity, this statement also holds of $\pi_\beta(p \res\tilde\beta)$ and $\pi_\beta(p_{\tilde\beta})$ in $\bar M_\beta$ and is upwards absolute, so $\pi_\beta(p \res\tilde\beta)=p\res\beta$ exactly codes $\pi_\beta(p_{\tilde\beta})$.
But since by definition of $P$, $p\res\beta$ exactly codes $p_\beta$, we have $p_\beta = \pi_\beta(p_{\tilde\beta})$.
 \renewcommand{\qedsymbol}{{\tiny  Lemma~\ref{exact_coding_collapse}~}$\Box$}\end{proof}

\begin{lem}\label{l.sublemma:ext}
The following hold:
\begin{enumerate} 
\item 
$g\res \beta$ eventually dominates  $f(q\res\beta)^{p_{\beta}}$
for large enough $\beta \in\Card\cap\alpha$.
\item If $p_\alpha$  is a proper initial segment of $s$, $g$ eventually dominates  $f(q\res\alpha)^{p_\alpha}$.
\item On the other hand $f(q_{<\alpha})^s$ eventually dominates  $g$, 
in fact $g\in \mathcal{B}(q_{<\alpha})^{s}$.
\end{enumerate}
\end{lem}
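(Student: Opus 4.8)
The whole lemma is a collection of domination statements relating the "collapse function" $g$, which measures how fast cardinal successors shrink under the transitive collapses $\pi_\beta$ of the Skolem hulls $H_\beta = h^M_{\Sigma_1}(\beta\cup\{A\cap\alpha,s\})$, with the coding-apparatus functions $f(\cdot)^{\cdot}$. The plan is to exploit two facts established just before the statement: (i) $f(p_{<\alpha})^s \in L_{\mu(p_{<\alpha})^s + \alpha + 1}[A\cap\alpha, s(p), s]$ (lemma \ref{codingapp_define}) — so $f$ lives low in the $J$-hierarchy; and (ii) $M$ was chosen with $\tilde\mu(p_{<\alpha})^s < \mu < \mu(p_{<\alpha})^s$ and $M = \bigcup_{\beta<\alpha}H_\beta$. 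Because $M$ sits strictly below $\mu(p_{<\alpha})^s$, while $f(q_{<\alpha})^s$ is constructed from $\mathcal{B}(q_{<\alpha})^s = L_{\mu(q_{<\alpha})^s}[\ldots]$, one expects $f(q_{<\alpha})^s$ to be more "spread out" than anything definable over $M$, which is what gives part~3.

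For part~1, I would argue pointwise: fix $\beta\in\Card\cap\alpha$ large enough that $p\in H_\beta$ and $\tilde\beta=\min(H_\beta\cap[\beta,\alpha])\neq\alpha$ (lemma \ref{exact_coding_collapse} then gives $\pi_\beta(p_{\tilde\beta})=p_\beta$). The value $f(q\res\beta)^{p_\beta}(\gamma)$ for a successor cardinal $\gamma<\beta$ is the order type of $h^{\mathcal{B}(q_{<\beta})^{p_\beta}}_{\Sigma_1}(\gamma\cup\{\ldots\})\cap\On$. Since $\mathcal{B}(q_{<\beta})^{p_\beta}$ is (the collapse under $\pi_\beta$ of) a structure inside $H_\beta$ — using coherence of the coding structures under the $\Sigma_1$-elementary collapse and lemma \ref{coding_app_stable} to see $q\res\beta$ doesn't disturb the structure — this order type is bounded by $\pi_\beta$ applied to the corresponding hull computed in $M$, hence by $(\beta^+)^{\bar M_\gamma}\leq g(\gamma')$ for suitable $\gamma'$; monotonicity and the fact that $g$ is increasing on cardinals then yields eventual domination. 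Part~2 is the same argument carried out at $\alpha$ itself: when $p_\alpha\is s$ properly, $\mathcal{B}(q_{<\alpha})^s$ contains $f(q\res\alpha)^{p_\alpha}$ by lemma \ref{codingapp_define}, and the collapse hulls $H_\beta$ exhaust $M$, so $g(\beta)=(\beta^+)^{\bar M_\beta}$ eventually outgrows each value $f(q\res\alpha)^{p_\alpha}(\beta)$ because that value is the order type of a hull computed in a structure captured inside $H_\beta$.

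For part~3, I would show directly that $g\in\mathcal{B}(q_{<\alpha})^s$: indeed $g$ is definable from $M$ and the collapse maps $\pi_\beta$, and $M=L_\mu[A\cap\alpha,s]$ with $\mu<\mu(q_{<\alpha})^s$ (using $\mu(q_{<\alpha})^s \geq \mu(p_{<\alpha})^s$ from lemma \ref{coding_app_stable} together with the choice of $\mu$); the sequence $(\pi_\beta)_{\beta}$ and hence $g$ can be computed from $M$ inside $L_{\mu+\omega}[A\cap\alpha,s]\subseteq\mathcal{B}(q_{<\alpha})^s$. Once $g\in\mathcal{B}(q_{<\alpha})^s$, eventual domination of $g$ by $f(q_{<\alpha})^s$ is automatic: $f(q_{<\alpha})^s$ enumerates order types of Skolem hulls over $\mathcal{B}(q_{<\alpha})^s$, and for cofinally (hence eventually, by the usual closure argument) many cardinals $\gamma$ one has $g\res\gamma\in H^{\mathcal{B}(q_{<\alpha})^s}_\gamma$, forcing $f(q_{<\alpha})^s(\gamma)$ above $\sup(\ran(g\res\gamma))\geq g(\gamma)$.

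The main obstacle I anticipate is part~1: one must be careful that the coding structure $\mathcal{B}(q_{<\beta})^{p_\beta}$ used to compute $f(q\res\beta)^{p_\beta}$ really is the image under $\pi_\beta$ of a structure living inside $H_\beta$ (or at least is bounded by one), since $\mathcal{B}(q_{<\beta})^{p_\beta}$ depends on $q_{<\beta}$, which is the part of the condition being built, not on $p$; here lemma \ref{coding_app_stable} (the coding structures below $\beta$ are unchanged passing from $p$ to the extension) and lemma \ref{exact_coding_collapse} (exact coding is preserved by the collapse) are exactly what is needed, together with the observation made after lemma \ref{coding_app_stable} that $Y\cap\beta$ does not precode a string in $S_\beta$, so the new material $[Y]_3\cap\beta$ appended in defining $q_{\beta^+}$ does not alter the relevant coding structures. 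Assembling these ingredients carefully is the crux; everything else is order-type bookkeeping.
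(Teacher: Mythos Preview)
Your plan for parts 1 and 2 is essentially the paper's, phrased through the collapse map. The paper observes directly that $f(p\res\tilde\beta)^{p_{\tilde\beta}}$ (with $\tilde\beta=\min(H_\beta\cap[\beta,\alpha])$, and $\tilde\beta=\alpha$ for part~2) is a single element of $M$, $\Sigma_1$-definable from $p$ and $\tilde\beta$, hence in $H_\beta=\bigcup_{\gamma<\beta}H_\gamma$, hence in $H_\gamma$ for all large successor $\gamma<\beta$; since also $\gamma\in H_\gamma$ and $M\models |f(\gamma)|=\gamma$, elementarity puts the collapsed value below $(\gamma^+)^{\bar M_\gamma}=g(\gamma)$. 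Your identification of $\mathcal{B}(q_{<\beta})^{p_\beta}$ with the $\pi_\beta$-image of $\mathcal{B}(p_{<\tilde\beta})^{p_{\tilde\beta}}$ via lemmas~\ref{coding_app_stable} and~\ref{exact_coding_collapse} is exactly what turns this into a statement about $f(q\res\beta)^{p_\beta}(\gamma)$.

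For part 3 your final step is wrong. You assert $\sup(\ran(g\res\gamma))\geq g(\gamma)$, but $g(\gamma')\in[\gamma',(\gamma')^+)$ for every successor cardinal $\gamma'<\gamma$, so $\ran(g\res\gamma)\subseteq\gamma$ and hence $\sup(\ran(g\res\gamma))\leq\gamma\leq g(\gamma)$; the inequality points the other way and is strict whenever $(\gamma^+)^{\bar M_\gamma}>\gamma$. So knowing $g\res\gamma\in H^s_\gamma$ is useless here --- it only gives $f(q_{<\alpha})^s(\gamma)\geq\gamma$, which is trivial. The paper bypasses this entirely: from $M\in\mathcal{B}(p_{<\alpha})^s$ (since $\mu<\mu(p_{<\alpha})^s$) one gets directly
\[
g(\gamma)<\otp(H_\gamma\cap\On)\leq\otp(H^s_\gamma\cap\On)=f(p_{<\alpha})^s(\gamma),
\]
the middle inequality holding because any ordinal $\Sigma_1$-definable in $M$ from $\gamma\cup\{A\cap\alpha,s\}$ remains $\Sigma_1$-definable in the taller structure $\mathcal{B}(p_{<\alpha})^s$ from the larger parameter set $\gamma\cup\{A\cap\alpha,s(p),s\}$. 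Your observation that $g\in\mathcal{B}(q_{<\alpha})^s$ is correct and is used by the paper just after this lemma, but it does not by itself give the domination; you need the direct hull-inclusion argument.
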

\begin{proof}

Let $\beta \leq \alpha$ be a limit cardinal large enough so that $p \in H_\beta$. 
In case $\beta = \alpha$, note that $p_\alpha \in M$.
We may assume that either $\beta = \alpha$ or $H_\beta \cap \alpha \neq \beta$ (as $\alpha$ is singular in $M$).

Let $\tilde \beta = \min H_\beta \cap[\beta,\alpha]$.
For the case $\alpha = \beta$, we may assume $p_\alpha$ is a proper initial segment of $s$.
Then $f(p\res\alpha)^{p_{\tilde\beta}}\in M$ (by Lemma~\ref{codingapp_define} and the previous remark if $\beta = \alpha)$, and in fact it is the solution to a $\Sigma_1$-formula in $M$ with parameters $\tilde \beta$ and $p$ - or $p_\alpha$, in case $\beta = \alpha$. 
Thus $f(p\res\alpha)^{p_{\tilde\beta}} \in H_\beta$.
Suppose $\gamma<\beta$ is a successor large enough so that $f(p\res\alpha)^{p_{\tilde\beta}} \in H_\gamma$.
Observe $\gamma \in H_\gamma$.
Thus by elementarity, $\bar M_\gamma \vDash$``$f(p\res\alpha)^{p_\beta}(\gamma)$ has size $\gamma$''.

Finally, 
as $M \in \mathcal{B}(p_{<\alpha})^{s}$, clearly $g (\gamma) < \otp(\On \cap H_\gamma) < \otp (\On \cap H^s_\gamma)$ and so $g(\gamma) < f(p\res\alpha)^{s}(\gamma)$ for all $\gamma < \beta$.
Thus $g$ is slower than $f(p\res\alpha)^s$. \renewcommand{\qedsymbol}{{\tiny  Lemma~\ref{l.sublemma:ext}~}$\Box$}
\end{proof}

\medskip

We now show that for any limit $\beta \leq \alpha$, $q\res\beta \in \mathcal{B}(p_{<\alpha})^{q_\beta}$ and exactly codes $q_\beta$.
First, let $\beta=\alpha$.
Since $\mathcal{\tilde B}(p_{<\alpha})^{s} \in\bar M \in \mathcal{B}(p_{<\alpha})^{s}=\mathcal{B}(q_{<\alpha})^{q_\alpha}$, we have
$Y\in \mathcal{B}(q_{<\alpha})^{q_\alpha}$and $g \in \mathcal{B}(q_{<\alpha})^{q_\alpha}$ (because the height of the latter model is a limit mutiple of $\beta$, we can argue as in Lemma~\ref{codingapp_define}). 
So also $q\res\alpha \in \mathcal{B}(q_{<\alpha})^{q_\alpha}$.

Suppose $p_\alpha=s=q_\alpha$.
Then we have seen that $f(p\res\alpha)^{s}$ dominates $g$, so $q\res\alpha$ exactly codes $p_\alpha$ just as $p\res\alpha$ does.

Now suppose $p_\alpha$ is a proper initial segment of $s=q_\alpha$.
Moreover, we have seen that $g$ eventually dominates $f(p\res\alpha)^{p_\beta}$.
So $f(p\res\alpha)^{p_\alpha}_{q\res\alpha}$ is defined, 
the limit coding for $q\res\alpha$ goes on for one more step than for $p\res\alpha$ and we obtain $Y$ precoding $s=q_\alpha$.
Also, the argument of Lemma~\ref{codingapp_define} shows that $f(p\res\alpha)^{p_\beta}_{q\res\alpha} \in \mathcal{B}(p_{<\alpha})^{s}$.
Observe that also $\mathcal{B}(p_{<\alpha})^{s}= \mathcal{B}(q_{<\alpha})^{q_\alpha}$ by Lemma~\ref{coding_app_stable}.
Since also $Y \in \mathcal{B}(q_{<\alpha})^{q_\alpha}$, $q\res\alpha$ codes $q_\alpha$.
As $q\res\alpha \in \mathcal{B}(q_{<\alpha})^{q_\alpha}$, the coding stops there and $q\res\alpha$ exactly codes  $q_\alpha$.

Now suppose $\beta < \alpha$. 
Remember $H_\beta \cap\alpha\neq\beta$.
Clearly, $Y\cap\beta \in \mathcal{B}(q_{<\beta})^{q_\beta}$ as $q_\beta =  p_\beta \conc [Y\cap\beta]_3$.
The $\Sigma_1$-theory of $\bar M_\beta$ allows us to reconstruct $g$ inside 
$\mathcal{B}(q_{<\beta})^{q_\beta}$ using an argument like that of Lemma~\ref{codingapp_define}.
Thus, $q\res\beta \in \mathcal{B}(q_{<\beta})^{q_\beta}$.
Moreover, in this case we have also shown that $g\res\beta$ eventually dominates $f(p\res\alpha)^{p_\beta}$,
and as before we obtain $Y\cap\beta$ in the next step of the limit coding (after the exact coding of $p_\beta$ by $p_{<\beta}$, which could of course be trivial if $p_\beta=\emptyset_\beta$).
We have seen $Y\cap\beta$ does not precode an element of $S_\beta$,
so $q\res\beta$ exactly codes $q_\beta =  p_\beta \conc [Y\cap\beta]_3$,
provided that $f(p_{<\beta})^{p_\beta}_{q\res\beta} \in \mathcal{B}(q_{<\beta})^{q_\beta}$.
This holds again by an argument similar to that of Lemma~\ref{codingapp_define}.
Again, as $q\res\beta \in \mathcal{B}(q_{<\beta})^{q_\beta}$, the coding stops there and $q\res\beta$  exactly codes  $q_\beta$.

Finally, we obtain a $q$ such that the desired properties hold except for the capturing of $X$, which only holds for a tail of $\beta$;
we can use induction to get $q$ that works for all $\beta$.
Obviously, $q \leq p$ (we have never put 1s on any partition affected by restraints).
For the beginning of the induction, assume $\alpha$ is the least limit cardinal.
Argue as in the general limit case described above to get a condition $q$ that works on a tail below $\alpha$. Now make finitely many extensions to obtain a $q$ that works everywhere.  \renewcommand{\qedsymbol}{{\tiny  Lemma~\ref{ext1}~}$\Box$}\end{proof}

Note that the proof of Lemma~\ref{ext1} as given here relies heavily on the idea to work with the $E_\beta$'s. 
Perhaps the reader's initial impression is that one could have easily done with the standard $C_\beta$ from $\Box$ instead;
virtual inaccessible coding requires an Easton set, but this is also no reason to go beyond $\Box$.
It is the mechanism of distinguishing between virtual inaccessible coding and immediate singular coding
which requires that $E_\beta$ appear in the smallest relevant type of structure---where $\beta$ is seen to be non-Mahlo,
but not necessarily singular.
Without this distinction, we could choose to always do virtual inaccessible coding---but then the proof of Lemma~\ref{ext1} seems to fail.

\section{The Main Theorem for Quasi-Closure}\label{jensen_qc}\index{quasi-closure!of Jensen coding@of Jensen coding, $P(A_0)$}

We shall now define $\D$ and $\leqlol$ witnessing that $P$ is quasi-closed.
\begin{dfn}\label{d.lessthan}\index[notation]{lessthanl  lambda@$\leqlol$}
We define $p \leqlol q$ just if $p\leq q$, $p^*_{\lambda} = q^*_{\lambda}$, and for all $\delta \in \Card' \cap\lambda$ we have $p_\delta = q_\delta$ and $p^*_\delta = q^*_\delta$.
In accordance with Section~\ref{sec:s:ext}, define $p \leqlo^{<\lambda} q$ just if $p\leq q$ and both $p_\delta = q_\delta$ and $p^*_{\delta} = q^*_{\delta}$ for all $\delta \in \Card' \cap\lambda$.
\end{dfn}
The class $\D$ is defined in a more elementary way than in \cite{bjw:82} and \cite{friedman:codingbook}, making do with more basic genericity over Skolem-hulls, owing to the fact that we've assumed $\Card = \Card^{L[A_0\cap\omega]}$. 

The proof in Section~\ref{sec:D} that $\D$ is dense, i.e., \eqref{qc:redundant}, takes the form of an intricate induction.\footnote{See Theorems 3.1, 3.2 and Lemma 3.10 in \cite{bjw:82} and Lemma 4.5 in \cite{friedman:codingbook}, where $\Sigma^p_f$ plays the role of $\D$.}
The next theorem treats two situations simultaneously:
firstly, a local version of quasi-closure which will carry us through singular limits in the aforementioned induction, assuming we already know ``smaller fragments'' of $\D$ to be dense.
Secondly, the global version of quasi-closure (sketched in \myplacecite{chapter 3.7}{bjw:82} and \myplacecite{after 4.11}{friedman:codingbook}).
The proof readily suggests such an aggregation. 
The following assumption captures both situations:
\begin{ass}\label{M}
Let $\beta \in \Card\cap\kappa$ or $\beta=\infty$.
In the first case, let $q \in P$, $M=\mathcal{A}^{q_{\beta^+}}$ and $A=A_q$;
also demand that $|q_{<\kappa}| \geq \beta^+$.
Define 
\[
P(q)^{\beta^+} = \{  p \res\beta^+ \setdef p \in P\wedge p \leq q \wedge p^{\beta^+}=q^{\beta^+}\},
\]
and let $R=P(q)^{\beta^+}$.
In the second case let $M=L[A_0]$, $A=A_0$ and $R=P$.
\end{ass}
The definition of $P(q)^{\beta^+}$ was chosen so that for $p\in P(q)^{\beta^+}$, $p\cdot q \in P$ and $P(q)^{\beta^+}$ generically codes both $A\cap\beta$ and $q^{\beta^+}$ over $\mathcal{A}^{q^{\beta^+}}$.
In fact, $(p\cdot q) \res \beta^{++} = p$.
Equivalently, $p \in R$ if and only if 
$p \leq q \res \beta^{++}$, $p^{\beta^+}=q^{\beta^+}$ and $p$ obeys all restraints from $q$ for inaccessibles $\delta > \beta$.

\medskip

We now define approximations to $\D$ relativized to $M$, and then $\D$ itself.

\begin{dfn}\label{D}
Given $p\in P$, $\lambda\in \Reg$, $\lambda < \beta'\in\Card$, $x\in M$ arbitrary,
we now define
$\D^M_{[\lambda,\beta')}(p,\vec{x}) \subseteq P$\index[notation]{D M [lambda,beta](p,x)@$\D^M_{[\lambda,\beta')}(p,\vec{x})$} (we only need to consider $\beta'\leq\beta$).
For $\delta \in \On$, let
\begin{align*}
H_\delta\index[notation]{H delta, H <delta@$H_\delta$, $H_{<\delta}$} &= H^M_\delta (p,\vec{x}) = h^M_{\Sigma_1}(\delta\cup\{\vec{x}\}),\\
H_{<\delta} &= H^M_{<\delta} (p,\vec{x}) = h^M_{\Sigma_1}(\sup(\supp(p)\cap \delta)\cup\{ \vec{x}\}).
\end{align*}
We define $\D^M_{[\lambda,\beta)}(p,\vec{x})$ as
the set of $q \in P$ such that if $\tau = \min( \supp(p)\cap[\lambda,\beta'))$ exists then $q \leqlo^{\tau} p$ and
\begin{enumerate}[label=(D \arabic*), ref=D \arabic*]
\item\label{restraints_start_high} if $\tau > \lambda$ and $\tau\in\Inacc$ then $\rho(q^*_\tau)\geq \lambda$ (of course, this is vital to preserve \ref{coding_areas_bounded} in the definition of $P(A_0)$; see also the last clause of \eqref{qc:redundant});

\item\label{weak_genericity} for all $\delta\in [\tau,\beta')$ such that $\delta \in H_{<\delta} \cup \supp(p)$

\begin{enumerate}[label=(\alph*), ref=\ref{weak_genericity}\alph*]
\item\label{grow} $|q_\delta| > H_\delta \cap \delta^+$;
 for $\delta=\kappa$ in addition, $|q_{<\kappa}| > \sup (H_{|p_{<\kappa}|}\cap\kappa)$.
\item\label{restraints} if $\nu \in H_{<\delta}\cap[\delta,\delta^+)$ then $b^{p_\delta\res\nu}\setminus \eta \in q^*_\delta$ for some $\eta \in [\lambda,\delta)$;

\item\label{A_coding} if $\xi \in H_\delta \cap [\delta,\delta^+)$ there is $\nu > |p_\delta|$ such that
$p_\delta((\langle \xi, \nu \rangle)_0) = 1$ if $\xi\in A$ and $p_\delta((\langle \xi, \nu \rangle)_0) = 0$ if $\xi\not\in A_p$;

\item \label{succ_coding} if $b^{p_{\delta^+}\res\nu}\in p^*_{\delta^+}$ there is $\zeta > |p_\delta|$ such that $q_\delta((\zeta)_1)=p_{\delta^+}(\nu)$;

\item \label{fake_inacc_coding} if $\delta\in\Inacc\cap\kappa$ 
there is $\beta \in E_\delta \setminus \sup(\supp(p)\cap\delta))$ such that
$q_{\beta^+}((\beta^+)_2)=1$;\footnote{This somewhat technical requirement makes it easy to distinguish the virtual inaccessible coding from the singular coding. Conditions which immediately use singular coding, i.e., those constructed in the Extendibility Lemma will have $q_{\beta^+}((\beta^+)_2)=1$ on a tail of $\beta \in E_\delta$.}

\item \label{inacc_coding} if $\delta\in\Inacc\cap\kappa^+$ and $b^{p_\delta\res\nu}\setminus \eta \in p^*_\delta$ then
there is $\xi \in b^{p_\delta\res\nu}\setminus\eta$ such that $\xi >  \sup(\supp(p)\cap\delta))$ and
$q_{<\delta}((\xi)_2)=p_\delta(\nu)$ (note that this clause elegantly covers both the Mahlo coding and the inaccessible, non-Mahlo coding);

\item\label{coding_areas_in_supp} if $\delta\in\Inacc$ and $b \in p^*_\delta \setminus\On$ then $b\cap\sup(\supp(p)\cap\delta) \subseteq \dom(q_{<\delta})$;
\end{enumerate}
\end{enumerate}
We also write
\[
U(p)\index[notation]{U(p)@$U(p)$}=U^M(p, \vec{x})\index[notation]{U M (p,x)@$U^M(p, \vec{x})$}=\{ \delta \in \Card\cap\delta \setdef \delta \in H_{<\delta}\cup\supp(p)\}.
\]
Finally, for the proof of quasi-closure we set $\D(\lambda, \vec{x}, p) = \D^{L[A_0]}_{[\lambda, \infty)}(p,\vec{x})$.
\end{dfn}
Note that we define $\D^M_{[\lambda,\beta)}(p,\vec{x})$ as a subset of $P$, and for $p \in P$ rather than as a subset of $R$ and for $p \in R$. 
This is a notational convenience we will make use of when we show that these sets are non-empty.
To build sequences with greatest lower bounds, it is only the restriction to $R$ of  $\D^M_{[\lambda,\beta)}(p,\vec{x})$ which is useful.

\medskip

We also introduce the following terminology, which provides good intuition and will be useful when we show the least Mahlo is not collapsed in our iteration (see \ref{it:reals:are:caught}).
\begin{dfn}
Let $H$ be any set and let $p, q \in P$, $q \leq p$.
We say that $q$ is basic generic for $(H,p)$ at $\delta^+$ if and only if
\begin{enumerate}
\item%
$|q_\delta| > H \cap \delta^+$;

\item%
if $\nu \in H\cap[\delta^+,\delta^{++})$ then $b^{p_{\delta^+}\res\nu} \in q^*_{\delta^+}$;

\item %
 if $\nu \in H\cap[\delta^+,\delta^{++})$ then there is $\zeta > |p_\delta|$ such that $q_\delta((\zeta)_1)=p_{\delta^+}(\nu)$;

\item%
 if $\xi \in H \cap [\delta,\delta^+)$ there is $\nu > |p_\delta|$ such that
$q_\delta((\langle \xi, \nu \rangle)_0) = 1$ if $\xi\in A$ and $q_\delta((\langle \xi, \nu \rangle)_0) = 0$ if $\xi\not\in A_p$;

\end{enumerate}

When $\delta\in\Inacc$, we say that $q$ is basic generic\index{basic generic} for $(H,p)$ at $\delta$ if and only if
\begin{enumerate}
\item%
$|q_{<\delta}| \geq \sup (H\cap\delta)$.

\item%
if $\nu \in H\cap[\delta,\delta^{+})$ then $b^{p_{\delta}\res\nu}\setminus \eta \in q^*_\delta$ for some $\eta$;

\item%
if $\nu \in H\cap[\delta,\delta^{+})$ then
there is $\xi \in b^{p_\delta\res\nu}\setminus\eta$ such that $\xi \geq  |p_{<\delta}|$ and
$q_{<\delta}((\xi)_2)=p_\delta(\nu)$;

\item%
if $\xi \in H \cap \delta$ there is $\nu > |p_\delta|$ such that
$q_{<\delta}((\langle \xi, \nu \rangle)_0) = 1$ if $\xi\in A$ and $q_{<\delta}((\langle \xi, \nu \rangle)_0) = 0$ if $\xi\not\in A_p$;
\end{enumerate}
The second definition we shall only use for $\delta = \kappa$ (in 
\ref{it:reals:are:caught}).
\end{dfn}

\medskip

That $P$ is quasi-closed will follow from the next theorem.

\begin{thm}\label{jensen_qc_main}
Let $\rho \leq \lambda \leq \beta$, $\lambda\in \Reg$.
If $\beta<\infty$, let $(\beta_\xi)_{\xi < \rho} \in M$ be an increasing sequence of cardinals such that $\sup_{\xi < \rho} \beta_\xi = \beta$ 
and set $\beta_\xi = \infty$ for $\xi<\rho$ if $\beta=\infty$.
Suppose $(p^\xi)_{\xi < \rho} \in M$ is a sequence of conditions in $R$ which has $\bar w= (w^\xi)_{\xi < \rho} \in M$ s.t. $M\models\bar w$ is a $(\lambda, x)$-canonical witness.
Moreover suppose $(p^\xi)_{\xi < \rho}$ is strategic in the following sense:
For a tail of $\xi < \rho$, 
\begin{enumerate}[label=(\Alph*), ref=\Alph* ]
\item \label{lambda} for all $\bar \xi$ such that $\xi < \bar \xi< \rho$ we have $p^{\bar \xi} \leqlol p^\xi$,
\item \label{lambda_xi} there is $\lambda_\xi$ such that $p^\xi \in \D_{[\lambda_\xi, \beta_\xi)}(p^\xi, \{ \bar w\res\xi+1,x\})$ and
$p^{\xi +1} \leqlo^{\lambda_{\xi+1}} p^\xi$,

\item \label{dense_top}  in case $\beta<\infty$ , letting $H = h^M_{\Sigma_1}(\beta_\xi \cup\{ \bar w\res\xi+1, x\})$,
$p^{\xi+1}$ is $(H,p^\xi)$-generic at $\beta^+$, which means in the present case:
\begin{enumerate}[label=(\arabic*), ref=\ref{dense_top}\arabic*]
\item $|p^{\xi+1}_\beta| > H \cap \beta^+$;

\item if $\nu \in H\cap[\beta^+, |s|)$ then $b^{s\res\nu} \in (p^{\xi+1})^*_\beta$;

\item if $\xi \in H \cap [\beta, \beta^+)$ there is $\nu > |p^\xi_\beta|$ such that
$p^{\xi+1}_\beta((\langle \xi, \nu \rangle)_0) = 1$ if $\xi\in A_p$ and $p^{\xi+1}_\beta((\langle \xi, \nu \rangle)_0) = 0$ if $\xi\not\in A_p$;

\item if $b^{s\res\nu}\in (p^\xi)^*_{\beta^+}$ there is $\zeta > |p^\xi_\beta|$ such that $p^{\xi+1}_\beta((\zeta)_1)=s(\nu)$;
\end{enumerate}
\end{enumerate}
Then $\bar p$ has a greatest lower bound $p^\rho \in R\cap M$.
\end{thm}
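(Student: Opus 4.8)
The plan is to construct the greatest lower bound $p^\rho$ coordinate by coordinate, by a simultaneous induction on cardinals $\delta \leq \beta$, exploiting the fact that the construction of the sequence $\bar p$ already ensures enough genericity over the relevant Skolem hulls. I would first fix $\vec{x}$ and the hulls $H_\delta = h^M_{\Sigma_1}(\delta \cup \{\bar w\res\xi+1, x\})$ appearing in \ref{D}, and observe that by \ref{lambda} the sequences $(p^\xi_\delta)_{\xi<\rho}$ and $((p^\xi)^*_{\delta^+})_{\xi<\rho}$ stabilize on regular $\delta < \lambda_\xi$, so the only real work happens at cardinals $\delta \in [\lambda, \beta)$. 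For such $\delta$, define $p^\rho_\delta = \bigcup_{\xi<\rho} p^\xi_\delta$ and $(p^\rho)^*_\delta = \bigcup_{\xi<\rho} (p^\xi)^*_\delta$ (with the analogous union for $p^\rho_{<\delta}$ at inaccessible and limit $\delta$), and then verify clause by clause that $p^\rho \leq p^\xi$ for every $\xi$ and that $p^\rho \in R \cap M$. That $p^\rho \in M$ follows because $\bar w$ (hence $\bar p$) is in $M$ and $M$ is closed under the relevant unions by the definability assumptions on canonical witnesses together with the absoluteness of the definition of $P$ (it is a Boolean combination of $\Sigma^A_1$ statements, as noted after \ref{conditions}).

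The heart of the argument is checking the axioms of \ref{conditions} for $p^\rho$, and here the strategic conditions \ref{lambda_xi} and \ref{dense_top} do the work. The successor-coding and inaccessible-coding relations $R^s$ only require end-extension plus respecting restraints, so since each $(p^{\xi+1})^*_{\delta^+}$ only grows and the genericity clauses \ref{succ_coding}, \ref{inacc_coding}, \ref{A_coding} guarantee that every restraint that was ever imposed is eventually honored at a coordinate below $|p^\rho_\delta|$, the limit condition satisfies the ordering requirements. The Easton-support requirement and clause \ref{coding_areas_bounded} are exactly where clause \ref{restraints_start_high} of \ref{D} matters: since for $\tau = \min(\supp(p^\xi) \cap [\lambda_\xi,\beta))$ we have $\rho((p^\xi)^*_\tau) \geq \lambda_\xi$, the coding ordinals used by inaccessibles above any regular $\gamma$ remain bounded below $\gamma$ in the union. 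For the singular coordinates $\delta$ one must check $p^\rho\res\delta \in \mathcal{B}(p^\rho_{<\delta})^{p^\rho_\delta}$ and that $p^\rho_{<\delta}$ exactly codes $p^\rho_\delta$; here I would use \ref{fake_inacc_coding} to control whether fake inaccessible coding occurs, the coding-structure stability (lemmas \ref{coding_app_stable} and \ref{codingapp_define}) to see the coding structures do not move, and the fact that $\bar p$ is a genuine $\leqlol$-descending sequence so that the ``exact coding'' of $p^\rho_\delta$ by $p^\rho_{<\delta}$ is inherited from the $p^\xi$. The continuity of the $\mathcal{B}$-structures at limit points of $E_\delta$, together with $\bar p \in M$, gives $p^\rho \res\delta \in \mathcal{B}(p^\rho_{<\delta})^{p^\rho_\delta}$ at limits.

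The genuinely delicate point — and what I expect to be the main obstacle — is the case $\beta < \infty$, i.e. the localized version used at the singular limit step of the induction proving $\D$ is dense (clause \eqref{qc:redundant}). There the unions $p^\rho_\beta = \bigcup_\xi p^\xi_\beta$ and $(p^\rho)^*_\beta = \bigcup_\xi (p^\xi)^*_\beta$ must themselves form a legitimate string in $S^*_\beta$ (or, for $\beta=\kappa$, in $S^*_{<\kappa}$), which is not automatic: one must check that no ``test model'' $N$ sees a failure of $\Phi(r)$, and that the length $|p^\rho_\beta|$ is still below $\beta^+$ — this is where $\rho \leq \lambda \leq \beta$ and the bookkeeping $\sup_\xi \beta_\xi = \beta$, $\beta_\xi$ cardinals, are essential, since $\rho < \beta^+$ bounds the number of extension steps and the genericity clauses \ref{dense_top}(1)--(4) ensure the union codes precisely what the test-model hypothesis of \ref{strings_dfs} demands (that $A^*(p^\rho_\beta\res\zeta)$ computes $r$ correctly). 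I would isolate this as a sub-lemma: given that each $p^\xi \res\beta^{++}$ lies in $R$ and the sequence is $(H,p^\xi)$-generic at $\beta^+$ in the sense of \ref{dense_top}, the union agrees with the decoding procedure of \ref{decoding_def} applied inside the relevant test models, so $p^\rho_\beta \in S^*_\beta$. Once this sub-lemma is in hand, the verification that $p^\rho$ is the \emph{greatest} lower bound is immediate: any $r \leq p^\xi$ for all $\xi$ must end-extend each $p^\xi_\delta$ hence extend $p^\rho_\delta$, and must contain each $(p^\xi)^*_\delta$ hence $(p^\rho)^*_\delta$, so $r \leq p^\rho$; and $p^\rho \leqlol p^\xi$ for each $\xi$ follows since the coordinates below $\lambda$ were never moved.
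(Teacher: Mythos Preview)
Your overall shape is right—the candidate $p^\rho$ is indeed the coordinatewise union, and showing it is a greatest lower bound once it is a condition is routine. But you have missed the central technical device, and without it the verification that $p^\rho$ is a condition does not go through.

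The missing idea is this: for each $\delta$ at which the strings grow, the paper forms the union $X=\bigcup_\xi X^\xi$ of $\Sigma_1$-Skolem hulls $X^\xi = h^M_{\Sigma_1}(\delta\cup\{x,\bar w\res\xi+1\})$, takes its transitive collapse $\bar X$ with inverse $j$, and proves two key lemmas. First (``Definability''), the collapsed sequence $(\pi(p^\xi))_{\xi<\rho}$ is a \emph{definable class} over $\bar X$---this is exactly what the canonical witness $\bar w$ buys you, and it is the only place the canonical witness is used. Second (``Local coding''), the genericity built into $\D$ guarantees that $\bar X$ itself can be recovered from $A\cap\delta$ and $\bar p_\delta = \bigcup_\xi \pi(p^\xi)_\delta$ inside $L_{\bar\mu}[A\cap\delta,\bar p_\delta]$. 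Together these give $\bar X\in\mathcal{A}_0^{\bar p_\delta}$ and $p^\rho\res\delta\in\mathcal{A}_0^{\bar p_\delta}$, which is how one obtains $p^\rho\res\delta\in\mathcal{B}(p^\rho_{<\delta})^{p^\rho_\delta}$ at singular $\delta$. Your appeal to ``continuity of the $\mathcal{B}$-structures'' and ``inheritance'' does not do this; in particular, when $\delta$ is singular and $\sup_\xi\supp(p^\xi)\cap\delta=\delta$ (the fake-inaccessible-coding case), none of the $p^\xi$ had $\delta$ in its support, so there is nothing to inherit---one must argue via the hull that the inaccessible-style coding at $\delta$ actually recovers $\bar p_\delta$.

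You also misjudge where the $S^*$-verification is hard. It is not only at $\delta=\beta$: for \emph{every} $\delta\geq\bar\lambda$ (where $\bar\lambda$ is the liminf of the $\lambda_\xi$) one must show $p^\rho_\delta\in S^*_\delta$, and the only new case is the test model $N$ with $\zeta=|p^\rho_\delta|$. The way to bound the height $\eta$ of such $N$ is precisely via Definability: since $(|p^\xi_\delta|)_{\xi<\rho}$ is definable over $L_{\bar\mu}[A\cap\delta,p^\rho_\delta]$ and is cofinal in $|p^\rho_\delta|$, any $N\models|p^\rho_\delta|=\delta^+$ must have $\eta<\bar\mu$. One then runs a case analysis on where $\eta$ sits relative to the cardinals of $\bar X$, using local coding and elementarity of $j$ to reduce to a test model already handled by some $p^\xi_{j(\alpha)}\in S^*_{j(\alpha)}$. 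Your proposal gestures at ``the union agrees with the decoding procedure\ldots inside the relevant test models'' but gives no mechanism for bounding those models or for transferring back to the $p^\xi$; the hull-and-collapse machinery is that mechanism, and it is not optional.
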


Before we prove the theorem, we note an obvious corollary:
\begin{cor}\label{jensen_qc_main_cor}
Suppose $\bar p$ is a $(\lambda,x)$-adequate sequence.
Then $\bar p$ has a greatest lower bound.
\end{cor}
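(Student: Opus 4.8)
The plan is to derive Corollary~\ref{jensen_qc_main_cor} from Theorem~\ref{jensen_qc_main} by specializing to the global case $\beta=\infty$, $M=L[A]$, $R=P$. Let $\bar p=(p^\xi)_{\xi<\rho}$ be $(\lambda,x)$-adequate. By Definition~\ref{canonical}, this means $\rho\leq\lambda$ and there is $\bar w=(w^\xi)_{\xi<\rho}$ which is simultaneously a $(\lambda,x)$-strategic guide (Definition~\ref{strategic}) and a $(\lambda,x)$-canonical witness (Definition~\ref{canonical}) for $\bar p$. Since $\bar w$ is a canonical witness, it is $\qcdefSeq(\lambda\cup\{x,\param\})$ and $\bar p$ is recovered from it by a $\qcdefG(\lambda\cup\{(x,\param)\})$ partial function $G$; because $L[A]$ is $\Sigma^A_1$-correct (and $\Pi^A_1$-correct), the statements defining $\bar w$ and $G$ are absolute, so $M\models$``$\bar w$ is a $(\lambda,x)$-canonical witness'' and $\bar w,\bar p\in M=L[A]$. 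Thus the data hypothesized in Theorem~\ref{jensen_qc_main} are available once I check the three strategic conditions \ref{lambda}, \ref{lambda_xi}, \ref{dense_top}.

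First I would dispose of \ref{dense_top}: in the global case $\beta=\infty$ there is no ``top'' coordinate, so this clause is vacuous (formally, the hypothesis ``in case $\beta<\infty$'' is not triggered). Next, \ref{lambda} is immediate: it is exactly clause~(2) in Definition~\ref{strategic} of a strategic guide (using that $\leqlol$ is a preorder by \eqref{qc:preorder}, one passes from successive direct extensions to $p^{\bar\xi}\leqlol p^\xi$ for all $\xi<\bar\xi$), together with clause~(3) handling limits. For \ref{lambda_xi} I need, for each $\xi<\rho$, an ordinal $\lambda_\xi$ with $p^\xi\in\D_{[\lambda_\xi,\infty)}(p^\xi,\{\bar w\res\xi+1,x\})$ and $p^{\xi+1}\leqlo^{\lambda_{\xi+1}}p^\xi$. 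Here I unwind clause~(1) of Definition~\ref{strategic}: for a tail of $\xi$, there is $\lambda'\in I$ with $p_{\xi+1}\in\D(\lambda',(\{x,c\},\bar w\res\xi+1),p_\xi)$ and $p_{\xi+1}\leqlo^{\lambda'}p_\xi$; recalling $\D(\lambda,\vec x,p)=\D^{L[A]}_{[\lambda,\infty)}(p,\vec x)$ from Definition~\ref{D}, this is precisely the required membership with $\lambda_{\xi+1}=\lambda'$ (for the finitely—or boundedly—many $\xi$ not in the tail one can absorb them, or re-index, since \eqref{qc:redundant} lets us always find such an extension; alternatively note $\D^M_{[\lambda,\infty)}(p,\vec x)$ contains $p$ itself when $\supp(p)\cap[\lambda,\infty)=\emptyset$, e.g. for $\lambda$ large). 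A small point to check is the bookkeeping that $\lambda_\xi$ can be chosen consistently with the requirement in \ref{lambda_xi} that it works both as the index witnessing $p^\xi\in\D_{[\lambda_\xi,\beta_\xi)}(\dots)$ and as the superscript on the direct-extension relation $p^{\xi+1}\leqlo^{\lambda_{\xi+1}}p^\xi$ — but this is exactly the content of Definition~\ref{strategic}(1) read with $\lambda'=\lambda_{\xi+1}$.

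With \ref{lambda}, \ref{lambda_xi}, \ref{dense_top} verified, Theorem~\ref{jensen_qc_main} applies and yields a greatest lower bound $p^\rho\in R\cap M=P\cap L[A]=P$. Finally I must observe that this $p^\rho$ is a greatest lower bound \emph{in $P$} (not merely in some restricted $R$), which is immediate since $R=P$ in the global case. I expect the only real subtlety to be the indexing/tail bookkeeping in translating Definition~\ref{strategic}'s ``for a tail of $\xi$'' into the ``for any $\xi<\rho$'' phrasing of \ref{lambda_xi} and \ref{lambda}; this is handled either by the re-indexing trick used repeatedly in the paper (cf. the remarks after Definition~\ref{def:qc}) or by noting that on the initial non-tail segment one may insert trivial direct extensions using \eqref{qc:redundant}. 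Everything else is a direct citation of the theorem with the parameters $\beta=\infty$, $M=L[A]$, $R=P$ plugged in, and the absoluteness remark ensuring the canonical witness is recognized inside $M$.
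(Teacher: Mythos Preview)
Your proposal is correct and follows essentially the same approach as the paper: specialize Theorem~\ref{jensen_qc_main} to $\beta=\infty$, $M=L[A]$, $R=P$, note that clause~\eqref{dense_top} is vacuous, and observe that clauses~\eqref{lambda} and~\eqref{lambda_xi} are exactly the content of the strategic-guide definition. The only minor difference is the handling of the ``tail'' issue: the paper resolves it by noting that the proof of the theorem itself is carried out so as to tolerate the hypothesis holding only on a tail (cf.\ the proof of Lemma~\ref{def}), whereas you propose re-indexing or inserting trivial extensions via \eqref{qc:redundant}; both resolutions are valid.
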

\begin{proof}[Proof of the corollary]
In case $\beta = \infty$, strategic in the sense of the theorem means exactly that $\bar w$ is a strategic guide.
\end{proof}

For quasi-closure, it thus remains to show that $\D$ is dense. 
But first, let us prove Theorem~\ref{jensen_qc_main}.
\begin{proof}[Proof of Theorem~\ref{jensen_qc_main}]
The proof is split over several lemmas.
Fix a sequence $(p^\xi)_{\xi<\rho}$ and $\bar w$ as in the hypothesis.
For each $\xi < \rho$, pick $\lambda_\xi\in \Reg$ as in \eqref{lambda_xi}.
Note we may assume $\lambda_\xi \geq \lambda$, for we may replace $\lambda_\xi$ by $\lambda$ whenever this assumption fails: 
firstly, \eqref{lambda} holds, and secondly, 
$\D^M_{[\lambda_\xi, \beta_\xi)}(p^\xi, \{ x, \bar w \res\xi+1\}) \subseteq \D^M_{[\lambda, \beta_\xi)}(p^\xi, \{ x, \bar w \res\xi+1\})$ if $\lambda_\xi \leq \lambda$.

Let $p= p^\rho$ be the obvious candidate for a greatest lower bound, i.e., for $\delta \in \bigcup_{\xi<\rho} \supp(p^\xi)$,
\begin{equation}\label{obvious_candidate}
\begin{gathered}
(p_\delta, p^*_\delta)= ( \bigcup_{\xi<\rho} p^\xi_\delta, \bigcup_{\xi<\rho} (p^\xi)^*_\delta),\\
p_{<\kappa} = \bigcup_{\xi<\rho} p^\xi_{<\kappa}\text{, if $\kappa\leq\beta$.}
\end{gathered}
\end{equation}

Most of this proof will now be devoted to checking that for each $\delta \in \Card \cap \kappa^+$ both
\begin{gather}
p_\delta \in S^*_\delta  \label{is_string}\\
p\res\delta \in 
\mathcal{B}(p_{<\delta})^{p_\delta} \text{ if }\delta \in \Sing.\label{growth}
\end{gather}
After that we conclude by checking that $p^*_{<\kappa} \in S^*_{<\kappa}$.

Let $\bar \lambda$ be minimal such that for an unbounded set of $\xi<\rho$, we have $\lambda_\xi \leq \bar \lambda$.
Obviously, if $\delta \in \Card \cap \bar \lambda$, $p^\rho_\delta = p^\xi_\delta$ for some $\xi$, and so \eqref{is_string} and \eqref{growth} hold.
So let $\delta \geq \bar \lambda$.
Let %
\[
X^\xi = \begin{cases}
 h^M_{\Sigma_1}(\delta \cup \{x, \bar w\res\xi+1\}) &\text{for }\delta<\beta,  \\
h^M_{\Sigma_1}(\beta_\xi \cup\{x, \bar w\res\xi+1\})&\text{for }\delta=\beta.
\end{cases}
\]
Let $X= \bigcup_{\xi<\rho} X^\xi$, let $\pi\colon \bar X \rightarrow X$ be the transitive collapse.
Let $j = \pi^{-1}$.
Write $\bar A = \bigcup_{\nu\in X} \pi(A\cap\nu)$ and (if $\beta<\infty$) $s= q_{\beta^+}$, $\bar s = \pi(s)$, so that
\[
\bar X = \begin{cases}
L_{\bar\mu}[\bar A]&\text{ in case }\beta=\infty, M=L[A]\\
L_{\bar\mu}[\bar A,\bar s]&\text{ when }\beta<\infty, M=\mathcal{A}^s.
\end{cases}
\]
We now embark on a series of lemmas which will be used several times when proving \eqref{is_string} and \eqref{growth}
in various cases.
\begin{lem}[Definability]\label{def}
The sequence $(\pi(p^\xi))_{\xi<\rho}$ is a definable class in $\bar X$.
\end{lem}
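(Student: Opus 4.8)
The plan is to show that the sequence $(\pi(p^\xi))_{\xi<\rho}$ — or a tail of it — is definable over $\bar X$ from parameters available there, essentially by transporting the canonical-witness structure of $\bar p$ across the collapse $\pi$. Recall that by hypothesis $\bar w = (w^\xi)_{\xi<\rho} \in M$ is a $(\lambda,x)$-canonical witness: it is $\qcdefSeq(\lambda\cup\{x,\param\})$ via some formula $\Psi$, and there is a $\qcdefG(\lambda\cup\{(x,\param)\})$ partial function $G$ with $p^\xi = G(\bar w\res\xi+1)$. The first thing I would check is that, since $\lambda \le \beta$ and $\lambda \cup \{x\} \subseteq X$ (the latter because $x$ was put into the parameters generating each $X^\xi$ and $\lambda \subseteq \beta_\xi$ for $\xi$ large), the ordinals in $\lambda$ are fixed by $\pi$ and $\pi(x)=x$ when $x$ has sufficiently absolute character, or in any case $\bar x := \pi(x)$ is the relevant transported parameter. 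Then the point is that $\Psi$ and the graph of $G$ are $\Pi^T_1$ respectively $\Sigma^T_1$ in these parameters, hence (being Boolean combinations of $\Sigma^A_1$ statements, as emphasized after Definition~\ref{conditions}) absolute between $M$ and its $\Sigma_1$-elementary substructures, and therefore pull back under $\pi$: for $\xi$ in a tail, $\bar X \models \Psi(\pi(w^\xi),\xi)$ and $\pi(p^\xi) = \bar G(\pi(\bar w\res\xi+1))$ where $\bar G = \pi(G)$ is defined by the same $\Sigma^T_1$ formula over $\bar X$.

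Next I would argue that $\pi\res\bar w$, i.e. the sequence $(\pi(w^\xi))_{\xi<\rho}$, is itself a definable class over $\bar X$. This uses the self-referential shape of the canonical witness built in Lemma~\ref{adequate}: each $w^\xi$ has the initial segment $\bar w\res\xi$ as its last coordinate and is characterized by the $\qcdefSeq(x)$ formula $\Phi(w^\xi,\bar w\res\xi)$, so that $w = w_\xi \iff \Psi(w,\xi)$ where $\Psi$ quantifies only over sequences $v$ with $\Phi(v(\eta),v\res\eta)$ for all $\eta\in\dom v$. Since $\Phi$ involves only bounded quantification over $X$-internal objects (a transitive model $M' \prec_{\Sigma_1} L[A]$ and $\leq_{L[A]}$-least solutions, both of which relativize) together with $\Sigma^A_1$ matrices, the collapse $\pi$ commutes with it, giving $\bar X \models \Phi(\pi(w^\xi),\pi(\bar w\res\xi))$ for a tail of $\xi$; hence $(\pi(w^\xi))_\xi$ is $\qcdefSeq(\bar x)$ over $\bar X$, and composing with $\bar G$ yields that $(\pi(p^\xi))_\xi$ is a definable class in $\bar X$, as claimed.

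The phrase ``or at least a tail'' in the statement is there precisely because, as flagged in the proof of the corollary (Corollary~\ref{jensen_qc_main_cor}), in the global case $\beta=\infty$ the hypothesis that $\bar p$ is adequate — equivalently that $\bar w$ genuinely witnesses canonicity and strategicity — is only assumed on a tail of $\xi<\rho$; so the identification of $\pi(p^\xi)$ with $\bar G(\pi(\bar w\res\xi+1))$ and the absoluteness of $\Psi$ can only be guaranteed past some threshold $\xi_0$. I would therefore phrase the conclusion as: there is $\xi_0<\rho$ such that $(\pi(p^\xi))_{\xi_0 \le \xi<\rho}$ is a class definable over $\bar X$ from the parameter $\bar x$ (and ordinals $<\lambda$), which is exactly what subsequent lemmas in the proof of Theorem~\ref{jensen_qc_main} will need. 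The main obstacle I anticipate is bookkeeping the absoluteness: one must be genuinely careful that every clause entering $\Phi$ — in particular the ``$M' \prec_{\Sigma_1} L[A]$'' clause, the minimality-of-$M'$ clause, and the $\leq_{L[A]}$-leastness clause — survives the collapse, i.e. that $\pi^{-1}$ of the relevant structure is again $\Sigma_1$-elementary in $\bar X$'s version of $L[A]$, namely $L_{\bar\mu}[\bar A]$; this is where the standard condensation facts for $L[A]$ and the hypothesis that the notions of Definitions~\ref{reg_defs} and~\ref{sing_defs} are Boolean combinations of $\Sigma^A_1$ statements do the real work.
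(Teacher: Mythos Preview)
Your first paragraph is essentially right: the formulas $\Psi$ (which is $\Pi^T_1$) and $\Gamma$ (the $\Sigma^T_1$ graph of $G$) transfer from $M$ to $X$ via $X \prec_{\Sigma_1} M$, and then along the collapse $\pi$. Your third paragraph also correctly identifies why only a tail is claimed.

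The gap is in your second paragraph. You appeal to ``the self-referential shape of the canonical witness built in Lemma~\ref{adequate}'': each $w^\xi$ carrying $\bar w\res\xi$ as a coordinate, characterized by a recursive $\Phi(w^\xi,\bar w\res\xi)$. But the hypothesis of Theorem~\ref{jensen_qc_main} only says $M\models$ ``$\bar w$ is a $(\lambda,x)$-canonical witness'', i.e.\ $\bar w$ is $\Pi^T_1$-definable by \emph{some} $\Psi(w,\xi)$ and $p^\xi=G(\bar w\res\xi+1)$ for \emph{some} $\Sigma^T_1$ function $G$. Nothing forces $\bar w$ to have the particular recursive form of Lemma~\ref{adequate}, so your absoluteness argument for $\Phi$ is not available.

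The paper avoids this by packaging everything into a single formula
\[
\Theta(\xi,p^*)\;\equiv\;\exists\,\bar w^*\!\res\xi+1\ \Big[\,\bar w^*\!\res\xi_0=\bar w\!\res\xi_0 \ \wedge\ \forall\nu\in[\xi_0,\xi{+}1]\ \Psi(w^*_\nu,\nu)\ \wedge\ \Gamma(p^*,\bar w^*)\,\Big],
\]
using only the abstract $\Psi$ and $\Gamma$, with the fixed initial segment $\bar w\!\res\xi_0$ as parameter. In $M$ this defines $p^\xi$. The transfer to $X$ works because the intended witness $\bar w\!\res\xi+1$ is literally a generator of $X^\xi\subseteq X$, so $X\models\Theta(\xi,p^\xi)$. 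For the converse direction in $X$ --- which is the uniqueness issue you are trying to handle via the recursive form --- observe that for each specific $w\in X$ the implication $\Psi(w,\nu)\to w=w^\nu$ is $\Sigma_1$ (its negation asserts $w\ne w^\nu$ together with the $\Pi_1$ statement $\Psi(w,\nu)$, so the implication itself is $\neg\Psi(w,\nu)\vee w=w^\nu$, a $\Sigma_1$ disjunction). Since $M$ satisfies it and $X\prec_{\Sigma_1}M$, so does $X$; hence any witness $\bar w^*$ in $X$ coincides with $\bar w\!\res\xi+1$, forcing $q=p^\xi$. Then apply $\pi$.

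This last $\Sigma_1$-absoluteness observation is the missing idea that replaces your appeal to Lemma~\ref{adequate}; once you have it, your first paragraph alone already gives the result, and the condensation bookkeeping you anticipate in the third paragraph becomes unnecessary.
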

\begin{proof}[Proof of Lemma~\ref{def}]
The notion of canonical witness was chosen precisely to ensure this definability, and in a sense, the lemma is trivial.
We show that there is a formula $\Theta$ such that $M\vDash \Theta(\xi,p^*) \iff p^*= p_\xi$ for $\xi \in [\xi_0, \rho)$.

Let $\Psi$ and $G$ be as in the definition of canonical witness, i.e., $\Psi$ is the formula defining $\bar w$ and $G$ is the $\Sigma^T_1(\lambda\cup\{x\})$ function s.t. $p^\xi=G(\bar w\res\xi+1,\vec{x})$ for $\xi<\rho$.
Let  $\Gamma$ be a $\Sigma^T_1(\lambda\cup\{x\})$ formula representing $G$.
The  formula $\Theta(\xi,p^*)$ is
\begin{multline*}
\exists\bar w^*= (w^*_\nu)_{\nu < \xi+1}\text{ s.t. }
 \bar w^* \res\xi_0 = \bar w\res \xi_0 \\
\wedge \; [ \forall \nu \in [\xi_0, \xi +1] \quad \Psi(w^*_\nu, \bar w^*\res\nu,x) ]\\
\wedge \;\Gamma(p^*, (w^*_\nu)_{\nu < \xi+1}, x)
\end{multline*}
As $\rho\subseteq X \prec_{\Sigma_1} M$ and $\{ p_\xi , \bar w\res\xi+1\} \subseteq X$ for each $\xi < \rho$, the same formula defines $(p^\xi)_{\xi<\rho}$ in $X$.
Now apply $\pi$.
 \renewcommand{\qedsymbol}{{\tiny  Lemma~\ref{def}~}$\Box$}\end{proof}
Observe that only $\delta$ such that %
\begin{equation}
\delta \in \supp(p^\rho) \text{ or } \delta = \bigcup_{\xi<\rho} \sup(\supp(p^\xi)\cap\delta) \label{delta_relevant}
\end{equation}
are relevant to the proof of \eqref{is_string} and \eqref{growth}.
Thus we restrict our attention to such $\delta$ in the following.

We shall now show that because of \eqref{lambda_xi}, $p^\rho$ enjoys a basic type of genericity over $X$.
First, note that $\Card \cap X \cap(\kappa^++1)\subseteq  \supp(p^\rho)$, because of \eqref{grow} and the following:
\begin{lem}\label{sublemma.U}
If $\tilde \gamma \in \Card \cap X$, then
$\tilde \gamma \in U^M(p^\xi, \{ \bar w\res\xi+1, \vec{x}\})$ for large enough $\xi<\rho$.
\end{lem}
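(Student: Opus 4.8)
The plan is to establish the claim by unwinding the definitions of $U^M$ and of the Skolem hull $H_{<\delta}$, and then exploiting the fact that the parameters $\bar w\res\xi+1$ grow with $\xi$ so that eventually $\tilde\gamma$ lands in the hull.

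First I would recall what is to be shown: fixing $\tilde\gamma\in\Card\cap X$, we want $\tilde\gamma\in U^M(p^\xi,\{\bar w\res\xi+1,\vec x\})$ for all large enough $\xi<\rho$, which by the definition of $U$ (Definition~\ref{D}) means $\tilde\gamma\in H^M_{<\tilde\gamma}(p^\xi,\{\bar w\res\xi+1,\vec x\})\cup\supp(p^\xi)$. Recall $H^M_{<\tilde\gamma}(p^\xi,\{\bar w\res\xi+1,\vec x\})=h^M_{\Sigma_1}\big(\sup(\supp(p^\xi)\cap\tilde\gamma)\cup\{\bar w\res\xi+1,\vec x\}\big)$. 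Since $\tilde\gamma\in X=\bigcup_{\xi<\rho}X^\xi$ and the $X^\xi$ are increasing in $\xi$ (as the parameter sequence $\bar w\res\xi+1$ increases and, for $\delta<\beta$, the first coordinate $\delta$ is fixed), there is some $\xi_0<\rho$ with $\tilde\gamma\in X^{\xi_0}=h^M_{\Sigma_1}(\delta\cup\{x,\bar w\res\xi_0+1\})$ (resp.\ with $\beta_{\xi_0}$ in the $\delta=\beta$ case). The point is then to match this up with $H^M_{<\tilde\gamma}(p^\xi,\cdot)$: for $\xi\geq\xi_0$ the parameter $\bar w\res\xi+1$ already contains $\bar w\res\xi_0+1$ (indeed it end-extends it), so $X^{\xi_0}\subseteq h^M_{\Sigma_1}(\delta\cup\{x,\bar w\res\xi+1\})$; it remains to see that the $\Sigma_1$-Skolem hull over the smaller ordinal parameter $\sup(\supp(p^\xi)\cap\tilde\gamma)$ suffices.

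The key step is therefore to argue that $\tilde\gamma\in h^M_{\Sigma_1}\big(\sup(\supp(p^\xi)\cap\tilde\gamma)\cup\{\bar w\res\xi+1,\vec x\}\big)$ for large $\xi$. By Lemma~\ref{def} (Definability), the sequence $(p^\xi)_{\xi<\rho}$, or a tail of it, is a definable class in $M$ (and in $X\prec_{\Sigma_1}M$) from the parameters $x$ and $\bar w$; in fact $p^\xi=G(\bar w\res\xi+1,\vec x)$ via a $\Sigma^T_1(\lambda\cup\{x\})$ function $G$. Hence $\supp(p^\xi)$ is $\Sigma_1$-definable from $\bar w\res\xi+1$ and $\vec x$ over $M$. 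Since $\tilde\gamma$ is already in $h^M_{\Sigma_1}(\delta\cup\{x,\bar w\res\xi+1\})$, and $\delta$ may be covered by the growing supports — more precisely, because $\tilde\gamma<\delta$ (when $\delta<\beta$; and when $\delta=\beta$ a cofinal analysis of $\beta_\xi$ is used) and because $\supp(p^\xi)$ fills up via \eqref{grow}/\eqref{weak_genericity} — one shows that for $\xi$ large enough the relevant Skolem term for $\tilde\gamma$ uses only ordinal inputs below $\sup(\supp(p^\xi)\cap\tilde\gamma)$. Concretely: any ordinal $<\delta$ needed as an argument of a Skolem term witnessing $\tilde\gamma\in X^\xi$ lies in $X^\xi\cap\delta$, and $X^\xi\cap\delta\subseteq\supp(p^\xi)$ by the genericity clause \eqref{grow} (which forces $|p^\xi_\eta|>H_\eta\cap\eta^+$ for each relevant $\eta\in U(p^\xi)$, pushing $X^\xi\cap\delta$ into the support); so such arguments are below $\sup(\supp(p^\xi)\cap\tilde\gamma)$ once they are below $\tilde\gamma$, and those above $\tilde\gamma$ but below $\delta$ can be absorbed since they too lie in the support. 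If $\tilde\gamma\in\supp(p^\xi)$ for large $\xi$, we are trivially done via the second disjunct of $U$.

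The main obstacle I anticipate is the bookkeeping around \emph{which} ordinal parameters are genuinely needed to capture $\tilde\gamma$ in the hull, and ensuring that the genericity clauses of $\D_{[\lambda_\xi,\beta_\xi)}$ (particularly \eqref{grow} and the fact that $H_{<\delta}\subseteq\supp$ points get hit) really do force $X^\xi\cap\delta\subseteq\supp(p^\xi)$ cofinally in $\xi$ — this is essentially the inductive ``filling up the support'' phenomenon and needs the minimality of $\bar\lambda$ together with \eqref{lambda_xi} to be invoked carefully. The $\delta=\beta$ case is slightly more delicate because there the first Skolem parameter is $\beta_\xi$ rather than a fixed $\delta$, so one must use that $\sup_\xi\beta_\xi=\beta$ and that $\tilde\gamma<\beta$ to find a single $\xi_0$ with $\tilde\gamma<\beta_{\xi_0}$, after which the argument proceeds as before. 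Once these are in hand, the conclusion $\tilde\gamma\in U^M(p^\xi,\{\bar w\res\xi+1,\vec x\})$ for all large $\xi$ is immediate, and this is exactly what subsequent arguments (towards $\Card\cap X\cap(\kappa^++1)\subseteq\supp(p^\rho)$) will consume.
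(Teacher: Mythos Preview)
Your argument has a genuine gap: you never invoke the relevance restriction \eqref{delta_relevant} on $\delta$, and this is the entire point.  Recall that just before this lemma the paper restricts attention to $\delta$ satisfying either $\delta\in\supp(p^\rho)$ or $\delta=\bigcup_{\xi<\rho}\sup(\supp(p^\xi)\cap\delta)$.  The paper's proof is then a two-line case split on this dichotomy: in the first case $\delta\in\supp(p^\xi)$ for large $\xi$, so (assuming $\tilde\gamma>\delta$) $\sup(\supp(p^\xi)\cap\tilde\gamma)\geq\delta$ and hence $H^M_{<\tilde\gamma}(p^\xi,\ldots)\supseteq h^M_{\Sigma_1}(\delta\cup\{\bar w\res\xi+1,\vec x\})=X^\xi\ni\tilde\gamma$; in the second case $\sup(\supp(p^\xi)\cap\delta)\nearrow\delta$, so the finitely many ordinal inputs witnessing $\tilde\gamma\in X^{\xi_0}$ eventually lie below $\sup(\supp(p^\xi)\cap\delta)\leq\sup(\supp(p^\xi)\cap\tilde\gamma)$.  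That is all.

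Your attempted substitute --- that $X^\xi\cap\delta\subseteq\supp(p^\xi)$ via clause \eqref{grow} --- is both circular and false.  Circular because \eqref{grow} is a requirement in the definition of $\D^M_{[\lambda,\beta)}$ that applies only to $\delta'$ \emph{already} in $U(p^\xi)$, which is precisely what you are trying to establish.  False because $\delta\subseteq X^\xi$ (the hull contains all ordinals below $\delta$), so your inclusion would give $\delta\subseteq\supp(p^\xi)$, contradicting Easton support.  You also write ``$\tilde\gamma<\delta$'' where the argument needs $\tilde\gamma\geq\delta$; the case $\tilde\gamma\leq\delta$ is handled separately (if $\tilde\gamma=\delta$ in the first alternative it lies in $\supp(p^\xi)$ directly).
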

\begin{proof}
Fix $\xi$ large enough such that $\tilde\gamma\in X^\xi$.
If first alternative of \eqref{delta_relevant} obtains, we may assume $\xi$ is large enough such that $\delta \in \supp(p^\xi)$. 
We can assume $\tilde\gamma > \delta$ as trivially, 
$ \supp(p^\xi)\subseteq U^M(p^\xi, \{ \bar w\res\xi+1, \vec{x}\})$.
Then $\sup(\supp(p^\xi)\cap\tilde\gamma) \geq \delta$,
and so $\tilde\gamma \in H^M_{\tilde\gamma}(p^\xi, \{ \bar w\res\xi+1, \vec{x}\})$.
Now suppose the second alternative of \eqref{delta_relevant} obtains.
Then we may assume $\xi$ is large enough so that 
$\tilde\gamma \in h^M_{\Sigma_1}(\sup(\supp(p^\xi)\cap\delta) \cup\{ \bar w\res\xi+1, \vec{x}\})$.
Since $\delta\leq\tilde\gamma$, we immediately conclude $\tilde\gamma \in H^M_{\tilde\gamma}(p^\xi, \{ \bar w\res\xi+1, \vec{x}\})$.
 \renewcommand{\qedsymbol}{{\tiny  Lemma~\ref{sublemma.U}~}$\Box$}
 \end{proof}
For $\gamma \in \Card^{\bar X}$, let
 \[
\bar p_\gamma = \bigcup_{\xi<\rho}\pi(p^\xi)_{\gamma}.
\]
In fact, \ $j(\gamma)^+\cap X =\bigcup_{\xi < \rho} |p^\xi_\sigma(\gamma)|$, i.e., 
\begin{equation}\label{total}
|\bar p_\gamma |= \gamma^{+L_{\bar \mu}},
\end{equation}
by the following:
\begin{lem}\label{nu_in_H}
If $\nu \in [\gamma, \gamma^{+\bar X})$ and either $\nu > \delta$ or the second alternative of  \eqref{delta_relevant} obtains, then $j(\nu) \in H^M_{<j(\gamma)}(p^\xi, \{ \bar w\res\xi+1, \vec{x}\})$, for large enough $\xi<\rho$.
\end{lem}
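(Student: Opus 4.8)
The statement to prove is Lemma \ref{nu_in_H}: if $\nu \in [\gamma, \gamma^{+\bar X})$ and either $\nu > \delta$ or the second alternative of \eqref{delta_relevant} obtains, then $j(\nu) \in H^M_{<j(\gamma)}(p^\xi, \{\bar w\res\xi+1, \vec{x}\})$ for large enough $\xi < \rho$. Recall $H^M_{<j(\gamma)}(p^\xi, \cdot) = h^M_{\Sigma_1}(\sup(\supp(p^\xi)\cap j(\gamma)) \cup \{\bar w\res\xi+1, \vec{x}\})$. The strategy is to descend into $\bar X$, find a $\Sigma_1$-definition of $\nu$ over $\bar X$ using only ordinals below $\gamma$ together with the (images of the) parameters, and then push this definition back up through $j = \pi^{-1}$ into $M$, using the previous Definability lemma (Lemma \ref{def}) to access the sequence $(\pi(p^\xi))_{\xi<\rho}$ inside $\bar X$, and the fact that $\rho \subseteq X \prec_{\Sigma_1} M$.

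First I would fix $\nu \in [\gamma, \gamma^{+\bar X})$ as in the hypothesis. Since $\gamma^{+\bar X}$ is a cardinal of $\bar X = L_{\bar\mu}[\bar A]$ (or $L_{\bar\mu}[\bar A, \bar s]$), there is a surjection of $\gamma$ onto $\nu$ which is $\Sigma_1$-definable over $\bar X$ from parameters in $\gamma \cup \{\bar A\cap\gamma, \gamma\}$ (and $\bar s$ if $\beta<\infty$) — this is the standard fine-structural/condensation fact that every ordinal below a successor cardinal of an $L[A]$-like structure is coded below the predecessor cardinal. Hence $\nu \in h^{\bar X}_{\Sigma_1}(\gamma \cup \{\bar A \cap \gamma, \bar s\})$, and by Lemma \ref{def}, the class $(\pi(p^\xi))_{\xi<\rho}$ (or a tail of it) is definable over $\bar X$; its parameters $\bar A\cap\gamma$ and $\bar s$ are themselves in the range of the relevant Skolem hull. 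Applying $j$ and using $X \prec_{\Sigma_1} M$, we get $j(\nu) \in h^M_{\Sigma_1}(\gamma \cup \{\bar w\res\xi+1, \vec x\})$ once $\xi$ is large enough that all finitely many parameters needed lie in $X^\xi$. It remains to replace the hull over $\gamma$ by a hull over $\sup(\supp(p^\xi) \cap j(\gamma))$: here I use the hypothesis. If $\nu > \delta$ then, since $\delta$ itself is in $X$ (it is relevant in the sense of \eqref{delta_relevant}) and $\delta \leq j(\gamma)$, and the $\Sigma_1$-Skolem functions over $M$ applied to ordinals below $j(\gamma)$ together with the parameters generate an initial segment of ordinals, we conclude $\delta \leq \sup(\supp(p^\xi) \cap j(\gamma))$ by arguing as in the preceding lemma (\eqref{grow} ensures the support is cofinal where needed). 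If instead the second alternative of \eqref{delta_relevant} obtains, then $\delta = \bigcup_{\xi<\rho}\sup(\supp(p^\xi)\cap\delta)$, and $\sup(\supp(p^\xi)\cap j(\gamma)) \geq \sup(\supp(p^\xi)\cap\delta)$ grows to at least $\delta$, again absorbing the ordinal parameters below $\gamma$.

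The main obstacle I anticipate is bookkeeping the parameters precisely: one must verify that the $\Sigma_1$-definition of $\nu$ over $\bar X$ really uses only ordinals below $\gamma$ (not, say, an ordinal in $[\gamma,\nu)$ hidden in the standard parameter of the collapsing structure), which is exactly why the hypothesis $N \models \ZF^-$ in the definition of $S^*_\alpha$ and the choice of $M$ as a \emph{minimal} model (so that $\bar X$ condenses correctly and $\gamma^{+\bar X}$ is genuinely the next $\bar X$-cardinal) are invoked — I would cite the relevant condensation facts from \cite{ralfbook} rather than redo them. A secondary, purely technical point is to make the ``for large enough $\xi$'' uniform: since $\rho \subseteq X = \bigcup_{\xi<\rho} X^\xi$ and $X^\xi$ is increasing, and only finitely many parameters (the image of the definition of the $p^\xi$-sequence, $\bar A\cap\gamma$, $\bar s$, and the witnessing ordinals below $\gamma$) are needed, there is $\xi^* < \rho$ beyond which they all lie in $X^\xi$; this is where the cofinality of $\rho$ in $X$ is used, exactly as in the proof of Lemma \ref{def}. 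With these in hand the conclusion $j(\nu) \in H^M_{<j(\gamma)}(p^\xi, \{\bar w\res\xi+1,\vec x\})$ follows, and \eqref{total}, namely $|\bar p_\gamma| = \gamma^{+L_{\bar\mu}}$, is then immediate since every $\nu$ below $\gamma^{+\bar X}$ is pulled into some $|p^\xi_\sigma(\gamma)|$ via clause \eqref{grow} of the genericity.
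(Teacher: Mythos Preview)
Your approach is much more elaborate than needed and contains a genuine gap. The paper's proof is literally ``Exactly as the previous lemma'': since $j=\pi^{-1}$ is a bijection from $\bar X$ onto $X=\bigcup_{\xi<\rho}X^\xi$, the hypothesis $\nu\in\bar X$ \emph{immediately} gives $j(\nu)\in X^\xi = h^M_{\Sigma_1}(\delta\cup\{\bar w\res\xi+1,x\})$ for all sufficiently large $\xi$. No fine structure, no condensation, no surjection of $\gamma$ onto $\nu$ is required --- membership of $j(\nu)$ in $X$ is automatic from the definition of $j$. One then finishes with the support argument you sketch at the end: under either hypothesis one obtains $\sup(\supp(p^\xi)\cap j(\gamma))\geq\delta$ for large $\xi$, so $H^M_{<j(\gamma)}(p^\xi,\{\bar w\res\xi+1,\vec x\})\supseteq X^\xi\ni j(\nu)$.

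The gap in your route is the assertion that $\nu\in h^{\bar X}_{\Sigma_1}(\gamma\cup\{\bar A\cap\gamma,\bar s\})$. A surjection $f\colon\gamma\to\nu$ certainly exists in $\bar X$, but there is no reason it is $\Sigma_1$-definable from parameters below $\gamma$ together with $\bar A\cap\gamma$ alone; typically an additional ordinal parameter (locating $f$) is required, and that parameter may lie well above $\gamma$. (Already in $\bar X=L_{\omega_2}$ with $\gamma=\omega$ the $\Sigma_1$-hull of $\omega$ is countable, far from containing $\omega_1=\gamma^{+\bar X}$.) This is exactly why the paper works with the growing parameters $\bar w\res\xi+1$ rather than trying to pin down $\nu$ from $\gamma$. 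Your next step, ``applying $j$ \ldots\ we get $j(\nu)\in h^M_{\Sigma_1}(\gamma\cup\{\bar w\res\xi+1,\vec x\})$'', is also unjustified: after applying $j$ the ordinal parameters land in $j[\gamma]$, not in $\gamma$, and you do not explain how $\bar A\cap\gamma$ and $\bar s$ get absorbed into $\{\bar w\res\xi+1,\vec x\}$. All of these difficulties disappear once you notice that $j(\nu)\in X$ holds trivially.
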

\begin{proof}
Exactly as the previous lemma. \renewcommand{\qedsymbol}{{\tiny  Lemma~\ref{nu_in_H}~}$\Box$}
\end{proof}

So clearly by \eqref{grow}, we have $\nu < |\bar p^{\xi+1}_\gamma |$ for  $\xi, \nu$ as above, prooving \eqref{total}.
Similarly, \eqref{grow} makes sure $|\bar p^\xi_\gamma|$ strictly increases with $\xi < \rho$.
\begin{rem}
Observe for \eqref{total} we can assume $\nu > \delta$ in the hypothesis of the lemma, but the full generality of the lemma will be needed in \ref{local_coding}.
\end{rem}

Thus, letting
\[
\bar p = \bigcup_{\gamma \in \Card^{\bar X}} \bar p_\gamma
\]
we see $\bar p\colon\bar \mu \rightarrow 2$ is a total function.
We continue exploiting the basic genericity of $\bar p$ over $\bar X$, in the sense that $\bar p_\delta$ codes all of $\bar A$ and $\bar p$:
\begin
{lem}[Local coding]\label{local_coding}\index{local coding}
Provided $\delta$ satisfies \eqref{delta_relevant} and letting 
$$Y= L_{\bar\mu}[A\cap\delta, \bar p_\delta] $$ 
we have
$\bar X\subseteq Y$ and $\bar X$ is a definable class in $Y$.
In fact, for each $\nu < \bar \mu$,
$\bar A \cap \nu$ and $\bar p \res \nu$ are definable in
$(\mathcal{A}^{<\bar p_{\lVert \nu \rVert}\res\nu})^Y$ if $Y \models \lVert \nu \rVert \in \Reg$ and definable in $(\mathcal{B}(\bar p_{<\lVert \nu \rVert})^{<\bar p_{\lVert \nu \rVert}\res\nu})^Y$ if $Y \models \lVert \nu \rVert \in \Sing$.
\end{lem}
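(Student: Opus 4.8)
The plan is to exploit the basic genericity of $\bar p$ over $\bar X$ established in the preceding lemmas, running the decoding procedure of Definition \ref{decoding_def} inside $Y$ and arguing it reconstructs $\bar X$ level by level. First I would observe that $\bar X \subseteq Y$: by \eqref{total} each initial segment $\bar A \cap \nu$ is coded into $\bar p_\delta$ in the sense of almost-disjoint coding (for $\delta = \gamma^+$ a successor in $\bar X$, using the $b^{p_\delta\res\nu}$ and clause \eqref{A_coding}, \eqref{succ_coding}, \eqref{inacc_coding} of the genericity), and these are the building blocks out of which $\bar A$, hence $\bar X = L_{\bar\mu}[\bar A]$ (or $L_{\bar\mu}[\bar A,\bar s]$), is assembled; so everything needed to build $\bar X$ lies in $L_{\bar\mu}[A\cap\delta,\bar p_\delta] = Y$. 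The heart of the argument is the ``in fact'' clause, which is proved by induction on $\nu < \bar\mu$ simultaneously with the claim that the decoding process run inside $Y$, starting from $A\cap\delta$ (or rather the relevant bottom piece) and $\bar p_\delta$, produces exactly $\bar A\cap\nu$ and $\bar p\res\nu$ at stage $\lVert\nu\rVert$.

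The inductive step splits according to whether $\lVert\nu\rVert = \gamma$ is regular or singular in $Y$. For $\gamma$ a successor cardinal of $\bar X$: by clause \eqref{A_coding} of the genericity of each $p^{\xi+1}$ over $X^\xi$, for every $\xi \in H_\gamma$ the condition $p_\gamma$ puts a $1$ on $(\langle\xi,\nu\rangle)_0$ for $\gamma^+$-many $\nu$ exactly when $\xi \in \bar A$, and a $0$ otherwise; and $H_\gamma$ eventually exhausts $\gamma^{+\bar X}$ by Lemma \ref{nu_in_H}. Hence $\bar A\cap[\gamma,\gamma^{+3})$ is decoded from $\bar p_\gamma$ by the regular decoding clause, and $\bar p_{\gamma^+} = s(\bar A\cap\gamma,\bar p_\gamma)$ is recovered by the successor-coding clauses \eqref{succ_coding}; the structure in which this decoding is definable is $(\mathcal{A}^{<\bar p_\gamma\res\nu})^Y$, as in Definition \ref{sing_defs}. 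For $\gamma$ inaccessible in $\bar X$: clauses \eqref{inacc_coding} and \eqref{coding_areas_in_supp} together with \eqref{restraints_start_high} and \ref{coding_areas_bounded} guarantee that the coding ordinals $b^{p_\gamma\res\nu}\setminus\eta$ lie below $\gamma$ and are genuinely hit, so the Mahlo/inaccessible limit decoding $p_{<\gamma} = \bigcup_{\gamma'<\gamma}s_{\gamma'}$, $s_\gamma = s(\bar A\cap\gamma, p_{<\gamma})$ goes through. For $\gamma$ singular in $Y$: here one uses that $\bar X \models \gamma\in\Sing$ (by elementarity of $\pi$, since $\gamma\in X^\xi$ is singular in $M$ for these $\delta$), invokes clause \eqref{fake_inacc_coding} to see whether fake inaccessible coding occurred, and then applies the ``singular coding with delays'' $t(\bar A\cap\gamma,p_{<\gamma})$ from Definition \ref{sing_defs}; condition \eqref{growth} for $p^\rho$ (which we are in the middle of proving, but which is available on the induction at levels below $\gamma$) ensures $\bar p\res\gamma \in \mathcal{B}(\bar p_{<\gamma})^{\bar p_\gamma}$, so the coding structure is the right one and the decoding recovers $\bar p_\gamma$ exactly. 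In each case the relevant initial segments $\bar A\cap\nu$, $\bar p\res\nu$ end up definable over the named coding structure, and $\bar X = L_{\bar\mu}[\bar A]$ is thereby definable in $Y$.

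The main obstacle, as usual in Jensen coding, is the singular case: matching the coding structure $\mathcal{B}(\bar p_{<\gamma})^{\bar p_\gamma}$ used inside $Y$ with the one used by the genuine conditions $p^\xi$, given the phenomenon of fake inaccessible coding. The key points are (i) that $\delta$ ranges only over ordinals satisfying \eqref{delta_relevant}, for which $\pi$ is elementary enough to transport singularity and the non-Mahlo witnesses $m_\gamma$, $E_\gamma$ downward (using the variant of square from Section \ref{sec:coding} and its coherency); and (ii) that Lemma \ref{coding_app_stable} tells us the coding apparatus is stable under the extensions $p^{\xi+1}\leq p^\xi$, so the union $\bar p$ does not change which structure is in play. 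I would isolate these two facts and then let the induction run mechanically, invoking Definition \ref{decoding_def} clause by clause; the successor and inaccessible cases are routine given Lemmas \ref{nu_in_H} and \ref{def}, and need only be sketched.
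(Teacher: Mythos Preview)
Your plan has the right overall shape---run the decoding procedure and verify it reproduces $\bar A$ and $\bar p$ level by level---but there is a genuine gap in how you set up the case split and in the singular step.

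The core issue is that you case-split on whether $\gamma = \lVert\nu\rVert$ is regular or singular \emph{in $Y$}, yet we do not know the cardinals of $Y = L_{\bar\mu}[A\cap\delta,\bar p_\delta]$. The paper flags exactly this as the source of technical difficulty (see the footnote in the proof) and works instead with $\Card^{L_{\bar\mu}} = \Card^{\bar X}$: the coding apparatus $b^{\bar p_\gamma\res\nu}$, $f^{\bar p_\gamma\res\nu}$ is computed in $L_{\bar\mu}[\bar A\cap\gamma, \bar p_\gamma\res\nu] \subseteq \bar X$, not in $Y$. Your proposal slides between the two---you announce a split on $Y$-regularity, then immediately speak of ``$\gamma$ a successor cardinal of $\bar X$''---and in the singular case you assert that ``$\gamma$ singular in $Y$'' implies ``$\bar X \models \gamma \in \Sing$'' by elementarity of $\pi$. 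That inference is unsupported: $\pi$ relates $\bar X$ and $M$, not $\bar X$ and $Y$, and in any case a model with an extra predicate can singularize more ordinals, not fewer.

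Your treatment of the singular case also diverges from the paper in a way that does not close. You try to re-verify exact coding at $\gamma$ from the genericity clauses and appeal to \eqref{growth} ``available on the induction at levels below $\gamma$''. But \eqref{growth} is a statement about $p^\rho$ at the genuine $\delta$ in $P$, not about $\bar p$ at ordinals $\gamma$ inside the collapse; there is no such inner induction running, and invoking it here is circular. The paper's argument for claim~(i) is instead pure elementarity: each $p^\xi$ is a condition in $P$, so $p^\xi\res j(\gamma)$ exactly codes $p^\xi_{j(\gamma)}$; this is $\Sigma_1$ and transfers along $\pi$ to the collapsed pieces, and continuity of singular limit coding then gives the claim for $\bar p$. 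No re-derivation from the $\D$-clauses is needed at singular levels.
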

\begin{proof}[Proof of Lemma~\ref{local_coding}]

We show the definability of $\bar p$ and $\bar A$ by checking they can be reconstructed from $\bar A \cap \delta$ and $\bar p_\delta$ by the decoding procedure run over $\bar X$ (see the last item of Definition~\ref{sing_defs}).
We now describe this procedure, which is carried out in $L_{\bar \mu}[\bar A\cap \delta, \bar p_\delta]$, making three claims which we prove thereafter.\footnote{A lot of the technical complexity of the following stems from the fact that we do not know the cardinals of $L_{\bar \mu}[A\cap \delta, \bar p^\rho_\delta]$.
For even though $\Card=\Card^{L[A_0\cap\omega]}$,  $\bar p^\rho_\delta$ might collapse cardinals
of $L_{\bar \mu}[A\cap \delta]$ when added to that model. 
Thus we have to work in a mixture of models below.
If the sets $\D$ are defined to provide stronger genericity over $X$, the construction can be carried out entirely over the natural model, namely $L_{\bar \mu}[A\cap \delta, \bar p^\rho_\delta]$.
}
Observe that $\Card^{\bar X} = \Card^{L_{\bar\mu}[A_0\cap\omega]}$.
\begin{enumerate}[label=(\roman*), ref=\roman*]

\item \label{local_sing_coding}
Suppose for $\gamma\in \Sing^{L_{\bar \mu}}$ we have already constructed $\bar A \cap \gamma$ and $\bar p\res\gamma$ inside $L_{\bar \mu}[\bar A\cap \delta, \bar p_\delta]$. 
We claim that $\bar p\res\gamma$ codes $\bar p_\gamma$ via singular limit coding using as coding apparatus the functions $f^{\bar p_\gamma\res\nu}$ as defined in $L_{\bar \mu}[\bar A \cap \gamma, \bar p_\gamma\res\nu]$, for $\nu < \gamma^{+L_{\bar\mu}}$.
Observe this is well defined as $L_{\bar \mu}[\bar A \cap \gamma, \bar p_\gamma\res\nu] \subseteq \bar X$ and so $\gamma$ is a singular cardinal in that model (this needn't be the case for $L_{\bar \mu}[\bar A \cap \gamma, \bar p \res\gamma, \bar p_\gamma\res\nu]$).

\item \label{local_reg_coding}
Suppose for $\gamma \in \Reg^{L_{\bar \mu}}$ below the Mahlo of $L_{\bar \mu}$ (if there is one) and $\nu \in [\gamma,\gamma^{+L_{\bar \mu}})$ we have already constructed $\bar A \cap \gamma$ and $\bar p\res\nu$ inside $L_{\bar \mu}[\bar A\cap \delta, \bar p_\delta]$.
Letting $k=1$ if $\gamma\in \Succ^{L_{\bar \mu}}$ and $k=2$ if $\gamma$ is inaccessible in ${L_{\bar \mu}}$, for each $\nu \in [\gamma,\gamma^{+L_{\bar \mu}})$ we claim
\begin{equation*}
\begin{gathered}
\bar p_\gamma(\nu)=1 \iff \\
\{ \xi < \gamma \setdef \xi \in b^{\bar p_\gamma\res\nu}\wedge \bar p((\xi)_k) = 1\}\text{ is unbounded below }\gamma,
\end{gathered}
\end{equation*}
where by $b^{\bar p_\gamma\res\nu}$ we mean  $(b^{\bar p_\gamma\res\nu})^{L_{\bar \mu}[\bar A \cap \gamma, \bar p_\gamma\res\nu]}$.

\item \label{local_mahlo_coding}
For $\gamma=\bar \kappa$, the Mahlo of $L_{\bar \mu}$, and $\nu \in [\bar \kappa,\bar \kappa^{+L_{\bar \mu}})$, suppose  we have already constructed $\bar A \cap \bar \kappa$  inside $L_{\bar \mu}[\bar A\cap \delta, \bar p_\delta]$.
Let $\bar p_{<\bar \kappa} = [\bar A]_1$ and $\bar A_0 = [\bar A]_0$.
For each $\nu \in [\bar \kappa,\bar \kappa^{+L_{\bar \mu}})$ we claim
\begin{equation*}
\begin{gathered}
\bar p_{\bar \kappa}(\nu)=1 \iff \\
\{ \xi < \bar \kappa \setdef \xi \in b^{\bar p_{\bar \kappa}\res\nu}\wedge \bar p_{<\bar\kappa}((\xi)_3) = 1\}\text{ is unbounded below }\gamma,
\end{gathered}
\end{equation*}
where by $b^{\bar p_{\bar \kappa}\res\nu}$ we mean  $(b^{\bar p_{\bar \kappa}\res\nu})^{L_{\bar \mu}[\bar A \cap \bar \kappa, \bar p_{\bar \kappa}\res\nu]}$.

\item \label{local_A_coding}
Suppose for $\gamma \in \Card^{L_{\bar \mu}}$ below the Mahlo of $L_{\bar \mu}$ (if there is one), we have already constructed $\bar A \cap \gamma$ and $\bar p\res\gamma$ inside $L_{\bar \mu}[\bar A\cap \delta, \bar p_\delta]$.
We claim:
\begin{gather*}
\nu \in \bar A \cap [\gamma, \gamma^{+L_{\bar \mu}})\iff \\
\{ \xi \in [\gamma, \gamma^{+L_{\bar \mu}}) \setdef \bar p_\gamma ((\langle \xi, \nu \rangle)_0)=1 \}\text{ is unbounded below }\gamma^{+L_{\bar \mu}}.
\end{gather*}
The same holds for $\bar A_0$ if $\gamma = \bar \kappa$, the Mahlo of $L_{\bar \mu}$.
\end{enumerate}
From these claims, the lemma follows by induction.
In a sense the proof of these claims is again trivial, by definition of $\D$, i.e., by strategicity or \eqref{lambda_xi}.
The claim in \eqref{local_sing_coding} is a straightforward consequence of elementarity:
since $\bar p_\gamma\res\nu \in \bar X$ for $\nu < | \bar p_\gamma |$,  
$j(f^{\bar p_\gamma\res\nu})=f^{p^\xi_{j(\gamma)} }\res j(\nu)$ for large enough $\xi < \rho$,
and $X \vDash$``$p^\xi \res j(\gamma)$ exactly codes $p^\xi_{j(\gamma)}$.
By the obvious continuity of singular limit coding, the claim in \eqref{local_sing_coding} follows.

Now for the claim in \eqref{local_reg_coding}.
We restrict our attention to the exemplary case that $\gamma < \beta$ and is inaccessible in $L_{\bar \mu}$.
Observe for the claim we can assume $\delta < \gamma$, but note for later that the present proof also works if $\delta = \gamma$ and the second alternative of \eqref{delta_relevant} obtains.

Now suppose $\nu \in [\gamma, \gamma^{+L_{\bar\mu}})$ and show the claim in \eqref{local_reg_coding}.
We have seen for $\gamma \in U^M(p^\xi, \{\bar w\res\xi+1, \vec{x}\})$ and 
$j(\nu) \in H_{j(\gamma)} (p^\xi,  \{\bar w\res\xi+1, \vec{x}\})$ 
for large enough $\xi < \rho$ 
(we assume for now that $\nu > \delta$, but note for later reference 
the present proof also goes through if the second alternative of \eqref{delta_relevant} obtains).
Thus by \eqref{restraints}, $b^{p_{j(\gamma)}^\xi \res j(\nu)} = b^{p_{j(\gamma)}^\rho \res j(\nu)} \in (p^\xi)^*_{j(\gamma)}$ for large enough $\xi < \rho$,
restraining extensions of $p^\xi$, i.e., making sure $p^\rho_{<j(\gamma)} ((\zeta)_1) = 0$ for a tail of $\zeta \in b^{p_{j(\gamma)}^\rho \res j(\nu)}$ if $p^\rho_{j(\gamma)}(j(\nu))=0$.
Moreover, \eqref{inacc_coding} makes sure 
$p^\rho_{<j(\gamma)} ((\zeta)_1) = 1$ for a tail of $\zeta \in b^{p_{j(\gamma)}^\xi \res j(\nu)}$  if $p^\rho_{j(\gamma)}(j(\nu))=1$.
Together, for large enough $\xi < \rho$ we have
\begin{gather*}
p^\xi_{j(\gamma)}(j(\nu))=1 \iff\\  X \models\text{``}p^\xi_{<j(\gamma)} ((\zeta)_1) = 1 
\text{ for unboundedly many }\zeta \in b^{p_{j(\gamma)}^\xi \res j(\nu)}\text{''}
\end{gather*}
and since
\[
\sigma( b^{p_{j(\gamma)}^\xi \res j(\nu)} ) = (b^{\bar p_\gamma \res \nu})^{L_{\bar \mu}[\bar A\cap \gamma, \bar p \res \nu]},
\]
applying $\sigma$ proves the claim.

The other case of the claim in \eqref{local_reg_coding} is entirely analogous, substituting the use of \eqref{inacc_coding} by \eqref{succ_coding} 
if $\gamma < \beta$ and using \eqref{dense_top} if $\gamma = \beta$.
Claim \eqref{local_mahlo_coding} is proved in the same manner, once more using \eqref{inacc_coding}.
The claim in  \eqref{local_A_coding} is proved using using \eqref{A_coding}.  \renewcommand{\qedsymbol}{{\tiny  Lemma~\ref{local_coding}~}$\Box$}
\end{proof}
\begin{lem}\label{catch_structure}
Provided $\delta$ satisfies \eqref{delta_relevant},
$\bar X \in \mathcal{A}_0^{\bar p_\delta}$
\end{lem}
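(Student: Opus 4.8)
\emph{Proof plan.}
Unwinding definitions, the assertion is the conjunction of two things: that $\bar A$ (together with $\bar s$, in the case $\beta<\infty$) --- and hence $\bar X=L_{\bar\mu}[\bar A]$ (resp.\ $L_{\bar\mu}[\bar A,\bar s]$) --- can be reconstructed inside $L[\bar A\cap\delta,\bar p_\delta]$, and that the ordinal height $\bar\mu$ of $\bar X$ is less than $\mu_0^{\bar p_\delta}$, the height of $\mathcal{A}_0^{\bar p_\delta}=L_{\mu_0^{\bar p_\delta}}[\bar A\cap\delta,\bar p_\delta]$. The first point is exactly Lemma~\ref{local_coding}: running the decoding procedure of Definition~\ref{decoding_def} over $\bar X$ from the pair $(\bar A\cap\delta,\bar p_\delta)$ reproduces $\bar A$ and $\bar p$ level by level, the $\nu$-th stage living inside the coding structure $(\mathcal{A}^{<\bar p_{\|\nu\|}\res\nu})^Y$ (or the singular analogue), where $Y=L_{\bar\mu}[\bar A\cap\delta,\bar p_\delta]$. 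So it remains to establish $\bar\mu<\mu_0^{\bar p_\delta}$.

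Recall $\mu_0^{\bar p_\delta}=\sigma(|\bar p_\delta|)+\delta$, where $\sigma(|\bar p_\delta|)$ is the least ordinal above $\mu_0^{<|\bar p_\delta|}=\sup_{\nu<|\bar p_\delta|}\mu^\nu$ at which $\delta$ is still the largest cardinal (of $L_{\cdot}[\bar A\cap\delta,\bar p_\delta]$). I would argue: (i) by clause~\eqref{grow} of Definition~\ref{D} together with~\eqref{total}, the length $|\bar p_\delta|$ is exactly what $\bar X$ regards as $\delta^+$; moreover the genericity provided by clause~\eqref{grow}, together with Lemma~\ref{def} (which makes $\bar p_\delta$ definable over $\bar X$), forces $\bar p_\delta$ to be rich enough that the decoding of any proper initial segment of $\bar X$ is already witnessed by some $\bar p_\delta\res\nu$ with $\nu<|\bar p_\delta|$, so that the steering ordinals $\mu^\nu$ ($\nu<|\bar p_\delta|$) are cofinal in $\bar\mu$ and hence $\bar\mu\le\mu_0^{<|\bar p_\delta|}$; (ii) on the other hand, $\bar X$ is, via the collapse $\pi$, the $\Sigma_1$-hull in $M$ of $\delta$ together with $x$ and the $\rho$-indexed pieces of the canonical witness ($\rho\le\lambda\le\delta$), so $\bar\mu$, and indeed everything the decoding produces, has cardinality at most $\delta$ in $L[\bar A\cap\delta,\bar p_\delta]$; thus $\delta^{+L[\bar A\cap\delta,\bar p_\delta]}$ is a regular cardinal exceeding $\bar\mu$, and the definable map $\nu\mapsto\mu^\nu$ with $L[\bar A\cap\delta,\bar p_\delta]$-domain of size at most $\delta$ cannot be cofinal in it, whence $\mu_0^{<|\bar p_\delta|}<\delta^{+L[\bar A\cap\delta,\bar p_\delta]}$. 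Consequently $\sigma(|\bar p_\delta|)$ exists and $\mu_0^{\bar p_\delta}=\sigma(|\bar p_\delta|)+\delta>\mu_0^{<|\bar p_\delta|}\ge\bar\mu$, and Lemma~\ref{local_coding} then places $\bar X\in L_{\mu_0^{\bar p_\delta}}[\bar A\cap\delta,\bar p_\delta]=\mathcal{A}_0^{\bar p_\delta}$.

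The main obstacle is step (i), the cofinality of the steering ordinals $(\mu^\nu)_{\nu<|\bar p_\delta|}$ in $\bar\mu$: this is precisely where clause~\eqref{grow} of the definition of $\D$ has to be used (it is what guarantees that $\bar p_\delta$ sees enough of $\bar X$ to make every relevant coding structure appear below $\mu_0^{<|\bar p_\delta|}$), and it dovetails with the levelwise reconstruction of Lemma~\ref{local_coding}. The upper estimate (ii) is more routine, using only that $\bar X$ is generated over $\delta$ by $\le\delta$ many parameters and hence is collapsed to size $\le\delta$ in $L[\bar A\cap\delta,\bar p_\delta]$. Once both estimates are combined with Lemma~\ref{local_coding}, the proof is complete.
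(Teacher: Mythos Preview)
Your overall plan---establish $\bar\mu<\mu_0^{\bar p_\delta}$ and then invoke Lemma~\ref{local_coding}---is sound, and the second half (using Local Coding to place $\bar A$, $\bar s$ inside $L_{\mu_0^{\bar p_\delta}}[A\cap\delta,\bar p_\delta]$) is exactly what the paper does. But your step~(i) has a genuine gap, and the paper's route to $\bar\mu<\mu_0^{\bar p_\delta}$ is both different and much shorter.

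Your claim~(i) is that the steering ordinals $(\mu^\nu)_{\nu<|\bar p_\delta|}$ are cofinal in $\bar\mu$, i.e.\ $\mu_0^{<|\bar p_\delta|}\ge\bar\mu$. The justification you give---that genericity and the decoding make ``$\bar p_\delta$ rich enough that the decoding of any proper initial segment of $\bar X$ is already witnessed by some $\bar p_\delta\res\nu$''---conflates two different things. The steering ordinals $\mu^\nu$ are defined purely from $A\cap\delta$ and the ordinal $\nu$; they do not depend on the \emph{values} of $\bar p_\delta$ at all, only (through $|\bar p_\delta|$) on how far we index. So no amount of ``richness'' of $\bar p_\delta$ bears on whether $\sup_{\nu<|\bar p_\delta|}\mu^\nu$ reaches $\bar\mu$. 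Indeed, a priori one could have every $\nu<|\bar p_\delta|$ already collapsed to $\delta$ at some $L$-level below $|\bar p_\delta|$, in which case each $\sigma(\nu)$---and hence each $\mu^\nu$---stays below $|\bar p_\delta|+\delta$, while $\bar\mu$ may be far larger. Your argument does not rule this out. Step~(ii) is then essentially vacuous: that $\mu_0^{<|\bar p_\delta|}<\delta^{+V}$ is automatic and contributes nothing to bounding $\bar\mu$.

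The paper bypasses~(i) entirely with a single observation: by \eqref{total}, $|\bar p_\delta|=\pi^{-1}(\delta^+)$ is a \emph{cardinal} in $L_{\bar\mu}[A\cap\delta]$ (it is the image of the cardinal $\delta^+$ under the collapse of a $\Sigma_1$-elementary hull). Hence no $\sigma$ with $|\bar p_\delta|<\sigma\le\bar\mu$ can satisfy $L_\sigma\models$``$\delta$ is the greatest cardinal'' (any such $L_\sigma$ would still see $|\bar p_\delta|$ as a cardinal above $\delta$). Since $\sigma(|\bar p_\delta|)>\mu_0^{<|\bar p_\delta|}\ge|\bar p_\delta|$ by definition, this forces $\sigma(|\bar p_\delta|)>\bar\mu$, whence $\mu_0^{\bar p_\delta}=\sigma(|\bar p_\delta|)+\delta>\bar\mu+\omega$. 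This is the missing idea: you do not need to analyse $\mu_0^{<|\bar p_\delta|}$ at all---the cardinal-preservation of $|\bar p_\delta|$ under the collapse does all the work in one line.
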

\begin{proof}[Proof of Lemma~\ref{catch_structure}]
Observe that 
$
L_{\bar \mu}[A\cap\delta]\vDash |\bar p_\delta| =\pi^{-1}(\delta^+)\in \Card.
$
Thus by the definition of steering ordinal, $\bar \mu  < \mu^{\bar p_\delta}$.
Since by Lemma~\ref{local_coding}, $\bar A$ and (in case $\beta<\infty$) $\bar s$ are definable over $L_{\bar \mu}[A\cap\delta, \bar p_\delta] \in L_{\mu^{\bar p_\delta}}[A\cap\delta, \bar p_\delta] = \mathcal{A}_0^{\bar p_\delta}$,
we have $X = L_{\bar \mu}[\bar A, \bar s]\in \mathcal{A}_0^{\bar p_\delta}$.  \renewcommand{\qedsymbol}{{\tiny  Lemma~\ref{catch_structure}~}$\Box$}
\end{proof}
\begin{lem}\label{collapsed_string}
If  $\delta = \bigcup_{\xi<\rho} \sup(\supp(p^\xi)\cap\delta)$, 
\begin{equation}
p\res\delta \in \mathcal{A}^{\bar p_\delta}_0\label{growth_collapse}.
\end{equation}
\end{lem}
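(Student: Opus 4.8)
The statement to prove is lemma \ref{collapsed_string}: if $\delta = \bigcup_{\xi<\rho} \sup(\supp(p^\xi)\cap\delta)$ (i.e.\ the second alternative of \eqref{delta_relevant} holds, so $\delta$ is a limit point of the supports which is \emph{not} itself in $\supp(p)$), then $p\res\delta \in \mathcal{A}^{\bar p_\delta}_0$. I would reduce this to the work already done in lemmas \ref{local_coding} and \ref{catch_structure}, essentially by observing that those lemmas were set up to cover this very case. The key point is that $p\res\delta$ is determined by the decoding procedure run over $\bar X$ starting from $\bar A\cap\delta$ and $\bar p_\delta$; so it suffices to see (a) this decoding procedure, and hence $p\res\delta$, is definable over $\mathcal{A}^{\bar p_\delta}_0 = L_{\mu^{\bar p_\delta}}[A\cap\delta, \bar p_\delta]$, and (b) $\mathcal{A}^{\bar p_\delta}_0$ is tall enough to actually \emph{contain} the resulting object.

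First I would recall from lemma \ref{catch_structure} that $\bar X \in \mathcal{A}^{\bar p_\delta}_0$; the proof there shows $L_{\bar\mu}[A\cap\delta]\models |\bar p_\delta| = \pi^{-1}(\delta^+)\in\Card$, so by the definition of the steering ordinal $\mu^{\bar p_\delta}$ we have $\bar\mu + \omega < \mu^{\bar p_\delta}$, i.e.\ $\mathcal{A}^{\bar p_\delta}_0$ sits a good stretch above $L_{\bar\mu}[A\cap\delta, \bar p_\delta]$. Next, by lemma \ref{local_coding} (applied in the case where $\delta$ satisfies \eqref{delta_relevant} via its second alternative --- exactly the hypothesis here, and the cases in that lemma's proof were explicitly flagged as going through when $\delta = \gamma$ and the second alternative obtains), both $\bar A$ and $\bar p = \bigcup_{\gamma\in\Card^{\bar X}}\bar p_\gamma$ are definable over $Y = L_{\bar\mu}[A\cap\delta, \bar p_\delta]$, via the decoding procedure. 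Since $p\res\delta = j$-image/restriction is recovered from $\bar p$ and the collapse $\pi$ --- more precisely, $\pi(p^\xi\res\delta)$ for a tail of $\xi$ forms a definable class in $\bar X$ by lemma \ref{def}, and $\bar p$ together with $\bar A$ codes everything --- the sequence $(\pi(p^\xi\res\delta))_{\xi<\rho}$ and hence its union, which is $\pi\res$-related to $p\res\delta$, is definable over $Y$. Unwinding the collapse (recall $\pi$ is the identity below $\delta$ on the relevant ordinals, so $\pi(p^\xi\res\delta) = p^\xi\res\delta$ and the union is literally $p\res\delta$), we get that $p\res\delta$ is a definable-over-$Y$ object.

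Finally I would assemble: $p\res\delta$ is definable over $Y = L_{\bar\mu}[A\cap\delta,\bar p_\delta]$, and since $\bar\mu + \omega < \mu^{\bar p_\delta}$ we have $Y \in L_{\mu^{\bar p_\delta}}[A\cap\delta,\bar p_\delta] = \mathcal{A}^{\bar p_\delta}_0$, whence any object definable over $Y$ lies in $\mathcal{A}^{\bar p_\delta}_0$ (using that $\mathcal{A}^{\bar p_\delta}_0$ is closed under the relevant definability, e.g.\ it models enough of $\ZF$ and $Y$ is an element of it, so $\Sigma_\omega$-definable-over-$Y$ subsets of $\delta$ are in the next few levels, still below $\mu^{\bar p_\delta}$ since that ordinal is of the form $\sigma + \text{(ordinal)}$ well above $\bar\mu$). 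This gives $p\res\delta \in \mathcal{A}^{\bar p_\delta}_0$, which is \eqref{growth_collapse}. The main obstacle is bookkeeping the interplay of the three models $\bar X$, $Y = L_{\bar\mu}[A\cap\delta,\bar p_\delta]$, and $L_{\bar\mu}[A\cap\delta]$ (the issue flagged in the footnote to lemma \ref{local_coding} that we do not know the cardinals of $Y$), and making sure the decoding procedure's definition clauses --- singular limit coding via the $f^{\bar p_\gamma\res\nu}$, regular/Mahlo coding, and $\bar A$-coding --- all relativize correctly; but all of this has already been carried out in lemma \ref{local_coding}, so here it is genuinely a matter of citing it and noting the height estimate from lemma \ref{catch_structure}.
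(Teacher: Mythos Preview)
Your proposal is correct and takes essentially the same approach as the paper: the key steps are the identity $\pi(p^\xi)\res\delta = p^\xi\res\delta$, the definability of the sequence over $\bar X$ from lemma~\ref{def}, and $\bar X \in \mathcal{A}^{\bar p_\delta}_0$ from lemma~\ref{catch_structure}. Your explicit invocation of lemma~\ref{local_coding} and the intermediate model $Y$ is not a genuinely different route but rather an unpacking of what is already inside the proof of lemma~\ref{catch_structure}; the paper simply cites \ref{def} and \ref{catch_structure} directly and omits this intermediate bookkeeping.
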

\begin{proof}[Proof of Lemma~\ref{collapsed_string}]
As $\pi(p^\xi)\res\delta = p^\xi\res\delta$, Lemmas~\ref{def} and  \ref{catch_structure} together yield
$p^\rho \res\delta \in \mathcal{A}^{\bar p_\delta}_0$.  \renewcommand{\qedsymbol}{{\tiny  Lemma~\ref{collapsed_string}~}$\Box$}
\end{proof}

With the help of the preceding lemmas, we now show \eqref{is_string} and \eqref{growth}.
Let $\tilde\delta = \min \On\cap X \setminus \delta$, so that $\pi(\tilde\delta)=\delta$.

\medskip

First,  suppose $\delta < \tilde\delta$.
Then we must have $X \models \tilde\delta \in \Inacc$ ($\delta$ is the critical point of $\pi$).
Observe that  for each $\xi < \rho$, by Easton support and since $\sup(\supp(p^\xi)\cap\tilde\delta)\in X$ we have $\sup(\supp(p^\xi)\cap\tilde\delta) < \delta$.
Thus \eqref{is_string} is trivially satisfied as $p^\rho_\delta = \emptyset_\delta$.
We may assume without loss of generality that
\begin{equation}\label{relevant_support_unbounded}
\delta = \sup_{\xi<\rho} (\supp(p^\xi)\cap\tilde\delta)
\end{equation}
hold, for otherwise \eqref{growth} is trivially satisfied and we are done.
This means $\delta \in \Sing$ ($\rho\leq\lambda$ and we must have $\lambda < \delta$ as the conditions grow below $\delta$).

We show that $\bar p_\delta$ is coded using virtual inaccessible coding\index{virtual inaccessible coding}\index{inaccessible coding!virtual}, i.e., 
\begin{equation}\label{fake_inaccessible_coding}
s(\bar p_{<\delta})=\bar p_\delta
\end{equation}
(of course $\bar p_{<\delta} = p_{<\delta}$).
We adapt the proof of claim \eqref{local_reg_coding}, Lemma~\ref{local_coding}.
Because of \eqref{fake_inacc_coding}, $p_{<\delta}$ does not immediately use singular coding. 
As we have noted there, the proof of the aforementioned claim also goes through if $\nu=\gamma= \delta$ because \eqref{relevant_support_unbounded} holds.
This means that for $\nu \in [\delta, \delta^{+L_{\bar\mu}})$ we have
\[
\bar p_\delta(\nu) = 1 \iff \bar p_{<\delta}((\zeta)_1)\text{ for a tail of } \zeta \in (b^{\bar p_\delta\res\nu})^{\bar X}.
\]
By \eqref{growth_collapse} $b^{\bar p_\delta}$ is eventually disjoint from $\bar p_{<\delta}$.
Moreover $p\res\delta\in \mathcal{A}_0^{\bar p_\delta}$ so the coding stops there.
It remains to show that %
\begin{equation}\label{restraints_in_dom}
\forall \nu \in [\delta, |\bar p_\delta| ) \quad b^{\bar p \res\nu}\subseteq^* \dom(\bar p_{<\delta}).
\end{equation}
Fixing $\nu$ as above, in the proof of Lemma~\ref{local_coding} we've seen $j(\delta) \in U^M(p^\xi, \{\bar w\res\xi+1,\vec{x}\})$ and $j(\nu) \in H_{<\delta} ^M(p^\xi, \{ \bar w\res\xi+1, \vec{x})$ for large enough $\xi <\rho$, as the second alternative of \eqref{delta_relevant} obtains.
So by \eqref{coding_areas_in_supp} (which we haven't used up to now) and \eqref{relevant_support_unbounded} it follows that
\[
b^{\bar p_{j(\delta)}^\rho \res j(\nu)} \cap [\eta, \delta)  \subseteq \dom(p^\rho_{<\delta}) 
\]
for some $\eta <\delta$.
Now use elementarity and apply $\sigma$ to get \eqref{restraints_in_dom}.
By \eqref{restraints_in_dom} and \eqref{fake_inaccessible_coding}, we have $\mathcal{A}_0^{\bar p_\delta} \subseteq \mathcal{B}(p^\rho_{<\delta})^{\emptyset_\delta}$, so \eqref{growth} follows from \eqref{growth_collapse} and  we are done.

\medskip

Now suppose $\delta \in X^\rho_\delta$.
Since $\bar p_\delta = p^\rho_{\tilde\delta}$ and $\mathcal{A}_0^{p_\delta} \subseteq \mathcal{B}(p_{<\delta})^{p_\delta}$ if $\delta \in \Sing$, \eqref{growth} immediately follows from \eqref{growth_collapse}. It remains to show \eqref{is_string}, i.e., that $p_\delta \in S^*_\delta$.

We only need to check the requirement given in Definition~\ref{strings_dfs} for $\zeta = |p^\rho_\delta|$, for when $\zeta < |p^\rho_\delta|$, the requirement is met as 
$\zeta < |p^\xi_\delta|$ for some $\xi<\rho$ and
$p^\xi_\delta \in S^*_\delta$.
So let $N =L_\eta[A\cap\delta, p^\rho_\delta] $ be a test model\index{test model}, i.e., let $\eta, \bar \kappa$ be such that
\begin{gather}
N\vDash \ZF^-\text{and }|p^\rho_\delta| = \delta^+, \label{still_card} \\
N\vDash \Card=\Card^{L[A\cap\omega]}\wedge \Card\setminus\omega_1=\Card^{L}\setminus\omega_1 \label{cards_r_L_cards}\\\
L_\eta\models \text{$\bar \kappa$ is the least Mahlo, }\bar\kappa^{++} \text{ exists.}\label{mahlo}
\end{gather}
We must show that $L_\eta[\bar A^*] \models r$ is coded by branches, where we let 
\[
\bar A^* = A^*(A\cap\delta, p_\delta)^N,
\]
i.e., the set obtained when running the decoding procedure (see \ref{decoding_def}) relative to $N$.
By Lemmas~\ref{def} and \ref{local_coding} and as $\pi(p^\xi_\delta)=p^\xi_\delta$ for $\xi < \rho$ the sequence $(|p^\xi_\delta|)_{\xi < \rho}$ is definable over $L_{\bar \mu}[A\cap\delta, p^\rho_\delta]$.
Thus \eqref{still_card} implies $\eta < \bar \mu = \bar X\cap \On$.
Let $\alpha = \sup(\Card^{L_{\bar \mu}} \cap\eta)$. 
Clearly $\eta \in [\alpha, \alpha^{+\bar X})$.

\medskip

\textbf{Case 1:}
$\alpha \geq \bar\kappa^{+\bar X}$.
Observe $\eta >  \bar \kappa^{+N} = \bar \kappa^{+\bar X}$, where the first inequality holds by \eqref{mahlo}.
Thus also $\bar X \vDash \bar \kappa$ is the least Mahlo and  by elementarity of $j\colon \bar X \rightarrow M$, $\beta=\kappa$ or $\beta=\infty$:
 If $\beta<\kappa$, $M=\mathcal{A}^{q_{\beta^+}}\vDash$ there is no Mahlo cardinal. Likewise, $j(\bar\kappa)=\kappa$.

By Lemma~\ref {local_coding} and \eqref{cards_r_L_cards}, $\bar A^* = \bar A_0 \cap\bar \kappa^{++N}$, where $\bar A_0=\pi[A_0]$.
By elementarity, $L_{\bar\mu}[\bar A_0]\models \Psi_0(r)$ and also $L_\eta[\bar A_0\cap\bar\kappa^{++N}]\models \Psi_0(r)$ since $\bar T$ is locally semidecidable.\footnote{Cf.\ the end of the proof of Fact~\ref{s.f.david.qc}.}

\medskip

\textbf{Case 2:} Otherwise, assume $\alpha \leq\bar \kappa$ and if $\alpha = \bar \kappa$, then $\bar \kappa$ is Mahlo in $\bar X$.
Write $\bar \zeta$ for $\alpha^{+N}=\alpha^{+L_\eta}$.
Observe that as in the previous case, by the proof of Lemma~\ref{local_coding} and by \eqref{cards_r_L_cards} the decoding procedure starting from $\bar p_\delta\res\bar\zeta$ run in $L_\eta[A\cap\delta]$ up to $\alpha$ yields $\bar A \cap\bar\zeta$ and $\bar p\res\bar\zeta$.
In the case $\alpha = \bar \kappa$, this uses the assumption that $\bar \kappa$ is Mahlo in $\bar X$ so that $N$ contains enough of the coding apparatus of $\bar X$ to run the proof of Lemma~\ref{local_coding} inside $N$.

As $\bar \zeta< \alpha^{+\bar X} = \sup_{\xi<\rho} |p^\xi_\alpha| $, 
we can pick $\xi<\rho$ such that $\bar\zeta < |p^\xi_\alpha|$.
By elementarity, 
\[
\tilde N= L_{j(\eta)}[A \cap j(\alpha), p^\xi_{j(\alpha)}\res j(\bar\zeta)],
\]
is a valid test-model for $\tilde s = p^\xi_{j(\alpha)}\res j(\bar\zeta)$ and as $\tilde s \in  S^*_{j(\alpha)}$, this string codes a predicate $A^*$ over $L_{j(\eta)}[A\cap j(\alpha)]$ such that $L_{j(\eta)}[A^*]\models \Psi_0(r)$.
By the way the coding works and using elementarity of $\pi$, $\bar p_\delta$ also codes $\bar A^* = \pi(A^*)$ and  $L_{\eta}[\bar A^*]\models \Psi_0(r)$.

\medskip

\textbf{Case 3:} 
It remains to deal with the case $\alpha = \bar \kappa$ when $\bar \kappa$ is not Mahlo in $\bar X$.
We assume for simplicity that $\kappa \in X$, or equivalently, $\beta \geq \kappa$ (and supply a few paranthetical remarks which should suffice for the case $\beta < \kappa$).
Again using the proof of Lemma~\ref{local_coding} and \eqref{cards_r_L_cards}, the decoding procedure starting from $p_\delta$ run in $L_\eta[A\cap\delta,p_\delta]$ up to
$\bar\kappa$ yields $A_0 \cap \bar \kappa$ and $\bar p_{<\pi(\kappa)} \res\bar \kappa$ (or if $\beta < \kappa$, the appropriate collapse of $p^0_{<\kappa} \res X$).
As $\bar \kappa< \pi(\kappa)$, we can find $\xi<\rho$ such that $|p^\xi_{<\kappa}| > j(\bar\kappa)$ (if $\beta<\kappa$, this holds for $\xi=0$). %
By elementarity, 
\[
\tilde N= L_{j(\eta)}[A \cap j(\bar\kappa), p^\xi_{<\kappa}\res j(\bar\kappa)],
\]
is a valid test-model for $\tilde s = p^\xi_{<\kappa}\res j(\bar\kappa)$ and as $\tilde s \in  S^*_{<\kappa}$, this string codes a predicate $A^*$ over $L_{j(\eta)}[A \cap j(\bar\kappa)]$ such that $L_{j(\eta)}[A^*]\models \Psi_0(r)$.
By the way the coding works and using elementarity of $\pi$, $\bar p_\delta$ also codes $\bar A^* = \pi(A^*)$ and  $L_{\eta}[\bar A^*]\models \Psi_0(r)$.
(Analogously for $\beta < \kappa$.)

\medskip

Finally, we prove $p^\rho_{<\kappa} \in S^*_{<\kappa}$, i.e., the requirement in Definition~\ref{strings_dfs} is met
(to avoid trivialities, we assume $\lambda < \kappa$).
So let $N = L_\eta[A_0\cap \bar \kappa, p^{\rho}_{<\kappa}\res\bar \kappa]$ be a $<\kappa$-test model with $\bar \kappa$ the least Mahlo in $N$.
We assume again $\bar \kappa = |p^\rho_{<\kappa}|$ as otherwise there is nothing to prove.
Let 
$$X = \bigcup_{\xi<\rho} H_{|p^\xi_{<\kappa}|},$$
let 
$\pi\colon X \rightarrow \bar X$ be the the collapsing map, $j=\pi^{-1}$, and $\bar\mu = \bar X \cap \On$.
By \eqref{grow}, we have $X = H_{\bar \kappa}$ and $j(\bar \kappa)=\kappa$ (so $\crit(j)=\bar\kappa$).

Since the sequence $(|p^\xi_{<\kappa}| )_{\xi <\rho}$ is definable over $\bar X$, we can argue as previously that $\eta < \mu$.
Also, as before, $\bar p_{\bar\kappa} = \bigcup_{\xi < \rho} \pi(p^\rho_{\kappa})$
is decoded from $p^\rho_{<\kappa}$ relative to $\bar X$,
and $\bar p_{\bar\kappa}\res {\bar\kappa}^{+N}$ is decoded relative to $N$.
Writing $\tilde \eta = j(\eta)$ %
it follows that the model $\tilde N= L_{\tilde\eta}[A\cap \kappa, p^\rho_\kappa \res j({\bar\kappa}^{+N})]$
is a valid test-model for $\kappa$ and so we finish the argument once more using that $p^\rho_\kappa \in S^*_\kappa$.  \renewcommand{\qedsymbol}{{\tiny  Theorem~\ref{jensen_qc_main}~}$\Box$}
\end{proof}

\section{Density}\label{sec:D}
We now show \eqref{qc:redundant}, \emph{Density} for the quasi-closure of $P$, i.e., that $\D$ is dense.
In a sort of bootstrapping process using Theorem~\ref{jensen_qc_main} in its local version, we shall show by induction on $\gamma$ that
$\D^M_{[\lambda,\gamma)}(p, \vec{x})\neq \emptyset$.
To be able to meet \eqref{coding_areas_in_supp}, we have to strengthen our inductive hypothesis and assume that densely, any Easton set can be ``covered'' by the domain of a condition.
\begin{lem}\label{lem:D}
Let $\gamma\in \Card$. For any $q$, $\beta$ and $M$ %
as in Assumption~\ref{M} such that  $\beta\geq\gamma$ and
any  $\lambda \in \Reg$ and $p \in P$, we have:
\begin{enumerate}[(i)]
\item\label{D_dense_ih}  for any $\vec{x} \in M$, there is $p' \in P$ such that $p'\leqlol p$ and $p' \in \D^M_{[\lambda,\gamma)}(p, \vec{x})$ 
\item\label{ext2_ind} for any Easton set $B \subseteq [\lambda,\gamma)$ there is $p' \in P$ such that $p' \leqlol p$ and $B \subseteq \dom(p'_{<\gamma})$.
\end{enumerate}
\end{lem}

Before we give the proof, note the following corollary, which complements Theorem~\ref{jensen_qc_main} in its global form (Corollary~\ref{jensen_qc_main_cor}):

\begin{cor}\label{cor_qc}\index{Easton supported Jensen coding!is quasi-closed}\index{quasi-closure}
$P$ is quasi-closed.
\end{cor}
\begin{proof}[Proof of the corollary]
It remains to show $\D$ is dense. 
By the previous lemma for $\beta= \infty$, $M=L[A]$ and $\gamma = \kappa^{+3}$,
for any $p \in P$, $\lambda \in \Reg$ and $\vec{x} \in L[A]$, there is $q\in \D(\lambda, p, \vec{x})$ such that $q \leqlol p$.
\end{proof}

In the following proof of Lemma~\ref{lem:D} you will notice a somewhat unexpected role-reversal of $\gamma$ and $M$:
the induction is over $\gamma$ while $q$, $\beta$ and $M$ vary freely.
Also, note how $\D^M_{[\lambda,\gamma)}(p, \vec{x})$ has been decoupled from $R$; we talk about density in $P$ and $q$ is merely a parameter.\todo{explain why $q$ is now merely a parameter}

To make life a little easier, we could prove from Item~\ref{D_dense_ih} in the Lemma the seemingly stronger fact that for any $\bar \gamma \geq \gamma$ and $r\in P$ such that $r \leq p$,
$\D^M_{[\lambda,\gamma)}(p, \vec{x})$ is $\leqlol$-dense in $P(r)^{\bar\gamma^+}$  (but we only use it in this proof, so we leave it implicit).

\begin{proof}[Proof of Lemma~\ref{lem:D}.]
The proof is by induction  on $\gamma$, so suppose both statements are true for all cardinals $\gamma' <\gamma \in \Card$.
Fix $q$, $M$, $\lambda\in \Reg$, $\vec{x}\in M$ and an Easton set $B$ as in the hypothesis,
and let $p \in P$ be arbitrary.
We shall find $p' \leqlol p$ satisfying both $p' \in \D^M_{[\lambda,\gamma)}(p, \vec{x})$
and $B \subseteq \dom(p'_{<\gamma})$.

For the successor case, assume $\gamma=\delta^+$, for $\delta \in \Card$.
We need to take care of \eqref{restraints_start_high} and \eqref{grow} to \eqref{coding_areas_in_supp} (in Definition~\ref{D}, p.~\pageref{D}) for $\delta$, and then we can finish this case quickly by induction.
So let $\iota_0$ be least above %
$|p_\delta| + \delta$ such that
any pair $b, b' \in p^*_{\delta^+}$ is disjoint above $\iota_0$ and let
\[
\iota = \sup (H_\delta \cap \On ) \cup \sup_{b \in p^*_{\delta^+}} (\min (b\setminus\iota_0))_1 \cup \sup B
\]
Note that $\iota < \delta^+$ and let
\[
p^1_\delta = (p^0_\delta  \conc E) \cup Z 
\]
where $E \subseteq \delta$ codes the relation $\in\res\iota\times\iota$ in some recursive way and where
\begin{gather*}
Z \colon [ |p^0_\delta| + \delta, \iota) \rightarrow 2\\
Z(\zeta)=\begin{cases}
1 & \text{ if $\zeta = (\langle \xi, |p^0_\delta|\rangle )_0$ for some $\xi \in [\delta, |p^0_\delta|)$,}\\
1 & \text{ if $\zeta = (\xi)_1$ for some $\xi \in b\setminus \iota_0$ and $b \in p^*_{\delta+}$,}\\
0 &\text{ otherwise.}\\
\end{cases}
\end{gather*}
Moreover, let $\eta = \rho(p^*_\delta) \cup \sup(U\cap\delta)\cup \lVert p^*_\delta \rVert\cup\lambda$ and let
\[
(p^1)^*_\delta = (p^0)^*_\delta \cup \{ b^{p_\delta \res\nu} \setminus \eta \setdef \nu \in H_{<\delta}\} \cup \lambda.
\]

If $\delta\in \Sing$, we must find $p^1\res\delta$ so that it exactly codes $p^1_\delta$,
using the extendibility Lemma~\ref{ext1}.
Otherwise we can set $p^1\res\delta = p^0\res\delta$.
It is clear that $p^1$ satisfies \eqref{restraints} and also \eqref{restraints_start_high} in definition of $\D$.
Also, by the choice of $Z$ and since $\iota$ was chosen large enough, $p^1$ satisfies \eqref{grow}, \eqref{A_coding} and \eqref{succ_coding}.
It is easy to see that $p^1_\delta \in S^*_\delta$:
 by choice of $E$,  
$p^1_\delta$ collapses the size of $\iota$ to $\delta$ over every $\ZF^-$ model.\footnote{Of course, much less than $\ZF^-$ is needed here.}
Thus any test model for $p^1$ is a test model for $p^0$ and we are done.

We must also arrange \eqref{coding_areas_in_supp} for $\delta$.
Here we use the inductive assumption that Item~\ref{ext2_ind} (from Lemma~\ref{lem:D}) holds below $\gamma$.
Let $\gamma'= \sup(\supp(p)\cap\delta)$ and let 
\[
B'= \bigcup \{ b \cap \gamma' \setdef b\in  p^*_{\delta}\setminus \On\}.
\]
By Item~\ref{ext2_ind} for $\gamma'$ we can find $p^2 \leqlol p^1$ such that
$B' \cup (B \cap \delta) \subseteq \dom(p^2_{<\delta})$. 

By induction hypothesis, we can find $p' \leqlol p^{2}$ such that 
$$p' \in \D^M_{[\lambda,\delta)}(p^2,\vec{x})\subseteq \D^M_{[\lambda,\delta)}(p,\vec{x}).$$
It follows that $p' \in \D^M_{[\lambda,\gamma)}(p,\vec{x})$ and $B \subseteq \dom(p'_{<\gamma})$, finishing the successor case.

\medskip

Now suppose $\gamma$ is a limit ordinal.
We can assume $\gamma \in \Sing$, for otherwise $\supp(p)\cap\gamma$ and $B$ are both  bounded below $\gamma$ and we can simply use the induction hypothesis.
So let $ (\beta'_\xi)_{\xi<\rho}$ be the increasing enumeration of a club in  $\beta' = \gamma$, $\rho = \cof(\beta')$ and $\beta'_0 > \rho$.

We can assume without loss of generality that $|p_{<\kappa}| \geq \beta^+$ (by \ref{ext_mahlo}, extendibility for the Mahlo coding).
Let $R'= P(p)^{\beta'^+}$, $M' = \mathcal{A}^{p_{\beta'^+}}=\mathcal{A}(A_p)^{p_{\beta'^+}}$
and let $p^0=p\res\beta'^++1$, observing $p^0 \in R'$.
For $\xi < \rho$, let 
$$D_\xi = \D^{M}_{[\lambda, \beta'_\xi)}(p,\vec{x})\cap L_{\beta'^+_\xi}[A]$$ 
and let 
$\vec{x}' = \{ \lambda, p^0, (\beta'_\xi)_{\xi<\rho}, (D_\xi)_{\xi<\rho}, B\}$, noting that $\vec{x}' \in M'$.
The point here is that for any $p' \in P$,
\begin{equation*}
p' \in \D^{M}_{[\lambda, \beta'_\xi)}(p,\vec{x}) \iff p'\res\beta'_\xi \in D_\xi,
\end{equation*}
so that we can talk about $\D^{M}_{[\lambda, \beta'_\xi)}(p,\vec{x})$ inside $M'$.

We shall now use Theorem~\ref{jensen_qc_main} for $M'$ and $R'$ to construct a sequence whose greatest lower bound will be the desired condition.
Let $X^0$ be least such that $X^0 \prec_1 M'$, $x'\in X^0$ and $X^0 \in M'$;
we can find such $X^0$ by the fact that $\cof(M'\cap\On) = \beta'^+$. %

We now find a sequence $\bar w = (w^\xi)_{\xi < \rho}$ such that for each $\xi < \rho$,
$w^\xi = (X^\xi, p^\xi, \bar w\res\xi)$ 
and
 \begin{gather*}
w^\xi \in M'\\ 
X_{\xi} = h^{M'}_{\Sigma_1}(\{ p^\xi, \bar w\res\xi\}),
\end{gather*}
and if $\xi$ is limit, then $p^\xi$ is a greatest lower bound of $\bar p\res\xi+1$ in $R'$.
Note that this implies $(X^\nu)_{\nu<\xi}\in X_{\xi}$ (also note the sequence $(X_\xi)_{\xi<\rho}$ is not continuos).
To complete the definition of $\bar w$, we must specify how to construct $p^{\xi+1}$ in the successor case.

By induction, assume we have built such a sequence $\bar w\res\xi+1$.
Now let $p^{\xi+1} \in R'$ be least such that 
\begin{itemize}
\item $p^{\xi+1} \res \beta'_\xi \in D_{\xi}$---equivalently, $p^{\xi+1}\in \D^{M}_{[\lambda, \beta'_\xi)}(p,\vec{x})$,
\item $p^{\xi+1} \in \D^{M'}_{[\lambda, {\beta'_\xi})}(p^{\xi},\{\vec{x}', \bar w \res \xi+1\})$ (note this is $\Pi^T_1(\{\vec{x}', \bar w \xi+1, p^\xi, \xi \})$ in the model $M'$)
\item $B \cap \beta'_\xi\subseteq \dom(p^{\xi+1})_{<\beta'^+_\xi}$,
\item the clauses in \eqref{dense_top} of Theorem~\ref{jensen_qc_main}
hold for $p^{\xi+1}$ with $\beta$, $\beta_\xi$, $M$, $\vec{x}$ in \eqref{dense_top} replaced by $\beta'$, $\beta'_\xi$, $M'$, and $\vec{x}'$ (note this is $\Pi^T_1(\{\vec{x}', \bar w \xi+1, p^\xi, \xi \})$ in $M'$ as well).
\end{itemize}
The last point can be arranged as in the first part of the argument in the successor case (using extendibility,  \ref{ext1}).
The first point can be arranged as the induction hypothesis implies that $D_\xi$ is dense in $R'$.\footnote{Literally, the induction hypothesis talks about $P$, not $R'$. In fact, we have that $\D^{M}_{[\lambda, \gamma)}(p,\vec{x})$ is dense in $P$ below $p$ if and only if it has non-empty intersection with $P(q)^{\bar\gamma^+}$ for any $q \leq p$ and any $\bar\gamma \geq \gamma$.} 
If in doubt, here is a pedestrian proof: since $p^\xi \in R'=P(p)^{\beta'^+}$, we know $p^\xi\cdot p\neq0$; so there is $p' \leqlol p^\xi\cdot p$ such that
$p' \in \D^{M}_{[\lambda, \beta'_\xi)}(p^\xi\cdot p,\vec{x})$;
But then
since trivially $\D^{M}_{[\lambda, \beta'_\xi)}(p^\xi\cdot p,\vec{x}) \subseteq \D^{M}_{[\lambda, \beta'_\xi)}(p,\vec{x})$, 
$$p' \res\beta'^+_\xi \cup p^\xi \res [\beta'^+_\xi, \beta') \in \D^{M}_{[\lambda, \beta'_\xi)}(p,\vec{x}) \cap R'.$$
Similar arguments work for the remaining two points.
Note that the conjunction of theses items can be expressed by  a $\Pi^T_1(\vec{x}'\cup\{ p^\xi, \xi \})$ formula (say, $\phi(p^{\xi+1}, \vec{x}',\bar w\res\xi+1)$) inside $M'$.

The sequence $\bar w$ is well defined since 
by the following fact and Theorem~\ref{jensen_qc_main},
the sequence $\bar p \res \rho'$ has a greatest lower bound $p^{\rho'}$ when  $\rho' \leq \rho$ is a limit ordinal.

Finally, we have that  $p^\rho \in \bigcap_{\xi < \rho} D_\xi = \D^M_{[\lambda,\gamma)}(p,\vec{x})$ and $B \subseteq \dom(p^\rho_{<\gamma})$.
Since also $p^\rho \in R'= P(p)^{\gamma^+}$, 
$p' = (p^\rho \cdot p) $ is the desired condition.
It remains to prove:
\begin{fct}\label{f.sequence.is.canonical}
The sequence $\bar w = (w^\xi)_{\xi<\rho'} = (X^\xi, p^\xi)_{\xi<\rho'}$ is a $(\lambda, x')$-canonical witness for $\bar p \res\rho' = (p^\xi)_{\xi<\rho'}$ inside $M'$
\end{fct}
\begin{proof}[Proof of Fact~\ref{f.sequence.is.canonical}]
Note that $w^\xi$ is the unique $x=(X,p,\bar w')$ such that
\begin{equation}\label{canonical_witness_D}
\begin{gathered}
X = h^{M'}_{\Sigma_1}(\{ p, \bar w^*\})\text{ and}\\
X\models p \text{ is $\leq_{M'}$-least such that }\phi(p,\vec{x}',\bar w)
\end{gathered}
\end{equation}
where $\phi$ comes from the inductive definition of $\bar p$ discussed above,
and $\bar w^* = \bar w \res \xi$.
We leave it to the reader to check that \eqref{canonical_witness_D} can be expressed by a $\Pi^T_1(\vec{x}')$-formula $\Psi(X,p,\bar w')$.

Now it is easy to see the fact holds:
$x = w^\xi$ if and only if $x=(X,p,\bar w^*)$, $\Psi(X,p,\bar w^*)$ holds and $\bar w^*$ is a sequence of length $\xi$ such that for each $\nu < \xi$, we have $\Psi( \bar w^*(\nu)_0 ,\bar w^*(\nu)_1, \bar w^*\res\nu)$.
This is obviously $\Pi^T_1(\vec{x}')$, as $\Psi$ is.
See Fact~\ref{density reduction}, especially Lemma~\ref{adequate} for a similar (but harder) argument. \renewcommand{\qedsymbol}{$\Box$ \tiny  Fact~\ref{f.sequence.is.canonical}.}
\end{proof}\renewcommand{\qedsymbol}{$\Box$ \tiny  Lemma~\ref{lem:D}.}\end{proof}

\section{Stratification}\label{sec_coding_strat}\index{Easton supported Jensen coding!is stratified}\index{stratification}

We now define the stratification system for $P(A_0)$.
Remember $\leqlol$ and $\D$ were defined in Definitions~\ref{d.lessthan} and \ref{D}.
\begin{dfn}
Let $\lambda \in \Reg \cap \kappa^{++}$.
\begin{enumerate}
\item Let $q \lequpl p$\index[notation]{lessthanu  lambda@$\lequpl$} just if $q\res [\lambda^+, \infty) \leq p \res [\lambda^+, \infty)$ in $P(A_0)$ and $q_\lambda \leq p_\lambda$.
\item\index[notation]{C lambda@$\Cl$} For any $p \in P(A_0)$, writing $\rho$ for $\rho(p^*_\lambda) \cup \sup\dom(p_{<\lambda})$, let
$\Cl(p)=\{ (p\res\lambda, \rho, B^{\rho}_p) \}$.
\end{enumerate}
\end{dfn}
We remind the reader that we take the singleton on the right-hand side in the above equation only to satisfy the abstract definition of stratification, which allows for a ``multifunction'' $\Cl$, as this is necessary for iterations.

A more straightforward definition would be $\Cl(p)=\{ p_{<\lambda} \}$
for all $p \in P(A_0)$.
This would would show quasi-closure in the setting of \myplacecite{4.1}{friedman:codingbook}.

The problem is that we had to introduce $\rho(p^*_\lambda)$ for inaccessible $\lambda$.
Recall that $\leq$ was defined so that ``new restraints'' (those in $q^*_\lambda$ for $q\leq p$) may only differ from old ones (those already in $p^*_\lambda$) \emph{above} $\rho(p^*_\lambda)$.
This means that with the more straightforward definition, in \eqref{linking}, when $\Cl(p)\cap\Cl(q)\neq\emptyset$ and $p \lequpl q$, and we want to form $q\cdot p$, the natural candidate (the point-wise union) might not actually lie below $p$ and $q$: not if any ``new'' restraints disagree with old restraints below $\rho(p^*_\lambda)$ (or $\rho(q^*_\lambda)$, respectively).
Thus, it is natural to make $\rho(p^*_\lambda)$ and the restraints below it part of $\Cl$.

Having checked quasi-closure, we invite the reader to check the rest of stratification, all of which is outright trivial.

\chapter{Extension and Iteration}\label{sec:ext}

The proof of the main result makes it necessary to consider iterations $\bar Q^\theta$ such that each initial segment $P_\iota$ is stratified on an interval $[\lambda_\iota,\kappa]$, but it is not forced that $\dot Q_\iota$ be stratified for all $\iota <\theta$.

A paradigmatic example are products: Supposing $P$ and $Q$ are stratified, $P \times Q$ is stratified even though forcing with $P$ might destroy that $Q$ is stratified. 
This is captured by the notion that $P\times Q$ is a \emph{stratified extension} of $P$,
or that $(P, P \times Q)$ is a stratified extension.
The notion of stratified extension is introduced below in Definition~\ref{def:s:ext}.

Most importantly for us, amalgamation produces stratified extensions.
Similarly, to ensure that $P(A)$ ``codes no unwanted branches'' we use shall coding forcings from an inner model, and these are not quasi-closed in the model where we force with them.

In our iteration, it will always hold that $(P_\iota,P_{\iota+1})$ is a stratified extension on some interval $[\lambda_\iota, \kappa]$ 
of regular cardinals;  $\lambda_\iota$ will not be same fixed cardinal throughout the iteration. 
With diagonal support, this suffices to conclude that $P_\theta$ is stratified (on some interval).
This coherency provided by extension is vital: e.g., an iteration whose proper initial segments are all $\sigma$-strategically closed can add a real (see \cite{souslinproblem}, or
 \cite{kellner:sh:sacks} for a particularly extreme example).
 From this one can show that quasi-closure for a particular preclosure system can, without coherence, be lost in the limit.

We treat quasi-closed and stratified extension separately (Sections~\ref{sec:qc:ext} and  \ref{sec:s:ext}). Each axiom of stratified (or quasi-closed) extension corresponds to an axiom of stratification (or quasi-closure)---in fact, interestingly, $P$ is stratified if and only if $(\{ 1_P\}, P)$ is a stratified extension.
To prove the iteration theorem, we also have to add some additional axioms concerning the interplay of the prestratification (preclosure) systems on $P_\iota$ and $P_{\iota+1}$; see Definitions~\ref{pcs:is} and \ref{pss:is}.

In Section~\ref{sec:prod} we show products of stratified forcings are stratified extensions.
Finally, we introduce \emph{the stable meet operator} in Section~\ref{sec:stm} and \emph{remote suborders} in Section~\ref{sec:remote}. See the beginning of those sections for a motivating discussion.

\section{Quasi-Closed Extension and Iteration}\label{sec:qc:ext}
In this section, we start by giving the definition of quasi-closed extension. 
We show that composition of quasi-closed forcing is a special case of quasi-closed extension.
We give a sufficient condition which makes sure that if $(P_0,P_1)$ is a quasi-closed extension, then $P_1$ is quasi-closed.
We prove that the relation of being a quasi-closed extension is transitive.
Finally, we formulate and prove an iteration theorem for quasi-closed forcing.

\medskip

Let $P_0$ be a complete suborder of $P_1$ and let $\pi\colon P_1 \rightarrow P_0$ be a strong projection.
Moreover, assume we have a system $\pcs_i=(\D_i, \param_i, \leqlol_i)_{\lambda\in I}$ for $i \in \{0,1\}$ such that $\D_i \subseteq I \times V \times (P_i)^2$ is a class definable with parameter $\param_i$ 
and for every $\lambda\in I$, $\leqlol_i$ is a  binary relation on $P_i$.
\begin{dfn}\label{pcs:is}
We write $\pcs_0 \is \pcs_1$\index[notation]{lessthan triangle@$\is$|textbf}\index[notation]{  lessthan triangle@$\is$|textbf}
to mean that for every $\lambda \in I$,
\begin{enumerate}[label=(\pcsIsRef), ref=\pcsIsRef]
\item For all $p,q \in P_0$, $p\leqlol_0 q \Rightarrow p \leqlol_1 q$. \label{ext}
\item For all $p,q \in P_1$, $p \leqlol_1 q \Rightarrow \pi(p) \leqlol_0 \pi(q)$. \label{pi:mon}
\item For all $p,q \in P_1$ and for every $x$, $p \in \D_1(\lambda,x,q) \Rightarrow \pi(p) \in \D_0(\lambda,x,\pi(q))$. \label{F:coh}\label{D:coh}

\end{enumerate}
\end{dfn}

Observe that if $\pcs_0 \is \pcs_1$ we can drop the subscripts on $\leqlol_0$, $\leqlol_1$ and just write $\leqlol$ without causing confusion.
\begin{dfn}\label{qc:ext}
We say the pair $(P_0,P_1)$ is a \emph{quasi-closed extension\index{quasi-closed extension} on $I$, as witnessed by $(\pcs_0,\pcs_1)$} 
if and only if $\pcs_0$ witnesses that $P_0$ is quasi-closed on $I$,  $\pcs_1$ is a preclosure system on $P_1$, $\pcs_0 \is \pcs_1$ and for any pair $\lambda,\bar \lambda \in I$ such that
$\lambda \leq \bar \lambda$, the following conditions hold:
\begin{enumerate}[label=(\qcExtRef), ref=\qcExtRef]
\item 
If $p\in P_1$ and $q\in P_0$ is such that $q \leqlol \pi(p)$ and $q\in \D(\lambda,x,\pi(p))$, there is $r \leqlol p$
such that $r \in \D(\lambda,x,p)$, $r \leqlol p$ and $\pi(r)=q$.
Moreover we can ask of $r$ that for any $\lambda' \in I$ such that $p \leqlo^{\lambda'}\pi(p)$ we also have $r \leqlo^{\lambda'} \pi(r)$.\label{ext:redundant}\label{ext:D:dense}
\item If $\bar p = (p_\xi)_{\xi<\rho}$ is a sequence of conditions in $P_1$ such that for some $q \in P_0$ and some $\bar w$
\begin{enumerate}
\item $q$ is a greatest lower bound of the sequence $(\pi(p_\xi))_{\xi<\rho}$ and for all $\xi <\rho$, $q \leqlol \pi(p_\xi)$,\label{ext:pi:bound}
\item  $\bar w$ is a $(\lambda,x)$-strategic guide and a $(\bar \lambda, x)$-canonical witness for $\bar p$, \label{ext:glb:strat:def}
\item  either $\lambda = \bar \lambda$ or $p_\xi \leqlo^{\bar\lambda}_1 \pi(p_\xi)$ for each $\xi<\rho$,\label{ext:glb:tail}
\end{enumerate}
then $\bar p$ has a greatest lower bound $p$ in $P_1$ such that for each $\xi < \rho$, $p \leqlol p_\xi$ and $\pi(p)=q$.
Moreover, if $p_\xi \leqlo^{\bar\lambda}_1 \pi(p_\xi)$ for each $\xi<\rho$, then also $p \leqlo_1^{\bar\lambda} \pi(p)$.\label{ext:glb}
\end{enumerate}
As before, if we say $(P_0,P_1)$ is a quasi-closed extension and don't mention either of $\pcs_0$, $\pcs_1$ or $I$, 
that entity is either clear from the context 
or we are claiming that one can find such an entity.
\end{dfn}

We will grow tired of repeating all the conditions $\bar p$ has to satisfy in (\ref{ext:glb}), so we issue the following definition:
\begin{dfn}\label{adequate2}
We say $\bar p$ is $(\lambda,\bar\lambda,x)$-adequate\index[notation]{lambda,lambda,x-adequate@$(\lambda,\bar\lambda,x)$-adequate}\index[notation]{ lambda,lambda,x-adequate@$(\lambda,\bar\lambda,x)$-adequate}\index{adequate sequence} if and only if $\lambda, \bar \lambda \in I$, $\lambda\leq\bar\lambda$ and $\bar p$ satisfies (\ref{ext:glb:strat:def}) and (\ref{ext:glb:tail}). We say $q$ is a $\pi$-bound\index[notation]{pi-bound@$\pi$-bound} if and only if (\ref{ext:pi:bound}) holds.
\end{dfn}

Of course, the obvious example for quasi-closed extension is provided by composition of forcing notions:\index{composition (forcing)!quasi-closed extension}
\begin{lem}\label{qc:comp:implies:ext}
If $P$ is quasi-closed on $I$ and $\forces_P \dot Q$ is quasi-closed on $I$, then $(P, P*\dot Q)$ is a quasi-closed extension on $I$. 

To be more precise, let $\pcs_0$ denote the preclosure system witnessing that $P$ is quasi-closed and let $\pcs_1=(\bar\D,\bar\param, \bleqlol)_{\lambda\in I}$ be the preclosure system constructed as in the proof of \ref{stratified:composition}, where we showed that $P*\dot Q$ is stratified. Then $(\pcs_0,\pcs_1)$ witnesses that $(P, P*\dot Q)$ is a quasi-closed extension on $I$.
\end{lem}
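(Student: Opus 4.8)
The plan is to verify the requirements of Definition~\ref{qc:ext} one by one, using the explicit description of $\pcs_1 = (\bar\D, \bar\param, \bleqlol)_{\lambda \in I}$ from the proof of Theorem~\ref{stratified:composition}, together with the fact (already established there) that the canonical projection $\pi\colon P*\dot Q \to P$, $\pi(p,\dot q) = p$, is a strong projection with $(p,\dot q)\cdot p' = (p\cdot p', \dot q)$ for $p' \leq p$ in $P$. There are essentially three things to check: that $\pcs_1$ is a pre-closure system on $P*\dot Q$ (which was done inside the proof of Theorem~\ref{stratified:composition}, so I will just cite it); that $\pcs_0 \is \pcs_1$; and that the two extension axioms \ref{ext:redundant} and \ref{ext:glb} hold.

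First I would dispatch $\pcs_0 \is \pcs_1$. Axiom \ref{ext} (if $p \leqlol_0 q$ then $p \leqlol_1 q$) does not even make literal sense here since $P_0 = P \not\subseteq P_1 = P*\dot Q$; the intended reading is via the embedding $i\colon p \mapsto (p, \dot 1)$, and then $(p,\dot 1)\bleqlol (q,\dot 1)$ reduces to $p \leqlol q$ and $p \forces \dot 1 \dleqlol \dot 1$, the latter holding by reflexivity of $\dleqlol$. Axioms \ref{pi:mon} and \ref{D:coh} are immediate from the \emph{definitions} of $\bleqlol$ and $\bar\D$ in the proof of Theorem~\ref{stratified:composition}: $(p,\dot p)\bleqlol(q,\dot q)$ was defined to require $p \leqlol q$ as its first conjunct, so $\pi$ is monotone for $\leqlol$; and $(p,\dot p)\in\bar\D(\lambda,x,(q,\dot q))$ was defined to require $p\in\D(\lambda,x,q)$, giving $\pi(p,\dot p)\in\D_0(\lambda,x,\pi(q,\dot q))$.

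Next, the extension axioms. For \ref{ext:redundant}: given $(p,\dot p)\in P*\dot Q$ and $q\in P$ with $q\leqlol p$ and $q\in\D(\lambda,x,p)$, set $r = (q,\dot p)$. Then $\pi(r)=q$, and $r\bleqlol(p,\dot p)$ because $q\leqlol p$ and $q\forces \dot p\dleqlol \dot p$ by reflexivity, and $r\in\bar\D(\lambda,x,(p,\dot p))$ because $q\in\D(\lambda,x,p)$ and $q\forces \dot p\in\dot\D(\lambda,x,\dot p)$ using the corresponding ``density'' clause \eqref{qc:redundant} for $\dot Q$ in the extension (or, more simply, the ``dense and open'' observation for $\bar\D$ already made in the composition proof). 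The clause about $\leqlo^{\lambda'}$ is inherited coordinatewise from the same clause for $P$ and for $\dot Q$. The main work is in \ref{ext:glb}: given a $(\lambda,\bar\lambda,x)$-adequate sequence $\bar p = (p_\xi,\dot q_\xi)_{\xi<\rho}$ with a $\pi$-bound $q$ for $(p_\xi)_{\xi<\rho}$, I would reproduce verbatim the greatest-lower-bound argument from the Quasi-Closure section of the proof of Theorem~\ref{stratified:composition}: the strategic guide / canonical witness $\bar w$ for $\bar p$ projects (via the $\Delta_0$ first-coordinate map) to a strategic guide and canonical witness for $(p_\xi)_{\xi<\rho}$; since $q$ is given as a greatest lower bound of $(p_\xi)_{\xi<\rho}$ with $q\leqlol p_\xi$, we may take $p_\rho := q$ rather than re-deriving it; then $q$ forces that $(\dot q_\xi)_{\xi<\rho}$ is $(\lambda,x)$-adequate (the definability transfers by relativizing the defining $\qcdefSeq$ and $\qcdefG$ formulas to $L[A]$, exactly as in that proof), so we can pick $\dot q_\rho$ forced by $q$ to be a greatest lower bound, and $(q,\dot q_\rho)$ is the desired bound, with $\pi(q,\dot q_\rho)=q$ and $(q,\dot q_\rho)\bleqlol(p_\xi,\dot q_\xi)$ for each $\xi$. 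The ``moreover'' about $\leqlo^{\bar\lambda}_1$ follows from the last sentences of \eqref{qc:glb} for $P$ and for $\dot Q$.

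The main obstacle, and the only place where care is genuinely needed, is bookkeeping the two parameters $\lambda \leq \bar\lambda$ in \ref{ext:glb}: one must make sure that the canonical-witness definability used to push ``$(\dot q_\xi)_\xi$ is adequate'' into $V^P$ only needs $\bar w$ to be a $(\bar\lambda,x)$-canonical witness (not a $(\lambda,x)$-one), while the strategic-guide clauses and the conclusion $p\leqlol p_\xi$ only need the $(\lambda,x)$-strategic-guide part — this is precisely the split already built into Definition~\ref{adequate2}, so it matches the composition proof once one keeps the superscripts straight. I expect no new ideas beyond re-packaging the Theorem~\ref{stratified:composition} argument; the lemma is essentially the statement that that proof \emph{relativizes} to the extension pair $(P, P*\dot Q)$, which is why the authors phrase it as ``$(\pcs_0,\pcs_1)$ witnesses \dots'' with $\pcs_1$ the very system constructed there.
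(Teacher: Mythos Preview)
Your overall approach matches the paper's: check $\pcs_0 \is \pcs_1$ from the definitions, then verify \ref{ext:redundant} and \ref{ext:glb} by re-running the relevant parts of the composition proof. One genuine gap, however, is in your argument for \ref{ext:redundant}.

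You set $r = (q,\dot p)$ and claim $r \in \bar\D(\lambda,x,(p,\dot p))$ because ``$q \forces \dot p \in \dot\D(\lambda,x,\dot p)$ using \eqref{qc:redundant}''. But \eqref{qc:redundant} does \emph{not} say that $p \in \D(\lambda,x,p)$; it only says there \emph{exists} some $q' \leqlol p$ with $q' \in \D(\lambda,x,p)$. In concrete examples (e.g.\ the strategic-closure example after Definition~\ref{def:qc}, where $\D(\lambda,x,p) = \{r : r \leq \sigma(p)\}$ for a strategy $\sigma$) one need not have $p \in \D(\lambda,x,p)$. So $(q,\dot p)$ need not lie in $\bar\D(\lambda,x,(p,\dot p))$. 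The paper's fix is the obvious one: use \eqref{qc:redundant} for $\dot Q$ in the extension, via fullness, to get a name $\dot q$ with $\forces \dot q \dleqlol \dot p$ and $\forces \dot q \in \dot\D(\lambda,x,\dot p)$, and then take $r = (q,\dot q)$. The ``moreover'' clause about $\leqlo^{\lambda'}$ then needs the last sentence of \eqref{qc:redundant} for $\dot Q$, applied to $\dot q$.

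For \ref{ext:glb} your sketch is correct in spirit but slightly underspecifies the case $\lambda < \bar\lambda$. The paper handles this by observing that \ref{ext:glb}(c) gives $p_\xi \forces \dot q_\xi \dleqlo^{\bar\lambda} 1_{\dot Q}$, and then invokes Lemma~\ref{bar:lambda:adeq} (stated just before this proof) to upgrade the $(\lambda,x)$-strategic guide for $(\dot q_\xi)_\xi$ to a $(\bar\lambda,x)$-strategic guide, so that the sequence is genuinely $(\bar\lambda,x)$-adequate in $\dot Q$ and the quasi-closure of $\dot Q$ applies. Your ``bookkeeping'' paragraph gestures at this but does not name the mechanism; you should cite Lemma~\ref{bar:lambda:adeq} explicitly rather than leaving it to ``keeping the superscripts straight''.
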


We give the proof after we prove the following simple lemma, which will be useful in several contexts.

\begin{lem}\label{bar:lambda:adeq}
Say $R$ carries a preclosure system $\pcs$ on $I$ and $\bar p= (p_\xi)_{\xi<\rho}$ has a $(\lambda,x)$-strategic guide $\bar w$ which is also a  $(\bar \lambda,x)$-canonical witness.
If for all $\xi < \rho$, $p_\xi \leqlo^{\bar\lambda} 1_R$, then $\bar p$ is in fact $(\bar \lambda,x)$-adequate.
\end{lem}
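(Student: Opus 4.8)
The statement to prove is Lemma~\ref{bar:lambda:adeq}: given a \pre closure system $\pcs$ on $I$ and a sequence $\bar p = (p_\xi)_{\xi<\rho}$ which has a $(\lambda,x)$-strategic guide $\bar w$ that is \emph{also} a $(\bar\lambda,x)$-canonical witness, if $p_\xi \leqlo^{\bar\lambda} 1_R$ for all $\xi<\rho$, then $\bar p$ is $(\bar\lambda,x)$-adequate. Unwinding Definition~\ref{canonical}, being $(\bar\lambda,x)$-adequate means $\rho\leq\bar\lambda$ and there is a single object which is \emph{both} a $(\bar\lambda,x)$-strategic guide and a $(\bar\lambda,x)$-canonical witness for $\bar p$. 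The natural candidate is $\bar w$ itself: it is already a $(\bar\lambda,x)$-canonical witness by hypothesis, so the only thing to check is that $\bar w$ is also a $(\bar\lambda,x)$-\emph{strategic} guide for $\bar p$, and that $\rho\leq\bar\lambda$. The latter is immediate: since $\bar w$ is a $(\bar\lambda,x)$-canonical witness, Definition~\ref{canonical}(2) requires $\rho\leq\bar\lambda$.

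So the plan is to verify the three clauses of Definition~\ref{strategic} for $\bar w$ as a $(\bar\lambda,x)$-strategic guide, working on the tail of $\xi<\rho$ on which $\bar w$ is a $(\lambda,x)$-strategic guide. Clause~(1) (for some $\lambda'\in I$, $p_{\xi+1}\in\D(\lambda', (\{x,c\},\bar w\res\xi+1), p_\xi)$ and $p_{\xi+1}\leqlo^{\lambda'}p_\xi$) does not mention $\lambda$ at all, so it carries over verbatim. Clause~(3) (at limits $\xi$, $p_\xi$ is a greatest lower bound of $(p_\nu)_{\nu<\xi}$) likewise makes no reference to $\lambda$, so it carries over. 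The only clause that involves the parameter $\lambda$ is clause~(2): $p_{\xi+1}\leqlol p_\xi$ must be upgraded to $p_{\xi+1}\leqlo^{\bar\lambda} p_\xi$. This is where the hypothesis $p_\xi\leqlo^{\bar\lambda}1_R$ enters.

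\textbf{The key step.} To get $p_{\xi+1}\leqlo^{\bar\lambda}p_\xi$ from $p_{\xi+1}\leqlo^{\bar\lambda}1_R$ and $p_\xi\leqlo^{\bar\lambda}1_R$, I would use axiom~(\ref{er}) of Definition~\ref{def:pcs}: if $p\leq q\leq r$ and $p\leqlol r$, then $p\leqlol q$. Apply this with $r = 1_R$, $q = p_\xi$, $p = p_{\xi+1}$, and the relation $\leqlo^{\bar\lambda}$ in the role of $\leqlol$. We have $p_{\xi+1}\leq p_\xi$ (from clause~(2) of the $(\lambda,x)$-strategic guide together with (\ref{qc:preorder}), which gives $p_{\xi+1}\leqlol p_\xi\Rightarrow p_{\xi+1}\leq p_\xi$), we have $p_\xi\leq 1_R$ trivially, and we have $p_{\xi+1}\leqlo^{\bar\lambda}1_R$ by hypothesis. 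Hence (\ref{er}) yields $p_{\xi+1}\leqlo^{\bar\lambda}p_\xi$, which is exactly clause~(2) for $\bar w$ as a $(\bar\lambda,x)$-strategic guide. (Note the hypothesis $p_\xi\leqlo^{\bar\lambda}1_R$ on $q = p_\xi$ is not even needed for this application of (\ref{er}); only $p_{\xi+1}\leqlo^{\bar\lambda}1_R$ is used. One could alternatively route through (\ref{qc:leqlo:vert}) if $\bar\lambda\geq\lambda$, but the problem does not assume $\bar\lambda\geq\lambda$, so (\ref{er}) is the right tool.)

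\textbf{Main obstacle.} There is essentially no obstacle; the lemma is a bookkeeping consequence of the axioms, exactly as flagged by the phrase ``which will be useful in several contexts.'' The only subtlety worth a sentence is making sure the tail on which $\bar w$ is a $(\lambda,x)$-strategic guide is the same tail on which we claim it is a $(\bar\lambda,x)$-strategic guide — it is, since all three clauses of Definition~\ref{strategic} are being checked pointwise on that tail, and the other conditions (limits being greatest lower bounds, the $\D$-membership in clause~(1)) are literally unchanged. Once clause~(2) is upgraded via (\ref{er}), one concludes that $\bar w$ witnesses both halves of Definition~\ref{canonical}(3) at level $\bar\lambda$, so $\bar p$ is $(\bar\lambda,x)$-adequate.
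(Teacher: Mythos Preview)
Your proposal is correct and takes essentially the same approach as the paper: the paper's proof is a two-line argument using (\ref{er}) with $p_{\bar\xi}\leq p_\xi\leq 1_R$ and $p_{\bar\xi}\leqlo^{\bar\lambda}1_R$ to conclude $p_{\bar\xi}\leqlo^{\bar\lambda}p_\xi$, then declares $\bar w$ a $(\bar\lambda,x)$-strategic guide. Your version is more explicit about which clauses of Definition~\ref{strategic} carry over unchanged and which one needs the upgrade via (\ref{er}), and you are right that only $p_{\xi+1}\leqlo^{\bar\lambda}1_R$ is actually used.
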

\begin{proof}[Proof of Lemma~\ref{bar:lambda:adeq}.]
For arbitrary $\xi<\bar\xi<\rho$,
by \ref{def:pcs}(\ref{er}), as $p_{\bar \xi} \leq p_\xi \leq 1_R$ and $p_{\bar \xi} \leqlo^{\bar\lambda} 1_R$, we have $p_{\bar\xi}\leqlo^{\bar\lambda} p_\xi$. Thus $\bar w$ is in fact  a $(\bar \lambda,x)$-strategic guide for $\bar p$ and so $\bar p$ is $(\bar\lambda,x)$-adequate.
\end{proof}

\begin{proof}[Proof of Lemma~\ref{qc:comp:implies:ext}.]
  
Just by looking at the definition of $\bar\leqlol$ and $\bar\D$ as given in the proof of \ref{stratified:composition} (see~p.~\pageref{stratified:composition}), it is immediate that $\pcs_0 \is \pcs_1$ and that $\pcs_1$ is a preclosure system.
To check that $\pcs_1$ is a preclosure system, observe \ref{def:pcs}(\ref{er}) has already been checked in the proof of Theorem~\ref{stratified:composition}. The other conditions we leave to the reader.

Condition~\ref{qc:ext}(\ref{ext:redundant}) holds since $P$ forces \ref{def:qc}(\ref{qc:redundant}) for $\dot Q$:
say we have $(p,\dot p) \in P*\dot Q$ and $q \in P$ such that $q \in \D(\lambda,x,p)$.
By fullness we may find $\dot q$ such that $p \forces \dot q \in \D(\lambda,x,\dot q)$ and $p\forces \dot q \leqlol \dot p$. 
We have $(q, \dot q) \leqlol (p,\dot p)$ and $(q,\dot q)\in \bar \D(\lambda,x,(p,\dot p))$.
Moreover, by the last clause of \ref{def:qc}(\ref{qc:redundant}), we can demand that
$p\forces$  for any $\lambda' \in I$, $\dot p \leqlo^{\lambda'} 1_{\dot Q} \Rightarrow \dot q \leqlo^{\lambda'} 1_{\dot Q}$. Thus, for any $\lambda' \in I$ such that $(p, \dot p) \bleqlo^{\lambda'} (p, 1_{\dot Q})$ we also have $(q, \dot q) \bleqlo^{\lambda'} (q, 1_{\dot Q})$, which proves the last statement of \ref{qc:ext}(\ref{ext:redundant}).

Now to the main point, that is \ref{qc:ext}(\ref{ext:glb}):\label{proof:of:def:qc:in:s:comp:impl:ext}
Say $\bar p = (p_\xi,\dot p_\xi)_{\xi<\rho}$ is sequence of conditions in $P_1$ which is $(\lambda,\bar\lambda,x)$-adequate. That is, we may fix $\bar w$ which is a $(\lambda,x)$-strategic guide and a $(\bar\lambda,x)$-canonical witness. We may also fix $q \in P_0$ which is a $\pi$-bound---i.e., (\ref{ext:pi:bound}) holds.

We have already seen that $q$ forces that 
$\bar w$ is a $(\bar\lambda,x)$-canonical witness for $(\dot p_\xi)_{\xi<\rho}$ (this is the same argument as in the proof of Theorem~\ref{stratified:composition}).

It is easy to check that $q$ also forces that $\bar w$ is a $(\lambda,x)$-strategic guide for $(\dot p_\xi)_{\xi<\rho}$.

If $\lambda=\bar \lambda$, we conclude that $q$ forces that $(\dot p_\xi)_{\xi<\rho}$ is $\lambda$-adequate in $\dot Q$. 
Thus we may pick a $P$-name $\dot q$ such that $q$ forces $\dot q \in \dot Q$ is the greatest lower bound of $(\dot p_\xi)_{\xi<\rho}$ in $\dot Q$.
Then $(q,\dot q)$ is the greatest lower bound of $\bar p$ and we are done with the proof of \ref{qc:ext}(\ref{ext:glb}) in this case.

If on the other hand, $\lambda<\bar\lambda$, we have that for all $\xi<\rho$, $(p_\xi,\dot p_\xi) \leqlo^{\bar\lambda}_1 (p_\xi,1_{\dot Q})$.
Thus by Lemma~\ref{bar:lambda:adeq} we have that
$q$ forces $(\dot p_\xi)_{\xi<\rho}$ is $\bar \lambda$-adequate.
Thus $q$ also forces that this sequence has a lower bound,
for which we may fix a name $\dot q$.
By quasi-closure for $\dot Q$ in the extension and since 
for all $\xi<\rho$ we have 
\[ q\forces\dot p_\xi \dot\leqlo^{\bar\lambda}1_{\dot Q},\]
we conclude that for any $\xi < \rho$ we have 
\[ q \forces \dot q \dot \leqlo^{\bar \lambda} \dot q_\xi.\]
Thus $(q,\dot q)$ is a greatest lower bound of $\bar p$ and
\[ (q,\dot q)\bar\leqlo^{\bar\lambda}(q,1_{\dot Q}). \]
\renewcommand{\qedsymbol}{{\tiny  Lemma~\ref{qc:comp:implies:ext}~}$\Box$}\end{proof}

We now embark on a series of lemmas culminating in the insight that the second forcing of a quasi-closed extension $(P_0,P_1)$
is itself quasi-closed. Thus, we obtain a second proof that $P*\dot Q$ is quasi-closed (under the assumptions of the previous lemma).
This makes use of the fact that the projection map $\pi_0\colon P*\dot Q \rightarrow P$ is definable.
In general, we shall see that we have to assume that the strong projection map from $P_1$ to $P_0$ is sufficiently definable.  
\begin{lem}\label{lem:strat:proj}
Assume for $i\in\{0,1\}$,  $P_i$ carries a preclosure system $\pcs_i$ on $I$ and $\pcs_0 \is \pcs_1$.  If $\bar p=(p_\xi)_{\xi<\rho}$ is a  sequence of conditions in $P_1$ and $\bar w$ is a $(\lambda,x)$-strategic guide with respect to $\pcs_1$, then $\bar w$ is also a $(\lambda,x)$-strategic guide for $(\pi(p_\xi))_{\xi<\rho}$ with respect to $\pcs_0$.
\end{lem}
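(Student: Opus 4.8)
The statement to prove is Lemma~\ref{lem:strat:proj}: given \pre closure systems $\pcs_0 \is \pcs_1$ on $P_0 \le P_1$, if $\bar w$ is a $(\lambda,x)$-strategic guide for $\bar p=(p_\xi)_{\xi<\rho}$ with respect to $\pcs_1$, then $\bar w$ is also a $(\lambda,x)$-strategic guide for $(\pi(p_\xi))_{\xi<\rho}$ with respect to $\pcs_0$. The plan is simply to unwind Definition~\ref{strategic} and push every clause through the projection $\pi$ using the coherence axioms \pcsIsRef1, \pcsIsRef2, \pcsIsRef3 of Definition~\ref{pcs:is}. Fix a tail of $\xi<\rho$ witnessing that $\bar w$ is a $(\lambda,x)$-strategic guide for $\bar p$ (with respect to $\pcs_1$); I claim the same tail works for $(\pi(p_\xi))_{\xi<\rho}$ (with respect to $\pcs_0$). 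There are three clauses to check at each $\xi$ in the tail.

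First I would handle the strategic-closure clause (item~1 of Definition~\ref{strategic}): we are given $\lambda'\in I$ with $p_{\xi+1}\in\D_1(\lambda',(\{x,c\},\bar w\res\xi+1),p_\xi)$ and $p_{\xi+1}\leqlo^{\lambda'}_1 p_\xi$. Applying \pcsIsRef3 (the axiom \ref{D:coh}, $\D$-coherence of $\pi$) gives $\pi(p_{\xi+1})\in\D_0(\lambda',(\{x,c\},\bar w\res\xi+1),\pi(p_\xi))$, and applying \pcsIsRef2 (the axiom \ref{pi:mon}, monotonicity of $\pi$ with respect to $\leqlo^{\lambda'}$) gives $\pi(p_{\xi+1})\leqlo^{\lambda'}_0\pi(p_\xi)$, with the same $\lambda'$. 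For item~2 of Definition~\ref{strategic}, $p_{\xi+1}\leqlo^\lambda_1 p_\xi$ yields $\pi(p_{\xi+1})\leqlo^\lambda_0\pi(p_\xi)$ again by \pcsIsRef2. Note there is no need to invoke \pcsIsRef1 here --- that axiom goes the wrong direction; only the two axioms describing $\pi$'s behaviour ($\pi$-monotonicity and $\D$-coherence) are used.

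The one clause requiring a small argument rather than a one-line appeal is item~3 of Definition~\ref{strategic}: if $\xi$ is a limit, then $\pi(p_\xi)$ must be a greatest lower bound of $(\pi(p_\nu))_{\nu<\xi}$ in $P_0$. Here I would use that $\pi$ is (the restriction of) the canonical projection $\ro(P_1)\to\ro(P_0)$, which by Lemma~\ref{strong:vs:canonical:proj} it must be since $\pi$ is a strong projection: the canonical projection of a Boolean algebra is computed as $\pi(b)=\prod\{b'\in\ro(P_0)\setdef b\le b'\}$, and such a map commutes with infima that exist in $\ro(P_1)$ --- so $\pi(\bigwedge_{\nu<\xi} p_\nu)=\bigwedge_{\nu<\xi}\pi(p_\nu)$. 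Since by hypothesis $p_\xi$ is a greatest lower bound of $(p_\nu)_{\nu<\xi}$ (it equals $\bigwedge_{\nu<\xi} p_\nu$ up to $\approx$), $\pi(p_\xi)$ is a greatest lower bound of $(\pi(p_\nu))_{\nu<\xi}$. (If one prefers to avoid the Boolean-algebra route, one can argue directly: $\pi(p_\xi)$ is clearly a lower bound by \pcsIsRef2 together with \ref{def:pcs}(\ref{qc:preorder}); and if $r\in P_0$ is below all $\pi(p_\nu)$, then since $\pi$ is the canonical projection and $P_0$ a complete sub-order, $r\cdot p_\nu\ne 0$ for all $\nu$ and a reduction argument shows $r$ is compatible with, hence $\le$, $\pi(p_\xi)$; this is where I expect the only friction, as it requires care about whether the greatest-lower-bound is taken in $P_0$, $P_1$, or the respective Boolean completions, and about the $\approx$-equivalence.) Assembling the three clauses over the tail of $\xi<\rho$ completes the proof; I would then remark that the same verification, with $\lambda$ replaced by $\bar\lambda$ throughout and \pcsIsRef2/\pcsIsRef3 at $\bar\lambda$, shows the analogous preservation of $(\bar\lambda,x)$-strategic guides, which is the form actually used in the sequel.
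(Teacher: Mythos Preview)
Your handling of clauses~1 and~2 of Definition~\ref{strategic} is exactly the paper's argument: push each condition through $\pi$ using (\ref{pi:mon}) and (\ref{D:coh}). The paper's proof in fact stops there and does not address clause~3 (greatest lower bounds at limits) at all.

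Your treatment of clause~3, however, contains an error. The canonical projection $\pi\colon\ro(P_1)\to\ro(P_0)$ does \emph{not} commute with infima; it commutes with suprema. For a concrete counterexample take $P_1=P_0\times Q$ with $\pi(p,q)=p$: if $q_0,q_1\in Q$ are incompatible then $\pi\bigl((p,q_0)\wedge(p,q_1)\bigr)=\pi(0)=0$ while $\pi(p,q_0)\wedge\pi(p,q_1)=p\ne 0$. So the Boolean-algebra route does not work as stated, and your alternative direct argument is, as you yourself flag, incomplete---compatibility of $r$ with $p_\xi$ does not give $r\le\pi(p_\xi)$. Under the bare hypothesis $\pcs_0\is\pcs_1$ of the lemma, clause~3 does not obviously transfer; the paper simply elides this. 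In the actual applications (Lemmas~\ref{lem:adeq:proj} and~\ref{lem:qc:succ}) one is inside a quasi-closed extension, and there axiom~(\ref{ext:glb}) provides what is needed via induction on limit $\xi<\rho$: the projected initial segment $(\pi(p_\nu))_{\nu<\xi}$ is adequate in $P_0$ (clause~3 being vacuous below the first limit, then by induction), hence has a greatest lower bound $q$ by quasi-closure of $P_0$; (\ref{ext:glb}) then produces a greatest lower bound $p'$ of $(p_\nu)_{\nu<\xi}$ in $P_1$ with $\pi(p')=q$, and since $p_\xi$ is already that greatest lower bound, $\pi(p_\xi)=q$. You have correctly spotted a point the paper glosses over; the fix just is not the one you propose.
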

\begin{proof}
Suppose we are given $\bar p$ and $\bar w$ as in the hypothesis. 
If $\xi < \bar \xi < \rho$, since $p_\xi \leqlol_1 p_{\bar \xi}$, by \ref{pcs:is}(\ref{pi:mon}), $\pi(p_\xi)\leqlol_0\pi(p_{\bar\xi})$.
Let $\xi<\rho$ be arbitrary. Fix a regular $\lambda'$ such that
$p_{\xi+1} \leqlo^{\lambda'}_1 p_\xi$
and
$p_{\xi+1} \in \D_1(\lambda', (x, \bar w\res\xi+1),  p_\xi)$
By \ref{pcs:is}(\ref{pi:mon}), 
$\pi(p_{\xi+1})\leqlo^{\lambda'}_0 \pi(p_\xi)$
 and by \ref{pcs:is}(\ref{F:coh}),
$\pi(p_{\xi+1}) \in \D_1(\lambda', (x, \bar w\res\xi+1),  ,\pi(p_\xi))$,
finishing the proof.
\end{proof}

\begin{lem}\label{lem:adeq:proj}
Assume $(P_i, \pcs_i)$, $i\in\{0,1\}$ are as in Lemma~\ref{lem:strat:proj}. 
Further, assume that the strong projection map $\pi\colon P_1\rightarrow  P_0$ is $\qcdefG(\lambda\cup\{x\})$\index[notation]{SigmaT1(z), SigmaT1(X)@$\Sigma^T_1(z)$, $\Sigma^T_1(X)$}. %
If $\bar p=(p_\xi)_{\xi<\rho}$ is a  sequence of conditions in $P_1$ which is $(\lambda,x)$-adequate with respect to $\pcs_1$, then $(\pi(p_\xi))_{\xi<\rho}$ is $(\lambda,x)$-adequate with respect to $\pcs_0$.
\end{lem}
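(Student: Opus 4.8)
The plan is to verify the definition of $(\lambda,x)$-adequacy for the projected sequence by producing both a strategic guide and a canonical witness for $(\pi(p_\xi))_{\xi<\rho}$ out of the data already available for $\bar p$. By hypothesis there is $\bar w$ which is simultaneously a $(\lambda,x)$-strategic guide and a $(\lambda,x)$-canonical witness for $\bar p$ with respect to $\pcs_1$. The strategic-guide half is immediate: lemma \ref{lem:strat:proj} tells us that this same $\bar w$ is a $(\lambda,x)$-strategic guide for $(\pi(p_\xi))_{\xi<\rho}$ with respect to $\pcs_0$. So the entire content of the lemma is to check that $\bar w$ is also a $(\lambda,x)$-canonical witness for the projected sequence.

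The key observation is that being a canonical witness asks for two things (definition \ref{canonical}): first, that $\bar w$ itself is $\qcdefSeq(\lambda\cup\{x,\param\})$; second, that there is a $\qcdefG(\lambda\cup\{x,\param\})$ partial function $G$ with $p_\xi = G(\bar w\res\xi+1)$ for all $\xi<\rho$. The first requirement is about $\bar w$ alone and does not mention the sequence at all, so it transfers verbatim from the hypothesis for $\bar p$ --- note that $\param$ here should be read as $\param_0$, and since $\param_0$ is recoverable from the parameters in $x$ (we have assumed $\param$ is among the constants coded by $x$, and in the extension context $\param_1$ is available) this causes no difficulty; alternatively one simply carries $\param_0$ as an extra constant. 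For the second requirement, let $G_1$ be the $\qcdefG$ function witnessing canonicity of $\bar w$ for $\bar p$, so $p_\xi = G_1(\bar w\res\xi+1)$. Define $G_0 = \pi \circ G_1$, i.e. $G_0(y) = \pi(G_1(y))$. Then $\pi(p_\xi) = \pi(G_1(\bar w\res\xi+1)) = G_0(\bar w\res\xi+1)$ for every $\xi<\rho$, exactly as required. It remains only to see that $G_0$ is $\qcdefSeq(\lambda\cup\{x\})$-definable as a partial function: this is where the extra hypothesis of the lemma is used, namely that $\pi\colon P_1\to P_0$ is $\qcdefG(\lambda\cup\{x\})$. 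A composition of two $\Sigma^T_1$ partial functions is again $\Sigma^T_1$ in the combined parameters (existentially quantify the intermediate value), and $\Sigma^T_1$ classes are closed under such composition; hence $G_0(y)=z \iff \exists u\,(G_1(y)=u \wedge \pi(u)=z)$ is $\qcdefG$ with parameters in $\lambda\cup\{x\}$ (absorbing both $\param_0$ and whatever parameter defines $\pi$ into $x$, as the paper's convention permits). This finishes the verification that $\bar w$ is a canonical witness for the projected sequence, and combined with lemma \ref{lem:strat:proj} we conclude $(\pi(p_\xi))_{\xi<\rho}$ is $(\lambda,x)$-adequate with respect to $\pcs_0$.

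The only mild obstacle is bookkeeping around the definability classes: one must make sure the composition $\pi\circ G_1$ really stays inside $\qcdefG$ rather than climbing to a higher level of the hierarchy, and that the parameters remain within $\lambda\cup\{x\}$. Since the paper has already noted (in section \ref{rem:def:set} and the discussion after definition \ref{canonical}) that there is flexibility in the definability demanded of $G$, and since the existential quantifier used in the composition is unbounded in exactly the way $\Sigma^T_1$ tolerates, this goes through cleanly; there is no issue with the lack of closure under bounded quantification that was flagged earlier, because we are not introducing any bounded quantifiers here. I would present the argument in precisely this order: recall the hypothesis, invoke lemma \ref{lem:strat:proj} for the strategic guide, then handle the two clauses of canonical witness, spending essentially all the writing on the composition $G_0 = \pi\circ G_1$ and its definability.
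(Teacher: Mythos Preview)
Your proof is correct and follows exactly the same approach as the paper: invoke lemma \ref{lem:strat:proj} for the strategic guide, then observe that $\pi\circ G$ is $\qcdefG(\lambda\cup\{x\})$ so that $\bar w$ remains a canonical witness for the projected sequence. You supply more explicit detail on the composition and on the $\param_0$/$\param_1$ bookkeeping than the paper does, but the argument is identical.
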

\begin{proof}
Fix $\bar w$ which is both a $(\lambda,x)$-strategic guide and a $(\lambda,x)$-canonical witness for $\bar p$.
By the previous lemma, $\bar w$ is a $(\lambda,x)$-strategic guide for $(\pi(p_\xi))_{\xi<\rho}$.
We may find a $\qcdefG(\lambda\cup\{x\})$ function $G$ such that $p_\xi = G(\bar w\res\xi+1)$. 
As $\pi \circ G$ is also $\qcdefG(\lambda\cup\{x\})$, $\bar w$ is also a 
$(\lambda,x)$-canonical witness for $(\pi(p_\xi))_{\xi<\rho}$.
\end{proof}

The following is useful, e.g., when we show a condition has legal support. Here lies one of the reasons for asking (\ref{er}).
\begin{lem}\label{lem:support:simple}
Assume $(P_i,\pcs_i)$, for $i\in\{0,1\}$ are as in Lemma~\ref{lem:strat:proj}. For any $p\in P_1$ and any regular $\lambda\in I$ we have:
\begin{equation}\label{support:simple}
(\exists q \in P_0 \quad p \leqlol_1 q) \iff p \leqlol_1 \pi(p)
\end{equation}
\end{lem}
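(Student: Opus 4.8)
\textbf{Proof plan for Lemma~\ref{lem:support:simple}.}
The plan is to prove the two implications in \eqref{support:simple} separately, the left-to-right direction being the substantive one. First I would handle the trivial direction: if $p \leqlol_1 \pi(p)$, then $\pi(p)\in P_0$ already witnesses the existential on the left-hand side, so there is nothing to do. The content of the lemma is therefore entirely in the forward implication.

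So suppose $q\in P_0$ with $p\leqlol_1 q$. The first step is to observe that $p\leq_{P_1} q$ by \ref{def:pcs}(\ref{qc:preorder}) applied to $\pcs_1$. Next I would use the basic facts about strong projections recorded after Definition~\ref{indie}: since $\pi\colon P_1\to P_0$ is a strong projection and $q\in P_0$ (identified with its image $i(q)=q\cdot 1_{P_1}\in P_1$), we have $p\leq i(q)$, and hence $\pi(p)\leq \pi(i(q)) = q$ by monotonicity of $\pi$ (condition 1 in the definition of projection); conversely $q$ is already below $\pi(p)$? — here I need to be careful. The key order-theoretic fact I want is: $q \leq_{P_0}\pi(p)$ and $\pi(p)\leq_{P_0} q$, i.e. $\pi(p)\approx q$. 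The inequality $\pi(p)\leq q$ follows from $p\leq i(q)$ together with $p\leq i(\pi(p))$ (which holds by $3'$b, as noted in the excerpt) and the fact that $\pi(p)$ is the canonical projection, which is the $\leq$-least reduction; alternatively, by Lemma~\ref{strong:vs:canonical:proj}, since $q\geq p$ and $q$ (being in $P_0$) is a reduction of $p$, we get $q=\pi(p)$ outright. That last route is cleanest: $q$ lies above $p$ in $P_1$ and $q\in P_0$ is trivially a reduction of $p$ to $P_0$ (every $q'\leq q$ in $P_0$ is compatible with $p$, since $q'\leq q$ and $p\leq q$, using that $P_0$ is a complete sub-order so compatibility in $P_0$ implies compatibility in $P_1$ — one checks $q'\cdot p \neq 0$ because $q'\leq \pi(p)$ and $\pi$ is a strong projection, so $q'\cdot p\in P_1$). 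Hence by Lemma~\ref{strong:vs:canonical:proj}, $q=\pi(p)$.

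Finally, once $q=\pi(p)$, the hypothesis $p\leqlol_1 q$ reads exactly as $p\leqlol_1 \pi(p)$, which is the desired conclusion. I would also remark, as the excerpt hints ("Here lies one of the reasons for asking (\ref{er})"), that condition \ref{def:pcs}(\ref{er}) for $\pcs_1$ is what guarantees the argument is robust under replacing $q$ by an $\approx$-equivalent condition, should one not wish to invoke Lemma~\ref{strong:vs:canonical:proj} in the sharp form $q=\pi(p)$: from $p\leqlol_1 q$ and $p\leq_{P_1}\pi(p)\leq_{P_1} q$ one concludes $p\leqlol_1\pi(p)$ directly by \ref{er}. This second route avoids needing $q$ to be literally equal to $\pi(p)$ and only uses $p\leq\pi(p)\leq q$, which follows from the strong-projection facts quoted above.

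\textbf{Main obstacle.} The only delicate point is establishing $\pi(p)\leq_{P_1} q$ (equivalently that $q$ is a reduction of $p$ sitting above $p$, so that Lemma~\ref{strong:vs:canonical:proj} applies), which requires correctly invoking the interplay between the strong projection $\pi$, the canonical projection $\ro(P_1)\to\ro(P_0)$, and the identification of $P_0$ as a strong sub-order of $P_1$ — all of which is set up in the preliminaries (Lemmas~\ref{strong:vs:canonical:proj} and~\ref{stron:proj:equiv}). No genuinely new idea is needed; it is a matter of threading the quoted facts together and then applying \ref{er}.
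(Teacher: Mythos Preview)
Your ``second route'' is exactly the paper's proof: from $p \leqlol_1 q$ one has $p \leq q$, and since $\pi$ is a strong projection $p \leq \pi(p) \leq q$ (the latter because $\pi$ is order-preserving and $\pi(q)=q$ for $q\in P_0$); then \ref{def:pcs}(\ref{er}) gives $p \leqlol_1 \pi(p)$ directly. That is the entire content, and you have it.

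Your first route, however, is wrong and should be deleted. You cannot conclude $q = \pi(p)$ from Lemma~\ref{strong:vs:canonical:proj}, because $q$ need not be a reduction of $p$: take $q = 1_{P_0}$ and any $p$ with $\pi(p) < 1_{P_0}$. Your argument that every $q' \leq q$ is compatible with $p$ rests on the claim ``$q' \leq \pi(p)$'', but you only know $q' \leq q$ and $\pi(p) \leq q$, which gives no relation between $q'$ and $\pi(p)$. The parenthetical in your ``main obstacle'' also misstates things: $\pi(p) \leq q$ is \emph{not} equivalent to $q$ being a reduction of $p$ sitting above $p$. The inequality $\pi(p) \leq q$ is all you need for the (\ref{er}) argument, and it follows trivially from monotonicity of $\pi$; no appeal to reductions or Lemma~\ref{strong:vs:canonical:proj} is required.
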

\begin{proof}
One direction is clear, so say $p \leqlol_1 q$ for some $q \in P_0$.
Apply \ref{def:pcs}(\ref{er}): 
As $\pi$ is a strong projection, $p \leq \pi(p) \leq q$ and so $p \leqlol_1 \pi(p)$.
\end{proof}

The intuition behind Definition~\ref{qc:ext} is that $P_0$ and $P_1$ are both quasi-closed, not independently of each other, but in a very coherent way.
That $P_1$ is quasi-closed is almost implicit in Definition~\ref{qc:ext}---it depends on a further assumption about the definability of $\pi$ (this is responsible for the distinct flavor of quasi-closure, setting it apart from the other axioms of stratification): 
\begin{lem}\label{lem:qc:succ}
If $(P_0,P_1)$ is a quasi-closed extension on $I$ and $\pi$ is $\qcdefG(\min I \cup\{\param_1 \})$\index[notation]{SigmaT1(z), SigmaT1(X)@$\Sigma^T_1(z)$, $\Sigma^T_1(X)$}\index[notation]{c bb(parameter in preclosure system) bar@$\bar c$ (parameter in preclosure system)}, then $P_1$ is quasi-closed on $I$.
\end{lem}

Before we give the proof, note that this assumption on $\pi$ is not entirely trivial: in an iteration, the canonical projection $\pi\colon P_\theta \rightarrow P_\iota$ is $\Delta_0$ in the parameter $\iota$; it is not in general $\qcdefG(\min I)$.
Also we would like to note in passing that in fact $P$ is quasi-closed exactly if $(\{ 1_P\}, P)$ is a quasi-closed extension; the same will be true for stratified forcing.
\begin{proof}
First check \ref{def:qc}(\ref{qc:redundant}): 
Say $p \in P_1$, $\lambda\in I$ and $x$ are given. 
Use (\ref{qc:redundant}) for $P_0$ to find $q \in P_0$ such that $q \leqlol \pi(p)$ and $q \in \D_0(\lambda,x,\pi(p))$.
Now apply \eqref{ext:D:dense} to get $p' \leqlol  p$ such that $p' \in \D_0(\lambda,x,p)$ and $\pi(p')=q$. 

For the last clause of \ref{def:qc}(\ref{qc:redundant}), we can assume that $q$ has been chosen so that for any $\lambda'\in I$, if $\pi(p) \leqlo^{\lambda'}_0 1_{P_0}$, then $q \leqlo^{\lambda'}_0 1_{P_0}$.
We can also assume that $p'$ has been chosen so that for any $\lambda'\in I$, if $p \leqlo^{\lambda'}_1  \pi(p)$, then $p' \leqlo^{\lambda'}_1 \pi(p')$.
Thus, $p' \leqlo^{\lambda'}_1 \pi(p) \leqlo^{\lambda'}_1 1_{P_1}$.

It remains to check \ref{def:qc}(\ref{qc:glb}), so say $\bar w$ witnesses that $\bar p =(p_\xi)_{\xi<\rho}$ is $(\lambda,x)$-adequate for $P_1$. 
By definition of adequate sequence $c_1$ is $\qcdefG(x)$ and 
by assumption, $\pi$ is $\qcdefG(\min I \cup\{x,c_1\})$ and hence also $\qcdefG(\lambda\cup\{x\})$.  
So by Lemma~\ref{lem:adeq:proj}, $(\pi(p_\xi))_{\xi<\rho}$ is  $(\lambda,x)$-adequate. 
Since $P_0$ is quasi-closed, $(\pi(p_\xi))_{\xi<\rho}$ has a greatest lower bound $q$.
Thus, applying \ref{def:qc}(\ref{qc:glb}) for $\bar\lambda=\lambda$, we conclude that $\bar p$ has a greatest lower bound.
\end{proof}

The next lemma will be used in \ref{thm:it:qc} when we show that if the initial segments of an iterations form a chain of quasi-closed extensions, then the limit is itself a quasi-closed extension. It says that the relation of being a quasi-closed extension is transitive.
Let $P_0$, $P_1$ and $P_2$ be preorders such that for $i \in \{0,1\}$, $P_i$ is a strong suborder of $P_{i+1}$.
\begin{lem}\label{qc:ext:transitive}\index{quasi-closed extension!is transitive}\index{transitivity!of quasi-closed extension}
Say $\pi_1\colon P_2 \rightarrow P_1$ and $\pi_0\colon P_2 \rightarrow P_0$ are strong projection maps and $\pi_1$ is $\qcdefG(\min I \cup\{ c_2\})$.
If both $(P_0,P_1)$ and $(P_1,P_2)$ are quasi-closed extensions on $I$, then $(P_0,P_2)$ is also a quasi-closed extension on $I$.
\end{lem}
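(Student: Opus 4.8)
The plan is to verify each clause in Definition~\ref{qc:ext} for the pair $(P_0,P_2)$, using the witnesses $\pi_0$ and $\pi_2 := \pi_0$ (the composition $\pi_0 = \pi_0^{P_1}\circ\pi_1$, where $\pi_0^{P_1}\colon P_1\to P_0$ is the strong projection coming from the hypothesis that $(P_0,P_1)$ is a quasi-closed extension), together with the pre-closure systems $\pcs_0$ on $P_0$ and $\pcs_2$ on $P_2$. First I would fix notation: let $\pcs_1$ be the pre-closure system on $P_1$ witnessing both extensions, and recall that $\pcs_0\is\pcs_1$ and $\pcs_1\is\pcs_2$. I would check the straightforward fact $\pcs_0\is\pcs_2$: for \pcsIsRef1 use transitivity of $\leqlol_0\Rightarrow\leqlol_1\Rightarrow\leqlol_2$; for \pcsIsRef2 (the $\pi$-monotonicity) compose \ref{pcs:is}(\ref{pi:mon}) for $(P_1,P_2)$ with that for $(P_0,P_1)$, noting $\pi_0 = \pi_0^{P_1}\circ\pi_1$; likewise for \pcsIsRef3 regarding the $\D$-classes. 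Since $\pcs_0$ already witnesses that $P_0$ is quasi-closed (by hypothesis) and $\pcs_2$ is a pre-closure system on $P_2$, the structural prerequisites of Definition~\ref{qc:ext} are met.

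Next I would verify \ref{qc:ext}(\ref{ext:redundant}). Given $p\in P_2$ and $q\in P_0$ with $q\leqlol\pi_0(p)$ and $q\in\D_0(\lambda,x,\pi_0(p))$, I first want an intermediate condition in $P_1$. Note $\pi_1(p)\in P_1$ and $\pi_0^{P_1}(\pi_1(p)) = \pi_0(p)$; apply \ref{ext:redundant} for $(P_0,P_1)$ to obtain $r_1\leqlol\pi_1(p)$ in $P_1$ with $r_1\in\D_1(\lambda,x,\pi_1(p))$ and $\pi_0^{P_1}(r_1)=q$; then apply \ref{ext:redundant} for $(P_1,P_2)$ to $p$ and $r_1$ to obtain $r\leqlol p$ in $P_2$ with $r\in\D_2(\lambda,x,p)$ and $\pi_1(r)=r_1$. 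Then $\pi_0(r) = \pi_0^{P_1}(\pi_1(r)) = \pi_0^{P_1}(r_1) = q$, as required. The ``moreover'' clause about tails $p\leqlo^{\lambda'}\pi_0(p)$ follows by carrying the tail hypothesis through both applications, using Lemma~\ref{lem:support:simple} to pass between ``$p\leqlo^{\lambda'}$ (some element of $P_0$)'' and ``$p\leqlo^{\lambda'}\pi_0(p)$'' at the relevant points, and using \ref{pcs:is}(\ref{pi:mon}) to see that a tail over $P_2$ projects to a tail over $P_1$.

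The main work is \ref{qc:ext}(\ref{ext:glb}); I expect this to be the principal obstacle, chiefly because of the two-parameter $(\lambda,\bar\lambda)$ bookkeeping. Suppose $\bar p = (p_\xi)_{\xi<\rho}$ in $P_2$ has a $\pi_0$-bound $q\in P_0$ and a witness $\bar w$ that is a $(\lambda,x)$-strategic guide and a $(\bar\lambda,x)$-canonical witness for $\bar p$, with either $\lambda=\bar\lambda$ or $p_\xi\leqlo^{\bar\lambda}_2\pi_0(p_\xi)$ for all $\xi$. By Lemma~\ref{lem:strat:proj}, $\bar w$ is a $(\lambda,x)$-strategic guide for $(\pi_1(p_\xi))_{\xi<\rho}$ with respect to $\pcs_1$; since $\pi_1$ is $\qcdefG(\min I\cup\{c_2\})$, Lemma~\ref{lem:adeq:proj} (and the observation that a canonical witness projects to one, as $\pi_1\circ G$ is still $\qcdefG$) gives that $\bar w$ is also a $(\bar\lambda,x)$-canonical witness for $(\pi_1(p_\xi))_{\xi<\rho}$ — here if $\lambda<\bar\lambda$ I use that $\pi_1(p_\xi)\leqlo^{\bar\lambda}_1\pi_1(\pi_0(p_\xi))$, which follows from \ref{pcs:is}(\ref{pi:mon}) applied to $p_\xi\leqlo^{\bar\lambda}_2\pi_0(p_\xi)$, together with $\pi_1(\pi_0(p_\xi)) = \pi_0(p_\xi)\in P_0\subseteq P_1$, and then Lemma~\ref{lem:support:simple}. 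Also $q = \pi_0^{P_1}$ applied to\,...\,; more precisely, since $q$ is a greatest lower bound of $(\pi_0(p_\xi))_\xi = (\pi_0^{P_1}(\pi_1(p_\xi)))_\xi$ with $q\leqlol\pi_0(p_\xi)$, and $(P_0,P_1)$ is a quasi-closed extension, clause \ref{ext:glb} for $(P_0,P_1)$ applied to $(\pi_1(p_\xi))_\xi$ (which is $(\lambda,\bar\lambda,x)$-adequate with $\pi_0$-bound $q$) yields a greatest lower bound $q_1\in P_1$ of $(\pi_1(p_\xi))_\xi$ with $q_1\leqlol\pi_1(p_\xi)$ for all $\xi$, $\pi_0^{P_1}(q_1)=q$, and (when $\lambda<\bar\lambda$) $q_1\leqlo^{\bar\lambda}_1\pi_0(p_\xi)$, hence $q_1\leqlo^{\bar\lambda}_1 1_{P_1}$. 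Now $\bar p$ over $P_2$ has: a $\pi_1$-bound $q_1$ (just established), the witness $\bar w$ which is a $(\lambda,x)$-strategic guide and $(\bar\lambda,x)$-canonical witness by hypothesis, and the tail condition in the form $p_\xi\leqlo^{\bar\lambda}_2\pi_0(p_\xi)\leq\pi_1(p_\xi)$, so $p_\xi\leqlo^{\bar\lambda}_2\pi_1(p_\xi)$ by \ref{def:pcs}(\ref{er}). Thus $\bar p$ is $(\lambda,\bar\lambda,x)$-adequate over $P_2$ with $\pi_1$-bound $q_1$, and clause \ref{ext:glb} for $(P_1,P_2)$ delivers a greatest lower bound $p\in P_2$ with $p\leqlol p_\xi$ and $\pi_1(p) = q_1$; then $\pi_0(p) = \pi_0^{P_1}(\pi_1(p)) = \pi_0^{P_1}(q_1) = q$. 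The final ``moreover'' ($p_\xi\leqlo^{\bar\lambda}_2\pi_0(p_\xi)\Rightarrow p\leqlo^{\bar\lambda}_2\pi_0(p)$) is obtained by tracking the $\bar\lambda$-tail assertions through the two invocations of \ref{ext:glb} exactly as above. The only delicate point, and where I would be most careful, is ensuring that when $\lambda=\bar\lambda$ one genuinely gets adequacy (not merely the strategic-guide/canonical-witness pair) at the intermediate stage $P_1$, so that \ref{ext:glb} for $(P_0,P_1)$ is applicable; but $\lambda=\bar\lambda$ makes ``$(\lambda,\bar\lambda,x)$-adequate'' vacuously coincide with ``has a strategic guide that is a canonical witness,'' so no tail hypothesis is needed there, and the argument goes through uniformly.
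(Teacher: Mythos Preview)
Your proof is correct and follows essentially the same two-step strategy as the paper: lift through $P_1$ for both (\ref{ext:redundant}) and (\ref{ext:glb}), using \ref{pcs:is}(\ref{pi:mon}) to project the tail hypothesis and the definability of $\pi_1$ to project the canonical witness. One small slip: in your application of (\ref{er}) you wrote $\pi_0(p_\xi)\leq\pi_1(p_\xi)$, but the correct chain is $p_\xi\leq\pi_1(p_\xi)\leq\pi_0(p_\xi)$; the conclusion $p_\xi\leqlo^{\bar\lambda}\pi_1(p_\xi)$ via (\ref{er}) is still right (the paper phrases this step via Lemma~\ref{lem:support:simple}, which amounts to the same thing).
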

\begin{proof}
Let $(\pcs_0,\pcs_1)$ and $(\pcs_1,\pcs_2)$ witness that $(P_0,P_1)$ and $(P_1,P_2)$ are quasi-closed extensions.

We now check all the conditions of \ref{qc:ext} for $(P_0,P_2)$ and $(\pcs_0,\pcs_2)$.
That $\pcs_2$ is a preclosure system holds by assumption, and that $\pcs_0 \is \pcs_2$ is obvious.

Observe that by \ref{pcs:is}(\ref{ext}) and the remark following Definition~\ref{qc:ext}, we don't need to distinguish between $\leqlol_0$, $\leqlol_1$ and $\leqlol_2$ and therefore we drop the subscripts in what follows.

We check \ref{qc:ext}(\ref{ext:redundant}):
Say $p \in P_2$ and $q \in \D_0(\lambda,x,\pi_0(p))$.
As in the previous proof, we can find $p' \in P_1$ such that $\pi_0(p')=q$, $p'\leqlol_1 \pi_0(p)$ and $p' \in \D_1(\lambda,x,\pi_1(p))$ and then $r$ such that $r\in\D_2(\lambda,x,p)$, with $\pi_1(r)=p'$ and $r\leqlol_2 p$.

Now if $\lambda'\in I$ and $p \leqlo^{\lambda'} \pi_0(p)$, we also have $p \leqlo^{\lambda'} \pi_1(p)$ by Lemma~\ref{lem:support:simple}.
This means we have both $p' \leqlo^{\lambda'} \pi_0(p')$ and $r \leqlo^{\lambda'} \pi_1(r)$.
As $\pi_1(r) = p'$, it follows that $r\leqlo^{\lambda'} \pi_0(r)$.

It remains to check \ref{qc:ext}(\ref{ext:glb}). So let $\bar p = (p_\xi)_{\xi<\rho}$ be a $(\lambda,\bar\lambda,x)$-adequate sequence of conditions in $P_2$ and let $q_0$ be a greatest lower bound of $(\pi_0(p_\xi))_{\xi<\rho}$ as in the hypothesis. 
Fix $\bar w$ which is a $(\lambda,x)$-strategic guide and a $(\bar\lambda,x)$-canonical witness for $\bar p$.
Show exactly as in the proof of Lemma~\ref{lem:qc:succ} that $\bar w$ is both a $(\bar\lambda,x)$-canonical witness and a $(\lambda,x)$-strategic guide for $(\pi_1(p_\xi))_{\xi<\rho}$ in $P_1$.
Denote this sequence by $\bar q$.
Moreover, if it is the case that $\bar\lambda>\lambda$, then 
\begin{equation}\label{glb:supp:long}
\forall \xi <\rho \quad p_\xi \leqlo^{\bar\lambda} \pi_0(p_\xi).
\end{equation}
By \ref{def:pcs}(\ref{pi:mon}), we have that 
\begin{equation}\label{glb:supp:latter}
\forall \xi <\rho \quad\pi_1(p_\xi) \leqlo^{\bar\lambda} \pi_0(p_\xi).
\end{equation}
Thus $\bar q$ satisfies the hypothesis of \ref{qc:ext}(\ref{ext:glb}) for $(P_0,P_1)$ and we may find a greatest lower bound $q_1 \in P_1$ with $\pi_0(q_1)=q$.
Now use \ref{qc:ext}(\ref{ext:glb}) for $(P_1, P_2)$ to find a greatest lower bound $q$ of $\bar p$ such that $\pi_1(q)=q_1$ and so $\pi_0(q)=q_0$.
If $\lambda<\bar\lambda$, Lemma~\ref{lem:support:simple} and \eqref{glb:supp:long} yield
\begin{equation}
\forall\xi < \rho \quad p_\xi \leqlo^{\bar\lambda} \pi_1(p_\xi).\label{glb:supp:former} 
\end{equation}
So finally, as $q \leqlo^{\lambda} \pi_1(q)$ by \eqref{glb:supp:former},
and $\pi_1(q)= q_1 \leqlo^{\bar\lambda} \pi_0(q_1)$ by \eqref{glb:supp:latter},
we conclude
$q \leqlo^{\bar\lambda} \pi_0(q)$.
\end{proof}

\begin{dfn}\label{def:diagsupp}
Say $\theta$ is a limit ordinal, and $\bar Q^\theta$ is an iteration such that for each $\iota <\theta$, $P_\iota$  carries a preclosure system $\pcs_\iota$ on $\Reg\cap[\lambda_\iota,\kappa^*)$, where the sequence $\bar \lambda=(\lambda_\iota)_{\iota<\theta}$ is a non-decreasing sequence of regulars and $\kappa^* \in \Card$ is arbitrary.
All of the following definitions are relative to these preclosure systems and to $\bar \lambda$.
\begin{enumerate}
\item For a thread\index{thread (forcing)} (in the sense of Definition~\ref{it:terminology}) $p$ through $\bar Q^\theta$, let 
\[ 
\supp^\lambda(p)\index[notation]{supp lambda(p)@$\supp^\lambda(p)$|textbf}\index[notation]{lambda-support@$\lambda$-support|textbf} = \{ \iota < \theta \setdef \lambda_\iota\leq\lambda \text{ and } \pi_{\iota+1} \not \leqlol_{\iota+1} \pi_\iota(p) \},
\]
and let $\sigma^\lambda(p)$ be the least ordinal $\sigma$ such that $\supp^\lambda(p)\subseteq \sigma$.

\item\label{d.diagonal.support} Let $\lambda^*$ be regular such that $\lambda^*\geq\lambda_\iota$ for all $\iota<\theta$. We say $P_\theta$ is the \emph{$\lambda^*$-diagonal support limit
\index{diagonal support|textbf}
\index{support!diagonal|textbf}
of $\bar Q^\theta$} if and only if $P_\theta$ consists of all threads $p$ through $\bar Q^\theta$ such that
for each regular $\lambda\geq\lambda^*$, $\supp^\lambda(p)$ has size less than $\lambda$ and $\sigma^{\lambda^*}(p)<\theta$.

\item\label{def:diagsupp:natural} We define the \emph{natural system of relations on $P_\theta$}\index{natural system of relations} to be given as follows 
\begin{enumerate}
\item $p \leqlol_{\theta} q \iff \forall\iota < \theta \quad \pi_\iota(p) \leqlol_{\iota} \pi_\iota(q)$;
\item $p \in \D(\lambda,x,q) \iff \forall \iota<\theta \pi_\iota(q)\in\D(\lambda,x,\pi_\iota(q)$.
\item The parameter $\param_\theta$ has as the form
\[
\param_\theta= (\bar c, L_\mu[A], \bar \pi )
\]
where $\bar c = ( c_\iota)_{\iota<\theta}$\index[notation]{c bb(parameter in preclosure system) bar@$\bar c$ (parameter in preclosure system)} and for each $\iota<\theta$, $c_\iota$ is the parameter from $\pcs_\iota$;
$L_\mu[A]$ is large enough so that $\theta \in L_\mu[A]$; and
$\bar \pi = ( \pi_\iota)_{\iota<\theta}$ where $\pi_\iota$ is the strong projection from $P_\theta$ to $P_\iota$.
\end{enumerate}
We shall see in the proof of Theorem~\ref{thm:it:qc} that under natural assumptions the natural system of relations is a preclosure system. 
\item We say $\bar Q^\theta$ is a \emph{$\bar\lambda$-diagonal support iteration}\index{iteration (forcing)!with diagonal support}\index[notation]{lambdaz-diagonal support iteration@$\bar\lambda$-diagonal support iteration}\index{diagonal support|textbf} if and only if for any limit $\iota<\theta$, $P_\iota$ is the $\lambda_\iota$-diagonal support limit of $\bar Q^\iota$.
\end{enumerate}
\end{dfn}
\begin{thm}\label{thm:it:qc}\index{quasi-closed extension}\index{iteration (forcing)!of quasi-closed extensions}
Let $\bar Q^\theta$ be an iteration such that for each $\iota < \theta$, $P_\iota$ carries a preclosure system $\pcs_\iota$ on $[\lambda_\iota,\kappa^*)$, where the sequence $\bar \lambda=(\lambda_\iota)_{\iota<\theta}$\index[notation]{lambdaz, (lambda xi) xi<kappa@$\bar \lambda$, $(\lambda_\xi)_{\xi\leq\kappa}$} is non-decreasing and $\kappa^*$ is regular.
Moreover, let $\lambda_\theta=\min(\Reg\setminus \sup_{\iota<\theta}\lambda_\iota)$ and assume
\begin{enumerate}
\item For all $\iota<\theta$, $(P_\iota,P_{\iota+1})$ is a quasi-closed extension on $[\lambda_\iota,\kappa^*)$.
\item If $\bar \iota < \theta$ is limit, $\pcs_{\bar\iota}$ is the natural system of relations on $P_{\bar\iota}$ on $[\lambda_\iota,\kappa^*)$ and $\bar Q^\iota$ is a $\bar \lambda\res\iota$-diagonal support iteration.
\item We have $\cof(\theta)<\kappa^*$.
\end{enumerate}
Let $P_\theta$ be the $\lambda_\theta$-diagonal support limit of $\bar Q^\theta$. Then $P_\theta$ is quasi-closed on $I_\theta= [\lambda_\theta,\kappa^*)$, as witnessed by the natural system of relations $\pcs_\theta$.
\end{thm}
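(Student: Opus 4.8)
The plan is to prove that $P_\theta$ is quasi-closed on $I_\theta$ by verifying the three pieces of Definition \ref{def:qc}: first that $\pcs_\theta$ (the natural system of relations) is a \pre closure system on $I_\theta$, then that \eqref{qc:redundant} holds, and finally that \eqref{qc:glb} holds. Throughout, the main structural tool is Lemma \ref{qc:ext:transitive} together with the hypothesis that $(P_\iota,P_{\iota+1})$ is a quasi-closed extension; by transitivity we obtain that for every pair $\iota<\bar\iota\le\theta$ (including $\bar\iota=\theta$, which we handle by a separate limit argument), $(P_\iota,P_{\bar\iota})$ is a quasi-closed extension on $[\lambda_\iota,\kappa^*)$, provided the relevant projection maps have the requisite definability. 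Here I would first record that the canonical projections $\pi_\iota\colon P_{\bar\iota}\to P_\iota$ are $\Delta_0$ in the parameter $\iota$ (by the definition of threads and \eqref{threads:order}), so that Lemma \ref{qc:ext:transitive} applies at successor steps, and I would carry through the limit case of transitivity by a direct argument: given a chain of quasi-closed extensions of limit length $\bar\iota$ with $\cof(\bar\iota)<\kappa$ and diagonal support, one shows $(P_\iota,P_{\bar\iota})$ is a quasi-closed extension by checking \ref{qc:ext}(\ref{ext:redundant}) and \ref{qc:ext}(\ref{ext:glb}) thread-by-thread, using the support condition to see that only boundedly many coordinates are nontrivial modulo $\leqlol$.

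For the first piece, checking that $\pcs_\theta$ is a \pre closure system amounts to checking \eqref{qc:D}, \eqref{qc:preorder}, \eqref{er}, \eqref{qc:leqlo:vert} coordinatewise; each follows from the corresponding axiom for each $\pcs_\iota$ together with \ref{pcs:is}(\ref{pi:mon}) and \ref{pcs:is}(\ref{ext}), which guarantee the coordinatewise definitions cohere. I would also check the definability: $\D(\lambda,x,q)=\forall\iota<\theta\ \pi_\iota(q)\in\D_\iota(\lambda,x,\pi_\iota(q))$ is $\qcdefD$ in the parameter $\bar c=(c_\iota)_{\iota<\theta}$ since each $\D_\iota$ is $\qcdefD(c_\iota)$ uniformly and the quantifier over $\iota<\theta$ is bounded. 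For \eqref{qc:redundant}: given $p\in P_\theta$, $\lambda\in I_\theta$, and $x$, I would build $q\le_{\leqlol}p$ with $q\in\D(\lambda,x,p)$ by recursion on $\iota\le\theta$, at successor steps applying \ref{qc:ext}(\ref{ext:redundant}) for $(P_\iota,P_{\iota+1})$ to lift $q\res\iota$ to $q\res\iota+1$ with $\pi_\iota(q\res\iota+1)=q\res\iota$, and at limit steps applying \ref{qc:ext}(\ref{ext:glb}) (via the limit transitivity statement) to take a thread through the partial results, using the ``moreover'' clauses of \ref{qc:ext}(\ref{ext:redundant}) and \ref{qc:ext}(\ref{ext:glb}) to maintain the tail condition $q\res\iota\leqlo^{\bar\lambda}_\iota 1$ that keeps the support small. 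One must verify that the resulting $q$ is genuinely a thread in $P_\theta$, i.e. that $\supp^\lambda(q)$ has size $<\lambda$ for all regular $\lambda\ge\lambda_\theta$ and $\sigma^{\lambda_\theta}(q)<\theta$; this follows because we only move coordinates in $\supp(p)$ plus a bounded set forced by the strengthening, using $\cof(\theta)<\kappa$.

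For \eqref{qc:glb}, given a $(\lambda,x)$-adequate sequence $\bar p=(p_\xi)_{\xi<\rho}$ in $P_\theta$ with strategic guide and canonical witness $\bar w$, I would construct the greatest lower bound $p$ by recursion on $\iota\le\theta$: set $p\res 0$ trivial, and having defined $p\res\iota$ as a greatest lower bound of $(\pi_\iota(p_\xi))_{\xi<\rho}$ with $p\res\iota\leqlol_\iota \pi_\iota(p_\xi)$ for all $\xi$, use \ref{qc:ext}(\ref{ext:glb}) for $(P_\iota,P_{\iota+1})$ — whose hypotheses \ref{qc:ext}(\ref{ext:pi:bound}), \ref{qc:ext}(\ref{ext:glb:strat:def}), \ref{qc:ext}(\ref{ext:glb:tail}) are met because $\bar w$ remains a strategic guide and canonical witness for the projected sequences by Lemmas \ref{lem:strat:proj} and \ref{lem:adeq:proj} (noting $\pi_\iota$ is $\Delta_0$ hence $\qcdefG$) and $p\res\iota$ is a $\pi$-bound — to get $p\res\iota+1$. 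At limit $\iota$, the natural system being a \pre closure system plus diagonal support means the thread $p\res\iota=(p\res\bar\iota)_{\bar\iota<\iota}$ is the greatest lower bound in $P_\iota$, and I must check it actually lies in $P_\iota$ by a support computation: the support of $p$ is contained in $\bigcup_{\xi<\rho}\supp^\lambda(p_\xi)$, which has size $\le\rho\cdot\sup_\xi|\supp^\lambda(p_\xi)|$; since $\rho\le\lambda$ and each $\supp^\lambda(p_\xi)$ has size $<\lambda$ with $\lambda$ regular, this is $<\lambda$, and similarly for $\sigma^{\lambda_\theta}$. The last sentence of \eqref{qc:glb} (tail preservation) is threaded through using the ``moreover'' clause of \ref{qc:ext}(\ref{ext:glb}) at every step. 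I expect the main obstacle to be the limit case of transitivity of quasi-closed extension together with the bookkeeping showing the constructed greatest lower bound and the witness $q$ in \eqref{qc:redundant} actually satisfy the diagonal support constraint at limit stages $\iota<\theta$ of the recursion — that is, the interaction between the recursion producing the bound and the requirement $\sigma^{\lambda_\theta}(p)<\theta$, which is where $\cof(\theta)<\kappa$ and the structure of diagonal support iterations from Definition \ref{def:diagsupp} must be used carefully; the purely coordinatewise axiom-checking is routine.
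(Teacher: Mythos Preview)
Your overall plan matches the paper's structure, but there is a genuine gap in your argument for \eqref{qc:glb}. You propose to build the greatest lower bound by recursion on all $\iota \le \theta$, invoking \ref{qc:ext}(\ref{ext:glb}) for $(P_\iota, P_{\iota+1})$ at each successor step. For this you need the projected sequence $(\pi_{\iota+1}(p_\xi))_{\xi<\rho}$ to satisfy \ref{qc:ext}(\ref{ext:glb:strat:def}); in particular $\bar w$ must be a $(\bar\lambda, x)$-canonical witness for it. By Lemma \ref{lem:adeq:proj} this requires $\pi_{\iota+1}$ to be $\qcdefG(\bar\lambda \cup \{x\})$. Your claim that ``$\pi_\iota$ is $\Delta_0$ hence $\qcdefG$'' glosses over the crucial point: $\pi_{\iota+1}$ is $\Delta_0$ only \emph{in the parameter} $\iota+1$. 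When $\iota \ge \bar\lambda$ --- which certainly occurs whenever $\theta > \bar\lambda$, as in the intended application $\theta = \kappa$ --- this parameter cannot be absorbed into $\bar\lambda \cup \{x\}$, and the projected sequence is not adequate for the original $x$. Nor can you simply enlarge $x$ to include $\iota$: the strategic-guide condition ties $\bar w$ to the original $x$ through the second argument of $\D$, and no monotonicity axiom for $\D$ in that argument is available.

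The paper avoids this by recursing not over all $\iota < \theta$ but along a fixed cofinal sequence $(\theta(\zeta))_{\zeta<\cof(\theta)}$ (the least one, hence definable from parameters already assumed to lie in $x$); then the new parameter at each step is $\zeta < \cof(\theta)$ rather than $\theta(\zeta)$. When $\cof(\theta) \le \bar\lambda$, every $\zeta$ lies below $\bar\lambda$ and can be absorbed. When $\cof(\theta) > \bar\lambda$, a further trick is needed: using diagonal support and $\rho \le \lambda < \cof(\theta)$, one first finds $\iota' < \theta$ above which every $p_\xi$ satisfies $p_\xi \leqlo^{\cof(\theta)} \pi_{\iota'}(p_\xi)$; this makes \ref{qc:ext}(\ref{ext:glb:tail}) available with the larger value $\cof(\theta)$ in place of $\bar\lambda$, and the cofinal-sequence recursion then goes through. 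Your support argument has a secondary gap as well: when $\rho = \lambda$, a union of $\lambda$ many sets each of size $< \lambda$ may well have size $\lambda$; the paper instead relies on Lemma \ref{supp:threads} (threads $\leqlo^{\lambda_1}$-below a fixed condition have legal support), not on a cardinality bound.
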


In the proof of the theorem, we need the following Lemmas~\ref{supp:union}--\ref{supp:threads}, showing that the notion of $\lambda$-support
behaves as we expect.
So fix an iteration $\bar Q^{\theta+1}$ and preclosure systems as in the hypothesis of the theorem.
These lemmas are somewhat technical but straightforward to show.
\begin{lem}\label{supp:union}
For each $\lambda\in[\lambda_\theta,\kappa^*)\cap\Reg$ and $p\in P_\theta$, 
\[ \supp^\lambda(p)=\bigcup_{\iota<\theta} \supp^\lambda(\pi_\iota(p)).\]
\end{lem}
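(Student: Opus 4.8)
The statement is purely bookkeeping about how $\lambda$-support localizes along the iteration, so the plan is to unwind the definitions and prove the two inclusions separately. Recall that for a thread $p$ through $\bar Q^\theta$ we have defined
\[
\supp^\lambda(p) = \{ \iota < \theta \setdef \lambda_\iota\leq\lambda \text{ and } \pi_{\iota+1}(p) \not \leqlol_{\iota+1} \pi_\iota(p) \},
\]
and that for $\bar\iota<\theta$ the condition $\pi_{\bar\iota}(p)$ is again a thread (through $\bar Q^{\bar\iota}$), with $\pi_\iota(\pi_{\bar\iota}(p))=\pi_\iota(p)$ for $\iota\leq\bar\iota$ by the compatibility of the projections in an iteration. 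Hence $\supp^\lambda(\pi_{\bar\iota}(p))= \supp^\lambda(p)\cap\bar\iota$, and the claimed identity is really the assertion that $\supp^\lambda(p)=\bigcup_{\iota<\theta}(\supp^\lambda(p)\cap\iota)$.

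\textbf{Key steps.} First I would observe $\supseteq$: if $\nu\in\supp^\lambda(\pi_\iota(p))$ for some $\iota<\theta$, then by the previous remark $\nu\in\supp^\lambda(p)\cap\iota\subseteq\supp^\lambda(p)$; this uses only that $\pi_\nu(\pi_\iota(p))=\pi_\nu(p)$ and $\pi_{\nu+1}(\pi_\iota(p))=\pi_{\nu+1}(p)$ whenever $\nu+1\leq\iota$, which is immediate from the definition of an iteration (definition \ref{it:terminology}, clause \eqref{threads:var:def}, or the compatibility of the strong projections). Conversely, for $\subseteq$, let $\nu\in\supp^\lambda(p)$. Then $\nu<\theta$, so pick any $\iota$ with $\nu+1\leq\iota<\theta$ (possible since $\theta$ is a limit ordinal in the relevant set-up, and in the successor case $\theta=\bar\iota+1$ one takes $\iota=\theta$ if $\nu+1=\theta$, but then $\nu=\bar\iota$ and one argues directly). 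For such $\iota$, $\pi_{\nu+1}(\pi_\iota(p))=\pi_{\nu+1}(p)\not\leqlol_{\nu+1}\pi_\nu(p)=\pi_\nu(\pi_\iota(p))$ and $\lambda_\nu\leq\lambda$, so $\nu\in\supp^\lambda(\pi_\iota(p))\subseteq\bigcup_{\iota<\theta}\supp^\lambda(\pi_\iota(p))$. The only subtlety is that $\pi_\iota(p)$ must genuinely lie in $P_\iota$ (not merely be a thread); this is guaranteed because $p\in P_\theta$ and the projections $\pi_\iota\colon P_\theta\to P_\iota$ are total, mapping conditions to conditions.

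\textbf{Main obstacle.} There is no serious obstacle; the only thing to be careful about is handling the endpoint, i.e.\ making sure that every $\nu\in\supp^\lambda(p)$ is captured by \emph{some} index $\iota<\theta$ strictly above $\nu$ (or equal, in the successor case). When $\theta$ is a limit this is automatic; in the form stated the lemma is applied within the proof of theorem \ref{thm:it:qc} with $\theta$ a limit, so $\nu<\theta$ gives $\nu+1<\theta$ and we may take $\iota=\nu+1$. I would simply note that $\pi_{\nu+1}(p)$ witnesses $\nu\in\supp^\lambda(\pi_{\nu+1}(p))$, completing the proof.
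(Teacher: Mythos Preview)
Your proof is correct and follows essentially the same approach as the paper: both unwind the definition of $\supp^\lambda$ and use the compatibility $\pi_\nu(\pi_\iota(p))=\pi_\nu(p)$ for $\nu\leq\iota$ to reduce the claimed identity to the observation that $\supp^\lambda(\pi_\iota(p))=\supp^\lambda(p)\cap\iota$. The paper phrases the $\supseteq$ direction as a case split on whether $\iota\leq\xi$ or $\iota>\xi$ (ruling out the former by reflexivity of $\leqlol$), whereas you note directly that $\nu\in\supp^\lambda(\pi_\iota(p))$ forces $\nu<\iota$; and you are slightly more explicit than the paper about the need for $\theta$ to be a limit in the $\subseteq$ direction (which is indeed the only context in which the lemma is applied).
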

\begin{proof}
First, prove $\supseteq$:
Say $\xi$ is a member of the set on the right. Thus there is some $\iota<\theta$ such that 
\begin{equation}\label{xi:not:in:support}
\pi_{\xi+1}(\pi_\iota(p))\not\leqlol \pi_\xi(\pi_\iota(p)).
\end{equation}
We consider two cases: first, assume $\iota \leq \xi$. Then $\pi_{\xi+1}(\pi_\iota(p))=\pi_\iota(p)=\pi_{\xi}(\pi_\iota(p))$,
and so as $\leqlol$ is a preorder, \eqref{xi:not:in:support} is false. Thus this case never occurs, and we can assume $\iota > \xi$.
Then $\pi_{\xi+1}(\pi_\iota(p))=\pi_{\xi+1}(p)$ and $\pi_{\xi}(\pi_\iota(p))=\pi_\xi(p)$, so \eqref{xi:not:in:support} is equivalent to 
$\pi_{\xi+1}(p) \not \leqlol \pi_\xi(p)$. We infer that $\xi \in  \supp^\lambda(p)$.
All of the above inferences can be reversed, so $\subseteq$ holds as well.
\end{proof}
\begin{lem}\label{supp:mon}
If $\lambda,\bar\lambda$ are regular such that $\lambda_\theta \leq \lambda \leq \bar\lambda < \kappa^*$ and $p \leqlo^{\bar\lambda}q$, then $\supp^\lambda(p)\subseteq \supp^\lambda(q)$.
\end{lem}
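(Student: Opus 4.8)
The statement to be proved is Lemma~\ref{supp:mon}: if $\lambda,\bar\lambda$ are regular with $\lambda_\theta \leq \lambda \leq \bar\lambda < \kappa^*$ and $p \leqlo^{\bar\lambda} q$, then $\supp^\lambda(p)\subseteq \supp^\lambda(q)$. By the definition of $\supp^\lambda$ (Definition~\ref{def:diagsupp}), $\xi \in \supp^\lambda(p)$ means $\lambda_\xi \leq \lambda$ and $\pi_{\xi+1}(p) \not\leqlol_{\xi+1} \pi_\xi(p)$. So the plan is: fix $\xi \in \supp^\lambda(p)$, hence $\lambda_\xi \leq \lambda$; it suffices to show $\pi_{\xi+1}(q)\leqlol_{\xi+1}\pi_\xi(q) \Rightarrow \pi_{\xi+1}(p)\leqlol_{\xi+1}\pi_\xi(p)$ (contrapositive), i.e. transfer the failure of the direct-extension relation down from $q$ to $p$ at level $\xi+1$. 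The main tool will be condition~(\ref{er}) of Definition~\ref{def:pcs} together with the interplay axioms \ref{pcs:is}(\ref{ext}) and \ref{pcs:is}(\ref{pi:mon}), and monotonicity in $\lambda$ from \ref{def:pcs}(\ref{qc:leqlo:vert}).

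First I would reduce everything to a single level. Since $p \leqlo^{\bar\lambda}_\theta q$ is defined coordinatewise (Definition~\ref{def:diagsupp}), we have $\pi_\iota(p)\leqlo^{\bar\lambda}_\iota \pi_\iota(q)$ for every $\iota<\theta$, in particular for $\iota = \xi+1$ and $\iota = \xi$. Also $\pi_{\xi+1}(p)\leq \pi_\xi(p)$ and $\pi_{\xi+1}(q)\leq\pi_\xi(q)$ since these are projections of conditions in a thread (so $\pi_{\xi+1}$ of each is below $\pi_\xi$ of the same). Now assume toward a contradiction that $\xi\notin\supp^\lambda(q)$; since $\lambda_\xi\leq\lambda$ we must then have $\pi_{\xi+1}(q)\leqlol_{\xi+1}\pi_\xi(q)$. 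Using that $\leqlol$ is monotone in the superscript via \ref{def:pcs}(\ref{qc:leqlo:vert}), from $\pi_{\xi+1}(p)\leqlo^{\bar\lambda}_{\xi+1}\pi_{\xi+1}(q)$ we get $\pi_{\xi+1}(p)\leqlol_{\xi+1}\pi_{\xi+1}(q)$ (as $\lambda\le\bar\lambda$). Chaining with $\pi_{\xi+1}(q)\leqlol_{\xi+1}\pi_\xi(q)$ and transitivity of $\leqlol_{\xi+1}$ gives $\pi_{\xi+1}(p)\leqlol_{\xi+1}\pi_\xi(q)$. It remains to descend from $\pi_\xi(q)$ to $\pi_\xi(p)$: we have $\pi_{\xi+1}(p)\leq \pi_\xi(p)\leq\pi_\xi(q)$ (the first because $p$ is a thread, the second because $\pi_\xi(p)\leqlo^{\bar\lambda}_\xi\pi_\xi(q)$ implies $\pi_\xi(p)\leq\pi_\xi(q)$ by \ref{def:pcs}(\ref{qc:preorder})), together with $\pi_{\xi+1}(p)\leqlol_{\xi+1}\pi_\xi(q)$; hence by \ref{def:pcs}(\ref{er}) applied in $P_{\xi+1}$ (with the three conditions $\pi_{\xi+1}(p)\le\pi_\xi(p)\le\pi_\xi(q)$, noting $\pi_\xi(p)\in P_{\xi+1}$ via the embedding of $P_\xi$ into $P_{\xi+1}$), we conclude $\pi_{\xi+1}(p)\leqlol_{\xi+1}\pi_\xi(p)$, contradicting $\xi\in\supp^\lambda(p)$.

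The step I expect to be the main obstacle is the last one: applying \ref{def:pcs}(\ref{er}) requires all three conditions $\pi_{\xi+1}(p)$, $\pi_\xi(p)$, $\pi_\xi(q)$ to live in a common \pre order carrying a \pre closure system, namely $P_{\xi+1}$, and to be correctly ordered there. The subtlety is that $\pi_\xi(p)$ and $\pi_\xi(q)$ are \emph{a priori} elements of $P_\xi$, so one must invoke that $P_\xi$ is a strong sub-order of $P_{\xi+1}$ (which follows from $(P_\xi,P_{\xi+1})$ being a quasi-closed extension and Lemma~\ref{stron:proj:equiv}) and that the \pre closure system $\pcs_{\xi+1}$ restricted to $P_\xi$ is compatible with $\pcs_\xi$ via $\pcs_\xi\is\pcs_{\xi+1}$, so that $\leqlol_\xi$ and $\leqlol_{\xi+1}$ agree on $P_\xi$ (as noted right after Definition~\ref{pcs:is}). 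Once this bookkeeping is in place the argument is routine, and I would present it compactly, perhaps even folding the monotonicity-in-$\bar\lambda$ observation and the reduction to a single level into one short paragraph and reserving the displayed reasoning for the \ref{def:pcs}(\ref{er}) application.
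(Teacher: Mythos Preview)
Your argument is correct and is exactly the routine verification the paper has in mind; the paper itself leaves this proof to the reader, so there is nothing further to compare. The key ingredients you identify---coordinatewise definition of $\leqlol_\theta$, monotonicity in the superscript via \ref{def:pcs}(\ref{qc:leqlo:vert}), transitivity from \ref{def:pcs}(\ref{qc:preorder}), and the final application of \ref{def:pcs}(\ref{er}) after embedding $P_\xi$ into $P_{\xi+1}$ using $\pcs_\xi\is\pcs_{\xi+1}$---are precisely what is needed.
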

\begin{proof}
Left to the reader.
\end{proof}
\noindent
Observe though that $\supseteq$ does not necessarily hold: in a two-step iteration $P*\dot Q$, we could have and $(p,1_{\dot Q}) \bleqlol (q,\dot q)$ but $q \not \forces_P \dot q \dleqlol 1_{\dot Q}$ (supposing, e.g., $p \forces \dot q=1_{\dot Q}$).
In this example we have  $\suppl(p,1_{\dot Q})=\{0\}\not \supseteq\suppl(q,\dot q)=\{0,1\}$.
\begin{lem}\label{supp:initial}
Fix $\bar \iota<\theta$.
If $p \in P_\theta$ and $\lambda,\bar\lambda$ are regular such that $\lambda_\theta \leq \lambda \leq \bar\lambda< \kappa^*$ and $p \leqlo^{\bar\lambda} \pi_{\bar\iota}(p)$, then
\[\supp^\lambda(p)=\supp^\lambda(\pi_{\bar\iota}(p)). \]
\end{lem}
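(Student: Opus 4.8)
The plan is to establish both inclusions. The inclusion $\supseteq$ is the easier one and follows essentially by unwinding definitions together with Lemma~\ref{supp:mon}. The inclusion $\subseteq$ is where the hypothesis $p \leqlo^{\bar\lambda} \pi_{\bar\iota}(p)$ is genuinely used: it says that $p$ does nothing new (in the sense of $\bleqlol$-extension) beyond stage $\bar\iota$, so there is no $\lambda$-support above $\bar\iota$.

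First I would prove $\subseteq$. Let $\xi \in \supp^\lambda(p)$, so $\lambda_\xi \leq \lambda$ and $\pi_{\xi+1}(p) \not\leqlol_{\xi+1} \pi_\xi(p)$. I claim $\xi < \bar\iota$. Suppose toward a contradiction that $\xi \geq \bar\iota$. Then $\pi_\xi(p) \leq \pi_{\bar\iota}(p)$, and applying $\pi_\xi$ to the assumption $p \leqlo^{\bar\lambda} \pi_{\bar\iota}(p)$ together with \ref{pcs:is}(\ref{pi:mon}) (or directly, since $\pcs_\theta$ is the natural system, which projects coordinatewise) gives $\pi_\xi(p) \leqlo^{\bar\lambda}_\xi \pi_{\bar\iota}(p)$; similarly $\pi_{\xi+1}(p) \leqlo^{\bar\lambda}_{\xi+1} \pi_{\bar\iota}(p)$. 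Since $\bar\iota \leq \xi < \xi+1$, we have $\pi_{\bar\iota}(p) \leq \pi_\xi(p)$ in $P_\xi$ and hence, projecting once more, $\pi_\xi(\pi_{\xi+1}(p)) = \pi_\xi(p) \leq \pi_{\bar\iota}(p)$. Now $\pi_{\xi+1}(p) \leqlo^{\bar\lambda}_{\xi+1} \pi_{\bar\iota}(p) \leq \pi_\xi(p) \leq \pi_{\xi+1}(p)$, so by \ref{def:pcs}(\ref{er}) applied with the triple $\pi_{\xi+1}(p) \leq \pi_\xi(p) \leq \pi_{\bar\iota}(p)$ and $\lambda\leq\bar\lambda$ (using \ref{def:pcs}(\ref{qc:leqlo:vert}) to pass from $\bar\lambda$ down to $\lambda$ first), we get $\pi_{\xi+1}(p) \leqlol_{\xi+1} \pi_\xi(p)$, contradicting $\xi \in \supp^\lambda(p)$. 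Hence $\xi < \bar\iota$, and then by Lemma~\ref{supp:union} (or just because $\pi_\xi(\pi_{\bar\iota}(p)) = \pi_\xi(p)$ and $\pi_{\xi+1}(\pi_{\bar\iota}(p)) = \pi_{\xi+1}(p)$ when $\xi+1 \leq \bar\iota$) we conclude $\xi \in \supp^\lambda(\pi_{\bar\iota}(p))$.

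For $\supseteq$: if $\xi \in \supp^\lambda(\pi_{\bar\iota}(p))$ then necessarily $\xi < \bar\iota$, and then $\pi_\xi(\pi_{\bar\iota}(p)) = \pi_\xi(p)$, $\pi_{\xi+1}(\pi_{\bar\iota}(p)) = \pi_{\xi+1}(p)$, so directly $\pi_{\xi+1}(p) \not\leqlol_{\xi+1}\pi_\xi(p)$ and $\xi \in \supp^\lambda(p)$. Alternatively one can invoke Lemma~\ref{supp:mon} with the trivial extension $\pi_{\bar\iota}(p) \leqlo^{\bar\lambda} \pi_{\bar\iota}(p)$ together with the fact that $\pi_{\bar\iota}(p)$ as a thread through $\bar Q^{\bar\iota}$ has all its $\lambda$-support below $\bar\iota$, so it coincides with its own image.

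The main obstacle is the $\subseteq$ direction, specifically the bookkeeping needed to show that $\pi_{\xi+1}(p) \leqlol_{\xi+1}\pi_\xi(p)$ whenever $\xi \geq \bar\iota$; this is precisely a careful application of axiom \ref{def:pcs}(\ref{er}) (which is exactly why that axiom was retained) combined with the verticality axiom \ref{def:pcs}(\ref{qc:leqlo:vert}) to handle the possible gap between $\lambda$ and $\bar\lambda$, and with the fact that the natural system on $P_\theta$ projects the $\leqlol$-relations coordinatewise so that \ref{pcs:is}(\ref{pi:mon}) is available at every stage.
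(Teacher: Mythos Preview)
Your direct argument is correct and matches the paper's ``direct proof'': for $\xi \geq \bar\iota$ one has the chain $\pi_{\xi+1}(p) \leq \pi_\xi(p) \leq \pi_{\bar\iota}(p)$ together with $\pi_{\xi+1}(p) \leqlo^{\bar\lambda} \pi_{\bar\iota}(p)$ (by (\ref{pi:mon})), and then (\ref{er}) plus (\ref{qc:leqlo:vert}) give $\pi_{\xi+1}(p) \leqlol \pi_\xi(p)$. The paper packages this step as Lemma~\ref{lem:support:simple}. There is a slip in your write-up: the line ``$\pi_{\bar\iota}(p) \leq \pi_\xi(p)$'' and the chain ``$\pi_{\bar\iota}(p) \leq \pi_\xi(p) \leq \pi_{\xi+1}(p)$'' are written in the wrong direction; the correct inequalities (which you do use two lines later) are $\pi_{\xi+1}(p) \leq \pi_\xi(p) \leq \pi_{\bar\iota}(p)$.

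Where your proposal diverges from the paper is in the attribution of the short route. You say $\supseteq$ is the easy direction ``by unwinding definitions together with Lemma~\ref{supp:mon}'', and that $\subseteq$ is where the hypothesis is genuinely used. In fact it is exactly the other way around: Lemma~\ref{supp:mon} applied to the hypothesis $p \leqlo^{\bar\lambda} \pi_{\bar\iota}(p)$ gives $\supp^\lambda(p) \subseteq \supp^\lambda(\pi_{\bar\iota}(p))$ in one line, while $\supseteq$ follows immediately from Lemma~\ref{supp:union} (and needs no hypothesis on $p$). So the inclusion you labored over by hand is the one that drops out of \ref{supp:mon} for free, and your proposed ``alternative'' use of \ref{supp:mon} for $\supseteq$ with the trivial relation $\pi_{\bar\iota}(p)\leqlo^{\bar\lambda}\pi_{\bar\iota}(p)$ yields only a tautology.
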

\begin{proof}
A short proof: $\supseteq$ holds by Lemma~\ref{supp:union} and $\subseteq$ is a consequence of Lemma~\ref{supp:mon}.
We also give a direct proof:
If $\iota < \bar \iota$, $\pi_\iota(\pi_{\bar\iota}(p))=\pi_\iota(p)$, so for such $\iota$, 
\[ \iota \in  \supp^\lambda(p) \iff \iota \in \supp^\lambda(\pi_{\bar\iota}(p)). \]
If $\iota \geq \bar \iota$, we have $\pi_{\iota+1}(p)\leq \pi_\iota(p)\leq \pi_{\bar\iota}(p)$. By assumption and by \ref{pcs:is}(\ref{pi:mon}) for $(P_{\iota+1},P_\theta)$, we have $\pi_{\iota+1}(p)\leqlo^{\bar \lambda} \pi_{\bar\iota}(p)$. 
So by Lemma~\ref{lem:support:simple}, $\pi_{\iota+1}(p)\leqlol\pi_\iota(p)$. We conclude that for $\iota \geq \bar \iota$, we have $\iota \not\in\supp^\lambda(p)$.
\end{proof}

\begin{lem}\label{supp:threads}
Let $\lambda_1 \in [\lambda_\theta,\kappa^*)\cap\Reg$ such that $\lambda_1 \geq \cof(\theta)$.
Say $q = (q^\iota)_{\iota<\theta}$ is a thread\index{thread (forcing)} through $\bar Q^\theta$ (see Definition~\ref{it:terminology}, p.~\pageref{it:terminology} for this terminology) and say there is $w \in P_\theta$ such that for all $\iota<\theta$, $q^\iota \leqlo^{\lambda_1}w$. Then $q$ satisfies the requirement of diagonal support (see Definition~\ref{def:diagsupp}, Item~\ref{d.diagonal.support}), i.e., $q \in P_\theta$.
\end{lem}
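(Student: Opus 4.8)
\textbf{Proof plan for Lemma~\ref{supp:threads}.}
The goal is to verify the two defining properties of the $\lambda_\theta$-diagonal support limit for the thread $q$: namely that $\sigma^{\lambda_\theta}(q)<\theta$, and that for every regular $\lambda\ge\lambda_\theta$ the set $\supp^\lambda(q)$ has size less than $\lambda$. First I would recall, via Lemma~\ref{supp:union}, that $\supp^\lambda(q)=\bigcup_{\iota<\theta}\supp^\lambda(q^\iota)$, since the thread $q$ determines $\pi_\iota(q)=q^\iota$ for each $\iota$ (using the identification of the two senses of ``thread'' from Definition~\ref{it:terminology}). So it suffices to control the supports of the approximations $q^\iota$ together with the ``sup of supports'' ordinal, and the witness $w$ is exactly what lets us do so.

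The key step is to apply Lemma~\ref{supp:mon} with the pair $\lambda\le\lambda_1$ whenever $\lambda\le\lambda_1$ (and otherwise with $\lambda$ in both slots, noting $q^\iota\leqlo^{\lambda_1}w$ gives $q^\iota\leqlo^\lambda w$ by the vertical monotonicity \ref{qc:leqlo:vert} when $\lambda\ge\lambda_1$): from $q^\iota\leqlo^{\lambda_1}w$ we obtain $\supp^\lambda(q^\iota)\subseteq\supp^\lambda(w)$ for every $\iota<\theta$, hence by Lemma~\ref{supp:union}, $\supp^\lambda(q)=\bigcup_{\iota<\theta}\supp^\lambda(q^\iota)\subseteq\supp^\lambda(w)$. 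Since $w\in P_\theta$, the set $\supp^\lambda(w)$ has size less than $\lambda$ for every regular $\lambda\ge\lambda_\theta$, and so the same is true of $\supp^\lambda(q)$; this establishes the cardinality requirement. (For $\lambda<\lambda_1$ one should first reduce to $\lambda_1$: $\supp^\lambda(q^\iota)\subseteq\supp^{\lambda_1}(q^\iota)$ is immediate from the definition of $\supp^\lambda$ together with \ref{qc:leqlo:vert}, so $\supp^\lambda(q)\subseteq\supp^{\lambda_1}(w)$, which again has size $<\lambda_1\le\kappa^*$; but here I must be a little careful, since the requirement is size $<\lambda$, not $<\lambda_1$ — see below.)

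For the requirement $\sigma^{\lambda_\theta}(q)<\theta$: again $\supp^{\lambda_\theta}(q)\subseteq\supp^{\lambda_\theta}(w)$, and $w\in P_\theta$ gives $\sigma^{\lambda_\theta}(w)<\theta$, whence $\supp^{\lambda_\theta}(q)\subseteq\sigma^{\lambda_\theta}(w)$ is bounded in $\theta$ and $\sigma^{\lambda_\theta}(q)\le\sigma^{\lambda_\theta}(w)<\theta$. This finishes the verification that $q\in P_\theta$.

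\textbf{Where the real work is.} The honest difficulty is the cardinality requirement for \emph{small} $\lambda$, i.e. regular $\lambda$ with $\lambda_\theta\le\lambda<\lambda_1$: bounding $\supp^\lambda(q)$ inside $\supp^{\lambda_1}(w)$ only gives size $<\lambda_1$, not $<\lambda$. To close this gap I would use the hypothesis $\lambda_1\ge\cof(\theta)$ together with diagonal support of $\bar Q^\theta$: for such small $\lambda$ and each $\iota<\theta$, $q^\iota=\pi_\iota(q)\in P_\iota$ already has $\supp^\lambda(q^\iota)$ of size $<\lambda$ (this is the induction hypothesis packaged in the fact that $\bar Q^\theta$ is a $\bar\lambda$-diagonal support iteration, applied at stage $\iota$, where $\lambda\ge\lambda_\theta\ge\lambda_\iota$). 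Now $\supp^\lambda(q)=\bigcup_{\iota<\theta}\supp^\lambda(q^\iota)$, and the supports $\supp^\lambda(q^\iota)$ form an $\subseteq$-increasing chain along $\iota$ (by Lemma~\ref{supp:mon} or directly), which is contained in the fixed set $\supp^{\lambda_1}(w)\subseteq\sigma^{\lambda_1}(w)$; since a union of an increasing chain of sets of size $<\lambda$ indexed by $\theta$ has size $<\lambda$ provided $\cof(\theta)\le\lambda$ — which holds because $\lambda\ge\lambda_\theta$ and $\cof(\theta)\le\lambda_1$... — hmm, this needs $\cof(\theta)\le\lambda$, not merely $\cof(\theta)\le\lambda_1$. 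The cleanest route, then, is: for small $\lambda$, the chain $(\supp^\lambda(q^\iota))_\iota$ is eventually constant. Indeed it is contained in $\supp^{\lambda_1}(w)$, a set of size $<\lambda_1\le\kappa^*<\theta$... this still may not stabilize below $\theta$. So the actual obstacle is a counting argument of exactly the type used for products of stratified forcings; I expect the correct statement to require that $\supp^{\lambda_1}(w)$, being of size $<\lambda_1$ and $w$ having $\sigma^{\lambda_1}(w)<\theta$, forces $\bigcup_\iota\supp^\lambda(q^\iota)$ to coincide with $\supp^\lambda(\pi_{\bar\iota}(q))$ for $\bar\iota=\sigma^{\lambda_1}(w)$ via Lemma~\ref{supp:initial} (applied with $\bar\lambda=\lambda_1$, using $q^\iota\leqlo^{\lambda_1}w$ hence $\pi_{\bar\iota}(q)\leqlo^{\lambda_1}\pi_{\bar\iota}(q)$ trivially and $q\leqlo^{\lambda_1}\pi_{\bar\iota}(q)$... this last needs justification). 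Getting this last reduction clean — identifying a single initial stage $\bar\iota<\theta$ past which $q$ is $\leqlo^{\lambda_1}$-equivalent to its restriction, so that Lemma~\ref{supp:initial} applies and $\supp^\lambda(q)=\supp^\lambda(\pi_{\bar\iota}(q))$ has size $<\lambda$ by the induction hypothesis at $\bar\iota$ — is the step I expect to require the most care, and it is precisely here that $\lambda_1\ge\cof(\theta)$ and $\sigma^{\lambda_1}(w)<\theta$ are used.
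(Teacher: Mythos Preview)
You have reversed the direction of vertical monotonicity. Axiom~(\ref{qc:leqlo:vert}) says that $\leqlo^{\bar\lambda}$ implies $\leqlo^{\lambda}$ when $\bar\lambda\ge\lambda$, not the other way around. So from $q^\iota\leqlo^{\lambda_1}w$ you obtain $q^\iota\leqlo^{\lambda} w$ only for $\lambda\le\lambda_1$; your parenthetical claim for $\lambda\ge\lambda_1$ is simply false. This error flips your entire case analysis. The case $\lambda_\theta\le\lambda\le\lambda_1$ is the \emph{easy} one, and you already had it right before you started second-guessing: Lemma~\ref{supp:mon} (with $\bar\lambda=\lambda_1$) gives $\supp^\lambda(q)\subseteq\supp^\lambda(w)$ directly, and since $w\in P_\theta$ this set has size $<\lambda$; the same inclusion with $\lambda=\lambda_\theta$ also gives $\sigma^{\lambda_\theta}(q)\le\sigma^{\lambda_\theta}(w)<\theta$. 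There is nothing more to do here, and none of the agonizing in your final paragraph is needed.

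The case that actually requires the hypothesis $\lambda_1\ge\cof(\theta)$ is $\lambda>\lambda_1$, and you never give a correct argument for it. The paper's argument is short: fix a cofinal sequence $(\theta(\zeta))_{\zeta<\cof(\theta)}$ in $\theta$. By Lemma~\ref{supp:union}, $\supp^\lambda(q)=\bigcup_{\zeta<\cof(\theta)}\supp^\lambda(q^{\theta(\zeta)})$. Each $q^{\theta(\zeta)}$ lies in $P_{\theta(\zeta)}$, so by the diagonal support hypothesis at that stage $\supp^\lambda(q^{\theta(\zeta)})$ has size $<\lambda$. Since $\cof(\theta)\le\lambda_1<\lambda$ and $\lambda$ is regular, a union of fewer than $\lambda$ sets each of size $<\lambda$ again has size $<\lambda$. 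No stabilization, no Lemma~\ref{supp:initial}, and note that $\sigma^{\lambda_1}(w)<\theta$ --- which your final attempt relies on --- is not guaranteed by the definition of diagonal support (only $\sigma^{\lambda_\theta}(w)<\theta$ is).
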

\begin{proof}
Let $\lambda$ be regular.
First consider the case $\lambda_\theta \leq \lambda \leq \lambda_1$. As $q \leqlo^{\lambda_1}w$, by Lemma~\ref{supp:mon},
$\supp^\lambda(q)\subseteq\supp^\lambda(w)$, which satisfies the requirement of diagonal support by assumption.
Now say $\lambda>\lambda_1$ and fix a sequence $(\theta(\zeta))_{\zeta<\cof(\theta)}$ which is cofinal in $\theta$.
By Lemma~\ref{supp:union}
\[ \supp^\lambda(q)= \bigcup_{\zeta < \cof(\theta)} \supp^\lambda(q^{\theta(\zeta)}), \]
and by assumption the right hand side is a union over bounded subsets of $\lambda$. Thus $\supp^\lambda(q)$ is a bounded subset of $\lambda$.
\end{proof}

With these lemmas at our disposal, we can give the proof of the theorem.

\begin{proof}[Proof of Theorem~\ref{thm:it:qc}] \renewcommand{\qedsymbol}{{\tiny  Lemma~\ref{thm:it:qc}~}$\Box$}
Now is the time that we will use that $\param_\theta$ includes (by the definition of the natural system of relations, see Definition~\ref{def:diagsupp} Item~\ref{def:diagsupp:natural}, p.~\pageref{def:diagsupp}) the parameters $L_\mu[A]$  and $(\pi_\iota)_{\iota\leq\theta}$, where $\mu$ is a cardinal and $\theta \in L_\mu[A]$.
This ensures that the least sequence $(\theta(\zeta))_{\zeta<\cof(\theta)}$ witnessing  the cofinality of $\theta$ is $\qcdefG(x)$\index[notation]{SigmaT1(z), SigmaT1(X)@$\Sigma^T_1(z)$, $\Sigma^T_1(X)$}.
Alternatively, one could include this sequence itself in $\param_\theta$\index[notation]{c aa(parameter in preclosure system)@$c$ (parameter in preclosure system)}; by recursion $\param_\theta$ would then include an entire ladder system. 
(What we really need below is that $\pi_{\theta(\sigma)}$ is $\qcdefG(w^\xi)$ for each $\zeta < \cof(\theta)$ and $\xi < \rho$. 
In practice, we can replace the parameters $L_\mu[A]$, and $(\pi_\iota)_{\iota\leq\theta}$ by just $\mu$ since the whole iteration up to $\theta$ and $(\pi_\iota)_{\iota\leq\theta}$ can be assumed to be definable from $\theta$ inside $L_\mu[A]$ (if $\mu$ was chosen large enough), and $\theta$ is simply definable from any condition $p \in P_\theta$ and hence is $\qcdefG(w^\xi)$ for each $\xi < \rho$.)

We will show by induction on $\theta$ that for each pair $\iota < \bar \iota \leq \theta$, $(P_\iota, P_{\bar \iota})$ is a quasi-closed extension. Thus $(P_0,P_\theta)$ is a quasi-closed extension and so by Lemma~\ref{lem:qc:succ}, $P_\theta$ is quasi-closed.
The inductive hypothesis is that for each pair $\iota < \bar \iota <\theta$, $(P_\iota, P_{\bar \iota})$ is a quasi-closed extension
as witnessed by $(\pcs_\iota,\pcs_{\bar\iota})$. 
We may assume $\theta$ is limit: 
For if $\theta$ is  a successor ordinal, $\pi^{\theta}_{\theta-1}$ is a $\Delta_1$-definable function and thus by induction hypothesis and Lemma~\ref{qc:ext:transitive}, for any $\iota < \theta$, $(P_\iota, P_\theta)$ is a quasi-closed extension. 

So assume $\theta$ is limit and let $\pcs_\theta$ be the natural system of relations on the diagonal support limit $P_\theta$.
Fix an arbitrary $\iota^*<\theta$. We show that $(P_{\iota^*},P_\theta)$ is a quasi-closed extension witnessed by $(\pcs_{\iota^*},\pcs_\theta)$.
By definition of $\pcs_\theta$, we have $\pcs_{\iota^*} \is \pcs_\theta$.
It is straightforward to show that $\pcs_\theta$ is a preclosure system (as defined in \ref{def:pcs}, p.~\pageref{def:pcs}). 
It is obvious that $\D_\theta$ is $\qcdefF(c_\theta)$:
Letting $\Phi$ be a universal $\Pi_1^A$ formula, by definition of the natural system of relations we can assume the first component $\bar \param = ( \param_\iota)_{\iota<\theta}$ of $\param_\theta$ is such that for each $\iota<\theta$ and each $p, q \in P_\iota$,
\[ q\in \D_\iota(\lambda,x,p)\iff\Phi(c_\iota,q,\lambda,x,p).\]
Thus $\Phi$ and $\bar c$ witness that $\D_\theta$ is $\qcdefD(\{c_\theta\})$: for $q \in\D_\theta(\lambda,x,p)$ is equivalent to
\[ \forall \iota \in\dom(p) \quad\Phi(c_\iota,\pi_\iota(q),\lambda,x,\pi_\iota(p)).\]

We finish the proof that $\pcs_\theta$ is a preclosure system  by proving \ref{def:pcs}(\ref{er}), as the remaining conditions have similar proofs:
Say $p \leq_\theta q \leq_\theta r$ and $p \leqlol_\theta r$. 
Fixing an arbitrary $\iota<\theta$, we have
$\pi_\iota(p) \leq_\iota \pi_\iota(q) \leq_\iota \pi_\iota(r)$ and $\pi_\iota(p) \leqlol_\iota \pi_\iota(r)$. 
Thus, by \ref{def:pcs}(\ref{er}) for $P_\iota$, $\pi_\iota(p)\leqlol_\iota \pi_\iota(q)$. As $\iota<\theta$ was arbitrary, $p \leqlol_\theta q$ holds.
So as mentioned earlier, the natural system of relations is a preclosure system.

We check \ref{qc:ext}(\ref{ext:redundant}).
Say $p \in P_\theta$ and $q^* \in P_{\iota^*}$ are such that $q^*\leqlol_{\iota^*} \pi_{\iota^*}(p)$ and $q^*\in \D_{\iota^*}(\lambda,x, \pi_{\iota^*}(p))$.
Assume first that $\cof(\theta)\in I$ and $\cof(\theta) > \lambda$.
Let $\sigma = \sigma^{\cof(\theta)}(p)$ and observe that $\sigma < \cof(\theta) \leq \theta$ by diagonal support.
If $\sigma \leq \iota^*$, set $r_0 = q^*$ and let $(\theta(\zeta))_{\zeta<\cof(\theta)}$ be the least normal sequence cofinal in $\theta$ such that $\theta(0)=\iota^*$.
Otherwise,
let $(\theta(\zeta))_{\zeta<\cof(\theta)}$ be the least normal sequence cofinal in $\theta$ such that $\theta(0)=\sigma$.
Use \ref{qc:ext}(\ref{ext:redundant}) for $(P_{\iota^*},P_\sigma)$ to get $r_0 \in P_\sigma$ such that $r_0 \in \D(\lambda, x, \pi_{\sigma}(p))$, $r_0 \leqlol_\sigma \pi_{\sigma}(p)$ and $\pi_{\iota^*}(r_0)=q^*$.
Observe that in either case, $p \leqlo^{\cof(\theta)} \pi_{\theta(0)} (p)$.

Now construct by induction on $\zeta < \cof(\theta)$ a thread $(r_\zeta)_{\zeta < \cof(\theta)}$: having $r_\zeta \in P_\zeta$, use \ref{qc:ext}(\ref{ext:redundant}) for $(P_{\theta(\zeta)},P_{\theta(\zeta+1)})$ to get $r_{\zeta+1} \in\D(\lambda, x, \pi_{\theta(\zeta+1)}(p))$ such that $r_{\zeta+1} \leqlol_{\theta(\zeta+1)} \pi_{\theta(\zeta+1)}(p))$,
$\pi_{\theta(\zeta)}(r_{\zeta+1})=r_\zeta$ and in addition, $r_{\zeta+1}\leqlo^{\cof(\theta)}_{\theta(\zeta+1)} r_\zeta$.
Let $r$ be the unique condition in $P_\theta$ defined by the thread $(r_\zeta)_{\zeta < \cof(\theta)}$.
Since $r \leqlo^{\cof(\theta)} r_0$, Lemma~\ref{supp:threads} allows us to conclude that $r$ has legal support. By construction, $r \in \D_\theta(\lambda,x,p)$, $r \leqlol_\theta p$ and $\pi_{\iota^*}(r)=q^*$.

If  $\cof(\theta) \leq \lambda$, we can skip the first step in the above: we just let $r_0=q^*$ and proceed as in the previous case. 
At the end, we use $r \leqlol p$ and Lemma~\ref{supp:threads} allows us to conclude that $r$ has legal support.
We leave the rest of \ref{qc:ext}(\ref{ext:redundant}) to the reader, as it is similar to previous arguments.

\medskip

\textbf{Taking lower bounds of adequate sequences:}
Now check \ref{qc:ext}(\ref{ext:glb}), the existence of greatest lower bounds.
Say  $\bar p = (p_\xi)_{\xi<\rho}$ is a $(\lambda,\bar\lambda,x)$-adequate sequence of conditions in $P_\theta$; then $\lambda$ and $\bar \lambda$ are both regular, $\lambda_\theta\leq\lambda\leq\bar\lambda< \kappa^*$. 
We may fix $\bar w$ which is both a $(\lambda,x)$-strategic guide and a $(\bar\lambda,x)$-canonical witness for $\bar p$.
Moreover, let $q^*$ be a greatest lower bound of the sequence $(\pi_{\iota^*}(p_\xi))_{\xi<\rho}$.

The construction of $q$ is by induction on $\zeta < \cof(\theta)$, and we shall use a sequence $(\theta(\zeta))_{\zeta<\cof(\theta)}$.
The construction is split in cases for the following reasons:
When $\cof(\theta) > \bar \lambda$, the definability of each $\pi_{\theta(\zeta)}$ poses a problem,  and so we first have to find a $\pi_{\cof(\theta)}$-bound.
Moreover, the argument that supports are legal goes differently depending on whether $\cof(\theta) \leq\lambda$ or not.
There are several possibilities for distinguishing cases as the trick for dealing with $\cof(\theta)>\bar \lambda$ can be applied whenever $\cof(\theta) > \lambda_\theta$. 
We give the version of the argument that seemed clearest.

First, assume that $\cof(\theta)\leq \bar\lambda$.
Let $\bar \theta = (\theta(\zeta))_{\zeta\leq\cof(\theta)}$ be the least (i.e., least in the sense of the canonical well-order of $L[S]$) normal sequence such that $\theta(0)=\iota^*$ and $\theta(\cof(\zeta))=\theta$.
By induction on $\zeta$, we now construct  a lower bound $q^{\zeta}\in P_{\theta(\zeta)}$ of the sequence $(\pi_{\theta(\zeta)}(p_\xi))_{\xi<\rho}$ for each $\zeta \leq \cof(\theta)$. 
Set $q^{0}=q^*$. 
Now assume we have $q^{\zeta}$ and show how to find $q^{\zeta+1}$.

The decisive point regarding definability is that $(\pi_{\theta(\zeta+1)}(p_\xi))_{\xi<\rho}$ is $(\lambda,\bar\lambda,x)$-adequate in $(P_{\theta(\zeta+1)},P_{\theta(\zeta)})$.
This holds because $\zeta < \bar\lambda$ and because 
 $\bar \theta$ is easily obtained from $\param_\theta$.
By \ref{qc:ext}(\ref{ext:glb}) we obtain a greatest lower bound $q^{\theta(\zeta+1)}\in P_{\theta(\zeta)}$. 

Now let $\zeta\leq\cof(\theta)$ be limit.
By construction and by (\ref{ext:glb}), the $q^{\zeta'}$, for $\zeta'<\zeta$ form a  thread. Define $q^{\theta(\zeta)}$ to be this thread. 
To show it has legal support, first assume that $\cof(\theta)\leq\lambda$.
By construction and by (\ref{ext:glb}), for each $\zeta$, we have $q^{\zeta}\leqlo^{\lambda} p_0$. 
As $\lambda$ is greater than the maximum of $\lambda_\theta$ and $\cof(\theta)$, Lemma~\ref{supp:threads} allows us to infer that $q^{\zeta}$ has legal support, and thus is a condition in $P_{\theta(\zeta)}$ and a $\pi_{\theta(\zeta)}$-bound of $\bar p$ (in the sense of Definition~\ref{adequate2}).
The final condition $q^{\cof(\theta)}$ is a greatest lower bound of $(p_\xi)_{\xi<\rho}$ and for all $\xi<\rho$, $q^{\cof(\theta)} \leqlol p_\xi$.

To finish the case where $\cof(\theta) \leq\bar\lambda$, we have to consider the subcase
where $\cof(\theta) \leq\lambda$ fails, i.e., we assume $\lambda < \bar \lambda$ and $\cof(\theta) \in (\lambda, \bar\lambda]$.
In this case we have  $p_\xi\leqlo^{\bar\lambda} \pi_{\iota^*}(p_\xi)$ for all $\xi<\rho$.
By \ref{qc:ext}(\ref{ext:glb}) and by induction, we have $q^{\zeta} \leqlo^{\bar\lambda} q^{\iota^*}$ for each $\zeta<\cof(\theta)$, and so $q \leqlo^{\bar\lambda} q^{\iota^*}$.
Moreover, as $\bar\lambda$ is greater than both $\cof(\theta)$ and $\lambda_\theta$, 
$q^\zeta$ has legal support by Lemma~\ref{supp:threads}.
Thus we are finished with the case $\cof(\theta) \leq \bar\lambda$.

Now assume $\cof(\theta) > \bar\lambda$.
We will now find $\iota'$ and $q'$ such that $q'$ is a $\pi_{\iota'}$-bound of $\bar p$ and for each $\xi < \rho$, 
\begin{equation}\label{better_iota}
p_\xi \leqlo^{\cof(\theta)}\pi_{\iota'}(p_\xi).
\end{equation}
As $\rho\leq\lambda<\cof(\theta)$ and $\sigma^{\cof(\theta)}(p_\xi)<\cof(\theta)$ for each $\xi<\rho$, letting
\[ \iota' = \sup_{\xi<\rho} \sigma^{\cof(\theta)}(p_\xi),\] 
we have $\iota' <\cof(\theta)$
and so $\iota' < \theta$.

Fix a $\nu < \rho$ and let $\sigma(\nu) = \sigma^{\cof(\theta)}(p_\nu)$.
Let $\bar q^\nu$ be the sequence defined by $q^\nu_\xi = \pi_{\sigma(\nu)}(p_\xi)$ for $\xi \geq \nu$. For $\xi < \nu$, the value of  $q^\nu_\xi$ is arbitrary as long as $\bar q^\nu$ has $\bar w$ as a canonical witness (e.g. set $q^\nu_\xi = \pi_{1}(p_0)$).
It is straightforward to check that $\bar w$ is a $(\bar\lambda,x)$-canonical witness and a $(\lambda,x)$-strategic guide for each sequence $\bar q^\nu$, for $\nu < \xi$ --- this is why we only \label{tail} make demands on a tail in the definition of a strategic guide. 
It is also clear that we can build  a thread $(q_\nu)_{\nu<\rho}$ using (\ref{ext:glb}), where $q_\nu$ is a greatest lower bound of $\bar q^\nu$.
Let $q'$ be this thread.
We invite the reader to check that $q'$ is a greatest lower bound of the sequence $(\pi_{\iota'}(p_\xi))_{\xi<\rho}$ and that $q'\leqlol_{\iota'} \pi_{\iota'}(p_0)$.
Thus, $q'$ has legal support as $\lambda \geq \cof(\rho)$ and by Lemma~\ref{supp:threads}.
By choice of $\iota'$, \eqref{better_iota} holds.
Observe that in the case $\lambda<\bar \lambda$, (\ref{ext:glb}) also entails $q' \leqlol \pi_{\iota^*}(q')=q^*$.

Now we argue exactly as in the case where $\cof(\theta)\leq\lambda$, but this time
setting $q^0=q'$ and $\theta(0)=\iota'$.  Again we build a sequence $(q^\zeta)_{\zeta<\cof(\theta)}$ by induction.
At each successor step, $\{ \pi_{\theta(\zeta+1)}(p_\xi) \setdef \xi<\rho \}$ is $(\lambda,\cof(\theta),x)$-adequate in $(P_{\theta(\zeta)},P_{\theta(\zeta+1)})$.
By \ref{qc:ext}(\ref{ext:glb}) we obtain a greatest lower bound $q^{\zeta+1}\in P_{\theta(\zeta)}$ such that $q^{\zeta+1} \leqlo^{\cof(\theta)} q^{\zeta}$. 
By induction, this entails $q^{\iota'}\leqlo^{\cof(\theta)} q^0 = q'$.
Thus at each limit stage $\zeta\leq\cof(\theta)$, $q^\zeta\leqlo^{\cof(\theta)} q'$ 
and Lemma~\ref{supp:threads} allows us to conclude that $q^{\zeta}$ has legal support.
Lastly, if $\lambda <\bar\lambda$, as $q^\zeta\leqlo^{\cof(\theta)} q' \leqlol q^*$, we also have
$q^\zeta\leqlol q^*=\pi_{\iota^*}(q^\zeta)$. \renewcommand{\qedsymbol}{{\tiny  Theorem~\ref{thm:it:qc}~}$\Box$}
\end{proof}

We conclude this section with an observation about the support of a greatest lower bound of an adequate sequence.
\begin{lem}\label{supp:glb}\index[notation]{supp lambda(p)@$\supp^\lambda(p)$}\index[notation]{lambda-support@$\lambda$-support}
Say $\bar p =(p_\xi)_{\xi<\rho}$ is a $(\lambda,x)$-adequate sequence with greatest lower bound $p$. Then for any regular $\bar \lambda$,
\[ \supp^{\bar\lambda}(p) \subseteq \bigcup_{\xi<\rho}\supp^{\bar\lambda}(p_\xi).\]
\end{lem}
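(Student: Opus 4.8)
The statement to prove is Lemma~\ref{supp:glb}: if $\bar p = (p_\xi)_{\xi<\rho}$ is $(\lambda,x)$-adequate with greatest lower bound $p$, then for every regular $\bar\lambda$ we have $\supp^{\bar\lambda}(p)\subseteq\bigcup_{\xi<\rho}\supp^{\bar\lambda}(p_\xi)$. The plan is to fix an ordinal $\eta$ which is \emph{not} in $\bigcup_{\xi<\rho}\supp^{\bar\lambda}(p_\xi)$ and show $\eta\notin\supp^{\bar\lambda}(p)$. Unpacking the definition of $\supp^{\bar\lambda}$ from Definition~\ref{def:diagsupp}, what we must show is: assuming $\pi_{\eta+1}(p_\xi)\leqlo^{\bar\lambda}_{\eta+1}\pi_\eta(p_\xi)$ for every $\xi<\rho$, then $\pi_{\eta+1}(p)\leqlo^{\bar\lambda}_{\eta+1}\pi_\eta(p)$. (If $\eta\geq\theta$ or $\bar\lambda<\lambda_\eta$ there is nothing to show; and we may assume $\bar\lambda\geq\lambda_\theta$ since for smaller $\bar\lambda$ the relation $\leqlo^{\bar\lambda}$ is not part of the system on $P_\theta$, or handle it by the same argument using the vertical absorption \ref{qc:leqlo:vert}.)

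\textbf{Key steps.} First I would observe that, since $p$ is the greatest lower bound of $\bar p$ and each $p_\xi\leq p_0$, compatibility of the projections $\pi_{\eta+1}(p_\xi)$ below $\pi_{\eta+1}(p_0)$ together with the fact that $\pi_{\eta+1}$ is a projection gives that $\pi_{\eta+1}(p)$ is (at least a lower bound, in fact) the greatest lower bound of $(\pi_{\eta+1}(p_\xi))_{\xi<\rho}$ in $P_{\eta+1}$, and similarly $\pi_\eta(p)$ is the greatest lower bound of $(\pi_\eta(p_\xi))_{\xi<\rho}$ in $P_\eta$; this uses that adequate sequences project to adequate sequences (Lemma~\ref{lem:adeq:proj}, applied to $(P_{\eta+1},P_\theta)$ and $(P_\eta,P_\theta)$, whose projections are sufficiently definable once $(\pi_\iota)_{\iota\leq\theta}$ is among the parameters in $x$ as in the proof of Theorem~\ref{thm:it:qc}) and the last clause of quasi-closure \ref{qc:glb} / axiom \ref{qc:ext}(\ref{ext:glb}) giving that the projected greatest lower bound is preserved. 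Next, by hypothesis each $\pi_{\eta+1}(p_\xi)\leqlo^{\bar\lambda}_{\eta+1}\pi_\eta(p_\xi)$, i.e.\ each $\pi_{\eta+1}(p_\xi)\leqlo^{\bar\lambda}_{\eta+1}1_{P_{\eta+1}}$ relativized over $\pi_\eta(p_\xi)$; more precisely $\pi_{\eta+1}(p_\xi)$ sits in the tail $\quot{P_{\eta+1}}{P_\eta}$ as the trivial condition, in the sense made precise by the ``$\supp^{\bar\lambda}$ empty'' reading. The point is now to invoke the last sentence of axiom \ref{qc:ext}(\ref{ext:glb}): when the sequence being bounded lies $\leqlo^{\bar\lambda}$-below the relevant projection at each coordinate, the greatest lower bound does too — i.e.\ $p_\xi\leqlo^{\bar\lambda}\pi_\eta(p_\xi)$ for all $\xi$ implies $p\leqlo^{\bar\lambda}\pi_\eta(p)$ after projecting by $\pi_{\eta+1}$. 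Combining, $\pi_{\eta+1}(p)\leqlo^{\bar\lambda}_{\eta+1}\pi_\eta(p)$, which is exactly $\eta\notin\supp^{\bar\lambda}(p)$.

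\textbf{Alternative route and main obstacle.} A cleaner way to phrase the second half is via Lemma~\ref{supp:union} ($\supp^{\bar\lambda}(p)=\bigcup_{\iota<\theta}\supp^{\bar\lambda}(\pi_\iota(p))$) and Lemma~\ref{supp:initial}: it suffices to bound $\supp^{\bar\lambda}(\pi_{\eta+1}(p))$ coordinatewise, and at coordinate $\eta$ the required statement reduces — via the $(\eta,\eta+1)$ quasi-closed extension and \ref{qc:ext}(\ref{ext:glb}) with $\lambda=\bar\lambda$ applied to the $\bar\lambda$-adequate sequence $(\pi_{\eta+1}(p_\xi))_\xi$ (adequate because each term lies $\leqlo^{\bar\lambda}$ below its own $\pi_\eta$-image, so Lemma~\ref{bar:lambda:adeq} upgrades the strategic guide to a $\bar\lambda$-strategic guide) — precisely to the ``moreover'' clause preserving $\leqlo^{\bar\lambda}$ under greatest lower bounds. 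I expect the main obstacle to be purely bookkeeping: carefully matching the definition of $\supp^{\bar\lambda}$ on $P_\theta$ (which refers to $\pi_{\eta+1}$ and $\pi_\eta$ of the \emph{whole} condition, not to the components $p(\eta)$) with the ``tail is trivial'' formulation needed to feed into \ref{qc:ext}(\ref{ext:glb}), and in handling the degenerate ranges of $\bar\lambda$ (those below $\lambda_\theta$, where one falls back on the vertical monotonicity \ref{qc:leqlo:vert} and the fact that on each $P_\iota$ the system is defined only on $[\lambda_\iota,\kappa^*)$). No genuinely new idea is needed beyond the already-proven preservation of greatest lower bounds together with their $\leqlo^{\bar\lambda}$-behaviour.
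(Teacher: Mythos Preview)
Your proposal is correct and follows essentially the same route as the paper's proof: fix $\iota$ outside $\bigcup_{\xi<\rho}\supp^{\bar\lambda}(p_\xi)$, project the adequate sequence to $P_{\iota+1}$, and invoke the ``moreover'' clause of axiom~(\ref{ext:glb}) for the quasi-closed extension $(P_\iota,P_{\iota+1})$ to conclude $\pi_{\iota+1}(p)\leqlo^{\bar\lambda}\pi_\iota(p)$. The only difference is bookkeeping around definability of the projection: the paper first reduces to the case $\iota<\bar\lambda$ (appealing to diagonal support) so that $\pi_{\iota+1}$ is $\qcdefSeq(\bar\lambda)$ with the ordinal $\iota+1$ already available as a parameter, whereas you place the sequence $(\pi_\iota)_{\iota\leq\theta}$ among the parameters in $x$, exactly as was done in the proof of Theorem~\ref{thm:it:qc}. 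Both devices serve the same purpose---making the projected sequence a $(\bar\lambda,x)$-canonical witness so that (\ref{ext:glb}) applies with the desired $\bar\lambda$---and neither introduces a new idea.
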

\begin{proof}
Assume $\iota<\theta$ and $\iota \not \in \bigcup_{\xi<\rho}\supp^{\bar\lambda}(p_\xi)$. We may assume $\iota < \bar\lambda$ (since $p$ has diagonal support).
Then as $\pi_{\iota+1}$ is $\qcdefSeq(\bar\lambda)$, the sequence $(\pi_{\iota+1}(p_\xi))_{\xi<\rho}$ is $\qcdefSeq(\bar\lambda\cup\{x\})$-definable,
and for all $\xi<\rho$, $\pi_{\iota+1}(p_\xi)\leqlo^{\bar\lambda}\pi_{\iota}(p_\xi)$. Therefore we can apply \ref{qc:ext}(\ref{ext:glb})  for $(P_\iota,P_{\iota+1})$ (see  \ref{qc:ext}, p.~\pageref{qc:ext}).
We conclude that $\pi_{\iota+1}(p)\leqlo^{\bar\lambda}\pi_\iota(p)$ and so $\iota \not\in \supp^{\bar\lambda}(p)$.
\end{proof}

\section{Stratified Extension and Iteration}\label{sec:s:ext}

In this section, we begin by given the definition of stratified extension. 
We show that composition of stratified forcing is a special case of stratified extension.
We show that the second forcing in a stratified extension is stratified.
Finally we prove an iteration theorem for stratified forcing.

\medskip

Let $P_0$ be a complete suborder of $P_1$ and let $\pi\colon P_1 \rightarrow P_0$ be a strong projection and let $I$ be an interval of regular cardinals.
Moreover, assume for $i \in \{0,1\}$, we have a system 
\[\pss_i=(\D_i, \param_i, \leqlol_i, \lequpl_i, \Cl_i)_{\lambda\in I}
\]
such that $\D\subseteq I \times V \times (P_i)^2 $ is a class which is definable with parameter $\param_i$, and for every $\lambda\in I$,
$\leqlol_i$ and $\lequpl_i$ are binary relations on $P_i$ and $\Cl_i \subseteq P_i\times\lambda$.
\begin{dfn}\label{pss:is}
We write $\pss_0 \is \pss_1$\index[notation]{lessthan triangle@$\is$|textbf}\index[notation]{  lessthan triangle@$\is$|textbf} if and only if in addition to (\ref{ext}), (\ref{pi:mon}) and (\ref{F:coh}) (see \ref{pcs:is}, p.~\pageref{pcs:is}), the following hold:
\begin{enumerate}[label=(\pssIsRef), ref=\pssIsRef] %
\item If $q,q' \in P_0$ and $p,p' \in P_1$ are such that $q' \leqlol_0 q \leq \pi(p')$ and $p' \leqlol_1 p$, then $q'\cdot p' \leqlol_1 q\cdot p$. \label{pss:is:cdot:lo}
\item For all $p,q \in P_0$, $p\lequpl_0 q \Rightarrow p \lequpl_1 q$. \label{pss:is:ext}
\item For all $p,q \in P_1$, $p \lequpl_1 q \Rightarrow \pi(p) \lequpl_0 \pi(q)$. \label{pss:is:pi:mon}
\item If $w \leq \pi(d), \pi(r)$ and $d \lequpl r$ then $w \cdot d \lequpl w \cdot r$. \label{pss:is:cdot:up}
\item If $\Cl_1(p)\cap\Cl_1(q)\neq 0$ then $\Cl_0(\pi(p))\cap\Cl_0(\pi(q))\neq 0$. \label{pss:is:c}
\end{enumerate}
\end{dfn}
Observe that if $\pss_0 \is \pss_1$, we can drop the subscripts on $\lequpl_0$, $\lequpl_1$ and just write $\lequpl$ without causing confusion.
Observe also that by Corollary~\ref{lambda:is:big}, we can assume that $r \leqlo^{\card{P_1}} p$ holds exactly if $p=r$.
This implies\footnote{Interestingly, \eqref{suborder:and:leqlo} also follows just from the assumption that for any $r,p\in P_1$, $r \lequp^{\card{P_1}} p$, together with (\ref{s:ext:exp}) \emph{Coherent Expansion}}
\begin{equation}\label{suborder:and:leqlo}
  \forall p\in P_1 \big( p \leqlo^{\card{P_1}}\pi(p) \iff p \in P_0 \big).
\end{equation}
We could assume that $\Cl_0 = \Cl_1 \cap P_0\times \lambda$.
For if not, simply replace $\Cl_1$ by the following relation $\Cl_*$:
$s \in \Cl_*(p)$ if and only if $s \in {}^{\leq 2}\lambda$ such that $s(0) \in \Cl_0(\pi(p))$ and if $p \not \in P_0$ then $1 \in \dom(s)$ and $s(1) \in \Cl_1(p)$ (now in fact we get $\Cl_0(p) = \{ s\res 1 \setdef s \in \Cl_1(p) \}$ for $p \in P_0$).
To sum up, we could in principle completely eliminate any mention of $\pss_0$ from the definition of stratified extension.

Replacing (\ref{pss:is:cdot:lo})by the following two conditions yields an equivalent version of the above definition:
\begin{enumerate}[label=(\pssIsRe\Alph*), ref=\pssIsRe\Alph*]
\item $w \leqlol_0 \pi(p) \Rightarrow w \cdot p \leqlol_1 p$. \label{pss:is:lo:init}
\item If $w \leq \pi(p')$ and $p' \leqlol p$ then $w\cdot p' \leqlol w \cdot p$. \label{pss:is:lo:tail!}
\end{enumerate} 
Sometimes it is more convenient to check both of these rather than (\ref{pss:is:cdot:lo}), which is concise but cumbersome to show.
Further notice that (\ref{pss:is:lo:tail!}) implies
\begin{enumerate}[label=(\pssIsRe b), ref=\pssIsRe b]
\item If $w \leq \pi(p)$ and $p \leqlol \pi(p)$ then $w\cdot p \leqlol w$. \label{pss:is:lo:tail}
\end{enumerate}
This is weaker than (\ref{pss:is:lo:tail!}). We note in passing that we could do entirely with (\ref{pss:is:lo:init}) and (\ref{pss:is:lo:tail}) and without (\ref{pss:is:lo:tail!}).
Neither condition (\ref{pss:is:cdot:lo}) nor any of its variants were included in \ref{def:pcs}, the definition of a preclosure system simply because they are not needed to preserve quasi-closure in iterations---rather we need (\ref{pss:is:lo:tail}) to preserve coherent linking, and (\ref{pss:is:lo:init}) helps to preserve density at limits; see below.

We fix some convenient notation: If $d\leq r$, we say \emph{$p$ $\lambda$-interpolates $d$ and $r$} to mean that $p \lequpl d$ and $p\leqlol r$.
We say $p \leqlo^{<\lambda}q $ to mean that for all $\lambda'\in I \cap \lambda$, 
we have $p \leqlo^{\lambda'} q$.
\begin{dfn}\label{def:s:ext}
Let $I$ be an interval of regular cardinals.
We say the pair $(P_0,P_1)$ is a \emph{stratified extension\index{stratified extension|textbf} on $I$, as witnessed by $(\pss_0,\pss_1)$} 
if and only if $\pss_0$ witnesses that $P_0$ is stratified on $I$, $\pss_1$ is a prestratification system on $P_1$ and $\pss_0 \is \pss_1$; 
Moreover, for all $\lambda \in I$ we have that (\ref{ext:redundant}), (\ref{ext:glb}) and all of the following conditions hold:
\begin{enumerate}[label=(\sExtRef), ref=\sExtRef]

\item \emph{Coherent Expansion}\index{Coherent!Expansion}: For $p,d \in P_1$, if $p \lequpl d$, $d \leqlol \pi(d)$ and $\pi(p)\leq\pi(d)$, we have that $p \leq d$.\label{s:ext:exp}

\item \emph{Coherent Interpolation}:\index{Coherent!Interpolation} Given $d,r\in P_1$ such that $d \leq r$ and $p_0 \in P_0$ such that $p_0$ $\lambda$-interpolates $\pi(d)$ and $\pi(r)$ we can find $p \in P_1$ which $\lambda$-interpolates $d$ and $r$ such that $\pi(p)=p_0$. If moreover $d \leqlo^{<\bar\lambda}\pi(d)$, we can in addition assume $p \leqlo^{<\bar\lambda} \pi(p)\cdot r$.\label{s:ext:int}

\item \emph{Coherent Linking}:\index{Coherent!Linking} Say $d,p \in P_1$, $d \lequpl p$ and $\Cl_1(d)\cap \Cl_1(p) \neq\emptyset$. 
Given $w_0 \in P_0$ such that both $w_0 \leqlo^{<\lambda} \pi(d)$ and $w_0 \leqlo^{<\lambda} \pi(p)$, we can find $w \in P_1$ such that $w\leqlo^{<\lambda}p,d$ and $\pi(w)=w_0$.\label{s:ext:cent}

\end{enumerate}
We find it relieving to notice that $P$ is stratified exactly if $(\{1_P\}, P)$ is a stratified extension.
Again, if we don't mention $\pss_0$, $\pss_1$ or $I$ we are either claiming that they can be appropriately defined or they can be inferred from the context.
\end{dfn}

\begin{lem}\label{stratified:comp:implies:ext}\index{composition (forcing)!stratified extension}
If $P$ is stratified on $I$ and $\forces_P \dot Q$ is stratified on $I$, then $(P, P*\dot Q)$ \
is a stratified extension on $I$.

To be more precise, let $\pss_0$ denote the prestratification system witnessing that $P$ is stratified and let $\pss_1=(\bar\D,\bar\param,\bleqlol,\blequpl,\bCl)_{\lambda\in I}$ be the prestratification system constructed as in the proof of \ref{stratified:composition}, where we showed that $P*\dot Q$ is stratified. Then $(\pss_0,\pss_1)$ witnesses that $(P, P*\dot Q)$ is a stratified extension on $I$.
\end{lem}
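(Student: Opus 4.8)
The strategy is to verify the clauses of Definition~\ref{def:s:ext} for the pair $(P, P*\dot Q)$, reusing as much as possible the work already done in the proof of Theorem~\ref{stratified:composition} and Lemma~\ref{qc:comp:implies:ext}. Since $\pss_1$ is literally the pre-stratification system built in the proof of Theorem~\ref{stratified:composition}, we already know $\pss_1$ is a pre-stratification system, that $\pss_0$ witnesses $P$ is stratified, and (by Lemma~\ref{qc:comp:implies:ext}) that conditions \ref{ext:redundant} and \ref{ext:glb} hold. It remains to check $\pss_0 \is \pss_1$ (the five conditions \ref{ext}, \ref{pi:mon}, \ref{D:coh} from Definition~\ref{pcs:is} plus \ref{pss:is:cdot:lo}--\ref{pss:is:c} from Definition~\ref{pss:is}) and the three coherency axioms \ref{s:ext:exp}, \ref{s:ext:int}, \ref{s:ext:cent}.

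First I would observe that here $\pi\colon P*\dot Q \to P$ is the projection to the first coordinate, so $\pi(p,\dot p)=p$, and that $(q\cdot(p,\dot p))=(q, \dot p)$ for $q\leq p=\pi(p,\dot p)$ (here it matters that $P$ is a strong sub-order of $P*\dot Q$, so $q\cdot 1_{P} = q$ and $q$ really is a reduction). Then \ref{ext}, \ref{pi:mon}, \ref{D:coh}, \ref{pss:is:ext}, \ref{pss:is:pi:mon}, \ref{pss:is:c} are all immediate from the definitions of $\bleqlol$, $\blequpl$, $\bar\D$ and $\bCl$ given in the proof of Theorem~\ref{stratified:composition}: each of these relations was defined as a conjunction whose first conjunct is the corresponding relation on the $P$-coordinate, and $\bCl(p,\dot p)$ always has the form $(\chi, \ldots)$ with $\chi\in\Cl(p)$, so sharing a $\bCl$-value forces sharing a $\Cl$-value on the first coordinate. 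For \ref{pss:is:cdot:lo} (equivalently, I would use the two conditions \ref{pss:is:lo:init} and \ref{pss:is:lo:tail!}): given $w\leqlo^\lambda_0 p = \pi(p,\dot p)$, one computes $w\cdot(p,\dot p) = (w,\dot p)$, and $(w,\dot p)\bleqlol (p,\dot p)$ holds because $w\leqlol p$ and $w$ forces $\dot p \dot\leqlo^\lambda \dot p$ by reflexivity of $\dot\leqlo^\lambda$ (here I note that $\dot\leqlo^\lambda$ is a pre-order, hence reflexive); the tail part \ref{pss:is:lo:tail!} is a similarly mechanical check pushing the hypothesis through $\pi$ and using transitivity. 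Condition \ref{pss:is:cdot:up} is the analogue using that $\lequpl$ satisfies the reflexivity built into \ref{up:extra}, and $w\cdot d$, $w\cdot r$ agree in the first coordinate with $w$.

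For the coherency axioms: \ref{s:ext:exp} (\emph{Coherent Expansion}) essentially is the argument already carried out under the heading ``\ref{exp}'' in the proof of Theorem~\ref{stratified:composition}, relativized below the condition $p\cdot d$ — given $(p,\dot p)\blequpl(d,\dot d)$ with $(d,\dot d)\bleqlol (d, 1_{\dot Q}) = \pi$-image and $\pi(p,\dot p)=p\leq d$, one gets $p\cdot d = p$ forces $\dot p \dlequpl \dot d \dleqlol 1_{\dot Q}$ and applies \ref{exp} for $\dot Q$ in the extension to conclude $p\forces \dot p \leq \dot d$, so $(p,\dot p)\leq(d,\dot d)$. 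For \ref{s:ext:int} (\emph{Coherent Interpolation}): given $(d,\dot d)\leq(r,\dot r)$ and $p_0\in P$ with $p_0\lequpl d$, $p_0\leqlol r$, I would take $p = (p_0,\dot p)$ where $\dot p$ is a $P$-name chosen by fullness so that $p_0\cdot d\forces \dot p \dlequpl \dot d$ and $p_0\forces \dot p\dleqlol \dot r$ — this is exactly the ``Interpolation'' paragraph of the proof of Theorem~\ref{stratified:composition}, only starting from the given $p_0$ on the first coordinate instead of constructing it; the ``moreover'' clause about $\leqlo^{<\bar\lambda}$ comes from the corresponding clause of \ref{interpolation} for $\dot Q$. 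Finally \ref{s:ext:cent} (\emph{Coherent Centering}) is the ``Centering'' paragraph of the proof of Theorem~\ref{stratified:composition}, but with the first coordinate $w_0$ of the witness handed to us rather than extracted from \emph{centering} for $P$: given $(d,\dot d)\blequpl (p,\dot p)$ sharing a $\bCl$-value and $w_0\leqlo^{<\lambda}_0 d, p$, one runs the induction on the rank of nodes of the guessing system to show $w_0 \forces \dot\Cl(\cdot)\cap\dot\Cl(\cdot)\neq 0$ at the root, then invokes \emph{centering} for $\dot Q$ in the extension by $w_0$ to obtain a name $\dot w$ with $w_0\forces \dot w \dot\leqlo^{<\lambda}\dot d,\dot p$, and sets $w=(w_0,\dot w)$; the short-center case \ref{short:center} uses \ref{exp} for $\dot Q$ exactly as before.

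\textbf{Main obstacle.} The genuinely non-routine point is \ref{s:ext:cent} in the ``long center'' case: one must re-run the rank induction \eqref{share:colour:ext} along the guessing system entirely below the \emph{given} $w_0$ (so one does not get to choose a convenient reduct), checking that \emph{Continuity} for $\dot Q$ in $V^{w_0}$ applies at each limit node and that the common spectrum survives restriction below $w_0$; this is where the bookkeeping from the proof of Theorem~\ref{stratified:composition} has to be transcribed carefully rather than merely cited. Everything else reduces, after unwinding the definitions of $\bleqlol$, $\blequpl$, $\bar\D$, $\bCl$, to a finite list of one-line verifications plus appeals to Lemma~\ref{qc:comp:implies:ext} for \ref{ext:redundant} and \ref{ext:glb}.
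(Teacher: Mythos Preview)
Your proposal is correct and follows essentially the same approach as the paper: cite Lemma~\ref{qc:comp:implies:ext} for the quasi-closed extension clauses, verify $\pss_0\is\pss_1$ by unwinding the coordinate-wise definitions, and for \ref{s:ext:exp}, \ref{s:ext:int}, \ref{s:ext:cent} reuse the corresponding paragraphs from the proof of Theorem~\ref{stratified:composition} with the first coordinate handed in rather than constructed. If anything you give more detail than the paper, which for \ref{s:ext:int} and \ref{s:ext:cent} simply says ``just look at how we found an interpolant'' and ``do the same''; your identification of the guessing-system rank induction below the given $w_0$ as the one place requiring care is accurate, though since $w_0\leq p\cdot d$ the original argument (which was carried out below $p\cdot d$) transfers without modification.
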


\begin{proof}
We have already checked (\ref{ext}), (\ref{pi:mon}), (\ref{F:coh}), (\ref{ext:redundant}) and (\ref{ext:glb})---i.e., that $(P_, P*\dot Q)$ is a \emph{quasi-closed extension}---in Lemma~\ref{qc:comp:implies:ext}.
We showed that $\pss_1$ is a prestratification system when we proved Theorem~\ref{stratified:composition}.
It's technical but straightforward to check that $\pss_0 \is \pss_1$ (see Definition~\ref{pss:is}, p.~\pageref{pss:is}):

Fix $\bar p=(p,\dot p) \in P*\dot Q$ and $ w \in P$, $w \leq p$.
For (\ref{pss:is:lo:init}), say $w \leqlol p$. Then as $p \forces \dot p \dleqlol p$, we have $(w,p)\bleqlol \bar p$.
For (\ref{pss:is:lo:tail!}), fix another condition $\bar q=(q,\dot q) \in P*\dot Q$ such that $\bar p \bleqlol \bar q$. Then $p \forces \dot p \dleqlol \dot q$, whence $w\forces \dot p \dleqlol \dot q$ and so $(w,p) \bleqlol (w, 1_{\dot Q})$, done.
For (\ref{s:ext:exp}) and (\ref{pss:is:cdot:up}), let $\bar r=(r,\dot r) \in P*\dot Q$ and say $\bar p \blequpl \bar r$, i.e., $p \lequpl r$ and if $p\cdot r > 0$ then $p \cdot q \forces \dot p \dlequpl \dot r$.
To check (\ref{s:ext:exp}) \emph{Coherent Expansion},  assume $\bar r \bleqlol (r,1_{\dot Q})$ and $p \leq r$. Then $p \forces  \dot p\dlequpl \dot r$. As $P$ forces \emph{expansion} for $\dot Q$, $p \forces \dot p \leq \dot q$ and we are done with (\ref{s:ext:exp}).
To check (\ref{pss:is:cdot:up}), say $w \leq p$. Then $w \cdot r \leq p \cdot r$, and so if $w\cdot r > 0$, it forces $\dot p \dlequpl \dot r$.
Since $w \lequpl w$, we infer that $(w,\dot p) \blequpl \bar r$.
The remaining (\ref{pss:is:ext}), (\ref{pss:is:pi:mon}) and (\ref{pss:is:c}) are immediate by the definition.

Now we check the conditions of \ref{def:s:ext} (see p.~\pageref{def:s:ext}).
For (\ref{s:ext:int}) \emph{Coherent Interpolation}, just look at how we found an interpolant in the proof of Theorem~\ref{stratified:composition}.
Do the same for (\ref{s:ext:cent}) \emph{Coherent Linking}.
\end{proof}

The following is the analogue of \ref{lem:qc:succ} for quasi-closed extension:
\begin{lem}\label{lem:s:succ}\index[notation]{SigmaT1(z), SigmaT1(X)@$\Sigma^T_1(z)$, $\Sigma^T_1(X)$}
If $(P_0,P_1)$ is a stratified extension on $I$ and $\pi$ is $\qcdefG(\min I \cup\{\param_1 \})$, then $P_1$ is stratified on $I$.
\end{lem}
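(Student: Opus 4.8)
\textbf{Proof plan for Lemma \ref{lem:s:succ}.}

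The plan is to mirror the proof of Lemma \ref{lem:qc:succ} (quasi-closure of the second forcing of a quasi-closed extension), adding the verification of the stratification axioms \ref{exp}, \ref{interpolation}, \ref{centering} plus the pre-stratification axioms \ref{up:extra}, \ref{up}, \ref{s:lequp:vert}, \ref{density}, \ref{continuous}. First I would observe that $\pss_1$ is assumed to be a pre-stratification system on $P_1$, so axioms \ref{up:extra}, \ref{up}, \ref{s:lequp:vert} and the ``density'' relation $\C^\lambda_1$ together with \ref{def:pcs}(\ref{qc:D})--(\ref{qc:leqlo:vert}) come for free; and quasi-closure of $P_1$ is exactly Lemma \ref{lem:qc:succ}, using that $(P_0,P_1)$ being a stratified extension in particular makes it a quasi-closed extension (conditions (\ref{ext:redundant}) and (\ref{ext:glb}) are explicitly part of Definition \ref{def:s:ext}), and that $\pi$ is $\qcdefG(\min I\cup\{\param_1\})$. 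So the real content is deriving the three ``Roman-numeral'' axioms of \ref{stratified:main} for $P_1$ from the ``coherent'' versions \ref{s:ext:exp}, \ref{s:ext:int}, \ref{s:ext:cent} together with the fact that $P_0$ is stratified.

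For \ref{exp} (\emph{Expansion}): given $p\lequpl_1 d$ with $d\leqlol_1 1_{P_1}$, I would apply \ref{pss:is:pi:mon} to get $\pi(p)\lequpl_0\pi(d)$, then \emph{Expansion} for $P_0$ (noting $d\leqlol_1 1_{P_1}$ forces $\pi(d)\leqlol_0 1_{P_0}$ via \ref{pcs:is}(\ref{pi:mon})) to conclude $\pi(p)\leq\pi(d)$; now Coherent Expansion \ref{s:ext:exp} — whose hypotheses $p\lequpl_1 d$, $d\leqlol_1\pi(d)$ (this follows from $d\leqlol_1 1_{P_1}$ and \ref{def:pcs}(\ref{er}) since $d\leq\pi(d)\leq 1_{P_1}$, cf. Lemma \ref{lem:support:simple}), and $\pi(p)\leq\pi(d)$ are now all met — yields $p\leq d$. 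For \ref{interpolation} (\emph{Interpolation}): given $d\leq r$ in $P_1$, apply \emph{Interpolation} for $P_0$ to $\pi(d)\leq\pi(r)$ to get $p_0\in P_0$ with $p_0\leqlo^\lambda_0\pi(r)$ and $p_0\lequp^\lambda_0\pi(d)$, i.e. $p_0$ $\lambda$-interpolates $\pi(d)$ and $\pi(r)$; then Coherent Interpolation \ref{s:ext:int} gives $p\in P_1$ that $\lambda$-interpolates $d$ and $r$, i.e. $p\leqlo^\lambda_1 r$ and $p\lequp^\lambda_1 d$ — this is exactly \ref{interpolation}; the tail clause (``in addition, $d\leqlo^{\bar\lambda}1_{P_1}\Rightarrow p\leqlo^{\bar\lambda}1_{P_1}$'') is obtained from the ``moreover'' clauses of \emph{Interpolation} for $P_0$ and of \ref{s:ext:int}, plus Lemma \ref{lem:support:simple} to pass between $\leqlo^{\bar\lambda}_1 \pi(p)$ and $\leqlo^{\bar\lambda}_1 1_{P_1}$. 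For \ref{centering} (\emph{Centering}): given $p\lequpl_1 d$ with $\C^\lambda_1(p)\cap\C^\lambda_1(d)\neq\emptyset$, use \ref{pss:is:c} to get $\C^\lambda_0(\pi(p))\cap\C^\lambda_0(\pi(d))\neq\emptyset$ and \ref{pss:is:pi:mon} for $\pi(p)\lequpl_0\pi(d)$, so \emph{Centering} for $P_0$ produces $w_0\in P_0$ with $w_0\leqlo^{\lambda'}_0\pi(p)$ and $w_0\leqlo^{\lambda'}_0\pi(d)$ for all $\lambda'\in I\cap\lambda$, i.e. $w_0\leqlo^{<\lambda}_0\pi(p),\pi(d)$; now Coherent Centering \ref{s:ext:cent} yields $w\in P_1$ with $w\leqlo^{<\lambda}_1 p,d$, which gives both compatibility of $p$ and $d$ and the ``in fact'' clause of \ref{centering}.

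Finally I would check \ref{density} (\emph{Density}) and \ref{continuous} (\emph{Continuity}) for $\C^\lambda_1$. Density: $\dom(\C^\lambda_1)$ dense in $P_1$ and the $\leqlo^{\lambda'}$-refinement clause follow from density of $\dom(\C^\lambda_0)$ in $P_0$ together with (\ref{ext:redundant}) (which lets one push a $\leqlo^{\lambda'}_0$-reduction of $\pi(p)$ up to a $\leqlo^{\lambda'}_1$-reduction of $p$) and the defining property of $\C_1$ in a stratified extension — here is where the remark after \ref{pss:is} that one may assume $\C^\lambda_0=\C^\lambda_1\cap(P_0\times\lambda)$ and (\ref{pss:is:lo:init}) are used. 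Continuity: an adequate sequence $\bar p=(p_\xi)$ in $\dom(\C^\lambda_1)$ projects (via Lemma \ref{lem:adeq:proj}, using $\qcdefG$-definability of $\pi$) to an adequate sequence $(\pi(p_\xi))$ in $\dom(\C^\lambda_0)$; its glb $q$ lies in $\dom(\C^\lambda_0)$ by Continuity for $P_0$; the glb $p$ of $\bar p$ (which exists and has $\pi(p)=q$ by (\ref{ext:glb})) then lies in $\dom(\C^\lambda_1)$, and the second ``$\C^\lambda_1(p)\cap\C^\lambda_1(q')\neq\emptyset$'' clause follows similarly from Continuity for $P_0$ plus \ref{pss:is:c}. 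The main obstacle I anticipate is not any single axiom but the bookkeeping around \emph{density and continuity of $\C^\lambda_1$}: the definition of $\C_1$ in a general stratified extension is not canonically pinned down by \ref{def:s:ext}, so one must either invoke the normalization after \ref{pss:is} (replacing $\C_1$ by $\C_*$) to make $\C^\lambda_0=\C^\lambda_1\cap(P_0\times\lambda)$ and argue with that, or carry an extra hypothesis about how $\C_1$ refines $\C_0$ — I would adopt the former, as the excerpt explicitly sanctions it, and then the two clauses fall out of the corresponding clauses for $P_0$ by projecting and lifting along $\pi$.
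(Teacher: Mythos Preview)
Your approach is correct and matches the paper's (which simply says ``straightforward consequence of the definition and Lemma~\ref{lem:qc:succ}''): invoke Lemma~\ref{lem:qc:succ} for quasi-closure, then derive \ref{exp}, \ref{interpolation}, \ref{centering} from their coherent versions together with the corresponding axioms for $P_0$, exactly as you outline.

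One correction, though: your final paragraph is unnecessary. By Definition~\ref{def:s:ext}, $\pss_1$ is already assumed to be a \emph{pre-stratification system} on $P_1$, and by Definition~\ref{def:pss} this includes \ref{density} (Density) and \ref{continuous} (Continuity) as part of the package --- not just the existence of the relation $\Cl_1$. So there is nothing to verify there. (Your proposed route --- deriving density of $\dom(\Cl_1)$ in $P_1$ from density of $\dom(\Cl_0)$ in $P_0$ via the normalization after Definition~\ref{pss:is} --- would not work as stated anyway: that normalization adjusts $\Cl_1$ to make it restrict to $\Cl_0$, but does not manufacture density in the larger order from density in the smaller one. Fortunately you don't need it.)
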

\begin{proof}
This follows straightforwardly from the definitions and Lemma~\ref{lem:qc:succ}. We leave details to the reader.
\end{proof}

\begin{dfn}\label{natural:pss}\index{natural system of relations}
Say $\bar Q^\theta$ is an iteration such that each initial segment $P_\iota$ carries a prestratification system $\pss_\iota$ on $I_\iota = \Reg\cap[\lambda_\iota,\kappa)$, where the sequence $\bar \lambda=(\lambda_\iota)_{\iota\leq\theta}$ is a non-decreasing sequence of regulars.  
Let $P_\theta$ be its $\lambda_\theta$-diagonal support limit.
We now add to the definition of \emph{natural systems of relations on $P_\theta$}.
Let $\lambda\in I_\theta$, where we set $I_\theta= [\lambda_\theta,\kappa)\cap \Reg$.
The relations $\leqlol$ and $\D$ are defined as in \ref{def:diagsupp}, p.~\pageref{def:diagsupp}. Let
\begin{enumerate}%
\item $p \lequpl_{\theta} q \iff \forall\iota < \theta \quad \pi_\iota(p) \lequpl_{\iota} \pi_\iota(q)$;
\item $p \in \dom(\Cl)$ if and only if for all $\iota <\sigma^\lambda(p)$, $\pi_\iota(p)\in\dom(\Cl_\iota)$;
\item $s \in \Cl_\theta(p)$ if and only if $s \colon \sigma^\lambda(p)\rightarrow\lambda$ and for all $\iota < \dom(s)$, we have $s(\iota)\in\Cl_\iota(\pi_\iota(p))$.
\end{enumerate}
\end{dfn}

As before, the above yields a prestratification system under natural assumptions, as we shall see in the proof of Theorem~\ref{thm:it:strat}.
\begin{thm}\label{thm:it:strat}\index{iteration (forcing)!of stratified extensions}
Let $\bar Q^\theta$ be an iteration such that for each $\iota < \theta$, $P_\iota$ carries a prestratification system $\pss_\iota$ on $I_\iota$, where $I_\iota = \Reg\cap[\lambda_\iota,\kappa)$ and the sequence $\bar \lambda=(\lambda_\iota)_{\iota<\theta}$ is a non-decreasing sequence of regulars.
Moreover, let $\lambda_\theta=\min(\Reg\setminus \bigcup_{\iota<\theta}\lambda_\iota)$, let $I_\theta= [\lambda_\theta, \kappa)$ and assume
\begin{enumerate}
\item For all $\iota<\theta$, $(P_\iota,P_{\iota+1})$ is a stratified extension on $I_\iota$.
\item If $\bar \iota < \theta$ is limit, $\pss_{\bar\iota}$ is the natural system of relations on $P_{\bar\iota}$
and $P_{\bar \iota}$ is the $\lambda_{\bar\iota}$-diagonal support limit
\index{diagonal support}
\index{support!diagonal}
of $\bar Q^{\bar \iota}$.
\item For each regular $\lambda\in[\lambda_\theta,\kappa)$ there is $\iota<\lambda^+$ such that for all $p \in P_\theta$ we have $\supp^\lambda(p) \subseteq \iota$.
\end{enumerate}
Let $P_\theta$ be the $\lambda_\theta$-diagonal support limit of $\bar Q^\theta$. Then $P_\theta$ is stratified on $I_\theta$.
\end{thm}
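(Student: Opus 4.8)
The strategy is to reduce the iteration theorem for stratified forcing to the already-proved iteration theorem for quasi-closed forcing (Theorem~\ref{thm:it:qc}) together with the lemma that the second forcing of a stratified extension is stratified (Lemma~\ref{lem:s:succ}). Concretely, I would show by induction on $\bar\iota\leq\theta$ that for every pair $\iota<\bar\iota\leq\theta$ the pair $(P_\iota,P_{\bar\iota})$ is a \emph{stratified extension} on $I_\iota$, witnessed by $(\pss_\iota,\pss_{\bar\iota})$; applying this to $(\{1\},P_\theta)$ (or to $(P_0,P_\theta)$ together with Lemma~\ref{lem:s:succ}) then gives that $P_\theta$ is stratified. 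The successor case of the induction uses transitivity of stratified extension (the analogue of Lemma~\ref{qc:ext:transitive}, which I would either cite or reprove in the stratified setting), noting that $\pi^{\bar\iota}_{\bar\iota-1}$ is $\Delta_1$-definable from $\bar\iota$ and hence the definability hypothesis of that transitivity lemma is met. So the heart of the matter is the limit case: $\theta$ limit, $P_\theta$ the $\lambda_\theta$-diagonal support limit, $\pss_\theta$ the natural system of relations, and one must verify that $(P_{\iota^*},P_\theta)$ is a stratified extension for each $\iota^*<\theta$.

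For the limit case I would proceed axiom by axiom. First, since Theorem~\ref{thm:it:qc} already applies (its hypotheses are implied by ours), we know $(P_{\iota^*},P_\theta)$ is a \emph{quasi-closed} extension, so (\ref{ext}), (\ref{pi:mon}), (\ref{F:coh}), (\ref{ext:redundant}) and (\ref{ext:glb}) hold; it remains to check $\pss_\iota \is \pss_\theta$ (the conditions (\ref{pss:is:cdot:lo})--(\ref{pss:is:c}) beyond the quasi-closure ones), that $\pss_\theta$ is a pre-stratification system, and the three coherence axioms (\ref{s:ext:exp}) \emph{coherent expansion}, (\ref{s:ext:int}) \emph{coherent interpolation}, (\ref{s:ext:cent}) \emph{coherent centering}. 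Each of these is checked coordinatewise: a condition $p\in P_\theta$ is a thread, the relevant relation ($\lequpl_\theta$, $\Cl_\theta$) is defined by quantifying $\pi_\iota(p)$ over $\iota<\theta$ (or $\iota<\sigma^\lambda(p)$ for $\Cl_\theta$), and the corresponding property of $(P_\iota,P_{\iota+1})$ lifts along the tower $(\pi^{\bar\iota}_\iota)$ using the inductive hypothesis that $(P_\iota,P_{\bar\iota})$ is a stratified extension for all $\iota<\bar\iota<\theta$. For \emph{coherent expansion} and the $\is$-conditions this is essentially a direct unwinding of definitions, entirely parallel to the corresponding parts of Theorem~\ref{thm:it:qc}'s proof. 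For the parts that require producing a new condition of $P_\theta$ (the interpolant $p$ in \emph{coherent interpolation}, the lower bound $w$ in \emph{coherent centering}), one builds the desired object as a thread $(r_\zeta)_{\zeta<\cof(\theta)}$ by recursion along a least cofinal sequence $(\theta(\zeta))_{\zeta<\cof(\theta)}$ in $\theta$, at successor steps applying the corresponding axiom of the stratified extension $(P_{\theta(\zeta)},P_{\theta(\zeta+1)})$, and at limit steps taking the thread union; here the ``moreover'' clauses of (\ref{s:ext:int}) and (\ref{s:ext:cent}) (giving $p\leqlo^{<\bar\lambda}\pi(p)\cdot r$, resp.\ $w\leqlo^{<\lambda}p,d$) are exactly what is needed to keep the partial threads $\leqlo^{<\lambda}$-below a fixed condition, so that Lemma~\ref{supp:threads} applies and the resulting thread has legal diagonal support. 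The role-reversal/definability trick from the proof of Theorem~\ref{thm:it:qc} (finding a better index $\iota'$ when $\cof(\theta)>\bar\lambda$, using that $x$ contains $L_\mu[A]$ and $\theta$ so that the cofinal sequence is available $\qcdefG$) must be repeated, but nothing new is needed beyond what that proof already supplies.

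The main obstacle, as usual in these preservation arguments, is \emph{coherent centering} at the limit: one has $d\lequpl_\theta p$ with $\Cl_\theta(d)\cap\Cl_\theta(p)\neq\emptyset$ and a witness $w_0\in P_{\iota^*}$ with $w_0\leqlo^{<\lambda}\pi_{\iota^*}(d),\pi_{\iota^*}(p)$, and must produce $w\in P_\theta$ with $w\leqlo^{<\lambda}d,p$ and $\pi_{\iota^*}(w)=w_0$. The subtlety is that $\Cl_\theta$ is a genuine multifunction whose domain is cut off at $\sigma^\lambda(p)$, so along the recursion one must carry, at each $\zeta$, a common ``colour'' $s^\zeta\in\Cl_{\theta(\zeta)}(\pi_{\theta(\zeta)}(d))\cap\Cl_{\theta(\zeta)}(\pi_{\theta(\zeta)}(p))$ and verify it is preserved at limit stages of the recursion --- this is precisely where \emph{Continuity} (\ref{continuous}) of the $\Cl_\iota$ and the coherence of the $\Cl$'s packaged into (\ref{pss:is:c}) and Definition~\ref{natural:pss} get used, and it is the step most likely to require a careful bookkeeping lemma (analogous to the guessing-system argument in the proof of Theorem~\ref{stratified:composition}, but now along the iteration rather than a two-step composition). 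I would isolate that bookkeeping as a small sublemma before carrying out the recursion, and otherwise the proof is a (lengthy but routine) coordinatewise verification modeled on the proof of Theorem~\ref{thm:it:qc}. Finally, the hypothesis that for each regular $\lambda$ there is $\iota<\lambda^+$ bounding $\supp^\lambda(p)$ for all $p\in P_\theta$ is used exactly once, to know $\sigma^\lambda(p)$ is bounded and hence $\Cl_\theta$, $\lequpl_\theta$ etc.\ are genuinely set-sized objects of the right form, and to feed Lemma~\ref{supp:threads} in the legal-support verifications; I would point this out explicitly where it occurs.
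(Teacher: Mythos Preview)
Your overall strategy is sound, and the paper even remarks that the inductive ``$(P_\iota,P_\theta)$ is a stratified extension'' route would work; but the paper itself takes a more direct path. After reducing to limit $\theta$ via Lemma~\ref{lem:s:succ}, it proves \emph{stratification of $P_\theta$} outright (Expansion, Interpolation, Centering, Continuity, Density) for the natural system $\pss_\theta$, rather than verifying coherent extension axioms over every $P_{\iota^*}$. There is no separate ``transitivity of stratified extension'' lemma, and no recursion along a cofinal sequence in $\theta$: the interpolant and the centering witness are built by induction over \emph{all} initial segments $\iota<\theta$, using coherent interpolation/centering for each successive pair $(P_\iota,P_{\iota+1})$ and Lemma~\ref{supp:threads} at limits.

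Your diagnosis of the main obstacle is off in two places. First, in centering there is no need to ``carry a colour'' through the recursion, and Continuity is not used: a witness $s\in\Cl_\theta(p)\cap\Cl_\theta(d)$ already hands you, for each $\iota<\sigma^\lambda(p)=\sigma^\lambda(d)$, a value $s(\iota)\in\Cl_\iota(\pi_\iota(p))\cap\Cl_\iota(\pi_\iota(d))$, so coherent centering for $(P_\iota,P_{\iota+1})$ applies directly. The genuine subtlety is what happens \emph{beyond} $\sigma=\sigma^\lambda(p)$ when $\sigma<\theta$: having found $w_0\in P_\sigma$ with $w_0\leqlo^{<\lambda}\pi_\sigma(p),\pi_\sigma(d)$, the paper sets $w=w_0\cdot d$ and proves $w\leqlo^{<\lambda}p,d$ by an inductive argument using (\ref{pss:is:lo:tail}), (\ref{pss:is:cdot:up}) and \emph{coherent expansion} (\ref{s:ext:exp}); your proposal does not anticipate this step. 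Second, you underplay \emph{Density} (\ref{density}), which the paper explicitly flags as the non-trivial pre-stratification condition: its proof is a genuine construction of a $\lambda'$-adequate sequence of length $\cof(\theta)$ (using quasi-closure of $P_\theta$ from Theorem~\ref{thm:it:qc}), with Continuity for the $P_{\sigma(\nu)}$ invoked at limit stages to keep the partial lower bounds in $\dom(\Cl)$. The third hypothesis of the theorem is used only to bound $\lvert\ran(\Cl_\theta)\rvert$ by $\lambda$, not for support arguments.
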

\begin{rem}
In our particular application we will have that for each regular $\lambda<\kappa$, there is $\iota<\lambda^+$ such that $\lambda<\lambda_\iota$.
Observe that by the definition of $\supp^\lambda(p)$, this implies that the last clause of the above is satisfied.
\end{rem}

Of course, the following proof can be easily adapted to show that under the same hypothesis, for every $\iota<\theta$, $(P_\iota,P_\theta)$ is a stratified extension on $I_\theta$; while this approach facilitated the inductive proof in the case of quasi-closure, it would serve no purpose in the present context.
\begin{proof}[Proof of Theorem~\ref{thm:it:strat}.]
By Lemma~\ref{lem:s:succ}, we may assume $\theta$ is limit.
That $P_\theta$ is stratified on $I_\theta$ is witnessed by the natural system of relations $\pss_\theta$, as defined in \ref{natural:pss}.
The proof of the following lemma is a straightforward induction, which we leave to the reader:
\begin{lem}
For any $\iota <\bar\iota\leq\theta$, $\pss_\iota \is \pss_{\bar\iota}$.
\end{lem}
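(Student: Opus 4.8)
The statement to prove is that for any $\iota < \bar\iota \leq \theta$, $\pss_\iota \is \pss_{\bar\iota}$. I would carry this out by induction on $\bar\iota$, mirroring the corresponding argument in the quasi-closed setting (the proof of theorem \ref{thm:it:qc}), but now checking the additional clauses \ref{pss:is:cdot:lo} (or its equivalent pair \ref{pss:is:lo:init} and \ref{pss:is:lo:tail!}), \ref{pss:is:ext}, \ref{pss:is:pi:mon}, \ref{pss:is:c} on top of \ref{ext}, \ref{pi:mon}, \ref{F:coh}, which were already handled in the quasi-closure lemmas. The base of the induction, $\bar\iota = \iota+1$, is exactly the hypothesis that $(P_\iota, P_{\iota+1})$ is a stratified extension, which by definition \ref{def:s:ext} includes $\pss_\iota \is \pss_{\iota+1}$.

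For the \emph{successor step} $\bar\iota = \bar\iota_0 + 1$ with $\iota < \bar\iota_0$, I would use transitivity: having $\pss_\iota \is \pss_{\bar\iota_0}$ by induction and $\pss_{\bar\iota_0} \is \pss_{\bar\iota_0 + 1}$ by hypothesis, I must check each clause composes. The monotonicity-type clauses \ref{ext}, \ref{pi:mon}, \ref{pss:is:ext}, \ref{pss:is:pi:mon}, \ref{F:coh}, \ref{pss:is:c} compose immediately since $\pi^{\bar\iota}_\iota = \pi^{\bar\iota_0}_\iota \circ \pi^{\bar\iota}_{\bar\iota_0}$ and the relevant relations are preserved stepwise. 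The clauses involving the $\cdot$ operation, namely \ref{pss:is:cdot:lo} and \ref{pss:is:cdot:up}, require a little more care: one uses that $q \cdot p$ (for the two-step case) composes correctly, i.e. that $(q \cdot_{\iota} r) \cdot_{\bar\iota_0} p$ agrees with $q \cdot_{\iota} p$ when the compatibility conditions are met — this follows from the uniqueness of $q \cdot p$ established right after definition of strong projection together with the fact that $\pi^{\bar\iota}_\iota$ is a strong projection (using the remark that a strong sub-order of a strong sub-order is a strong sub-order, and $1_Q$ forces $\pi_P \res \quot{P}{Q}$ is a strong projection).

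For the \emph{limit step} $\bar\iota$, with $\pss_{\bar\iota}$ the natural system of relations from definition \ref{natural:pss}, I would fix $\iota < \bar\iota$ and verify each clause of $\pss_\iota \is \pss_{\bar\iota}$ directly from the coordinatewise definitions of $\leqlol_{\bar\iota}$, $\lequpl_{\bar\iota}$, $\Cl_{\bar\iota}$ and the fact that $\pi_\iota = \pi^{\bar\iota}_\iota$ is the canonical projection acting coordinatewise. E.g. \ref{pss:is:ext}: if $p \lequpl_\iota q$ in $P_\iota$ with $p, q \in P_\iota \subseteq P_{\bar\iota}$ (via the embedding), then for each coordinate $\nu$, $\pi_\nu(p) \lequpl_\nu \pi_\nu(q)$ by the induction hypothesis $\pss_\nu \is \pss_\iota \is \pss_\nu$ applied appropriately, hence $p \lequpl_{\bar\iota} q$; and \ref{pss:is:pi:mon} is the coordinatewise reading of the definition. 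The clause \ref{pss:is:c} follows since $\Cl_{\bar\iota}$-values are threads of $\Cl_\nu$-values and intersecting threads coordinatewise gives intersecting restrictions. The clauses \ref{pss:is:cdot:lo}/\ref{pss:is:lo:init}/\ref{pss:is:lo:tail!} reduce, coordinate by coordinate, to the corresponding clauses for the extensions $(P_\nu, P_{\nu'})$ along the iteration, using the induction hypothesis and lemma \ref{lem:support:simple} to control supports so that the resulting thread lies in $P_{\bar\iota}$.

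\textbf{The main obstacle.} I expect the delicate point to be the limit step's treatment of \ref{pss:is:cdot:lo} (equivalently \ref{pss:is:lo:tail!}): one needs that forming $q' \cdot p'$ coordinatewise actually produces a legal condition of $P_{\bar\iota}$ — i.e.\ that it is a genuine thread with diagonal support — and that it has the claimed $\leqlol_{\bar\iota}$-relationship to $q \cdot p$. Legality of support should follow from lemma \ref{supp:threads} combined with lemma \ref{supp:mon}, since the operation $\cdot$ does not increase $\lambda$-support beyond that of the inputs, but one has to argue this carefully using that $\pi_\iota$ is $\qcdefG(\min I)$-definable only when needed and otherwise just $\Delta_0$ in the relevant ordinal parameter; the interaction between the support bookkeeping and the coordinatewise $\cdot$ is the part that needs the most attention. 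Everything else is, as the excerpt says of the preceding lemma, a straightforward induction.
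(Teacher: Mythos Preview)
Your proposal is correct and matches the paper's approach exactly: the paper states only that the proof is ``a straightforward induction, which we leave to the reader,'' and you have outlined precisely that induction on $\bar\iota$ (base case from hypothesis, successor by transitivity, limit by unwinding the natural system coordinatewise).

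One small remark on your ``main obstacle'': the concern about $q'\cdot p'$ being a legal condition of $P_{\bar\iota}$ is somewhat overstated. At limit $\bar\iota$, the quasi-closed iteration theorem~\ref{thm:it:qc} (already invoked in the proof of theorem~\ref{thm:it:strat}) establishes that $(P_\iota,P_{\bar\iota})$ is a quasi-closed extension, so in particular $P_\iota$ is a strong sub-order of $P_{\bar\iota}$ and $q'\cdot p'$ is automatically a condition in $P_{\bar\iota}$ --- no separate support argument is needed for \emph{legality}. What remains is only to verify $q'\cdot p' \leqlol_{\bar\iota} q\cdot p$, which by the natural system's definition reduces to checking $\pi_\nu(q'\cdot p') \leqlol_\nu \pi_\nu(q\cdot p)$ for each $\nu<\bar\iota$. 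For $\nu\geq\iota$ this is $q'\cdot\pi_\nu(p') \leqlol_\nu q\cdot\pi_\nu(p)$, which follows from the induction hypothesis $\pss_\iota\is\pss_\nu$; for $\nu<\iota$ it reduces to $\pi_\nu(q')\leqlol_\nu\pi_\nu(q)$ via (\ref{pi:mon}). So the clause is indeed routine.
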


Next, we check that $\pss_\theta$ is a prestratification system (see \ref{def:pss} p.~\pageref{def:pss}):
Conditions~(\ref{up:extra}), (\ref{up}) and (\ref{s:lequp:vert}) are immediate by the definition of $\lequpl_\theta$ and the fact that for each $\iota<\theta$, $\pss_\iota$ is a prestratification system. The proofs resemble that of (\ref{exp}), see below.

The non-trivial condition is  \ref{def:pss}(\ref{density}), \emph{Density}.\label{C:dense}
First we must check that $\ran(\Cl_\theta)$ has size at most $\lambda$: this is because by the last assumption of the theorem and by diagonal support, 
$\supp^\lambda(p)\in[\iota]^{<\lambda}$ for some $\iota<\lambda^+$.

For the more interesting part of the argument, we use density and \emph{Continuity} for the initial segments $P_\iota$, $\iota<\theta$ together with quasi-closure.
Observe that by Theorem~\ref{thm:it:qc}, for any $\iota < \bar\iota \leq \theta$, $(P_\iota,P_\theta)$ is a quasi-closed extension on $I_\theta$.
Say we are given $p \in P_\theta$. 
Let $\sigma = \sigma^\lambda(p)$. We may assume that $\sigma=\theta$, for otherwise we can use induction and \emph{Density} for $P_\sigma$ and are done. 
Thus we can assume both $\lambda_\theta<\lambda$ and $\cof(\theta)<\lambda$, for otherwise, since $P_\theta$ is a diagonal support limit, $\supp^{\lambda}(p)$ is bounded below $\theta$.

So say we are given $\lambda' \in [\lambda_\theta,\lambda)$. We must find $q \leqlo^{\lambda'} p$ such that $q \in \dom(\Cl)$.
Let $\delta=\cof(\theta)$ and assume without loss of generality $\lambda'\geq\delta$ (otherwise we may increase $\lambda'$). 
Fix a normal sequence $(\sigma(\xi))_{\xi\leq\delta}$ such that $\sigma(\delta)=\theta$. 
We inductively construct a $\delta$-adequate sequence $(p_\xi)_{\xi<\delta}$ such that $p_0=p$ and for any $\nu,\xi$ such that $\nu < \xi < \delta$, 
\begin{equation}\label{inDomainC}
 \pi_{\sigma(\nu)}(p_{\xi}) \in \dom(\Cl_{\sigma(\nu)}).
\end{equation}
Fix appropriate $x$ so that the following sequence can be built in a $(\delta,x)$-adequate fashion and such that $(\sigma(\xi))_{\xi\leq\delta}$ is a component of $x$. 
That is, put any set which is mentioned below and would otherwise push up the complexity of the definition into $x$.
We follow the recipe described in Lemma~\ref{adequate} to construct a a $(\delta,x)$-adequate sequence.
Let $p_0 = p$. 
Assuming we have $p_\xi$ and $\bar w \res \xi+1$, find $p_{\xi+1}$ as follows.

We may find $q \in P_\theta$ such that $q \in \D_\theta(\lambda', (x,\bar w \res \xi+1) , p_\xi)$ and $q\leqlo^{\lambda'}p_\xi$. 
Also, there is $q' \leqlo^{\lambda'} \pi_{\sigma(\xi)}(q)$ such that $q \in \dom(\Cl_{\sigma(\xi)})$.
Let $p_{\xi+1}=q'\cdot q$.
Since $\pss_{\sigma(\xi)}\is \pss_\theta$, by \ref{pss:is}(\ref{pss:is:lo:init}), $p_{\xi+1} \in \D_\theta(\lambda',x,p_\xi)$, and also $p_{\xi+1}\leqlo^{\lambda'} p_\xi$. 
Moreover, by \ref{pss:is}(\ref{pss:is:c}), for any $\nu\leq\xi$, 
\[
\pi_{\sigma(\nu)}(p_{\xi+1})\in\dom(\Cl_{\sigma(\nu)}).
\]
Apply the usual trick  to find $w_{\xi+1}$ and $p_{\xi+1}$ as above so as to obtain a $(\delta,x)$-adequate sequence:
see again Lemma~\ref{adequate} (there is no reason to repeat the argument here).

At limit stages $\bar \xi \leq \delta$,  $p_{\bar \xi}$ is a greatest lower bound in $P_\theta$ of the sequence constructed so far. 
It exists by quasi-closure for $P_\theta$.
We show
\begin{equation}\label{inDomainC:limit}
\pi_{\sigma(\bar\xi)}(p_{\bar \xi})\in\dom(\Cl_{\sigma(\bar\xi)}).
\end{equation}
Let $\nu < \bar \xi$ be arbitrary.
As $(P_{\sigma(\nu)},P_\theta)$ satisfies (\ref{qc:glb}), 
\begin{equation}
\pi_{\sigma(\nu)}(p_{\bar \xi}) = \prod_{\xi\in(\nu,\bar\xi)} \pi_{\sigma(\nu)}(p_\xi).
\end{equation}
We want to apply (\ref{continuous}) for $P_{\sigma(\nu)}$.
By choice of $x$, $(\pi_{\sigma(\nu)}(p_\xi))_{\xi\in(\nu,\bar\xi)}$ is a $\lambda'$-adequate sequence; 
and so we may use \emph{Continuity} for $P_{\sigma(\nu)}$. 
Now by induction hypothesis, \eqref{inDomainC} holds for all $\xi \in (\nu,\bar \xi)$ and so by (\ref{continuous}) we have $\pi_{\sigma(\nu)}(p_{\bar \xi}) \in \dom(\Cl_{\sigma(\nu)})$.
As $\nu<\bar \xi$ was arbitrary and by definition of $\Cl_{\sigma(\bar \xi)}$ we conclude that \eqref{inDomainC:limit} holds.
In particular, for the last stage of our construction, we set $\bar\xi=\delta$ in \eqref{inDomainC:limit} and conclude $p_\delta\in\dom(\Cl_{\theta})$, finishing the proof of \emph{Density}. So $\pss_\theta$ is a prestratification system.

\emph{Quasi-closure} was shown in Lemma~\ref{thm:it:qc}.
First we check conditions (\ref{exp})--(\ref{linking}) of \ref{stratified:main}, stratification (see p.~\pageref{stratified:main}). 
\emph{Expansion} (\ref{exp}) is trivial:
If $d \lequpl_\theta r$ and $r \leqlol_\theta 1$, then for all $\iota<\theta$, $\pi_\iota(d) \lequpl_\iota \pi_\iota(r)$ and $\pi_\iota(r) \leqlol_\iota 1$.  By induction, we may assume \emph{expansion} holds for each $P_\iota$, $\iota<\theta$. Thus $d \leq r$.

We show \emph{Interpolation} (\ref{interpolation}) holds.
So fix $d,r \in P_\theta$ such that $d \leq r$ holds. We construct the interpolant $p$ by induction on its initial segments $\pi_\iota(p)$, for $\iota<\theta$. 
Say we have already constructed $\pi_\iota(p)$. 
Use \emph{Coherent Interpolation} for $(P_\iota,P_{\iota+1})$ to obtain $\pi_{\iota+1}(p)$ interpolating $\pi_{\iota+1}(d)$ and $\pi_{\iota+1}(r)$: 
demand that 
\begin{equation}\label{int:above:supp:construction}
\pi_{\iota+1}(p) \leqlo^{<\bar\lambda(\iota)} \pi_{\iota+1}(r)\cdot \pi_\iota(p),
\end{equation}
 where $\bar \lambda(\iota)$ is the maximal $\bar \lambda$ with the property that $\pi_{\iota+1}(d)\leqlo^{<\bar\lambda} \pi_\iota(d)$.\footnote{actually, it would suffice to demand this whenever $\iota \geq \sigma^\lambda(d)$}
We claim that for any $\gamma \in I_\theta$,
\begin{equation}\label{int:above:supp}
 \iota \not \in \supp^\gamma(d)\cup\supp^\gamma(r)\Rightarrow \pi_{\iota+1}(p) \leqlo^\gamma \pi_\iota(p).
\end{equation}
So fix $\gamma\in\Reg$ and assume the hypothesis of \eqref{int:above:supp}. As $d\res\iota+1 \leqlo^\gamma d\res\iota$, by \ref{def:pcs}(\ref{qc:leqlo:vert}) and by definition of $\bar\lambda(\iota)$, we have $\gamma < \bar\lambda(\iota)$. Thus, \eqref{int:above:supp:construction} yields
\begin{equation}\label{int:above:supp:p:pr}
\pi_{\iota+1}(p) \leqlo^{\gamma} \pi_{\iota+1}(r)\cdot \pi_\iota(p).
\end{equation}
Since $r\res\iota+1 \leqlo^\gamma r\res\iota$, by \ref{pss:is}(\ref{pss:is:lo:tail}) we infer 
\begin{equation}\label{int:above:supp:pr:r}
\pi_{\iota+1} (r)\cdot \pi_\iota(p) \leqlo^{\gamma} \pi_\iota(p).
\end{equation}
From (\ref{int:above:supp:p:pr}) and (\ref{int:above:supp:pr:r}) we get
$\pi_{\iota+1}(p) \leqlo^\gamma \pi_\iota(p)$.

At limit stages $\bar\iota \leq \theta$ of the construction of the interpolant $p$, \eqref{int:above:supp} holds for all $\iota<\bar\iota$, and so $\pi_{\bar\iota}(p)$ satisfies the support requirement.
This completes the proof of interpolation.

Now for \emph{Linking} (\ref{linking}).
Say $p \lequpl d$ and fix $s \in \Cl_\theta (p) \cap \Cl_\theta(d)$.
Write $\sigma$ for $\dom(s)$. 
By definition of $\Cl_\theta$, $\sigma = \sigma^\lambda(p)=\sigma^\lambda(d)$.
First, assume $\sigma=\theta$. In this case, we have $\lambda > \lambda_\theta$ by definition of diagonal support.
We construct $w$ by induction on its initial segments $w\res\iota$, for $\iota <\sigma$. 
To start, use \emph{Linking} for $P_1$ to obtain $w\res1$.
Assume we have $w\res\iota$; just use \emph{Coherent Linking} for $(P_\iota,P_{\iota+1})$ to obtain $w\res\iota+1$. At limits $\iota \leq \sigma$, use Lemma~\ref{supp:threads} and the fact that $\cof(\sigma)<\lambda$ and so $w_0 \res \iota \leqlo^{\cof(\sigma)} \pi_\iota(d)$.

Secondly, if $\sigma<\theta$, we can use \emph{Linking} for $P_\sigma$ to obtain a lower bound $w_0$ of $\pi_{\sigma}(p)$ and $\pi_{\sigma}(d)$ with the desired properties.
We claim that $w=w_0 \cdot d$ is the desired condition, i.e., $w \leqlo^{<\lambda} p,d$.
The proof is of course by induction on $\iota \leq \theta$. For limit $\iota$, just use the induction hypothesis and the definition of $\leqlo^{<\gamma}_\iota$.
For the successor case, write
\begin{gather*}
d^*= \pi_{\iota+1}(d),\\
p^*= \pi_{\iota+1}(p),\\
w_0^*=w_0\cdot \pi_\iota(d) \\
\end{gather*} 
and let $\pi$ denote $\pi_\iota$.
We may assume by induction that $w_0^*  \leqlo^{<\lambda} \pi(d^*), \pi(p^*)$. In the following, use that $\pss_{\iota+1}$ is a prestratification system, $\pss_\iota \is \pss_{\iota+1}$ and \ref{pss:is}(\ref{s:ext:exp}) \emph{Coherent Expansion}.

Firstly, since $d^* \leqlol \pi(d^*)$ and $w^*_0 \leq \pi(d^*)$, by \ref{pss:is}(\ref{pss:is:lo:tail}), 
we have
\begin{equation}\label{coh:cent:wd:small:supp}
w^*_0 \cdot d^* \leqlol w^*_0.
\end{equation}
In the same way, we can argue that 
\begin{equation}\label{coh:centlower:triv:p}
w^*_0 \cdot p^* \leqlol w^*_0.
\end{equation}
Equation \eqref{coh:cent:wd:small:supp} and $w^*_0 \leqlo^{<\lambda} \pi(d^*)$ give us
$w^*_0 \cdot d^* \leqlo^{<\lambda} \pi(d^*)$,
and together with $w^*_0 \cdot d^* \leq d^* \leq \pi(d^*)$ and \ref{def:pcs}(\ref{er}) we infer that $w^*_0 \cdot d^* \leqlo^{<\lambda} d^*$.

Since $d^* \lequpl p^*$ and $w^*_0 \leq \pi(d^*), \pi(p^*)$, we may conclude by $\pss_\iota \is \pss_{\iota+1}$ and \ref{pss:is}(\ref{pss:is:cdot:up}) that
\[ w^*_0 \cdot d^* \lequpl w^*_0 \cdot p^*.\]
This together with \eqref{coh:centlower:triv:p}, by \emph{Coherent Expansion} \ref{pss:is}(\ref{s:ext:exp}) yields 
\[ w^*_0 \cdot d^* \leq w^*_0 \cdot p^*.\]
Thus $w^*_0 \cdot d^* \leq p^* \leq \pi(p^*)$ while at the same time $w^*_0 \cdot d^* \leqlol w^*_0 \leqlo^{<\lambda} \pi(p^*)$. 
Another application of \ref{def:pcs}(\ref{er})
yields $w^*_0\cdot d^* \leqlo^{<\lambda} p^*$.
This ends the successor step of the inductive proof that $w \leqlo^{<\lambda} p,d$, and we are done with coherent linking.

Finally, check (\ref{continuous}) \emph{Continuity}: Fix $\lambda^*, \lambda\in I_\theta$ such that $\lambda^*<\lambda$.
Say $\bar p$ and $\bar q$ are $(\lambda^*,x)$-adequate sequences of length $\rho$ with greatest lower bound $p$ and $q$ respectively, and for each $\xi<\rho$, $\Cl_\theta(p_\xi) \cap \Cl_\theta(q_\xi)\neq\emptyset$. We show that $p, q \in \dom(\Cl_\theta)$ and $\Cl_\theta(p)\cap\Cl_\theta(q)\neq\emptyset$.

For $\nu < \rho$, let $\sigma(\nu)= \sigma^\lambda(p_\nu)$.
Look at the sequence $\bar p^\nu = (p^\nu_\xi)_{\xi<\rho}$ of conditions in $P_{\sigma(\nu)}$, defined by
$p^\nu_\xi = 1_{P_{\sigma(\nu)}}$ for $\xi < \nu$ and $p^\nu_\xi = \pi_{\sigma(\nu)}(p_\xi)$ for $\xi\in [\nu,\rho)$.
As in the proof of Theorem~\ref{thm:it:qc}, it is easy to see $p^\nu_\xi = G(\bar w\res\xi+1)$ for some $\qcdefG(\lambda^*\cup\{x\})$ function $G$, where $\bar w$ is a canonical witness and strategic guide for $\bar p$.
Also as in the proof of Theorem~\ref{thm:it:qc}, $\bar p^\nu$ is $(\lambda^*,x)$-adequate. Its greatest lower bound is $\pi_{\sigma(\nu)}(p)$.
Thus by \emph{Continuity} for $P_{\sigma(\nu)}$, $\pi_{\sigma(\nu)}(p) \in \dom\Cl_{\sigma(\nu)}$.

Observe that by definition of $\Cl_\theta$, we have $\sigma^\lambda(p_\xi)=\sigma^\lambda(q_\xi)$ for all $\xi < \rho$.
Analogously, define adequate sequences $\bar q^\nu$ in $P_{\sigma(\nu)}$ with greatest lower bound $\pi_{\sigma(\nu)}(q)$.
By \emph{Continuity} for $P_{\sigma(\nu)}$, we infer
$\Cl_{\sigma(\nu)}(\pi_{\sigma(\nu)}(p))\cap\Cl_{\sigma(\nu)}(\pi_{\sigma(\nu)}(q))\neq\emptyset$ for each $\nu < \rho$.
Letting $\sigma = \sup_{\nu<\rho}\sigma(\nu)$, we infer
$\Cl_{\sigma}(\pi_{\sigma}(p))\cap\Cl_{\sigma}(\pi_{\sigma}(q))\neq\emptyset$, by the definition of $\Cl_{\sigma}$.
As $\sigma^\lambda(p), \sigma^\lambda(q) \leq \sigma$ by Lemma~\ref{supp:glb}, this means
$\Cl_{\theta}(p)\cap\Cl_{\theta}(q)\neq\emptyset$, by the definition of $\Cl_{\theta}$.
We are done with the proof of \emph{Continuity}.
\end{proof}

\begin{cor}\label{cor:it:strat:comp}
Theorem~\ref{thm:it:strat:comp} holds, that is, iterations with stratified components and diagonal support are stratified.
\end{cor}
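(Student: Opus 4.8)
The statement, Corollary \ref{cor:it:strat:comp}, asserts that Theorem \ref{thm:it:strat:comp} follows from the machinery just developed; that is, an iteration $\bar Q^\theta = \langle P_\nu, \dot Q_\nu \rangle_{\nu<\theta}$ with stratified components and diagonal support has $P_\theta$ stratified. The plan is to deduce this from the combination of Lemma \ref{stratified:comp:implies:ext} (composition of stratified forcing is a stratified extension) and Theorem \ref{thm:it:strat} (the iteration theorem for stratified extensions). The key observation is that ``having stratified components'' is a special case of ``being built from stratified extensions'': if $P_\nu$ forces $\dot Q_\nu$ to be stratified, then by Lemma \ref{stratified:comp:implies:ext} the pair $(P_\nu, P_{\nu+1}) = (P_\nu, P_\nu * \dot Q_\nu)$ is a stratified extension, as witnessed by the system $\pss_{\nu+1}$ obtained from $\pss_\nu$ and the stratification of $\dot Q_\nu$ via the construction in the proof of Theorem \ref{stratified:composition}.

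First I would fix the sequence $\bar\lambda = (\lambda_\iota)_{\iota\leq\theta}$ of regulars: here all $\lambda_\iota$ equal the single fixed $\lambda_0$ coming from the stratification (so $\bar\lambda$ is constant and trivially non-decreasing), and $\lambda_\theta = \lambda_0$ as well. Then I would verify, by induction on $\iota \leq \theta$, that each $P_\iota$ carries a \pre stratification system: at successors this is the system from Lemma \ref{stratified:comp:implies:ext}, at limits it is the natural system of relations on the $\lambda_\iota$-diagonal support limit (Definition \ref{natural:pss}), which is exactly what Definition \ref{def:it:strat:comp} of diagonal support limit produces. One must check the three hypotheses of Theorem \ref{thm:it:strat}: hypothesis (1), that $(P_\iota, P_{\iota+1})$ is a stratified extension on $I_\iota$, is exactly Lemma \ref{stratified:comp:implies:ext}; hypothesis (2), that at limits $P_{\bar\iota}$ carries the natural system and is the $\lambda_{\bar\iota}$-diagonal support limit of $\bar Q^{\bar\iota}$, holds because an iteration with diagonal support (Definition \ref{def:it:strat:comp}(3)) uses precisely the diagonal support limit at every limit stage; hypothesis (3), that for each regular $\lambda < \lambda^*$ there is $\iota < \lambda^+$ with $\supp^\lambda(p) \subseteq \iota$ for all $p \in P_\theta$, follows from the support condition $\supp_\lambda(p) \subseteq \bar\lambda$ built into Definition \ref{def:it:strat:comp}(1) together with the observation (made in the remark after Theorem \ref{thm:it:strat}) that if $\lambda < \lambda_\iota$ then $\iota \notin \supp^\lambda(p)$. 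Once all three hypotheses are in place, Theorem \ref{thm:it:strat} directly yields that $P_\theta$ is stratified on $I_\theta$, and since $I_\theta$ contains all the regulars of interest above $\lambda_0$, and $P_\theta$ is trivially stratified above $\card{P_\theta}$ by Example \ref{exm:centered}, one concludes $P_\theta$ is stratified in the sense required.

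The only mildly delicate points are bookkeeping ones: reconciling the notation $\supp_\lambda(p)$ from Definition \ref{def:it:strat:comp} with $\supp^\lambda(p)$ from Definition \ref{def:diagsupp} (these agree because $\pi_{\iota+1} \not\leqlol_{\iota+1} \pi_\iota(p)$ is equivalent, via the composition system, to $p\res\iota \not\forces_\iota p(\iota) \dleqlol_\iota \dot 1_\iota$), and checking that ``stratified components'' entails that the systems $\pss_\iota$ cohere in the sense of $\pss_\iota \is \pss_{\iota+1}$—but this coherence is built into the construction of $\pss_{\iota+1}$ from $\pss_\iota$ in Theorem \ref{stratified:composition} and is precisely what Lemma \ref{stratified:comp:implies:ext} records. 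I do not expect a serious obstacle: the whole point of introducing stratified extension and proving Theorem \ref{thm:it:strat} in the greater generality was to subsume Theorem \ref{thm:it:strat:comp}, so the corollary is genuinely a matter of unwinding definitions and citing the two main results. If anything, the subtlety to watch is making sure the diagonal support limit of Definition \ref{def:it:strat:comp} literally coincides with the $\lambda_\theta$-diagonal support limit of Definition \ref{def:diagsupp} in the constant-$\bar\lambda$ case, which it does since the size condition ``$\supp_\lambda(p)$ has size less than $\lambda$ for each regular $\lambda$'' is exactly the requirement there, and $\sigma^{\lambda_\theta}(p) < \theta$ follows from the boundedness of supports.
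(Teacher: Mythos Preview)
Your proposal is correct and follows exactly the same approach as the paper: invoke Lemma~\ref{stratified:comp:implies:ext} to see that each successor step $(P_\iota,P_{\iota+1})$ is a stratified extension, then apply Theorem~\ref{thm:it:strat}. The paper's own proof is a terse two-sentence version of what you wrote; your additional bookkeeping (reconciling $\supp_\lambda$ with $\supp^\lambda$, matching the diagonal-support definitions, and verifying hypothesis (3) via the clause in Definition~\ref{def:it:strat:comp}(1)) is accurate and fills in details the paper leaves implicit.
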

\begin{proof}
By Lemma~\ref{stratified:comp:implies:ext}, composition is an example of stratified extension.
By Theorem~\ref{thm:it:strat}, since the iteration has diagonal support and its initial segments form a sequence of stratified extensions, the whole iteration is stratified. 
\end{proof}

\section{Products}\label{sec:prod}

So far, stratified extension has only given us an overly complicated proof that iterations with stratified components are stratified. Here is a first non-trivial application: as a consequence of the next lemma, one can mix composition and products of stratified forcing freely in iterations with diagonal support, and the resulting iteration will be stratified.

\begin{lem}\label{products:ext}\index{product (forcing)!is a stratified extension}
If $P$ and $Q$ are stratified on $I$, $(P, P \times Q)$ is a stratified extension (on $I$).
\end{lem}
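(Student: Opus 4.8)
The plan is to verify each clause of Definition~\ref{def:s:ext} for the pair $(P, P\times Q)$ by exhibiting an explicit \pre stratification system $\pss_1$ on $P\times Q$, in close analogy with the proof of Lemma~\ref{stratified:comp:implies:ext}. Note that $P$ is canonically a strong sub-order of $P\times Q$ via $p\mapsto (p,1_Q)$, and the strong projection $\pi\colon P\times Q\to P$ is simply $\pi(p,q)=p$, with $(\bar p,\bar q)\cdot(p,q) = (\bar p, q)$ when $\bar p\le p$. This $\pi$ is $\Delta_0$, hence $\qcdefG$-definable. Now write stratification systems $\pss^P = (\D^P, c^P, \leqlol_P, \lequpl_P, \Cl^P_\lambda)_{\lambda\in I}$ for $P$ and $\pss^Q$ for $Q$. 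Since $P\times Q = P * \check Q$ where $\check Q$ is the (trivial) $P$-name for $Q$, and $P$ forces $\check Q$ to be stratified on $I$, one is tempted to simply invoke Theorem~\ref{stratified:composition} plus Lemma~\ref{stratified:comp:implies:ext}. The subtlety — and the reason this lemma is stated separately — is that $P$ need not be stratified \emph{below} $\lambda_0$ in a way compatible with $Q$; we must check the product makes sense on the \emph{same} interval $I$ for both factors, and that diagonal support will later behave correctly (which is handled by the explicit simple form, see below).

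The concrete approach: for $\lambda\in I$, set $(p,q)\bar\leqlol(p',q')$ iff $p\leqlol_P p'$ and $q\leqlol_Q q'$; set $(p,q)\bar\lequpl(p',q')$ iff $p\lequpl_P p'$ and $q\lequpl_Q q'$ (note there is no forcing condition to worry about here, unlike the genuine two-step iteration, so the definition is cleaner than in \ref{stratified:composition}); set $\bar\D(\lambda,x,(p,q)) = \{(p',q') : p'\in\D^P(\lambda,x,p),\ q'\in\D^Q(\lambda,x,q)\}$; and for $\Cl$, first handle $\lambda>\min I$ via guessing systems exactly as in \ref{stratified:composition} (the guessing-system machinery is needed because of \emph{Continuity} at limits), but now the $\dot\C$-values become actual $\C^Q$-values, so the guessing trees only have to organise the $P$-side adequate sequences. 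One then checks $\pss^P\is\pss_1$: conditions \ref{pcs:is}(\ref{ext})--(\ref{D:coh}) and \ref{pss:is}(\ref{pss:is:cdot:lo})--(\ref{pss:is:c}) all reduce to the corresponding axioms for $P$ and $Q$ separately, using that $\pi$ forgets the $Q$-coordinate; in particular \ref{pss:is}(\ref{pss:is:cdot:lo}) holds because $(q'\cdot p')$ in the product is $(\,\cdot_P$ on the first coordinate, unchanged second coordinate$)$. Then verify \ref{ext:redundant} and \ref{ext:glb} — these are literally the quasi-closed-extension conditions, and the proof is the same as the relevant part of Lemma~\ref{qc:comp:implies:ext}, with the observation that greatest lower bounds in $P\times Q$ are computed coordinatewise, and a canonical witness for $(p_\xi,q_\xi)_{\xi<\rho}$ yields canonical witnesses for both $(p_\xi)_\xi$ and $(q_\xi)_\xi$ (projection to a coordinate is $\Delta_0$). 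Finally check \ref{s:ext:exp}, \ref{s:ext:int}, \ref{s:ext:cent}: for \emph{Coherent Expansion}, $p\bar\lequpl d$, $d\bar\leqlol(\pi(d),1_Q)$ wait — more precisely $d\leqlol_{\text{2nd coord}}$ trivial and $\pi(p)\le\pi(d)$ forces $p\le d$ by \emph{Expansion} for $Q$ on the second coordinate and triviality on the first; \emph{Coherent Interpolation} and \emph{Coherent Centering} follow by running \emph{Interpolation} resp.\ \emph{Centering} for $Q$ on the second coordinate while keeping the $P$-coordinate fixed at the prescribed value $p_0$ resp.\ $w_0$.

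\textbf{Main obstacle.} The genuine work is \emph{Continuity} for $\Cl$ at limit stages, which is why one cannot avoid reintroducing guessing systems: when $(p_\xi,q_\xi)_{\xi<\rho}$ is $\lambda'$-adequate and each $(p_\xi,q_\xi)\in\dom(\bar\Cl)$ with a common colour with $(p'_\xi,q'_\xi)$, the greatest lower bound's $\Cl$-value must be reconstructed from the \emph{sequence} of colours of the $q_\xi$, not a single colour — exactly the phenomenon discussed before Definition~\ref{gs}. So the bulk of the proof is re-running the \emph{Continuity} argument of Theorem~\ref{stratified:composition} in the (simpler) product setting, observing that the guessing trees and the functions $\gH,\gl$ do not depend on the $Q$-components of the witnessing sequences, so that the same tree works for both $\bar p$ and $\bar q$. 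Everything else is a routine coordinatewise unwinding of the axioms. A cleaner alternative, which I would mention but not pursue in detail, is: since $P\times Q$ is literally $P*\check Q$ and $\check Q$ is a ground-model (hence $P$-name for a) stratified forcing on $I$, Lemma~\ref{stratified:comp:implies:ext} applies verbatim once we note its hypothesis ``$\forces_P \dot Q$ stratified on $I$'' is met trivially by $\dot Q=\check Q$; the present lemma is then a direct corollary, and the only thing to add is the remark that the resulting $\pss_1$ has the explicit product form above, which is what makes it usable in diagonal-support iterations.
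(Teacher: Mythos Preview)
Your proposal is correct but takes a more roundabout path than the paper on the one point you flag as the ``main obstacle.'' In fact that obstacle does not exist for products.

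The paper defines $\bCl$ in the simplest possible way: $s\in\bCl(p,q)$ iff either $s\in\Cl_P(p)$ and $q\leqlol 1_Q$, or $s=(\chi,\zeta)$ with $\chi\in\Cl_P(p)$ and $\zeta\in\Cl_Q(q)$. No guessing systems. \emph{Continuity} is then a one-line application of Continuity for $P$ and $Q$ separately: if $(p_\xi,q_\xi)_{\xi<\rho}$ is $\lambda'$-adequate in $P\times Q$, then projecting (which is $\Delta_0$) gives that $(p_\xi)_\xi$ is $\lambda'$-adequate in $P$ and $(q_\xi)_\xi$ is $\lambda'$-adequate in $Q$; Continuity for each factor yields a common colour for the limit in each coordinate, and pairing gives a common $\bCl$-colour.

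The reason guessing systems were needed in Theorem~\ref{stratified:composition} is that there $\dot\C^\lambda(\dot q)$ is a $P$-\emph{name}, and one only controls it via a spectrum; spectra need not be preserved along adequate sequences. In a product, $\Cl_Q(q)$ is an honest ground-model value, so the difficulty you quote before Definition~\ref{gs} simply never arises. Your sentence ``one cannot avoid reintroducing guessing systems'' is therefore mistaken, and your identification of Continuity as the bulk of the work is off --- the paper dismisses it in half a line.

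Your alternative (invoke Lemma~\ref{stratified:comp:implies:ext} via $P\times Q = P*\check Q$) is also correct and is a valid shortcut, though the resulting $\bCl$ would carry the guessing-system baggage unnecessarily. The paper's direct product definition is what one actually wants downstream (e.g.\ for checking the $\is$-conditions and for the explicit use in amalgamation), so the extra explicitness is not wasted. Everything else in your plan --- the coordinatewise $\leqlol$, $\lequpl$, $\D$, the verification of $\pss_P\is\pss_1$, the quasi-closed-extension clauses via coordinatewise limits plus Lemma~\ref{bar:lambda:adeq}, and coherent expansion/interpolation/centering via the $Q$-axioms on the second coordinate --- matches the paper exactly.
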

\begin{proof}
The proof is entirely as you expect.
Fix prestratification systems $\pss_P = (\D_P, \param_P, \leqlol_P,\lequpl_P,\Cl_P)_{\lambda\in I}$ and $\pss_Q= (\D_Q, \param_Q, \leqlol_Q,\lequpl_Q,\Cl_Q)_{\lambda\in I}$.
We now define a stratification system $\bar \pss = (\bar\D, \bar\param,\bleqlol,\blequpl,\bCl)_{\lambda\in I}$ on $P\times Q$ in the most natural way:
let $\bar \param = (\param_P, \param_Q)$, 
$\bar \D(\lambda,x,(p,q))= \D_P(\lambda,x,p) \times \D_Q(\lambda,x,q)$ and let
\begin{align*}
(p,q) \bleqlol (\bar p,\bar q) \iff &p \leqlol_P \bar p \text{ and } q \leqlol_P \bar q \\
(p,q) \blequpl (\bar p,\bar q) \iff &p \lequpl_P \bar p \text{ and } q \lequpl_P \bar q \\
s \in \bCl(p,q) \iff & \Big[ s \in \Cl_P(p) \text{ and } q \leqlol 1_Q \Big] \text{ or } \Big[ s=(\chi,\zeta)\\
                     &  \text{ where }  \chi \in \Cl_P(p) \text{ and } \zeta \in \Cl_Q(q) \Big].\\
\end{align*}
That $\bar\pss$ is a prestratification system requires but a glance at the definitions (see \ref{def:pcs}, p.~\pageref{def:pcs} and \ref{def:pss}, p.~\pageref{def:pss}). 
For example, \emph{Continuity}, (\ref{continuous}) is a straightforward application of \emph{Continuity} for both $P$ and $Q$.    

The same holds for (\ref{ext}), (\ref{pi:mon}) and (\ref{F:coh}) (see p.~\pageref{pss:is} for the definition of $\pss_P \is \bar \pss$, and see p.~\pageref{pcs:is} for (\ref{ext}), (\ref{pi:mon}) and (\ref{F:coh}).
For the following, let $(p,q)\in P\times Q$, $w\in P$.
For your entertainment, we check \ref{pss:is}(\ref{pss:is:lo:init}) (see page~\pageref{pss:is:lo:init}).
Say $w \leqlol_p p$. Then clearly $(w,q) \bleqlol (p,q)$, done.
Now \ref{pss:is}(\ref{pss:is:lo:tail}):
say $w \leq p$ and $(p,q)\bleqlol(p,1_Q)$. This means $q \leqlol_Q 1_Q$ and so $(w,q)\bleqlol (w,1_Q)$, which is what we wanted to prove.

For the next two conditions, let $\bar d= (d,d^*) , \bar r=(r,r^*) \in P \times Q$ satisfy $\bar d \blequpl \bar r$.
We jump ahead and check (\ref{s:ext:exp}) of \ref{def:s:ext} (see p.~\pageref{def:s:ext}): say $d \leq r$ and $\bar r \bleqlol (r,1_Q)$. Then $r^* \leqlol_Q 1_Q$ and $d^* \lequpl_Q r^*$ by assumption, so by \ref{def:pss}(\ref{exp}) for $Q$, $d^* \leq r^*$ and thus $\bar d \leq \bar r$.

Let's check (\ref{pss:is:cdot:up})). Say $w \leq d$ and $w \leq r$. 
By \ref{def:pss}(\ref{up:extra}) for $P$, $w \lequpl_P w$ and so $(w,d^*)\blequpl (w,r^*)$.
We omit the rest of \ref{pss:is} and conclude that $\pss_P \is \bar\pss$.

The most interesting part of the present proof is that of \emph{quasi-closed extension}\index{product (forcing)!is a quasi-closed extension} (Definition~\ref{qc:ext}, see p.~\pageref{qc:glb}), of which we check (\ref{ext:glb}) , leaving (\ref{ext:redundant}) to the reader.
So say $(p_\xi,q_\xi)_{\xi<\rho}$ is $(\lambda,\bar\lambda,x)$-adequate and $(p_\xi)_{\xi<\rho}$ has a greatest lower bound $p$.
Since the projection from $P\times Q$ to $P$ is obviously very simply definable, $\bar q= (q_\xi)_{\xi<\rho}$ is $(\lambda,\bar\lambda,x)$-adequate as well (by Lemma~\ref{lem:adeq:proj}). If $\lambda=\bar\lambda$, we are done as $\bar q$ is $\lambda$-adequate and $Q$ is quasi-closed.
If on the other hand, $\lambda<\bar\lambda$, we have that for all $\xi<\rho$, $q_\xi\leqlo^{\bar\lambda}_Q 1_Q$. By Lemma~\ref{bar:lambda:adeq},
$\bar q$ is $\bar\lambda$-adequate. Moreover, if $q$ is a greatest lower bound of $\bar q$, by quasi-closure for $Q$, we have $q \leqlo^{\bar\lambda}_Q 1_Q$. So $(p,q) \bar\leqlo^{\bar\lambda}(p,1_Q)$ and we are done.

To conclude that $(P, P\times Q)$ is a stratified extension, we check the remaining conditions of \ref{def:s:ext} (see p.~\pageref{def:s:ext}).
\emph{Coherent Interpolation}, \ref{def:s:ext}(\ref{s:ext:int}) and \emph{Coherent Linking}, \ref{def:s:ext}(\ref{s:ext:cent}) are identical to interpolation and linking for $Q$ in this context.
\end{proof}

\section{Stable Meets for Strong Suborders}\label{sec:stm}

In the next section, we introduce the operation of amalgamation and show that the amalgamation of a stratified forcing $P$ is a stratified extension of $P$.
In that proof, we must show that a certain dense subset of $P$ is closed under taking meets with conditions from an ``initial segment'' (or rather, a strong suborder) $Q$ (see Lemma~\ref{Q:cdot:D}, p.~\pageref{Q:cdot:D}). This will be facilitated by  the so-called $Q$-stable meet operation\index{stable meet}\index[notation]{Q-stable meet@$Q$-stable meet}\index[notation]{wedge (stable meet)@$\stm$ (stable meet)}\index[notation]{  wedge (stable meet)@$\stm$ (stable meet)} $p \stm_Q r$, which we introduce in the present section.
In a standard iteration
this is a simple operation: $p \stm_Q r$ ``starts like $p$ on $Q$'' and then continues ``like $r$'' (as closely as possible) on $\quot{P}{Q}$. 
What makes it useful is the following: if $r, p \in P$, $r \leq p$ and $r$ is moreover ``a direct extension on the tail \quot{P}{Q}'' of $p$, then $p\stm_Q r$ is a \emph{de-iure} direct extension of $p$, and moreover $r$ can be obtained straightforwardly from $p\stm_Q r$.

We now give a formal definition of such an operation, and then show that we can always define an operation $\stm$ on products and compositions. 
Then we show how to define $\stm$ for infinite iterations. In the next section we shall see we also have a stable meet operator for amalgamation.
Of course amalgamation necessarily introduces an element of recursion into the definition of this operation;
thus we are forced into this formal, inductive approach (rather than defining $\stm$ directly on the iteration used in the main theorem) by our choice to present a general preservation theorem (rather than simply showing the requisite properties hold of the specific iteration used to prove Theorem~\ref{p.t.main}).

Let $Q$ be a strong suborder of $P$, and let
$\pi\colon P \rightarrow Q$ be the strong projection. 
Say $\pss = (\hdots, \leqlol, \hdots)_{\lambda\in I}$ is a prestratification system on $P$.
\begin{dfn}\label{schnitzel}
We call $\stm$ a \emph{$Q$-stable meet operator on $P$} with respect to $\pss$ or a \emph{stable meet on $(Q,P)$} if and only if 
\begin{enumerate}
\item $\stm\colon (p,r) \mapsto p \stm r$ is a function with $\dom(\stm)\subseteq P^2$ and $\ran(\stm)\subseteq P$.
\item $\dom(\stm)$ is the set of pairs $(p, r) \in P^2$ such that $r \leq p$ and 
\begin{equation}\label{schnitzel:dom}
\exists \lambda \in I \quad  r \leqlol \pi(r)\cdot p
\end{equation}
\item Whenever $r\leq p$ and $r \leqlol \pi(r)\cdot p$, the following hold:
\begin{gather}
p \stm r \leqlol p \label{schnitzel:leqlol}\\
\pi(p \stm r)=\pi(p)\label{schnitzel:proj}\\
\pi(r)\cdot (p \stm r) \approx r \label{schnitzel:meet}
\end{gather}
\end{enumerate}
As usual, we don't mention $\pss$ when context permits.
\end{dfn}

A few remarks are in order to clarify this definition.
\begin{itemize}
\item We certainly don't have $p \stm r = r \stm p$.
\item The gist of \eqref{schnitzel:dom} is that we try to express that $\pi(r)$ forces that in $\quot{P}{Q}$,  the ``tail'' of $r$ is a direct extension (in the sense of $\leqlol$) of $p$; \eqref{schnitzel:dom} captures the essence of this even when $\quot{P}{Q}$ is not stratified. 
\item Observe that $r \leq p$ implies $\pi(r)\leq\pi(p)$ and so $\pi(r)\cdot p\in P$; thus \eqref{schnitzel:dom} makes sense.
\item By $\pi(r)\cdot (p \stm r) \approx r $ we mean that $\pi(r)\cdot (p \stm r) \leq r $ and $\pi(r)\cdot (p \stm r) \geq r $.
Admittedly, we are very careful here. %
\item Observe that there could be more than one map $\stm$ satisfying the definition.
Intuitively, this is because \eqref{schnitzel:leqlol} is not strong enough to fully determine $p \stm r$ on $\pi(p)-\pi(r)$.
If we add to the above the requirement that $-\pi(r) \cdot (p \stm r) = p$ hold in $\ro(P)$, this uniquely determines $\stm$. In fact this entails 
\begin{equation}\label{schnitzel:var}
p \stm r= r + (p-\pi(r))
\end{equation}
in $\ro(P)$.\footnote{In all the applications we have in mind, the natural definition of $\stm$ satisfies \eqref{schnitzel:var}---provided we work with the \emph{separative quotient} of $P$.}
For our purposes, this point is moot.
\end{itemize} 

That we have chosen to define stable meet operators abstractly for arbitrary iterations may give the concept an unnecessarily difficult appearance.
We give some examples showing how simple it really is.
\begin{lem}\label{stable:meet:products}\index{product (forcing)!stable meet}
Say $\bar P = Q_0 \times Q_1$, and say for each $\lambda\in I$, $\bleqlol$ is obtained from $\leqlol_0$ and $\leqlol_1$ as in the proof of \ref{products:ext} (where of course $\leqlol_i \subseteq (Q_i)^2$).
Then there is a stable meet operator on $(Q_0,\bar P)$ with respect to $\bleqlol$.
\end{lem}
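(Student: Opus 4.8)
\textbf{Proof plan for Lemma \ref{stable:meet:products}.}

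The plan is to define $\stm$ by the obvious ``copy-paste'' recipe on the product and then verify the three conditions of Definition \ref{schnitzel} directly. Concretely, given $p=(p_0,p_1)$ and $r=(r_0,r_1)$ in $\bar P = Q_0\times Q_1$ with $r\leq p$, one has $\pi(r)=(r_0, 1_{Q_1})$ and $\pi(r)\cdot p = (r_0, p_1)$, so the hypothesis \eqref{schnitzel:dom} unwinds (using the definition of $\bleqlol$ from the proof of \ref{products:ext}) to: there is $\lambda\in I$ with $r_0 \leqlo^\lambda_0 r_0$ (automatic, since $\leqlo^\lambda_0$ is a preorder) and $r_1 \leqlo^\lambda_1 p_1$. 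So the domain of $\stm$ is exactly the set of pairs $(p,r)$ with $r\leq p$ and $r_1\leqlol_1 p_1$ for some $\lambda$. I would then simply set
\[
p \stm r = (p_0, r_1),
\]
i.e. keep the $Q_0$-coordinate of $p$ and take the $Q_1$-coordinate of $r$. This is the natural instance of ``start like $p$ on $Q$, continue like $r$ on the tail'', and since the product is literally a product it requires no recursion.

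Next I would check the three clauses. For \eqref{schnitzel:proj}: $\pi(p\stm r) = (p_0, 1_{Q_1}) = \pi(p)$, immediate. For \eqref{schnitzel:meet}: $\pi(r)\cdot (p\stm r) = (r_0,1_{Q_1})\cdot(p_0,r_1) = (r_0\cdot p_0, r_1)$, and since $r\leq p$ gives $r_0\leq p_0$, we have $r_0\cdot p_0 \approx r_0$, hence $\pi(r)\cdot(p\stm r)\approx (r_0,r_1)=r$, as required. For \eqref{schnitzel:leqlol}: we must see $(p_0,r_1)\bleqlol (p_0,p_1)$, which by the definition of $\bleqlol$ amounts to $p_0\leqlol_0 p_0$ (true, preorder) and $r_1\leqlol_1 p_1$, which is exactly the hypothesis \eqref{schnitzel:dom} rewritten. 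I should also note that $p\stm r\leq p$, i.e. $(p_0,r_1)\leq (p_0,p_1)$, which holds because $r_1\leqlol_1 p_1$ implies $r_1\leq_{Q_1} p_1$ by \ref{def:pcs}(\ref{qc:preorder}). Finally I would remark $(p,r)\in\dom(\stm)$ indeed implies $r\leq p$ (part of the definition of the domain), so the recipe is well-defined on all of $\dom(\stm)$.

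There is essentially no obstacle here: the lemma is the base case on which the inductive/recursive constructions of $\stm$ for compositions and amalgamation (in the remainder of the section) are built, and for a genuine product every clause reduces to a triviality about the coordinatewise preorders, using only that $\leqlol_0$ is reflexive and that $\bleqlol$ was \emph{defined} coordinatewise. The only place demanding a line of care is confirming that the domain condition \eqref{schnitzel:dom}, written with $\pi(r)\cdot p$, really does collapse to the single coordinate statement $r_1\leqlol_1 p_1$; this follows by computing $\pi$ and $\cdot$ on the product and invoking the definition of $\bleqlol$ from the proof of \ref{products:ext}. Once that identification is made, clauses \eqref{schnitzel:leqlol}--\eqref{schnitzel:meet} are verified by inspection as above.
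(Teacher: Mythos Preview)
Your proposal is correct and follows exactly the paper's approach: define $(p_0,p_1)\stm(r_0,r_1)=(p_0,r_1)$, unwind the domain condition \eqref{schnitzel:dom} to $r_1\leqlol_1 p_1$, and verify the three clauses by direct coordinatewise computation. In fact you are more thorough than the paper, which only spells out \eqref{schnitzel:leqlol} and leaves \eqref{schnitzel:proj} and \eqref{schnitzel:meet} to the reader.
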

\begin{proof}
Let $\pi$ denote the projection to the first coordinate. 
Define $\dom(\stm)$ to be the set of pairs prescribed in Definition~\ref{schnitzel}. 
Say $r=(r_0,r_1) \in Q_0\times Q_1$ and $p=(p_0,p_1)\in Q_0\times Q_1$ are such that $(r,p)\in\dom(\stm)$. 
Define 
\[(p_0,p_1) \stm (r_0,r_1) = (p_0,r_1).\] 
As $(r,p) \in \dom(\stm)$, we can fix $\lambda$ such that  $(r_0,r_1) \bleqlol \pi(r)\cdot p=(r_0,p_1)$, and so $r_1 \leqlo_1 p_1$. 
Thus $(p_0, r_1) \bleqlol (p_0,p_1)$. 
To check the other properties is left to the reader.
\end{proof}
\begin{lem}\label{stable:meet:comp}\index{composition (forcing)!stable meet}
Say $\bar P = Q * \dot R$, and say for each $\lambda\in I$, $\bleqlol$ is obtained from $\leqlol$ and $\dleqlol$ as in the proof of \ref{stratified:composition}.
Then there is a stable meet $\stm$ on $(Q_0,P)$ with respect to $\bleqlol$.
\end{lem}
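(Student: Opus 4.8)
The plan is to proceed in close analogy with Lemma~\ref{stable:meet:products}, only now the ``tail'' is a name $\dot R$ rather than a standing forcing $Q_1$, so we must use fullness to produce a name for the meet in the extension. First I would fix the projection $\pi\colon Q*\dot R \to Q$ (which, as remarked after Definition~\ref{indie} and in Lemma~\ref{strong:vs:canonical:proj}, is just the map $(q,\dot r)\mapsto q$, and is a strong projection with $(q,\dot r)\cdot q' = (q',\dot r)$ for $q'\le q$). I would then let $\dom(\stm)$ be exactly the set of pairs prescribed in Definition~\ref{schnitzel}, and for $r=(q_r,\dot r)$ and $p=(q_p,\dot p)$ with $r\le p$ and $r\bleqlol \pi(r)\cdot p = (q_r,\dot p)$, I would define
\[
p \stm r = \bigl(q_p,\, \dot s\bigr),
\]
where $\dot s$ is a $Q$-name chosen by fullness so that $q_r$ forces $\dot s = \dot r$ and $1_Q$ forces $\dot s = \dot p$ below any condition incompatible with $q_r$; concretely, pick $\dot s$ with $q_r \forces_Q \dot s=\dot r$ and $\forall q'\perp q_r\colon q'\forces_Q \dot s = \dot p$ (this is the name-level analogue of the pointwise prescription $(p_0,r_1)$ in the product case, and matches \eqref{schnitzel:var} on the separative quotient).

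The key verifications are the three displayed requirements \eqref{schnitzel:leqlol}, \eqref{schnitzel:proj} and \eqref{schnitzel:meet}. For \eqref{schnitzel:proj} there is nothing to do: $\pi(p\stm r) = q_p = \pi(p)$ by construction. For \eqref{schnitzel:leqlol}, I must show $(q_p,\dot s)\bleqlol (q_p,\dot p)$, i.e. $q_p\forces_Q \dot s\, \dleqlol \dot p$. Since $r\bleqlol (q_r,\dot p)$ gives $q_r\forces_Q \dot r\,\dleqlol\dot p$, and $q_r\forces \dot s=\dot r$, we get $q_r\forces \dot s\,\dleqlol\dot p$; below conditions incompatible with $q_r$ we have $\dot s=\dot p$, and $\dleqlol$ is reflexive (it is a \pre order by \ref{qc:preorder}), so $q'\forces \dot s\,\dleqlol\dot p$ for those $q'$ as well. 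Combining, $1_Q\forces \dot s\,\dleqlol \dot p$, hence $p\stm r \bleqlol p$ (the first coordinates are literally equal, which trivially gives $q_p\leqlol q_p$). For \eqref{schnitzel:meet}, note $\pi(r)\cdot(p\stm r) = (q_r,\dot s)$ by the formula for $\cdot$ in a composition, and $q_r\forces \dot s=\dot r$, so $(q_r,\dot s)\approx (q_r,\dot r) = r$; in fact they are literally equal as conditions once we pass to the separative quotient, which is all that is asked. Finally one checks $\dom(\stm)$ is as prescribed: if $r\le p$ and $r\bleqlol\pi(r)\cdot p$ then $(r,p)$ is handled above, and conversely the definition of $\dom(\stm)$ in \ref{schnitzel} is by fiat the set of such pairs, so there is nothing further to prove.

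I do not expect a serious obstacle here; the only mild subtlety is the use of fullness to pin down $\dot s$ and the pedantic but routine bookkeeping that $q_r\forces \dot s=\dot r$ together with reflexivity of $\dleqlol$ off $q_r$ is enough to conclude $1_Q\forces \dot s\,\dleqlol\dot p$ (rather than needing $\dleqlol$ to be genuinely globally definable in a stronger sense). If one prefers to be fully explicit, one can instead take $\dot s$ to be the mixture $\dot s = \{(\check q', \tau)\colon q'\le q_r,\ (q',\tau)\in\dot r\}\cup\{(\check q',\tau)\colon q'\perp q_r,\ (q',\tau)\in\dot p\}$, which makes the two forcing facts about $\dot s$ immediate and avoids even invoking fullness; the rest of the argument is unchanged. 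This is the name-level incarnation of \eqref{schnitzel:var}, and it is the template we will iterate in the proof for infinite iterations and re-use for amalgamation in the next section.
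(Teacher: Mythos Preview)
Your proposal is correct and follows essentially the same approach as the paper's proof: both define $p\stm r=(q_p,\dot s)$ where $\dot s$ is the mixture that agrees with $\dot r$ below $q_r$ and with $\dot p$ elsewhere, then verify \eqref{schnitzel:leqlol} using $q_r\forces\dot r\dleqlol\dot p$ (from the hypothesis) together with reflexivity of $\dleqlol$ off $q_r$. The paper is terser---it phrases the split as $r\forces\dot r^*=\dot r$ and $-r\forces\dot r^*=\dot p$ and leaves \eqref{schnitzel:proj}, \eqref{schnitzel:meet} to the reader---but the construction and the verification of \eqref{schnitzel:leqlol} are the same.
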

\begin{proof}
Let $\pi$ denote the projection to the first coordinate. 
Again, define $\dom(\stm)$ to be the set of pairs prescribed in Definition~\ref{schnitzel}. 
Say $\bar r=(r,\dot r)$ and $\bar p=(p,\dot p)$ are such that $(\bar r,\bar p)\in\dom(\stm)$. 
Define $\bar p \stm \bar r = (p,\dot r^*)$, where $\dot r^*$ is such that $r \forces \dot r^*=\dot r$ and $-r\forces \dot r^*=\dot p$. 
Fixing a $\lambda$ witnessing that $(\bar r,\bar p)\in\dom(\stm)$, so that we have $(r,\dot r) \bleqlol \pi(\bar r)\cdot \bar p=(r,\dot p)$, and so $r\forces \dot r \dleqlol \dot p$.
Then $(p,\dot r^*) \bleqlol (p,\dot p)$, since $r \forces \dot r^*=\dot r \dleqlol \dot p$ and $p-r\forces \dot r^*= \dot p \dleqlol \dot p$. 
To check the other properties is left to the reader.
\end{proof}

The stable meet operator behaves very nicely in iterations:
\begin{lem}
Let $\bar Q^{\theta+1}$ be an iteration\index{iteration (forcing)!stable meet} with diagonal support and say for each $\iota < \theta$, $P_\iota$ carries a prestratification system $\pss_\iota$ on $I$ and
\begin{enumerate}
\item For all $\iota<\theta$, we have $\pss_\iota \is \pss_{\iota+1}$.
\item If $\bar \iota \leq \theta$ is limit, $\pss_{\bar\iota}$ is the natural system of relations on $P_{\bar\iota}$.
\end{enumerate}
Moreover, say for each $\iota<\theta$, there is a stable meet operator $\stm^{\iota+1}_\iota$ on $(P_\iota,P_{\iota+1})$ with respect to $\pss_{\iota+1}$.
Then for each $\iota < \theta$ such that $\iota>0$ there is a $P_\iota$-stable meet operator on $P_\theta$.
\end{lem}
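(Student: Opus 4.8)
The plan is to define the $P_\iota$-stable meet operator on $P_\theta$ by induction, first passing through the successor levels using the given operators $\stm^{\bar\iota+1}_{\bar\iota}$ and then handling limit stages $\bar\iota\le\theta$ coherently via the thread structure of the diagonal support iteration. Fix $\iota>0$. For each $\bar\iota$ with $\iota\le\bar\iota\le\theta$ I would define a $P_\iota$-stable meet operator $\stm^{\bar\iota}_\iota$ on $P_{\bar\iota}$ with respect to $\pss_{\bar\iota}$, so that these operators cohere under the projections $\pi_{\bar\iota'}\colon P_{\bar\iota}\to P_{\bar\iota'}$ for $\iota\le\bar\iota'\le\bar\iota$; the operator we want is then $\stm^\theta_\iota$. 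The base case $\bar\iota=\iota$ is trivial: $p\stm r = r$ whenever $r\le p$ and $r\leqlol_\iota\pi_\iota(r)\cdot p = p$, i.e. whenever $r\leqlol_\iota p$.

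For the successor step, suppose $\stm^{\bar\iota}_\iota$ is defined. I would use the given $P_{\bar\iota}$-stable meet $\stm^{\bar\iota+1}_{\bar\iota}$ on $P_{\bar\iota+1}$ and compose: given $(p,r)\in P_{\bar\iota+1}^2$ with $r\le p$ and $r\leqlo^\lambda_{\bar\iota+1}\pi_\iota(r)\cdot p$, first set $r' = \pi_{\bar\iota}(r)\cdot p \in P_{\bar\iota+1}$ (using that $P_{\bar\iota}$ is a strong sub-order, so $\pi_{\bar\iota}(r)\le\pi_{\bar\iota}(p)$ and this meet exists). One checks $(p,r')\in\dom(\stm^{\bar\iota+1}_{\bar\iota})$, i.e. $r'\leqlo^\lambda_{\bar\iota+1}\pi_{\bar\iota}(r')\cdot p = \pi_{\bar\iota}(r)\cdot p = r'$, which holds since $\leqlo^\lambda$ is reflexive on $\approx$-classes by \ref{er}; actually the relevant fact is $r'\leqlo^\lambda_{\bar\iota+1} r'$ needs the witness from the hypothesis on $r$, so I would instead argue $r\leqlo^\lambda_{\bar\iota+1} r'$ via \ref{pss:is}(\ref{pss:is:cdot:lo}) (applied to $\pi_{\bar\iota}(r)\leqlo_0\pi_{\bar\iota}(r)$ and $r\leqlo_1 p$), then use \ref{er}. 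Then put $p\stm r = (p\stm^{\bar\iota+1}_{\bar\iota} r')\stm^{?}$ — more precisely, apply $\stm^{\bar\iota+1}_{\bar\iota}$ to $(p,r')$ to get a condition agreeing with $p$ on $P_{\bar\iota}$ and with $r$ on the tail $\quot{P_{\bar\iota+1}}{P_{\bar\iota}}$, and then recursively apply $\stm^{\bar\iota}_\iota$ on the $P_{\bar\iota}$-part to make it start like $p$ only on $P_\iota$. One then verifies (\ref{schnitzel:leqlol}), (\ref{schnitzel:proj}) and (\ref{schnitzel:meet}) for $\stm^{\bar\iota+1}_\iota$ by combining the corresponding properties of the two operators, transitivity of $\leqlol$ (\ref{qc:preorder}), and \ref{pss:is}(\ref{pi:mon}).

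For a limit $\bar\iota\le\theta$, given $(p,r)$ with $r\le p$ and $r\leqlo^\lambda_{\bar\iota}\pi_\iota(r)\cdot p$, I would define $p\stm r$ as the unique thread $(s_{\bar\iota'})_{\iota\le\bar\iota'<\bar\iota}$ (extended by $\pi_{\bar\iota'}$ below $\iota$) given by $s_{\bar\iota'} = \pi_{\bar\iota'}(p)\stm^{\bar\iota'}_\iota \pi_{\bar\iota'}(r)$ — i.e. apply the already-constructed operators pointwise to the projections. Coherence of the family under projections (which I would have carried along as part of the inductive hypothesis) guarantees that this is indeed a thread, hence defines an element of $\prod\bar Q^{\bar\iota}$. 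The essential point, and the main obstacle, is to show this thread has legal (diagonal) support, so that $p\stm r \in P_{\bar\iota}$: one uses (\ref{schnitzel:leqlol}) at each level together with lemma \ref{supp:mon} to get $\supp^\gamma(p\stm r)\subseteq\supp^\gamma(p)$ for every regular $\gamma$, and invokes the last clause of the $\lambda$-diagonal support definition; if $\cof(\bar\iota)$ is an issue one falls back on lemma \ref{supp:threads}, noting $p\stm r\leqlo^\lambda p$ by the natural-system definition of $\leqlol_{\bar\iota}$ and the pointwise inequalities. The verification of (\ref{schnitzel:proj}) and (\ref{schnitzel:meet}) at the limit reduces pointwise to the successor/base cases since $\pi$ and $\cdot$ commute with the projections $\pi_{\bar\iota'}$, so no new difficulty arises there. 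I expect the support bookkeeping at limits — particularly confirming that the condition $r\leqlo^\lambda_{\bar\iota}\pi_\iota(r)\cdot p$ really does propagate to each $\bar\iota'$ so that $(\pi_{\bar\iota'}(p),\pi_{\bar\iota'}(r))$ lies in $\dom(\stm^{\bar\iota'}_\iota)$ — to be the one place where some care is needed, and I would isolate it as a short lemma of the same flavor as \ref{supp:initial}.
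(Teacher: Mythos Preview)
Your overall induction matches the paper's approach: define $p\stm^{\bar\iota}_\iota r$ so that at successors it factors as the one-step meet composed with the inductively given operator, and at limits it is the thread of projections. The paper phrases this as a single formula $p\stm^\eta_\iota r=\prod_{\iota\le\nu<\eta}\pi_{\nu+1}(p)\stm^{\nu+1}_\nu\pi_{\nu+1}(r)$ and then verifies by induction that this product is a thread and a stable meet; your recursive description unwinds to the same object.

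Two issues, though. First, your successor step is muddled: you introduce $r'=\pi_{\bar\iota}(r)\cdot p$ and write ``apply $\stm^{\bar\iota+1}_{\bar\iota}$ to $(p,r')$'', but $r'$ has the \emph{tail of $p$}, so $p\stm^{\bar\iota+1}_{\bar\iota} r'\approx p$ and you have lost $r$ entirely. What you actually check (namely $r\leqlo^\lambda r'$) is exactly the domain condition for $(p,r)$, so you should apply $\stm^{\bar\iota+1}_{\bar\iota}$ to $(p,r)$; this gives a condition whose $P_{\bar\iota}$-part is $\pi_{\bar\iota}(p)$, and then you take the Boolean meet with $\pi_{\bar\iota}(p)\stm^{\bar\iota}_\iota\pi_{\bar\iota}(r)\in P_{\bar\iota}$. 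That is precisely the paper's $t^{\bar\iota+1}_\iota=t^{\bar\iota}_\iota\cdot t^{\bar\iota+1}_{\bar\iota}$.

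Second, and more seriously, your support argument at limits has a real gap. From $(\ref{schnitzel:leqlol})$ you only get $p\stm r\leqlo^\lambda p$ for the \emph{particular} $\lambda$ witnessing $(p,r)\in\dom(\stm)$, so lemma~\ref{supp:mon} gives $\supp^\gamma(p\stm r)\subseteq\supp^\gamma(p)$ only for $\gamma\le\lambda$; for $\gamma>\lambda$ this containment can fail. Your fallback to lemma~\ref{supp:threads} needs $\lambda\ge\cof(\bar\iota)$, which is not guaranteed, and the range $\lambda<\gamma\le\cof(\bar\iota)$ is left uncovered. The fix (and this is what the paper does) is to prove the weaker containment $\supp^\gamma(p\stm r)\subseteq\supp^\gamma(p)\cup\supp^\gamma(r)$ \emph{for every} $\gamma\in I$, coordinate by coordinate: if $\xi\notin\supp^\gamma(p)\cup\supp^\gamma(r)$ then $\pi_{\xi+1}(r)\leqlo^\gamma\pi_\xi(r)\cdot\pi_{\xi+1}(p)$ by $(\ref{er})$, so by the stable meet property applied with \emph{this} $\gamma$ one gets $t^{\xi+1}_\xi\leqlo^\gamma\pi_{\xi+1}(p)\leqlo^\gamma\pi_\xi(p)$, whence (via $(\ref{pss:is:lo:tail!})$) $t^{\xi+1}_\iota\leqlo^\gamma t^\xi_\iota$ and $\xi\notin\supp^\gamma(p\stm r)$. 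The point is that the witness $\gamma$ for the one-step stable meet is allowed to vary with the coordinate.
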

\begin{proof}

By induction on $\theta$, we show that for each pair $\iota, \eta$ such that $0<\iota<\eta \leq \theta$, there is a stable meet operator $\stm^\eta_\iota$ for $(P_{\iota},P_{\eta})$.
For $\iota, \eta$ as above and for $p$, $r \in P$ such that $r \leq p$ and \eqref{schnitzel:dom} hold, define
\begin{equation}\label{schnitzel:it}
p \stm^\eta_\iota r = \prod_{\iota\leq\nu<\eta} \pi_{\nu+1}(p) \stm^{\nu+1}_\nu \pi_{\nu+1}(r).
\end{equation}
We prove by induction on $\theta$ that 
\begin{enumerate}
\item For $\iota$, $\eta$ such that $0<\iota<\eta \leq \theta$ and for $(p,r)\in\dom(\stm^\theta_\iota)$, 
\begin{equation}\label{schnitzel:comm}
\pi_\eta(p \stm^\theta_\iota r)=\pi_\eta(p) \stm^\eta_\iota \pi_\eta(r).
\end{equation}
The sequence of $\pi_\eta(p) \stm^\eta_\iota \pi_\nu(r)$, for $\eta \in (\iota,\theta]$ determines a thread in $P_\theta$, in the sense of Definition~\ref{it:terminology}.
\item For $\iota$ and $\eta$ as above,  $\stm^\eta_\iota$ is a stable meet operator on $(P_\iota,P_\eta)$.
\end{enumerate}
Fix $\iota<\theta$.
Let $(p,r)\in\dom(\stm^\theta_\iota)$ be arbitrary and let $\lambda$ be an arbitrary witness to \eqref{schnitzel:dom}.
For the rest of the proof let $t^\eta_\iota$ denote $\pi_\eta(p) \stm^\eta_\iota \pi_\eta(r)$, for $0<\iota < \eta \leq \theta$.

First assume $\theta$ is limit. By induction hypothesis, $(t^\eta_\iota)_{\eta\in(\iota,\theta)}$ is a thread through $\bar Q^\theta$; by definition (\ref{schnitzel:it}), this thread is $p \stm^\theta_\iota r = t^\theta_\iota$. 
We must show that $t^\theta_\iota$ has legal support.
It suffices to show that for each $\gamma \in I$, $\supp^\gamma (t^\theta_\iota) \subseteq \supp^\gamma (p) \cup \supp^\gamma (r)$.
So fix such a $\gamma$ and a $\xi <\theta$ such that we have 
\begin{gather}
\pi_{\xi+1}(p)\leqlo^{\gamma} \pi_\xi(p),\label{xi:not:in:supp:p}\\
\pi_{\xi+1}(r)\leqlo^{\gamma} \pi_\xi(r).\label{xi:not:in:supp:r}
\end{gather}
We have $\pi_{\xi+1}(r)\leq \pi_\xi(r)\cdot \pi_{\xi+1}(p)\leq \pi_\xi(r)$ (simply because $r\leq p$) and so by  (\ref{er}) and (\ref{xi:not:in:supp:r}), we have $\pi_{\xi+1}(r) \leqlo^\lambda \pi_\xi(r)\cdot \pi_{\xi+1}(p)$.
Since $\stm^{\xi+1}_\xi$ is a stable meet operator, and by (\ref{xi:not:in:supp:p}) we have
\[
\pi_{\xi+1}(p) \stm^{\xi+1}_\xi \pi_{\xi+1}(r)\leqlo^{\gamma}\pi_{\xi}(p). 
\]
In other words, 
$t^{\xi+1}_\xi \leqlo^\gamma \pi_\xi(p)$ and thus, taking the Boolean meet with $t^\xi_\iota$ on both sides,
\[
t^{\xi+1}_\iota = t^{\xi+1}_\xi \cdot t^\xi_\iota  \leqlo^\gamma \pi_\xi(p)\cdot t^\xi_\iota = t^\xi_\iota,
\]
where the last equation holds since $\stm^\xi_\iota$ is a stable meet operator by induction.
So we have $t^{\xi+1}_\iota \leqlo^\gamma t^\xi_\iota \in P_\xi$.
We conclude by Lemma~\ref{lem:support:simple} that $\xi \not \in \supp^\gamma (t^\theta_\iota)$, finishing the proof that  $t^\theta_\iota$ has legal support.

It is straightforward to prove equations (\ref{schnitzel:leqlol}), (\ref{schnitzel:proj}) and (\ref{schnitzel:meet}) for $t^\theta_\iota=p \stm^\theta_\iota r$, assuming by induction that for each $\eta<\theta$, $\stm^\eta_\iota$ is a stable meet operator ($t^\theta_\iota$ is a thread whose initial segments satisfy these equations). We leave this to the reader.

Now let $\theta=\eta+1$. To see that $(t^\nu_\iota)_{\nu\leq\theta}$ is a thread, it suffices to show that $\pi_\eta(t^\theta_\iota)=t^\eta_\iota$. In order to show this, observe
\begin{equation*}
\pi_\eta(t^\theta_\iota)=t^\eta_\iota\cdot\pi_\eta(t^\theta_\eta)=t^\eta_\iota\cdot \pi_\eta(p)=t^\eta_\iota,
\end{equation*}
where the last equation holds since by induction, $t^\eta_\iota \leq \pi_\eta(p)$.
It follows by the induction hypothesis that $(t^\nu_\iota)_{\nu\leq\theta}$ is a thread.

It remains to show that $\stm^\theta_\iota$ is a $P_\iota$-stable meet on $P_\theta$, i.e we must show (\ref{schnitzel:leqlol}), (\ref{schnitzel:proj}) and (\ref{schnitzel:meet}).
Firstly, by induction, 
\[ t^\eta_\iota \leqlol \pi_\eta(p),\]
and as $\stm^\theta_\eta$ is a $P_\eta$-stable meet on $P_\theta$, 
\[\pi_\eta(p) = \pi_\eta(t^{\eta+1}_\eta).\]
By \ref{pss:is}(\ref{pss:is:lo:init}), this entails 
\[ t^\eta_\iota \cdot t^{\eta+1}_\eta  \leqlol t^{\eta+1}_\eta.\]
As $\stm^\theta_\eta$ is a $P_\eta$-stable meet on $P_\theta$, we have  $t^{\eta+1}_\eta \leqlol p$, whence $t^\theta_\iota =t^\eta_\iota \cdot t^{\eta+1}_\eta \leqlol p$, proving (\ref{schnitzel:leqlol}).
Secondly, 
\begin{equation*}\label{show:schnitzel:proj}
\pi_\iota(t^\theta_\iota) = \pi_\iota(t^\eta_\iota \cdot \pi_\eta(t^{\eta+1}_\eta))=\pi_\iota(t^\eta_\iota)=\pi_\iota(p).
\end{equation*}
 The first equality here is trivial. The second holds since $t^\eta_\iota \leq \pi_\eta(p)$ by induction hypothesis and since by the assumption that $\stm^{\eta+1}_\eta$ is a $P_\eta$-stable meet, we have $\pi_\eta(p) = \pi_\eta(t^{\eta+1}_\eta)$.
The last equality of holds by induction.
Finally, we prove (\ref{schnitzel:meet}). We have
\[ \pi_\iota(r)\cdot t^\theta_\iota = \pi_\iota(r) \cdot t^\eta_\iota \cdot t^{\eta+1}_\eta = \pi_\eta(r) \cdot t^{\eta+1}_\eta = r,\]
where the first equation holds by definition, the second by induction hypothesis, and the last one since $\stm^{\eta+1}_\eta$ is a $P_\eta$-stable meet.
We are done with the successor case of the induction, and thus with the inductive proof of the lemma.
\end{proof}

By the lemma, if $\bar Q^\theta$ is an iteration as in the hypothesis of the lemma and $\iota<\eta < \theta$, the map $\stm^\eta_\iota$
is the same as $\stm^\theta_\iota\res (P_\eta)^2$. So as we do for strong projections, we just write $\stm_\iota$ and we speak of the $P_\iota$-stable meet operator (without specifying the domain).
Moreover, we can formally set $p \stm_0 r = r$ and $p \stm_\iota r = p$ for $\iota \geq \theta$.

\section[Remoteness]{Remoteness: Preserving Strong Suborders}\label{sec:remote}
Let $C, Q$ be complete suborders of $P$, and say $\pi_C\colon P \rightarrow C$ and 
$\pi_Q\colon P \rightarrow Q$ are strong projections. 
We want to find a sufficient condition to ensure that $C$ is a complete suborder of $\quot{P}{Q}$, after forcing with $Q$.
In our application $C$ will just be $\kappa$-Cohen forcing of $L$, for $\kappa$ the least Mahlo.
Our iteration will be of the form $P= Q * (\dot Q_0 \times C) * \dot Q_1$, so after forcing with $Q$, $C$ is a complete suborder of $\quot{P}{Q} = (\dot Q_0 \times C) * \dot Q_1$.
We want the same to hold for $\Phi[C]$ (where $\Phi$ is a member of a particular family of automorphisms of $P$ which we construct using the technique of amalgamation); this helps to ensure ``coding areas'' don't get mixed up by the automorphisms, see Lemma~\ref{index:sequ} and Lemma~\ref{coding:survives}. So we have to introduce a property sufficient for $C$ to be a complete suborder of $\quot{P}{Q}$, in such a way that this condition is inherited by $\Phi[C]$. 
For this, we use of course the stratification of $P$.
This is necessary since  forming $\quot{P}{Q}$ will not only ``take away an initial segment'' and leave $\Phi[C]$ in the tail in same obvious fashion as for $C$; instead forming $\quot{P}{Q}$ will also ``take away'' a small subalgebra of $P$ (a copy of the random algebra).

Fix a preorder $P$ which is stratified on $I$. The following definition is, as usual, relative to a particular prestratification system.
\begin{dfn}\label{remote}
We say $C$ is \emph{remote in $P$ over $Q$\index{remote|textbf} (up to height $\kappa$)} if and only if $C$ and $Q$ are strong suborders of $P$ with strong projections $\pi_C$ and $\pi_Q$, and for all  $c \in C$ and $p \in P$ such that $c \leq \pi_C(p)$, we have
\begin{enumerate}
\item \label{remote:leqlo} 
$p\cdot c \leqlol p$ for every $\lambda \in I\cap\kappa$;
\item  $\pi_Q(p \cdot c) = \pi_Q(p)$.

\end{enumerate}
Observe that if we drop the first clause, this just says that $C$ is independent in $P$ over $Q$ (see Definition~\ref{indie}).

For a $P$-name $\dot C$, we say  $\dot C$ is remote in $P$ over $Q$ if and only if it is a name for a generic of a remote complete suborder of $P$; i.e., there is a complete suborder $R_C$ of $P$ (with a strong projection $\pi_C\colon P \rightarrow R_C$) such that $R_C$ is dense in $\gen{\dot C}^{\ro(P)}$ and $R_C$ is remote in $P$ over $Q$.
\end{dfn}
\begin{lem}\label{remote:lemma:not:in}
If $\dot C$ is a $P$-name which is remote over $Q$, then $\dot C$ is not in $V^Q$.
\end{lem}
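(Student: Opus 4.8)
The plan is to reduce Lemma~\ref{remote:lemma:not:in} to Lemma~\ref{indie:lemma:not:in}. Indeed, by Definition~\ref{remote}, if $\dot C$ is remote over $Q$ in $P$, then there is a complete sub-order $R_C$ of $P$, dense in $\gen{\dot C}^{\ro(P)}$, with a strong projection $\pi_C\colon P\to R_C$, such that for all $c\in R_C$ and $p\in P$ with $c\leq\pi_C(p)$ we have both $p\cdot c\leqlol p$ for every $\lambda\in I\cap\kappa$ \emph{and} $\pi_Q(p\cdot c)=\pi_Q(p)$. The second of these two conditions is \emph{exactly} the defining property of $R_C$ being independent over $Q$ in $P$ (see Definition~\ref{indie}); the first clause is simply extra information which we do not need here. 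Hence $R_C$ is independent over $Q$ in $P$, so $\dot C$ (being a name for a generic of $R_C$) is independent over $Q$ in $P$ in the sense of Definition~\ref{indie}.

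Now Lemma~\ref{indie:lemma:not:in} applies directly: since $\dot C$ is a $P$-name which is independent over $Q$, it is not in $V^Q$. This completes the argument.

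Since the deduction is essentially a matter of unwinding the two definitions, there is no genuine obstacle; the only point requiring (minimal) care is to observe that remoteness is literally a strengthening of independence, obtained by adding clause~(\ref{remote:leqlo}) of Definition~\ref{remote} to the requirement in Definition~\ref{indie}, and that the passage from ``remote complete sub-order'' to ``independent complete sub-order'' respects density in $\gen{\dot C}^{\ro(P)}$ verbatim. Thus the statement follows at once. A short remark could be added pointing out that, as already observed after Definition~\ref{remote}, dropping the first clause of remoteness yields precisely independence, so this lemma is an immediate corollary of Lemma~\ref{indie:lemma:not:in}.
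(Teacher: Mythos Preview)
Your proposal is correct and matches the paper's approach exactly: the paper's proof consists of the single line ``An immediate consequence of lemma~\ref{indie:lemma:not:in},'' and your argument simply spells out why, namely that remoteness is by definition independence plus an extra clause.
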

\begin{proof}
An immediate consequence of Lemma~\ref{indie:lemma:not:in}
\end{proof}

\chapter{Amalgamation}\label{sec:amalgamation}\index{amalgamation|textbf}

Amalgamation is a technique to build iterations with many automorphisms.
We need two types of amalgamations: Using type-1 amalgamation, we make sure a stage of our iteration has an automorphism extending an isomorphism of two \emph{small} complete subalgebras $B_0$, $B_1$ of the previous stage of the iteration. Using type-2 amalgamation, we take care that we can extend automorphisms of initial segments (in our case, those created by type-1 amalgamation).

The technique presented here differs substantially from that of \cite{shelah:amalgamation} (described also in \cite{jr:amalgamation}): It has a ``mixed support'' flavor rather than a ``finite support'' flavour;
moreover, some fine tuning is needed to preserve stratification.

\medskip
 
In \ref{modf}, we define \emph{basic amalgamation}, i.e., the forcing $P^\Int_f$, which will be put to use when we define either type of amalgamation.
Before defining these two types of amalgamation, we pause to analyze $P^\Int_f$ and find that it can be decomposed as a product after forcing with $B_0$ (Section \ref{factor}; this will be put to use in Lemmas~\ref{newreal} and \ref{index:sequ} to show we can close off $\Gamma^0$ under automorphisms).

In Section~\ref{am} we define type-1 amalgamation (denoted by $\am$) and show it is a stratified extension (by finding a dense set where ``Boolean values are stable''; two small details here are the use of ``reduced pairs of random reals'' in Lemma~\ref{Q:in:D}, and the stable meet operator in Lemma~\ref{q:cdot:p:in:D} to preserve that $Q$ is a strong suborder),
and in  Section~\ref{simpleram} we do the same for the simpler type-2 amalgamation (denoted by $\simpleram$).

In the last section, we construct a stable meet operator for amalgamation and discuss remote suborders. 
This completes the inductive proof that there is a stable meet for each stage of the iteration for the proof of the main theorem.
Also, we prove Lemma~\ref{remote:lemma} which helps to ensure ``coding areas'' don't get mixed up by the automorphisms (it will be put to use in Lemmas~\ref{index:sequ} and \ref{coding:survives}).

\section{Basic Amalgamation}\label{modf}

Let $P$ be a forcing, $Q$ a complete suborder of $P$ such that $\pi\colon P \rightarrow Q$ is a strong projection (see \ref{stron:proj:equiv}, p.~\pageref{stron:proj:equiv} and the preceding discussion).
For $i \in \{0,1\}$, let $\dot B_i$ be a $Q$-name such that $\forces_Q \dot B_i$ is a complete subalgebra of $\quot{P}{Q}$.
Moreover, say we have a $Q$-name $\dot f$ such that $\forces_Q \dot f\colon \dot B_0 \rightarrow \dot B_1$ is an isomorphism of Boolean algebras.

\medskip

Our task is to find $P'$ containing $P$ as a complete suborder, carrying an automorphism $\Phi \colon P' \rightarrow P'$ which extends the isomorphism of $\dot B_0$ and $\dot B_1$ (in the extension by $Q$) and which is trivial on $Q$.
Moreover, we want to preserve stratification: 
Suppose $(Q,P)$ is a stratified extension on $I = [\lambda_0, \kappa^*)\cap \Reg$, where we allow $\kappa^* = \infty$. 
We want $\lambda_1<\kappa^*$ such that $(P',P)$ is a stratified extension above $\lambda_1$  ($\lambda_1$ may be strictly greater than $\lambda_0$). 

\medskip

We first make some observations:
Let $\ro(Q) * \dot B_i$ be denoted by $B_i$. This is a complete subalgebra of $B=\ro(P)$, consisting of the $Q$-names (or if you prefer, $\ro(Q)$-names) $\dot b$ such that $1_Q\forces_Q\dot b\in\dot B_i$. 

Keep in mind that we can canonically identify the partial order $Q * (\dot B_i\setminus\{ 0 \})$
with the set of $b \in B_0$ such that $\pi_Q(b) \in Q$. Also, don't confuse this with the set of $b \in B_0$ such that $\pi_Q(b) = 1$---or, equivalently, $1_Q\forces_Q [b]_{\dot G} > 0$, which is called the term-forcing, usually denoted by $(\dot B_i\setminus\{ 0 \})^{Q}$.

Let $\pi_i$ denote the canonical projection from $P$ to $B_i$.
Then $\pi_i$ coincides with $\pi$ on $Q$ (by \ref{strong:vs:canonical:proj}). 
Moreover, $\dot f$ can be viewed as an isomorphism $f$ of $B_0$ and $B_1$ (mapping names to names). We have
\begin{equation}\label{f:and:pi:commute}
\pi \circ f = f \circ \pi = \pi.
\end{equation}
\todo{In fact, for any pair of subalgebras $B_0$, $B_1$ of $\ro(P)$ such that $Q\subseteq B_0\cap B_1$ and an isomorphism $f\colon B_0 \rightarrow B_1$, Equation~(\ref{f:and:pi:commute}) holds if and only if $f$ generates an isomorphism of the pair $\quot{B_i}{Q}$, $i\in\{0,1\}$ in any $Q$-generic extension. 
Thus instead of starting with $\dot f$ and $\dot B_0$, $\dot B_1$ as in the first paragraph, we could also have started with $f$, $B_0$ and $B_1$ as above, satisfying (\ref{f:and:pi:commute}).}

\medskip

In a first step, we define $P^\Int_f$, the amalgamation of $P$ over $f$. $P^\Int_f$ contains $P$ as a complete suborder and has an automorphism $\Phi$ extending $f$.
\begin{rem}\label{am:remark}
If we require $P'$ to be stratified, we have to be more careful: we must carefully pick a dense subset $\Dam$ of $P$, such that $P'={\Dam}^\Int_f$ is stratified. The partial order ${\Dam}^\Int_f$ is in general not equivalent to $P^\Int_f$. 
Finally, we will define a forcing $\am$ which is equivalent to ${\Dam}^\Int_f$, and moreover $(P, \am)$ is a stratified extension, to make sure that the stratification of $P'$ is witnessed in a way that is coherent enough to survive (infinite) iteration. Let's postpone these complications, and first look at $P^\Int_f$.
\end{rem}
Amalgamation is not at all a well-behaved construction: If $D$ is a dense subset of $P$, we cannot infer that $D^\Int_f$ is dense in $P^\Int_f$.
Even the weaker statement fails: if $\ro(P)=\ro(R)$, we cannot conclude $\ro(P^\Int_f)=\ro(R^\Int_f)$.
This in combination with the fact that stratification is similarly ill-behaved (e.g., one dense subset of $\ro(P)$ may be stratified while another dense subset is not) is the main obstacle in this proof.

\medskip

Amalgamation is so ill-behaved that in general we cannot even preclude $P^\Int_f=\{ 1_P \}$, although this pathology does not arise if we ask $B_0\cup B_1 \subseteq P$.
On the other hand,  we cannot simply work with $\ro(P)$; for although $\ro(P)$ has a dense stratified subset (namely $P$),
this doesn't mean that $\ro(P)^\Int_f$ will have a dense stratified subset. 
With the intuition that we want to stick as closely to $P$ as possible (for the sake of stratification) but still have $B_0, B_1 \subseteq P$, we define a ``hybrid'':
\begin{dfn}\label{def:blowup}\index[notation]{P(Q,f)@$\blowup{P}(Q,f)$}
Consider the set $P \times  B_0 \times B_1$, i.e., the set of triples $(p,\dot b^0,\dot b^1)$ where $p \in P$ and $\forces_Q \dot b^i \in \dot B_i$ for $i \leq 2$. 
Order this set by $(p,\dot b^0,\dot b^1)\leq (q,\dot d^0,\dot d^0)$ \emph{if and only if} $p \leq q$ and $p \cdot \dot b^0 \cdot \dot b^1 \leq  q \cdot \dot d^0 \cdot \dot d^1$ in $\ro(P)$. 
This makes sense since we can canonically identify $\dot b^j, \dot d^j$ with elements of $B_j$.
We call $\blowup{P}=\blowup{P}(Q,f)$ the set of $(p,\dot b^0,\dot b^1) \in P \times B_0 \times B_1$ such that
\begin{equation}
\pi(p)\forces p \cdot \dot b^0 \cdot \dot b^1 \neq 0,
\end{equation}
or equivalently, 
\begin{equation}\label{eq:blowup:def:alt}
\pi(p \cdot \dot b^0 \cdot \dot b^1)=\pi(p).
\end{equation}
For $\hat p \in \blowup{P}$, when we refer to the components of $\hat p$, we use the notation $\hat p = (\hat p^P, \hat p^0, \hat p^1)$.
When appropriate, we identify $\hat p$ with $\hat p^P\cdot\hat p^0\cdot\hat p^1$, i.e., the meet of the components in $\ro(P)$. In particular,
if $g$ is a function such that $\dom(g)=\ro(P)$, we write $g(\hat p)$ for $g(\hat p^P\cdot\hat p^0\cdot\hat p^1)$.
\end{dfn}
Clearly, $P$ is isomorphic to the subset of $\blowup{P}$ where the two latter components are equal to $1_{\ro(P)}$, and this set is in turn dense in $\blowup{P}$. So $P$ can be considered a dense subset of $\blowup{P}$. 
Thus, the separative quotient of $\blowup{P}$ is the completion under $\cdot$ of $P\cup B_0 \cup B_1$ in $\ro(P)$ (leaving aside the $0$ element). 
Observe, moreover, that if $D \subseteq P$ is dense in $P$, then $\{\hat p \in \blowup{P} \setdef \hat p^P \in D \}$ is the same as $\blowup{D}$, and
we shall often use this fact tacitly.
Lastly, observe that 
\begin{equation}\label{blowup:order}
\hat p \leq \hat q \iff \big[ \text{ $\hat p^P \leq \hat q^P$ and $\pi_j(\hat p)\leq \pi_j(\hat q)$ for $j\in\{0,1\}$ }\big]
\end{equation}
and $\hat p \approx (\hat p^P,\pi_0(\hat p), \pi_1(\hat p))$.\footnote{We may regard $(\hat p^P,\pi_0(\hat p), \pi_1(\hat p))$ the canonical representative of $\hat p$ if $P$ is separative.} These two observations together would make for an equivalent, more strict definition of $\blowup{P}$, yielding separative $\blowup{P}$ provided $P$ is separative. Notwithstanding, we find the current definition more convenient---if less elegant. 
In the following, we identify $P$ with $\{ \hat p \in \blowup{P} \setdef \hat p^0=\hat p^1=1 \}$.

\medskip

Much of the following would work if we replace (\ref{eq:blowup:def:alt}) by the weaker $p \cdot \dot b_0 \cdot \dot b_1 \neq 0$.
The advantage of asking (\ref{eq:blowup:def:alt}) is that it makes the projection $\bar \pi\colon P^\Int_f \rightarrow P$ take a simple form.

\begin{dfn}\index[notation]{P T f@$P^\Int_f$}\index{basic amalgamation}\index{amalgamation!basic}
We define $P^\Int_f$ to consist of all sequences $\bar p: \Int \rightarrow \blowup{P}$ such that
for all but finitely many $k$ we have $\bar p(k)^P \leqlo^{\lambda_0} \pi(\bar p(k)^P)$ and for all $k$ we have
\begin{equation*}
f(\pi_0(\bar p(k+1)^P\cdot \bar p(k+1)^0 \cdot \bar p(k+1)^1))=\pi_1(\bar p(k)^P\cdot \bar p(k)^0 \cdot \bar p(k)^1),
\end{equation*}
or, simply
\begin{equation}\label{thinout}
f(\pi_0(\bar p(k+1)))=\pi_1(\bar p(k)).
\end{equation}
The ordering on  $P^\Int_f$ is given by $\bar r \leq \bar p$ if and only if for all $k$, $\bar r(k) \leq \bar p(k)$ in $\blowup{P}$.
We define a map \index[notation]{Phi (amalgamation)@$\Phi$ (amalgamation)}\index{automorphism}$\Phi \colon P^\Int_f \rightarrow P^\Int_f$ by:
\begin{equation*}
 \Phi(\bar p)(i)= \bar p(i+1) \text{ for }i \in \Int.
\end{equation*}
Obviously, $\Phi$ is one-to-one and onto, and $\Phi(\bar p)\leq \Phi(\bar q) \iff \bar p \leq \bar q$.
\end{dfn}

Observe that (\ref{f:and:pi:commute}) together with (\ref{thinout}) and (\ref{eq:blowup:def:alt}) imply that for all $i \in \Int$,
\begin{equation}\label{proj:am:to:Q}
\pi(\bar p(i))=\pi(\bar p(0))=\pi(\bar p(0)^P).
\end{equation}

\medskip

Let \index[notation]{F (amalgamation)@$F$ (amalgamation)}$F \colon \blowup{P} \rightarrow B_1$ be defined by $F(x)=f(\pi_0(x))$ and let \index[notation]{G (amalgamation)@$G$ (amalgamation)}$G \colon \blowup{P} \rightarrow B_0$ be defined by $G(x)=f^{-1}(\pi_1(x))$. 

\medskip

It may seem more natural to replace (\ref{thinout}) by the weaker requirement that $f(\pi_0(p(k+1)))$ and $\pi_1(p(k))$ be compatible;
however, if we define $P^\Int_f$ in this manner, $P$ will \emph{not} in general be a complete suborder. 
Moreover, that $P$ is simply a strong suborder with a simple projection satisfying (\ref{proj:am:to:Q}) simplifies the proof that $\am$ is stratified, as it allows to build conditions in $\D$ (for the preclosure system) in a coherent way, by induction on the length of conditions, without having to revisit an initial segment of the condition at amalgamation stages (see \eqref{ext:D:dense}). 

\medskip

We now define a complete embedding $e\colon\blowup{P} \rightarrow P^\Int_f$\index{strong suborder!basic amalgamation}\index{strong projection!basic amalgamation} 
and a \index[notation]{pi bar@$\bar \pi$}strong projection $\bar \pi \colon P^\Int_f \rightarrow\blowup{P}$.
For $\hat u \in \blowup{P}$
define $e(\hat u)\colon \Int \rightarrow \blowup{P}$ by  
\begin{equation*}
e(\hat u) (i) = \begin{cases}
(\pi(\hat u^P), G^i(\hat u),1) & \text{ for $i>0$,}\\
\hat u &  \text{ for $i=0$,}\\
(\pi(\hat u^P),1,F^i(\hat u)) &  \text{ for $i<0$.}
\end{cases}
\end{equation*}
\noindent
For $\bar p \in  P^\Int_f$,   define $\bar \pi(\bar p)\in \blowup{P}$ by  $\bar \pi(\bar p) = \bar p(0)$. 

\begin{lem}\label{strong:proj}
The map $\bar \pi$ is a strong projection, that is:
if $\hat w \leq \bar \pi(\bar q)$ in $\blowup{P}$, we may find $e(\hat w) \cdot \bar q \in P^\Int_f$.
\end{lem}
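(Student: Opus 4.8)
The statement to prove is that $\bar\pi\colon P^\Int_f \rightarrow \blowup{P}$ is a strong projection, and the crux (as signalled by the restatement) is verifying property $3'$b: given $\bar q \in P^\Int_f$ and $\hat w \leq \bar\pi(\bar q) = \bar q(0)$ in $\blowup{P}$, that $e(\hat w)\cdot \bar q$ is a legitimate element of $P^\Int_f$, and that it is the canonical greatest element below $\bar q$ projecting to $\hat w$. The plan is to define $\bar r = e(\hat w)\cdot\bar q$ coordinatewise in $\ro(P)$ --- that is, $\bar r(k) = e(\hat w)(k)\cdot \bar q(k)$, using the componentwise meet in $\blowup{P}$ (meet the $P$-part, meet each $B_j$-part) --- and then check three things: (i) each $\bar r(k)$ is a genuine condition in $\blowup{P}$, i.e.\ satisfies \eqref{eq:blowup:def:alt}; (ii) the thinning-out coherence condition \eqref{thinout} holds for $\bar r$; (iii) the support condition ($\bar r(k)^P \leqlo^{\lambda_0}\pi(\bar r(k)^P)$ for all but finitely many $k$) holds; and finally (iv) that $\bar\pi(\bar r) = \hat w$ and $\bar r \leq \bar q$, and that any $\bar s \leq \bar q$ with $\bar\pi(\bar s)\leq\hat w$ satisfies $\bar s \leq \bar r$.

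For (i) and (ii), the key computational inputs are \eqref{f:and:pi:commute} ($\pi\circ f = f\circ\pi = \pi$), \eqref{proj:am:to:Q} ($\pi(\bar q(i)) = \pi(\bar q(0)^P)$ for all $i$), and the explicit form of $e(\hat w)(k)$. Since $\hat w \leq \bar q(0)$ implies $\pi(\hat w) \leq \pi(\bar q(0)^P)$, and in fact I would first observe $\pi(\hat w) = \pi(\hat w^P)$ by \eqref{eq:blowup:def:alt} applied to $\hat w$; for $k>0$ the $P$-part of $e(\hat w)(k)$ is $\pi(\hat w^P)$, so $\bar r(k)^P = \pi(\hat w^P)\cdot\bar q(k)^P$, and one computes $\pi(\bar r(k)) = \pi(\hat w^P)\cdot\pi(\bar q(k)) = \pi(\hat w^P)$ using that $\pi(\hat w^P)\leq\pi(\bar q(k))$. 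The thinning condition \eqref{thinout} for $\bar r$ follows because $e(\hat w)$ itself satisfies \eqref{thinout} (it is in $P^\Int_f$, or rather its image is --- one checks $F(e(\hat w)(k+1)) = \pi_1(e(\hat w)(k))$ from the definitions of $F$, $G$ and the cases), $\bar q$ satisfies \eqref{thinout}, and $f$ is an isomorphism hence distributes over the meet: $F(\bar r(k+1)) = f(\pi_0(e(\hat w)(k+1))\cdot\pi_0(\bar q(k+1))) = f(\pi_0(e(\hat w)(k+1)))\cdot f(\pi_0(\bar q(k+1))) = \pi_1(e(\hat w)(k))\cdot\pi_1(\bar q(k)) = \pi_1(\bar r(k))$. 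Here one must be slightly careful that $\pi_0$ of a meet is the meet of $\pi_0$'s, which holds because $B_0$ is a complete subalgebra and projections of complete subalgebras are multiplicative on the relevant elements --- or more simply, because $\bar r(k+1)$ is identified with its meet in $\ro(P)$ and $\pi_0$ is the canonical projection $\ro(P)\to B_0$, which is monotone and idempotent; the cleanest route is to note $\pi_0(\hat x\cdot\hat y) = \pi_0(\hat x)\cdot\pi_0(\hat y)$ whenever one of the factors already lies in $B_0$, which is the case here since $\pi_0(e(\hat w)(k+1))\in B_0$.

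For (iii), for $k>0$ we have $\bar r(k)^P = \pi(\hat w^P)\cdot\bar q(k)^P$; since $\bar q(k)^P\leqlo^{\lambda_0}\pi(\bar q(k)^P)$ for all but finitely many $k$, and $\pi(\hat w^P)\leq\pi(\bar q(k)^P) = \pi(\bar q(0)^P)$, an application of \ref{pss:is}(\ref{pss:is:lo:tail}) (or the analogous $\stm$/coherence machinery, using that $w_0\cdot p \leqlol w_0$ when $p\leqlol\pi(p)$ and $w_0\leq\pi(p)$) gives $\bar r(k)^P\leqlo^{\lambda_0}\pi(\bar r(k)^P) = \pi(\hat w^P)$; similarly for $k<0$; only finitely many $k$ (those where $\bar q$ fails the support condition, plus possibly $k=0$) are exceptional, so $\bar r$ has legal support and $\bar r\in P^\Int_f$. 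Part (iv) is then essentially formal: $\bar\pi(\bar r) = \bar r(0) = e(\hat w)(0)\cdot\bar q(0) = \hat w\cdot\bar q(0) = \hat w$ since $\hat w\leq\bar q(0)$; $\bar r\leq\bar q$ coordinatewise by construction; and if $\bar s\leq\bar q$ with $\bar s(0)\leq\hat w$, then applying $\Phi^{\pm k}$ and using \eqref{thinout}, \eqref{proj:am:to:Q} one sees $\bar s(k)\leq e(\hat w)(k)$ for every $k$ (the $P$-part of $\bar s(k)$ projects into $\pi(\hat w^P)$ for $k\neq 0$, and its $B_j$-parts are forced by $F^k,G^k$ applied to $\bar s(0)\leq\hat w$), hence $\bar s(k)\leq\bar r(k)$, i.e.\ $\bar s\leq\bar r$. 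The main obstacle I anticipate is bookkeeping the interplay between the componentwise meet in $\blowup{P}$ and the canonical projections $\pi_0,\pi_1$ --- making sure multiplicativity is invoked only where one factor genuinely lies in the subalgebra --- together with tracking which finitely many coordinates are exceptional for the support condition; the stratification input itself is light, just one application of (\ref{pss:is:lo:tail}).
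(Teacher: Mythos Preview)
There is a genuine gap in step (ii). Your chain
\[
f(\pi_0(\bar r(k+1))) \;=\; f\bigl(\pi_0(e(\hat w)(k+1))\bigr)\cdot f\bigl(\pi_0(\bar q(k+1))\bigr) \;=\; \pi_1(e(\hat w)(k))\cdot\pi_1(\bar q(k)) \;=\; \pi_1(\bar r(k))
\]
justifies the first equality correctly (for $k+1>0$ the factor $e(\hat w)(k+1)\approx G^{k+1}(\hat w)$ lies in $B_0$, so $\pi_0$ distributes over the product), but the \emph{last} equality is the problem. For $k>0$ we have $e(\hat w)(k)\approx G^k(\hat w)\in B_0$, not $B_1$; the identity $\pi_1(a\cdot c)=\pi_1(a)\cdot\pi_1(c)$ is guaranteed only when one factor already lies in $B_1$, and can fail when the factor lies merely in $B_0$. (A symmetric issue arises for $k<0$ with the roles of $B_0,B_1$ reversed.) So your coordinatewise meet $\bar r(k)=e(\hat w)(k)\cdot\bar q(k)$ need not satisfy \eqref{thinout} and hence need not lie in $P^\Int_f$ at all; the greatest lower bound $e(\hat w)\cdot\bar q$ computed \emph{in} $P^\Int_f$ can sit strictly above your $\bar r$ on some coordinates.

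The paper avoids this by constructing the witness $\bar w$ \emph{inductively} outward from $\bar w(0)=\hat w$: having $\bar w(i)$, one obtains $\bar w(i+1)$ by multiplying $\bar q(i+1)$ not by all of $e(\hat w)(i+1)$ but only by the single element of the \emph{correct} $B_j$ that \eqref{thinout} forces --- namely the image under $f^{\pm 1}$ of the relevant projection of $\bar w(i)$. Because that multiplier lies in the right subalgebra, the projection does factor, and \eqref{thinout} holds by construction at each step. One then verifies $\bar w\leq e(\hat w)$, $\bar w\leq\bar q$, and maximality by a separate short induction. Your outline for parts (iii) and (iv) is fine and essentially matches the paper's; the issue is solely that the candidate object you build in (i)--(ii) is the wrong one.
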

\begin{proof}
Let $\hat w \leq \bar \pi(\bar p)$. We define $\bar w$ by induction,  as follows:
\[ \bar w(0)=\hat w \]
Assume $\bar w(i) \in \blowup{P}$ has already been defined. We know $\pi (\bar w(i))=\pi(\bar w(i)^P)$.
Assume by induction that $\pi(\bar w(i)^P)=\pi({\hat w}^P) $.
Also, assume by induction that $\bar w(i) \leq \bar p(i)$ and $\bar w(i) \leq e(\hat w)(i)$ in $\blowup{P}$.
To inductively define $\bar w$ on the positive integers, assume $i \geq 0$ and define:
\[ \bar w(i+1) = (\pi(\hat w)\cdot\bar p(i+1)^P,\bar p(i+1)^0,\bar p(i+1)^1 \cdot F(\bar w(i))). \]

The definition of $\bar w$ on the negative integers is also by induction. Assuming $i\leq 0$, we set: 
\[ \bar w(i-1) = (\pi(\hat w)\cdot\bar p(i-1)^P,\bar p(i-1)^0 \cdot G(\bar w(i)),\bar p(i-1)^1) \]
For $i \geq 0$, as $\bar w(i) \leq \bar p(i)$, we have
\begin{multline*}
f(\pi_0(\bar w(i)))=F(\bar w(i)) \cdot f(\pi_0( \bar p(i))) \\ 
=F(\bar w(i)) \cdot \pi_1( \bar p (i+1))\\
=\pi_1( \pi({\hat w}^P)\cdot \bar p (i+1) \cdot F (\bar w(i))\ %
\end{multline*}
where the second equation holds as (\ref{thinout}) holds for $\bar p$, 
and the last equation follows from $F(\bar w(i)) \leq \pi(\bar w(i))=\pi({\hat w}^P)$.
We conclude, by definition of $\bar w (i+1)$, that
\begin {equation}\label{e:complete:main}
f(\pi_0(\bar w(i)))=\pi_1( \bar w(i+1)).
\end{equation}
\noindent Applying $\pi$ to (\ref{e:complete:main}), we see $\pi(\bar w(i+1))=\pi(\bar w(i))$, and so
\begin{align*}
\pi(\bar w(i+1)) &= \pi({\hat w}^P) =\\
\pi (\pi({\hat w}^P)\cdot\bar p(i+1)^P) &= \pi( \bar w(i+1)^P),%
\end{align*} 
where the first equation follows from the induction hypothesis and the second follows from 
\[ \pi({\hat w}^P)\leq \pi(\bar p(0)^P)=\pi(\bar p(i+1)^P). \]
Thus, $\bar w(i+1) \in \blowup{P}$, $\pi(\bar w(i))=\pi({\hat w}^P)$ and by construction, both
$\bar w(i+1) \leq \bar p(i+1)$ and $\bar w(i+1) \leq e({\hat w}^P)(i+1)$ hold.

Replacing $F$ by $G$ in the above, we obtain a similar argument for the inductive step from $i\leq 0$ to $i-1$; we leave the details to the reader. 
Finally we have that $\bar w(i) \in\blowup{P}$ and (\ref{e:complete:main}) holds for all $i \in \Int$, whence $\bar w \in P^\Int_f$. We have already shown $\bar w \leq \bar p$ and $\bar w \leq e(\hat w)$.

\medskip

We now show $\bar w \geq e(\hat w)\cdot\bar p$:
Say $\bar r \in P^\Int_f$ such that $\bar r \leq e(\hat w)\cdot \bar p$. Clearly $\bar r(0) \leq \bar w(0)=w$.
Now assume by induction that $\bar r(i) \leq \bar w(i)$. 
Then by (\ref{thinout}), 
\[
 \bar r(i+1) \leq \pi_1(\bar r(i+1)) \leq F(\bar w(i))%
\]
so as $\bar r(i+1) \leq \bar p(i+1)$, we have $\bar r(i+1) \leq \bar w(i+1)$.

A similar argument shows $\bar r(i-1)\leq\bar w(i-1)$, so we we've shown by induction that $\bar r \leq \bar w$. 
So finally, $\bar w = e(\hat w)\cdot\bar p$.
\end{proof}
For $i \in \Int$, we write $e_i$ for $\Phi^i \circ e$ and $\bar \pi_i$ for $\bar \pi \circ \Phi^i$.
\begin{cor}\label{strong:proj:cor}
For each $i \in \Int$,
the map $e_i$ is a complete embedding of $\blowup{P}$ into $P^\Int_f$. It is well-defined and injective on the separative quotient of $\blowup{P}$. The map $\bar \pi_i\colon P^\Int_f\rightarrow \blowup{P}$ is a strong projection. The map $e_i \res P$ is a complete embedding of $P$ into $P^\Int_f$. 
Letting $R = \{ \bar p \in P^\Int_f \setdef \bar p(i)^0=\bar p(i)^1=1 \}$, $R$ is dense in $P^\Int_f$, we have $e_i[P]\subseteq R$ and $\bar \pi_i\res R\colon R \rightarrow P$ is a strong projection. 
\end{cor}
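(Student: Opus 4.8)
The plan is to bootstrap everything from Lemma~\ref{strong:proj} (that $\bar\pi$ is a strong projection) together with the elementary properties of the shift map $\Phi$. First I would note that $\Phi$ is an order-automorphism of $P^\Int_f$ satisfying $\bar\pi_i = \bar\pi\circ\Phi^i$ and $e_i = \Phi^i\circ e$ by definition; hence, once the corollary is known for $i=0$, applying $\Phi^i$ (an automorphism) transports every statement to arbitrary $i\in\Int$. So it suffices to treat $i=0$, i.e. to show $e=e_0$ is a complete embedding and $\bar\pi=\bar\pi_0$ is a strong projection, and then deduce the statements about $R$.

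For $i=0$: by Lemma~\ref{strong:proj}, for $\hat w\leq\bar\pi(\bar q)$ the condition $e(\hat w)\cdot\bar q$ exists in $P^\Int_f$, and the construction in that proof shows $\bar\pi(e(\hat w)\cdot\bar q)=\bar w(0)=\hat w$; in particular, taking $\bar q = 1_{P^\Int_f}$ one gets that $e$ has the defining property $3'$ of a strong projection's section map, so by the general theory recalled after Lemma~\ref{stron:proj:equiv} (the discussion of strong projections and the map $q\mapsto q\cdot 1_P$), $e$ is a complete embedding of $\blowup P$ into $P^\Int_f$, with $\bar\pi$ the associated strong projection. Well-definedness and injectivity of $e$ on the separative quotient of $\blowup P$ follow from \eqref{blowup:order}: $\hat p\approx(\hat p^P,\pi_0(\hat p),\pi_1(\hat p))$, and $e$ depends on $\hat p$ only through these data and through $\pi(\hat p^P)$, so $e$ factors through the separative quotient and $\bar\pi\circ e=\mathrm{id}$ gives injectivity. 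That $e\res P$ is a complete embedding of $P$ into $P^\Int_f$ is then immediate, since $P$ is (identified with) a complete—indeed dense—suborder of $\blowup P$ (as observed right after Definition~\ref{def:blowup}), and composition of complete embeddings is a complete embedding.

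It remains to handle $R=\{\bar p\in P^\Int_f\setdef \bar p(i)^0=\bar p(i)^1=1\}$ for a fixed $i$; again by applying $\Phi^i$ we may take $i=0$, so $R=\{\bar p\setdef \bar p(0)^0=\bar p(0)^1=1\}$. Density of $R$ in $P^\Int_f$: given $\bar q\in P^\Int_f$, put $\hat w=(\bar q(0)^P\cdot\bar q(0)^0\cdot\bar q(0)^1,\,1,\,1)\in\blowup P$—legitimate since \eqref{eq:blowup:def:alt} for $\bar q(0)$ gives $\pi(\hat w)=\pi(\bar q(0)^P\cdot\bar q(0)^0\cdot\bar q(0)^1)=\pi(\bar q(0)^P)$, so $\hat w\in\blowup P$—and note $\hat w\leq\bar\pi(\bar q)=\bar q(0)$ by \eqref{blowup:order}. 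By Lemma~\ref{strong:proj}, $e(\hat w)\cdot\bar q\in P^\Int_f$; its $0$-th coordinate is $\bar w(0)=\hat w$, which has trivial $B_0,B_1$ components, so $e(\hat w)\cdot\bar q\in R$ and $e(\hat w)\cdot\bar q\leq\bar q$. Since $e[P]$ consists precisely of sequences whose $0$-th coordinate has first component in $P$ and second, third components $1$, we have $e[P]\subseteq R$. Finally $\bar\pi\res R\colon R\to P$ is a strong projection: for $\bar p\in R$ we have $\bar\pi(\bar p)=\bar p(0)=(\bar p(0)^P,1,1)\in P$, so $\bar\pi$ does map $R$ onto (a dense subset of, hence all of) $P$; and the section property is inherited, because for $w\leq\bar\pi(\bar p)$ in $P$ the condition $e(w)\cdot\bar p$ produced by Lemma~\ref{strong:proj} again lies in $R$ (its $0$-th coordinate is $w$, with trivial $B_i$-parts) and is the greatest lower bound below $\bar p$ with projection $w$.

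\textbf{Main obstacle.} The only genuinely non-formal point is verifying that the various conditions manufactured above—$e(\hat w)\cdot\bar q$ with $\hat w$ chosen to kill the $B_0,B_1$ components—really land in $P^\Int_f$ and in $R$; but this is exactly what the inductive construction inside the proof of Lemma~\ref{strong:proj} delivers, together with the identity \eqref{proj:am:to:Q} $\pi(\bar p(i))=\pi(\bar p(0)^P)$ which guarantees membership in $\blowup P$ at every coordinate. Everything else is bookkeeping with \eqref{blowup:order}, \eqref{thinout}, the automorphism $\Phi$, and the abstract strong-projection calculus of Lemmas~\ref{strong:vs:canonical:proj}–\ref{stron:proj:equiv}.
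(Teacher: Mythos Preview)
Your approach is correct and matches the paper's own (two-sentence) proof, which simply says the first claim is an obvious corollary of Lemma~\ref{strong:proj} and the rest follows from elementary properties of $e$ and $\bar\pi$; you have merely unpacked those elementary properties explicitly, reducing to $i=0$ via the automorphism $\Phi$ exactly as intended.

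One small slip in your density argument for $R$: you set $\hat w=(\bar q(0)^P\cdot\bar q(0)^0\cdot\bar q(0)^1,1,1)$, but the first coordinate of an element of $\blowup{P}$ must lie in $P$, whereas $\bar q(0)^P\cdot\bar q(0)^0\cdot\bar q(0)^1$ is in general only in $\ro(P)$. The fix is immediate: since $P$ is dense in $\blowup{P}$ (as noted right after Definition~\ref{def:blowup}), first choose $p\in P$ with $(p,1,1)\leq\bar q(0)$, then apply Lemma~\ref{strong:proj} to $\hat w=(p,1,1)$. Everything else goes through as you wrote it.
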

\begin{proof}
The first claim is an obvious corollary of the lemma. The rest follows straightforwardly from elementary properties of $e$ and $\bar \pi$.
\end{proof}
From now on, we identify $\blowup{P}$ with $e[\blowup{P}]$ and accordingly $P$ with $\{ e(p,1,1) \setdef p \in P \}$.
\begin{cor}\index[notation]{Phi (amalgamation)@$\Phi$ (amalgamation)}
$\Phi$ is an automorphism of $P^\Int_f$ extending $f$.
\end{cor}
\begin{proof}
Let $b \in B_0$. We may assume $\pi(b)\in Q$ (this holds for a dense set of conditions in $B_0$). Thus $b \in \blowup{P}$ (to be precise, we should write $(\pi(b),b,1)$ instead of $b$).
Now as $F^n(f(b))=F^{n+1}(b)$ and $G^{n+1}(f(b))=G^n(b)$, 
\begin{align*}
 \Phi(e(b)) = \Phi((\hdots, G^2(b), G(b),\stackrel{\stackrel{0}{\downarrow }}{  b}, f(b), F^2(b)&, \hdots))=\\
                     (\hdots, G^2(b), G(b), b, \stackrel{\stackrel{0}{\downarrow }}{f( b)}, F^2(b), \hdots&) = e( f(b))
\end{align*}
So since $\Phi$ and $f$ agree on a dense set of conditions in $B_0$, they are equal on $B_0$.
\end{proof}

\section{Factoring the Amalgamation}\label{factor}

Interestingly, we can factor the amalgamation over a generic for $B_0$. We will put this to use when we investigate the tail $\am \colon P$. 
In particular, it enables us to show that if $\dot r$ is a $P$-name which is unbounded over $V^Q$, $\Phi(\dot r)$ will be unbounded not just over $V^Q$ but over $V^P$.
This will play a crucial role in the proof of the main theorem, ensuring that when we make the set without the Baire property definable, the coding (ensuring its definability) doesn't conflict with the homogeneity afforded by the automorphisms. The main point of the present section is Lemma~\ref{newreal}; it is used in Section~\ref{sec:reals:areas} on p.~\pageref{sec:reals:areas}, to prove Lemma~\ref{index:sequ}. This is in turn used in Section~\ref{sec:preserving:coding} to prove the crucial Lemma~\ref{coding:survives}.

\medskip

For an interval $I \subseteq \Int$,  let $\restam{P}{I}$\index[notation]{P I f@$\restam{P}{I}$} be the set of $\bar p\colon I \rightarrow \blowup{P}$ such that whenever both $k\in I$ and $k+1 \in I$, (\ref{thinout}) holds.
In other words  
\begin{gather*}
\restam{P}{I} =\{ \bar p \res I \setdef \bar p \in P^\Int_f \}.
\end{gather*}
It is clear that for each $k\in I$, the map $e^I_k\colon \blowup{P} \rightarrow\restam{P}{I}$, defined by $e^I_k(p)=e_k(p)\res I$ is a complete embedding. Similarly, there is a strong projection $\pi^I_k \colon\restam{P}{I}\rightarrow \blowup{P}$. 
\begin{lem}\label{factor:am}
Let $G_0=G_Q * H_0$ be $Q* \dot{B_0}$-generic. Then in $V[G_0]$, there is a dense embedding of $P^\Int_f:G_0$ into 
\[\big[\restam{P}{(-\infty,0]} : {e_0[G_Q * H_0]}\big] \times \big[\restam{P}{[1,\infty)} : {e_1[G_Q * f[H_0]]}\big]\]
and another one into  
\[ \big[\restam{P}{(-\infty,-1]} : {e_{-1}[G_Q * H_0]}\big] \times \big[\restam{P}{[0,\infty)} : {e_{0}[G_Q * f[H_0]]}\big].\]
\end{lem}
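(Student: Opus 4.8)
The idea is to write a condition $\bar p \in P^\Int_f$, once we are in $V[G_0]$ where $G_0 = G_Q * H_0$ is generic for $Q * \dot B_0$, as a pair consisting of its ``left half'' $\bar p \res (-\infty,0]$ and its ``right half'' $\bar p\res [1,\infty)$, these two halves being linked only through the already-known values $\pi_0(\bar p(0))$ and $f(\pi_0(\bar p(1))) = \pi_1(\bar p(0))$. First I would recall from the definition of $P^\Int_f$ and equation \eqref{thinout} that the only coherence constraint coupling coordinate $0$ with coordinate $1$ is $f(\pi_0(\bar p(1))) = \pi_1(\bar p(0))$; everything to the left of and including $0$ is a condition in $\restam{P}{(-\infty,0]}$, everything from $1$ onward is a condition in $\restam{P}{[1,\infty)}$, and these are otherwise independent. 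Once we pass to $V[G_0]$ and quotient, the coordinate-$0$ datum $\pi_0(\bar p(0))$ is decided by $H_0$ (it either lies in $H_0$ or not), and correspondingly $\pi_1(\bar p(0)) = f(\pi_0(\bar p(1)))$ is governed by $f[H_0]$; so in the quotient the link disappears and we genuinely get a product.

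The key steps, in order: (1) Define the candidate dense embedding $\iota$ sending (the class of) $\bar p \in P^\Int_f : G_0$ to the pair $\big(\bar p\res(-\infty,0] : e_0[G_Q*H_0],\ \bar p\res[1,\infty) : e_1[G_Q*f[H_0]]\big)$, using that $\pi^{(-\infty,0]}_0$ and $\pi^{[1,\infty)}_1$ are strong projections (as noted just before the lemma) so the two quotients make sense. (2) Check $\iota$ is order-preserving and order-reflecting on the separative quotients: this is immediate from \eqref{blowup:order} and the coordinatewise definition of $\leq$ on $P^\Int_f$, together with the fact that $\bar r \leq \bar p$ in the quotient by $G_0$ iff the left halves are $\leq$ in the quotient by $e_0[G_0]$ and the right halves are $\leq$ in the quotient by $e_1[G_Q * f[H_0]]$ — here one uses $f(\pi_0(\bar p(1))) = \pi_1(\bar p(0))$ and $\eqref{f:and:pi:commute}$ to see that conditioning coordinate $1$ on $f[H_0]$ is the same as conditioning coordinate $0$ on $H_0$. (3) Check density of the range: given an arbitrary pair $(\bar q, \bar s)$ with $\bar q \in \restam{P}{(-\infty,0]}$ compatible with $e_0[G_0]$ and $\bar s \in \restam{P}{[1,\infty)}$ compatible with $e_1[G_Q*f[H_0]]$, I would shrink $\bar q(0)$ and $\bar s(1)$ inside $V[G_0]$ so that $f(\pi_0(\bar s(1))) = \pi_1(\bar q(0))$ — this is possible precisely because both $\pi_0(\bar q(0))$ and $\pi_1(\bar q(0)) = f(\cdots)$ are compatible with $H_0$, resp. $f[H_0]$, in $V[G_0]$ — and then glue $\bar q$ and $\bar s$ into a single $\bar p\in P^\Int_f$ whose image refines $(\bar q,\bar s)$. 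The ``all but finitely many coordinates are $\leqlo^{\lambda_0}$-trivial'' side condition is inherited coordinatewise, so the glued sequence is a legitimate member of $P^\Int_f$. (4) Finally, the second dense embedding, into $\big[\restam{P}{(-\infty,-1]} : e_{-1}[G_Q*H_0]\big] \times \big[\restam{P}{[0,\infty)} : e_0[G_Q*f[H_0]]\big]$, is proved verbatim after applying the automorphism $\Phi$ (which shifts the index by one and is an automorphism of $P^\Int_f$ by the corollary preceding this section): $\Phi$ carries the cut at $\{0|1\}$ to the cut at $\{-1|0\}$ and carries $e_0$ to $e_{-1}$, $e_1$ to $e_0$, so the two statements are formally equivalent.

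The main obstacle I anticipate is step (3), the density argument: one must be careful that when passing to $V[G_0]$ the coordinate-$0$ component of a condition is not an arbitrary element of $\blowup P$ but carries the data $\pi_0(\cdot)$, which the generic $H_0$ has already ``used up''; concretely, one needs that for any $\hat q^P, \hat s^P \in P$ compatible with the relevant generics, there are refinements whose $\pi_0$- and $\pi_1$-shadows are linked by $f$ and still compatible with $H_0$, resp. $f[H_0]$. This is exactly the content of $\eqref{eq:blowup:def:alt}$ (that $\pi(p\cdot \dot b^0 \cdot \dot b^1) = \pi(p)$) combined with the fact that $f$ is an isomorphism commuting with $\pi$ via $\eqref{f:and:pi:commute}$; so the obstacle is real but is dispatched by unwinding the definition of $\blowup P$ and of $P^\Int_f$. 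The remaining verifications (order preservation, reflection, and the side condition on supports) are routine and I would only sketch them.
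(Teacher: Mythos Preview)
Your proposal is essentially correct and follows the paper's approach: define the restriction map $S(\bar p)=(\bar p\res(-\infty,0],\ \bar p\res[1,\infty))$, observe that $S(\bar p)\leq S(\bar q)\iff \bar p\leq \bar q$, and prove density of the range by using the generic $H_0$ to find a common refinement $\dot b=\pi_0(\bar q(0))\cdot f^{-1}(\pi_1(\bar s(1)))\in H_0$ that lets you glue the two halves. This is exactly the paper's argument; your anticipation that step~(3) is the crux and is dispatched via \eqref{eq:blowup:def:alt} and \eqref{f:and:pi:commute} is right.

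One small comment on step~(4): the paper simply says the second embedding is obtained by the same argument ``only different in notation'' (i.e.\ rerun the density proof with the cut at $\{-1\,|\,0\}$). Your route via $\Phi$ also works, but you should be explicit that you are \emph{post-composing} the first embedding with the shift isomorphism between the two products; if instead you conjugate by $\Phi$ on both sides, the domain changes from $P^\Int_f:e_0[G_0]$ to $P^\Int_f:e_{-1}[G_0]=P^\Int_f:e_0[G_1]$, which is a different (though isomorphic) quotient. Post-composition avoids this, since the shift sends $R_0:e_0[G_0]\to R_0':e_{-1}[G_0]$ and $R_1:e_1[G_1]\to R_1':e_0[G_1]$ while leaving the domain untouched. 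With that clarification your argument is fine.
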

\begin{proof}
We only show how to construct the first embedding; the second part of the proof is only different in notation.
Let $R_0$ denote $\loweram{P}{0}$ and $R_1$ denote $\upperam{P}{1}$, let $H_1 = f[H_0]$ and $G_1= G_Q*H_1$.
In $V$, let $S$ denote the obvious map $S\colon P^\Int_f \rightarrow R_0 \times R_1$: $S(\bar p)_0= \bar p \res (-\infty,0]$ and $S(\bar p)_1=   \bar p\res[1,\infty) $.

Let $S^* = S \res ( P^\Int_f:G_0 )$. We show that the range of $S^*$ is dense in $(R_0:G_0) \times (R_1 : G_1)$. Since $S(\bar p) \leq S(\bar q) \iff \bar p \leq \bar q$, this implies that $S^*$ is injective on the separative quotient of its domain and thus is a dense embedding.

To show that $\ran(S^*)$ is dense, let $\bar p_0$, $\bar p_1$ be given such that $\bar p_i \in R_i:G_i$, for $i\in\{0,1\}$. 
Fix $i\in\{0,1\}$ for the moment.
Without loss of generality, $\bar p_i(i) \in P$ (and not just in $\blowup{P}$).
Let $b_i = \pi_i(\bar p_i(i)) \in \dot B_i^{G_Q}$. Then as $\bar p_i \in R_i:G_i$, $b_i \in H_i$.
Find $q \in G_Q$ and a $Q$-name $\dot b$ such that $q \leq \pi_Q(\bar p_0),\pi_Q(\bar p_1)$ and $q$ forces that 
\begin{gather}\label{dot b links p0 and p1}
\dot b = b_0\cdot f^{-1}(b_1) > 0 \quad\text{in $(\dot B_0)^{G_Q}$.} 
\end{gather}
We have that $q \forces \dot b \cdot \bar p_0(0) \neq 0$ and $f(\dot b)\cdot \bar p_1(1) \neq 0$,
or in other words, $q \cdot \dot b  \in \blowup{P}$, 
$q \cdot \dot b \leq \bar \pi ( \bar p_0)$ and $q \cdot f(\dot b) \leq \bar \pi ( \bar p_1)$. 
So we can define
$\bar p^*_0=e(q \cdot \dot b)\cdot \bar p_0$ and  $\bar p^*_1=e_1(q \cdot f(\dot b))\cdot \bar p_1$.
As 
\begin{equation}\label{in G:0}
q \in G_Q\text{ and }\dot b^{G_Q} \in H_0,
\end{equation}
we have $p^*_0 \in R_0 : G_0$ and  $p^*_1 \in R_1 : G_1$. Define $\bar p^*$:
\begin{equation*}%
\bar p^* = ( \hdots, \bar p^*_0(-1),\bar p^*_0(0),\bar p^*_1(1),\bar p^*_1(2),   \hdots )
\end{equation*}
Then $\pi_Q(\bar p^*)=q$ and by (\ref{dot b links p0 and p1}), $q$ forces that the following hold in $(\dot B_0)^{G_Q}$:
\begin{gather*}
f(\pi_0(\bar p^*_0(0)))=f(\pi_0(q\cdot\bar p_0(0)\cdot \dot b))=f(\dot b) \\
\pi_1(\bar p^*_1(1))=\pi_1(q\cdot\bar p_1(1)\cdot f(\dot b))=f(\dot b). 
\end{gather*}
Thus $\bar p^* \in D^\Int_f$, and again by (\ref{in G:0}), $\bar p^* \in  D^\Int_f:G_0$. 
As $S^*(\bar p^*)=(\bar p_0^*, \bar p_1^*) \leq (\bar p_0,\bar p_1)$, we are done.
\end{proof}
 
Let $\dot{R_i}$ be a $Q*\dot{B_0}$-name for $R_i$, for each $i \in\{0,1\}$.
We just showed that $Q*\dot{B_0}$ forces that there is a dense embedding from $\quot{P^\Int_f}{G_0}$ into $\dot R_0 \times \dot R_1$.
So there is a dense embedding of $P^\Int_f$ into $Q*\dot{B_0} * (\dot R_0 \times \dot R_1)$. Since the latter is equivalent to $\restam{P}{(-\infty,0]} * \dot R$ for some $\dot R$, we find that $\restam{P}{(-\infty,0]}$ is a complete suborder of $P^\Int_f$. The same is true for 
$\restam{P}{[0,\infty)}$ (or more generally, for $\restam{P}{I}$, where $I$ is any interval in $\Int$). 
In fact, it's easy to show that the natural embedding and projection witness this.

\medskip

The previous lemma yields an important insight concerning the action of the automorphism $\Phi$:
It enables us to show that if $\dot x$ is a $P$-name which is not in $V^{B_0}$ (and hence also not in $V^{B_1}$), then
for all $i \in  \Int\setminus\{0\}$, $\Phi(\dot x) \not \in V^{P}$. In fact, for the proof of the main theorem, we shall need something a bit more specific: 
\begin{lem}\label{newreal}\index{random real!over V Q@over $V^Q$}\index{unbounded (over V Q)@unbounded (over $V^Q$)}
Assume that $\dot r_0, \dot r_1$ be $P$-names for reals random over $V^Q$, and 
assume  $\forces_Q \dot B_i = \gen{ \dot r_i}^{\quot{P}{Q}}$ (as is the case in our application).
If $\dot r$ is a $P$-name for a real such that $\dot r$ is unbounded over $V^Q$, then for any $i \in  \Int\setminus\{0\}$, $\Phi^i(\dot r)$ unbounded over $ V^{P}$.
\end{lem}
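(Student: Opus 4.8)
The plan is to reduce the statement about $\Phi^i(\dot r)$ being unbounded over $V^P$ to an application of lemma \ref{unbounded} (the abstract fact that unboundedness over $V$ passes to unboundedness over $V^Q$ when we adjoin an independent factor), using lemma \ref{factor:am} to exhibit the right product decomposition. First I would fix $i \in \Int\setminus\{0\}$; by symmetry (applying $\Phi^{-1}$ if necessary, which is again an automorphism of $P^\Int_f$) it suffices to treat $i > 0$, and in fact it suffices to treat $i = 1$, since $\Phi^i = \Phi \circ \Phi^{i-1}$ and we can iterate the argument (or, more cleanly, argue for all positive $i$ at once using the interval $\restam{P}{[1,\infty)}$ and noting $\Phi^i(\dot r)$ lives on coordinates shifted by $i$).

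The key steps are as follows. Fix a $Q * \dot B_0$-generic $G_0 = G_Q * H_0$ and work in $V[G_0]$. By lemma \ref{factor:am}, in $V[G_0]$ there is a dense embedding of $P^\Int_f : G_0$ into
\[
\big[\restam{P}{(-\infty,0]} : e_0[G_Q*H_0]\big] \;\times\; \big[\restam{P}{[1,\infty)} : e_1[G_Q*f[H_0]]\big].
\]
Now $\dot r$ is (essentially) a name on the $0$-coordinate: more precisely, $\dot r$ is a $P$-name, and under the identification of $P$ with $\{\hat p \in \blowup P \setdef \hat p^0 = \hat p^1 = 1\}$ sitting on coordinate $0$, $\dot r$ is really a $\restam{P}{(-\infty,0]}$-name, hence lives in the first factor; symmetrically $\Phi(\dot r)$ lives on coordinate $1$, i.e. is a $\restam{P}{[1,\infty)}$-name, hence lives in the second factor. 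I would then check that over $V^Q$, the real $\dot r$ is not just unbounded over $V^Q$ but that, after passing to $V[G_0]$, the first factor still has $\dot r$ unbounded over $V[G_0]$ — this uses that $\dot B_0 = \gen{\dot r_0}^{\quot P Q}$ is random forcing over $V^Q$, so adjoining $H_0$ is adjoining a random real, and a random real does not bound any real that was unbounded over $V^Q$ (indeed adding a random real adds no dominating real, so unboundedness over $V^Q$ is preserved to $V^Q[H_0] = V[G_0]$ relative to the reals of $V^Q$; more carefully, $\dot r$ is a name in $\restam{P}{(-\infty,0]}$ which contains a copy of the $B_0$-generic, and the relevant statement is that $\dot r$ is unbounded over $V[G_0]$'s ground-model reals, which follows since $\dot r$ was unbounded over $V^Q$ and random forcing is $\omega^\omega$-bounding so adds no new reals dominating old ones). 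Finally, in $V[G_0]$, apply lemma \ref{unbounded} with "$P$" of that lemma being (a dense subset of) the first factor $\restam{P}{(-\infty,0]} : e_0[G_0]$ and "$Q$" of that lemma being the second factor $\restam{P}{[1,\infty)} : e_1[G_Q*f[H_0]]$ (via the natural embedding): since $\dot r$ is unbounded over $V[G_0]$ in the first factor and the second factor is an independent (product) factor, $\dot r$ remains unbounded over $V[G_0]^{\text{(second factor)}}$. Translating back via the dense embedding of lemma \ref{factor:am} and unwinding the generics, this says exactly that $\Phi(\dot r)$ — which we located in the second factor — is unbounded over $V^P$, because $V^P = V[G_0][\text{rest}]$ and the "rest" is exactly generic for the two factors; the point is that $\dot r$ (sitting in the first factor together with all of $V[G_0]$) constitutes the model over which $\Phi(\dot r)$ must be unbounded, and that model is precisely a $V^Q$-extension of the second factor's ground model.

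\textbf{Main obstacle.} The delicate point is bookkeeping the generics correctly so that the application of lemma \ref{unbounded} is legitimate: I need the first factor (containing $\dot r$) to be genuinely \emph{independent} of the second factor (containing $\Phi(\dot r)$) over the appropriate base model, and lemma \ref{factor:am} only gives this after quotienting by $G_0$, which itself involves $f[H_0]$ on the positive side. So the real work is to verify that, once we fix $G_Q * H_0$ and pass to $V[G_0]$, the name $\Phi(\dot r)$ is forced to be unbounded over $V[G_0][\bar p_0]$ for $\bar p_0$ generic for the first factor — equivalently, over the model generated by $V^Q$, the random real $H_0$, and the first-factor generic — and that this model is exactly (a dense sub-forcing extension of) $V^Q$ augmented by data independent of the second factor. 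A secondary subtlety is handling $i \neq 1$: one must check that $\Phi^i(\dot r)$ for $i \geq 2$ is a name in $\restam{P}{[i,\infty)} \subseteq \restam{P}{[1,\infty)}$, so it still lands in the second factor, and re-run the same argument; for $i < 0$ one uses the second decomposition in lemma \ref{factor:am} (into $\restam{P}{(-\infty,-1]}$ and $\restam{P}{[0,\infty)}$) and the mirror-image argument. None of these steps is deep, but the notational discipline around which generic belongs to which factor is where an incorrect proof would go wrong, so I would carry out the reduction slowly and invoke lemma \ref{unbounded} only after the product structure and the base model are pinned down unambiguously.
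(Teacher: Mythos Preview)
Your overall strategy is the same as the paper's: use the product decomposition of lemma \ref{factor:am} and lemma \ref{unbounded}. But you have applied lemma \ref{unbounded} to the wrong name, and the conclusion you draw is a non sequitur.

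Concretely: you place $\dot r$ in the first factor $R_0 = \restam{P}{(-\infty,0]}:e_0[G_0]$ and $\Phi(\dot r)$ in the second factor $R_1 = \restam{P}{[1,\infty)}:e_1[G_1]$, then apply lemma \ref{unbounded} with ``$P$'' $=R_0$ and ``$Q$'' $=R_1$. This yields that $\dot r$ (at coordinate $0$) is unbounded over $V[G_0]^{R_1}$. That statement tells you nothing about $\Phi(\dot r)$ being unbounded over $V^P$: the model $V^P$ (via $e_0$) sits inside the $R_0$-extension, not the $R_1$-extension, and the unbounded name you produced is $e_0(\dot r)$, not $e_1(\dot r)=\Phi(\dot r)$. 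At best your computation shows $e_0(\dot r)$ is unbounded over $V^{e_1[P]}$, which after applying $\Phi^{-1}$ gives $\Phi^{-1}(\dot r)$ unbounded over $V^P$ --- so you have inadvertently argued the case $i=-1$, not $i=1$.

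The fix is to swap the roles. Work with $\Phi(\dot r)=e_1(\dot r)$ as an $R_1$-name: since $P/G_1$ is a complete sub-order of $R_1$ via $e^I_1$ and $\dot r$ is unbounded over $V^{B_1}$ (random forcing adds no unbounded reals), $e^I_1(\dot r)$ is an $R_1$-name unbounded over $W=V[G_1]$. Now apply lemma \ref{unbounded} with ``$P$'' $=R_1$ and ``$Q$'' $=R_0$ to conclude that $e_1(\dot r)$ is unbounded over $W^{R_0}=V^{\restam{P}{(-\infty,0]}}$, which contains $V^P$ via $e_0$. That is exactly the paper's argument. For general $i>0$ one then observes $e_{-i+1}[P]\subseteq\restam{P}{(-\infty,0]}$ and shifts by $\Phi^{i-1}$; for $i<0$ one uses the second decomposition in lemma \ref{factor:am}.
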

\begin{proof}
Firstly, $\dot r$ is unbounded over $V^{B_i}$, for each $i\in\{0,1\}$, since the random algebra does not add unbounded reals.

For a start, let's assume $i = 1$.
Let $G_1=G_Q * f[H_0]$ be $Q* \dot{B_1}$-generic and work in $W=V[G_1]$. 
We have that $\dot r$ is a $\quot{P}{ G_1 }$ name for a real which is unbounded over $W$ in the sense of Definition~\ref{generic:reals}---in any $\quot{P}{G_1}$-generic extension of $V[G_1]$, the interpretation of $\dot r$ will be unbounded over $V[G_1]$.
Let $R_0$, $R_1$ be defined as in the previous proof, i.e.,
\begin{align*}
R_1&=\quot{ \restam{P}{[1,\infty)} }{G_1 },\\
R_0&=\restam{P}{(-\infty,0]} : {(G_Q * H_0)}, 
\end{align*}
let $\dot{R_i}$ be a $Q*\dot{B_0}$-name for $R_i$, for each $i \in\{0,1\}$, and let $I=[1,\infty)$. 
As $\quot{P}{ G_1 }$ is a complete suborder of $R_1$,
$e^I_1(\dot r)$ is an $R_1$-name which is unbounded over $W$. 
By Lemma~\ref{unbounded}, viewing $e^I_1(\dot r)$ as a $R_0\times R_1$-name, it is unbounded over $W^{R_0}$.
As $G_1$ was arbitrary, $e^I_1(\dot r)$ is a $Q * \dot{B_0} * (\dot{R_1}\times \dot{R_0})$-name unbounded over $V^{Q * \dot{B_0} * \dot{R_0}}$.
By the previous theorem this means that $e_1(\dot r)$ is a $P^\Int_f$-name unbounded over $V^{Q * \dot{B_0} * \dot{R_0}}=V^{\restam{P}{(-\infty,0]}}$ and hence over $V^P$,
since $S \circ e_0=e^I_0$ shows that $P$ is a complete suborder of $Q * \dot{B_0} * \dot{R_0}$.

\medskip

For arbitrary $i \in \Int$ such that $i > 0$: 
We just showed that $e_1(\dot r)$ is a $P^\Int_f$-name unbounded over $V^{\restam{P}{(-\infty,0]}}$.
Since $e^I_{-i+1}[P]$ is a complete suborder of $\restam{P}{(-\infty,0]}$, we know $e_1(\dot r)$ is unbounded over $V^{e_{-i+1}[P]}$.
Apply $\Phi^{i-1}$ to see $\Phi^i(\dot r)$ is unbounded over $V^{e_0[P]}$, as $e_0= \Phi^{i-1} \circ e_{-i+1}$.
For $i < 0$, argue exactly as above but use the second dense embedding mentioned in Lemma~\ref{factor:am}. 
\end{proof}

\section{Stratified Type-1 Amalgamation}\label{am}

We now turn to the matter of stratification.
Assume $(Q,P)$ is a stratified extension on $I=[\lambda_0,\kappa^*)\cap \Reg$, as witnessed by 
$$\pss_P=(\D_P, \param_P,\leqlol,\lequpl,\Cl)_{\lambda\in I}$$ 
and $\pss_Q=(\D_Q,\param_Q, \leqlol_Q,\lequpl_Q,\Cl_Q)_{\lambda\in I}$.
As mentioned, we allow $\kappa^*$ to be $\infty$, as well.
We never need to mention $\leqlol_Q$, $\lequpl_Q$, $\Cl_Q$ and $\D_Q$ as we can always use the corresponding relation from $\pss_P$ (see the remark following Definition~\ref{pss:is}, p.~\pageref{pss:is}). 
Moreover, assume $\forces_Q \card{\dot B_0} \leq \lambda_0$.

\medskip

The main problem with stratification and amalgamation is quasi-closure:
Consider two sequences $(p_\xi)_{\xi<\rho}$ and $(q_\xi)_{\xi<\rho}$ such that
$p_\xi$ and $q_\xi$ are compatible for every $\xi<\rho$, with greatest lower bounds $p$ and $q$ respectively. In general, $p$ and $q$ don't have to be compatible.  A similar problem occurs with regard to the defining equation (\ref{thinout}) of amalgamation: say we have a sequence of conditions $\bar p_\xi \in \am$ and for each $i \in \Int$, $\bar p(i)$ is a greatest lower bound of $(\bar p_\xi(i))_\xi$. Even though (\ref{thinout}) holds for every $\bar p_\xi$, it could fail for $\bar p$.

\medskip

The solution to this problem is to thin out to a dense subset of $P$ where $\pi_i$ is stable with respect to ``direct extension''\index{direct extension}, before we amalgamate.
That is, on this dense subset, $\pi_i$  doesn't change (in a strong sense) when conditions are extended in the sense of $\leqlol$, for $\lambda\in I$.
\begin{dfn}\label{am:gen:D}
Let $\Dam=\Dam(Q,P,f,\lambda_0)$\index[notation]{D(Q,P,f,lambda0)@$\Dam(Q,P,f,\lambda_0)$} be the set of $p \in P$ such that %
for all $q \in P$, if $q \leqlo^{\lambda_0} p$ we have
\begin{equation}\label{am:lower:part:frozen:nice}
\forall (b_0,b_1) \in B_0 \times B_1 \quad \big(\pi(q)\cdot p \cdot b_0 \cdot b_1 \neq 0\big) \Rightarrow \big(q\cdot b_0 \cdot b_1 \neq 0\big)
\end{equation}
\end{dfn}
 
Observe that (\ref{am:lower:part:frozen:nice}) is equivalent to:
\begin{equation}
\forall j\in\{0,1\} \quad \pi(q)\forces_Q \forall b \in \dot B_{1-j} \quad \pi_j(q\cdot b)=\pi_j(p \cdot b),\label{lower:part:frozen}\\
\end{equation}
and also to the following:
\begin{equation}
\forall j\in\{0,1\}\quad \forall b \in B_{1-j} \quad \pi_j(q\cdot b) = \pi(q) \cdot  \pi_j(p \cdot b).\label{lower:part:frozen:alt}
\end{equation}
\begin{lem}\label{lem:shrink}
$\Dam$ is open dense in $\langle P, \leqlo^{\lambda_0}\rangle$.
\end{lem}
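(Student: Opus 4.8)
The claim is that $\Dam$ is both open and dense in $\langle P, \leqlo^{\lambda_0}\rangle$. Openness is the easy half: if $p \in \Dam$ and $p' \leqlo^{\lambda_0} p$, then any $q \leqlo^{\lambda_0} p'$ also satisfies $q \leqlo^{\lambda_0} p$ (since $\leqlo^{\lambda_0}$ is a \pre order by \ref{qc:preorder}), so the defining condition \eqref{lower:part:frozen:nice} for $p$ applies to such $q$; but \eqref{lower:part:frozen:nice} only constrains $q$ via $\pi(q)$, $q$, $b_0$, $b_1$, so it transfers verbatim to $p'$ in place of $p$. (One should double-check $\pi(q)\cdot p' \cdot b_0 \cdot b_1 \le \pi(q)\cdot p\cdot b_0\cdot b_1$, which is immediate from $p'\le p$.) Hence $p' \in \Dam$.

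For density, fix $p_0 \in P$; I want to build $p \leqlo^{\lambda_0} p_0$ with $p \in \Dam$. The natural approach is a fusion/closure argument using quasi-closure of $P$ in the sense of \ref{def:qc}, exactly in the style of the proof of Theorem \ref{single xdensity reduction}. Set $\lambda = \lambda_0$. Since $\forces_Q \card{\dot B_0} \le \lambda_0$ (and $B_1 \cong B_0$ via $f$, so likewise for $B_1$), the set $B_0 \times B_1$, viewed in $\ro(P)$, has a predense subset of size at most $\lambda_0$ below any condition; more precisely, it suffices to consider, for each pair $(b_0,b_1)$ from a fixed set of $Q$-names of size $\le \lambda_0$ enumerating $\dot B_0$ and $\dot B_1$, the "bad event" that some $q \leqlo^{\lambda_0} p$ has $\pi(q)\cdot p\cdot b_0\cdot b_1 \neq 0$ but $q \cdot b_0 \cdot b_1 = 0$. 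I would build a $\leqlo^{\lambda_0}$-decreasing, $\lambda_0$-adequate sequence $\bar p = (p_\xi)_{\xi \le \lambda_0}$, starting from $p_0$, where at successor step $\xi+1$ we handle the $\xi$-th pair $(b_0^\xi, b_1^\xi)$: either there is no threat and we set $p_{\xi+1} = p_\xi^*$ (the output of the $\D$-strategy), or we use \eqref{lower:part:frozen:alt} as a target and strengthen $p_\xi^*$ in the sense of $\leqlo^{\lambda_0}$ so as to decide, for the finitely/$\le\lambda_0$-many relevant $j$ and $b$, that $\pi_j(p\cdot b)$ is "already present", i.e. that the condition $\pi(q)\cdot p\cdot b_0\cdot b_1$ projects below $q$ itself — in other words we absorb the projection $\pi_j$ into the condition. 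By \ref{qc:glb}, the adequate sequence has a greatest lower bound $p = p_{\lambda_0}$ with $p \leqlo^{\lambda_0} p_\xi$ for every $\xi$, hence $p \leqlo^{\lambda_0} p_0$.

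The crux is arranging that $p$ actually lies in $\Dam$, i.e. that the property survives at the greatest lower bound and that a single pass over $\lambda_0$ pairs suffices. Here the point is that \eqref{lower:part:frozen:nice} need only be checked for $q \leqlo^{\lambda_0} p$; since $p \leqlo^{\lambda_0} 1_P$ need not hold, but the relevant $q$ satisfy $q \leqlo^{\lambda_0} p \leqlo^{\lambda_0} p_\xi$, the freezing we arranged at stage $\xi$ for the pair $(b_0^\xi,b_1^\xi)$ is inherited by $q$ — this is where \ref{er} and the transitivity of $\leqlo^{\lambda_0}$ do the work, exactly as in the remark following \ref{def:pcs} about $\leqlol$ being well-defined on $\approx$-classes. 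I expect the main obstacle to be the bookkeeping that makes the one-pass fusion legitimate: one must check that once $\pi_j(q\cdot b) = \pi(q)\cdot \pi_j(p_\xi \cdot b)$ has been forced at stage $\xi$, no later strengthening in the sense of $\leqlo^{\lambda_0}$ can disturb it (because $\leqlo^{\lambda_0}$ leaves the "lower part" $<\lambda_0$ untouched, and $B_0, B_1$ live in that lower part once we have arranged the relevant names to be $\lambda_0$-chromatic via Lemma \ref{density reduction}), and that the resulting construction can be carried out with a canonical witness so that $\bar p$ is genuinely $\lambda_0$-adequate. This is entirely analogous to Lemma \ref{adequate}, so I would simply cite that construction rather than repeat it.
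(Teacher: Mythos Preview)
Your openness argument is correct. The density argument has the right skeleton (a $\lambda_0$-adequate $\leqlo^{\lambda_0}$-decreasing sequence with bookkeeping over pairs from $B_0\times B_1$), but the mechanism you describe at the successor step does not work, and the justification you give is circular.

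You write that at stage $\xi$ you will ``strengthen $p_\xi^*$ in the sense of $\leqlo^{\lambda_0}$ so as to decide \ldots\ that $\pi_j(p\cdot b)$ is already present'', and that this freezing is preserved ``because $\leqlo^{\lambda_0}$ leaves the lower part $<\lambda_0$ untouched, and $B_0,B_1$ live in that lower part''. But this is exactly what is \emph{not} true in general: a $\leqlo^{\lambda_0}$-extension $q$ of $p$ can very well change $\pi_j(p\cdot b)$ to a strictly smaller $\pi_j(q\cdot b)$; the whole purpose of $\Dam$ is to single out those $p$ where this does not happen. So you cannot simply ``force'' stability at stage $\xi$ by a $\leqlo^{\lambda_0}$-extension and then argue it persists --- that would make $\Dam=P$. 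Making the relevant names $\lambda_0$-chromatic (Lemma~\ref{density reduction}) does not help either: chromaticity controls how names are decided, not how the projections $\pi_j$ behave under $\leqlo^{\lambda_0}$-extension.

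The paper's construction supplies the missing idea: the bookkeeping is by \emph{triples} $(\alpha_\xi,\beta_\xi,\zeta_\xi)\in(\lambda_0)^3$, where $\zeta_\xi$ is a value of the centering function $\C^{\lambda_0}$. At stage $\xi$ one asks whether there exist $\bar p\leqlo^{\lambda_0}p_\xi$ and $p^*\le\bar p$ with $\zeta_\xi\in\C^{\lambda_0}(p^*)$ such that $\pi(p^*)\forces \bar p\cdot \dot b_0(\alpha_\xi)\cdot\dot b_1(\beta_\xi)=0$. If so, one uses \emph{Interpolation}~(\ref{interpolation}) to pick $p_{\xi+1}\leqlo^{\lambda_0}\bar p$ with $p_{\xi+1}\lequp^{\lambda_0}p^*$. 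In the verification, a putative counterexample $\bar q\leqlo^{\lambda_0}p_{\lambda_0}$ is refined to some $q^*\in\dom(\C^{\lambda_0})$; matching the triple, one gets $q^*\le p_{\xi+1}\lequp^{\lambda_0}p^*$ and $\C^{\lambda_0}(q^*)\cap\C^{\lambda_0}(p^*)\neq\emptyset$, so \emph{Centering}~(\ref{centering}) yields a common extension $w$, and $\pi(w)$ forces contradictory statements about $p_{\lambda_0}\cdot\dot b_0(\alpha_\xi)\cdot\dot b_1(\beta_\xi)$. In short, the argument is a density-reduction in the style of Theorem~\ref{single xdensity reduction}, not a direct ``freezing'': you must record witnesses via $\lequp^{\lambda_0}$ and the colour, and then use centering in the verification.
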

\begin{proof}
Let $p_0$ be given. We inductively construct an adequate sequence of $p_\xi$, $0<\xi\leq{\lambda_0}$ with $p_{\lambda_0} \in \Dam$. 
First fix $x$ such that the following definition is $\qcdefSeq$ in parameters from $x$. Fix $Q$-names $\dot b_j$ such that $\forces_Q \dot b_j\colon \lambda_0 \rightarrow \dot B_j$ is onto, for $j=0,1$, 
and let $\xi \mapsto ( \alpha_\xi, \beta_\xi, \zeta_\xi)$ be a surjection from $\lambda_0$ onto $(\lambda_0)^3$. 

\medskip

For limit $\xi$, let $p_\xi$ be the greatest lower bound of the sequence constructed so far.
Say we have constructed $p_\xi$, we shall define $p_{\xi+1}$. 
Let's first assume there are $p^*, \bar p$ such that 
$\bar p \leqlo^{\lambda_0} p_\xi$, $\bar p\in\D(\lambda_0,x,p_\xi)$, $p^* \leq \bar p$ and
\begin{enumerate}
\item\label{descend} $\pi(p^*)\forces \bar p \cdot \dot b_0(\alpha_\xi)\cdot \dot b_1(\beta_\xi) = 0$,
\item\label{colour} $\zeta_\xi \in \Clink^{\lambda_0}(p^*)$.
\end{enumerate}
In this case pick $p_{\xi+1}$ such that $p_{\xi+1} \leqlo^{\lambda_0} \bar p$ and $p_{\xi+1}\lequp^{\lambda_0} p^*$ (using interpolation).
If, on the other hand, no such $\bar p, p^*$ exist, just pick $p_{\xi+1} \in\D(\lambda_0,x,p_\xi)$.

\medskip

We now show (\ref{lower:part:frozen}) holds for the final condition $p_{\lambda_0}$: say, to the contrary, we can find $j \in \{0,1\}$ and $\dot b \in B_{1-j}$ together with $\bar q \leqlo^{\lambda_0} p_{\lambda_0}$ such that
\[ \pi(\bar q )\not\forces_Q \pi_j(\bar q\cdot \dot b)=\pi_j(p_{\lambda_0}\cdot \dot b). \]
Without loss of generality say $j=0$. We can find $q^* \leq \bar q$ such that for some $\alpha, \beta < \lambda_0$
\begin{enumerate}[(i)]
\item \label{compatible} $\pi(q^*)\forces \pi_0(p_{\lambda_0}\cdot \dot b) - \pi_0(\bar q\cdot \dot b) = \dot b_0(\alpha)\neq 0$,
\item $\pi(q^*)\forces \dot b = \dot b_1(\beta)$,\label{fix_b}
\item $q^* \in \dom(\Clink^{\lambda_0})$.
\end{enumerate}
Find $\xi<{\lambda_0}$ so that $\alpha=\alpha_\xi$, $\beta=\beta_\xi$ and $\zeta_\xi \in \Clink^{\lambda_0}(q^*)$. 
By construction, at stage $\xi$ of our construction we had $\bar p$ and $p^*$ satisfying (\ref{descend}) and (\ref{colour}).
As $\Clink^{\lambda_0}(p^*)\cap \Clink^{\lambda_0}(q^*)\neq 0$ and $q^*\leq p_{\xi+1}\lequp^{\lambda_0} p^*$, we can find $w \leq p^*, q^*$.
By \ref{compatible} and \ref{fix_b}, $\pi(w)\forces p_{\lambda_0} \cdot \dot b_0 (\alpha) \cdot \dot b_1(\beta) \neq 0$.
But since $w \leq p^*$, 
$\pi(w)\forces \bar p \cdot \dot b_0 (\alpha) \cdot \dot b_1(\beta) = 0$ and so also
$\pi(w)\forces p_{\lambda_0} \cdot \dot b_0 (\alpha) \cdot \dot b_1(\beta) = 0$, contradiction.

\medskip

Now we show $\Dam$ is open: 
For any $r \leqlo^{\lambda_0} q$, $j=0,1$ and $\dot b \in B_{1-j}$, since $r \leqlo^{\lambda_0} p$, we have $\pi(r)\forces \pi_j (r \cdot \dot b) = \pi_j (p \cdot \dot b)$.
Since $\pi(q)\forces \pi_j (q \cdot \dot b) = \pi_j (p \cdot \dot b)$ and $r \leq q$,
$\pi(r)\forces \pi_j (r \cdot \dot b) = \pi_j (q \cdot \dot b)$. So $q \in \Dam$.
\end{proof}

Having $Q \subseteq \Dam$ helps in many circumstances, in particular we like to have $1_P \in \Dam$. 
To this end we introduce the notion of $B_0, B_1$ being $\lambda_0$-reduced.
\begin{dfn}
We say the pair $B_0, B_1$ is $\lambda$-reduced\index{reduced pair|textbf}\index[notation]{lambda-reduced@$\lambda$-reduced pair|textbf} over $Q$ if and only if whenever $p \in P$, $p \leqlo^{\lambda} q$ for some $q\in Q$ and $b\in B_j$ for $j=0$ or $j=1$, we have
\[ \pi_{1-j}(p\cdot b)=\pi(p)\cdot\pi(b). \]
\end{dfn}

Henceforth assume $B_0$, $B_1$ is a $\lambda_0$-reduced pair. We will later see that this is a very mild assumption, see Lemmas~\ref{reduced:equ} and \ref{reduce:a:pair}.
\begin{lem}\label{Q:in:D}
If $p \leqlo^{\lambda_0} q$ for some $q \in Q$ and $j\in\{0,1\}$ we have 
\[ 
\pi(p)\forces \forall b \in \dot B_{1-j}\setminus \{ 0\} \quad\pi_j(p\cdot b)=1,
\]
and moreover,  $p \in \Dam$. In particular, we have $Q \subseteq \Dam$.
\end{lem}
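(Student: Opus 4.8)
The statement to prove is Lemma~\ref{Q:in:D}: if $p \leqlo^{\lambda_0} q$ for some $q \in Q$ and $j \in \{0,1\}$, then $\pi(p)\forces \forall b \in \dot B_{1-j}\setminus\{0\}\ \pi_j(p\cdot b)=1$, and consequently $p \in \Dam$; in particular $Q \subseteq \Dam$. The plan is to derive the first assertion directly from the hypothesis that $B_0,B_1$ is a $\lambda_0$-reduced pair over $Q$, then feed it into the characterization \eqref{lower:part:frozen:alt} of $\Dam$ to conclude membership in $\Dam$, and finally observe that elements of $Q$ trivially satisfy the hypothesis with $q=p$ (since $\leqlo^{\lambda_0}$ is reflexive and $\pi(p)=p$ for $p \in Q$).

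First I would fix $j \in \{0,1\}$ and work in the extension by $Q$ below $\pi(p)$. Let $\dot b \in \dot B_{1-j}$ with $\dot b \neq 0$. Since $B_0,B_1$ is $\lambda_0$-reduced over $Q$ and $p \leqlo^{\lambda_0} q \in Q$, the definition of reduced pair gives $\pi_j(p\cdot b) = \pi(p)\cdot\pi(b)$, where I view $b$ as the corresponding element of $B_{1-j} = \ro(Q)*\dot B_{1-j}$. Now the key point is that $\pi(b)$, the canonical projection of $b$ to $Q$, equals $\pi(p)$ in the relevant sense once we restrict below $\pi(p)$: more precisely, $b$ has been chosen so that $\pi(b) \in Q$ (this holds densely, by the discussion after Definition~\ref{def:blowup}), and $\pi(p)\forces_Q \dot b \neq 0$ means exactly $\pi(b) \geq \pi(p)$, so $\pi(p)\cdot\pi(b) = \pi(p)$ in $\ro(P)$. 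Translating back, this says $\pi(p)\forces_Q \pi_j(p\cdot\dot b)=1_{\dot B_j}$, which is the first displayed claim. One should be slightly careful to state this uniformly over all $\dot b$ rather than one at a time, but the argument is uniform, so the universally quantified statement is forced by $\pi(p)$.

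Next I would deduce $p \in \Dam$. By Lemma~\ref{lem:shrink} and the remarks preceding it, $\Dam$ is characterized by \eqref{lower:part:frozen:alt}: $p \in \Dam$ iff for every $r \leqlo^{\lambda_0} p$, every $j\in\{0,1\}$ and every $b \in B_{1-j}$ one has $\pi_j(r\cdot b) = \pi(r)\cdot\pi_j(p\cdot b)$. So let $r \leqlo^{\lambda_0} p$. Since $\leqlo^{\lambda_0}$ is a preorder (by \eqref{qc:preorder}) and $p \leqlo^{\lambda_0} q$, we get $r \leqlo^{\lambda_0} q$ with $q \in Q$, hence by the first part of the lemma both $\pi(r)\forces \pi_j(r\cdot \dot b)=1$ and $\pi(p)\forces\pi_j(p\cdot\dot b)=1$. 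Thus $\pi_j(r\cdot b) = \pi(r)$ and $\pi_j(p\cdot b)=\pi(p)$, so $\pi(r)\cdot\pi_j(p\cdot b) = \pi(r)\cdot\pi(p)=\pi(r)$ (using $r \leq p \Rightarrow \pi(r)\leq\pi(p)$), which matches $\pi_j(r\cdot b)$. This verifies \eqref{lower:part:frozen:alt} and gives $p \in \Dam$. Finally, for $q \in Q$: since $\pi$ is a strong projection with $Q$ a strong sub-order, $\pi(q)=q$, and $q \leqlo^{\lambda_0} q$ by reflexivity, so $q$ satisfies the hypothesis with itself in the role of the witnessing element of $Q$; hence $q \in \Dam$, i.e. $Q \subseteq \Dam$.

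\textbf{Main obstacle.} I expect the only delicate point to be the bookkeeping identifying $\pi(\dot b)$ with $\pi(p)$ below $\pi(p)$ — i.e.\ being careful about the distinction between $B_{1-j}$ as a set of $Q$-names (where $\pi_Q(b)\in Q$) versus as the term-forcing (where $\pi_Q(b)=1$), which is flagged explicitly in the text just before Lemma~\ref{strong:proj}. Once one fixes the convention that $b$ ranges over the canonical representatives with $\pi(b)\in Q$, the chain $\pi_j(p\cdot b) = \pi(p)\cdot\pi(b) = \pi(p)$ is immediate from the $\lambda_0$-reducedness hypothesis plus $\pi(p)\forces \dot b\neq 0$. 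Everything else is a routine unwinding of the equivalent forms \eqref{lower:part:frozen}--\eqref{lower:part:frozen:alt} of the definition of $\Dam$ together with transitivity and reflexivity of $\leqlo^{\lambda_0}$; no construction or genericity argument is needed here, in contrast to Lemma~\ref{lem:shrink}.
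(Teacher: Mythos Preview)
Your proof is correct and follows essentially the same approach as the paper. Both arguments derive the first displayed statement directly from the $\lambda_0$-reduced hypothesis via $\pi_j(p\cdot b)=\pi(p)\cdot\pi(b)$, then obtain $p\in\Dam$ by applying this to any $p'\leqlo^{\lambda_0}p$ (using transitivity to get $p'\leqlo^{\lambda_0}q\in Q$) and reading off \eqref{lower:part:frozen}; the paper is just terser and works directly with the forcing form \eqref{lower:part:frozen} rather than the Boolean form \eqref{lower:part:frozen:alt}. One minor point: your line ``$\pi_j(r\cdot b)=\pi(r)$'' is only literally correct when $\pi(b)\geq\pi(r)$, but since you are verifying \eqref{lower:part:frozen} (a statement inside the $Q$-extension, where $\dot b\neq 0$ is assumed), this is exactly the relevant case and the argument goes through.
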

\begin{proof}
Fix $p$ as in the hypothesis. Say $r \in Q$, $r \leq \pi(p)$ and $b \in B_0$ such that $r \forces b \in \dot B_0\setminus\{0\}$. Then $r \leq \pi( b)$.
So as $B_0$, $B_1$ is $\lambda_0$-reduced, $r \leq \pi_1(p\cdot  b)$, whence $r \forces \pi_1(p\cdot b)=1$. This proves the first statement for $j=1$, and in the other case the proof is the same.

\medskip

We now show $p \in \Dam$: Say $p' \leqlo^{\lambda_0} p$. Since also $p' \leqlo^{\lambda_0} q$, we have
\[
\pi(p')\forces \forall b \in \dot B_{1-j}\setminus\{0\}\quad \pi_j(p'\cdot b) = 1 =\pi_j(p\cdot b),
\]
and thus $p \in \Dam$.
\end{proof}

In fact, the first statement of Lemma~\ref{Q:in:D} is equivalent to $B_0$, $B_1$ being a reduced pair (this is really just a slight variation of Lemma~\ref{reduced:equ}). 

\medskip

The following lemma provides a hint as to how we can assume that $B_0$, $B_1$ is $\lambda_0$-reduced.
For the lemma, recall from the beginning of this chapter that $B_i = \ro(Q) * \dot B_i$.
\begin{lem}\label{reduced:equ}
Assume that $\dot r_0, \dot r_1$ are $P$-names for reals random over $V^Q$, and 
assume  $\forces_Q \dot B_i = \gen{ \dot r_i}^{\quot{P}{Q}}$ (as will be the case in our application).
Say $j=0$ or $j=1$. The following are equivalent (interestingly, in (\ref{reduced:alt}), there is no mention of $j$):
\begin{enumerate}
\item\label{reduced:orig} Whenever $p \in P$, $p \leqlo^{\lambda} q$ for some $q\in Q$ and $b\in B_j$, we have 
\[ \pi_{1-j}(p\cdot b)=\pi(p)\cdot\pi(b).\]
\item\label{reduced:alt}
Whenever  $p \in P$, $p \leqlo^{\lambda} q$ for some $q\in Q$ and $b_0$, $b_1$ are $Q$-names for Borel sets such that for some $w \leq \pi(p)$,
$w \forces_Q$``both $b_0$ and $b_1$ are not null'', there is $p' \leq p$ such that $p'\forces_P \dot r_0 \in b_0$ and $\dot r_1 \in b_1$.
\end{enumerate}
\end{lem}
\begin{proof}
First, assume Item \ref{reduced:alt}. We carry out the proof for $j=0$ (the other case is exactly the same). Let $p \in P$ such that for some $q \in Q$, $p \leqlo^{\lambda_0} q$ and let $b_0 \in B_0$. As for any $r \in \ro(P)$, $r\leq\pi_j(r)\leq\pi(r)$ holds, we have 
$\pi_j(p\cdot b_0) \leq \pi(p) \cdot \pi(b_0)$. 
We now show $\pi_j(p\cdot b_0)\geq\pi(p)\cdot\pi(b_0)$. It suffices to show that whenever $b_1 \in B_1$ is compatible with $\pi(p)\cdot\pi(b_0)$, it is compatible with $p\cdot b_0$. 
So fix $b_1\in B_1$. 
We have $\pi(b_1)\cdot \pi(b_0)\cdot \pi(p) \neq 0$, so we may pick $w \leq \pi(b_1)\cdot \pi(b_0)\cdot \pi(p)$.
For $j=0,1$, let $\dot b_j$ be a $Q$-name for a Borel set such that $b_j = \bv{ \dot r_j \in \dot b_j}^{\ro(P)}$.
The last inequality means $w \forces \dot b_0$ and $\dot b_1$ are not null. So by assumption, we can find $p'$ forcing $\dot r_j \in \dot b_j$ for both $j=0,1$. 
In other words, $p' \leq p \cdot b_0 \cdot b_1$, whence $b_1$ is compatible with $p\cdot b_0$.

\medskip

For the other direction, assume Item \ref{reduced:orig} and again assume $j=0$, fix $p$ as above, and say $\dot b_0, \dot b_1$ are $Q$-names such that 
$w \forces \dot b_0, \dot b_1 \in \Borelplus$ for some $w\leq \pi(p)$. 
Let $b_j = \bv{ \dot r_j \in \dot b_j}^{\ro(P)}$. As $\pi(b_0)\cdot \pi(b_1)\cdot \pi(p)\neq0$,
$b_1$ is compatible with $\pi(b_0)\cdot \pi(p) = \pi_1(p\cdot b_0)$. 
Thus $b_1$ is compatible with $p\cdot b_1$. So we may pick $p' \in P$, $p' \leq p\cdot b_0 \cdot b_1$. 
\end{proof}
\begin{dfn}\label{d:reduced:pair:alt}\index{reduced pair}\index[notation]{lambda-reduced@$\lambda$-reduced pair}
Under the assumptions of the previous lemma, we also say \emph{the pair $\dot r_0$, $\dot r_1$ is $\lambda$-reduced} to express that the two (equivalent) conditions from said lemma hold.
\end{dfn}

We shall need the next lemma to show that $P$ completely embeds into $\am$ (see \ref{strong:proj:am}). Observe that the next lemma does not make the assumption that $B_0,B_1$ is a
$\lambda_0$-reduced pair obsolete, i.e., by itself the lemma does not imply $Q \subseteq \Dam$.
\begin{lem}\label{Q:cdot:D}\index{stable meet}\index[notation]{Q-stable meet@$Q$-stable meet}\index[notation]{wedge (stable meet)@$\stm$ (stable meet)}
Assume that there exists a $Q$-stable meet operator $\qstm$ on $P$ with respect to $\pss$.
Then if $p \in \Dam$ and $q \in Q$ are such that $q \leq \pi(p)$, we have $q \cdot p \in \Dam$.
Moreover, if $(p,r)\in\dom(\qstm)$ and $p \in \Dam$, for any $j\in\{0,1\}$ and $b \in B_{1-j}$ we have $\pi_j((p\qstm r)\cdot b)=\pi_j(p\cdot b)$.
\end{lem}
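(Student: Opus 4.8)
\textbf{Proof plan for Lemma \ref{Q:cdot:D}.}

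The plan is to verify the two claims in sequence, using that membership in $\Dam$ is characterised by the equivalent condition \eqref{lower:part:frozen:alt}: $p \in \Dam$ iff for every $q \leqlo^{\lambda_0} p$, every $j\in\{0,1\}$ and every $b \in B_{1-j}$, we have $\pi_j(q\cdot b) = \pi(q)\cdot\pi_j(p\cdot b)$. First I would establish the last sentence, since the first part will follow from it plus lemma \ref{Q:in:D}. So fix $(p,r)\in\dom(\qstm)$ with $p\in\Dam$, a $j\in\{0,1\}$ and $b \in B_{1-j}$; without loss of generality $j=0$. By definition \ref{schnitzel} of a stable meet operator, $p\qstm r \leqlol p$ for the witnessing $\lambda$, and since $B_0,B_1$ have size at most $\lambda_0$ we may assume $\lambda = \lambda_0$ (using \ref{qc:leqlo:vert} to pass to $\lambda_0$ if the witness is larger, noting $\lambda_0 = \min I$). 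Hence $p\qstm r \leqlo^{\lambda_0} p$, and since $p\in\Dam$, condition \eqref{lower:part:frozen:alt} applied to $q = p\qstm r$ gives exactly $\pi_0((p\qstm r)\cdot b) = \pi(p\qstm r)\cdot\pi_0(p\cdot b)$. By \eqref{schnitzel:proj}, $\pi(p\qstm r) = \pi(p)$, and since $b \in B_1$ and $p \cdot b \leq p \leq \pi(p)$ we have $\pi(p)\cdot\pi_0(p\cdot b) = \pi_0(p\cdot b)$ (as $\pi_0(p\cdot b) \leq \pi(p\cdot b) = \pi(p)$). Therefore $\pi_0((p\qstm r)\cdot b) = \pi_0(p\cdot b)$, which is the moreover-claim.

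Next I would prove $Q\cdot\Dam \subseteq \Dam$. Let $p \in \Dam$ and $q \in Q$ with $q \leq \pi(p)$; I must show $q\cdot p \in \Dam$, i.e. verify \eqref{lower:part:frozen:alt} for $q\cdot p$. Let $q' \leqlo^{\lambda_0} q\cdot p$, let $j\in\{0,1\}$, say $j=0$, and $b \in B_1$. Since $q\cdot p \leq p$ we get $q' \leq p$, but to invoke $p\in\Dam$ I need $q' \leqlo^{\lambda_0} p$, not merely $q' \leq p$. This is where I would use lemma \ref{lem:support:simple} (or directly \ref{def:pcs}(\ref{er})): since $q\cdot p \leqlo^{\lambda_0} q \in Q$ — which holds because, using that $(Q,P)$ is a stratified extension, $q\cdot p = i(q)\cdot p$ and $q \leqlol q$ together with \ref{pss:is}(\ref{pss:is:lo:init}) give $q\cdot p \leqlol q\cdot 1_P \approx$ a condition in $Q$, or more simply $q\cdot p \leqlo^{\lambda_0} q$ by the identification of $Q$ with a sub-order — and $q' \leqlo^{\lambda_0} q\cdot p \leq p \leq \pi(p)$, applying \ref{def:pcs}(\ref{er}) to $q' \leq p \leq \pi(p)$ with $q' \leqlo^{\lambda_0} \pi(q') $ (which follows from \ref{lem:support:simple} since $q' \leqlo^{\lambda_0} q\cdot p \leqlo^{\lambda_0} q \in Q \subseteq P_0$, so $q'$ is $\leqlo^{\lambda_0}$-below a condition in $Q$). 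Once $q' \leqlo^{\lambda_0} p$ is in hand, $p\in\Dam$ yields $\pi_0(q'\cdot b) = \pi(q')\cdot\pi_0(p\cdot b)$. It remains to relate this to $q\cdot p$: applying \eqref{lower:part:frozen:alt} for $p$ to the condition $q\cdot p$ itself (legitimate since $q\cdot p \leqlo^{\lambda_0} p$) gives $\pi_0((q\cdot p)\cdot b) = \pi(q\cdot p)\cdot\pi_0(p\cdot b) = \pi(q)\cdot\pi_0(p\cdot b)$ (using $\pi(q\cdot p) = \pi(q)$ since $q\in Q$ and $\pi\res Q = \id$, by \ref{strong:vs:canonical:proj}). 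Combining, $\pi_0(q'\cdot b) = \pi(q')\cdot\pi_0(p\cdot b)$ and $\pi(q') \leq \pi(q\cdot p)$, so $\pi_0(q'\cdot b) = \pi(q')\cdot\pi_0((q\cdot p)\cdot b)$, which is \eqref{lower:part:frozen:alt} for $q\cdot p$; hence $q\cdot p\in\Dam$. The claim $Q \subseteq \Dam$ is then immediate by taking $p = 1_P$, which lies in $\Dam$ by lemma \ref{Q:in:D} (using that $B_0,B_1$ is $\lambda_0$-reduced), so $q = q\cdot 1_P \in \Dam$.

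The main obstacle I anticipate is the bookkeeping around \emph{which} $\leqlo$-relation holds where: the hypotheses of $\Dam$ are stated in terms of $\leqlo^{\lambda_0}$, but the natural facts one has about $q\cdot p$ and $p\qstm r$ are stated in terms of $\leqlol$ for the specific witnessing $\lambda$ or in terms of the order $\leq$ from a strong projection, and one must repeatedly promote $\leq$ to $\leqlo^{\lambda_0}$ via \ref{def:pcs}(\ref{er}) and lemma \ref{lem:support:simple}, and collapse larger $\lambda$ down to $\lambda_0 = \min I$ via \ref{qc:leqlo:vert}. The only genuinely substantive input is the interaction between the stable meet operator's defining equations \eqref{schnitzel:leqlol}--\eqref{schnitzel:meet} and the ``frozen projections'' property of $\Dam$; everything else is routine manipulation of strong projections in $\ro(P)$, for which one uses that $\pi_j$ is a canonical projection and that $\pi\circ f = f\circ\pi = \pi$ (equation \eqref{f:and:pi:commute}) is not even needed here since $f$ does not enter the statement.
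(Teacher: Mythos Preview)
Your treatment of the ``moreover'' clause is correct and matches the paper: $p\qstm r \leqlol p$ with $\lambda\geq\lambda_0$ gives $p\qstm r \leqlo^{\lambda_0} p$, and then $p\in\Dam$ plus $\pi(p\qstm r)=\pi(p)$ yield $\pi_j((p\qstm r)\cdot b)=\pi_j(p\cdot b)$.

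The gap is in your argument for $Q\cdot\Dam\subseteq\Dam$. You repeatedly assert (or try to derive) that $q\cdot p \leqlo^{\lambda_0} p$ and that $q' \leqlo^{\lambda_0} p$, and your justifications via \ref{pss:is}(\ref{pss:is:lo:init}) and lemma~\ref{lem:support:simple} do not work: \eqref{pss:is:lo:init} needs $q\leqlo^{\lambda_0}\pi(p)$, but you only have $q\leq\pi(p)$, and there is no reason an arbitrary $q\in Q$ below $\pi(p)$ should be a $\leqlo^{\lambda_0}$-extension of it. Likewise $q\cdot p\leqlo^{\lambda_0} q$ is false in general (think of $P=Q*\dot R$: the $\dot R$-component of $p$ need not be $\dot\leqlo^{\lambda_0}\,1$). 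So the route ``reduce everything to $p\in\Dam$ by showing the test condition is $\leqlo^{\lambda_0}$-below $p$'' is blocked.

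This is exactly why the stable meet is needed in the \emph{main} claim, not only in the moreover clause. The paper's argument runs as follows: given $r\leqlo^{\lambda_0} q\cdot p$, one has $r\leq \pi(r)\cdot p\leq q\cdot p$ (since $\pi(r)\leq q$), so \ref{def:pcs}(\ref{er}) gives $r\leqlo^{\lambda_0}\pi(r)\cdot p$; hence $(p,r)\in\dom(\qstm)$. Now use \eqref{schnitzel:meet}: $r\approx\pi(r)\cdot(p\qstm r)$, so
\[
\pi_j(r\cdot b)=\pi(r)\cdot\pi_j\big((p\qstm r)\cdot b\big)=\pi(r)\cdot\pi_j(p\cdot b)=\pi(r)\cdot\pi_j(q\cdot p\cdot b),
\]
the middle equality being precisely the moreover clause you already proved, and the last holding because $\pi(r)\leq q$. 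This is \eqref{lower:part:frozen:alt} for $q\cdot p$. The stable meet is the detour that replaces the unavailable relation $r\leqlo^{\lambda_0} p$ by the available $p\qstm r\leqlo^{\lambda_0} p$ together with the factorisation $r=\pi(r)\cdot(p\qstm r)$.
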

\begin{proof}
Let $p \in P$, $q \in Q$ and $q\leq\pi(p)$. We check that $q \cdot p \in \Dam$.
So let 
\begin{equation}\label{schnitzel:dom:var}
r \leqlo^{\lambda_0} q \cdot p,
\end{equation}
and fix $j \in \{0,1\}$ and $b \in B_{1-j}$.
To prove that $q \cdot p \in \Dam$, it suffices to show
\begin{equation}\label{q:cdot:p:in:D}
\pi_j(r\cdot b)=\pi(r)\cdot \pi_j(q\cdot p\cdot b).
\end{equation}
Observe that (\ref{schnitzel:dom:var}) implies that $r \leqlo^{\lambda_0} \pi(r) \cdot p$---for by \ref{pcs:is}(\ref{pi:mon}), $\pi(r)\leqlol \pi(q \cdot r) = q$; now use \ref{pss:is}(\ref{pss:is:lo:init}). Thus $(p,r) \in \dom(\qstm)$ and $p\qstm r \leqlo^{\lambda_0} p$.
Thus $p\qstm r \in \Dam$ and 
\[\pi_j((p\qstm r )\cdot b)=\pi(p \qstm r) \cdot \pi_j(p\cdot b) = \pi_j(p\cdot b),\]
where the last equation holds because $\pi(p \qstm r)=\pi(p) \geq \pi_j(p\cdot b)$. Note in passing that this proves of the ``moreover'' clause of the lemma. We continue with the proof of the remaining part of the lemma.
By the previous, as $r=\pi(r)\cdot (p\qstm r)$,
\[ \pi_j(r \cdot b) = \pi(r)\cdot \pi_j( (p\qstm r) \cdot b)= \pi(r) \cdot \pi_j(p \cdot b)= \pi(r) \cdot \pi_j(q\cdot p \cdot b). \]
The last equation holds as $\pi(r)\leq q$.
This finishes the proof of the lemma.
\end{proof}

From now on, assume we have a $Q$-stable meet $\qstm$ on $P$.

\medskip

While it is true that $(\blowup{\Dam}, \Dam^\Int_f)$ is a stratified extension, this is not quite the partial order we use in the main theorem:
For replacing $P_\xi$ by a dense set $\Dam_\xi$ at cofinally many stages of our iteration leads to obvious complications at limit stages. Instead, we have a much simpler solution.
\begin{dfn}[Type-1 amalgamation]\index{type-1 amalgamation}\index[notation]{Am1(Q,P,f,lambda 0)@$\am(Q,P,f,\lambda_0)$}\index{amalgamation!type-1}
Let $\am=\am(Q,P,f,\lambda_0)$ be the set of $\bar p\colon\Int\rightarrow \blowup{P}$ such that the following conditions are met.
\begin{enumerate}
\item For all $i \in \Int$, $\pi(\bar p(i)^P) = \pi(\bar p(0)^P)$.
\item For all but finitely many $i\in\Int$, $\bar p(k)^P \leqlo^{\lambda_0} \pi(\bar p(k)^P)$.
\item For all $i \in \Int\setminus\{-1,0\}$, $f(\pi_0(\bar p(i)))=\pi_1(\bar p(i+1))$ --- that is, (\ref{thinout}) holds.
\item $\bar p(0) \in P$, i.e., $\bar p^0(0)=\bar p^1(0)=1$ and 
\begin{align}
f(\pi_0(\bar p(-1)))& \geq  \pi_1(\bar p(0)),\label{middle:l}\\
f(\pi_0(\bar p(0)) & \leq \pi_1(\bar p(1))\label{middle:r}.
\end{align}
\item For $i \in \Int\setminus\{0\}$, $\bar p(i)^P \in \Dam(Q,P,f,\lambda)$.
\end{enumerate}
\end{dfn}

Observe we can replace (\ref{middle:l}) and (\ref{middle:r}) by
\begin{equation}\label{middle:both}
\bar p(0)^P \leq f(\pi_0(\bar p(-1))) \cdot f^{-1}(\pi_1(\bar p(1))).
\end{equation}
and obtain an equivalent definition. 
Thus, $\bar p \in \am$ if and only if the following conditions are met:
\begin{enumerate}
\item $\bar p(0) \in P$, 
\item $\bar p\res [1,\infty) \in \Dam^{[1,\infty)}_f$ and $\bar p\res (-\infty,-1] \in \Dam^{(-\infty,-1]}_f$
\item for both $j\in\{-1,1\}$ we have $\pi(\bar p(0))=\pi(\bar p(j))$ and (\ref{middle:both}) holds.
\end{enumerate}
Let $a\colon P \rightarrow \am$ be defined by $a(p)(0)=(p,1,1)$ and $a(p)(i)=(\pi(p),1,1)$ for all $i\in\Int\setminus\{0\}$. 
As before, let \index[notation]{pi bar@$\bar \pi$}$\bar\pi(\bar p)=\bar p(0)^P$ (we see no problem in using the same designation as for the projection from $\Dam^\Int_f$ to $\Dam$---see the remark after the next lemma). 
\begin{lem}\label{strong:proj:am}
The map $a\colon P\rightarrow \am$ is a complete embedding and 
\[\bar \pi \colon \am \rightarrow P
\] is a strong projection.
\end{lem}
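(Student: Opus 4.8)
\textbf{Proof plan for Lemma \ref{strong:proj:am}.}

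The plan is to deduce this from the already-established facts about basic amalgamation, namely Lemma \ref{strong:proj} (that $\bar\pi\colon P^\Int_f\to\blowup{P}$ is a strong projection) and Corollary \ref{strong:proj:cor} (that $e_0\res P$ is a complete embedding of $P$ into $P^\Int_f$ and $\bar\pi_0\res R$ is a strong projection for $R=\{\bar p\in P^\Int_f\setdef \bar p(0)^0=\bar p(0)^1=1\}$), together with the density lemmas \ref{lem:shrink} and \ref{Q:cdot:D}. First I would observe that $\am$ is, up to the trivial identification of $P$-conditions with their images in $\blowup{P}$, a \emph{subset} of $P^\Int_f$: every $\bar p\in\am$ satisfies \eqref{thinout} away from $\{-1,0\}$, has $\pi(\bar p(i))=\pi(\bar p(0))$ for all $i$ (which is exactly \eqref{proj:am:to:Q}), and $\bar p(0)\in P\subseteq\blowup P$. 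The two ``middle'' conditions \eqref{middle:l}, \eqref{middle:r} precisely say that $\bar p(-1)\geq \Phi^{-1}(a(\bar p(0)))(-1)$ and $\bar p(1)\geq\Phi(a(\bar p(0)))(1)$ in the relevant tails, so that $\bar p$ is genuinely below both $e_{-1}[\bar p\res(-\infty,-1]]$-type data and $e_1$-type data glued through $\bar p(0)\in P$; this is why $\am$ is downward-adequate for meets with $P$.

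The core of the argument is the verification that $\bar\pi$ is a strong projection from $\am$ to $P$, i.e.\ that for $\bar q\in\am$ and $w\leq\bar\pi(\bar q)=\bar q(0)^P$ in $P$, the condition $a(w)\cdot\bar q$ exists in $\am$ and is the greatest lower bound of $a(w)$ and $\bar q$ there. I would construct the witness by essentially the same double induction as in the proof of Lemma \ref{strong:proj}: set $\bar w(0)=w$ (note $w\in P$, so this is legitimate for $\am$), and for $i\geq 0$ put $\bar w(i+1)=(\pi(w)\cdot\bar q(i+1)^P,\ \bar q(i+1)^0,\ \bar q(i+1)^1\cdot F(\bar w(i)))$, and symmetrically for $i\leq 0$ with $G$. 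The computations showing $\bar w(i)\in\blowup P$, $\pi(\bar w(i))=\pi(w^P)$, \eqref{e:complete:main}, and $\bar w\geq e(w)\cdot\bar q$ in $P^\Int_f$ are identical to those in Lemma \ref{strong:proj} and I would not repeat them. What is \emph{new} and must be checked here is that $\bar w\in\am$, not merely in $P^\Int_f$: the conditions $\pi(\bar w(i))=\pi(\bar w(0))$ and \eqref{thinout}-away-from-$\{-1,0\}$ are immediate from the construction; the ``finitely many $i$'' direct-extension clause (2) transfers from $\bar q$ because below $i$ large we only multiplied $\bar q(i)^P$ by $\pi(w)$ which is a $Q$-condition $\leq\pi(\bar q(i)^P)$, and $\pi(w)\cdot\bar q(i)^P\leqlo^{\lambda_0}\pi(\bar q(i)^P)\leqlol^{\lambda_0}\pi(w\cdot\bar q(i)^P)$ by \ref{pss:is}(\ref{pss:is:lo:init}) applied inside the stratified extension $(Q,P)$; and, crucially, clause (5), $\bar w(i)^P\in\Dam$ for $i\neq 0$, follows from $\bar q(i)^P\in\Dam$ by openness of $\Dam$ under $\leqlo^{\lambda_0}$ (Lemma \ref{lem:shrink}) together with Lemma \ref{Q:cdot:D}, since $\bar w(i)^P = \pi(w)\cdot\bar q(i)^P$ is a $Q$-multiple of a condition in $\Dam$ and hence lies in $\Dam$ (and one further $\leqlo^{\lambda_0}$-extension may be needed to absorb the extra $F/G$-factors, handled by openness again). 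The middle conditions \eqref{middle:l}, \eqref{middle:r} for $\bar w$ hold because $\bar w(0)=w\leq \bar q(0)^P$ and $\bar q$ satisfies them, using $F(\bar w(-1))\geq F(\text{old})$ direction etc. Finally, $\bar w=a(w)\cdot\bar q$ (the greatest lower bound in $\am$) follows from $\bar w=e(w)\cdot\bar q$ in $P^\Int_f$ once one notes that any $\bar r\in\am$ below $a(w)$ and $\bar q$ is a fortiori in $P^\Int_f$ below $e(w)$ and $\bar q$.

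I expect the main obstacle to be precisely clause (5): ensuring that the inductively built $\bar w(i)$ land back in $\Dam$ rather than merely in $P$. This is where the $Q$-stable meet operator hypothesis (in force since ``From now on, assume we have a $Q$-stable meet $\qstm$ on $P$'') is genuinely used, via the ``moreover'' part of Lemma \ref{Q:cdot:D} guaranteeing $\pi_j$-stability is preserved under $Q$-multiplication and under $\qstm$; combined with the openness of $\Dam$ in $\langle P,\leqlo^{\lambda_0}\rangle$ this closes the gap. The completeness of $a\colon P\to\am$ is then formal: $a$ clearly preserves order and incompatibility is preserved since a common lower bound in $\am$ projects under $\bar\pi$ to a common lower bound in $P$; and $a$ composed with $\bar\pi$ is the identity on $P$, so the standard criterion via the strong projection $\bar\pi$ gives that $a$ is a complete embedding. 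The remark that using the same name $\bar\pi$ as for $\Dam^\Int_f\to\Dam$ causes no confusion is justified because the two maps agree on the overlap of their domains.
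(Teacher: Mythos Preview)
Your approach has a genuine gap. The claim that $\am$ is a subset of $P^\Int_f$ is false: conditions in $\am$ satisfy only the \emph{inequalities} \eqref{middle:l}, \eqref{middle:r} at the middle, not the equality \eqref{thinout}, and you yourself note this (``\eqref{thinout} away from $\{-1,0\}$''). Consequently your final step---``any $\bar r\in\am$ below $a(w)$ and $\bar q$ is a fortiori in $P^\Int_f$ below $e(w)$ and $\bar q$''---does not go through, since such $\bar r$ need not lie in $P^\Int_f$ at all. Worse, your inductively constructed $\bar w$ with the extra $F/G$ factors is in general \emph{strictly below} $a(w)\cdot\bar q$: take $w=\bar q(0)$, so that the correct meet is simply $\bar q$ itself, whereas your $\bar w(1)$ has $\pi_1(\bar w(1))=f(\pi_0(w))\cdot\pi_1(\bar q(1))$, which is strictly below $\pi_1(\bar q(1))$ whenever \eqref{middle:r} holds with strict inequality. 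So your $\bar w$ is a lower bound but not the greatest one, and the strong-projection clause $3'$b fails for it.

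The paper's proof is much simpler and avoids the $F/G$ propagation entirely. Given $\bar p\in\am$ and $w\leq\bar p(0)$, set $\bar p'(0)=w$ and $\bar p'(i)=(\pi(w)\cdot\bar p(i)^P,\,\bar p(i)^0,\,\bar p(i)^1)$ for $i\neq 0$, with \emph{no change} to the boolean components. The point is that $a(w)(i)=(\pi(w),1,1)$ carries trivial boolean parts, so multiplying by it only touches the $P$-coordinate; the linking condition away from the middle is preserved because $\pi_j(\pi(w)\cdot x)=\pi(w)\cdot\pi_j(x)$ and $f$ fixes $Q$; the relaxed middle inequalities for $\bar p'$ follow immediately from those for $\bar p$ together with $w\leq\bar p(0)$; and clause (5) holds by Lemma~\ref{Q:cdot:D}, exactly the lemma you identified as relevant. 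That $\bar p'$ is the greatest lower bound is then a one-line coordinatewise check. The relaxation to inequalities at the middle is precisely what makes this direct argument possible and is the reason $\am$ was introduced rather than working with $\Dam^\Int_f$ directly.
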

\begin{proof}
Let $\bar p \in \am$, $w \in P$, and $w \leq \bar p(0)$. Define $\bar p'$ by
\[ \bar p' (i)= \begin{cases} w &\text{for $i=0$,}\\
                              (\pi(w)\cdot \bar p(i)^P,\bar p(i)^0,\bar p(i)^1) &\text{for $i \in\Int\setminus\{0\}$.} \end{cases} \]
Clearly $\bar p' \in \am$, $\bar p' \leq a(w)$ and $\bar p' \leq \bar p$. 
Moreover, for arbitrary $\bar q \in \am$, if $\bar q\leq a(w)$ and $\bar q \leq \bar p$, clearly $\bar q \leq \bar p'$; so $\bar p' = a(w)\cdot \bar p$.
This shows that $\bar \pi$ is a strong projection and accordingly, $a$ is a complete embedding.
\end{proof}

In what follows, we identify $P$ and $a[P]$---except when we feel this would hide the point of the argument.
Next we show that in fact, $\am$ and $\Dam^\Int_f$ are presentations of the same forcing.
\begin{lem}\label{common:dense:set}
The set $D^*=\{ \bar p \in \Dam^\Int_f \setdef \bar p(0)^0=\bar p(0)^1=1 \}$ is dense in both $\Dam^\Int_f$ and $\am$.
\end{lem}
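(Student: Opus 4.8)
The claim is that $D^* = \{\bar p \in \Dam^\Int_f \setdef \bar p(0)^0 = \bar p(0)^1 = 1\}$ is dense in $\Dam^\Int_f$ and in $\am$. The plan is to first dispose of density in $\Dam^\Int_f$, which is essentially formal, and then handle density in $\am$, where the real work lies in ``pushing the middle components at $i=0$ out to the neighbouring coordinates'' without destroying the thin-out condition \eqref{thinout} or membership of each coordinate in $\Dam$.

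First I would treat $\Dam^\Int_f$. Given $\bar p \in \Dam^\Int_f$, recall that $\bar p(0) = (\bar p(0)^P, \bar p(0)^0, \bar p(0)^1) \in \blowup{\Dam}$, so $\bar p(0)^j \in B_j$ for $j \in \{0,1\}$ and $\pi(\bar p(0)) = \pi(\bar p(0)^P)$. Since $\bar\pi$ (the projection from $\Dam^\Int_f$ to $\blowup{\Dam}$ obtained from lemma \ref{strong:proj}) is a strong projection, the condition $\hat w = (\bar p(0)^P \cdot \bar p(0)^0 \cdot \bar p(0)^1, 1, 1)$—which lies in $\blowup{\Dam}$ once we note $\Dam$ is open in $\langle P, \leqlo^{\lambda_0}\rangle$ by lemma \ref{lem:shrink}, and replace the first coordinate by a representative from $\Dam$ below it if necessary—satisfies $\hat w \leq \bar\pi(\bar p) = \bar p(0)$ in $\blowup{\Dam}$. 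Thus $e(\hat w) \cdot \bar p$ exists in $\Dam^\Int_f$; call it $\bar p'$. Then $\bar p' \leq \bar p$, and $\bar p'(0) = \hat w$ has trivial $0$- and $1$-components, so $\bar p' \in D^*$. Here I would be careful to verify that each coordinate $\bar p'(i)^P$ still lies in $\Dam$—this follows because in the construction of $e(\hat w) \cdot \bar q$ in the proof of lemma \ref{strong:proj}, the $P$-component of $\bar w(i)$ is $\pi(\hat w) \cdot \bar p(i)^P$, and since $\pi(\hat w) \in Q$ (using \eqref{proj:am:to:Q} and that $\hat w \leq \pi(\bar p(0)^P)$ after the reduction), lemma \ref{Q:cdot:D} applies via the $Q$-stable meet operator $\qstm$ to give $\pi(\hat w) \cdot \bar p(i)^P \in \Dam$. (If the representative from $\Dam$ below $\bar p(0)$ is not literally $\pi(\hat w)\cdot p$, one uses openness of $\Dam$ together with lemma \ref{Q:cdot:D} once more.)

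Next, density in $\am$. Given $\bar p \in \am$, I want $\bar q \in D^*$ with $\bar q \leq \bar p$. The obstacle—and I expect this to be \emph{the} main difficulty—is that a condition in $\am$ is already ``$P$-valued'' at coordinate $0$ (i.e. $\bar p(0) \in P$), but the relations \eqref{middle:l} and \eqref{middle:r} only demand the \emph{inequalities} $f(\pi_0(\bar p(-1))) \geq \pi_1(\bar p(0))$ and $f(\pi_0(\bar p(0))) \leq \pi_1(\bar p(1))$ rather than the equalities of \eqref{thinout}. So to land in $\Dam^\Int_f$ I must extend $\bar p$ so that these become equalities. The move is: set $\bar q(0) = \bar p(0)$ (already trivial in the outer coordinates); for $i \geq 1$ push the ``missing'' boolean value inward by defining $\bar q(1)^1 = \bar p(1)^1 \cdot f(\pi_0(\bar p(0)))$ and, inductively for $i > 1$, absorb the discrepancy forced by \eqref{thinout} using the maps $F, G$ exactly as in the proof of lemma \ref{strong:proj}; symmetrically for $i \leq -1$ using $G$. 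Concretely this is $\bar q = e(\bar p(0)) \cdot \bar p$ computed inside the appropriate half-amalgamations $\loweram{P}{0}$ and $\upperam{P}{1}$ and then glued, which is legitimate precisely because of the alternative description of $\am$ via $\Dam^{[1,\infty)}_f$ and $\Dam^{(-\infty,-1]}_f$ given right before lemma \ref{strong:proj:am}. One checks \eqref{thinout} now holds at every link (the inequalities of \eqref{middle:l},\eqref{middle:r} turn into equalities after the meet), that $\bar q(i)^P \in \Dam$ for each $i$ (again via lemma \ref{Q:cdot:D} and $\qstm$, since each new $P$-component is $\pi(\bar p(0)) \cdot \bar p(i)^P$ meeted against a member of $Q$), and that the support condition (all but finitely many coordinates $\leqlo^{\lambda_0} \pi$) is preserved since we only shrink the outer boolean coordinates, not the $P$-coordinates, beyond meeting with the fixed $\pi(\bar p(0)) \in Q$. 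Hence $\bar q \in \Dam^\Int_f$ with $\bar q(0)^0 = \bar q(0)^1 = 1$, i.e. $\bar q \in D^*$, and $\bar q \leq \bar p$. I would present the inductive computation verifying \eqref{thinout}-equalities compactly, since it repeats the bookkeeping already carried out in lemma \ref{strong:proj}.
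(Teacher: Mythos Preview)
Your approach is essentially the same as the paper's: for $\Dam^\Int_f$ you pick $d\in\Dam$ below the Boolean meet $\bar p(0)^P\cdot\bar p(0)^0\cdot\bar p(0)^1$ and form $d\cdot\bar p$ via the strong projection; for $\am$ you split into the two half-amalgamations, use the $e(\cdot)\cdot(\cdot)$ construction from lemma~\ref{strong:proj} on each side to turn the inequalities \eqref{middle:l}, \eqref{middle:r} into the equalities required by \eqref{thinout}, and glue. The paper organises the second half slightly differently---it first replaces $\bar p(0)$ in the half-sequence by the value $b_0=f(\pi_0(\bar p(-1)))$ to get an honest element $\bar p^-\in\Dam^I_f$, then takes the meet with $d$---but this is just a cleaner bookkeeping of the same idea you describe.

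There is, however, one genuine oversight in your $\am$ argument. You set $\bar q(0)=\bar p(0)$ and note it already has trivial outer coordinates since $\bar p(0)\in P$. But membership in $D^*$ requires $\bar q\in\Dam^\Int_f$, hence $\bar q(0)^P\in\Dam$; yet the definition of $\am$ only asks $\bar p(0)\in P$, with no requirement that it lie in $\Dam$ (only the coordinates $i\neq 0$ are required to have $P$-part in $\Dam$). So your $\bar q$ need not land in $D^*$. The fix is exactly the first line of the paper's proof: pick $d\in\Dam$ with $d\leq\bar p(0)$ (possible since $\Dam$ is $\leqlo^{\lambda_0}$-dense by lemma~\ref{lem:shrink}) and use $d$ in place of $\bar p(0)$ throughout your construction. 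With that one change your argument goes through.
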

\begin{proof}
First, we notice that $D^* \subseteq \am$ and that the ordering of $\Dam^\Int_f$ and that of $\am$ coincide on $D^*$.
Given $\bar p \in \Dam^\Int_f$, find $d \in \Dam$ such that $d \leq \bar p(0)^P\cdot p(0)^0\cdot p(0)^1$; clearly, $d \cdot \bar p \in D^*$.

\medskip

Now let $\bar p \in \am$. We find $\bar w \leq \bar p$, such that $\bar w \in D^*$.
Find $d \in \Dam$ such that $d \leq \bar p(0)$.
First let $I=(-\infty,0]$ and construct $\bar w^{-}=\bar w\res I$.
Let $b_0=f(\pi_0(\bar p(-1)))$ and define $\bar p^{-} \in \Dam^{I}_f$ by
\[ \bar p^{-}=( \hdots, \bar p(i), \hdots, \bar p(-1), b_0 ), \]
where of course we identify $b_0$ and $(1_P, \pi(b_0),b_0)\in \blowup{\Dam}$.
Since $d \leq \bar p(0) \leq b_0$ and $b_0 = \bar \pi^I_0(\bar p^{-})$,
we can let $\bar w^{-}= d \cdot \bar p^{-} \in \Dam^{I}_f$.
Observe that $\bar \pi^I_0(\bar w^{-})=d$. %

Let $I=[0,\infty)$. In an analogous fashion, define $\bar w^{+} \in \Dam^I_f$ such that $\bar w^{+} \leq \bar p\res I$ and $\pi^I_0(\bar w^{+})=d$. Letting 
\[ \bar w(i)=\begin{cases} \bar w^{-}(i) &\text{for $i < 0$,}\\
                           \bar w^{+}(i)  &\text{for $i \geq 0$,} \end{cases}
\]
we conclude $\bar w \in \am$.
Moreover, $\bar\pi(\bar w)=d \in \Dam$ whence $\bar w \in D^*$, and $\bar w \leq \bar p$ in $\am$.
\end{proof}
Thus, although $\Phi$\index[notation]{Phi (amalgamation)@$\Phi$ (amalgamation)} is not an automorphism of $\am$, since it is an automorphism of $\Dam^\Int_f$, it gives rise to an automorphism of the associated Boolean algebra.
We call $\Phi$ the \emph{automorphism resulting from the amalgamation,}\index{automorphism resulting from amalgamation} and we refer to $Q$ as the \emph{base of the amalgamation}\label{base}\index{base of amalgamation, or of Phi@base of amalgamation, \emph{or} of $\Phi$} or, interchangeably, \emph{the base of $\Phi$}. 

\medskip

That $\ro(\am) = \ro(\Dam^\Int_f)$ justifies that we use the same notation for the strong projections $\bar \pi \colon \am \rightarrow P$ and $\bar \pi \colon \Dam^\Int_f \rightarrow \Dam$---as we know a strong projection agrees on its domain with the canonical projection.
The next lemma clarifies the role of $\Dam$.
\begin{lem}\label{stay:in:am}
Let $\bar p \in \am$ and say $\bar q \colon \Int \rightarrow P\times B_0\times B_1$ satisfies the following conditions:
\begin{enumerate}
\item \label{stay:pi} for each $i \in \Int$, $\pi(\bar q(i)^P)=\pi(\bar q(0)^P)$.
\item $\bar q(0)^0=\bar q(0)^1=1$.
\item $\forall i \in \Int\setminus\{0\}\quad \bar q(i)^P \leqlo^{\lambda_0} \bar p(i)^P$.
\item \label{stay:pi:j} $\forall i \in \Int\setminus\{0\}\quad \pi_j(\bar q(i)) = \pi(\bar q(i)^P)\cdot \pi_j(\bar p(i))$
\end{enumerate}
Then $\bar q \in \am$.
\end{lem}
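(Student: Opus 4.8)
The plan is to verify the five defining conditions of $\am$ for $\bar q$ one at a time, using the hypotheses (\ref{stay:pi})--(\ref{stay:pi:j}) together with the fact that $\bar p \in \am$ and that $\Dam$ is open in $\langle P, \leqlo^{\lambda_0}\rangle$ (lemma \ref{lem:shrink}). Conditions 1 and 2 in the definition of $\am$ are immediate: condition 1 is exactly hypothesis (\ref{stay:pi}), and condition 2 (that $\bar q(0) \in P$) is exactly hypothesis 2 in the lemma, namely $\bar q(0)^0 = \bar q(0)^1 = 1$, so $\bar q(0) = \bar q(0)^P \in P$. For condition 2 of $\am$ (the support condition, $\bar q(k)^P \leqlo^{\lambda_0} \pi(\bar q(k)^P)$ for all but finitely many $k$) I would argue: for all but finitely many $i$ we have $\bar p(i)^P \leqlo^{\lambda_0} \pi(\bar p(i)^P)$; for such $i$, hypothesis 3 gives $\bar q(i)^P \leqlo^{\lambda_0} \bar p(i)^P \leqlo^{\lambda_0} \pi(\bar p(i)^P)$, and since $\pi(\bar q(i)^P) \leq \pi(\bar p(i)^P)$ (as $\bar q(i)^P \leq \bar p(i)^P$), an application of \ref{def:pcs}(\ref{er}) — going through $\bar q(i)^P \leq \pi(\bar q(i)^P) \leq \pi(\bar p(i)^P)$ — yields $\bar q(i)^P \leqlo^{\lambda_0} \pi(\bar q(i)^P)$.

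For condition 5 of $\am$ (that $\bar q(i)^P \in \Dam$ for $i \neq 0$), I would use that $\Dam$ is open in $\langle P, \leqlo^{\lambda_0}\rangle$ by lemma \ref{lem:shrink}: since $\bar p(i)^P \in \Dam$ and $\bar q(i)^P \leqlo^{\lambda_0} \bar p(i)^P$, we get $\bar q(i)^P \in \Dam$. The substantive work is condition 3 (equation (\ref{thinout}) for $i \notin \{-1,0\}$) and condition 4 (equations (\ref{middle:l}) and (\ref{middle:r}), or equivalently (\ref{middle:both})). For (\ref{thinout}) at $i \notin \{-1,0,1\}$ — so that both $i$ and $i+1$ are $\neq 0$ — I would compute, using hypothesis (\ref{stay:pi:j}):
\begin{align*}
f(\pi_0(\bar q(i))) &= f\big(\pi(\bar q(i)^P)\cdot \pi_0(\bar p(i))\big)
 = \pi(\bar q(i)^P)\cdot f(\pi_0(\bar p(i)))\\
 &= \pi(\bar q(i)^P)\cdot \pi_1(\bar p(i+1))
 = \pi(\bar q(i+1)^P)\cdot \pi_1(\bar p(i+1)) = \pi_1(\bar q(i+1)),
\end{align*}
where the second equality uses that $f$ is a Boolean isomorphism fixing $Q$ and that $\pi(\bar q(i)^P) \in \ro(Q) \subseteq B_0$ (via (\ref{f:and:pi:commute})), the third uses (\ref{thinout}) for $\bar p$, the fourth uses hypothesis (\ref{stay:pi}) that $\pi(\bar q(i)^P) = \pi(\bar q(i+1)^P)$, and the last again uses (\ref{stay:pi:j}). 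The case $i = 1$ is the same computation, noting $\bar p(1)^P \in \Dam$ and that the $i=1$ instance only needs $\pi_0(\bar q(1))$ on the left, which by (\ref{stay:pi:j}) equals $\pi(\bar q(1)^P)\cdot \pi_0(\bar p(1))$.

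For condition 4, I would first observe $\bar q(0) = \bar q(0)^P \in P$ (already noted). Then for (\ref{middle:l}), using (\ref{stay:pi:j}) at $i=-1$ and the fact that $\bar q(0)^P \leq \pi(\bar q(0)^P) = \pi(\bar q(-1)^P)$ and $\bar q(0)^P \leq \bar p(0)^P \leq f(\pi_0(\bar p(-1)))$ (the latter inequality is (\ref{middle:l}) for $\bar p$):
\[
f(\pi_0(\bar q(-1))) = \pi(\bar q(-1)^P)\cdot f(\pi_0(\bar p(-1))) \geq \bar q(0)^P\cdot \bar q(0)^P = \bar q(0)^P = \pi_1(\bar q(0)),
\]
and (\ref{middle:r}) is symmetric, using $i=1$. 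I expect condition 3/4 — specifically getting the bookkeeping of $\pi_0$, $\pi_1$, $\pi$ and the action of $f$ exactly right, and keeping track of where $\Dam$-membership of the $\bar p(i)^P$ is actually used versus where hypothesis (\ref{stay:pi:j}) suffices on its own — to be the main (though routine) obstacle; everything else is a direct appeal to the hypotheses and lemmas \ref{lem:shrink} and \ref{strong:proj:am}. Finally I would note that, having checked all conditions, $\bar q \in \am$, completing the proof.
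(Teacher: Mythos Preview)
Your overall strategy matches the paper's: verify the clauses of the definition of $\am$ one by one, using hypotheses (\ref{stay:pi})--(\ref{stay:pi:j}), the membership $\bar p\in\am$, and openness of $\Dam$. Two points need attention.

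First, you never verify that $\bar q(i)\in\blowup{P}$ for $i\neq 0$. The definition of $\am$ begins with ``$\bar p\colon\Int\to\blowup{P}$'', so this is a prerequisite for all the numbered clauses, and the hypothesis only gives $\bar q\colon\Int\to P\times B_0\times B_1$. The paper handles this explicitly: from (\ref{stay:pi:j}) one has $\pi_j(\bar q(i))=\pi(\bar q(i)^P)\cdot\pi_j(\bar p(i))$; applying $\pi$ and using $\pi(\bar q(i)^P)=\pi(\bar q(0)^P)\leq\pi(\bar p(0)^P)=\pi(\bar p(i))$ gives $\pi(\bar q(i))=\pi(\bar q(i)^P)$, which is exactly (\ref{eq:blowup:def:alt}). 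This is routine but not optional.

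Second, in your verification of (\ref{middle:l}) you write $\bar q(0)^P=\pi_1(\bar q(0))$. This is false in general: since $\bar q(0)^0=\bar q(0)^1=1$ you get $\pi_1(\bar q(0))=\pi_1(\bar q(0)^P)$, which is typically strictly above $\bar q(0)^P$. Your chain of inequalities does correctly establish $f(\pi_0(\bar q(-1)))\geq\bar q(0)^P$; to finish, either apply $\pi_1$ to both sides (the left side is already in $B_1$), or---as the paper does---bypass (\ref{middle:l}) and (\ref{middle:r}) entirely and verify the equivalent condition (\ref{middle:both}) directly, which only asks $\bar q(0)^P\leq f(\pi_0(\bar q(-1)))\cdot f^{-1}(\pi_1(\bar q(1)))$. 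Note also that both you and the paper use $\bar q(0)^P\leq\bar p(0)^P$ at this step; this is not among the listed hypotheses but holds in every application (cf.\ corollary \ref{stay:in:am:cor}).
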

\begin{proof}
First, let $I=[1,\infty)$ and show $\bar q \res I \in \Dam^I_f$.
Let $i\in I$ be arbitrary. 
By Item~\ref{stay:pi:j} above, we have
\begin{equation}\label{q:p:proj}
\pi_j(\bar q(i))=\pi(\bar q(i)^P)\cdot \pi_j(\bar p(i))
\end{equation}
for $j \in \{0,1\}$.
Since by \ref{stay:pi} we have $\pi(\bar q(i)^P) = \pi(\bar q(0)^P) \leq \pi(\bar p(0)^P) = \pi(\bar p(i))$,
applying $\pi$ to (\ref{q:p:proj}) yields
\begin{equation}
\pi(\bar q(i)) = \pi(\bar q(i)^P)\cdot\pi(\bar p(i))= \pi(\bar q(i)^P),
\end{equation}
which means 
\begin{equation}\label{in:blowup}
\bar q(i) \in \blowup{P}.
\end{equation} 
Since $\bar p \in \am$ and since (\ref{q:p:proj}) holds, we have 
\begin{equation*}
f(\pi_0(\bar q(i)))=\pi(\bar q(0))\cdot f(\pi_0(\bar p(i)))=\pi(\bar q(0)) \cdot \pi_1(\bar p(i+1)) =\pi_1(\bar q(i))
\end{equation*}
Thus $\bar q\res I \in  \Dam^I_f$.
Repeat the argument above to show $\bar p\res (-\infty,-1] \in \Dam^{(-\infty,-1]}_f$.
As $\bar q(0)^0=\bar q(0)^1=1$ by assumption, (\ref{in:blowup}) holds for $i=0$.
Let $b = f(\pi_0(\bar p(-1))) \cdot f^{-1}(\pi_1(\bar p(1)))$.
As $\bar q(0) \leq \bar p(0) \leq b$, clearly
\[ \bar q(0) \leq \bar\pi(\bar q(0)) \cdot b = f(\pi_0(\bar q(-1))) \cdot f^{-1}(\pi_1(\bar q(1))). \]
Thus, finally $\bar q \in \am$.
\end{proof}

\medskip

Finally, we are ready to state and prove the main theorem of this section:
\begin{thm}\label{thm:am:s:ext}\index{amalgamation!is a stratified extension}
$(P,\am)$ is a stratified extension on $J=I \setminus (\lambda_0)^+$.
\end{thm}
\begin{proof}
We proceed to define a stratification of $\am$. $\am$ is going to be stratified above $(\lambda_0)^+$,
but in general not above $\lambda_0$, which comes from the fact that possibly $B_0$ and $B_1$ conspire to yield antichains of size $(\lambda_0)^+$.
\footnote{This is the case if we amalgamate, e.g., over copies of the Cohen algebra, as was shown by Shelah which is why he invented sweetness---see \cite{shelah:amalgamation}. Perhaps the same occurs for Random algebras, although we are currently unable mention a concrete example.}

For notational convenience, we define $\bar q \alol \bar p$\index[notation]{lessthanl bar lambda@$\alol$} for arbitrary $\Int$-sequences $\bar q,\bar p \in {}^\Int(P\times B_0\times B_1)$ and for
$\lambda \geq \lambda_0$: $\bar q \alol \bar p$ exactly if for every $i\in\Int$, $\bar q(i)^P \leqlo^\lambda \bar p(i)^P$ and for every $i\in\Int\setminus\{0\}$ we have $\pi(\bar q(i)^P)\forces_Q \pi_j(\bar q(i)) = \pi_j(\bar p(i))$---or equivalently, 
\begin{equation}\label{alol:bool}
\pi_j(\bar q(i)) = \pi(\bar q(i)^P)\cdot \pi_j(\bar p(i))
\end{equation} 
for both $j\in\{0,1\}$.
\begin{cor}\label{stay:in:am:cor}
Using this notation we can state Lemma~\ref{stay:in:am} in the following way:
If for some regular $\lambda\geq\lambda_0$, $\bar p \in \am$ and $\bar q \colon \Int \rightarrow P\times B_0 \times B_1$ satisfy $\bar q \alol \bar p$ and moreover $\bar q(0) \in P$ and for all $i \in\Int$, $\pi(\bar q(i)^P)=\pi(\bar q(0)^P)$ holds, then $\bar q \in \am$.
\end{cor}
\begin{lem}\label{alo}
Observe that if $\bar q \colon \Int \rightarrow P\times B_0 \times B_1$ and $\bar p \in \am$ satisfy $\bar q(i)^P\leqlo^{\lambda} \bar p(i)^P$ for all $i \in \Int$ and $\bar q(i)^j=\bar p(i)^j$ for all $i \in\Int\setminus\{0\}$ and $j\in\{0,1\}$, then $\bar q \alol \bar p$.
\end{lem}
\begin{proof}
For $i \in \Int\setminus\{0\}$ and $j\in\{0,1\}$, we have 
\begin{multline*}
\pi_j(\bar q(i))=\bar p(i)^j \cdot\pi_j(\bar q(i)^P\cdot \bar p(i)^{1-j}) \\
 =\bar p(i)^j\cdot \pi(\bar q(i)^P)\cdot\pi_j(\bar p(i)^P\cdot \bar p(i)^{1-j})=\pi(\bar q(i)^P)\cdot\pi_j(\bar p(i)).
\end{multline*}
where the second line is equal to the first as $\bar p(i)^P \in \Dam$ and $\bar q(i)^P \leqlol \bar p(i)^P$.
Thus, $\bar q \alol \bar p$.
\renewcommand{\qedsymbol}{{\tiny  Lemma~\ref{alo}~}$\Box$}
\end{proof}

\medskip

Now let $\bar p, \bar q \in \am$ and say $\lambda \in I$ and $\lambda>\lambda_0$. 
Define 
\[\bar q \in \aD(\lambda,x,\bar p)\index[notation]{D bar(lambda,x,p)@$\aD(\lambda,x,\bar p)$}\iff \forall i \in \Int \quad\bar q(i)^P \in \D(\lambda,x,\bar p^P(i)).\] 
We say \index[notation]{lessthanu bar lambda@$\aupl$}$\bar q \aupl \bar p$ exactly if
\[ \forall i \in \Int\quad \bar q(i)^P \lequp^\lambda \bar p(i)^P.\]
Next we define \index[notation]{C lambdaz@$\aCl$}$\aCl$.
Fix a name $\bij$ such that 
\[ \forces_P \bij \colon \dot B_0 \cup \dot B_1\rightarrow \check{\lambda_0}\text{ is a bijection.}\]
Let $\dom(\aCl)$ be the set of all $\bar p \in \am$ such that for each $i \in \Int$, we have $\bar p(i)^P \in \dom(\Cl)$ and if $i\neq 0$, there is $\lambda'\in\lambda\cap I$ such that for $j \in\{0,1\}$ we have that $\bij(\pi_j(\bar p(i)))$ is $\lambda'$-chromatic below $\pi(\bar p(i)^P)$.
If $\bar p\in \dom(\aCl)$, we define $\aCl(\bar p)$ to be the set of all $(c(i),\lambda'(i),H^0(i),H^1(i))_{i\in\Int}$ such that
for all $i \in \Int$, $c(i)\in \Cl(\bar p(i))$ and for all $i \in \Int\setminus\{0\}$ and $j\in\{0,1\}$, $H^j(i)$ is a $\lambda'(i)$-spectrum of $\bij(\pi_j(\bar p(i)))$ below $\pi(\bar p(0)^P)$. Observe that $\lambda'(0)$, $H^0(0)$ and $H^1(0)$ can be chosen arbitrarily---they merely serve as place-holders to facilitate notation.
This finishes the definition of the stratification of $ \Dam^\Int_f$.

\medskip

First we check that $\aD$ and $(\alol)_{\lambda\in I}$ give us a preclosure system, see \ref{def:pcs}, p.~\pageref{def:pcs}.
That $\aD$ is $\qcdefF$ is immediate (without any further assumptions on the parameter $x$).
For the following, let $\bar p$, $\bar q$, $\bar r \in \am$, $\lambda \in I$ and $x$ be arbitrary.

It is clear that \eqref{qc:D} holds, for if $\bar q \leq \bar p \in \aD(\lambda, x, \bar r)$,
then $\bar q(i)^P \leq \bar p(i)^P \in \aD(\lambda, x, \bar r(i)^P)$ for each $i\in \Int$.
Thus by \eqref{qc:D} for $P$, $\bar q(i)^P  \in \aD(\lambda, x, \bar r(i)^P)$ for each $i\in \Int$ and we are done.
For (\ref{qc:preorder}), we must prove transitivity, so say $\bar p \alol \bar q \alol \bar r$ and show $\bar p \alol \bar r$. 
Fix $i\in \Int$ and $j\in\{0,1\}$. 
Clearly, $\bar p(i)^P \leqlol \bar r(i)^P$. 
As $\pi(\bar p(i)^P)\forces_Q\pi_j(\bar p(i))=\pi_j(\bar q(i))$ and $\pi_j(\bar q(i))=\pi_j(\bar r(i))$, we get $\pi(\bar p(i)^P)\forces_Q \pi_j(\bar p(i))=\pi_j(\bar r(i))$ and so as $i$, $j$ were arbitrary, $\bar p\alol \bar r$.
It remains to show that $\bar p \alol \bar q \Rightarrow \bar p \leq \bar q$.
So assume $\bar p \alol \bar q$ and fix $i\in\Int$. Firstly, $\bar p(i)^P\leq\bar q(i)^P$; moreover, (\ref{alol:bool}) implies $\pi_j(\bar p(i))\leq \pi_j(\bar q(i))$ for $j \in\{0,1\}$, and so as $i\in\Int$ was arbitrary and by (\ref{blowup:order}), we infer $\bar p\leq \bar q$.

(\ref{er}): Say $\bar p \leq \bar q \leq \bar r$ and $\bar p \alol \bar r$. Let $i \in \Int$ be arbitrary; clearly $\bar p(i)^P\leqlol\bar q(i)^P$.
Let $j \in \{0,1\}$ be arbitrary; as
\[ \pi_j(\bar p(i))\leq \pi(\bar p(i)^P)\cdot \pi_j(\bar q(i))\leq \pi(\bar p(i)^P)\cdot \pi_j(\bar r(i)) \]
and the terms on the sides of the equation are equal, we conclude $\bar p \alol \bar q$.
Condition (\ref{qc:leqlo:vert}) is trivial.

\medskip

We continue by checking the conditions of \ref{def:pss}, i.e., that we have a prestratification system on $\am$.
The conditions (\ref{up:extra}), (\ref{up}) and (\ref{s:lequp:vert}) are immediate by definition.
We prove (\ref{density}) \emph{Density}:
\begin{lem}\label{am:dom(C):dense}
Density holds; i.e
for $\lambda \in J$, $\bar p \in \am$ and $\lambda' \in [\lambda_0, \lambda)$ there is $\bar q \in \am$ such that 
$\bar q \in \dom(\aCl)$ and $\bar q \alo^{\lambda'} \bar p$.
\end{lem}
\begin{proof}
First, look through the following definition and find a set of parameters $x$ such that it is $\qcdefSeq$ in parameters from $x$.
We define conditions $p^n_i \in P$ for $n \in \nat$ and $i\in\Int$ and $q^n \in Q$ for $n\in\nat$. We do so by induction on $n$, in each step using induction on  $i$.
First, as $Q$ is stratified we can find $q^0 \in Q$ such that $q^0 \leqlo^{\lambda'} \pi(\bar p(0)^P)$ and for all $i \in \Int$ and both $j\in\{0,1\}$, $\pi_j(\bar p (i))$ is $\lambda'$-chromatic below $q^0$.

Set $p^0_i=\bar p(i)^P$, for $i\in\Int$.

Now say we have already defined a $\Int$-sequence $\bar p ^n = (p^n_i)_{i\in\Int}$ of conditions in $P$ and $q^n \in Q$. 
We will define a stronger
$\Int$-sequence $\bar p^{n+1}= (p^{n+1}_i)_{i\in\Int}$ of conditions in $P$ and a $q^{n+1} \in Q$.
We first define $\bar p^{n+1}$ on the positive integers by induction, then on the negative ones. 
At the end we find $q^{n+1} \in Q$.

So find $p^{n+1}_i \in P$ for $i\geq 0$, by induction on $i$.
Find $p^{n+1}_0 \leqlo^{\lambda'} q^n \cdot\bar p^n_0$ such that $p^{n+1}_0 \in \D(\lambda',x,q^n \cdot\bar p^n_0)$ and $p^{n+1}_0 \in \dom(\Cl)$.
Assume by induction that for all $i\in\Int$, $q^n \leqlo^{\lambda'} \pi(p^n_i)$, whence also $\pi(p^{n+1}_0) \leqlo^{\lambda'} q_n \leqlo^{\lambda'} \pi(p^n_i)$.
Continue by induction, choosing, for each $i \in \nat\setminus \{0\}$, a condition $p^{n+1}_i$ such that 
\begin{equation}
\begin{aligned}\label{domC:dense:F:spaced}
p^{n+1}_i \leqlo^{\lambda'}  &\pi(p^{n+1}_{i-1})\cdot p^n_i\\
p^{n+1}_i \in \D(\lambda', x, & \pi(p^{n+1}_{i-1})\cdot p^n_i)
\end{aligned}
\end{equation}
and $p^{n+1}_i \in \dom(\Cl)$.
By induction hypothesis, $\pi(p^{n+1}_{i-1}) \leqlo^{\lambda'} q_n \leqlo^{\lambda'} \pi(p^n_i)$, so $\pi(p^{n+1}_{i-1})\cdot p^n_i$ is a well defined condition in $P$ and $\pi(p^{n+1}_{i-1})\cdot p^n_i \leqlo^{\lambda'}p^n_i$.
Thus we have defined $p^{n+1}_i$ for $i \geq 0$. 
Before we consider the case $i<0$, observe that for any $i \in \nat\setminus\{0\}$, by (\ref{domC:dense:F:spaced}), \ref{pcs:is}(\ref{pi:mon}) and (\ref{F:coh}), we have
\begin{equation*}
\begin{aligned}
\pi(p^{n+1}_i)\leqlo^{\lambda'} &\pi(p^{n+1}_{i-1})\\
\pi(p^{n+1}_i)\in \D(\lambda',x, &\pi(p^{n+1}_{i-1})).
\end{aligned}
\end{equation*}
We also use that by construction, $\pi(p^{n+1}_{i-1}) \leq q^{n+1}\leq \pi(p^n_i)$.
Thus,  $(\pi(p^{n+1}_i))_{i\in \nat}$ is $(\lambda',x)$-strategic and we may assume by choice of $x$ and by Lemma~\ref{adequate} that it is $(\lambda',x)$-adequate.
Let $q^*$  be a greatest lower bound for $(\pi(p^{n+1}_i))_{i\in \nat}$. 

Now we define $p^{n+1}_i$, by induction on $i$ for $i < 0$:
Find $p^{n+1}_{-1} \in P$ such that
\begin{align*}
p^{n+1}_{-1} \leqlo q^* \cdot p^n_{-1}\\
p^{n+1}_{-1} \in \D(\lambda',x,q^* \cdot p^n_{-1})
\end{align*}
and such that $p^{n+1}_{-1} \in \dom(\Cl)$.
Again, continue choosing for each $i\in \nat$, $i>1$ a condition $p^{n+1}_{-i} \in \dom(\Cl)$ such that the sequence  $(\pi(p^{n+1}_{-i}))_{i\in \nat}$ is $(\lambda',x)$-adequate.
Finally, let $q^{n+1}$ be a greatest lower bound of $(\pi(p^{n+1}_{-i}))_{i\in \nat}$.
This finishes the inductive definition of $\bar p^{n}$.

For each $i \in \Int$, $(p^n_i)_{n\in\nat}$ is a $\lambda'$-adequate sequence and thus has a greatest lower bound which we call $\bar q(i)^P$.
By Lemma~\ref{lem:adeq:proj} and by choice of $x$, $\{ \pi(p^n_i)\}_{n\in\nat}$ is a $\lambda'$-adequate sequence in $Q$. 
Since $(Q,P)$ is a quasi-closed extension, by \eqref{qc:glb} $\pi(\bar q(i)^P)$ is a greatest lower bound of this sequence.
As for each $n \in \nat$, $q^{n+1}\leqlo^{\lambda'}\pi(p^n_i)\leqlo^{\lambda'} q^n$, $(q_n)_{n \in \nat}$ also has greatest lower bound $\pi(\bar q(i)^P)$, whence for all $i \in \Int$, $\pi(\bar q(i)^P) = \pi(\bar q(0)^P)$.
Set $\bar q(i)^j=\bar p(i)^j$ for $j=0,1$ and observe that $\bar q \bar \leqlo^{\lambda'} \bar p$.
Thus as $\lambda' \geq \lambda_0$,  we see $\bar q$ satisfies the hypothesis of Lemma~\ref{stay:in:am} and thus $\bar q \in \am$.
Lastly, as $\bar q(i)^P$ is a greatest lower bound of $\{ p^n_i \}_{n \in \nat}$, we conclude $\bar q(i)^P \in \dom(\Cl)$.
For each $i\in\Int$, fix $c(i) \in \Cl(\bar q(i)^P)$.

Fix $i \in \Int\setminus\{0\}$ and $j\in\{0\}$. 
At the beginning, we chose $q_0$ such that $\pi_j(\bar p (i))$ is $\lambda'$-chromatic below $q_0$.
So we may fix  a $\lambda'$-spectrum $H^j(i)$ of $\pi_j(\bar p (i))$ below $q_0$---and hence also below $\pi(\bar q(0)^P)\leq q_0$.
As $\bar q \alo^{\lambda'} \bar p$ we have $\pi_j(\bar q(i))=\pi(\bar q(i)^P) \cdot \pi_j(\bar p(i))$.
Thus, as $i\in\Int\setminus\{0\}$ and $j\in\{0,1\}$ were arbitrary, \renewcommand{\qedsymbol}{{\tiny  Lemma~\ref{am:dom(C):dense}~}$\Box$}
\[(c(i), \lambda', H^0(i),H^1(i))_{i\in\Int}  \in \aCl(\bar q). \qedhere\]
\end{proof}

This finishes the proof of  \emph{Density}. We postpone the proof of (\ref{continuous}) \emph{Continuity} (the last remaining clause of Definition~\ref{def:pss}, verifying that we have a prestratification system on $\am$).

Now we check that the prestratification system on $\am$ extends that of $P$. 
Conditions (\ref{ext}), (\ref{pi:mon}) and (\ref{D:coh}) are immediate.
For (\ref{pss:is:cdot:lo}), it suffices to check (\ref{pss:is:lo:init}) and (\ref{pss:is:lo:tail!}).

\medskip

(\ref{pss:is:lo:init}): Say $q \in P$ and $\bar p$ are such that $q \leqlol \bar \pi(\bar p)$. Let $i \in \Int\setminus\{0\}$.
By (\ref{pss:is:lo:init}) for $(Q,P)$ we have $\pi(q)\cdot \bar p(i)^P\leqlol \bar p(i)^P$.
Moreover, $\pi_j( (\bar p \cdot q) (i))=\pi(q)\cdot\pi_j( \bar p(i))$,
so as $(\bar p \cdot q)(0)=q\leqlol \bar p(0)^P$, we conclude $\bar p \cdot q \alol \bar p$.

\medskip

(\ref{pss:is:lo:tail!}):
Say $q \in P$ and $\bar p$, $\bar r \in \am$ are such that $q \leq \bar \pi(\bar r)$ and
$\bar r \alol \bar p$. Let $i \in \Int\setminus\{0\}$. By (\ref{pss:is:lo:tail!}) for $(Q,P)$ we have
$\pi(q)\cdot \bar r(i)^P\leqlol \pi(q)\cdot \bar p(i)^P$. Moreover,
\begin{multline*}
\pi_j((\bar r \cdot q)(i))=\pi(q)\cdot\pi_j( \bar r(i))
\\=\pi(q)\cdot\pi(\bar p(0)^P)\cdot\pi_j( \bar p(i))=\pi(q)\cdot\pi_j( \bar p(i))=\pi_j((\bar p \cdot q)(i)),
\end{multline*}
so as $\bar r \cdot q(0)=q=\bar p \cdot q(0)$, we conclude $\bar r \cdot q \alol \bar p \cdot q$.
Conditions (\ref{pss:is:ext}), (\ref{pss:is:pi:mon}) and (\ref{pss:is:c}) are left to the reader.
Being cautious, we check (\ref{pss:is:cdot:up}). Say $w \in P$, $\bar d$, $\bar r \in \am$ and $w \leq \bar\pi(\bar d)$ while $\bar d \alol \bar r$.
By (\ref{pss:is:cdot:up}) for $(Q,P)$, we have $w \cdot \bar d(i)^P \lequpl w \cdot \bar r(i)^P$. As $\bar d \cdot w (0)=w=\bar r \cdot w(0)$, we conclude that $w \cdot \bar d \lequpl w \cdot \bar r$.

\medskip

We now check that $(P,\am)$ is a quasi-closed extension (Definition~\ref{qc:ext}).
Start with \eqref{ext:D:dense}, that $\aD$ is coherently dense.
Observe that to find $\bar q \in \aD(\lambda, x, \bar p)$, with a given $\bar q(0)^P$, we need only make a direct extension $\bar q(i)^P$ of $\bar p(i)^P$ for every $i\in\Int\setminus\{0\}$
such that $\bar q(i)^P=\pi(\bar q(0)^P)$.
This is possible by \eqref{qc:ext} for $(Q,P)$.
We obtain $\bar q$ which satisfies all the requirements of \ref{stay:in:am}. 
Thus we can find such $\bar q \in \aD(\lambda, x, \bar p)$ without falling out of $\am$
and without changing $\bar q(0)^P$.

\medskip

(\ref{ext:glb}):  
So say $\bar\lambda<\lambda$ and $\bar{\bar p}=(\bar p)_{\xi<\rho}$ is a $(\lambda^*,\bar\lambda,x)$-adequate sequence in $\am$  with a $\bar \pi$-bound $p \in P$.
Towards finding a greatest lower bound $\bar p$, set $\bar p(0) = p$.
Fix $i \in \Int\setminus \{0\}$.
By definition of $\aD$ and $\alol$, the sequence $\{\bar p_\xi(i)^P\}_{\xi<\rho}$ is $(\lambda^*,\bar\lambda,x)$-adequate in $P$.  
Since $\{\pi(\bar p_\xi(0)^P)\}_{\xi<\rho}$ is the same as $\{\pi(\bar p_\xi(i)^P)\}_{\xi<\rho}$, the condition $\pi(\bar p(0))=\pi(p(i)^P) \in Q$ is a $\pi$-bound of $\{\pi(\bar p_\xi(i)^P)\}_{\xi<\rho}$.
Thus by (\ref{ext:glb}) for $(Q,P)$, the sequence $\{\bar p(i)^P\}_{\xi<\rho}$ has a greatest lower bound $p_i  \in P$ such that for all $\xi <\rho$,
$p_i \leqlo^{\lambda^*} \bar p_\xi(i)^P$ and $\pi(p_i)=\pi(\bar p(0)^P)$. 
Moreover, if $\lambda^*<\bar \lambda$, we have $p_i \leqlo^{\bar\lambda} \pi(p_i)$.
For each $i \in \Int$, let $\bar p(i)^P=p_i$ and for $j\in\{0,1\}$ let $\bar p(i)^j=\bar p_0(i)^j$. 
By Corollary~\ref{stay:in:am:cor}, $\bar p \in \am$ and $\bar p \alo^{\lambda^*} \bar p_0$.
We must check that for all $\xi <\rho$, $\bar p \alo^{\lambda^*} \bar p_\xi$.
This is clear as  for every $i\in \Int$ we have $\bar p(i)^P \leqlo^{\lambda^*} \bar p_\xi(i)^P$ by construction, and for every $i \in\Int\setminus\{0\}$ and $j\in\{0,1\}$ we have 
\[ \pi_j(\bar p(i))=\pi(\bar p(i)^P)\cdot \pi_j(\bar p_0(i)) = \pi(\bar p(i)^P)\cdot \pi_j(\bar p_\xi(i)), \]
where the first equation holds since $\bar p \alo^{\lambda^*} \bar p_0$ second equation holds since
\[ \pi(\bar p(i)^P)=\pi(p)\leq \pi(\bar p_\xi(i)^P) \]
and $\bar p_\xi \alo^{\lambda^*} \bar p_0$ gives us
\[ \pi_j(\bar p_\xi(i))=\pi(\bar p_\xi(i)^P)\cdot \pi_j(\bar p_0(i)).\]

\medskip

This is the natural point to check \emph{Continuity}, the the last remaining clause of Definition~\ref{def:pss}, verifying that we have a prestratification system on $\am$:

\medskip

(\ref{continuous})  \emph{Continuity}:
Say $\bar{\bar p}=(\bar p)_{\xi<\rho}$ and $\bar{\bar q}=(\bar q)_{\xi<\rho}$ are both $(\lambda^*,\bar\lambda,x)$-adequate for $\bar\lambda<\lambda$,
such that
\begin{equation}\label{sequence:colors}
\forall \xi<\rho \quad \aCl(\bar p_\xi)\cap \aCl(\bar q_\xi) \neq \emptyset.
\end{equation}
Say the sequence $\bar{\bar p}=(\bar p)_{\xi<\rho}$ has a greatest lower bound $\bar p$, the sequence $\bar{\bar q}=(\bar q)_{\xi<\rho}$ has a greatest lower bound $\bar q$.
We show 
\begin{equation}\label{color}
\aCl(\bar p)\cap \aCl(\bar q) \neq \emptyset.
\end{equation}
First, observe that in each $P$-component we obtain a common color for $\bar p$ and $\bar q$:
For each $i\in\Int$, as in the previous proof, $\{ \bar p_\xi(i)^P \}_{\xi<\rho}$ and  $\{ \bar q_\xi(i)^P \}_{\xi<\rho}$ are $(\lambda^*,\bar\lambda,x)$-adequate and so by (\ref{continuous}) for $P$ we can find $c(i) \in \Cl(\bar p(i)^P)\cap\Cl(\bar q(i)^P)$.
To obtain the colours fo the boolean values, we can use the spectra of the first condition on either sequence, say $\bar q_0$:
fix $(c_0(i),\lambda_0'(i),H^0_0(i),H^1_0(i))_{i\in\Int} \in \aCl(\bar p_0)\cap \aCl(\bar q_0)$.
We shall now check that 
\begin{equation}\label{color:am}
 (c(i),\lambda_0'(i),H^0_0(i),H^1_0(i))_{i\in\Int} \in \aCl(\bar p)\cap \aCl(\bar q).
 \end{equation}
This is clear by definition: fix $i \in \Int\setminus\{0\}$ and $j \in \{0,1\}$. Firstly, $\bar p \alol \bar p_0$ and so 
\[ \pi_j(\bar p(i)) = \pi(\bar p(i)^P)\cdot \pi_j(\bar p_0(i)).\]
Moreover, by choice of $\lambda_0'(i)$ and $H^j_0(i)$ there is $b^j \in B_j$ such that we have
\[ \pi_j(\bar p_0(i)) = \pi(\bar p_0(i)^P)\cdot b^j. \]
and $H^j_0$ is a $\lambda'_0(i)$-spectrum for $b^j$ below $\pi(\bar p(i)^P)$. 
The last two equations together yield
\[ \pi_j(\bar p(i))=\pi(\bar p(i)^P)\cdot b^j,\]
and so (\ref{color:am}) holds. This finishes the proof of (\ref{continuous}).
\medskip

We now verify the conditions of Definition~\ref{def:s:ext}, showing that $(P,\am)$ is a stratified extension on $J$.

\medskip

(\ref{s:ext:exp}) \emph{Coherent Expansion}: Assume $\bar q \aupl \bar p$ and $\bar p \alol \bar a(\bar p(0))$. Moreover, assume $\bar q(0) \leq \bar p(0)$. We show $\bar q \leq \bar p$. Let $i\in\Int\setminus\{0\}$ be arbitrary. As $\bar q(i)^P \lequpl \bar p(i)^P$ and
$\bar p(i)^P \leqlol a(\bar p(0)^P)(i)^P= \pi(p(i)^P)$, and as $\pi(\bar q(0)^P)\leq \pi(\bar p(0))$, we have $\bar p(i)^P\leq \bar q(i)^P$ by (\ref{s:ext:exp}) for $(Q,P)$.
Say $i\neq 0$ and $j \in \{0,1\}$. Then
\[ \pi_j(\bar p(i)) \leq \pi(\bar p(i)) = \pi(\bar p(0)^P) \leq \pi(\bar q(0)^P)=\pi(\bar q(i))=\pi_j(\bar q(i)), \]
where the last equality holds as $\bar q(i)^P \leqlol \pi(\bar q(0)^P) \in Q \subseteq \Dam$.
By (\ref{blowup:order}),
$\bar q \leq \bar p$, and we are done.
 
\medskip

(\ref{s:ext:int}) \emph{Coherent Interpolation}:
Let $\bar d, \bar r \in \am$ be such that $\bar d \alol \bar r$, and say $p \in P$ interpolates $\bar\pi(\bar d)$ and $\bar \pi(\bar r)$. 
We find $\bar p \in \am$ such that $\bar p \alol \bar r$ and $\bar p \aupl \bar d$ and moreover $\bar \pi(\bar p)=p$.
For $i \in \Int\setminus\{0\}$, use \emph{Coherent Interpolation} for $(Q,P)$ to find $p_i \in P$
such that $p_i \leqlol \bar r(i)^P$ and $p_i \lequpl \bar d(i)^P$ and moreover $\pi(p_i)=\pi(p)$.
Now we define a sequence $\bar p \colon \Int \rightarrow P\times B_0 \times B_1$.
Set $\bar p(0) = (p,1,1)$ and set $\bar p(i)=(p_i, \bar r(i)^0, \bar r(i)^1)$ for $i \in\Int\setminus\{0\}$.
Clearly, $\bar p \alol \bar r$ and so $\bar p \in \am$. By construction, $\bar p \aupl \bar d$ and $\bar \pi(\bar p)=\bar p(0)^P=p$.

\medskip

It remains to demonstrate (\ref{s:ext:cent}) \emph{Coherent Linking}:
\begin{lem}\label{l.a.coherent.linking}
\emph{Coherent Linking} holds: Say $\lambda \in J$, $\bar p \aupl \bar d$ and $\bar \Cl (\bar p) \cap \bar \Cl(\bar d)\neq\emptyset$.
Say further we have $w_0\in P$ such that $w_0 \leqlo^{<\lambda}\bar p(0)^P$ and  $w_0 \leqlo^{<\lambda}\bar d(0)^P$. 
Then there is $\bar w \in \am$ such that $\bar \pi(\bar w)=w_0$ and both $\bar w \alo^{<\lambda} \bar p$ and $\bar w  \alo^{<\lambda} \bar d$.
\end{lem}
\begin{proof}
Fix $\bar p, \bar d$ and $w_0$ as in the hypothesis. Fix $i \in \Int\setminus\{0\}$ for the moment. 
Observe we have 
Since $\Cl(\bar p(i)^P)\cap \Cl(\bar d(i)^P)\neq\emptyset$, by \emph{Coherent Linking} for $(Q,P)$ we can find $\bar w_i \in P$ such that $\pi(w_i)=\pi(w_0)$. If the additional assumption at the end of the lemma holds, we may assume $w_i \leqlo^{<\lambda} \bar p(i)^P$ and $w_i \leqlo^{<\lambda} \bar d(i)^P$. 
For $i \in \Int$, set 
\[ \bar w(i) = (w_i, \bar p(i)^0,\bar p(i)^1). \] 
Since $\bar w \alo^{\lambda_0} \bar p$ and $\pi(\bar w(i)^P)=\pi(w_0)$ for each $i\in\Int$,  by Lemma~\ref{stay:in:am}, $\bar w \in \am$. 

Now say the additional assumption holds. 
By construction, $\bar w(i)^P \leqlo^{<\lambda} \bar p$ for each $\lambda'\in[\lambda_0,\lambda)$.
Fix $i \in \Int$. 
Since $\aCl(\bar p)\cap \aCl(\bar d)\neq\emptyset$, $\bar p(i)^j$ and $\bar d(i)^j$ have a common $\lambda$-spectrum below $\pi(w_0)$,  and so
\begin{equation}\label{forces:same:bv}
\pi(w_0)\forces_Q \bar p(i)^j=\bar d(i)^j.
\end{equation}
Thus for each $i \in \Int$, 
\[\bar w(i) = w(i)^P \cdot d(i)^0 \cdot d(i)^1 \leq \bar d(i)\]
whence $\bar w \leq \bar d$. 
In fact, as $\bar w(i)^P \leqlo^{\lambda'} \bar d(i)^P$ and (\ref{forces:same:bv}) holds, $\bar w \alo^{\lambda'} \bar d$  for each $\lambda'\in[\lambda_0,\lambda)$.
\renewcommand{\qedsymbol}{{\tiny  Lemma~\ref{l.a.coherent.linking}~}$\Box$}
\end{proof}
\renewcommand{\qedsymbol}{{\tiny  Theorem~\ref{thm:am:s:ext}~}$\Box$}
\end{proof}

\section{Stratified Type-2 Amalgamation}\label{simpleram}

We now consider the simpler case when we want to extend an automorphism already defined on an initial segment of the iteration.
Let $P$ be a forcing, $Q$ a complete suborder, $f$ an automorphism of $Q$ and $\pi\colon P \rightarrow Q$ a strong projection. 
Assume $\lambda_0$ is regular and $(Q,P)$ is a stratified extension on $I= [\lambda_0, \kappa^*)$. 
We denote the stratification on $Q$ by $\leqlol_Q$, $\lequpl_Q, \hdots$ and write $\leqlol$, $\lequpl, \hdots$ for the stratification of $P$.

Further we assume that for each regular $\lambda\in I$ and $q,r \in Q$, 
\begin{enumerate}
\item $q\leqlol_Q r \iff f(q)\leqlol_Q f(r)$;
\item $q \lequpl_Q r \iff f(q)\lequpl_Q f(r)$;
\item $p\in \D_Q (\lambda,x,q))\iff f(p) \in \D(\lambda,x, f(q))$;
\item $\Cl_Q(q)\cap \Cl_Q(r) \neq \emptyset \iff \Cl_Q(f(q))\cap \Cl_Q(f(r)) \neq \emptyset$.
\end{enumerate}
We define the type-2 amalgamation \index{amalgamation!type-2}\index{type-2 amalgamation}\index[notation]{Am1(Q,P,f,lambda 0)@$\simpleram(Q,P,f,\lambda_0)$}$\simpleram(Q,P,f,\lambda_0)$ (or just $\simpleram$ where the context allows) as the set of all $\bar p \colon \Int \rightarrow P$ such that for all but finitely many $i\in\Int$ we have $\bar p(i) \leqlo^{\lambda_0} \pi(\bar p(i))$ and for all $i \in \Int$,
\begin{equation}\label{s:thinout}
f(\pi(\bar p(i)))=\pi(\bar p(i+1)).
\end{equation}
The ordering is $\bar p \leq \bar q$ if and only if for each $i\in\Int$, $\bar p(i) \leq \bar q(i)$.
To make notation more uniform, we also write $\bar p(i)^P$ to mean just $\bar p(i)$, for $i\in\Int$, in the case of type-2 amalgamation.

\medskip

Define \index[notation]{pi bar@$\bar \pi$}$\bar \pi\colon \simpleram\rightarrow P$ by $\bar \pi(\bar p)=\bar p(0)$. The map $\bar e\colon P\rightarrow\simpleram$ is defined by
$\bar e(p)(0)= p$ and $\bar e(p)(i)=\pi(p)$ for all $i \in\Int$, $i\neq 0$.

It is straightforward to check that $\bar e$ is a complete embedding and $\bar \pi$ is the restriction of the canonical projection
from $\ro(\simpleram)$ to $\ro(\bar e[P])$. 
Moreover, if $q \in P$, $\bar p \in \simpleram$ and $q \leq \bar \pi(\bar p)$, then $q\cdot\bar p \in \simpleram$.

\medskip

We now define the stratification of $\simpleram$, consisting of $\aCl$, $\aD$, $\alol$, $\aupl$ for each regular $\lambda \in I$.
We say \index[notation]{lessthanl bar lambda@$\alol$}$\bar q \alol \bar p$ exactly if for every $i \in \Int$, $\bar q(i) \leqlol \bar p(i)$, and
\index[notation]{lessthanu bar lambda@$\aupl$}$\bar q \aupl \bar p$ exactly if for every $i \in \Int$, $\bar q(i) \lequpl \bar p(i)$.
Similarly for \index[notation]{D bar(lambda,x,p)@$\aD(\lambda,x,\bar p)$}$\aD(\lambda,x,\bar p)$. 
For $\bar p$ such that for each $i \in \Int$, $\bar p(i) \in \dom(\Cl)$, we define \index[notation]{C lambdaz@$\aCl$}$\aCl(\bar p)$ to be the set of all $c \colon \Int \rightarrow \lambda$ such that for each $i\in\Int$, $c(i) \in \Cl(\bar p(i))$. 

\begin{thm}\label{l:simpleram:strat}
$(P,\simpleram)$ is a stratified extension on $I$.%
\end{thm}
\begin{proof}
The proof is a slight modification of the argument for type-1 amalgamation.
Therefore, we only touch the main points, and leave the rest to the reader.
\begin{lem}\label{l:simpleram:qc}
$(P,\simpleram)$ is a quasi-closed extension on $I$.
\end{lem} 
\begin{proof}
Let $\lambda \in I$.
Let $(\bar p_\xi)_{\xi<\theta}$ be $\lambda$-adequate. Fix $i \in \Int$ and let $\bar p(i)$ be the greatest lower bound of the $\lambda$-adequate sequence $(\bar p_\xi(i))_{\xi<\theta}$. By coherency, $(\pi(\bar p_\xi(i)))_{\xi<\theta}$ is also adequate and its greatest
lower bound is $\pi(\bar p(i))$. 
As $f$ is an automorphism, for each $i\in\Int$, $f(\pi(\bar p(i)))$ is a greatest lower bound of $(q_\xi(i))_{\xi<\theta}$, where $\bar q_\xi(i) = f(\pi(\bar p_\xi(i)))$. As the latter is equal to $\pi(\bar p_\xi(i-1))$, we obtain (\ref{s:thinout}) for $\bar p$. 
So $\bar p \in\simpleram$; it is straightforward to check it is a greatest lower bound of $(\bar p_\xi)_{\xi <\theta}$.
\renewcommand{\qedsymbol}{{\tiny  Lemma~\ref{l:simpleram:qc}~}$\Box$}
\end{proof}

\begin{lem}\label{l:simpleram:interp}
\emph{Coherent Interpolation} holds, i.e whenever $\bar r,\bar d \in \simpleram$, $\bar d \leq \bar r$ and $p_0 \in P$ such that $p_0 \leqlol \bar \pi(\bar r)$ and  $p_0 \lequpl \bar \pi(\bar d)$,
there is $\bar p \in \simpleram$ such that $\bar p \alol \bar r$, $\bar p \aupl \bar d$ and $\bar \pi(\bar p)=p_0$.
\end{lem}
\begin{proof}
Let $\lambda \in I$.
Given $\bar r, \bar d$ and $p_0$ as above, first set $\bar p(0)=p_0$. 
As $\pi(\bar r(i))=\pi(\bar r(0))$ and $\pi(\bar d(i))=\pi(\bar d(0))$, $p_0 \lequpl \pi(\bar d(i))$ and $p_0 \leqlol \pi(\bar r(i))$, for all $i\in\Int$. \emph{Coherent Interpolation} for $(Q,P,\pi)$ allows us to find, for each $i\in\Int$, $i\neq 0$ a condition $\bar p(i) \in P$ such that $\pi(\bar p (i))=\pi(p_0)$, $p_0 \lequpl \bar d(i)$ and $p_0 \leqlol\bar r(i)$. As for each $i\in\Int$, $\pi(\bar p(i))=\pi(p_0)$, $\bar p \in \simpleram$.
\renewcommand{\qedsymbol}{{\tiny  Lemma~\ref{l:simpleram:interp}~}$\Box$}
\end{proof}

The proof of the next lemma is obvious and we leave it to the reader.
\begin{lem}
\emph{Coherent Linking} holds. That is: Say $\bar p \aupl \bar d$ and either of the following holds: $\bar \Cl (\bar p) \cap \bar \Cl(\bar d)\neq\emptyset$ or for some $q \in Q$, $\bar p \alol q$ or $\bar d \alol q$.
Say further $w_0\in\Dam$ such that for each regular $\lambda' \in [\lambda_0, \lambda)$, $w_0 \leqlo^{\lambda'} \bar \pi (\bar p)$ and  $w_0  \leqlo^{\lambda'} \bar \pi (\bar d)$.
Then there is $\bar w \in \simpleram$ such that for each regular $\lambda' \in [\lambda_0, \lambda)$, $\bar w \alo^{\lambda'} \bar p$, $\bar w  \alo^{\lambda'} \bar d$ and $\bar \pi(\bar w)=w_0$.
\end{lem}

\begin{lem}\label{l:simpleram:density}
If $\bar p \in \simpleram$ and $\lambda' , \lambda \in I$ and $\lambda' < \lambda$, there is $\bar q \in \simpleram$ such that 
$\bar q \in \dom(\aCl)$ and $\bar q \alo^{\lambda'} \bar p$.
\end{lem}
\begin{proof}
We define a sequence $(q_i)_{i\in\Int}$ of conditions in $P$, by induction on $i$. As usual, read through the following definition and pick $x$ such that it is $\qcdefSeq$ with parameters in $x$.
First, find $q_0 \leqlo^{\lambda'} \bar p(0)^P$ such that $q_0 \in \dom(\Cl)$.
Continue by induction, choosing, for each $n \in \nat\setminus \{0\}$, a condition $q_n \leqlo^{\lambda'} \bar p(n)\cdot \pi(q_{n-1})$ such that 
$\pi(q_n) \in \D_Q(\lambda,x,\pi(q_{n-1}))$
$q_n \in \dom(\Cl)$.
Let $q^*_1$  be a greatest lower bound for $(\pi(q_k))_{k\in \nat}$; it exists by quasi-closure for $Q$.
Find $q_1 \leqlo^{\lambda'} q^*_1 \cdot\bar p(1)$ such that $q_1 \in \dom(\Cl)$.
Again, continue by induction, choosing for each $n\in \nat\setminus \{0,1\}$, a condition  $q_{-n} \in \dom(\Cl)$ ensuring that $(\pi(q_{-k}))_{k\in \nat}$ form an adequate sequence. 
Finally, let $q$ be a greatest lower bound of $(\pi(q_{-k}))_{k\in \nat}$.
For each $i \in \Int$, $q \leqlo^{\lambda'} \pi(q_i)$, so $q\cdot q_i \leqlo^{\lambda'} q_i\leqlo^{\lambda'}\bar p(i)$.
Observe that by coherent stratification, $q\cdot q_i \in \dom(\Cl)$ for each $i\in\Int$. 
Setting $\bar q(i) = q\cdot q_i$, we have $\pi(\bar q(i))=q$, for all $i\in\Int$.
Thus $\bar q \in \simpleram$, $\bar q \alo^{\lambda'} \bar p$ and $\bar q \in \dom(\aCl)$.
\renewcommand{\qedsymbol}{{\tiny  Lemma~\ref{l:simpleram:density}~}$\Box$}
\end{proof}
With all remaining details left to the reader, this concludes the proof that $(P,\simpleram)$ is a stratified extension on $I$.
\renewcommand{\qedsymbol}{{\tiny  Theorem~\ref{l:simpleram:strat}~}$\Box$}
\end{proof}

\section{Remoteness and Stable Meets}\label{am:remote:stm}

The following lemma helps to ensure ``coding areas'' don't get mixed up by the automorphisms, as we shall see in Lemmas~\ref{index:sequ} and \ref{coding:survives}. 
Also see the discussion at the beginning of Section~\ref{sec:remote}.
\begin{lem}\label{remote:lemma}\index{remote}
Say $C$ is remote in $P$ over $Q$ (up to some height $\kappa'$, where  $\kappa'\leq\kappa^*$). Then $\Phi^k[C]$ is remote in $\am$ over $P$ (up to the same height) for any $k \in \Int\setminus\{0\}$.
\end{lem}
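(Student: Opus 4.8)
The plan is to verify the two defining clauses of remoteness (Definition \ref{remote}) for $\Phi^k[C]$ in $\am$ over $P$, namely that for all $c \in \Phi^k[C]$ and $\bar p \in \am$ with $c \leq \pi_{\Phi^k[C]}(\bar p)$, we have (i) $\bar p \cdot c \bar\leqlo^\lambda \bar p$ for every regular $\lambda \in I \cap \kappa'$, and (ii) $\bar\pi(\bar p \cdot c) = \bar\pi(\bar p)$. First I would set up notation: identify $C$ with a complete sub-order $R_C$ of $P$ with strong projection $\pi_C \colon P \to R_C$, so that (since $C$ is remote over $Q$) for $c\leq\pi_C(p)$ we have $p\cdot c \leqlol p$ (all $\lambda \in I\cap\kappa'$) and $\pi_Q(p\cdot c) = \pi_Q(p)$. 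Working inside $\ro(\Dam^\Int_f) = \ro(\am)$, the automorphism $\Phi$ acts, and $\Phi^k[C]$ corresponds to the image $\Phi^k[R_C]$; a condition $c' \in \Phi^k[C]$ is (a dense set of conditions are) of the form $e_k(c)$ for $c \in R_C$ with $\pi(c) \in Q$, thought of as a condition in $\am$ whose only nontrivial coordinate is the $k$-th.

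Next I would compute $\bar p \cdot e_k(c)$ coordinate-wise using the description of $e_k = \Phi^k \circ e$ and the formula for $e$: multiplying $\bar p$ by $e_k(c)$ only affects the $P$-component and the $B_0, B_1$-components via the functions $F, G$ and the projection $\pi_C$ at shifted coordinates, and since $c$ lies over $Q$ (i.e. $\pi(c) \in Q$), equation (\ref{f:and:pi:commute}) and (\ref{proj:am:to:Q}) guarantee that the $\pi$-values of all coordinates are unchanged. For clause (ii), the key point is that $\bar\pi(\bar p \cdot e_k(c)) = (\bar p \cdot e_k(c))(0)^P$; since $k \neq 0$, multiplying by $e_k(c)$ at coordinate $0$ only contributes $\pi(c) \in Q$ via the base; using remoteness of $C$ over $Q$ in $P$ — specifically $\pi_Q(p \cdot c) = \pi_Q(p)$ applied at coordinate $k$, together with the defining equation (\ref{thinout}) which propagates boolean values between adjacent coordinates — one gets that the $0$-th $P$-coordinate is unchanged. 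For clause (i), at each coordinate $i \in \Int$ one has $(\bar p\cdot e_k(c))(i)^P \leqlol \bar p(i)^P$: at $i = k$ this is exactly clause (\ref{remote:leqlo}) of remoteness of $C$ over $Q$; at $i \neq k$ the $P$-coordinate is only multiplied by an element of $Q$ (the shifted base part), and one uses that this multiplication is by $\pi$ of something, hence lands in $\Dam$-preserving territory and respects $\leqlol$. Also one must verify the boolean $B_0,B_1$-coordinates change in the way permitted by $\bar q \alol \bar p$, i.e. $\pi_j((\bar p\cdot e_k(c))(i)) = \pi((\bar p\cdot e_k(c))(i)^P) \cdot \pi_j(\bar p(i))$; this should follow from $\bar p(i)^P \in \Dam$ and lemma \ref{alo} (or directly from the definition of $\Dam$ and the fact that $\Phi^k[C]$ only touches coordinate $k$).

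The main obstacle will be bookkeeping the interaction between the amalgamation's coordinate-shifting automorphism $\Phi^k$ and the projections: one must check that $\Phi^k[C]$ really is (dense in) a complete sub-order of $\am$ with a strong projection, and that forming the quotient $\quot{\am}{P}$ — which, as the discussion preceding section \ref{sec:remote} notes, removes not only an ``initial segment'' but also a small (random) sub-algebra — does not destroy the completeness of $\Phi^k[C]$. This is precisely where the stratification of $P$ (inherited by $\am$ via theorem \ref{thm:am:s:ext}) enters: the clause $\bar p\cdot c \bar\leqlo^\lambda \bar p$ is what survives the quotient by a small stratified piece, exactly analogous to how lemma \ref{indie:lemma:not:in} upgraded independence. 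I would therefore structure the proof as: (1) reduce to a dense set of $c$ of the form $e_k(c_0)$ with $c_0 \in R_C$, $\pi(c_0)\in Q$; (2) prove clause (ii) by a coordinate computation invoking $\pi_Q(p\cdot c) = \pi_Q(p)$ and (\ref{f:and:pi:commute}), (\ref{proj:am:to:Q}), (\ref{thinout}); (3) prove clause (i) by checking $\bar\leqlo^\lambda$ coordinate-wise, using clause (\ref{remote:leqlo}) of remoteness of $C$ over $Q$ at coordinate $k$ and triviality (multiplication by a $Q$-element respecting $\leqlol$, via \ref{pss:is}(\ref{pss:is:lo:init}) or \ref{pss:is}(\ref{pss:is:lo:tail})) elsewhere, plus the $\alol$-compatibility of the boolean coordinates from $\bar p(i)^P \in \Dam$; (4) conclude by noting the reasoning is uniform in $k \in \Int\setminus\{0\}$ and that the height bound $\kappa'$ is preserved since we only ever invoke remoteness of $C$ over $Q$ at heights $< \kappa'$.
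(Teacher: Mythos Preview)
Your overall plan is right, but there is a genuine gap in the argument for clause~(ii), and it stems from placing the role of $\Dam$ in the wrong spot.

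For clause~(ii) you propose to use only independence, i.e.\ $\pi_Q(p\cdot c)=\pi_Q(p)$, together with~(\ref{thinout}). But the linking equation~(\ref{thinout}) is expressed in terms of $\pi_0,\pi_1$ (the projections to $B_0,B_1$), not $\pi_Q$. Knowing that $\pi_Q(c\cdot\bar p(k))=\pi_Q(\bar p(k))$ does \emph{not} by itself give $\pi_j(c\cdot\bar p(k))=\pi_j(\bar p(k))$; without the latter, the coordinate-wise product (namely $c\cdot\bar p(k)$ at position $k$, $\bar p(i)$ elsewhere) need not satisfy~(\ref{thinout}) at positions adjacent to $k$, hence need not lie in $\am$, and then the actual meet $e_k(c)\cdot\bar p$ could be strictly smaller---in particular its $0$-th coordinate could differ from $\bar p(0)$.

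The missing step, which the paper does first, is the computation
\[
\pi_j(c\cdot\bar p(k)) \;=\; \pi_Q(c\cdot\bar p(k)^P)\cdot\pi_j(\bar p(k)) \;=\; \pi_Q(\bar p(k)^P)\cdot\pi_j(\bar p(k)) \;=\; \pi_j(\bar p(k)),
\]
where the first equality uses that $\bar p(k)^P\in\Dam$ (this is where $k\neq 0$ enters, via the definition of $\am$) together with $c\cdot\bar p(k)^P\leqlo^{\lambda_0}\bar p(k)^P$ from remoteness clause~(\ref{remote:leqlo}), and the second equality is independence. Once this is in hand, the coordinate-wise product \emph{is} the meet $e_k(c)\cdot\bar p$, so $(e_k(c)\cdot\bar p)(i)=\bar p(i)$ for every $i\neq k$. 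Both clauses then follow in one line each: clause~(ii) because coordinate $0$ is literally unchanged, and clause~(i) because only coordinate $k$ moves, and there $c\cdot\bar p(k)^P\leqlol\bar p(k)^P$ by remoteness. Your worries about ``multiplication by an element of $Q$'' at coordinates $i\neq k$ and about propagation are unnecessary---nothing happens there at all.
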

\begin{proof}
Let $D^* = \bar\pi^{-1}[\Dam] \subseteq \am\cap \Dam^\Int_f$ as in Lemma~\ref{common:dense:set}.
Let $j \in \{0,1\}$ arbitrary. If $\hat p \in \blowup{\Dam}$, $c \in C$ and $c \leq \pi_C(\hat p^P)$, by the definition of $\Dam$, 
\[ \pi_Q(\hat p\cdot c)\forces\pi_j(\hat p\cdot c)=\pi_j(\hat p), \]
that is, $\pi_j(\hat p\cdot c)=\pi_j(\hat p) \cdot \pi_Q(\hat p^P\cdot c)$, so as $C$ is independent over $Q$ and thus $\pi_Q(\hat p^P\cdot c) = \pi_Q(\hat p^P)$, we have $\pi_j(\hat p\cdot c)=\pi_j(\hat p)$.
In fact, if we have $\hat p^P \in \Dam$, we have $c\cdot \hat p \in \blowup{\Dam}$. 
Observe further that for any $c \in C$, $\pi_j(c)=1$, and moreover, $C\subseteq \Dam$. Thence, $C \subseteq D^* \subseteq \dom(\Phi^k)$.
Moreover, $\Phi^k(c) (0)= e_k(c) (0)=(1,1,1)$ and so $\Phi^k(c) \in D^* \subseteq \am$.

\medskip

We now show $\Phi^k[C]=e_k[C]$ is independent in $\am$ over $P$:
Let $c\in C$, $\bar p \in \am$ and say $c \leq (\bar \pi_k\circ \pi_C)(\bar p)=\pi_C(\bar p(k))$. 

Since $\pi_j (c \cdot \bar p(k))=\pi_j(\bar p(k))$,  for every $i \in \Int$, 
\begin{equation}\label{remote:equality:on:sides}
i \neq k \Rightarrow e_k(c) \cdot \bar p (i) = \bar p(i).
\end{equation}
Thus $e_k(c) \cdot \bar p \in \am$. This firstly shows that $\bar \pi_k\circ \pi_C$ is a strong projection from $\am$ to $C$.
Moreover $\bar \pi(\bar p \cdot e_k(c)) = \bar p(0) =\bar \pi (\bar p)$, and we are done with the proof that $\Phi^k[C]=e_k[C]$ is independent in $\am$ over $P$.

It follows that $\Phi^k[C]$ is remote in $\am$ over $P$, by definition of $\alol$: let $\lambda \in [\lambda_0, \kappa')$. Say $c \leq \bar \pi_k (\bar p)$.
Then $e_k(c) \cdot \bar p (k) = (\bar p(k)^P \cdot c, \bar p(k)^0,\bar p(k)^1)$ and $\bar p(k)^P \cdot c \leqlol p(k)^P$, by Clause \ref{remote:leqlo} of Definition~\ref{remote}. So by (\ref{remote:equality:on:sides}), $e_k(c) \cdot \bar p \alol \bar p$, and we are done.
\end{proof}

The last lemma of this section is the counterpart of Lemmas~\ref{stable:meet:products} and \ref{stable:meet:comp}. Together these lemmas make sure that in the iteration used in our application, we have stable meet operators for every initial segment.
We assume $P$ is stratified on $J$.
\begin{lem}\index{stable meet}\index[notation]{Q-stable meet@$Q$-stable meet}\index[notation]{wedge (stable meet)@$\stm$ (stable meet)}
There is a $P$-stable meet operator $\astm$ on $\am$. 
\end{lem}
\begin{proof}
Of course we set 
\[ \dom(\astm)=\{ (\bar p,\bar r) \setdef \exists \lambda\in J \quad\bar r \alol \bar \pi(\bar r) \cdot \bar p \}. \]
Say we have $\bar p$, $\bar r \in \am$ such that $(\bar p, \bar r) \in \dom(\astm)$.
This means we can fix a regular $\lambda \in J$ such that for each $i \in\Int\setminus\{0\}$, $\bar r(i)^P \leqlol \pi(\bar r(i)^P)\cdot\bar p(i)^P$.
Let $w_i =  \bar p(i)^P \stm \bar r(i)^P$ for $i \in \Int\setminus\{0\}$ and
set 
\[ 
    \bar p \astm \bar r = \begin{cases} (w_i, \bar p(i)^0, \bar p(i)^1) &\text{ for $i \in \Int\setminus\{0\}$}\\
                                       \bar p(0)                       &\text{ for $i=0$.}
                                       \end{cases}
\] 
Let $\bar p \astm \bar r$ be denoted by $\bar w$.
By LemmaLemma~\ref{Q:cdot:D}, for $i\in\Int\setminus\{0\}$ and $j\in\{0,1\}$ we have 
\begin{equation}\label{am:stm:proj:equal}
\begin{split}
\pi_j(\bar w(i))&=\bar p(i)^j\cdot \pi_j(w_i \cdot \bar p(i)^{1-j})\\
                 &=\bar p(i)^j\cdot \pi_j(\bar p(i)^P \cdot \bar p(i)^{1-j})=\pi_j(\bar p(i)). \\
\end{split}
\end{equation}
In particular, as $w_i \leqlol \bar p(i)^P$ and $i$ was arbitrary, %
we have 
\begin{equation}\label{astm:leqlol}
\bar p \astm \bar r \alol \bar p.
\end{equation} 
Moreover, $\pi(w_i)= \pi(\bar p(i)^P)=\pi(\bar p(0))=\pi(\bar w(0))$. 
So $\bar w$ satisfies the hypothesis of Lemmas~\ref{stay:in:am} and therefore $\bar w \in \am$. 
Clearly, $\bar \pi(\bar p \astm \bar r)= \bar p(0)$. 
It remains to see that $\bar \pi(\bar r) \cdot (\bar p \astm \bar r) \approx \bar r$;
we have
\[ \bar \pi(\bar r)\cdot \bar w = \begin{cases} (\pi(\bar r(0)^P)\cdot w_i, \bar p(i)^0, \bar p(i)^1) &\text{ for $i \in \Int\setminus\{0\}$}\\
                                       \bar r(0)                       &\text{ for $i=0$.}
                                       \end{cases}
 \]
Write $\bar u = \bar \pi(\bar r)\cdot \bar w$ and write $\bar v = \bar\pi(\bar r)\cdot \bar p$. For arbitrary $i \in \Int \setminus\{0\}$ and $j \in \{0,1\}$ we have
\begin{align*}
\pi_j(\bar u(i))&=\pi(\bar r(0)^P)\cdot \pi_j(\bar w(i))= \pi(\bar r(0)^P)\cdot \pi_j(\bar p(i)) &&\text{by (\ref{am:stm:proj:equal}),}\\
&=\pi(\bar r(0)^P)\cdot \pi_j(\bar v(i)) = \pi_j(\bar r(i)) &&\text{as $\bar r \leqlol \bar v$.}
\end{align*}
Thus by (\ref{blowup:order}), $\bar u(i)\approx \bar r(i)$. 
As $i \in \Int \setminus\{0\}$ was arbitrary and as $\bar u(0)= \bar r(0)$,
we conclude $\bar u \approx \bar r$, finishing the proof that $\astm$ is a $P$-stable meet on $\am$.
\end{proof}

\chapter{Proof of the Main Theorem}\label{sec:main}

In this chapter we prove Theorem~\ref{p.t.main} (resp.\ Theorem~\ref{p.t.main2}) building on the work of the previous chapters.
We begin with the assumption $V=L$ and fix $\kappa$ which we assume to be the only Mahlo.
The first step is to force with with the product of our canonical sequence\index{canonical!sequence of trees}\index{Suslin trees!canonical sequence of} (see Definition~\ref{p.n.d.canonical}) of independent $\kappa^{++}$-Suslin trees $\bar T =\prod_{\xi<\kappa} T(\xi)$\index[notation]{T bar, Pi(T(xi)) xi<kappa@$\bar T$, $\prod_{\xi<\kappa} T(\xi)$}\index[notation]{Pixi(T(xi)) xi<kappa, T bar@$\prod_{\xi<\kappa} T(\xi)$, $\bar T$}. %
Recall that $\bar T$ is $\kappa^+$-closed and that by independent we mean that $\bar T$ has the $\kappa^{++}$-cc.

Recall also from Section~\ref{p.s.trees} that for a dense set of $\bar t \in \bar T$, all components $\bar t(\nu)$, for $\nu < \kappa$, are at the same height---but in Section~\ref{sec:preserving:coding} it will be important that different heights are allowed, so we don't restrict ourselves to said dense subset.

Forcing with $\bar T$ adds a sequence of branches $\bar B = (B(\xi))_{\xi<\kappa}$\index[notation]{B bar, (B(xi)) xi<kappa@$\bar B$, $(B(\xi))_{\xi<\kappa}$}, %
where $B(\xi)$ denotes the branch through $T(\xi)$.
As a notational convenience, we assume the sequence of trees (resp. branches) is indexed by elements of 
$$J={}^{<\kappa}2 \times \omega\times \omega \times 2\index[notation]{J@$J$}$$ %
rather than by ordinals in $\kappa$, that is as $B(s,n,i,j)$ and $T(s,n,i,j)$ for $s \in{}^{<\kappa}2$,  $n,i \in \nat$ and $j\in\{0,1\}$.
Note that $\bar T$  $\kappa^{++}$-distributive (as it satisfies the $\kappa^{++}$-cc) whence $({}^{\kappa^+} \On)^{L[\bar B]} \subseteq L$, $\Card^{L[\bar B]} = \Card^L$ and the $\GCH$ still holds in $L[\bar B]$. 

\medskip

We now define $(P_\xi)_{\xi\leq\kappa}$, by induction on $\xi$.
We start with $P_0 = \bar T$ %
and identify $\bar B$ with $G_0$. %
We shall call $G_\xi$ the $P_\xi$ generic over $L$ %
for each $\xi \leq \kappa$, and we also write $G$ for $G_\kappa$. %

We construct this iteration to deal with the following tasks:
\begin{description}
\item[Task 1]\label{task:cohens} Add a set of reals $\Gamma^0$ such that $P_\kappa$ forces that the Baire-property fails for $\Gamma^0$
(see Definition~\ref{d.gamma}). 
\item[Task 2]\label{task:code} For each real $r$ added by $P_\kappa$, make sure that $P_\kappa$ forces $r \in \Gamma^0 \iff \Psi(r,0) \iff \neg \Psi(r,1)$,
where $\Psi(x,y)$ is a $\Sigma^1_3$ formula. 
The formula $\Psi(x,y)$ will be given explicitly at the beginning of Section~\ref{sec:preserving:coding}.
\item[Task 3]\label{task:amalgamation} Make sure every projective set of reals is Lebesgue-measurable in the extension by $P_\kappa$. This is achieved using the two types of amalgamation.
\item[Task 4]\label{task:collapse} To make the construction more uniform, we force with a Levy-collapse at certain stages.
\end{description}
We force with the Levy-collapse for the following reasons: firstly, when we amalgamate, and at limits, whether we collapse the continuum depends on factors beyond our control. So we always make sure we collapse the continuum at stages following a limit or an amalgamation stage, making the construction more uniform and the cardinal structure in intermediate extensions more independent of happenstance. 
Secondly, we want to make sure $\CH$ holds before we force with Jensen coding (for Task 2). 

Task 2 requires the sophisticated technique of Jensen coding with localization which was discussed in Section~\ref{sec:overview:coding} and Chapter~\ref{sec:coding}.
We will make the real $r$ (along with information about its membership in $\Gamma$) definable by coding a subset of our set of branches $\bar B$ by a real $u$, where $u$ is generic for Jensen coding.
Say we have iterated for $\xi$ steps and are in $L[G_\xi]$.
For now, let's call the set of branches we ``code'' at the $\xi+1$-th step $\bar B^- =\{ B(\nu) \setdef \nu \in I \}$, where $I\in[\kappa]^\kappa$.

We have explained in Section~\ref{p.s.making.def} why we use generic ``coding areas'' of size $\kappa$. 
The point here is that the automorphisms that arise from amalgamation (Task 3) threaten to make any simpler coding ``unreadable''. 
See Section~\ref{sec:preserving:coding} for the proof that the coding as we now define it is not made ``unreadable'' by automorphisms.

We shall pick a set $A_\xi \subseteq \kappa^+$ such that $(\Hhier_\alpha)^{L[A_\xi]}=L_\alpha[A_\xi]$ for every cardinal $\alpha \leq \kappa$ and $\{ B(\nu) \setdef \nu \in I\}$ is definable in some simple recursive fashion from $A_\xi$ (see below). 
Note $L[A_\xi]$ will be a proper submodel of $L[G_\xi]$ (since we don't want to code \emph{all of} $\bar B$). 
We shall then force over $L[G_\xi]$ with $P(A_\xi)$ of Chapter~\ref{sec:coding} \emph{as defined in the model $L[A_\xi]$} to obtain a generic real $u$ such that $A_\xi \in L[u]$ and moreover the following is true:
\begin{multline}\label{david}
\text{for all $\alpha, \beta<\kappa$ \emph{ if } $L_\beta[u]$ is a model of $\ZF^-$ and of
``$\alpha$ is the least }\\
\text{Mahlo and $\alpha^{++}$ exists'' \emph{ then:}}\\
\text{$I\cap\alpha \in L_\beta[u]$ and $L_\beta[u] \models$``$\forall \nu \in I \cap \alpha$ $T^\alpha(\nu)$ has a branch,''}
\end{multline}
where $ T^\alpha(\nu)$ denotes the outcome of the construction of $T(\nu)$ carried out in $L_\beta$ (cf.\ Definition~\ref{p.n.d.canonical} and Lemma~\ref{p.l.canonical}).
Note that as we are using a forcing from $L[A_\xi]$, a proper submodel of $L[G_\xi]$, we shall have to argue that this step represents a stratified extension.

\medskip

There are $4$ types of forcing involved, so we fix a simple and convenient partition $E^0, \dots, E^3$ of $\kappa$: let $E^n$\index[notation]{E n@$E^n$}, for $0\leq n\leq 3$, denote the set of ordinals $\xi<\kappa$ such that for some limit ordinal $\eta$ and $k \in \omega$, $\xi=\eta+k$ and $k \equiv n \pmod 4$. For an ordinal $\xi<\kappa$, let $E^n(\xi)$ denote the $\xi$-th element of $E^n$. 
Also fix, for each $\rho<\kappa$, an increasing sequence $\bar \alpha_\rho = (\alpha^\zeta_\rho)_{\zeta<\kappa}$\index[notation]{alpha bar rho, (alpha zeta rho) zeta < kappa@$\bar \alpha_\rho$, $(\alpha^\zeta_\rho)_{\zeta<\kappa}$} of ordinals $>\rho$ cofinal in $\kappa$: we let $\alpha^\zeta_\rho = \langle \zeta,\rho\rangle$, where $\langle .\,, .\rangle$ is the G\"{o}del pairing function.%

As we have to tackle certain tasks for every real of the extension, our definition will make use of two bookkeeping devices, 
$\bar s=(\dot  s_\xi)_{\xi<\kappa}$\index[notation]{s bar, (s xi)@$\bar s$, $(\dot  s_\xi)_{\xi<\kappa}$  (bookkeeping)}\index[notation]{ s xi@$(\dot  s_\xi)_{\xi<\kappa}$  (bookkeeping)} and $\bar r=(\bar\iota(\xi),\dot r^0_\xi, \dot r^1_\xi)_{\xi<\kappa}$.\index[notation]{r bar, (iota(xi), r dot 0 xi, r dot 1 xi)@$\bar r$, $(\bar\iota(\xi),\dot r^0_\xi, \dot r^1_\xi)_{\xi<\kappa}$ {\tiny (bookkeeping)}}
\index[notation]{iota bar, (iota(xi)) xi<kappa@$\bar \iota$, $(\bar\iota(\xi))_{\xi<\kappa}$ (bookkeeping)}
\index[notation]{ iota(xi), r dot 0 xi, r dot 1 xi@$(\bar\iota(\xi),\dot r^0_\xi, \dot r^1_\xi)_{\xi<\kappa}$ {\tiny (bookkeeping)}}
We define $\bar s$ to list all reals which end up in the complement of $\Gamma^0$, in order to handle Task 2 for each of these. To make sure all projective sets of reals are measurable (Task 3) we ask that for each $\iota<\kappa$, the set of $\dot r^0_\xi, \dot r^1_\xi $ such that $\bar \iota(\xi)=\iota$ list all the pairs of reals in $L[G_\kappa]$ which are random over $L^{P_{\iota}}$.
We also ask that each pair $\dot r^0_\xi, \dot r^1_\xi $ be $\lambda$-reduced over $P_{\bar \iota(\xi)}$ for some large enough $\lambda$.
We shall first proceed with the definition of the iteration, and after that argue that a bookkeeping with the requisite properties can be defined at the same time.  

We define a sequence $\bar \lambda= (\lambda_\xi)_{\xi\leq\kappa}$\index[notation]{lambdaz, (lambda xi) xi<kappa@$\bar \lambda$, $(\lambda_\xi)_{\xi\leq\kappa}$|textbf} by induction, so that for each $\xi \leq \kappa$, $P_\xi$ will be stratified on $[\lambda_\xi,\kappa]$.
We let $\lambda_0 = \omega$. For limit $\xi$, let $\lambda_\xi$ be the minimum of $\Reg \setminus \bigcup_{\nu<\xi}\lambda_\nu$. For successors, define
\begin{equation}\label{def:lambda:xi}
\lambda_{\xi+1} =\begin{cases} 
      
                  (\lambda_{\xi})^+& \text{if $\xi \in E^0$ or $\exists \rho$ s.t. $\xi=E^3(\alpha^0_\rho)$},\\
                  \lambda_{\xi}&\text{otherwise.}
\end{cases}
\end{equation}
For the readers orientation, be aware we will always have that $P_\xi$ collapses $\lambda_\xi$, except when $\xi$ is inaccessible or $P_\xi$ is an amalgamation of type-1. 
In those cases $P_\xi$ may or may not preserve $\lambda_\xi$.
The continuum of $L[G_\xi]$ is always at most $(\lambda_\xi)^+$ 
so $\CH$ always holds in $L[G_\xi]$, except when $P_\xi$ does not collapse $\lambda_\xi$, in which case we shall force with the collapse at the next step. 

\medskip

At all limit stages $\xi \leq \kappa$, we define $P_\xi$ to be the $\bar \lambda$-diagonal support limit of the iteration up to that point.
We generally write $B_\xi = \ro(P_\xi)$ for $\xi\leq \kappa$. In the inductive definition of the iteration, we also define 
\begin{enumerate}
\item\index[notation]{G o xi@$G^o_\xi$} A sequence of sets $G^o_\xi$, for $\xi<\kappa$; these arise because at coding stages, the next forcing is not taken from the natural model $L[G_\xi]$ but from a smaller model $L[\bar B^-][G^o_\xi]$ (of course, since we do not want to code \emph{all of }$\bar B$). 
The $G^o_\xi$ help to ``integrate out the $\bar T$-part'' and to show that each $(P_\xi,P_{\xi+1})$ is a stratified extension. 
Note that we will have $\pset(\omega)^{L[G_\xi]}= \pset(\omega)^{L[G^o_\xi]}$.%
\item A sequence $\bar c=(\dot c_\xi)_{\xi<\kappa}$ of names for reals where each $\dot c_\xi$ is Cohen over $L[G_{E^0(\xi)}]$.
\item A sequence $(C_\xi)_{\xi<\kappa}$ of so-called coding areas, where each $C_\xi \in {}^\kappa 2$ is generic over 
$L[G_{E^1(\xi)}]$ but has constructible initial segments.
\item\label{def.auto} Maps $\Phi^\zeta_\rho$, for $\rho,\zeta < \kappa$, where $\Phi^{\zeta}_\rho$ induces an automorphism of  some $B_\xi$---to be precise, $\xi = E^3(\alpha^\zeta_\rho)$---and $\Phi^{\bar \zeta}_\rho$ extends $\Phi^\zeta_\rho$ for $\zeta < \bar \zeta$.
\end{enumerate} 

\section{Definition of the Forcing Iteration}\label{def:it:succ:stage}
For the successor stage, assume by induction that we have already defined $P_\xi$ for $\nu<\xi$ and $\dot r^0_\nu, \dot r^1_\nu$, $\dot s_\nu$ for $\nu \leq \xi$. 
Fix $k$ and $\eta$ such that $\xi=E^k(\eta)$. 
Let $G_\xi$ denote a generic for $P_\xi$.
We may assume we have already defined $G^o_\xi$, letting $G^o_0=\emptyset$.
\begin{description}

\item[$k=0$] 
At this stage we collapse the continuum of $L[G_\xi]$ and make sure the $\GCH$ holds (Task 4). 
Let $P_{\xi+1}= P_\xi \times \Coll(\omega,\lambda_{\xi+1})$,
and let $G^\xi$ be the generic for $\Coll(\omega,\lambda_{\xi+1})$ over $L[G_\xi]$.

As $\lambda_{\xi+1} =(\lambda_\xi)^+\geq2^\omega$ in $L[G_\xi]$, we can pick a $\Coll(\omega,\lambda_{\xi+1})$-name such that the $P_{\xi+1}$-name it induces via the embedding into the product is fully Cohen over $P_\xi$. 
We define $\dot c_\eta$\index[notation]{c eta@$\dot c_\eta$, $c_\eta$} to be this name and $c_\eta$ to be its interpretation under under the generic $G^\xi$.
In $L[G_{\xi+1}]$, simply let $G^o_{\xi+1} = G^o_\xi \times G^\xi$.

\item[$k=1$] Let $P_{\xi+1}= P_\xi\times (\Add(\kappa))^L$. 
We denote by $G^\xi$ the generic and by $C_\eta$\index[notation]{C eta@$\dot C_\eta$, $C_\eta$ (coding area)}\index{coding area|textbf} the new subset of $\kappa$ it represents  (and let $\dot C_\eta$ denote its $P_{\xi+1}$-name). This will be the generic ``coding area'' used in the next step.
Again let $G^o_{\xi+1} = G^o_\xi \times  G^\xi$.

\item[$k=2$] We take care of Task 2, making sure $\Psi(c,j)$, holds for some real $c$ given to us by bookkeeping ($j\in \{0,1\}$ indicates whether $c \in \Gamma^0$; again, see Section~\ref{sec:preserving:coding} for the precise definition of $\Psi(c,j)$). 
If $\eta$ is a limit or $\eta=0$, let $c$ denote $\dot c_\eta$ (the Cohen real defined at stage $E^0(\eta)$), and let $j=0$.
If $\eta$ is a successor, let $c$ denote $\dot s_{\eta-1}^{G_\xi}$, and let $j=1$.
Note that $j$ indicates that $c \in \Gamma^j$. 
We wish to code a branch through $T(s,n,i,j)$ if and only if $\sigma$ is an initial segment of $C=C_\eta$ (the coding area from the previous step) and $c(n)=i$. That is, we let\label{def.B.B.minus.etc}
\[B(C,c,j)=\{ B(s,n,i,j) \setdef s \is C, c(n)=i \}\]
be the set of branches to code, and represent it in a $\Delta_1$ way as a subset of $\kappa^{++}$:
\[\index[notation]{B bar minus@$\bar B^-$}\bar B^-=\{\#(s,n,i,j,t) \setdef s \is C, c(n)=i, t \in B(s, n,i,j) \}\]
where $\#x$ denotes the order-type of $x$ in the well-ordering of $L[\bar B^-][G^o_\xi]$.

Working in $L[\bar B^-][G^o_\xi]$, define \index[notation]{A xi 2@$A_\xi$}$A_\xi \subseteq \kappa^{++}$ as in \ref{sec:full:setting} so that
$L[\bar B^-][G^o_\xi] = L[A_\xi]$ and let $P(A_\xi)$ be the forcing discussed in Chapter~\ref{sec:coding}.
 Finally, define 
\[ Q_\xi = P(A_\xi)^{L[A_\xi]},\]
and let $\dot Q_\xi$ be a $P_\xi$-name for this forcing.
Observe that it takes an argument to show we get a stratified extension; $Q_\xi$ isn't obviously stratified (on any interval) in $L[G_\xi]$.

Letting $G^\xi$ denote the $Q_\xi$-generic, let $G^o_{\xi+1} =G^o_{\xi} \times \{ p_{<\kappa^{++}} \setdef p \in G^\xi \}$ (if you prefer, replace $p_{<\kappa^{++}}$ by $p_0$).

\item[$k=3$]

Say $\eta=\alpha^\zeta_\rho$.
We first treat the case where $\zeta=0$:
By induction the bookkeeping device $\bar r$ gives us $\bar r(\rho)=(\bar\iota(\rho), \dot r^0_\rho, \dot r^1_\rho)$, where $\bar\iota(\rho)<\xi$ (in fact, $<\rho$)  and the pair of names reals $\dot r^0_\rho,\dot r^1_\rho$ is fully random over $L^{P_{\bar\iota(\rho)}}$ and $\lambda_\xi$-reduced over $P_{\bar\iota(\rho)}$.

\label{def.succ.auto} Let $f$ be the automorphism of the complete Boolean algebras generated by $\dot r^0_\rho$ and $\dot r^1_\rho$ in $B_\xi$ and let $P_{\xi+1}$ be the type-1 amalgamation of $P_\xi$ over $f$ and $P_{\bar\iota(\rho)}$:
\[ P_{\xi+1}= \am(P_{\bar\iota(\rho)},P_\xi,f,\lambda_\xi). \]
The resulting automorphism\index{automorphism} of $P_{\xi+1}$ we denote by %
$\Phi^0_\rho$. 

Observe that, in general, this automorphism need not extend to an automorphism of $\ro(P_\kappa)$.
Also observe that by induction and Theorem~\ref{thm:am:s:ext} $(P_\xi,\am(P_{\bar\iota(\rho)},P_\xi,f,\lambda_\xi))$ will be a stratified extension above $\lambda_{\xi+1}$.

In the second case, when $\eta=\alpha^\zeta_\rho$ and $\zeta > 0$, we make sure $\Phi^0_\rho$ is extended by an automorphism of $P_{\xi+1}$. So we let
\[ P_{\xi+1}= \simpleram(\dom(\Phi),P_\xi, \Phi), \]
where $\Phi$ is (an extension of) $\Phi^0_\rho$, constructed at an earlier stage of the iteration:

If $\zeta$ is a successor ordinal, at a previous stage $E^3(\alpha_\rho^{\zeta-1})$, we defined $\Phi_\rho^{\zeta-1}$ extending $\Phi^0_\rho$. Set $\Phi=\Phi_\rho^{\zeta-1}$.
 
If $\zeta$ is a limit, we have a sequence $(\Phi_\rho^\nu)_{\nu<\zeta}$, forming an increasing chain, and all extending $\Phi^0_\rho$. Letting $\delta=\bigcup_{\nu<\zeta}\alpha_\rho^\nu$, there is a unique automorphism of $P_\delta$, extending each of them (note $\delta < \xi$). Let $\Phi$ be this automorphism.

The resulting automorphism of $P_{\xi+1}$ we denote by \index[notation]{Phi rho zheta@$\Phi_\rho^\zeta$}$\Phi_\rho^\zeta$.
In both cases we say \emph{$\xi+1$ or $P_{\xi+1}$ is an amalgamation stage associated to $\Phi^\zeta_\rho$}\index{amalgamation!stage associated to Phi zeta rho@stage associated to $\Phi^\zeta_\rho$}.

$G^o_{\xi+1}$ is defined to be the $\Int$-sequence of the sets defined like $G^o_\xi$ in each of the $\Int$-many components of the amalgamation.
\end{description}

\medskip

We define the family of automorphisms which witness that $P_\kappa$ is sufficiently homogeneous (in the sense of Definition~\ref{p.d.hom}; we give a detailed proof in Lemma~\ref{auto}).
\begin{dfn}[The automorphisms]\label{d:auto}
The system of maps $\langle \Phi^\zeta_\rho \setdef \zeta < \kappa\rangle$ uniquely determines an automorphism\index{automorphism} \index[notation]{Phi rho@$\Phi_\rho$}$\Phi_\rho$ of $\ro(P_\kappa)$ such that $\Phi_\rho (\dot r^0_\rho)=\dot r^1_\rho$ and $\Phi_\rho \res P_{\bar\iota(\rho)}$ is the identity. For a more uniform notation, we also write $\Phi^\kappa_\rho$ for $\Phi_\rho$.
We call any stage of the iteration $P_{\xi+1}$ such that $\xi=E^3(\alpha^\zeta_\rho)$ for some $\zeta < \kappa$ and thus such that $\Phi^\kappa_\rho$ extends $\Phi^\zeta_\rho$, \emph{ associated to $\Phi_\rho$}.
\end{dfn}

Next, we collect the some important properties of the iteration.
\begin{lem}~\label{iteration:prop}
\begin{enumerate}
\item\label{P:strat:ext} For all $\xi<\kappa$, $(P_\xi, P_{\xi+1})$ is a stratified extension on $[\lambda_{\xi+1},\kappa]$.
\item  \label{it:strat} For all $\xi\leq\kappa$, $P_\xi$ is stratified on $[\lambda_\xi, \kappa]$.
\item \label{P:cont:small} For all $\xi<\kappa$, $ P_\xi \forces 2^\omega \leq (\lambda_\xi)^+$.
\item \label{P:coll} If $\xi<\kappa$ is not of the form $E^3(\alpha^0_\rho)$ for some $\rho<\kappa$, i.e., if $P_{\xi+1}$ is not a type-1 amalgamation, we have that $ P_{\xi+1} \forces \card{\lambda_{\xi+1}} = \omega$ and $P_{\xi+1}\forces\GCH$. \label{it:CH}
\item \label{P:cent} It is forced by $\bar T$ (i.e., by $P_0$) that for each $\xi \leq \kappa$, $P_\xi : P_0$ is $\kappa^+$-linked.
\end{enumerate}
\end{lem}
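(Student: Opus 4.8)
\textbf{Proof proposal for Lemma \ref{iteration:prop}.}

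The plan is to prove all five clauses simultaneously by induction on $\xi\leq\kappa$, since they feed into each other: clause \eqref{it:strat} for $P_\xi$ depends on \eqref{P:strat:ext} for all $\nu<\xi$ together with the iteration theorem \ref{thm:it:strat}, while \eqref{P:strat:ext} at stage $\xi$ depends on \eqref{it:strat} for $P_\xi$ and on the relevant extension lemma for whichever of the four types of forcing occurs at stage $\xi$. At limit stages, \eqref{it:strat} follows directly from theorem \ref{thm:it:strat}: the hypotheses are met because by induction each $(P_\nu,P_{\nu+1})$ is a stratified extension on $[\lambda_{\nu+1},\kappa]$ and the sequence $\bar\lambda$ is non-decreasing, the diagonal support limits are taken by construction, and the last clause of the hypothesis of \ref{thm:it:strat} holds because by \eqref{def:lambda:xi} for each regular $\lambda<\kappa$ there is $\iota<\lambda^+$ with $\lambda<\lambda_\iota$ (this is the remark following \ref{thm:it:strat}). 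Clause \eqref{P:cont:small} and \eqref{P:coll} are also immediate at limits by diagonal support and the regularity of $\kappa$ (a $\bar\lambda$-diagonal support limit of length $<\kappa$ cannot collapse $\kappa$ and the continuum stays bounded by $(\lambda_\xi)^+$); and \eqref{P:cent} at limits follows because a diagonal support limit of $\kappa^+$-centered forcings of length $\leq\kappa$ is again $\kappa^+$-centered, working in $W$ where $\GCH$ holds.

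For the successor step, I would go through the four cases $k=0,1,2,3$ of the construction in section \ref{def:it:succ:stage}. For $k=0$ (the L\'evy collapse $\Coll(\omega,\lambda_{\xi+1})$) and $k=1$ (the product with $(\Add(\kappa))^L$), $\dot Q_\xi$ is centered and closed enough that by example \ref{strat:ex:*} (or more simply example \ref{exm:centered}) $\dot Q_\xi$ is stratified in $L[G_\xi]$ on the appropriate interval, hence by lemma \ref{stratified:comp:implies:ext} (composition) resp. lemma \ref{products:ext} (products) $(P_\xi,P_{\xi+1})$ is a stratified extension; clauses \eqref{P:cont:small}, \eqref{P:coll}, \eqref{P:cent} are then routine bookkeeping about the collapse and the $\GCH$. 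For $k=3$ with $\zeta=0$, $P_{\xi+1}$ is the type-1 amalgamation $\am(P_{\bar\iota(\rho)},P_\xi,f,\lambda_\xi)$, and the key input is theorem \ref{thm:am:s:ext}: to apply it I must check the hypotheses of section \ref{am}, namely that $(P_{\bar\iota(\rho)},P_\xi)$ is a stratified extension on $[\lambda_{\bar\iota(\rho)},\kappa)$ (induction hypothesis), that $\Vdash\card{\dot B_0}\leq\lambda_\xi$ and that the pair $\dot r^0_\rho,\dot r^1_\rho$ is $\lambda_\xi$-reduced over $P_{\bar\iota(\rho)}$ (guaranteed by the book-keeping, using fact \ref{reduced:equ}), and that a $P_{\bar\iota(\rho)}$-stable meet operator on $P_\xi$ exists (this is exactly what the chain of lemmas in sections \ref{sec:stm} and \ref{am:remote:stm} provides, by induction on the stage). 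Then theorem \ref{thm:am:s:ext} gives that $(P_\xi,\am(\dots))$ is a stratified extension on $[(\lambda_\xi)^+,\kappa)=[\lambda_{\xi+1},\kappa)$, matching \eqref{def:lambda:xi}. For $k=3$ with $\zeta>0$ one uses $\simpleram$ and the corresponding stratified-extension lemma for type-2 amalgamation; here $\lambda_{\xi+1}=\lambda_\xi$ and the extension is on the same interval.

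The main obstacle is the case $k=2$ (the Jensen coding stage), because $\dot Q_\xi = P(A_\xi)^{L[A_\xi]}$ is a forcing defined in the \emph{inner} model $L[A_\xi]=L[\bar B^-][G^o_\xi]$, a proper submodel of the true universe $L[G_\xi]$, and as remarked in the text $\dot Q_\xi^{G_\xi}$ is not obviously stratified in $L[G_\xi]$ on any interval. The resolution is to use the device of the sets $G^o_\xi$: one shows that $P_\xi$ is (equivalent to) $\bar T \ast \dot P^*_\xi$ with $\dot P^*_\xi$ a $P^*_\xi$-name in $W$, so that passing to the coding stage amounts to a composition $\bigl(L[\bar B^-][G^o_\xi]\bigr)\text{-forcing }\ast\text{ }P(A_\xi)$, and crucially $P(A_\xi)$ is quasi-closed (corollary \ref{cor_qc}) and stratified (section \ref{sec_coding_strat}) \emph{as computed in $L[A_\xi]$}, where the requisite $\kappa^{++}$-Suslin trees are still Suslin. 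One then verifies that $(P_\xi,P_{\xi+1})$ is a stratified extension by checking the axioms of \ref{def:s:ext} directly, exploiting that the $\bar T$-part is $\kappa^{++}$-distributive (so adds no new subsets of $\kappa^+$ and in particular the coding structures and the class $\D$ computed in $L[A_\xi]$ remain correct), and using that $A_\xi$ depends only on the coding-relevant generics, not on the full $\bar B$; the strong projection $\pi\colon P_{\xi+1}\to P_\xi$ is definable enough for lemma \ref{lem:s:succ} to apply. Clauses \eqref{P:cont:small} and \eqref{it:CH} at this stage use that $P(A_\xi)$ codes $L[A_\xi]$ into a subset of $\omega_1$ and hence collapses $\lambda_\xi$ while preserving $\CH$, and \eqref{P:cent} uses that the standard building blocks of Jensen coding are $\kappa^+$-centered in $W$ (the $R^s$ are essentially almost-disjoint coding forcings with small conditions).
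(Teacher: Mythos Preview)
Your inductive framework and the treatment of the cases $k=0,1,3$ are correct and essentially match the paper's proof. The gap is in the coding stage $k=2$.

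The difficulty there is not that the coding structures or the class $\D$ might be computed differently in $L[A_\xi]$ versus $L[G_\xi]$, and it is not resolved by the $\kappa^{++}$-distributivity of $\bar T$. The real issue is the \emph{definability} clause in quasi-closed extension, namely (\ref{ext:glb}). Given a $(\lambda,\bar\lambda,x)$-adequate sequence $(p^\nu)_{\nu<\rho}$ in $P_{\xi+1}$ with canonical witness $\bar w$, you must show that $q\res\xi$ forces the sequence $(p^\nu(\xi))_{\nu<\rho}$ of conditions in $\dot Q_\xi=P(A_\xi)$ to be $(\bar\lambda,x)$-adequate \emph{in $L[A_\xi]$}, since that is the only model in which $P(A_\xi)$ is known to be quasi-closed. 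The witness $\bar w$ is $\Pi_1$ in $L$ and hence relativizes to a $\Pi_1^{A_\xi}$ definition, so that part is fine; but $p^\nu(\xi)$ is only a $P_\xi$-\emph{name}, and there is no reason the map $\bar w\res\nu+1 \mapsto (p^\nu(\xi))^{G_\xi}$ should be $\Sigma_1^{A_\xi}$: evaluating an arbitrary $P_\xi$-name may depend on all of $G_\xi$, not just on $G^o_\xi\subseteq A_\xi$.

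The paper's fix is to \emph{strengthen} $\D_{\xi+1}$ beyond the naive composition formula: in addition to $q\res\xi\in\D_\xi(\lambda,p\res\xi,x)$ and $q\res\xi\forces q(\xi)\in\dot\D(\lambda,p(\xi),x)^{L[A_\xi]}$, one requires that the names $p(\xi)_{<\kappa^+}$ and $p(\xi)^*_{\kappa^+}$ be $\kappa$-chromatic (definition~\ref{chromatic}). This is dense since $P_\xi$ is stratified at $\kappa$ by induction. Along an adequate sequence each $p^\nu(\xi)$ is then $\kappa$-chromatic, so its interpretation is determined by a $\kappa$-spectrum, and such spectra live in $L[G^o_\xi]\subseteq L[A_\xi]$; hence the interpretation map is $\Sigma_1^{A_\xi}(\{x\}\cup\bar\lambda)$ (with $\kappa^{+++}$ among the parameters to bound the existential quantifier by $L_{\kappa^{+++}}[A_\xi]$). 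Now $\bar w$ is a canonical witness for $((p^\nu(\xi))^{G_\xi})_\nu$ inside $L[A_\xi]$ and (\ref{ext:glb}) follows. Your proposal does not introduce this chromaticity device, so as written it does not close the argument for (\ref{ext:glb}) at coding stages.

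A minor remark on \eqref{P:cent}: the paper does not argue by preservation of centeredness under diagonal limits (which would itself need justification), but instead observes that the centering relation $\C$ furnished by stratification at $\kappa$ directly witnesses $\kappa^+$-centeredness of $\dot P_\xi^{\bar B}$ in $W$.
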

Observe it follows that $P_\xi$ preserves all cardinals greater than $\lambda_\xi$, for $\xi\leq\kappa$.
We will show in \ref{it:reals:are:caught} that it also preserves that $\kappa$ is Mahlo and that $P_\kappa$ preserves $\kappa$.
\begin{proof}
When $\xi \not \in E^2$, the first item holds by induction and by %
Lemma~\ref{products:ext} (products) and Theorem~\ref{thm:am:s:ext} (amalgamation); 
the prestratification systems are the ones stemming from the constructions in Lemmas~\ref{stratified:comp:implies:ext}, \ref{products:ext} and Theorem~\ref{thm:am:s:ext} of course.

It remains to show $(P_\xi, P_{\xi+1})$ is a a stratified extension\index{stratified extension} on $[\lambda_{\xi+1},\kappa]$ when $\xi$ is a coding stage.
We let $q \in \D_{\xi+1}(\lambda, p, x)$
if and only if $q\res\xi \in  \D_\xi(\lambda, \pi_\xi(p), x)$,
$q \res\xi \forces q(\xi) \in \dot \D_\xi(\lambda, p(\xi), x)^{L[A_\xi]}$---where $\dot \D_\xi$ witnesses quasi-closure of $P(A_\xi)$ in $L[A_\xi]$---
and both $p(\xi)_{<\kappa^+}$ and $p(\xi)^*_{\kappa^+}$ are $\kappa$-chromatic names below $\pi_\xi(p)$;
observe this makes sense since we may view $p(\xi)_{<\kappa^+}$ and $p(\xi)^*_{\kappa^+}$ as functions with domain $\kappa$, and since by induction $P_\xi$ is stratified at $\kappa$.
The proof of the density property of $\D$ goes through (the additional chromaticity requirement can be met since we only have to look at $\lambda\leq \kappa$).
Define $\Cl$, $\lequpl$, $\leqlol$ as in the proof for composition of stratified forcings \ref{stratified:composition}.

To see that this is a stratified extension, let $\bar p=(p^\nu)_{\nu<\rho}$ be a $(\lambda, \bar \lambda,x)$-adequate sequence in $P_{\xi+1}$, as witnessed by $\bar w$ and let $q$ be a greatest lower bound of $(p^\nu\res\xi)_{\nu<\rho}$.
It suffices to show that $q$ forces that $(p^\nu(\xi))_{\nu<\rho}$ is $(\lambda, \bar\lambda,x)$-adequate in $L[A_\xi]=L[\bar B^-, G^o_\xi]$.
That $\bar w$ is a strategic guide is clear by definition of $\D$ and the usual argument for composition; so we check that it is a canonical witness.
Firstly, $\bar w$ is $\qcdefSeq(\{ x\} \cup\bar\lambda)$ in $L[A_\xi]$ since it is $\qcdefSeq(\{ x\} \cup\bar\lambda)$ in $L$ (as in \ref{stratified:composition}).
Secondly, $p^\nu(\xi)^{G_\xi}$ can be obtained by applying a $\Sigma_1^{A_\xi}(\{ x\} \cup\bar\lambda)$-function to $w^\nu$ in $L[A_\xi]$:
since each $p^\nu(\xi)$ is a $\kappa$-reduced $P_\xi$-name, its interpretation can be found
inside $L[G^0_\xi]$ and hence in $L[A_\xi]$. 
In fact, the (partial) function assigning to $p^\nu(\xi)$ its interpretation is $\Sigma_1^{A_\xi}(\{ x\} \cup\bar\lambda)$, provided $\kappa^{+++}$ is among the parameters in $x$---in fact, the existential quantifier can be bounded by $L_{\kappa^{+++}}[A_\xi]$ since we only have to find a $\kappa$-spectrum witnessing the interpretation.
Thus $p^\nu(\xi)^{G_\xi}$ can be obtained by a $\Sigma^{A_\xi}_1(x)$ partial function from $p^\nu(\xi)$, which can in turn be obtained by a recursive function from $p^\nu$, which can be obtained by a ${\qcdefG(\{ x\} \cup\bar\lambda)}$ partial function from $w^\nu$.
This shows that $\bar w$ is a canonical witness.

The second item holds by Theorem~\ref{thm:it:strat}.
The third one is a corollary of the previous ones and Lemma~\ref{density reduction} and the next follows since we collapse $\lambda_\xi$ at the right stage.

Lastly, the $\kappa^+$-linkedness follows since stratification at $\kappa$ allows us to define $\Clink^{\kappa^+}$ in the usual way;
if $p, q \in P_\xi$ are such that $\pi_0(p)$ (recall $\pi_0$ is the strong projection to $\bar T$) and $\pi_0(q)$ are compatible in $\bar T$ and $\Clink^{\kappa^+}(p) \cap \Clink^{\kappa^+}(q) \neq \emptyset$ then clearly $p \cdot q \neq 0$.
Although this is not essential, note that we can now show cardinals above $\kappa$ are preserved:
This follows from the general fact about the composition of $\kappa^{++}$-distributive and $\kappa^+$-linked forcing.
\end{proof}

\begin{rem}\label{r.C.explicit}\index[notation]{C lambda@$\Cl$}
Note that we can give $\Cl$ the following explicit form.
The definition given earlier in the context of our iteration theorem is equivalent to the present one modulo a recursive translation.

Let $\lambda \leq\kappa$ be regular and let $\sigma <\lambda$. 
First, define $D^\sigma_{\lambda} \subseteq P_\kappa$, by induction on the length of a condition. 
For the successor step, say $p \in P_{\nu+1}$. We let $p \in D^\sigma_{\lambda}$ if and only if $\pi_\nu(p) \in D^\sigma_{\lambda}$ and the following hold:
\begin{enumerate}
\item in case $\nu \in E^0$ (i.e., $\dot Q_\nu$ is $\Coll(\omega,\lambda_\nu)$), we require that 
$\pi_\nu(p) \forces \ran(p(\nu))\subseteq \sigma$,
\item in case $\nu \in E^2$ (i.e., $\dot Q_\nu$ is Jensen coding of $L[A_\nu]$), we require that 
$\pi_\nu(p) \forces \sup\dom(p(\nu)_{<\lambda})\leq\sigma$ and $\rho(p(\nu)^*_\lambda)\leq\sigma$, if $\lambda\in\Inacc$.
\item in case $\nu \in E^3$ (i.e., $P_{\nu+1}$ is an amalgamation), we require that for all $i \in \Int\setminus\{0\}$ we have $p(i)^P \in D^\sigma_{\lambda}$ (it would be redundant to require this also for $i=0$).
\end{enumerate}
For $p \in P_\nu$ where $\nu \leq\kappa$ is a limit ordinal, let $p \in D^\sigma_{\lambda}$ if and only if for all $\nu'<\nu$, $\pi_{\nu'}(p)\in D^\sigma_{\lambda}$.
Finally, define $p \in D^\Sigma_\lambda$ if and only if there is $\sigma<\lambda$ such that $p \in D^\sigma_\lambda$.
Also, for any $p \in D^\Sigma_\alpha$, let $\sigma^2_\lambda(p)$ be the least $\sigma<\alpha$ such that $p \in  D^\sigma_{\lambda}$ and $\supp^\lambda(p)\cap\lambda\subseteq \sigma$.

The sets $\Cl$ can now be defined by a similar induction:
they are binary relations 
\[ \Cl \subseteq P_\kappa\times \{\text{ sequences of length }\leq \kappa \} . \]
Concentrating on the successor step, say we've already defined
\[  \Cl \cap ( P_\nu \times\{\text{ sequences of length }\leq \nu \}  ).\]
For $p \in P_{\nu+1}$, let $H \in \Cl(p)$ if and only if $H=(H(\xi))_{\xi<\nu+1}$ is a sequence of length $\nu+1$, $ H\res\nu \in \Cl(\pi_\nu(p))$ and either 
\begin{enumerate}
\item $p \leqlo^\lambda \pi_\nu(p)$, i.e., $p(\nu)$ is trivial below $\lambda$, and 
\item $H(\nu) \in \Hhier(\lambda)$ is arbitrary, 
\end{enumerate}
or else, the following hold:
\begin{enumerate}
\item $p \in D^\Sigma_\lambda$; we write $\sigma$ for $\sigma^2_\lambda(p)$ in the following,
\item in case $\nu \in E^0$, we require that $p(\nu)$ (a collapsing condition) is $\beta$-chromatic below $\pi_\nu(p)$, for some $\beta <\lambda$, with spectrum $H(\nu)$,  
\item in case $\nu \in E^2$, we require that $H(\nu)=(\sigma, H, H_B, H_\rho)$, where 
 $H$ is a $\lVert \sigma \rVert$-spectrum for $p(\nu)\res \lVert \sigma \rVert^+$ below $\pi_\nu(p)$ (note that we can identify $p\res \lVert \sigma \rVert^+$ in some convenient way with a function with domain $\lVert \sigma \rVert$),
and for inaccessible $\lambda$, 
$H_\rho$ is a $\lVert \sigma \rVert$-spectrum for 
 $\rho(p(\nu)^*_\lambda$ below $\pi_\nu(p)$ and $H_2$ is a $\lVert \sigma \rVert$-spectrum for 
 $B^{\rho(p(\nu)^*_\lambda}_{p(\nu)}$ below $\pi_\nu(p)$;
if $\lambda$ is not inaccessible, we can ask $\rho= h_1=\emptyset$;
\item in case $\nu \in E^3$, we require that $H(\nu)=(\bar H_i^P, \bar H^0_i, \bar H^1_i)_{i\in\Int\setminus\{0\}}$ and for all $i \in \Int\setminus\{0\}$, $ \bar H^P_i \in \Cl(p(i)^P)$ and for $j \in \{0,1\}$, $p(i)^j$ is $\lVert \sigma \rVert$-chromatic with spectrum $\bar H^j_i$ below $\pi_\iota(p(0)^P)$---where $\iota$ is chosen so that $P_\iota$ is the base of the amalgamation $P_{\nu+1}$ (see p.~\pageref{base} for the definition of \emph{base}).
\end{enumerate}
\end{rem}

\section{A Word about Bookkeeping}\label{bookkeeping}

We give a recipe for cooking up a definition of 
\index[notation]{r bar, (iota(xi), r dot 0 xi, r dot 1 xi)@$\bar r$, $(\bar\iota(\xi),\dot r^0_\xi, \dot r^1_\xi)_{\xi<\kappa}$ {\tiny (bookkeeping)}}
\index[notation]{ iota(xi), r dot 0 xi, r dot 1 xi@$(\bar\iota(\xi),\dot r^0_\xi, \dot r^1_\xi)_{\xi<\kappa}$ {\tiny (bookkeeping)}}
$\bar r=(\bar\iota(\rho),\dot r^0_\rho,\dot r^1_\rho)_{\rho<\kappa}$. %
The definition is given by induction ``on blocks''.
Assume $\bar r \res \xi$ has been defined.
We shall now define $\bar r$ and $\bar \iota$ on $[\xi,\lambda_\xi^+)$---the ``next block''.

Assume as induction hypothesis that $\xi \in E^0$ is a limit ordinal or $0$, so the last forcing of $P_{\xi+1}$ collapses $\lambda_{\xi+1}$ to $\omega$
and by definition $\lambda_{\xi+1}=\lambda_\xi^+$.
For the induction start, assume $\xi=\iota=0$, and for $\xi > 0$, consider for the moment some fixed $\iota <\xi$. 

Enumerate all reals in $L^{P_\xi}$ which are random over $L^{P_\iota}$ in order type $\beta \leq \lambda^+_\xi$ (it may be there are no such reals, in which case $\beta=0$). 
This is possible as $\lambda^+_\xi \geq 2^\omega$ in $L[G_\xi]$.
In other words, find names $P_\xi$-names $(\dot x^\iota_\nu)_{\nu<\beta}$ such that 
\[ P_\xi \forces \omega^\omega \setminus \dot N = \{ \dot x^\iota_\nu \}_{\nu<\beta},\]
where $\dot N$ is a name for the union of the Borel null sets with code in $L^{P_\iota}$.

For each $\nu,\nu' < \beta$, apply Lemma~\ref{reduce:a:pair} below to
obtain a set $Y=Y(\nu,\nu',\iota)$ of size $\lambda^+_\xi$ consisting of pairs which are $\lambda^+_\xi$-reduced over $P_\xi$.
If there are no reals in $L^{P_\xi}$ which are random over $L^{P_\iota}$ (i.e., if $\beta = 0$) let $Y$ be any set of pairs of random reals in $L^{P_{\xi+1}}$ which are $\lambda^+_\xi$-reduced over $P_\xi$. 
Such a set exists by the proof of Lemma~\ref{reduce:a:pair} below.

Now define $\bar r \res \lambda^+_\xi$ and $\bar\iota\res\lambda^+_\xi$ (using a bijection of $\lambda^+_\xi$ with $\xi \times (\lambda^+_\xi)^3$) in such a way that all pairs obtained in this way are listed, i.e., for each $\iota < \xi$, each pair  and $\nu,\nu' < \lambda^+_\xi$ and each $y \in Y(\nu,\nu',\iota)$ there is $\rho \in [\xi,\lambda^+_\xi)$ such that $\bar\iota(\rho)=\iota$ and $(\dot r^0_{\rho},\dot r^1_{\rho})=y$. 

Note that it will follow from Lemma~\ref{it:reals:are:caught} below that we ``catch our tail'' and $\bar r$ enumerates all the pairs of random reals of the final model $L[G_\kappa]$ (see Lemma~\ref{bookkeeping:catches:all}).
\begin{lem}\label{reduce:a:pair}\index{reduced pair}\index[notation]{lambda-reduced@$\lambda$-reduced pair}
Let $\iota<\xi$, where $\xi \in E^0$and say  $1_{P_\xi}$ forces $\dot x^0, \dot x^1$ are $P_\xi$-names random over $L^{P_\iota}$. 
Then there is a set $Y=\{ (\dot y^0_\nu,\dot y^1_\nu) \}_{\nu<\lambda^+_\xi}$ of $P_{\xi+1}$-names such that 
\[ 1\forces (\dot x^0,\dot x^1)\in \{ (\dot y^0_\nu,\dot y^1_\nu) \}_{\nu<\lambda^+_\xi} \]
 and each pair in $Y$ is $\lambda^+_\xi$-reduced over $P_\xi$.

\end{lem}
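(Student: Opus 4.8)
\textbf{Plan of proof for Lemma \ref{reduce:a:pair}.}

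The statement asks us, given a single pair of $P_\xi$-names $\dot x^0,\dot x^1$ forced to be random over $L^{P_\iota}$, to produce a family $Y$ of $\lambda_{\xi+1}$-many pairs, each $\lambda_{\xi+1}$-reduced over $P_\xi$, such that $1_{P_\xi}$ forces the given pair to be among them. The plan is to exploit the fact (item \eqref{P:coll} of lemma \ref{iteration:prop}, together with \eqref{P:cont:small}) that at stage $\xi \in E^0$ we have $2^\omega \leq (\lambda_\xi)^+ = \lambda_{\xi+1}$ in $L[G_\xi]$, so $P_\xi$ has a dense set of size $\lambda_{\xi+1}$ and there are only $\lambda_{\xi+1}$-many antichains to worry about; moreover $P_{\xi+1}$ collapses $\lambda_{\xi+1}$ to $\omega$. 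The key observation is the reformulation of ``$\lambda$-reduced'' given in fact \ref{reduced:equ}, clause \eqref{reduced:alt}: a pair $\dot r_0,\dot r_1$ of names for reals random over $V^Q$ is $\lambda$-reduced over $Q = P_\xi$ iff whenever $p \leqlo^{\lambda} q$ for some $q \in Q$ and $b_0,b_1$ are $Q$-names for non-null Borel sets (below a common $w \leq \pi(p)$), there is $p' \leq p$ forcing $\dot r_0 \in b_0$ and $\dot r_1 \in b_1$. So the task reduces to arranging, for each relevant threat $(p, w, b_0, b_1)$, that some condition in the ambient forcing meets it.

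First I would fix a dense subset $P_\xi^* \subseteq P_\xi$ of size $\lambda_{\xi+1}$ and enumerate in order type $\lambda_{\xi+1}$ all relevant quadruples $(p, b_0, b_1, \alpha)$ where $p \in P_\xi^*$, $b_0, b_1$ range over (a dense set of) $P_\xi$-names for Borel codes, and $\alpha < \lambda_{\xi+1}$ is a ``direct-extension level'' (recording which $\leqlo^{\lambda_{\xi+1}}$-extension of some $q \in P_\xi$ we are testing $p$ against — this is needed because the definition of reduced quantifies over all $p \leqlo^{\lambda_{\xi+1}} q$, not just $p$ itself). Then, by a straightforward bookkeeping of length $\lambda_{\xi+1}$, I would build, for each index $\nu < \lambda_{\xi+1}$, a pair $(\dot y^0_\nu, \dot y^1_\nu)$ of $P_{\xi+1}$-names for reals random over $L^{P_\iota}$: start from $(\dot x^0, \dot x^1)$ (so that the first pair, say, equals the given one — this secures the clause $1 \forces (\dot x^0,\dot x^1) \in Y$), and at stage $\nu$ use the $\nu$-th quadruple to refine the names so that the required reduction instance is met, i.e. force membership of the two random reals in the prescribed non-null Borel sets below the prescribed condition. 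Since randomness is preserved under this kind of refinement (each step only passes to a non-null Borel piece), and since there are only $\lambda_{\xi+1}$-many instances to handle in total for all $\nu$ simultaneously (one can interleave the bookkeeping so that every quadruple is addressed for cofinally many, hence all, $\nu$), the resulting family $Y$ has every pair $\lambda_{\xi+1}$-reduced over $P_\xi$, by fact \ref{reduced:equ}.

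The main obstacle I anticipate is the handling of the direct-extension quantifier in the definition of $\lambda$-reduced: one must check that the property ``$\pi_j(p \cdot b) = \pi(p) \cdot \pi(b)$ for all $p \leqlo^{\lambda_{\xi+1}} q$'' is genuinely \emph{open} in the right sense, so that deciding it at the single condition level propagates to all $\leqlo^{\lambda_{\xi+1}}$-extensions; here I would lean on the stratification of $P_\xi$ at $\kappa$ (so $\lambda_{\xi+1} < \kappa$ is in the stratified interval) and on lemmas \ref{density reduction} and \ref{Q:in:D}, much as in the proof of lemma \ref{lem:shrink}, where an entirely analogous ``freezing of boolean values along direct extensions'' is carried out by building an adequate sequence of length $\lambda_0$. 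In fact the cleanest route may be to prove the lemma as a direct corollary of (a relativized version of) lemma \ref{lem:shrink}: thin out, inside the construction of each $\dot y^j_\nu$, to the set $\Dam$-analogue where the relevant projections are stable, and then the reducedness is automatic. A secondary, purely bookkeeping, point to get right is that $2^\omega = \lambda_{\xi+1}$ at stage $\xi$ guarantees exactly $\lambda_{\xi+1}$-many Borel names up to equivalence, so the enumeration has the correct length and the whole construction stays within $L[G_{\xi+1}]$; this uses \eqref{it:CH} (GCH is forced at stage $\xi+1$) to also control the number of names appearing later, which matters only insofar as we want $Y$ to be a genuine set of size $\lambda_{\xi+1}$ rather than a proper class of candidates.
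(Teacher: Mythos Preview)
Your plan has a genuine gap, and the suggested toolkit points in the wrong direction.

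First, you write that ``the first pair, say, equals the given one --- this secures the clause $1 \forces (\dot x^0,\dot x^1) \in Y$''. But if $(\dot y^0_0,\dot y^1_0)$ literally \emph{is} $(\dot x^0,\dot x^1)$, there is no reason it should be $\lambda_{\xi+1}$-reduced; that is precisely what the lemma is trying to repair. You cannot then ``refine'' this pair into a reduced one while keeping it equal to $(\dot x^0,\dot x^1)$, because refining a name by deciding more of it below more conditions makes it \emph{more} determined, which works against the freedom that reducedness (in the form of fact~\ref{reduced:equ}\eqref{reduced:alt}) demands: for \emph{every} $p \leqlo^{\lambda_{\xi+1}} q \in P_\xi$ and every pair of non-null Borel sets one must be able to extend $p$ so as to push both reals into those sets.

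Second, and relatedly, invoking $\Dam$ and lemma~\ref{lem:shrink} is backwards. Those constructions \emph{freeze} the projections $\pi_j$ along $\leqlo^{\lambda}$-extensions; reducedness asks that direct extensions decide essentially nothing about the pair. Stabilising boolean values cannot manufacture the required genericity.

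The paper's proof uses an idea you do not mention: the \Levy{} collapse $\dot Q_\xi=\Coll(\omega,\beta)$ at stage $\xi$ (where $\beta=\lambda_{\xi+1}$). One fixes a maximal antichain $\{\dot q_\zeta\}_{\zeta<\beta}$ in $\dot Q_\xi$ and a surjection $\zeta\mapsto(\dot b_0(\zeta),\dot b_1(\zeta))$ onto pairs of positive Borel sets. For each $\nu$, the name $\dot y^j_\nu$ is defined piecewise along this antichain: below $(1_{P_\xi},\dot q_\nu)$ it equals $\dot x^j$, and below every other $(1_{P_\xi},\dot q_\zeta)$ it equals a fresh random $R^j_\zeta$ forced into $\dot b_j(\zeta)$. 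No single $\dot y^j_\nu$ equals $\dot x^j$ as a name, yet maximality of the antichain forces $(\dot x^0,\dot x^1)$ to coincide with some $(\dot y^0_\nu,\dot y^1_\nu)$ in the extension. Reducedness then falls out immediately: if $p \leqlo^{\beta} q$ with $q\in P_\xi$, then $p(\xi)\dleqlo^{\beta} 1$, so $p$ has trivial collapse coordinate and is compatible with every $(1_{P_\xi},\dot q_\zeta)$; picking $\zeta\neq\nu$ with $\dot b_j(\zeta)\subseteq \dot b_j$ gives the required $p'$.
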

\begin{proof}
Find a maximal antichain $\{ q_\zeta\}_{\zeta < \lambda^+_\xi}$in $Q_\xi = \Coll(\omega,\lambda^+_\xi)$. 
Note that $\{ (1_{P_\xi}, q_\zeta)\}_{\zeta < \lambda^+_\xi}$ is maximal antichain in $P_{\xi+1}$.
Fix a map
\[ b\colon \zeta \mapsto (\dot b_0(\zeta),\dot b_1(\zeta))\]
such that $\forces_{P_\xi} b\colon \lambda^+_\xi \rightarrow (\Borelplus)^2$ is onto, where $\Borelplus$ denotes the set of Borel sets with positive measure coded in $L[G_\xi]$. 
For each $\zeta < \lambda^+_\xi$ and $j\in \{0,1\}$ pick $R^0_\zeta,R^1_\zeta$ such that $(1_{P_\xi}, \dot q_\zeta)$ forces $R^j_\zeta$ is random over $L^{P_\xi}$ and $R^j_\zeta \in \dot b_j(\zeta)$ for both $j\in \{0,1\}$.
This is possible since $\Coll(\omega,\lambda^+_\xi)$ collapses the continuum of $L[G_\xi]$.
Fix $\nu < \theta$ for the moment, and define $(\dot y^0_\nu,\dot y^1_\nu)$ as follows:
for each $j\in \{0,1\}$, pick $\dot y^j_\nu$ such that $(1_{P_\xi}, q_\nu) \forces \dot y^j_\nu=\dot x^j$ and for each $\zeta\in \lambda^+_\xi\setminus\{ \nu\}$ we have $(1_{P_\xi}, q_\zeta)\forces \dot y^j_\nu=\dot R^j_\zeta$.

As $\{ (1_{P_\xi}, q_\zeta)\}_{\zeta <\lambda^+_\xi}$ is maximal, $1_{P_\xi}$ forces $\dot r_j$ is random over $L^{P_\iota}$.
For each $\nu<\theta$, the pair $\dot y^0_\nu,\dot y^1_\nu$ is $\lambda^+_\xi$-reduced over $P_\xi$ (see Lemma~\ref{reduced:equ} Item~\ref{reduced:alt} or equivalently, Definition~\ref{d:reduced:pair:alt}): 
Let $p \leqlo^{\lambda^+_\xi} q \in P_\xi$, let $\dot b_0$, $\dot b_1$ be $P_\xi$-names and fix $w \leq \pi_\xi(p)$ such that $w\forces_\iota \dot b_0$ and $\dot b_1$ are codes for positive Borel sets.
Find $w' \in P_\xi$ and $\zeta<\theta$, such that $w'\leq w$, $\zeta\neq\nu$ and 
\begin{equation}\label{r:fix:b:j}
\forall j\in \{0,1\}\; w' \forces \dot b_j(\zeta) \subseteq \dot b_j.
\end{equation}
We can ask $\zeta \neq \nu$ because we are content with $\subseteq$ instead of $=$ in (\ref{r:fix:b:j}).
As $w'\forces p(\xi)\dot \leqlo^{\lambda^+_\xi}_\xi 1$, $w'\cdot p$ is compatible with $(1_{P_\xi}, q_\zeta)$.
If $p' \leq w'\cdot p$ and $p' \leq (1_{P_\xi}, q_\zeta)$, we have $p'\forces \dot y^j_\nu \in \dot b_j(\zeta)\subseteq\dot b_j$.
Clearly, the same proof shows that $\dot y^0_\nu,\dot y^1_\nu$ is $\lambda^+_\xi$-reduced over $P_\iota$: Any code for a positive Borel set in $L[G_\iota]$ is remains one in $L[G_\xi]$.

Lastly, as $\{ (1_{P_\xi}, q_\zeta)\}_{\zeta < \theta}$ is maximal and $(1_{P_\xi}, q_\nu)\forces \forall j \in \{0,1\}\; \dot x^j=\dot y^j_\nu$,
\[ 1_{P_{\xi+1}}\forces_{\xi+1} (\dot x^0,\dot x^1)\in \{ (\dot y^0_\nu,\dot y^1_\nu) \}_{\nu<\lambda}. \]
\end{proof}

We now define $\tilde\Gamma^0_\xi$\index[notation]{Gamma 0 xi@$\tilde\Gamma^0_\xi$}, the set of $P_\xi$-names whose interpretations will be in $\Gamma^0$ (i.e., in the $\Delta^1_3$ set without the Baire property, to be defined in the next section) at stage $\xi<\kappa$ of the iteration.
Let $\tilde\Gamma^0_\xi$ be the smallest superset of $\{ \dot c_\eta \setdef E^0(\eta)<\xi \text{ and $\eta$ is limit or }\eta=0\}$ (for limit $\eta$ of course $E^0(\eta)=\eta$) closed under all of the functions $F=\Phi^\zeta_\rho,(\Phi^\zeta_\rho)^{-1}$ such that $\dom F \subseteq P_\xi$, i.e., closed under functions in
\begin{equation*}
\{ \Phi^\zeta_\rho,(\Phi^\zeta_\rho)^{-1} \setdef E^3(\alpha^\zeta_\rho) \leq \xi \}.
\end{equation*}
(Recall that $\dot c_\eta$ is (a name for) a Cohen real defined at stage $E^0(\eta)$ and $\Phi^\zeta_\rho$ is an automorphism defined at stage $E^3(\alpha^\zeta_\rho)$; see Section~\ref{def:it:succ:stage}, Cases $k=0$ and $k=3$.)
Let 
\[\dot \Gamma^0_\xi = \{(1_{P_\xi},\dot c)\setdef \dot c \in \tilde\Gamma^0_\xi \}, \]
that is, $\dot \Gamma^0_\xi$ is the canonical choice for a name whose interpretation consists of the interpretations of the elements of $\tilde\Gamma^0_\xi$.

When defining $\bar s$,  we need to make sure that for every stage $\xi<\kappa$, all $P_\xi$-names for reals $\dot s$ 
such that
for any $\dot r\in \tilde\Gamma^0_\xi$, $\forces_{P_\xi} \dot r \neq \dot s$
are listed (in the course of the iteration) by $\bar s$.\index[notation]{s bar, (s xi)@$\bar s$, $(\dot  s_\xi)_{\xi<\kappa}$  (bookkeeping)}\index[notation]{ s xi@$(\dot  s_\xi)_{\xi<\kappa}$  (bookkeeping)}
We can easily make sure this is the case using arguments as above. 
As $P_\xi$ forces $\card{\omega^\omega}<\kappa$ (in fact $\leq \kappa$ would suffice), we can find $\dot f_\xi$ such that
\[ \forces_\xi \dot f_\xi\colon\kappa\rightarrow \omega^\omega\setminus \dot \Gamma^0_\xi\text{ is onto.} \]
We may assume (by induction hypothesis) we have such $\dot f_\nu$ for $\nu<\xi$. 
Pick $\dot s_\xi$ such that for $\xi=\langle\eta,\zeta\rangle$ (G\"odel pairing), $\forces_\xi \dot s_\xi=\dot f_{\eta}(\zeta)$.

Later (see Lemma~\ref{disjoint}), we show that $\bar s$ lists exactly the reals of the final model $L[G_\kappa]$ which are not in $\Gamma^0$ (which we are about to define).
This concludes the definition of $(P_\xi)_{\xi\leq\kappa}$, $\bar c$, $(C_\xi)_{\xi<\kappa}$, $\Phi^\zeta_\rho$ for ${\zeta\leq\kappa}$ and ${\rho<\kappa}$, $\bar r$, and $\bar s$, as well as that of $\tilde\Gamma^0_\xi$ and $\dot \Gamma^0_\xi$.

\section{Cohen Reals, Coding Areas and the Irregular Set}\label{sec:reals:areas}

We now define a set without the Baire property which, as we shall see in Section~\ref{sec:preserving:coding}, is $\Delta^1_3$.
\begin{dfn}[The irregular sets $\Gamma^0$ and $\Gamma^1$]\label{d.gamma}\index[notation]{Gamma 0, Gamma 1@$\Gamma^0$, $\Gamma^1$}
Let $\Gamma^0$ be the least superset of 
\[\{ c_\xi \setdef \xi<\kappa, \text{ $\xi$ limit ordinal or }\xi=0\}\]
closed under all functions $\Phi_\rho, (\Phi_\rho)^{-1}$, $\rho < \kappa$ (recall that $c_\eta$ is a Cohen real defined at stage $E^0(\eta)$ and $\Phi_\rho$ is automorphism of $\ro(P_\kappa)$; see Section~\ref{def:it:succ:stage}, Cases $k=0$, $k=3$ and Definition~\ref{d:auto}). 
Let $\dot \Gamma^0$ be a $P_\kappa$-name for this set. 
Moreover, let $\tilde \Gamma^0$\index[notation]{Gamma 0,z@$\tilde \Gamma^0$} be the least superset of 
\[\{ \dot c_\xi \setdef \xi<\kappa, \text{ $\xi$ limit ordinal or }\xi=0\}\]
closed under all functions $\Phi_\rho, (\Phi_\rho)^{-1}$, $\rho < \kappa$.
Recalling $\tilde\Gamma^0_\xi$ from the previous section (defined in the discussion of the coding device $\bar s$), note  that
\[
\bigcup_{\xi<\kappa} \tilde \Gamma^0_\xi =  \tilde\Gamma^0.
\] 
Let $\Gamma^1=\{ (\dot s_\xi)^G \setdef \xi<\kappa \}$ (coming from the bookkeeping device $\bar s$ that was discussed at the end of the previous section), with $P_\kappa$-name $\dot \Gamma^1$.
\end{dfn}
From Lemma~\ref{it:reals:are:caught} (proved in the next section) it follows $\Gamma^0$ does not have the Baire property\index{Baire property} in $L[G_\kappa]$ because for every $u\in\omega^\omega\cap L[G_\kappa]$, both $\Gamma^0$ and its complement will contain a dense set of reals which are Cohen over $L[u]$ (see Lemma~\ref{p.l.gamma} and \cite{jr:amalgamation}).

\medskip

By the next lemma, $\Gamma^0$ and $\Gamma^1$ are disjoint (that their union is $\omega^\omega$ will follow from Lemma~\ref{it:reals:are:caught} in the next section). 
Moreover, the lemma shows that the coding areas\index{coding area} $C_\nu$ behave in the same way as do the reals $c_\nu$, and this will be used in \ref{coding:survives} to show that the coding does not conflict with the automorphisms coming from amalgamation.

\medskip

We start by introducing the notion of an \emph{index sequence}.\index{index sequence}
\begin{dfn} Suppose $\dot y$ is of the following form:
\begin{equation}\label{e:index}
\dot y = (\Phi_{\xi_{m}})^{k_{m}} \circ \dots  \circ (\Phi_{\xi_1})^{k_1} (\dot x_{\xi_0})
\end{equation}
where $m \geq 0$, ${\xi_0}, \dots, \xi_{m} < \kappa$, $k_i \in \Int$ for $0<  i \leq m$ and $\dot x_{\xi_0}$ denotes either $\dot c_{\xi_0}$ or $\dot C_{\xi_0}$.
Observe that every $\dot y \in \Gamma^0$ can be written in the form above, for $\dot x_{\xi_0} = \dot c_{\xi_0}$,
where $m = 0$, we by convention we read \eqref{e:index} as $\dot y = \dot x_{\xi_0}$. 

We define the notion of an \emph{index sequence} of $\dot y$ as follows:
Write $\dot y_0$ for $\dot x_{\xi_0}$, and if $\not \forces_{P_\kappa} \dot y = \dot y_0$, for $0<i \leq m$ write
\[
\dot y_{i} = (\Phi_{\xi_i})^{k_i} \circ \dots  \circ (\Phi_{\xi_1})^{k_1} (\dot x_{\xi_0}),
\]
so that $\dot y_{m} = \dot y$.
We can trivially assume that $\xi_{i+1} \neq \xi_i$ for each $ i< m$ (otherwise contract the terms).
We can also assume $\not \forces_{P_\kappa} \dot y_{i+1}=\dot y_i$ for such $i$ (or we may simply leave out the $i$-th automorphism).  
Under these assumptions, we call ${\xi_0},  \dots, \xi_{m}, k_1, \dots, k_{m}$ an \emph{index sequence of $\dot y$}.
We say $m+1$ is the \emph{length} of this index sequence, so 
if $\forces_{P_\kappa} \dot y = \dot y_0 = \dot x_{\xi_0}$, the sequence of length $1$ consisting just of ${\xi_0}$ is an index sequence of $\dot y$. 
We also say $\xi_i$ and $k_i$ are not defined, for any $i >m$.
\end{dfn}

We now show that each $\dot y$ of the above form has only one index sequence, and is characterized uniquely by it:
\begin{lem}\label{index:sequ}
Given $\dot y$ with index sequence ${\xi_0},\xi_1, \dots, \xi_m, k_1, \dots, k_m$ (where of course $k_1, \dots, k_m$ are undefined in case $m=0$) there are $\rho_0, \dots, \rho_{m} < \kappa$ such that
\begin{enumerate}
\item the sequence $(\rho_i)_{i\leq m}$ is increasing, i.e., $\rho_0 < \dots < \rho_{m}$,
\item if $1  \leq   i \leq m$, $P_{\rho_i+1}$ is an amalgamation (of $P_{\rho_i}$) associated to $\Phi_{\xi_{i-1}}$,
\item\label{l.rho} if $0 \leq i \leq m$, $\dot y_i$ (using the notation from the previous definition) is a $P_{\rho_i+1}$-name not in $L^{P_{\rho_i}}$. Moreover, $\dot y_i$ is either unbounded over $L^{P_{\rho_i}}$ (if $\dot y_0 = \dot c_{\xi_0}$) or remote over $P_{\rho_i}$ up to height $\kappa$ (if $\dot y_0 = \dot C_{\xi_0}$).
\end{enumerate}

Moreover, for $\dot y, \dot y' \in \tilde\Gamma^0$, either $\forces_{P_\kappa} \dot y = \dot y'$ or $\forces_{P_\kappa} \dot y \neq \dot y'$.
The latter holds if and only if $\dot y$ and $\dot y'$ have different index sequences.
\end{lem}
\begin{proof}
By induction on $m$. For $m=0$, since $\dot y_0 = \dot x_{\xi_0}$, we pick $\rho_0$ least so that $\dot x_{\xi_0} \in P_{\rho_0 +1}$ (i.e., $\rho_0=E^0({\xi_0})$ if $\dot x_{\xi_0} = \dot c_{\xi_0}$; or $\rho_0=E^1({\xi_0})$ if $\dot x_{\xi_0} = \dot C_{\xi_0}$).
Then all of requirements from the lemma hold.

Now assume that $m > 0$ and by induction the above holds for $i \leq m-1$. 
Let $\rho_{m}$ be the least $\rho<\kappa$ such that $P_{\rho+1}$ is an amalgamation stage associated to $\Phi_{\xi_{m}}$, and $\dot y_{m-1}$ is a $P_{\rho}$-name (equivalently, $\dot y_{m}$ is a $P_{\rho+1}$-name).
Since by induction, $\dot y_{m-1}$ is not in $L^{P_{\rho_{m-1}}}$, $\rho_{m-1} < \rho_{m}$. 

We have that either $P_{\rho_{m}+1}= \am(P_\zeta, D, \dot r_0, \dot r_1)$ (for some $\zeta$, $D$ a dense subset of $P_{\rho_{m}}$, and some $\dot r_0$, $\dot r_1$), or $P_{\rho_{m}+1}= \simpleram(P_\zeta, P_{\rho_{m}}, \Phi)$ (for some $\Phi$ and $\zeta$).
We prove the lemma assuming the first holds; very similar arguments work for the other alternative, which we leave to the reader.  

Observe that $\dot y_{m-1}$ is not a $P_\zeta$-name, as otherwise, contrary to assumption, 
\[\forces_{P_\kappa} \dot y_{m}=\Phi^{k_m}_{\xi_m}(\dot y_{m-1})=\dot y_{m-1}.\] 
We have to consider two cases: If $\dot x_{\xi_0} = \dot c_{\xi_0}$, we can assume by induction that $\dot y_{m-1}$ is unbounded over $L^{P_\zeta}$ (as the induction hypothesis implies that for any initial segment, $\dot y_{m-1}$ is either in that initial segment or unbounded over it). Thus $\dot y_{m-1}$ is also unbounded over $L^{P_\zeta * \dot B (\dot r_i)}$ for $i=0,1$.
By Lemma~\ref{newreal} applied for $P = D$, $\dot y_{m}$ is unbounded over $L^{P_{\rho_{m}}}$ and we are done.
If on the other hand, $\dot x_{\xi_0} = \dot C_{\xi_0}$, we can assume by induction that $\dot y_{m-1}$ is remote over $P_\zeta$ up to height $\kappa$ and $\kappa > \lambda_{\xi_{m}}$. So by Lemma~\ref{remote:lemma}, $\dot y_{m}$ is remote over $P_{\rho_{m}}$. 

Lastly suppose ${\xi_0}, \dots, \xi_m$,$k_1,\dots,k_m$ is an index sequence of $\dot y$ with length $m+1$ and  
\[
{\xi'_0},\dots, \xi'_n, k'_1,\dots,k'_n
\]
is an index sequence of $\dot y'$ with length $n+1$.
Assume $\not \forces_{P_\kappa} \dot y = \dot y'$; we show $\forces_{P_\kappa} \dot y \neq \dot y$.
Let $\rho_0, \dots \rho_m$ and $\rho_0', \dots \rho_n'$ be obtained as above for $\dot y$ and $\dot y'$ respectively. 

Assume for now there is an $l$ such that both $\xi_l$ and $\xi'_{l}$ are defined and $\xi_l \neq \xi'_l$ or $k_l \neq k'_l$. 
Further, suppose $l$ is maximal with that property.
Then we can also assume $\rho_{l} = \rho'_{l}$, for otherwise
\[ \forces_{P_\kappa} \dot y_{l} \neq \dot y_{l'},\]
and we can apply $(\Phi_{\xi_m})^{k_m} \circ \dots (\Phi_{\xi_{l+1}})^{k_{l+1}}$ to this to obtain
\[\forces_{P_\kappa} \dot y \neq \dot y',\]
and we are done.
As by definition, $\rho_{l} = \rho'_{l}$ implies $\xi_l = \xi'_l$, it must be the case that $k_l \neq k'_l$. 
From now on write $\xi$ for $\xi_l$ (which is the same as $\xi'_{l}$) and $\rho$  for $\rho_{l}$ (which is the same as $\rho'_{l}$).

As in the previous argument, fix $\zeta$ such that $P_{\rho+1}= \am(P_\zeta, D, \dot r_0, \dot r_1)$ (for $D$ a dense subset of $P_{\rho}$, and some $\dot r_0$, $\dot r_1$), or $P_{\rho+1}= \simpleram(P_\zeta, P_{\rho}, \Phi)$ (for the appropriate $\Phi$).

By induction hypothesis $\dot y_{l-1}$ is either remote or unbounded over $V^{P_{\rho_{l-1}}}$, but $\dot y_{l-1} \in V^{P_{\rho_{l-1}+1}}$.
The latter makes $\zeta \geq \rho_{l-1}+1$ impossible, for otherwise again $\dot y_{l}=\Phi^{k_l}_\xi(\dot y_{l} ) = \dot y_{l-1}$.
Thus, $\zeta \leq \rho_{l-1}$, and again $\dot y_{l-1}$ is either remote or unbounded over $ V^{P_\zeta}$.

By  Lemma~\ref{newreal}, $\forces_{P_\kappa}  \dot y'_{l-1} \neq (\Phi_{\xi})^{-k'_l+k_{l-1}} (\dot y_{l})$. %
Apply $(\Phi_{\xi})^{k_l}$ to see $\forces_{P_\kappa}  \dot y_{l} \neq \dot y'_{l}$.
As above, apply $(\Phi_{\xi_m})^{k_m} \circ \dots (\Phi_{\xi_{l+1}})^{k_{l+1}}$ to this to obtain
\[\forces_{P_\kappa} \dot y \neq \dot y'.\]
This finishes the proof in the case that for some $l$, both $\xi_l$ and $\xi_{l'}$ are defined but not both $\xi_l = \xi'_l$ and $k_l = k'_l$. 

If no $l$ as above exists, the index sequences for $\dot y$ and $\dot y'$ agree where both are defined, except possibly at the first coordinate. 
If the sequences differ in length, we are done by Item~\ref{l.rho} of the lemma.
If they are of the same length, since $\forces_{P_\kappa} \dot y = \dot y'$ if ${\xi_0} = {\xi'_0}$, we have ${\xi_0}\neq{\xi'_0}$ and $\forces_{P_\kappa} \dot y_0 \neq \dot y'_0$.
Apply $(\Phi_{\xi_m})^{k_m} \circ \dots (\Phi_{\xi_{0}})^{k_{0}}$ and we're done.
\end{proof}

We prove the next lemma assuming Lemma~\ref{it:reals:are:caught}, and in particular, Corollary~\ref{cor:reals:are:caught} from the next section.
\begin{lem}\label{disjoint}\index[notation]{Gamma 0, Gamma 1@$\Gamma^0$, $\Gamma^1$!are disjoint}\index{disjointness of Gamma 0, Gamma 1@disjointness of $\Gamma^0$, $\Gamma^1$}
$\forces_{P_\kappa} \dot \Gamma^0=\omega^\omega \setminus \dot \Gamma^1$.
\end{lem}

\begin{proof}
First we show $\forces_{P_\kappa}\dot \Gamma^0\cup\dot \Gamma^1=\omega^\omega$. Let $r \not \in (\dot \Gamma^0)^G$. Find $\xi<\kappa$ such that $r\in L[G_\xi]$.
As $r \not \in \{\dot c^{G_\xi} \setdef \dot c \in \tilde\Gamma^0_\xi \}$, $\bar s$ was defined to list a name for $\dot r$, so $r \in (\dot \Gamma^1)^G$.

Now let $\dot c \in \Gamma^0$ and $\dot s \in \Gamma^1$, and show $\forces_{P_\kappa}\dot s \neq \dot c$. 
Fix $\xi <\kappa$ so that $\dot s$ is a $P_\xi$-name and $\forces_{P_\xi} \dot s\not \in \tilde\Gamma^0_\xi$. 
Let $v, \rho_1, \dots, \rho_n$ be obtained as in the previous lemma from an index sequence for $\dot c$ and write $\rho=\rho_n$.
By the last lemma $\dot c$ is a $P_{\rho+1}$ name not in $L^{P_\rho}$.
If $\rho+1 \leq \xi$, we are clearly done, for then $\dot c \in \tilde\Gamma^0_\xi$.
Otherwise, if $\rho \geq \xi$, $\dot c$ is not in $L^{P_\rho}\supseteq L^{P_\xi}$, so in any case, $\forces_{P_\kappa} \dot c \neq \dot s$.
\end{proof}

Alternatively, it is possible to show $\Gamma^0\cap\Gamma^1=\emptyset$ directly, without recourse to index sequences and Lemma~\ref{index:sequ}: Show by induction on the number of applications of automorphisms that for each 
$\dot c \in \tilde\Gamma^0 \setminus \tilde\Gamma^0_\xi$ there is $\rho \geq \xi$ such that $c\in L[G_{\rho+1}]$ but $c$ is unbounded over $L[G_\rho]$. 

\section{The Final Stage of the Iteration}\label{sec:stage kappa}

The next lemma shows that $\kappa$ remains a cardinal in the final model, $\kappa$ remains Mahlo at each earlier stage, and that all reals appear in some initial segment of the construction.
We abuse notation and write conditions in $P_\theta$ as $(t,p)$ in the following.
We do this solely in order to have a convenient and more suggestive way to refer to the $\bar T$-part (which more formally should be referred to as $\pi_0(p)$).
\begin{lem}\label{it:reals:are:caught}
Let $\theta \leq \kappa$, let $(\dot \alpha_\xi)_{\xi<\kappa}$ be a sequence of $P_\theta$-names for ordinals below $\kappa$ and let $(t,p) \in P_\theta$. Then for any $\beta_0<\kappa$ there is an inaccessible $\alpha \in (\beta_0,\kappa)$ and a condition $(t',p') \leqlo^{<\kappa} (t,p)$ such that for all $\xi <\alpha$,
$(t',p') \forces \dot \alpha_\xi <\alpha$. Moreover if $\theta = \kappa$, there is a sequence of $P_\alpha$-names $(\dot \alpha'_\xi)_{\xi<\alpha}$ such that for each $\xi<\alpha$,
$(t',p') \forces \dot \alpha_\xi = \dot \alpha'_\xi$.
\end{lem}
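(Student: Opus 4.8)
\emph{The plan.} The statement is a reflection ($\Delta$-system) result, and the strategy is to locate an inaccessible $\alpha\in(\beta_0,\kappa)$ together with an elementary submodel $M$ of a large enough structure with $M\cap\kappa=\alpha$, and then to build a condition $(t',p')\le(t,p)$ which is ``generic over $M$'' in a sufficiently strong sense. Below such a $(t',p')$, every decision made inside $M$ about a name $\dot\alpha_\xi$ with $\xi\in M$ is already forced by the part of the generic lying in $M$; and if $\pi\colon M\cong\bar M$ is the transitive collapse with $j=\pi^{-1}$ (so $\crit(j)=\alpha$, $\pi(\kappa)=\alpha$ and $j(\alpha)=\kappa$), then $\pi[M\cap P_\theta]$ is, inside $\bar M$, dense in $\pi(P_\theta)$. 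Since each $\dot\alpha_\xi$ with $\xi<\alpha$ lies in $M$, elementarity gives $\bar M\models$``$\pi(\dot\alpha_\xi)$ is a $\pi(P_\theta)$-name for an ordinal below $\pi(\kappa)=\alpha$'', and absoluteness of the forcing relation between $V$ and $\bar M$ together with $\crit(j)=\alpha$ then yields $(t',p')\forces\dot\alpha_\xi<\alpha$ for all $\xi<\alpha$. When $\theta=\kappa$ the same computation identifies the value of $\dot\alpha_\xi$ below $(t',p')$ with that of the $P_\alpha$-name $\dot\alpha'_\xi\defeq j(\pi(\dot\alpha_\xi))$ read off over $P_\alpha$, giving the ``moreover''. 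When $\theta<\kappa$ the scheme is the same, $\pi$ being the identity below $\alpha$ and the portion of $P_\theta$ ``above level $\alpha$'' being $\kappa$-distributive, hence contributing nothing to the $\kappa$-sequence of names. Feeding a padded name for a real into the lemma gives that $P_\kappa$ adds no new reals over the union of the $P_\alpha$, $\alpha<\kappa$; feeding a name for a purported cofinal map $\mu\to\kappa$ gives that $\kappa$ stays regular, and a small elaboration that it stays Mahlo, at every stage --- the consequences recorded around this lemma and exploited for ``catching our tail''.

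\emph{Finding $\alpha$.} Fix a large enough cardinal $\chi$ (taking $\chi=\kappa^{+++}$, which already figures as a parameter of the stratification of $P_\kappa$, suffices), and build a continuous $\subseteq$-increasing chain $(M_i)_{i<\kappa}$ of elementary submodels of $L_\chi[A]$ --- expanded to contain $A$, $(P_\nu)_{\nu\le\kappa}$, $(\dot\alpha_\xi)_{\xi<\kappa}$, $(t,p)$ and the relevant stratification data --- each of size $<\kappa$, closed under sequences of length $<\card{M_i}$, with $M_i\cap\kappa\in\kappa$. Then $C=\{M_i\cap\kappa : i<\kappa\}$ is club in $\kappa$; as $\kappa$ is Mahlo, $C\cap\Inacc$ is stationary, and since the global $\diamondsuit$-sequence of Definition \ref{reg_defs} concentrates on the inaccessibles below $\kappa$, the set of inaccessible $\alpha\in C$ for which $\diamondsuit_\alpha$ guesses a fixed canonical code of $\langle (M_i)_{i<\alpha},(t,p)\rangle$ is still stationary. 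Choose such an $\alpha>\beta_0$ and put $M=M_\alpha$.

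\emph{The hard part.} Constructing $(t',p')$ is where quasi-closure, stratification, $\diamondsuit$ and the $\Box$-like sequence $(E_\beta)$ of Section \ref{sec:coding} all enter. One works inside $\bar M$, enumerates the dense subsets of $\pi(P_\theta)$ lying in $\bar M$, and builds a decreasing sequence of conditions meeting them; greatest lower bounds at limits exist because $\pi(P_\theta)$ is quasi-closed in $\bar M$ (Corollaries \ref{jensen_qc_main_cor} and \ref{cor_qc}, and Theorem \ref{thm:it:qc}, relativized), adequacy being maintained along the way by the canonical-witness bookkeeping of Lemma \ref{adequate}, while the density and reduction lemmas of stratification (Theorem \ref{single xdensity reduction}, Lemma \ref{density reduction}) keep supports legal and supply the $\Cl$-chromaticity needed by the tail. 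Transporting this construction back along $j$ produces a candidate lower bound $(t',p')$ in $P_\theta$ for the image sequence, and one must verify that (i) $(t',p')\le(t,p)$, (ii) $(t',p')\in P_\theta$, i.e. it has legal diagonal support, and --- the genuinely delicate point --- (iii) at the inaccessible stage $\alpha$ the condition really uses inaccessible coding, with the coding structures of Definitions \ref{reg_defs} and \ref{sing_defs} computed identically in $V$ and in $\bar M$, so that $(t',p')$ ``recognizes'' that $G\cap M$ will be $M$-generic. Here the steering ordinals hitting $\diamondsuit_\alpha$ (Definition \ref{reg_defs}) guarantee that the coding structure at $\alpha$ already ``sees'' $\diamondsuit_\alpha$, which we arranged to encode $(M_i)_{i<\alpha}$, so the needed genericity is reflected into the coding structure and the coding ordinals used at $\alpha$ concentrate on it; the use of $E_\alpha$, together with the clause of Definition \ref{conditions} that fake inaccessible coding at $\alpha$ halt only once the singularity of $\alpha$ is recognized, provides the coherence that keeps these structures stable under further extension. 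Finally, putting $\rho(p^*_\alpha)$ and the low part of $B^{\rho}_{p}$ into the centering function $\Cl$ (Section \ref{sec_coding_strat}) is precisely what allows the fusion to be carried out almost independently in the ``lower part'' (the coding below $\alpha$) and the ``upper part'' (the $\bar T$-block and the coding from $\kappa^+$ into $\kappa$, which is $\kappa$-distributive): restraints are the only channel through which the two parts interact, and making them part of $\Cl$ keeps that interaction bounded below $\alpha$. I expect (iii) --- showing the fused condition is a legitimate member of $P_\theta$ whose coding at $\alpha$ is correctly reflected --- to be the main obstacle; the rest is a lengthy but essentially routine blend of the iteration theorems of the earlier chapters with the $\Delta$-system heuristic.
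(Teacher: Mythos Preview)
Your plan has the right shape, but the ``hard part'' conceals a genuine gap. You propose to enumerate the dense subsets of $\pi(P_\theta)$ lying in $\bar M$ and build a decreasing sequence meeting them, taking greatest lower bounds via quasi-closure. This does not work: quasi-closure supplies lower bounds only for $(\lambda,x)$-adequate sequences, which must in particular be $\leqlo^\lambda$-decreasing for some $\lambda$ at least the length of the sequence. Meeting an arbitrary dense set in $\bar M$ (for instance, the set of conditions deciding $\pi(\dot\alpha_\nu)$) forces a general $\leq$-extension, not a direct one, so your sequence of length $\alpha$ is never $\leqlo^\alpha$-adequate and no lower bound is available---either in $\bar M$ or after pulling back along $j$.

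The paper does not build an $M$-generic condition by meeting dense sets; it runs a \emph{fusion}. One enumerates possible $\C$-values: at step $\xi$, if some $(s_0,r_0)\leq(t^\xi,p^\xi)$ realises $h(\{\xi\}_0)\in\C^{\kappa(\xi)}$, one passes to a \emph{side condition} $r^\xi_1$ deciding $\dot\alpha_{\{\xi\}_1}$, but the master condition $(t^{\xi+1},p^{\xi+1})$ absorbs from $r^\xi_1$ only the ``upper part''---the $\bar T$-coordinate, the $\kappa$-Cohen coordinates, and at each coding stage the data $(p(\eta)_{<\kappa},p(\eta)^*_\kappa)$ and $(p(\eta)_\kappa,p(\eta)^*_{\kappa^+})$---while staying a $\leqlo^{<\kappa}$-extension of $(t^\xi,p^\xi)$. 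Limits are then verified by hand, not by quoting quasi-closure: membership of $p^\rho(\eta)_{<\kappa}$ in $S^*_{<\kappa}$ and of $p^\rho(\eta)_\kappa$ in $S^*_\kappa$ is checked using \emph{two} chains of models ($M^\xi$ of size $<\kappa$ and $N^\xi$ of size $\kappa$, the latter for the $\kappa^+$-level) together with a club $C^*\subseteq\Sing$ explicitly written into the strings so that test models recover a witness to non-Mahloness. Finally, the specific choice $\diamondsuit_\alpha=C\cap\alpha$ (rather than ``$\diamondsuit_\alpha$ codes the model sequence'') is what makes the endgame go through: the restraints in $r(\eta)^*_\alpha$ live on $\diamondsuit_\alpha$ by the design of the coding apparatus, hence on $C$, and together with $\rho(p^\alpha(\eta)^*_\kappa)\geq\alpha$ and $\supp(r^\xi_1(\eta))\cap\kappa\subseteq\kappa(\xi{+}1)$ this is exactly what yields the compatibility $(s,r)\cdot(t^\alpha,r^\xi_1)\neq 0$ from which $(t^\alpha,\bar p^\alpha)\forces\dot\alpha_\nu<\alpha$ follows.
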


The ``moreover'' clause is of course meaningless if $\theta < \kappa$.
Before we prove the lemma, we draw a corollary.
\begin{cor}~\label{cor:reals:are:caught}\index{cardinal!Mahlo}\index{Mahlo cardinal}\index{cardinal!preservation}\index{preservation of kappa@preservation of $\kappa$}
\begin{enumerate}
\item If $\theta <\kappa$, $\kappa$ remains Mahlo in $L[G_\theta]$.
\item If $r \in L[G_\kappa]$ is a real, there is $\alpha<\kappa$ such that $r\in L[G_\alpha]$. In particular, $\kappa$ remains uncountable in $L[G_\kappa]$ (i.e., $\kappa=\omega_1$ in the final model).
\end{enumerate}
\end{cor}
\begin{proof}\renewcommand{\qedsymbol}{{\tiny  Corollary~\ref{cor:reals:are:caught}~}$\Box$}
For the first corollary, fix a real $r \in L[G_\kappa]$ and  let $\dot \alpha_n$ be a $P_\kappa$-name for $r(n)$, for each $n \in \nat$.
The lemma shows we can find $(t',p') \in \bar G_\kappa$, $\alpha<\kappa$ and a sequence of $P_\alpha$-names $\{\dot \alpha'_n \setdef n\in\nat\}$, such that for each $n \in \nat$, $(t',p')\forces \dot \alpha_n=\dot \alpha'_n$. Obviously, $r \in L[G_\alpha]$.

For the second, say $\theta < \kappa$ and fix a $P_\theta$-name $\dot C$ for a closed unbounded subset\index{closed unbounded set} of $\kappa$.
Let $\dot \alpha_\xi$ be a name for the least element of $\dot C$ above $\xi$.
By the lemma, we may find an inaccessible $\alpha \in (\lambda_\theta,\kappa)$ and $(t',p') \in G_\theta$ such that for each $\xi<\alpha$, $(t',p') \forces \dot \alpha_\xi<\alpha$.
Thus, $(t',p') \forces \check \alpha \in \dot C$. 
Now observe that as $P_\theta$ is stratified above $\lambda_\theta$, $\alpha$ is inaccessible in $L[G_\theta]$.
\end{proof}

Before we begin the proof we remind the reader of the following terminology.
Let $\eta < \theta$ a coding stage and work in $L[\bar B^-, G^o_\eta] = L[A_\eta]$, where $\dot B^-$, $G^o$, $A_\eta$ are as defined in Section~\ref{def:it:succ:stage} (see p.~\pageref{def:it:succ:stage}; what we call $\eta$ here is called $\xi$ there unfortunately) and $A_\eta$ is the set to be coded at this stage, as defined in Section~\ref{sec:full:setting}, see p.~\pageref{sec:full:setting}.
Let $H$ be any set and let $p\in P(A_\eta)$.

We say that $q \in P(A_\eta)$ is basic generic\index{basic generic} for $(H,p)$ at $\kappa^+$ if and only if
\begin{enumerate}
\item\label{grow_kappa_plus} $|q_\kappa| > H \cap \kappa^+$;

\item\label{restraints_kappa_plus} 
if $\nu \in H\cap[\kappa^+,\kappa^{++})$ then $b^{p_{\kappa^+}\res\nu} \in q^*_{\kappa^+}$;

\item \label{succ_coding_kappa_plus} if $\nu \in H\cap[\kappa^+,\kappa^{++})$ then there is $\zeta > |p_\kappa|$ such that $q_\kappa((\zeta)_1)=p_{\kappa^+}(\nu)$;

\item\label{A_coding_kappa_plus} if $\xi \in H \cap [\kappa,\kappa^+)$ there is $\nu > |p_\kappa|$ such that
$q_\kappa((\langle \xi, \nu \rangle)_0) = 1$ if $\xi\in A_\eta$ and $q_\kappa((\langle \xi, \nu \rangle)_0) = 0$ if $\xi\not\in A_\eta$;
\end{enumerate}
We say that $q \in P(A_\eta)$ is basic generic for $(H,p)$ at $\kappa$ if and only if
\begin{enumerate}
\item\label{grow_kappa}  $|q_{<\kappa}| \geq \sup (H\cap\kappa)$.

\item\label{restraints_kappa} if $\nu \in H\cap[\kappa,\kappa^{+})$ then $b^{p_{\kappa}\res\nu}\setminus \eta' \in q^*_\kappa$ for some $\eta'<\kappa$;

\item \label{mahlo_coding_kappa} if $\nu \in H\cap[\kappa,\kappa^{+})$ then
there is $\xi \in b^{p_\kappa\res\nu}\setminus\eta$ such that $\xi \geq  |p_{<\kappa}|$ and
$q_{<\kappa}(\{ \xi \}_2)=p_\kappa(\nu)$;
\end{enumerate}
We shall use these notions as a replacement for $\D$ in the following proof, since this approach simplifies the argument showing that the sequence we build is appropriately definable (or ``canonical'').
We need to do this because we build the canonical witness (as usual, a sequence of models, this time of two different types) in advance, before we build the associated sequence of conditions.

\begin{proof}[Proof of Lemma~\ref{it:reals:are:caught}]
The proof is by induction on $\theta$, so assume Lemma~\ref{it:reals:are:caught} holds for all $\theta' < \theta$.
Fix recursive (class) functions $\xi \mapsto \{ \xi \}_i$, for $i \in\{ 0,1\}$, from the class of ordinals to itself and such that whenever $\alpha \in \Card$ and $(\eta_0, \eta_1) \in \alpha\times\alpha$ then the set 
\[
\{ \xi < \alpha \setdef ( \{  \xi\}_0, \{ \xi\}_1 )= (\eta_0, \eta_1) \}
\]
has size $\alpha$.
Also, we denote by $h\colon \On \rightarrow L$ the map enumerating the constructible universe according to its canonical well-ordering.
We assume without loss of generality that $(t,p) \in \dom(\Clink^\kappa)\cap\dom(\Clink^{\kappa^+})$.

Start by inductively choosing continuous $\in$-chains $N^{\xi}$ and $M^{\xi}$ for $\xi<\kappa$ as follows. %
Set
\[
\vec{x}=\{\kappa^{+++}, (\dot\alpha)_{\xi<\kappa}, P_\theta, (t,p) \}.%
\]
Suppose $\xi < \kappa$ and if $\xi >0$, $M^\xi$ and $N^\xi$ are allready defined. 
Let $M^{\xi+1}\prec_{\Sigma_1} L$ be $\subseteq$-least such that %
$\vec{x} \in  M^{\xi+1}$, $M^{\xi+1} \cap \kappa \in \kappa$, and if $\xi >0$, $N_\xi \in M^{\xi+1}$. 
Let $N^{\xi+1}$ be $\subseteq$-least such that 
$N^{\xi+1}\prec_{\Sigma_1} L$, $N^{\xi+1} \cap\kappa^+ \in \kappa^+$, and 
$M^{\xi+1} \in N^{\xi+1}$.
For limit $\rho <\kappa$, let 
\begin{gather*}
M^\rho = \bigcup_{\xi<\rho} M^\xi\\
N^\rho = \bigcup_{\xi<\rho} N^\xi
\end{gather*}
and write $\kappa(\xi) = M^\xi \cap \On$. %
Observe that for each $\xi < \kappa$, $M^\xi$ has size less than $\kappa$, $N^\xi$ has size $\kappa$, both are closed under $h$ and $h^{-1}$, $M^\xi \subseteq N^\xi$ and each $\kappa(\xi)$ is a strong limit cardinal.

Let $\langle \kappa(\xi) \setdef \xi < \kappa\rangle$ be the strictly increasing enumeration of $C$ and find an inaccessible limit point $\alpha$ of $C$ such that $\alpha\geq \beta_0$ and
\begin{equation}\label{e.diamonduse}
\Diamond_\alpha = C\cap \alpha = \{\kappa(\xi)\setdef \xi<\alpha\}
\end{equation}
(remember we have chosen to denote by $(\Diamond_\xi)_\xi$ the canonical diamond sequence of $L$ concentrating on inaccessibles\index{Diamond Principle} below $\kappa$). 
Pick $C^* \subseteq \lim C\cap \Sing$ such that $C^*$ is club in $\alpha$ (where $\lim C$ of course denotes the set of limit point of $C$).
Now we construct a sequence of $(t^\xi, p^\xi) \in P_\theta$ by induction on $\xi\leq\alpha$, starting with $(t^0,p^0)=(t,p)$.

\medskip

\textbf{Successor step:}
At successor stages, assume we have constructed $(t^{\xi}, p^{\xi}) \in M^{\xi+1}$ and when $\xi$ is a limit, 
also assume that for each coding stage $\eta < \theta$,
$$(t^\xi, p^\xi\res\eta)\forces |p^\xi(\eta)_{<\kappa}|= \kappa(\xi).$$
Let $(t^{\xi+1},p^{\xi+1})$ be the $\leq_L$-least condition $(t', p') \in M^{ \xi+1}$ such that $(t', p') \mathbin{\leqlo^{<\kappa}} (t^{\xi}, p^{\xi})$, and meeting the following requirements: 
\begin{enumerate}
\item \label{Cstar}\label{l.first}
If $\xi$ is a limit ordinal, then for each coding stage $\eta < \theta$ we have
$$
(t', p'\res\eta)\forces p'(\eta)_{<\kappa} (\kappa(\xi))=i,
$$
where $i = 0$ if $\kappa(\xi)\in C^*$ and $i=1$ otherwise.

\item \label{kappa:work}
If there is a condition $(s_0,r_0) \leq (t^{\xi}, p^{\xi})$
with $h(\{\xi\}_0) \in \Clink^{\kappa(\xi)}(s_0,r_0)$, we demand that for some $(s_1, r_1)\leq (s_0,r_0)$ such that $(s_1, r_1)\in \dom(\Clink^\kappa)\cap\dom(\Clink^{\kappa^+})$ which decides $\dot \alpha_{\{\xi\}_1}$, we have that
$$ t' \leq s_1,$$
for every stage $\eta \in E^1$ were we force with $\kappa$-Cohen $p'(\eta)\leq r^1(\eta)$
and
for every coding stage $\eta < \theta$, $(t',p'\res\eta)$ forces both that
$$
(p'(\eta)_{<\kappa}, p'(\eta)^*_{\kappa}) \leq (r_1(\eta)_{<\kappa}), r_1(\eta)^*_{\kappa}) \text{ in } P^{p'(\eta)_\kappa}
$$
and that
$$
(p'(\eta)_{\kappa}, p'(\eta)^*_{\kappa^+}) \leq (r_1(\eta)_{\kappa}), r_1(\eta)^*_{\kappa^+}) \text{ in } P^{p'(\eta)_{\kappa^+}}.
$$
Moreover we demand $(t', p') \mathbin{\leqlo^{<\kappa}} (t^\xi, p^\xi)$.
Write $r^\xi_i = r_i$, where $i \in \{0,1\}$.
We also define $\alpha^\xi_\nu$ to be the ordinal such that $(s_1, r^\xi_1)\forces \dot \alpha_\nu = \check\alpha^\xi_\nu$.

\item\label{basic_generic}
For every $\eta < \theta$ which is a coding stage, we have that 
\begin{eqpar}\label{kappa:strategic}
$(t', p'\res\eta)\forces$``$p'(\eta)_{<\kappa}$ is basic generic for $(M^{\xi}, p^\xi(\eta))$ at $\kappa$ and
$p'(\eta)_{\kappa}$ is basic generic for 
$(N^{\xi}, p^\xi(\eta))$ at $\kappa^+$.'' 
\end{eqpar}

\item\label{generic}\label{l.last}
Notice that in the previous item we have ensured basic genericity over subsets of $L$, while we want to capture some information about $L[A_\eta]$; so we ask the following.
For every $\eta < \theta$ and every $P_\eta$-name for an ordinal $\dot \alpha \in M^{\xi}$  there is a set $a$ of size less than $\kappa$ such that
$(t', p'\res\eta) \forces \dot\alpha \in \check a$. 
Also, for every $\eta < \theta$ and every $P_\eta$-name for an ordinal $\dot \beta \in N^{\xi}$  there is a set $b$ of size $\kappa$ such that
$(t', p'\res\eta) \forces \dot\beta \in \check b$. 

\end{enumerate}
The conjunction of Requirements~\ref{l.first}--\ref{l.last} can be expressed by a formula which is $\Sigma_1$ in parameters from $\vec{x}\cup\{ M^{\xi}, N^\xi, (t^\xi, p^\xi) \}$,
so if $(t',p')$ satisfying these requirements can be found at all, then we can demand $(t',p') \in M^{\xi+1}$. 

Requirements~\ref{l.first} and \ref{kappa:work} are trivial.
Requirement~\ref{basic_generic} can be met by a density argument identical to the one showing the ``density'' of strategic class $\D$ in the limit of an iteration.
The difference is purely notational (we've treated the Mahlo coding as lower part at $\kappa$ and as upper part for $\lambda<\kappa$, now we are ``in-between'' these cases):
\begin{lem}\label{l.kappa.strategic.dense}
The set of all $(t', p'\res\eta)$ such that
for every $\eta < \theta$, \eqref{kappa:strategic} holds
is dense below $(t^\xi, p^\xi)$
\end{lem}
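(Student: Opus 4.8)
The plan is to reproduce, almost verbatim, the proof that the strategic class $\D$ is $\leqlo^{<\kappa}$-dense in the limit of a diagonal-support iteration (the argument of theorem~\ref{thm:it:qc} combined with the density lemma of section~\ref{sec:D}), reading ``$(t',p'\res\eta)$ forces that $p'(\eta)_{<\kappa}$ is basic generic for $(M^\xi,p^\xi(\eta))$ at $\kappa$ and $p'(\eta)_\kappa$ is basic generic for $(N^\xi,p^\xi(\eta))$ at $\kappa^+$'' wherever that argument reads ``$q\in\D$''. The first ingredient is a one-stage statement: if $\eta<\theta$ is a coding stage, $L[A_\eta]$ the ground model used there, $p\in P(A_\eta)$ and $H$ an arbitrary set, then the set of $q\leqlo^{<\kappa}p$ which are basic generic for $(H,p)$ at $\kappa$ and at $\kappa^+$ is dense in $\langle P(A_\eta),\leqlo^{<\kappa}\rangle$. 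Since here $H$ will be one of the $\Sigma_1$-submodels $M^\xi$ or $N^\xi$ of $L$ rather than a Skolem hull computed inside $L[A_\eta]$, I would not simply quote corollary~\ref{cor_qc} but reprove this directly: use extendibility for the Mahlo coding (lemma~\ref{ext_mahlo}) to stretch $|q_{<\kappa}|$ past $\sup(H\cap\kappa)$, and extendibility for the successor coding at $\kappa^+$ to stretch $|q_\kappa|$ past $\sup(H\cap\kappa^+)$; for the coding ordinals coming from $H$ below $\kappa^+$ (respectively $\kappa^{++}$) throw the appropriate restraints $b^{p_\kappa\res\nu}\setminus\eta'$ into $q^*_\kappa$ (respectively $b^{p_{\kappa^+}\res\nu}$ into $q^*_{\kappa^+}$), which is legal because both of these components may have size $\kappa$; and code the relevant bits of $A_\eta$ and of $p_\kappa,p_{\kappa^+}$ --- exactly the clauses defining ``basic generic at $\kappa$'' and ``basic generic at $\kappa^+$''. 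Crucially, every one of these moves leaves untouched each component at a regular cardinal $<\kappa$, so it stays inside $\leqlo^{<\kappa}$, and it respects clause~(\ref{coding_areas_bounded}) of the definition of $P(A_\eta)$, using the auxiliary ordinal $\rho(p^*_\kappa)$ precisely as in~\ref{D}(\ref{restraints_start_high}) and, at singulars below $\kappa$, the extendibility lemma~\ref{ext1} to keep the exact coding intact.

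Second, I would glue these one-stage extensions across all coding stages $\eta<\theta$. Since $P_\theta$ is stratified, hence quasi-closed, on $[\lambda_\theta,\kappa]$ (lemma~\ref{iteration:prop}(\ref{it:strat})), and since basic genericity at a coding stage $\eta$ is a statement about $p\res\eta$ forcing a property of $p(\eta)$, one builds a $\leqlo^{<\kappa}$-descending, $(\kappa,x)$-adequate sequence $(t^n,p^n)$ for a suitable parameter $x$ --- which must contain $L_\mu[A]$, the chains $(M^\xi)_\xi$ and $(N^\xi)_\xi$, the forcing $P_\theta$, and the cofinal sequence in $\theta$ along which one recurses --- applying the one-stage density at each coding stage met below the current condition and propagating coherently by the coherent-density clause \eqref{ext:D:dense} of quasi-closed extension. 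At limit steps of this recursion one passes to the thread, which has legal diagonal support by lemma~\ref{supp:threads} since every stage is a $\leqlo^{<\kappa}$-extension of $(t^0,p^0)=(t^\xi,p^\xi)$ and $\cof\theta\leq\kappa$; that the sequence is genuinely $(\kappa,x)$-adequate (i.e.\ carries a canonical witness) is arranged by the ``usual trick'' of lemma~\ref{adequate}, together with the observation recorded in lemma~\ref{iteration:prop} that names for conditions of $P(A_\eta)$ are $\kappa$-chromatic, so that their interpretations are $\Sigma_1^{A_\eta}$-recoverable. The greatest lower bound of the whole sequence, which exists by quasi-closure of $P_\theta$, is then a condition $\leqlo^{<\kappa}$ below $(t^\xi,p^\xi)$ witnessing the density.

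The step I expect to be the main obstacle is the support bookkeeping in the gluing. Meeting basic genericity at $\kappa$ enlarges the ``lower'' Mahlo-coding part $p(\eta)_{<\kappa}$, which drags $\eta$ into $\supp^\kappa$, while meeting it at $\kappa^+$ enlarges the ``upper'' part at $\kappa^+$; one must verify that doing both simultaneously at all the relevant coding stages keeps the resulting thread inside the $\lambda_\theta$-diagonal support limit, which is exactly where the ``independence of upper and lower parts'' encoded by $\rho(p^*)$ and $\sup\dom(p_{<\alpha})$ (cf.\ the remarks following definition~\ref{D}) is needed, and where the hypothesis that only boundedly many $\iota$ satisfy $\lambda_\iota\leq\lambda$ for each regular $\lambda<\kappa$ enters. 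Once this is in hand, the remainder is the line-by-line transcription of the density-of-$\D$-at-limits argument, as the text already signals.
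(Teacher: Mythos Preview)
Your outline is workable but considerably heavier than the paper's argument, and the extra machinery is driven by a misconception about the $\kappa$-support at coding stages.

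The paper does \emph{not} build a $(\kappa,x)$-adequate descending sequence in $P_\theta$ and does not invoke quasi-closure of $P_\theta$ here: given $(t',q)\leq(t^\xi,p^\xi)$, it simply defines $p'(\eta)$ by induction on $\eta$ as a thread, exactly as in the proof of \eqref{ext:redundant} in theorem~\ref{thm:it:qc}, choosing at each coding stage a name for an extension of $q(\eta)$ that is basic generic --- whose existence is quoted from corollary~\ref{cor_qc}. No canonical witness, no $\kappa$-chromatic names, no greatest-lower-bound step.

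The reason this suffices is the point you get backwards: enlarging the Mahlo-coding string $p(\eta)_{<\kappa}$ does \emph{not} drag $\eta$ into $\supp^\kappa$. In the stratification of $P(A_\eta)$ from section~\ref{sec_coding_strat}, the relation $\leqlo^\kappa$ freezes only $p_\delta$ and $p^*_{\delta^+}$ for regular $\delta<\kappa$; the Mahlo string $p_{<\kappa}$, along with $p^*_\kappa$, $p_\kappa$, $p^*_{\kappa^+}$, is free to change. Basic genericity at $\kappa$ and at $\kappa^+$ touches exactly these upper coordinates and nothing below, so the one-stage extension is automatically $\leqlo^\kappa$. The paper's support observation --- if $(t',q\res\eta)\forces q(\eta)_{<\kappa}=\emptyset$ then one may choose $p'(\eta)\leqlo^\kappa q(\eta)$ --- is precisely this, and it immediately bounds $\supp^\kappa(p')$ by that of $q$. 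The devices you reach for ($\rho(p^*)$, the bounded-$\lambda_\iota$ hypothesis, the recovery of conditions from $\kappa$-chromatic names) are not needed for this density claim; they do enter elsewhere in the proof of lemma~\ref{it:reals:are:caught}, at the limit step where a genuine adequate sequence \emph{is} being assembled, but not here.
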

\begin{proof}
Say we are given $(t', q) \leq (t^\xi, p^\xi)$.
Exactly as in \ref{thm:it:qc}, construct $p'(\eta)$ by induction on $\eta$.
At successor stages, let $p'(\eta)$ be a name for a condition such that \eqref{kappa:strategic} is met.
This is possible as $(t', q\res\eta)$ forces such a condition to exist, by Corollary~\ref{cor_qc} (see p.~\pageref{cor_qc}).
Observe that if $(t', q\res\eta) \forces q(\eta)_{<\kappa} = \emptyset$, then we can choose $p'(\eta) \leqlo^{\kappa} q(\eta)$.
Thus, the resulting condition $(t', p')$ has legal support. \renewcommand{\qedsymbol}{{\tiny  Lemma~\ref{l.kappa.strategic.dense}~}$\Box$}\end{proof}

Lastly, we can satisfy Requirement~\ref{generic}: for the first line, use the induction hypothesis and $\lVert M^{\xi} \rVert < \kappa$, for the second use $\{\kappa\}$-stratification.
This shows we can find $(t',p')$ as above. 

\medskip

Note that by diagonal support, Requirement~\ref{generic} and since $M^{\xi+1}$ is $\Sigma_1$-elementary, for each $\eta<\theta$ we have
\begin{equation}\label{p_star_control}
(t^{\xi+1}, p^{\xi+1}\res\eta) \forces_{P_\eta} |p^{\xi+1}(\eta)_{<\kappa}| <   \kappa(\xi+1).
\end{equation}
(and analogously, a similar equation holds for $|p^{\xi+1}(\eta)_\kappa|$).
By induction, \eqref{p_star_control} and Requirement~\ref{l.first} will entail that for all $\xi < \kappa$ and all coding states $\eta < \theta$ we have
$$
(t^{\xi+1}, p^{\xi+1}\res\eta) \forces_{P_\eta} \{ \zeta \setdef p^{\xi}(\eta)_{<\kappa}(\zeta)= 1 \}\cap \lim C = C^* \cap \kappa(\xi+1).
$$

\medskip

\textbf{Limit step:}
At limit $\rho \leq \alpha$, we take $(t^\rho, p^\rho)$ to be the greatest lower bound of $(t^\xi,p^\xi)_{\xi<\rho}$. 
This is well-defined:
We show by induction on $\eta <\theta$ that $(t^\rho, p^\rho\res\eta)$ is a condition.
Since we are always taking $\leqlo^{<\kappa}$-direct extensions,
at amalgamation stages we can simply use induction to take the point-wise limit in each of the $\Int$-many components and the resulting $\Int$-sequence will be a condition in the amalgamation.
By $\kappa$-closure of $\bar T$ and $\kappa$-Cohen forcing, we only have to treat coding stages.

So fix a coding stage $\eta$ and assume $w=(t^\rho, \pi_\eta(p^\rho))$ is a condition.
We let $p^\rho(\eta)$ be the name for the obvious candidate for a greatest lower bound of the sequence $(p^\nu(\eta))_{\nu<\rho}$ (see \eqref{obvious_candidate}, p.~\pageref{obvious_candidate}).
\begin{claim}\label{claim:generic}
The condition $w$ is $M^\rho$-generic, in the sense that 
$$w \forces_{P_\eta} M^\rho[\dot G_\eta]\cap \On = M^\rho \cap \On.$$
It is $N^\rho$-generic in the same sense.
\end{claim}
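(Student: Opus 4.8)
The direction $M^\rho\cap\On\subseteq M^\rho[\dot G\res\eta]\cap\On$ (and likewise for $N^\rho$) is immediate, since every ordinal in $M^\rho$ has a canonical name in $M^\rho$. For the reverse inclusion it suffices to show that $w$ is $M^\rho$-generic in the usual sense: for every $P_\eta$-name $\dot\gamma\in M^\rho$ for an ordinal $<\kappa$, $w\forces_{P_\eta}\dot\gamma\in\check M^\rho$; and the analogous statement with $N^\rho$ in place of $M^\rho$. (Here I use that $w$ is, up to the inductive hypothesis that $(t^\rho,p^\rho\res\eta)$ is a condition, a legitimate member of $P_\eta$, and that $w$ extends $(t^\xi,p^\xi)\res\eta$ for every $\xi<\rho$, being the restriction to $\eta$ of the greatest lower bound of the $(t^\xi,p^\xi)$'s.) So I would fix such a $\dot\gamma$ and, using that $M^\rho=\bigcup_{\xi<\rho}M^\xi$ is a continuous chain with $M^\xi\in M^{\xi+1}$, pick $\xi<\rho$ with $\dot\gamma\in M^\xi$; correspondingly $\dot\gamma\in N^\xi$ for the $N$-side.

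\textbf{The core step} is then to invoke requirement~\ref{generic} of the successor construction, which was imposed precisely for this purpose: at stage $\xi+1$ (applied to our fixed coding stage $\eta<\theta$ and the name $\dot\gamma\in M^\xi$), its first clause furnishes a set $a$ of size $<\kappa$ with $(t^{\xi+1},p^{\xi+1}\res\eta)\forces_{P_\eta}\dot\gamma\in\check a$, and — since $(t^{\xi+1},p^{\xi+1})$ and the witnessing data were chosen $\leq_L$-least and hence $\Sigma_1$-definably from parameters in $M^{\xi+1}$ — we may take $a\in M^{\xi+1}$; the second clause of \ref{generic} gives the same for $N^{\xi+1}$ with $\card{a}=\kappa$. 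Because $w$ extends $(t^{\xi+1},p^{\xi+1})\res\eta$, also $w\forces\dot\gamma\in\check a$. The one point that needs a word is that $a$ is contained in, not merely an element of, the model. For $M^{\xi+1}$: since $\kappa^{+++}\in\vec x$, $\kappa$ is definable in $M^{\xi+1}$, so $M^{\xi+1}\models\card a<\kappa$, hence $M^{\xi+1}$ carries a surjection $f\colon\bar\mu\to a$ with $\bar\mu\in M^{\xi+1}\cap\kappa=\kappa(\xi+1)$; as $M^{\xi+1}\cap\kappa$ is an ordinal, $\bar\mu\subseteq M^{\xi+1}$ and therefore $a=\ran f\subseteq M^{\xi+1}\subseteq M^\rho$. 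For $N^{\xi+1}$ the argument is the same, using instead that $\kappa\subseteq N^{\xi+1}$ (as $N^{-1}\supseteq\kappa$) so that a surjection from $\kappa$ onto the $\kappa$-sized set $a$, which lies in $N^{\xi+1}$, witnesses $a\subseteq N^{\xi+1}\subseteq N^\rho$. Thus $w\forces\dot\gamma\in M^\rho\cap\On$ (resp. $N^\rho\cap\On$), completing the genericity.

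\textbf{The main obstacle} is not the bookkeeping absorption above but the interaction of this claim with the surrounding induction. First, one must be sure requirement~\ref{generic} really is consistently imposable at every successor stage: here one uses the induction hypothesis of lemma~\ref{it:reals:are:caught} applied to $P_\eta$ with $\eta<\theta$ to squeeze the $<\kappa$-many names in $M^\xi$ into small sets, and $\{\kappa\}$-stratification for the $N^\xi$-side, noting that $\theta\leq\kappa$ keeps the number of relevant coding stages $\eta$ and names below $\kappa$ at each step. Secondly — and this is where I expect the real care to lie — the two conclusions of the claim are exactly what is then needed to verify that the candidate $p^\rho(\eta)$ (the componentwise union from \eqref{obvious_candidate}) satisfies the clauses defining a coding condition: one passes to the transitive collapses $\bar N^\rho$ and $\bar M^\rho$ and runs the decoding process of definition~\ref{decoding_def} there, using $N^\rho$-genericity at $\kappa^+$ to conclude $p^\rho(\eta)_\kappa\in S^*_\kappa$ by a theorem~\ref{jensen_qc_main}-style argument relativized to $\bar N^\rho$, and $M^\rho$-genericity at $\kappa$ to conclude the exact-coding and $\mathcal B(p^\rho(\eta)_{<\kappa})^{p^\rho(\eta)_\kappa}$-membership requirements below $\kappa$ — the place where the $\diamondsuit$-guided recording of $C^*\cap\kappa(\xi)$ into $p^\xi(\eta)_{<\kappa}$ and the square-sequence $E_\alpha$ for fake inaccessible coding enter. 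So while the claim itself is the standard ``generic condition along a $\Sigma_1$-elementary chain'' argument, its role as the hinge between the construction and the coding verification is the delicate part of lemma~\ref{it:reals:are:caught}.
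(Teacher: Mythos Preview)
Your proof is correct and follows exactly the approach the paper intends: requirement~\ref{generic} together with $\Sigma_1$-elementarity of $M^{\xi+1}$ (resp.\ $N^{\xi+1}$) to place the witnessing set $a$ in the model, followed by the observation that $M^{\xi+1}\cap\kappa\in\kappa$ (resp.\ $\kappa\subseteq N^{\xi+1}$) forces $a\subseteq M^{\xi+1}$ (resp.\ $a\subseteq N^{\xi+1}$). The paper's own proof is a one-line pointer to precisely these two ingredients; you have simply unpacked them.

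One small slip: you restrict to names ``for an ordinal $<\kappa$'', but neither the claim nor requirement~\ref{generic} carries this restriction, and your argument nowhere uses it---the set $a$ may consist of ordinals above $\kappa$, and the surjection argument still gives $a\subseteq M^{\xi+1}$. Just drop the ``$<\kappa$''. Your closing paragraph on how the claim feeds into the verification of $p^\rho(\eta)_\kappa\in S^*_\kappa$ and $p^\rho(\eta)_{<\kappa}\in S^*_{<\kappa}$ is an accurate description of the surrounding argument, though it goes beyond what the claim itself requires.
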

\begin{proof}
This follows from Requirement~\ref{generic} in the construction of the sequence and by $\Sigma_1$-elementarity.
\end{proof}
Although we don't use this in what follows, note that modulo the transitive collapse, $M^\rho[\dot G_\eta]$ is a generic extension of $M^\rho$ by $P_\eta$; likewise for $N^\rho[\dot G_\eta]$.
\begin{claim}
We have that $w \forces p^\rho(\eta)_{<\kappa} \in S^*_{<\kappa}$.
\end{claim}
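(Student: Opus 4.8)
The claim to prove is that $w \forces p^\rho(\eta)_{<\kappa} \in S^*_{<\kappa}$, where $w = (t^\xi, p^\xi \res \eta)$ (with $\xi = \rho$ a limit) is a greatest lower bound at a coding stage $\eta < \theta$, and $p^\rho(\eta)_{<\kappa}$ is the obvious candidate for a lower bound of the strings $(p^\nu(\eta)_{<\kappa})_{\nu < \rho}$.

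The plan is to run the argument of the last case (proving $p^\rho_{<\kappa} \in S^*_{<\kappa}$) in the proof of Theorem \ref{jensen_qc_main}, but \emph{localized} to the structures $M^\rho$ and $N^\rho$ from the construction, rather than to the elementary hulls manufactured inside that theorem. First I would recall (definition \ref{strings_dfs}) what must be shown: for every $<\kappa$-test model $N = L_{\bar\eta}[A_\eta \cap \bar\kappa, s\res\bar\kappa]$ for $s = p^\rho(\eta)_{<\kappa}$, i.e. every $\bar\kappa \in \Card \cap |s|+1$ and $\bar\eta$ with $N \models$``$\bar\kappa$ is the least Mahlo, $\bar\kappa^{++}$ exists, $\Card=\Card^{L[A_\eta\cap\omega]}$, $\ZF^-$'', we have $N \models \Phi(r)$. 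As in theorem \ref{jensen_qc_main}, for $\bar\kappa < |s|$ there is nothing to prove, since then $\bar\kappa < |p^\nu(\eta)_{<\kappa}|$ for some $\nu<\rho$ and $p^\nu(\eta)_{<\kappa} \in S^*_{<\kappa}$ already. So I would assume $\bar\kappa = |p^\rho(\eta)_{<\kappa}|$; by requirement \ref{basic_generic} (basic genericity at $\kappa$), $|p^\rho(\eta)_{<\kappa}| = \sup_{\nu<\rho}(M^\nu \cap \kappa) = \kappa(\rho) = M^\rho \cap \kappa$, so $\bar\kappa = \kappa(\rho)$.

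The heart of the argument is then the following. By Claim \ref{claim:generic}, $w$ forces $M^\rho[\dot G\res\eta]$ to be a $P_\eta$-generic extension of $M^\rho$ with the same ordinals; let $\bar M$ be the transitive collapse of $M^\rho[\dot G\res\eta]$ and $\pi$ the collapse, $j = \pi^{-1}$, $\bar\mu = \bar M \cap \On$. Then $j(\bar\kappa) = \kappa$, $\crit(j) = \bar\kappa$, and $w$ forces that $\pi(p^\rho(\eta)_{<\kappa}) = p^\rho(\eta)_{<\kappa}$ (since the string lives on $\bar\kappa = \crit(j)$ and by \ref{basic_generic}, \ref{grow_kappa}). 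Just as in the proof of lemma \ref{local_coding} and its use in theorem \ref{jensen_qc_main}, requirements \ref{kappa:work}, \ref{rho}, \ref{basic_generic} (the genericity of $p^\rho(\eta)_{<\kappa}$ over $M^\rho$, i.e. over $N^{\xi}$ and $M^{\xi}$) guarantee that the local coding/decoding procedure of definition \ref{decoding_def}, run over $\bar M$, reconstructs $\bar A_\eta = \pi[A_\eta]$ from $p^\rho(\eta)_{<\kappa}$; more precisely, using $\Card^N = \Card^{L[A_\eta\cap\omega]}$ and $N \models \ZF^-$, the decoding of $p^\rho(\eta)_{<\kappa}\res\bar\kappa^{+N}$ over $N$ yields the appropriate initial segment of $\bar A_\eta$, and the sequence $(|p^\nu(\eta)_{<\kappa}|)_{\nu<\rho}$ is definable over $\bar M$ (by the canonical-witness machinery built into the construction via the $M^\xi$, exactly as in lemma \ref{def}), forcing $\bar\eta < \bar\mu$. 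Then by elementarity of $j$, pulling the test model $N$ up to $\tilde N = L_{j(\bar\eta)}[A_\eta \cap j(\bar\kappa), p^\nu(\eta)_{<\kappa}\res j(\bar\kappa)]$ for a suitable $\nu < \rho$ with $|p^\nu(\eta)_{<\kappa}| > j(\bar\kappa)$ (which exists since $j(\bar\kappa) = \kappa > \bar\kappa$... more carefully, $\bar\kappa^{+N} < \bar\kappa^{+\bar M} = \sup_{\nu<\rho}|p^\nu(\eta)_{<\kappa}|$ when $\bar\kappa$ is Mahlo in $\bar M$, etc., splitting into the same three cases as theorem \ref{jensen_qc_main} according to whether $\alpha := \Card^{L_{\bar\mu}} \cap \bar\eta$ sits above $\bar\kappa^{+\bar M}$, below $\bar\kappa$, or equals a non-Mahlo $\bar\kappa$) makes $\tilde N$ a genuine $<\kappa$-test model for $p^\nu(\eta)_{<\kappa} \in S^*_{<\kappa}$. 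Since $p^\nu(\eta)_{<\kappa} \in S^*_{<\kappa}$ (being a string appearing in a condition of $P$, hence obeying definition \ref{conditions}(1)), $\tilde N$ codes a predicate $A^*$ with $L_{j(\bar\eta)}[A^*] \models \Phi(r)$, and applying $\pi$ (elementarity) gives $N \models \Phi(r)$, as required.

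The main obstacle I expect is the bookkeeping needed to verify that the decoding procedure, when run over the \emph{localized} model $\bar M$ rather than over an elementary hull built to order inside theorem \ref{jensen_qc_main}, still has available all the pieces of the coding apparatus it needs at $\bar\kappa$ — in particular in the third case, where $\bar\kappa$ is not Mahlo in $\bar M$, one must check (as in Case 3 of theorem \ref{jensen_qc_main}) that the decoding only ever needs to run up to $\bar\kappa$ and yields $\bar p_{<\bar\kappa}\res\bar\kappa$, from which a sufficiently long $\nu$ gives the test model. A secondary point is confirming that requirement \ref{rho} (on $\rho(p'(\eta)_{<\kappa})$) plus requirement \ref{Cstar} (capturing $C^*$, needed only for the \emph{fake inaccessible coding} part of the companion claim about $p^\rho(\eta)_\kappa$, not strictly for this claim) do not interfere — here they are harmless, since the $S^*_{<\kappa}$ requirement is insensitive to restraints. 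Everything else is a transcription of the arguments of section \ref{jensen_qc_main} and lemma \ref{local_coding}, which by the hypothesis of this plan I may take as given.
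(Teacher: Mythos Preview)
Your plan is broadly right in shape—both you and the paper reduce to the machinery of theorem~\ref{jensen_qc_main} once the height $\gamma$ of the test model $N$ has been bounded below $\bar\mu$—but your argument for that bound has a genuine gap, and it is precisely where you dismiss requirement~\ref{Cstar} as ``not strictly for this claim.''

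Your proposed bound runs: the sequence $(|p^\nu(\eta)_{<\kappa}|)_{\nu<\rho}$ is definable over $\bar M$ and is cofinal in $\kappa(\rho)$, so if $\gamma\geq\bar\mu$ then $N$ sees a short cofinal sequence in $\kappa(\rho)$, contradicting that $N\models\kappa(\rho)$ is Mahlo (hence regular). There are two problems. First, the sequence of conditions $(t^\nu,p^\nu)_{\nu<\rho}$ is \emph{not} definable from the $M^\xi$'s alone: at each successor step the condition is chosen least satisfying, among others, requirement~\ref{Cstar}, which refers to $C^*$. So definability over $\bar M$ needs $C^*\cap\kappa(\rho)$ as a parameter—exactly what you said was irrelevant. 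Second, and more seriously, even granting definability, the sequence has order type $\rho$, and the construction runs over all limit $\rho\leq\alpha$; fixed points $\rho=\kappa(\rho)$ of the map $\xi\mapsto\kappa(\xi)$ form a club and certainly occur below $\alpha$. At such $\rho$ your sequence has length $\kappa(\rho)$ and does not singularize $\kappa(\rho)$ at all. (Contrast theorem~\ref{jensen_qc_main}, where the analogous argument works because there $\rho\leq\lambda\leq\delta$ and $N$ thinks the target is $\delta^+$.)

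The paper's argument is designed precisely to avoid this: it observes that the chain $(\bar M^\xi)_{\xi<\rho}$ is definable over $L_{\bar\mu}$, so if $\gamma\geq\bar\mu$ then $C\cap\kappa(\rho)\in N$; then, using the string $p^\rho(\eta)_{<\kappa}$ (a parameter of $N$) together with requirement~\ref{Cstar}, $N$ recovers $C^*\cap\kappa(\rho)$. Since $C^*\subseteq\Sing$, this exhibits a club of singulars inside $N$, contradicting $N\models\kappa(\rho)$ is Mahlo—no appeal to $\rho<\kappa(\rho)$ is needed. So requirement~\ref{Cstar} is not incidental here; it is the mechanism by which the bound $\gamma<\bar\mu$ is obtained. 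Once that bound is in hand, the paper simply quotes the last part of the proof of theorem~\ref{jensen_qc_main} (your three-case expansion of that quotation is fine).
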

\begin{proof}
Let $G_\eta$ denote an arbitrary $P_\eta$-generic with $w \in G_\eta$ for the moment and write $L[\bar B^-, G^o_\eta] = L[A_\eta]$ (again, see Sections~\ref{sec:full:setting} and~\ref{def:it:succ:stage}).
Let 
\[
N = L_\gamma[A_\eta\cap\bar\kappa, p^\rho(\eta)_{<\kappa}\res\bar \kappa]
\] 
be a $<\kappa$-test model such that $N \vDash \bar \kappa$ is the least Mahlo.
Clearly, if $\bar \kappa < |p^\rho(\eta)_{<\kappa}|$, we are done since $\rho$ is a limit, $|p^\xi(\eta)_{<\kappa}|$ is strictly increasing in $\xi$ and $p^\xi(\eta)_{<\kappa} \in S^*_{<\kappa}$ for each $\xi< \rho$ by induction.

So assume $\bar \kappa = |p^\rho(\eta)_{<\kappa}|$ and note that the latter equals
$\kappa(\rho)$.
Let $\bar M^\xi$ be the transitive collapse of $M^\xi$ for $\xi \leq \rho$ .
Letting $\bar \mu =  \bar M^\rho \cap \On$, we show $\gamma < \bar \mu$:
For otherwise,
since $( \bar M^\xi)_{\xi<\rho}$ is definable over $\bar M^\rho = L_{\bar\mu}$ which is in  turn definable in $N$,
we find $C\cap\kappa(\rho) \in N$ and thus $\lim C^* \in N$, contradicting that $N \vDash \kappa(\rho)$ is Mahlo.

Thus indeed, $\gamma < \bar \mu$. Now we can quote the last part of the proof of Theorem~\ref{jensen_qc_main}, which easily shows that
by Requirement~\ref{basic_generic} and by Claim~\ref{claim:generic} in the construction of the sequence and by elementarity,
for $A^*= A^*(A_\eta\cap\bar\kappa,p^\rho_{<\kappa}(\eta))$,
$L_\gamma[A^*] \vDash r^\eta$ is coded by branches. \end{proof}
\begin{claim}
We have that $w \forces p^\xi(\eta)_\kappa \in S^*_\kappa$.
\end{claim}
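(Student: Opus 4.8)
The claim to prove is that $w \forces p^\rho(\eta)_\kappa \in S^*_\kappa$, i.e. that the ``top string'' of the limit condition is, with respect to the coding apparatus relative to $L[A_\eta]$, a legal string at the Mahlo $\kappa$ — equivalently, every test model $N$ for $p^\rho(\eta)_\kappa$ satisfies $L^N[A^*(p^\rho(\eta)_\kappa\res\alpha^{+N})]\models \Phi(r^\eta)$. The plan is to run the same argument as in the two preceding claims (the one about $p^\rho(\eta)_{<\kappa}\in S^*_{<\kappa}$) and as in the final part of the proof of theorem~\ref{jensen_qc_main}, transposed from the ``below $\kappa$'' / ``$<\kappa$-test model'' setting to the ``at $\kappa$'' / full test model setting. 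That is, I would fix an arbitrary $P_\eta$-generic $G\res\eta$ compatible with $w$, work in $L[\bar B^-,G^o\res\eta]=L[A_\eta]$ as usual, and let $N=L_\gamma[A_\eta\cap\kappa, p^\rho(\eta)_\kappa\res\bar\kappa^{+N}]$ be a test model for $p^\rho(\eta)_\kappa$ in the sense of definition~\ref{strings_dfs} (here since $\kappa$ is the Mahlo of $L$, $\bar\kappa$ will be $\kappa$ itself, or rather the point below which the test model believes the Mahlo sits — if that point is strictly below $\kappa$ there is nothing to prove by an induction/elementarity argument exactly as in case~2 and case~3 of the final part of the proof of theorem~\ref{jensen_qc_main}, so the real work is when the test model correctly identifies $\kappa$).

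The first key step is the bounding of the ordinal height: letting $\bar\mu = M^\rho\cap\On$ (note that at stage $\kappa$ we use the $N^\xi$-chain for the ``$\kappa^+$'' part and the $M^\xi$-chain for the ``$\kappa$'' part, so I must be careful which chain controls which), I would show that $\gamma<\mu$ where $\mu$ is the appropriate ordinal, by the now-familiar trick: the sequence of collapsing structures is definable over the relevant $L_\mu$-type model, which in turn is definable inside $N$; if $\gamma$ were large enough, $N$ would compute $C\cap\kappa$ (or the relevant piece of $\diamondsuit_\kappa = C\cap\kappa$) and hence see that $\kappa$ is not Mahlo — using here precisely requirement~\ref{Cstar} and the choice of $\alpha$ (now $\kappa$) as an inaccessible limit point of $C$ with $\diamondsuit_\kappa = C\cap\kappa$ — contradicting that $N\models$``$\kappa$ is the least Mahlo''. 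This is exactly where the archetypical use of $\diamondsuit$ advertised in the introduction and in the footnote to the definition of $\mu_0^{<\alpha}$ pays off. The second key step is: once $\gamma<\mu$, invoke requirement~\ref{basic_generic} (basic genericity of $p^\rho(\eta)_\kappa$ over $N^\xi$ at $\kappa^+$ and of $p^\rho(\eta)_{<\kappa}$ over $M^\xi$ at $\kappa$) together with claim~\ref{claim:generic} ($N^\rho$- and $M^\rho$-genericity of $w$), and quote the last part of the proof of theorem~\ref{jensen_qc_main} essentially verbatim: these genericity facts are exactly the instances of the strategic class $\D$ (in the form of the ``basic generic'' notions) needed to make the decoding procedure of definition~\ref{decoding_def}, run inside $N$, reconstruct $\bar A^* = A^*(p^\rho(\eta)_\kappa\res\bar\kappa^{+N})$ correctly, so that by elementarity of the collapse $\pi\colon \bar M^\rho\to M^\rho$ (or the relevant embedding into $L[A_\eta]$) and by the construction of $\bar T$, $L^N[\bar A^*]\models \Phi(r^\eta)$.

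I would also need to dispatch the subcase where the test model's Mahlo $\bar\kappa$ is strictly below $\kappa$, and the subcase $\bar\kappa=\kappa$ but $\kappa$ is not Mahlo in $\bar M^\rho$: these are handled exactly as cases~2 and~3 in the closing argument of theorem~\ref{jensen_qc_main} (pick $\xi<\rho$ with $|p^\xi_\kappa|$ or $|p^\xi_{<\kappa}|$ large enough, use that $p^\xi(\eta)_\kappa\in S^*_\kappa$ or $p^\xi(\eta)_{<\kappa}\in S^*_{<\kappa}$ by the successor-step construction, and apply elementarity of $\pi$). The main obstacle — and the only place where genuine care beyond copying theorem~\ref{jensen_qc_main} is needed — is bookkeeping the interplay of the two elementary chains $M^\xi$ and $N^\xi$ at the single stage $\kappa$: at earlier stages the Mahlo coding was treated as ``lower part at $\kappa$'' and as ``upper part below $\kappa$'', but here $\kappa$ is simultaneously the bottom of the ``$\kappa^+\to\kappa$'' coding and the top of the ``$\kappa\to$ below'' coding, so one must verify that requirement~\ref{basic_generic} was stated with exactly the right pairing ($p'(\eta)_{<\kappa}$ basic generic over $M^\xi$ at $\kappa$, $p'(\eta)_\kappa$ basic generic over $N^\xi$ at $\kappa^+$) to feed both halves of the decoding. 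Granting that — which is precisely how the construction was set up — the proof of this claim is a routine transcription, and I expect it to be only a few lines long in the final text, ending the inductive verification that $(t^\rho,p^\rho)$ is a legal condition, and hence completing the limit step and the proof of lemma~\ref{it:reals:are:caught}.
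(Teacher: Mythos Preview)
Your overall plan is right---quote theorem~\ref{jensen_qc_main} in the case $\delta=\kappa$, replace the strategic class $\D$ by basic genericity (requirement~\ref{basic_generic}) together with claim~\ref{claim:generic}, and handle the subcases exactly as in the closing case analysis there. The second half of your proposal (the decoding step, elementarity, the role of cases~2 and~3) matches the paper.

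But your bounding argument is wrong in two linked ways. First, the relevant chain here is $N^\rho$, not $M^\rho$: a test model for $p^\rho(\eta)_\kappa$ satisfies $N\models |p^\rho(\eta)_\kappa|=\kappa^+$, so the height you must bound lives at the level of the collapse of $N^\rho\cap\kappa^+$, not of $M^\rho\cap\kappa$. Second---and this is the real error---the contradiction is \emph{not} with ``$\kappa$ is Mahlo''. The test model may perfectly well believe $\kappa$ is Mahlo (it is, in $L$); there is no club of singulars below $\kappa$ available to you, and in any case $C^*$ is club only in $\alpha<\kappa$, not in $\kappa$. The contradiction is the one from theorem~\ref{jensen_qc_main}: once the sequence $(t^\xi,p^\xi)_{\xi<\rho}$ is definable over the transitive collapse of $N^\rho$, the test model sees a short cofinal sequence in $|p^\rho(\eta)_\kappa|$, contradicting $N\models |p^\rho(\eta)_\kappa|=\kappa^+$. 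The one new point the paper singles out is why this definability goes through: the construction of $(t^\xi,p^\xi)$ uses the extra parameter $C^*$, and $C^*\in\Hhier(\kappa)$, so $C^*\in N^\rho$ automatically (since $\kappa\subseteq N^\rho$)---even though $C^*$ was chosen \emph{after} the $N^\xi$-chain.

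So your invocation of $\diamondsuit$ (``$\diamondsuit_\kappa=C\cap\kappa$'') is misplaced here; the $\diamondsuit$-trick is used elsewhere in the proof of lemma~\ref{it:reals:are:caught} (at the final step, to space the restraints at level $\alpha$), not in this claim.
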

\begin{proof}
This is completely analogous to the previous claim:
The proof of Theorem~\ref{jensen_qc_main} (see p.\ \pageref{jensen_qc_main}) carries over to the present situation almost verbatim (setting $\delta=\kappa$).
As in the previous claim, to show that the height of any test-model is less than that of the collapse of $N^\rho$, use that $(t^\xi,p^\xi)_{\xi<\rho}$ is appropriately definable over the transitive collapse of $N^\rho$ using $\vec{x}$ and $C^*$ as parameters.
The additional parameter is unproblematic since $C^* \in \Hhier(\kappa)$ and is therefore an element of $N^\rho$.
As in the previous claim, we may directly use basic genericity, i.e., Requirement~\ref{basic_generic} in the construction (instead of $\D$ as we did in the proof of Theorem~\ref{jensen_qc_main}).
\end{proof}

Observe when $\rho < \alpha$ is a limit ordinal, 
have $(t^\rho, p^\rho) \in M^{ \rho +1}$:
this is because $(t^{\xi},p^{\xi})_{\xi<\rho}$ is definable over
$\langle M^{\rho}, C^*\cap\kappa(\rho)\rangle$ and $C^* \cap\kappa(\rho) \in M^{\rho +1}$ since
$\kappa(\bar\rho+1)$ is a strong limit cardinal.
This ends the construction of the sequence $(t^\xi, p^\xi)$, for $\xi \leq \alpha$.

Finally, we extend $(t^\alpha, p^\alpha)$ to $(t^\alpha, \bar p^\alpha)$ by making sure $\alpha$ is in the support at each coding stage, i.e.,
\begin{equation}\label{alpha_in_supp}
\forall \eta \in \theta\cap \alpha\cap E^2\quad (t^\alpha, \bar p^\alpha \res\eta) \forces \alpha \in \supp(p^\alpha(\eta))
\end{equation}
We show $(t^\alpha, \bar p^\alpha) \forces \dot \alpha_\nu < \alpha$, for each $\nu < \alpha$.
In fact, letting $\dot \alpha'_\nu$ be the name such that whenever $\{ \xi \}_1=\nu$ and $r^\xi_1$ is defined, 
$r^\xi_1 \forces \dot \alpha'_\nu = \check \alpha^\xi_\nu$, we will show that
$(t^\alpha, \bar p^\alpha) \forces \dot \alpha_\nu =  \dot \alpha'_\nu$.
Observe that this is a $P_\alpha$-name below $(t^\alpha, \bar p^\alpha)$, so this proves the theorem.

So let $\nu < \alpha$, $(s,r) \leq (t^\alpha, \bar p^\alpha)$ and assume without loss of generality that $(s,r) \in \dom(\Clink^\alpha)$ and that $(s,r)$ decides $\dot \alpha_\nu$.
Pick $\xi$ such that $h(\{ \xi \}_0)\in \Clink^\alpha(s, r)$, $\{ \xi \}_1=\nu$ and also, 
$h(\{ \xi \}_0) \in \Hhier(\kappa(\xi))$.
The latter may be achieved by increasing $\xi$ if needed, without changing $\{ \xi \}_i$, $i\in \{0,1\}$.
Observe that it follows that $h(\{ \xi \}_0)\in \Clink^{\kappa(\xi)}(s, r)$, by definition of $\Clink$.

Since $\alpha$ is inaccessible, we have $\xi < \alpha$, and
$r$ witnesses that $r^\xi_1$ is defined.
We now show that $(s,r) \cdot (t^\alpha, r^\xi_1) \neq 0$.
This is clear for the $\bar T$-part, since $s \leq t^\alpha$.
We show that for each $\eta < \theta$, $(s,r \res \eta \cdot r^\xi_1\res\eta )\forces  r(\eta) \cdot r^\xi_1(\eta) \neq 0$.
The only non-trivial cases are amalgamation and coding stages.
At amalgamation stages, for each $i \in \Int$, we can takes point-wise meets by induction.
Observe that since the outcome is a $\leqlo^{<\kappa}$-direct extension of $r^\xi_1\res\eta+1$,
the resulting sequence is a condition in the amalgamation, i.e., in $P_{\eta+1}$.

Now assume $\eta$ is a coding stage ($\eta \in E^2$).
Letting $w = (s, r \res \eta \cdot r^\xi\res\eta )$, we have that 
$w$ forces that
$$(r(\eta)_{<\kappa}, r(\eta)^*_\kappa)  \leq (r^\xi_1(\eta)_{<\kappa}, r^\xi_1(\eta)^*_{\kappa}) \text{ in }P^{r(\eta)_\kappa}$$ and
$$
(r(\eta)_{\kappa}, r(\eta)^*_{\kappa^+})  \leq (r^\xi_1(\eta)_{\kappa}, r^\xi_1(\eta)^*_{\kappa^+}) \text{ in }P^{r(\eta)_{\kappa^+}}.
$$
Moreover, $w$ forces that $r^\xi_1(\eta)$ is an extension of a condition $r^\xi_0(\eta)$ which has ``lower part $h(\eta)$'', i.e., $h(\eta) \in \Clink^{\kappa(\xi)}(r^\xi_0(\eta))$.
Since $w\forces h(\eta) \in \Clink^{\kappa(\xi)}(r(\eta))$, $w$ forces that
$r^\xi_0(\eta) \res\kappa(\xi) = r(\eta)\res\alpha \geq r^\xi_1(\eta)\res\alpha(\xi)$.
It follows immediately that that $w \forces r(\eta)\res [\alpha, \kappa^+] \cup r^\xi_1\res\alpha$ is a condition, call it $w'$.
Moreover, note that $w$ forces that 
\begin{equation}\label{spaced_out}
\supp(r^\xi_1(\eta))\cap\kappa \subseteq \kappa(\xi+1).
\end{equation}

It remains to show that $w \forces w'=r(\eta) \cdot r^\xi_1(\eta) $.
It is clear that $w \forces w' \leq r^\xi_1$; in order to see $w \forces w' \leq r(\eta)$ we must check that making the extension from $r(\eta)\res\alpha$ to $r^\xi_1(\eta)$ below $\alpha$ did not violate any of the inaccessible restraints in $r(\eta)$ at and above $\alpha$.
There is nothing to show for restraints in $r(\eta)^*_\kappa$, as they only affect the coding into $\kappa$ and there, the extension from $r(\eta)$ to $w'$ is trivial: $w \forces r(\eta)_{<\kappa} \leq r^\xi_1(\eta)_{<\kappa}$ and so $w \forces r(\eta)_{<\kappa} = w'_{<\kappa}$.
Since the restraints in $r(\eta)^*_\alpha$ are spaced by $\Diamond_\alpha$ (see the discussion following Definition~\ref{reg_defs}, p.~\pageref{reg_defs}), and by \eqref{spaced_out} and \eqref{e.diamonduse},  
making this extension obeys all restraints in $r(\eta)^*_\alpha$.
Lastly, since  \eqref{alpha_in_supp} makes sure that $w \forces \alpha \in \supp(r(\eta))$, no
restraints from the interval $(\alpha, \kappa)$ are violated by this extension.
In other words, we have shown that $w \forces w'=r(\eta) \cdot r^\xi_1 \neq 0$.
This finishes the proof that $(t^\alpha, r^\xi_1) \cdot (s,r) \neq 0$ and so
since $(s,r)$ decides $\dot \alpha_\nu$, we must have
$(s,r) \forces \dot \alpha_\nu = \check \alpha^\xi_\nu$.
This concludes the proof of Lemma~\ref{it:reals:are:caught}
\end{proof}
It is crucial that by Lemma~\ref{it:reals:are:caught}, the bookkeeping devices $\bar r$ and $\bar s$ ``catch'' all the relevant reals in the
final extension by $P_\kappa$:

\begin{lem}\label{bookkeeping:catches:all}
If $\iota<\kappa$, $\dot r^0, \dot r^1$ are $P_\kappa$-names for reals and $p \in P_\kappa$ forces $\dot r^0, \dot r^1$ are random over $L^{P_\iota}$, there is $q \leq p$ and $\nu<\kappa$ such that $\bar\iota(\nu)=\iota$
\[ q \forces \dot r^j=\dot r^j_\nu. \]
If $\dot s$ is a $P_\kappa$-name for a real and $p \in P_\kappa$, there is $q\leq p$ and $\xi<\kappa$ such that either
$q \forces \dot s=\dot s_\xi$, or $q\forces \dot s \in \Gamma_\xi$. 
\end{lem}
\begin{proof}
By Lemma~\ref{it:reals:are:caught} there is $q\leq p$, $\xi<\kappa$ and $P_\xi$-names $\dot x^0, \dot x^1$ such that $q \forces \dot r^j=\dot x^j$.
As $P_\kappa$ collapses the continuum of any initial stage of the iteration, we may assume $1_{P_\kappa}$ forces $\dot x^j$ is random over $L^{P_\nu}$. 
Using the notation from Lemma~\ref{reduce:a:pair}, find $\nu, \nu'$ and $q'\leq q$ such that $q'\forces \dot x^0 = \dot x_\nu$ and $q'\forces \dot x^1 = \dot x_{\nu'}$, and find $q'' \leq q'$ and $y \in Y(\nu,\nu')$ such that $q'' \forces (\dot x^0, \dot x^1)=y$.
Thus, by construction of $\bar r$, we may find $\nu$ such that $\bar\iota(\nu)=\iota$ and $y=(\dot r^0_\nu, \dot r^1_\nu)$, and we have
$q'' \forces \dot r^0=\dot r^0_\nu$ and $\dot r^1=\dot r^1_\nu$.

The second claim follows immediately from Lemmas~\ref{it:reals:are:caught}, \ref{disjoint} and the definition of $\bar s$.
\end{proof}

\section{Projective Implies Measurable}\label{s.every-projective-set}\index{Lebesgue measurability}\index{projective (set, hierarchy)}

\begin{lem}\index{random real!measure one set of random reals}
For any $\nu<\kappa$, $\bigcup N^*_\nu$ is a null set, where
\[ N^*_\nu=\{N \in L[G_\nu] \setdef L[G_\nu]\models N \subseteq \omega^\omega \text{ has measure zero} \} \]
\end{lem}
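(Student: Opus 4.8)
The plan is to show that the meager (null) ideal of each intermediate model $L[G_\nu]$ is ``killed'' by a later stage of the iteration. The key fact is that the iteration $P_\kappa$ is designed precisely so that every real of $L[G]$ appears in some $L[G_\alpha]$ (lemma \ref{it:reals:are:caught}), and that at cofinally many stages we collapse the current continuum and add a Cohen real. First I would fix $\nu<\kappa$. By lemma \ref{iteration:prop}\eqref{P:cont:small}, $(2^\omega)^{L[G_\nu]} \leq (\lambda_\nu)^+ < \kappa$, so in $L[G_\nu]$ the set $N^*_\nu$ of (codes for) null Borel sets has size $<\kappa$; hence $\bigcup N^*_\nu$ is determined by a set of $<\kappa$ Borel codes, all lying in $L[G_\nu]$.

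Next I would locate a stage $\xi \in E^0$ with $\xi > \nu$ which is a limit ordinal; at stage $E^0(\xi)$ we force with $\Coll(\omega,\lambda_{\xi+1})$ over $L[G_{E^0(\xi)}]$ and (by construction in the $k=0$ case) we pick a name $\dot c_{\xi}$ for a real which is \emph{fully Cohen} over $P_{E^0(\xi)}$. Since $\nu < E^0(\xi)$, the model $L[G_\nu]$ is contained in $L[G_{E^0(\xi)}]$, so $\dot c_\xi$ is in particular a real Cohen over $L[G_\nu]$ — i.e. $P_\kappa$ forces $\dot c_\xi \notin B_c$ for every Borel code $c \in L[G_\nu]$ coding a meager, hence (by symmetry of the construction, or directly since we can run the same argument with the null ideal — but here the cleaner route is) it suffices to recall that in the definition of the iteration the Cohen reals $\dot c_\xi$ are added with the \emph{meager} ideal in mind for Task 1. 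For the null ideal I would instead invoke the more robust route: at stage $\xi+1$ with $\xi\in E^0$ we also have $\lambda_{\xi+1}\geq 2^\omega$ in $L[G_\xi]$ and $\Coll(\omega,\lambda_{\xi+1})$ collapses $(2^\omega)^{L[G_\xi]}$ to $\omega$; more to the point, a forcing that collapses $(2^\omega)^{L[G_\nu]}$ to be countable makes the union of the ground-model null sets into a null set, because a countable union of null sets is null. Concretely: in $L[G]$ the set $N^*_\nu$ is countable (its size $<\kappa$ in $L[G_\nu]$ has been collapsed to $\omega$ by the time we reach $L[G]$, using lemma \ref{iteration:prop}\eqref{P:coll} applied at a non-amalgamation stage past $\nu$), and each member of $N^*_\nu$ is (interpreted in $L[G]$ as) a genuinely null Borel set since Borel codes are absolute and ``measure zero'' is $\mathbf{\Pi}^1_1$ and upward absolute for the relevant sets. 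Hence $\bigcup N^*_\nu$ is a countable union of null sets, so null.

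So the proof outline is: (1) by lemma \ref{iteration:prop}\eqref{P:cont:small}, $|N^*_\nu| < \kappa$ as computed in $L[G_\nu]$; (2) choose any stage $\mu$ with $\nu < \mu < \kappa$ such that $P_{\mu+1}$ is not a type-1 amalgamation, so by lemma \ref{iteration:prop}\eqref{P:coll}, $P_{\mu+1}$ forces $|\lambda_{\mu+1}| = \omega$; taking $\mu$ large enough that $\lambda_{\mu+1} \geq (2^\omega)^{L[G_\nu]}$, this makes $N^*_\nu$ countable in $L[G_{\mu+1}]$, hence in $L[G]$; (3) for each Borel code $c \in N^*_\nu$, the set $B_c$ has measure zero in $L[G]$: absoluteness of Borel codes between $L[G_\nu]$ and $L[G]$ plus the fact that ``$B_c$ is null'' is expressible as (the existence of, for each $n$, an open cover of measure $<2^{-n}$), and such covers, being witnessed by constructible-from-$G_\nu$ reals, remain covers in $L[G]$; (4) a countable union of null sets is null. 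The main obstacle is step (3): one must be careful that null sets of $L[G_\nu]$ remain null in $L[G]$, which requires that the later forcing $\quot{P_\kappa}{P_\nu}$ adds no ``accidental'' reason for $B_c$ to grow in measure — but this is automatic, since $B_c$ as a \emph{set of reals} can only get larger, not its measure, and more precisely the measure-zero cover, being coded by reals already present, is absolute; the only genuine content is the collapse in step (2), which is exactly what lemma \ref{iteration:prop}\eqref{P:coll} provides.
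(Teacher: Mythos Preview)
Your proposal is correct and follows essentially the same route as the paper: reduce to Borel null codes in $L[G_\nu]$, observe these become countable in $L[G]$ (since everything below $\kappa$ is collapsed), and conclude by countable additivity of the null ideal. The paper's proof is three lines and omits your unnecessary detour through Cohen reals; also note that $N^*_\nu$ as literally defined contains \emph{all} null sets (not just Borel ones), so the reduction to Borel covers---which you handle parenthetically---is the actual first step, exactly as in the paper.
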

\begin{proof}
Every null set $N \in L[G_\nu]$ is covered by a null Borel set whose Borel code is also in $L[G_\nu]$. The set $C^*$ of Borel codes for null sets in $L[G_\nu]$ is countable in $L[G]$, so $\bigcup N^*_\nu$, which is equal to the union of all the Borel sets with code in $C^*$, is a countable union of null sets in $L[G]$.
\end{proof}
The following, together with the last lemma, suffices to show that in the extension by $P_\kappa$, every projective set of reals is measurable.
\begin{lem}\label{auto}
Let $\nu<\kappa$. There is a name $\dot r_*$ which is fully random over $L^{P_\nu}$ such that the following hold: 
\begin{enumerate}
\item
Let $\dot B(\dot r_*)$ be a $P_\nu$-name for the complete subalgebra of $\ro(\quot{P_\kappa}{P_\nu})$ generated by $\dot r_*$ in $L[G_\nu]$ and let $B_0 = P_\nu * \dot B(\dot r_*)$.
For any $b \in \ro(P_\kappa) \setminus B_0$, there is an automorphism $\Phi$ of $\ro(P_\kappa)$ such that $\Phi(b)\neq b$ and $\Phi\res B_0 = \id$.
\label{lem:mix}
\item
For any $P_\kappa$-name $\dot r$ which is random over $L^{P_\nu}$ and any $p\in P_\kappa$ there is $q\leq p$ and an automorphism $\Phi$ of $\ro(P_\kappa)$ such that $q \forces \dot r =\Phi(r_*)$ and $\pi_\nu \circ \Phi = \Phi \circ \pi_\nu = \pi_\nu$.\label{lem:homogeneity:for:random:reals} 
\end{enumerate} 
\end{lem}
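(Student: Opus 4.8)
The plan is to fix $\dot r_*$ once and for all and then establish the two clauses separately, in both cases exploiting the type-$1$ amalgamation automorphisms $\Phi_\rho$ together with the book-keeping $\bar r$ and lemma \ref{book-keeping:catches:all}. For the choice of $\dot r_*$: since every $\Coll(\omega,\lambda_{\xi+1})$-step adds reals random over the model at that stage, I would fix some $\nu_1<\kappa$ with $\nu_1>\nu$ and take a $P_{\nu_1}$-name $\dot r_*$ for a real fully random over $L^{P_\nu}$. Then $B^0 = P_\nu * \dot B(\dot r_*)$ is a complete sub-algebra of $B_{\nu_1}$, so any automorphism of $B_\kappa$ which is the identity on $P_{\nu_1}$ is already the identity on $B^0$; in particular every $\Phi_\rho$ with $\bar\iota(\rho)\geq\nu_1$ fixes $B^0$ pointwise. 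I would also record that, by lemma \ref{it:reals:are:caught}, $P_\kappa$ has a dense set of conditions of bounded support, so $\bigcup_{\mu<\kappa}B_\mu$ is dense in $B_\kappa$ and every $b$ of interest lies in some $B_\mu$, $\mu<\kappa$.

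For clause \ref{lem:homogeneity:for:random:reals}, let $\dot r$ be forced random over $L^{P_\nu}$ and $p\in P_\kappa$. Applying lemma \ref{book-keeping:catches:all} to the pair $(\dot r_*,\dot r)$ I get $q_0\leq p$ and $\rho$ with $\bar\iota(\rho)=\nu$ (and $\rho>\nu_1$) such that $q_0\forces \dot r^0_\rho=\dot r_*$ and $\dot r^1_\rho=\dot r$. The type-$1$ amalgamation at stage $E^3(\alpha^0_\rho)$ yields an automorphism $\Phi:=\Phi_\rho$ of $B_\kappa$ with $\Phi(\dot r^0_\rho)=\dot r^1_\rho$ and $\Phi\res P_{\bar\iota(\rho)}=\Phi\res P_\nu=\id$, hence $\pi_\nu\circ\Phi=\Phi\circ\pi_\nu=\pi_\nu$. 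Applying $\Phi$ to $q_0\forces r_*=\dot r^0_\rho$ gives $\Phi(q_0)\forces\Phi(r_*)=\dot r^1_\rho$, so $q:=q_0\cdot\Phi(q_0)\leq p$ forces $\Phi(r_*)=\dot r^1_\rho=\dot r$. What needs care is that $q\neq 0$, i.e. that $q_0$ is compatible with its shift $\Phi(q_0)$; here I would use the factoring of the amalgamation (lemma \ref{factor:am}): over a generic for the random algebra $\am$ splits as the product of the left and right tails, which are independent over the base $P_\nu$, and this lets me first extend $q_0$ inside a single tail so that $q_0$ and $\Phi(q_0)$ become compatible.

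For clause \ref{lem:mix}, let $b\in B_\kappa\setminus B^0$; I may assume $b\in B_\mu$ for some $\mu<\kappa$. Put $b^-=\sup\{c\in B^0: c\leq b\}$, so $b^-<b$, and pick a nonzero $b'\leq b\cdot(-b^-)$; then $\pi_{B^0}(b')\neq 0$ and $b'$ is never below a nonzero element of $B^0$, so $b'$ is generic over $B^0$, and since $b'\notin B^0$ there is a $P_\kappa$-name $\dot a$ for a real which below some $q\leq b'$ is forced random over $L^{P_\nu}$ and such that $q$ forces a genuine Borel fact about $\dot a$ equivalent to $b\in\dot H$. Using lemma \ref{book-keeping:catches:all} together with lemma \ref{reduce:a:pair} I would catch a $\lambda$-reduced pair $(\dot r^0_\rho,\dot r^1_\rho)$ with $\bar\iota(\rho)=\nu$ which below a condition equals $(\dot a,\dot a')$ for some $\dot a'$ forced distinct from $\dot a$; the resulting $\Phi_\rho$ moves $\dot a$, hence moves $b$. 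The remaining issue is to ensure at the same time that $\Phi$ fixes $\dot r_*$ (hence all of $B^0$, not only $P_\nu$). I would achieve this by taking $\Phi$ to be a \emph{composition} of such amalgamation automorphisms that first moves $\dot r_*$ off itself, then moves $b$ via the pair above, then moves $\dot r_*$ back; that the net effect on $b$ is nontrivial is then forced by a distinct-index-sequence argument as in lemma \ref{index:sequ}, with lemma \ref{newreal} guaranteeing that the random (and unbounded) reals involved stay mutually generic under the intermediate automorphisms.

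The hard part will be exactly that last point of clause \ref{lem:mix}: producing an automorphism that is simultaneously the identity on all of $B^0 = P_\nu*\dot B(\dot r_*)$ and yet moves a prescribed $b\notin B^0$. The amalgamation machinery only supplies automorphisms whose base is an initial segment $P_\iota$, and the only ones that automatically fix $B^0$ also fix the whole larger algebra $B_{\nu_1}$, which is too much; so one must build the desired automorphism by composing amalgamations with base $P_\nu$ and keep careful track of the entanglement between $\dot r_*$, the amalgamated pair and $b$ — this is where lemmas \ref{newreal}, \ref{index:sequ}, \ref{reduce:a:pair} and \ref{factor:am} all come into play, together with the precise sense in which $b$ is generic over $B^0$ rather than merely over $P_\nu$. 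The condition-compatibility $q_0\cdot\Phi(q_0)\neq 0$ in clause \ref{lem:homogeneity:for:random:reals} is a lesser but genuine technical point, again handled through the product factoring of lemma \ref{factor:am}.
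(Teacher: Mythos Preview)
Your approach to clause~\ref{lem:mix} has a genuine gap rooted in a false premise. You write that ``the amalgamation machinery only supplies automorphisms whose base is an initial segment $P_\iota$, and the only ones that automatically fix $B^0$ also fix the whole larger algebra $B_{\nu_1}$''---but this is not so, and the paper exploits exactly the case you overlook. The book-keeping allows $\nu=\nu'$ in lemma~\ref{reduce:a:pair}, so it lists \emph{self-pairs}; hence there are type-1 amalgamation stages $P_{\xi+1}=\am(P_\iota,D_\xi,r_*,r_*)$ with $\iota\geq\nu$ in which the isomorphism $f$ is the identity on $\dot B(\dot r_*)$. The resulting shift automorphism $\Phi$ therefore fixes all of $B^0=P_\nu*\dot B(\dot r_*)$ pointwise, yet is highly non-trivial on the rest of $B_\xi$. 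This single observation removes the entire ``move $\dot r_*$ off and back'' program you propose, and with it the delicate entanglement-tracking via lemmas~\ref{newreal} and~\ref{index:sequ}.

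With $\Phi\res B^0=\id$ for free, the paper finishes clause~\ref{lem:mix} by an explicit construction inside $(D_\xi)^\Int_f$: pick $\xi$ large enough that $b_0:=\pi_\xi(b)\notin B^0$; choose $p\in D_\xi$, $p\leq b_0$, with $\pi_{B^0}(p)\not\leq b_0$ (possible since otherwise $b_0=\sum\{c\in B^0:c\leq b_0\}\in B^0$); choose $q\in D_\xi$ below $\pi_{B^0}(p)$ with $q\cdot b_0=0$; and set $\bar p(-1)=q$, $\bar p(0)=\pi_{B^0}(q)\cdot p$, $\bar p(i)=\pi_{B^0}(q)$ elsewhere. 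Then $\bar p\leq b_0$ while $\Phi(\bar p)(0)=q$ is incompatible with $b_0$, so $\Phi(b)\neq b$. Your $b'\leq b\cdot(-b^-)$ reasoning is heading in the right direction, but the conclusion that one must manufacture a random real $\dot a$ and amalgamate over a pair involving it is unnecessary.

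For clause~\ref{lem:homogeneity:for:random:reals} your observation that one needs $q_0\cdot\Phi(q_0)\neq 0$ is correct, and the paper's one-line ``clear from the construction'' does gloss over it; but note that with $\dot r_*$ chosen as $\dot r^0_\eta$ from the list, the relevant equalities can be arranged at the level of names rather than just below a condition, which sidesteps the compatibility issue more directly than the factoring argument you sketch.
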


\begin{proof}
We show the first item of the lemma.
For $\dot r_*$ we may use any $\dot r^0_\eta$ (from our list $\bar r$) such that $\bar\iota(\eta)\geq \nu$ (i.e., it's fully random over $L^{P_\nu}$).
Note this includes the case where $\dot r = \dot r^*$.

Let $\pi_0$ be the canonical projection $\pi_0 \colon \ro(P_\kappa) \rightarrow B_0$, where $B_0$ is as in the hypothesis of Item~\ref{lem:mix} of the lemma.
Let $f$ be the identity $B_0 \to B_0$. %
Pick $\xi<\kappa$ such that 
\begin{enumerate}
\item $\pi_\xi(b) \not \in B_0$; this holds for large enough $\xi$ since $b \not\in B_0$;
\item $r^*$ is a $P_\xi$-name, i.e., $B_0$ is a complete subalgebra of $B_\xi$.
\item $P_{\xi+1}=\am(P_\iota,P_\xi,f,\lambda_\xi)$, where $\iota \geq \nu$.
\end{enumerate}

Let $b_0$ denote $\pi_\xi(b)$.
Clearly, there is $p \in P_\xi$, $p \leq b_0$ such that $\pi_0(p)\not\leq b_0$: for otherwise, the set
\[X =\{ d \in B_0 \setdef d\leq b_0\} \]
would be predense in $P_\xi$ below $b_0$, and thus $b_0=\sum^{\ro(P_\xi)} X\in B_0$, contradiction.

So pick $p$ as above and let $q \in P_\xi$, $q \leq \pi_0(p)$, whence $q \cdot b_0=0$. Let $b_1 = \pi_0(q)$.
Letting $\Dam=\Dam(P_{\nu},P_\xi,f,\lambda_\xi)$, consider a condition $\bar p \in (\Dam)^\Int_f$ such that $\bar p(-1)=q$, $\bar p (0)= b_1\cdot p$ and for $i \in \Int\setminus\{-1,0\}$,
$\bar p(i)=b_1$. Then we have $\bar p \leq p \leq b_0$. Letting $\Phi$ denote the automorphism of $\ro(P_\kappa)$ resulting from $P_{\xi+1}$, we have $\Phi(\bar p) \leq q$ whence $\Phi(\bar p) \cdot b_0 = 0$. So as $\bar p \leq \pi_\xi(b)$ and $\Phi(\bar p)\cdot b =0$, it follows that $\Phi(b)\neq b$; for otherwise since $\bar p \leq \pi_\xi(b)$, we have $\bar p \cdot b \neq 0$ but $\Phi(\bar p \cdot b) = \Phi(\bar p) \cdot b = 0$.

The second claim is clear from the construction, as $\Phi_\rho(\dot r^0_\rho)=\dot r^1_\rho$ for each $\rho<\kappa$.

\end{proof}

Finally, we show in $L[G]$:
\begin{lem}\label{l.meas}
Say $s \in [\On]^\omega$, $\phi$ a formula. If $X =\{r \in \omega^\omega \setdef \phi(r,s) \}$, $X$ is measurable.
\end{lem}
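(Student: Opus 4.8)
The plan is to show measurability by the classical ``random real over an inner model'' characterization: a set $X = \{ r \setdef \phi(r,s) \}$ with $s \in [\On]^\omega$ is Lebesgue measurable provided that for co-boundedly many (in fact, all large enough) initial stages $\nu < \kappa$, the truth value of $\phi(r,s)$ is the same for every real $r$ which is random over $L^{P_\nu}$. Once this homogeneity is established, one argues as in Solovay's theorem: fix a countable elementary submodel context, or more directly, use that $s$ (being a countable sequence of ordinals) lies in some $L[G_\nu]$ by lemma \ref{it:reals:are:caught}, so that $\phi(\cdot,s)$ is really a statement about reals over $L[G_\nu]$; then $X$ differs from a Borel set by a null set, because the set of reals \emph{not} random over $L^{P_\nu}$ is null (by the preceding lemma on $\bigcup N^*_\nu$), and on the random reals $X$ is either ``everything'' or ``nothing'' up to this null set.

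First I would fix $\nu < \kappa$ large enough that $s \in L[G_\nu]$; this is legitimate by the corollary to lemma \ref{it:reals:are:caught} (every real, hence every countable sequence of ordinals coded by a real, appears at some initial stage). Then I claim: for any two $P_\kappa$-names $\dot r_0, \dot r_1$ for reals random over $L^{P_\nu}$ and any condition $p$, there are $q \leq p$ and automorphisms $\Phi_0, \Phi_1$ of $B_\kappa$, each fixing $B^0 = P_\nu * \dot B(\dot r_*)$ pointwise and each moving $\dot r_*$ to $\dot r_0$, resp. $\dot r_1$, where $\dot r_*$ is the canonical random name from lemma \ref{auto}. This is exactly item \ref{lem:homogeneity:for:random:reals} of lemma \ref{auto} (applied twice). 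Since $\phi(r,s)$ has all its parameters in $L[G_\nu] = L[G][\text{stuff fixed by } \Phi_j]$ — more precisely $s$ and the name for the generic restricted to $P_\nu$ are fixed by each $\Phi_j$ — the automorphisms preserve the statement $\phi(\dot r_*, \check s)$; hence $p \forces \phi(\dot r_0, \check s) \iff p \forces \phi(\dot r_1, \check s)$ whenever both are decided, and by genericity the decision is uniform across a dense set. Thus there is a fixed truth value $v \in \{0,1\}$ such that $1_{P_\kappa} \forces$ ``for every real $\dot r$ random over $L^{P_\nu}$, $\phi(\dot r, \check s) \leftrightarrow v = 1$''.

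Granting this uniformity, the conclusion is routine. In $L[G]$, the set $R_\nu$ of reals random over $L^{P_\nu}$ has full measure, since its complement is contained in $\bigcup N^*_\nu$, a countable union of null Borel sets (the preceding lemma). If $v = 1$ then $X \supseteq R_\nu$, so $X$ has full measure and is measurable; if $v = 0$ then $X \cap R_\nu = \emptyset$, so $X$ is null and measurable. Either way $X$ is Lebesgue measurable, which is what we wanted.

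The main obstacle — and the only point needing genuine care — is the claim that the automorphisms of $B_\kappa$ furnished by lemma \ref{auto} actually fix the interpretation of $\phi(\cdot, \check s)$, i.e. that $s$ and the relevant part of the generic filter lie in the fixed field $B^0 = P_\nu * \dot B(\dot r_*)$ of the automorphisms. Since $\Phi_j$ restricted to $B^0$ is the identity and $\pi_\nu \circ \Phi_j = \Phi_j \circ \pi_\nu = \pi_\nu$, the $P_\nu$-generic is preserved; as $s \in L[G_\nu]$, this means $\check s$ is ``fixed'' in the sense that $\Phi_j(\dot s) = \dot s$ for the canonical name $\dot s$ of $s$ in $L^{P_\nu}$, and then a standard induction on the complexity of $\phi$ (automorphisms of the forcing preserve the forcing relation) gives $p \forces \phi(\dot r_*, \check s) \iff \Phi_j(p) \forces \phi(\dot r_j, \check s)$. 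Handling this correctly requires being careful that $\phi$ is evaluated over $L[G]$ (not over some intermediate model) and that the forcing-relation/automorphism interaction is applied to the \emph{full} iteration $P_\kappa$; but all the needed structural facts (homogeneity of the random algebra, the specific automorphisms $\Phi_\rho$ with $\Phi_\rho(\dot r^0_\rho) = \dot r^1_\rho$, and catching all random pairs via $\bar r$ by lemma \ref{book-keeping:catches:all}) are already in place, so this is more bookkeeping than mathematics.
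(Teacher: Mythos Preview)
Your argument has a genuine error, not just a bookkeeping slip. You claim there exist automorphisms $\Phi_j$ which simultaneously fix $B^0 = P_\nu * \dot B(\dot r_*)$ pointwise \emph{and} move $\dot r_*$ to $\dot r_j$. These two requirements are incompatible: $\dot r_*$ lies in $B^0$ (it generates $\dot B(\dot r_*)$), so any automorphism fixing $B^0$ pointwise must fix $\dot r_*$. Lemma \ref{auto} gives you two \emph{different} kinds of automorphisms: those of part (\ref{lem:mix}) fix all of $B^0$ but only move elements outside $B^0$; those of part (\ref{lem:homogeneity:for:random:reals}) satisfy $\pi_\nu \circ \Phi = \pi_\nu$ (so they fix $P_\nu$) and send $\dot r_*$ to a given random name, but they do \emph{not} fix $\dot B(\dot r_*)$. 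You have conflated these.

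Consequently your conclusion that $X$ is either null or co-null is simply false in general: take $\phi(r,s)$ to be ``$r(0)=0$'', so $X$ has measure $1/2$. The correct argument, which the paper carries out, is more refined. First use part (\ref{lem:mix}) to show that $b = \bv{\phi(\dot r_*, \dot s)}^{B_\kappa}$ must lie in $B^0$: if not, some $\Phi$ fixes $\dot r_*$ and $\dot s$ (both in $B^0$) but moves $b$, contradicting $b = \Phi(b)$. Since $b \in B^0$ and $\dot B(\dot r_*)$ is the Random algebra over $L[G_\nu]$, this $b$ corresponds to a Borel set $B$ (with code in $L[G_\nu]$), not merely to a truth value. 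Then use part (\ref{lem:homogeneity:for:random:reals}) to show that for every $r \notin N^*$ one has $r \in X \iff r \in B$: pick $\Phi$ with $\Phi(\dot r_*)^G = r$ and $\Phi$ commuting with $\pi_\nu$, and transfer the statement via $\Phi^{-1}[G]$. The upshot is $X \setminus N^* = B \setminus N^*$, hence $X$ is measurable. Your Solovay-style template is right in spirit, but the homogeneity here is over the \emph{Random algebra}, not over the two-element algebra.
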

\begin{proof}
Let $X, s$ be as above, and say $s=\dot s^{G}$. Without loss of generality, $\dot s$ is a $P_\nu$-name, where $\nu<\kappa$ and $\forces_\nu \dot s \in [\On]^\omega$ (by Lemma~\ref{it:reals:are:caught}).
Fix $\dot r_*$ as in the previous lemma. Let $\dot B(\dot r_*)$ be a $P_\nu$-name for the complete subalgebra of $\ro(\quot{P_\kappa}{P_\nu})$ generated by $\dot r_*$ in $L[G_\nu]$.
\begin{claim}\label{claim.x}
$\bv{\phi(\dot r_*,\dot s)}^{\ro(P_\kappa)} \in \ro(P_\nu)*\bg{\dot r_*}$.
\end{claim}
\begin{proof}[Proof of Claim]
Write $B_0=P_\nu * \dot B(\dot r_*)$ and $b = \bv{\phi(\dot r_*,\dot s)}^{{\ro(P_\kappa)}}$. Towards a contradiction, assume $b \not\in B_0$. By Item~\ref{lem:mix} of Lemma~\ref{auto} there is an automorphism $\Phi$ of $\ro(P_\kappa)$ such that $\Phi(b)\neq b$ while $\Phi(\dot s)=\dot s$ and $\Phi(\dot r)=\dot r$.
This is a contradiction, as we infer
\[ b = \bv{ \phi (\dot r_*,\dot s ) }^{\ro(P_\kappa)}= \bv{ \phi (\Phi(\dot r_*),\Phi(\dot s) ) }^{\ro(P_\kappa)} = \Phi(b). \]
\renewcommand{\qedsymbol}{{\tiny  Claim~\ref{claim.x}~}$\Box$}
\end{proof}

Let $N^*$ denote 
\[ \bigcup \{N \in L[G_\nu] \setdef L[G_\nu]\models N \text{ has measure zero}\},\]
and let $\dot N^*$ be a $P_\nu$-name for this set. $N^*$ is null in $L[G]$.

We find a Borel set $B$ such that for arbitrary $r \not \in N^*$, we have $r \in X \iff r \in B$. Then $X\setminus N^*=B\setminus N^*$ is measurable, finishing the proof. We may regard $\bg{\dot r_*}$ as identical to the Random algebra in $L[G_\nu]$, so we may write $\bv{\phi(\dot r_*,\dot s)}^{\ro(\quot{P_\kappa}{P_\nu})} = \ecn{B}$ for a Borel set $B$.

To show $B$ is the Borel set we were looking for, let $r \not \in N^*$ be arbitrary.
Find $\dot r$ and $p \in G$ such that $\dot r^G=r$ and $p \forces \dot r \not \in \dot N^*$, i.e., $p$ forces $\dot r$ is random over $L^{P_\nu}$. By \ref{lem:homogeneity:for:random:reals} of the previous lemma, there is an automorphism $\Phi$ of $P_\kappa$ and $q \in G$ such that $q\forces \Phi(\dot r_*)=\dot r$, and thus $\Phi(\dot r_*)^G={\dot r_*}^{\Phi^{-1}[G]}=\dot r^G$.
We also have $\pi_\nu \circ \Phi = \Phi \circ \pi_\nu = \pi_\nu$ and so
$\Phi(\dot s)^G={\dot s}^{\Phi^{-1}[G]}=\dot s^G$.
We have
\begin{multline*}
\phi(\dot r^G, \dot s^G) \iff \bv{\phi(\dot r, \dot s)}\in G \iff \\
\iff \bar \Phi^{-1}(\bv{\phi(\dot r, \dot s)}) \in \bar \Phi^{-1}[G] \iff \\
\iff \bv{\phi(\bar \Phi^{-1}(\dot r), \dot s)} \in \bar \Phi^{-1}[G]\iff \\
\iff \bv{\phi(\dot r_*, \dot s)} \in \bar \Phi^{-1}[G]\cap\bg{\dot r_*}\iff \dot r_*^{\bar \Phi^{-1}[G]}\in B 
\end{multline*}
As $\dot r_*^{\bar \Phi^{-1}[G]}=\dot r^G$, we are done.
\renewcommand{\qedsymbol}{{\tiny  Lemma~\ref{l.meas}~}$\Box$}
\end{proof}

\section{A Projective Set Without the Baire Property}\label{sec:preserving:coding}

We now check that $\Gamma^0$ from Definition~\ref{d.gamma} is in fact $\Delta^1_3$. 
As we have seen, this is optimal in a model where all projective sets are measurable: 
On general grounds, if just all $\Sigma^1_2$ sets are Lebesgue measurable, all $\Sigma^1_2$ sets have the property of Baire (by \cite{bartoszynski:additivity}; or see Theorem~\ref{t.barto}). 
That all $\Sigma^1_2$ sets have the property of Baire can also be seen more easily from the fact $\omega_1$ is  inaccessible to reals in our model (using \cite{solovay}, see Theorem~\ref{p.t.sigma-1-2})---as again, it must be on general grounds, if just all $\mathbf{\Sigma}^1_3$ set are measurable (by \cite{shelah:amalgamation}; see Theorem~\ref{p.i.random-reals}).

\medskip

We now discuss the $\Sigma^1_3$ formulas $\Psi(r,j)$\index[notation]{Psi r j@$\Psi(r,j)$ ($\Sigma^1_3$ definition of $\Gamma^j$)} witnessing that $\Gamma^j$ is $\Sigma^1_3$, for each $j\in\{0,1\}$.
Let $\theta(r,u,\alpha,\beta)$\index[notation]{theta r u alpha beta@$\theta(r,u,\alpha,\beta)$} denote the formula 
\begin{eqpar*}
$L_\beta[r,u]$ is a model of $\ZF^-$ and of
``$\alpha$ is the least Mahlo and $\alpha^{++}$ exists''.
\end{eqpar*}
Also, recall that for an ordinal $\alpha$ and $C \in {}^\alpha 2$, we write $\sigma \is C$ to express that $\sigma$ is an initial segment of $C$, i.e., that for some $\rho < \alpha$, $\sigma = C \res \rho$. An expression such as $\forall \sigma \is C\; \phi(\sigma)$ is, of course, a short-hand for 
$\forall \sigma\; \big( \sigma \is C \Rightarrow \phi(\sigma)\big)$.
Finally, recall that $J$ denotes the set of indices for $\bar T$, i.e., 
$$J={}^{<\kappa}2 \times \omega\times \omega \times 2.$$

For $j \in \{ 0,1\}$, $\Psi(r ,j)$ denotes the formula (cf.\ Definition~\ref{p.n.d.canonical} and Lemma~\ref{p.l.canonical})
\begin{eqpar*}
$\exists u \in 2^\omega \quad \forall \alpha, \beta<\kappa$  if  $\theta(r,u,\alpha,\beta)$ holds, then:\\
$L_\beta[r,u] \models$``$\exists C \in  {}^\alpha 2 \; \forall \sigma \is C \;  \big[ \sigma \in L$ and $\forall n \in \omega$ %
$T^\alpha(\sigma,n,r(n),j)^{L_\beta}$ has a branch.$\big]$''
\end{eqpar*}

To establish our main theorem, it remains to give a proof of the following lemma, which shall take up the remainder of this final chapter.
\begin{lem}\label{coding:survives}
For any $r \in L[G] \cap \omega^\omega$ and $j \in \{0,1\}$,   
\begin{equation}\label{e.coding:survives}
r \in \Gamma^j \iff L[G]\vDash \Psi(r,j).
 \end{equation}
\end{lem}
One direction of \eqref{e.coding:survives} is fairly straightforward.
\begin{proof}[Proof of $\Rightarrow$ in \eqref{e.coding:survives}]
Let $r \in L[G]\cap\omega^\omega$ and $j \in \{0,1\}$ be given.
We show 
\begin{equation}\label{wanted:branches}
r \in \Gamma^j \Rightarrow L[G]\vDash \Psi(r,j).
\end{equation}
To this end, suppose $r \in \Gamma^j$ and show that $\Psi(r,j)$ holds in $L[G]$.
If $j=0$, by definition of $\dot \Gamma^0$ we can find $\eta < \kappa$ which is $0$ or a limit and $\Phi_\xi$, for $\xi< \kappa$ such that $r= (\Phi_\xi(\dot c_\eta))^G$. If $j=1$, we may fix a successor $\eta<\kappa$ such that $r= (\dot s_{\eta-1})^G$.
Let $\dot r_0$ denote $\dot c_\eta$ if $j=0$ and let $\dot r_0$ denote $\dot s_{\eta-1}$ if $j=1$ and let $r_0=(\dot r_0)^G$ .

In either case, at stage $\xi=E^2(\eta)$ we force with Jensen coding, adding a real $s_0$ such that
\begin{eqpar*}
for all $\alpha,\beta <\kappa$, if $\theta(r_0,s_0,\alpha,\beta)$ then $C_\eta\res\alpha, r_0 \in L_\beta[s_0]$ and \\
$L_\beta[s_0]\models$ ``$\forall \sigma$ such that $\sigma \is C_\eta\res\alpha$ and for all $n,i$ such that $r_0(n)=i$,  $T^\alpha(\sigma,n,i,j)$ has a branch''.
\end{eqpar*}
So 
\[ 1_{P_\kappa}\forces \Psi(\dot r_0,j),\]
which completes the proof in case $j=1$. For $j=0$, apply $\Phi_\xi$ to get
\[ 1_{P_\kappa}\forces \Psi(\Phi_\xi(\dot r_0),j), \]
and we are done as $(\Phi_\xi(\dot r_0))^G=r$.

Thus, we have established $L[G]\vDash\Psi(r,j)$ and hence that  in Lemma~\ref{coding:survives}, $\Rightarrow$ of \eqref{e.coding:survives} holds.
\end{proof}

For the other direction of Lemma~\ref{coding:survives}, we make a definition and state a technical lemma.

\pagebreak[3]

\begin{dfn}\label{m.d.F}~\index[notation]{Fxi@$\mathcal{F_\xi}$ (set of partial automorphisms)}
\begin{enumerate}
\item For $\xi\leq \kappa$, let $\mathcal{F_\xi}$ be the smallest set closed under (relational) composition and containing all functions $F=\Phi^\zeta_\rho,(\Phi^\zeta_\rho)^{-1}$ (they where defined on p.~\pageref{def.succ.auto} in the case $k=3$; see also Definition~\ref{d:auto}) such that $\dom F \subseteq P_\xi$. In other words, $\mathcal{F_\xi}$ is the closure under relational composition of
\[
\{ \Phi^\zeta_\rho,(\Phi^\zeta_\rho)^{-1} \setdef E^3(\alpha^\zeta_\rho) < \xi \}.
\]
\item For $\xi \leq \kappa$, working in $L[G_\xi]$ define $I_\xi$ to be the set of triples $(\sigma,n,i,j)$ such that for some $\eta$ with $E^2(\eta)<\xi$ and $\Phi \in \mathcal{F}_\xi$, $\sigma \is \Phi(C_\eta)$ and 
\begin{enumerate}
\item if $\eta$ is limit ordinal, $\Phi(c_\eta)(n)=i$ and $j=0$   
\item if $\eta$ is a successor ordinal, $\Phi(s_{\eta-1})(n)=i$ and $j=1$.
\end{enumerate}
\item For each $\xi \leq \kappa$, let $\dot I_\xi$ be a $P_\xi$-name for $I_\xi$.
\end{enumerate}
\end{dfn}
The heart of the proof ``$\Leftarrow$'' in Lemma~\ref{coding:survives} is the following lemma.
It expresses that with few---less than $\kappa$ many---possible exceptions, never is a branch through one of our trees constructible from a real parameter unless one of the following two cases applies: Firstly, the tree was explicitly made to be so at a coding stage on purpose, or secondly, the first case obtains after applying one of the automorphisms stemming from amalgamation.
\begin{lem}\label{no:unwanted:branches}
Suppose $\theta < \kappa$, $p \in P_\theta$ and $(n,i,j) \in \omega^2\times 2$.
There is $\zeta<\kappa$ and $p' \in P_{\theta}$ stronger than $p$ such that for any $\sigma \in {}^{<\kappa} 2$ with $\lh(\sigma) > \zeta$, 
\begin{equation}\label{e:no:unwanted:branches}
p' \forces_{P_{\theta}}\text{``}(\sigma,n,i,j) \notin \dot I_{\theta} \Rightarrow  \forall u \in \omega^\omega\;  L[u]\vDash T(\sigma,n,i,j) \text{ is Suslin.''}
\end{equation}
\end{lem}
We postpone the proof and first finish the proof of Lemma~\ref{coding:survives}, assuming Lemma~\ref{no:unwanted:branches}.

 \medskip

\begin{proof}[Proof of Lemma~\ref{coding:survives}]
We have alredy shown $\Rightarrow$.
For $\Leftarrow$, fix $r \in \omega^\omega$ and $j \in \{0,1\}$ and, working in $L[G]$, suppose $\Psi(r,j)$ holds; we must show $ r \in \Gamma^j$.
Fix $u$ witnessing that $\Psi(r,j)$ holds.
\begin{claim}\label{c:u:C} 
There is $C \in L[u,r]\cap  {}^\kappa 2$ such that all proper initial segments of $C$ are constructible and 
\begin{equation}\label{u:C}
\forall \sigma \is C \; \forall n \in \omega\; %
L[u,r] \vDash T(\sigma,n,r(n),j)\text{ has a branch.}
\end{equation}
\end{claim}
\begin{proof}[Proof of claim.]
Let $L_\beta[r,u]$ be isomorphic to a countable elementary submodel of $L_{\kappa^{+3}}[r,u]$ which contains $r$ and $u$, and let $\alpha$ be the least Mahlo in $L_\beta[r,u]$. Then as $\theta(r,u,\alpha,\beta)$ holds, by $\Psi(r,j)$,
\begin{equation*}
L_\beta[r,u] \models \exists C \in  {}^\alpha 2 \; \forall \sigma \is C \; ( \sigma \in L \wedge \forall n \in \omega\; %
 T^\alpha(\sigma,n,r(n),j)^{L_\beta}\text{ has a branch.})
\end{equation*}
So by elementarity, the claim holds.
\renewcommand{\qedsymbol}{{\tiny  Claim~\ref{c:u:C}~}$\Box$}
\end{proof}

\medskip

Fix $u$ and $C \in L[u,r]$ as in the claim.
Moreover, by Corollary~\ref{cor:reals:are:caught}, we may fix a limit ordinal $\theta<\kappa$ such that $u,r \in L[G_\theta]$.

\begin{claim}\label{c.single:out:C}
For some $\Phi \in \mathcal{F}_\theta$ and some $\eta<\theta$, we have $C= \Phi(C_\eta)$.
\end{claim}
\begin{proof}[Proof of claim.]
We show that any long enough initial segment $\sigma$ of $C$ satisfies:
\begin{equation}\label{single:out:C}
\exists! (\rho,\eta') \in \theta^2\text{ s.t. } \exists \zeta \text{ with }  \Phi^\zeta_\rho \in \mathcal F_\theta, C_{\eta'} \in \dom( \Phi^\zeta_\rho)  \text{ and } \sigma \is \Phi^\zeta_\rho(C_{\eta'}).
\end{equation}
By Lemma~\ref{index:sequ}, it is clear that any long enough $\sigma$ is an initial segment of $\Phi^\zeta_\rho(C_{\eta'})$ for \emph{at most} one pair $(\rho, \eta')$ as in \eqref{single:out:C}.
It remains to show any long enough $\sigma \in C$ is an initial segment of  $\Phi^\zeta_\rho(C_{\eta'})$ for \emph{at least} one such $(\rho, \eta')$.
But otherwise, for no $n,i,j$ is $(\sigma,n,i,j) \in I_\xi$, which by Lemma~\ref{no:unwanted:branches} contradicts \eqref{u:C}.
\renewcommand{\qedsymbol}{{\tiny  Claim~\ref{c.single:out:C}~}$\Box$}
\end{proof}

\medskip

So we may fix $\Phi$ and $C_\eta$ as in the claim.
Now we distinguish two cases: 
\begin{description}
\item[Case 1] $\eta$ is a limit ordinal. In this case, let $c$ be $c_\eta$ and let $j^* = 0$.
\item[Case 2] $\eta$ is a successor ordinal. In this case, let $c$ be $s_{\eta-1}$ and let $j^* = 1$.
\end{description}
Finally we show
\begin{claim}\label{c.r.j}
It holds that $r = \Phi(c)$ and $j=j^*$.
\end{claim}
\begin{proof}[Proof of claim.]
Let $n \in \omega$ be arbitrary. Towards a contradiction, assume that either $j \neq j^*$ or $r(n) \neq \Phi(c)(n)$.
Fix $P_\theta$-names $\dot r$ and $\dot u$ for $r$ and $u$, and let $\dot c$ denote $\dot c_\eta$ if $j^* = 0$ and $\dot s_{\eta-1}$ if $j^* = 1$.
Let $p_0 \in G \cap P_\theta$ such that
if $j=j^*$,
\begin{equation}\label{e.diff-at-n}
p_0 \forces \dot r(n) \neq \Phi(\dot c)(n).
\end{equation}
For any $p \leq p_0$, we can find $\zeta < \kappa$ and $p' \leq p$ as in Lemma~\ref{no:unwanted:branches}; 
so by a density argument, we may find $p' \in G$ and $\zeta< \kappa$ such that for any  $\sigma \is \Phi(C_\eta)$ of length greater than $\zeta$, \eqref{e:no:unwanted:branches} holds.
By choosing $\sigma$ long enough, we can assume \eqref{single:out:C}  holds, as well. 

As $\sigma \is \Phi(C_\eta)$, by \eqref{single:out:C} and by \eqref{e.diff-at-n},
\[
p' \forces (\sigma,n,\dot r(n),j) \notin \dot I_\theta.
\]
So from \eqref{e:no:unwanted:branches} we infer
\[
p' \forces L[\dot r, \dot u]\vDash T(\sigma,n,\dot r(n),j)\text{ is Suslin,}
\]
which contradicts \eqref{u:C} (remembering $p'\in G$ and $\sigma \is C = \Phi(C_\eta)$).
\renewcommand{\qedsymbol}{{\tiny  Claim~\ref{c.r.j}~}$\Box$}
\end{proof}
\medskip

This finishes the proof of Lemma~\ref{coding:survives} (under the assumption that Lemma~\ref{no:unwanted:branches} holds): 
By the last claim, in Case 1 we have $r \in \Gamma^0$ and in Case 2 we have $r \in \Gamma^1$.
\renewcommand{\qedsymbol}{{\tiny  Lemma~\ref{coding:survives}~}$\Box$}
\end{proof}

\subsection{Cutting Out a Tree}

We now prove Lemma~\ref{no:unwanted:branches} which expresses that no (or few) unwanted branches are constructible from reals.
For this, we first make a definition and show several preliminary lemmas; 
Lemma~\ref{no:unwanted:branches} will be proved as Corollary~\ref{c:no:unwanted:branches}.

\begin{dfn}\index[notation]{Z(p)@$Z(p)$} Let $p \in P_\kappa$.
\begin{enumerate}
\item We denote by $Z(p)$ the smallest set $Z$ such that 
\begin{itemize}
\item $p \in Z$,
\item if $p' \in Z$ and $\xi < \lh(p')$ then $\pi_\xi(p') \in Z$,
\item if $\bar p \in Z$ and $\lh(\bar p)$ is a type-1 amalgamation stage and $k \in \Int$, then
$\bar p(k)^P \in Z$;
\end{itemize}
\item Suppose $\nu \in J$.
We say $p$ \emph{strongly rejects $\nu$}\index{strongly rejects}\index{rejects|see {strongly rejects}} to mean the following:
\[
\forall p' \in Z(p) \; p' \forces \nu \notin \dot I_{\lh(p')}.
\]
\end{enumerate}
\end{dfn}

\pagebreak[3]

\begin{rem}~
\begin{enumerate}
\item This definition is more straightforward than it looks; we shall now give some indication of how it is used.

Suppose $p \forces \nu\notin \dot I_{\lh(p)}$ and $\xi < \lh(p)$. 
Then $\pi_\xi(p)\forces \nu \notin \dot I_\xi$.
Also, if $\xi $ is an amalgamation stage and $\bar p = \pi_{\xi+1}(p)$, $\dot I_{\xi+1}$ was defined so that
for each $i\in\Int$, $\bar p(i) \forces \nu\notin \dot I_{\xi}$. 
But in general, $\bar p(i)^P \not\forces \nu\notin \dot I_{\xi}$!

Now suppose we want to show some set $D$ is dense below 
\[
\bv{\nu\notin \dot I_\xi}^{\ro(P_\xi)}.
\]
So given $p \in P_\xi$ such that $p \forces \nu\notin \dot I_\xi$, we must find $q \in D$, $q \leq p$.
Of course, we construct $q$ by induction on $\xi$ (or on the length of $q$). 

At amalgamation stages, our only hope to find $\bar q \leq \bar p = \pi_{\xi+1}(p)$ is to use direct extension, so let's assume $D$ is even $\leqlol$-dense for some large enough $\lambda$.
For $i\neq0$, 
since it is possible that $\bar p(i)^P \not\forces \nu\notin \dot I_{\xi}$, there is no reason we should be able to find a \emph{direct extension} of $\bar p(i)^P$ in $D$. 
\todo{Maybe find $q \leqlol \bar p(i)^P$ such that $q \leq \bar p(i)$? I suppose this could be arranged quite easily!}
Hence, the natural induction breaks down.
Of the many feasible solutions, one is simply to require that $p$ strongly reject $\nu$.
\item If $p \forces \nu\notin  \dot I_\xi$, we can strengthen $p$ to strongly reject $\nu$, and we shall do so in the proof of Lemma~\ref{no:unwanted:branches}.
\item Equivalently, $p$ strongly rejects $\nu \in J$ if and only if
for any $n \in \omega$, any $\bar \xi \colon n+1 \to \kappa$ 
and any $\bar k\colon n+1 \to \Int$,
it holds that
\[
p_n \forces \nu \notin  \dot I_{\lh(p_n)}
\]
where $p_0, \dots, p_n$ is defined by recursion as follows: let $p_0 = p$ and
\[
p_{l+1} = \begin{cases}
\Phi^{\bar k(l)}(\pi_{\bar \xi (l)}(p_l))^P &\text{if $\xi(l)$ is an amalgamation stage, i.e., $\xi \in E^3$;}\\
\pi_{\bar \xi (l)}(p_l) &\text{otherwise.}
\end{cases}
\]
\end{enumerate}
\end{rem}

\medskip

Next we show that, given $\nu\in J$ and $q \in P_\kappa$ which strongly rejects $\nu$, $P_{\lh(q)}$  can be written as a product $T' \times P'$, where $T'= T(\nu)(\leq t)$ for some $t \in T(\nu)$. 
As $T'$ doesn't add reals, we then finish the argument by showing  that $P'$ doesn't add a branch through $T(\nu)$. 

The definition of the $P'$ would be  straightforward (simply omit $\nu$ everywhere), were it not for the fact that we need certain parameters to match their respective counterparts in $P_\kappa$; the parameters concerned are the reals given by bookkeeping, the auxiliary sets $G^o_\xi$, and $q$ itself.

\begin{lem}\label{l:pnu:IH} Suppose $\nu \in J$ and momentarily write $T$ to mean $T(\nu)$.
For each  $q \in P_\kappa$ which strongly rejects $\nu$ and each $\xi < \kappa$ we define a preorder $\pnu$, $\tm \in T$ and 
\begin{equation}\label{e.dense.embedding}
\emb \colon T(\leq\tm) \times \pnu \to P_\xi(\leq\tm\cdot \pi_\xi(q))
\end{equation}
such that the following hold:
\begin{enumerate}[label=\Roman*., ref=\Roman*]
\item\label{l:IH:order}\label{l:IH:first} For every $t, s \in T(\leq \tm)$ and $r, p \in \pnu$ we have
\[
\emb(t,r) \leq_{P_\xi} \emb(s,p) \iff \big( s \leq_{T} t \wedge r \leq_{\pnu} p)
\]
\item\label{l:IH:surj} For any $p \in P_\xi(\leq \tm\cdot q)$ there is $t \in T(\leq \tm)$ and $p^\nu \in \pnu$ such that
\[
\emb(t,p^\nu)=t \cdot p.
\]
If furthermore $r \in \pnu$ and $t \cdot p \leqlol \emb(t,r)$ (for $\lambda \in [\lambda_\xi,\kappa]$) we can in addition demand that $p^\nu$ is chosen so that $p^\nu \leqlol r$ (with respect to direct extension in $\pnu$).
\item\label{l:IH:proj} If $\xi' < \xi$, there is a strong projection $\pi^{\nu,q}_{\xi'}$ from  $\pnu$ to $\pmao{\xi'}$, and 
for all $(t,p) \in \bar T(\leq\tmao{\xi'})\times \pnu$ 
\begin{equation}\label{commute}
\emao{\xi'}(t, \pi^{\nu,q}_{\xi'}(p)) = \pi_{\xi'}(\emb(t, p)).
\end{equation}
\item\label{l:IH:strat} $\pnu$ is stratified on $[\lambda_\xi,\kappa]$ and $\kappa^{+}$-linked.
\item\label{l:IH:leqlo}\label{l:IH:last} Letting $\leqlo^{\nu,q,\lambda}_\xi$ denote direct extension for $\lambda \in [\lambda_\theta,\kappa]$ in the sense of the stratification of  $\pnu$,
for every $t, s \in T(\leq \tm )$ and $r, p \in \pnu$ we have
\[
\big( s \leq_{T} t \wedge r \leqlo^{\nu,q,\lambda}_\xi p \big)  \iff    \emb(t,r) \leqlo^\lambda_\xi \emb(s,p)
\]
\end{enumerate}
In particular, \eqref{e.dense.embedding} is a dense embedding of preorders and the preorder on the left in \eqref{e.dense.embedding} has the $\kappa^+$-cc.
\end{lem}
In order to prove the last lemma, we need a general observation concerning a product $T' \times P'$ with the $\alpha^+$-chain condition where $T'$ is $\alpha^+$-distributive (in our case, $\alpha=\kappa^+$).
It is clear that in such a case, all  functions from $\alpha$ into $\On$ added by $T'\times P'$ are already added by $P'$---we now show a slightly stronger property.
\begin{lem}\label{l:names}
For preorders $T'$ and $P'$ and $\alpha\in \On$, suppose $T'\times P'$ has the $\alpha^+$-cc and $T'$ is $\alpha^+$-distributive.
Further suppose that $(t,p)\in T'\times P'$ and $\dot x$ is a $T'\times P'$-name such that
\[
(t,p) \forces_{T'\times P'} \dot x\colon \alpha \to V 
\]
where $V$ of course refers to the ground model.
Then there is $t^* \in T'$, $t^*\leq t$ and a $P'$-name $\dot z$ such that
\[
(t^*,p) \forces_{T'\times P'} \dot x = \dot z
\]
where we identify $\dot z$ with $e_1(\dot z)$, for the map $e_1\colon P' \to T'\times P'$ given by $p \mapsto (1_{T'},p)$.
\end{lem}
\begin{proof}
Suppose $(t,p)$,  $\dot x$ are as in the hypothesis of the lemma.
Pick,  for each $\xi <\alpha$ a maximal antichain $\{ (t^\xi_\nu, p^\xi_\nu) \setdef \nu < \alpha\}$ and $x^\xi_\nu$ for each $\nu < \alpha$ such that 
\[
\forall \xi<\alpha \; \forall \nu<\alpha\; (t^\xi_\nu, p^\xi_\nu) \forces_{T'\times P'} \dot x(\xi) = \check x^\xi_\nu.
\]
For each pair $(\xi,\nu) \in \alpha^2$  let $D^\xi_\nu = \{ t \in T' \setdef t \leq t^\xi_\nu \vee t \perp t^\xi_\nu   \}$, a dense subset of $T'$. 
Fix $t^* \in \bigcap_{\xi,\nu<\alpha} D^\xi_\nu$
and for each $\xi < \alpha$, let $I_\xi = \{ \nu \setdef t^* \leq t^\xi_\nu \}$.

It is straightforward to check that for each $\xi < \alpha$ the set
$
\{ p^\xi_\nu \setdef \nu \in I_\xi \}
$
is a maximal antichain in $P'$.
Let $\dot z$ be the $P'$-name such that 
\[
\forall \xi< \alpha \; \forall \nu \in I_\xi \; p^\xi_\nu \forces_{P'} \dot z(\xi) = \check z^\xi_\nu.
\] 
Clearly $(t^*,p) \forces_{T'\times P'} e_1(\dot z)= \dot x$.
\end{proof}

Now we are ready to  deconstruct $P$ as product, below certain conditions.
\begin{proof}[Proof of Lemma~\ref{l:pnu:IH}.]
Fix  $\nu \in J$.	
We define $\emb$, $\pnu$, and $\tm$ by induction on $\xi < \kappa$, simultaneously for all $q \in P_\kappa$ which strongly reject $\nu$.
Of course for fixed $\xi$, these only depends on $\pi_\xi(q)$; yet we allow arbitrary $q \in P_\kappa$ in the upper indices of $\emb$, $\pnu$, and $\tm$ to simplify notation.
We show Items~\ref{l:IH:first}--\ref{l:IH:last} listed in the lemma by induction on $\xi$, simultaneously with our inductive definition.

\medskip

\textbf{Induction start.} Let 
\[
\pmao{0} = \bar T^{\nu} ( \leq \pi_0(q)),
\] 
where
\[
\bar T^{\nu} = \{ t \in \bar T \setdef t(\nu) = \pi_0(q)(\nu) \}.
\]
Moreover, define $\tmao{0} = \pi_0(q)(\nu)$ and, recalling that $\bar T = P_0$, let $\emao{0}$ be the obvious isomorphism
\[
\emao{0} \colon T(\leq \tmao{0}) \times \pmao{0} \to \bar T(\leq \tmao{0})
\]

From now on, the inductive definition of $\pnu$ is precisely that of $P_\xi$ but with two qualifications:
Firstly, we want to make sure we use the ``same'' bookkeeping device and the same sets $G^{\circ}_\xi$, and that we work below $q$; for this, we have to ``line up'' the two iterations step by step.
This makes it necessary to introduce the $\tm$.
Secondly, we forgo all mention of $\nu$.

\medskip

\textbf{Inductive step.}  For the inductive step, suppose we have 
\[
\emb\colon T(\leq \tm) \times \pnu \to P_\xi(\leq \tm\cdot \pi_\xi(q)) 
\] 
We quickly treat stages $\xi \in E^0 \cup E^1$ (the interesting part of the induction will be amalgamation and coding stages).
For such $\xi$, let $\tmao{\xi+1}=\tm$, and let
\[
\pmao{\xi+1} = \begin{cases}
\pnu \times \big( \Coll(\omega,\lambda_{\xi+1}) (\leq q(\xi))\big)  & \text{if $\xi \in E^0$}\\
\pnu \times  \big(\Add(\kappa) (\leq q(\xi))\big)^L & \text{if $\xi \in E^1$}.
\end{cases}
\]
We leave it to the reader to find the obvious maps $\emao{\xi+1}$. In the case $\xi = E^1(\eta)$, write $\dot C^{\nu,q}_\eta$ for the $\pmao{\xi+1}$-name of the $(\Add(\kappa))^L$-generic.

\medskip

\textbf{Inductive step for coding stages.} Next, suppose $\xi = E^2(\eta)$, i.e., $\xi$ is a coding stage.
Since the real we want to code---call it $c$---and $G^o_\xi$ from the definition of $P_{\xi+1}$ and $q(\xi)$ can be viewed as functions from $\kappa^+$ into $\On$, using the induction hypothesis and Lemma~\ref{l:names} we can find 
$\tmao{\xi+1} \leq \tm$ in $T$ and $\pnu$-names $\dot c^{\nu,q}$, $\dot G^{\nu,q}_\xi$, and $q^{*}(\xi)$ such that
\[
\tm \cdot \pi_\xi(q) \forces_{P_\xi} e_2(\dot c^{\nu,q}) = \dot c, e_2(\dot G^{\nu,q}_\xi)= \dot G^o_\xi  \text{ and } e_2(q^{*}(\xi))=q(\xi)
\]
where in the above, $\dot c$ and $\dot G^o_\xi$ are names for $c$ and $G^o_\xi$, and $e_2$ denotes the complete embedding $e_2\colon \pnu\to P_\xi(\leq \tm\cdot \pi_\xi(q))$ given by $p \mapsto (\tmao{\xi},p)$ (which we may use to translate names).

We closely follow the definition of $P_{\xi+1}$ (compare p.~\pageref{def.B.B.minus.etc}) and define $\pnu$-names: 
Define $\dot{\bar B}^{-,\nu,q}$ from $\dot c^{\nu,q}$, $\dot C^{\nu,q}_\eta$ and the $\bar T^{\nu,q}$ -generic added by $\pmao{0}$ just as $\bar B^-$ was defined from $c$, $C_\eta$ and $\bar B$ . 
This is well-defined since
\[
\forces_{T(\leq \tm) \times  \pnu} \;\check \nu \notin {\emb}^{-1}(\dot I_\xi)
\]
and hence $B(\nu)$ is never referenced.
Define $\dot A^{\nu,q}_\xi$ just as we defined $A_\xi$ but with $\bar B^-$ and $G^o_\xi$ replaced by $\dot{\bar B}^{-, \nu,q}$ and $\dot G^{\nu,q}_\xi$, respectively. 
Finally define 
$\pmao{\xi +1} = \pnu * \dot Q^{\nu,q}_\xi$, where 
\[
\forces_{\pnu} \dot Q^{\nu,q}_\xi = J(\dot A^{\nu,q})^{L[\dot G^{\nu,q}_\xi]} (\leq q^{*}(\xi)).
\]
Given $t \in T(\leq \tm)$ and $(p,\dot p) \in \pmao{\xi +1}$, 
define
\[
\emao{\xi+1}(t, (p,\dot p)) = (\emb(t,p), e_2(\dot p)),
\]
again using the compete embedding $e_2\colon \pnu\to P_\xi(\leq \tm\cdot q)$ defined above to translate the $\pnu$-name $\dot p$ into a $P_\xi(\leq \tm\cdot \pi_\xi(q))$-name in the second component. 
\begin{claim}\label{c.IH.E2}
Items~\ref{l:IH:first}--\ref{l:IH:last} of Lemma~\ref{l:pnu:IH} hold when $\xi \in E^2$.
\end{claim}
\begin{proof}[Proof of claim.]
By the definitions, Items~\ref{l:IH:order} and \ref{l:IH:proj} are obvious.
Item~\ref{l:IH:strat} is proved verbatim as for $P_{\xi+1}$ when $\xi \in E^2$ (see Item~\ref{P:strat:ext} of Lemma~\ref{iteration:prop}, p.~\pageref{P:strat:ext}).
Item~\ref{l:IH:leqlo} for $\xi+1$ straightforwardly follows from the inductive hypothesis that it holds for $\xi$. 
We leave further details to the reader.

We show Item~\ref{l:IH:surj} in some detail. %
Let $p \in P_{\xi+1}(\leq \tm\cdot q)$, so that we may write $p = (\pi_\xi(p), p(\xi))$.
By induction hypothesis, forcing with $P_{\xi}(\leq \tm\cdot \pi_\xi(q))$ is the same as forcing with 
$T(\leq \tm) \times  \pnu$, so we may view $p(\xi)$ as a name in the latter forcing and observe
that 
\[
\forces_{T(\leq \tm) \times  \pnu} p(\xi) \in J(\dot A^{\nu,q})^{L[\dot G^{\nu,q}_\xi]}
\]
We may assume by slightly mangling $p(\xi)$ that 
\[
\forces_{T(\leq \tm) \times  \pnu} p(\xi) \in \dot Q^{\nu,q}_\xi
\]
By the induction hypothesis we may find 
$t_0 \in T(\leq \tm)$ and $p^\nu_\xi \in \pnu$ such that $\emb(t_0,p^\nu_\xi) = t_0 \cdot \pi_\xi(p)$.
As $p(\xi)$ can be viewed as a name for a function from $\kappa^+$ into $\On$, by Lemma~\ref{l:names}
we can find $t \in T(\leq t_0)$ and a $\pnu$-name which we denote by $p^\nu(\xi)$ such that
$(t, p^\nu_\xi) \forces p(\xi)= e_2(p^\nu(\xi))$.
We conclude
$\emao{\xi+1}(t, (p^\nu, p^\nu(\xi)) = t \cdot p$.
The additional fact about direct extension is proved similarly and is left to the reader.
\renewcommand{\qedsymbol}{{\tiny  Claim~\ref{c.IH.E2}~}$\Box$}
\end{proof}

\medskip

\textbf{Inductive step for amalgamation stages.} Now suppose $\xi = E^3(\eta)$ where $\eta=\alpha^0_\rho$, i.e., $\xi$ is a type-1 amalgamation stage.
Recall the bookkeeping device $\bar r$ gives us two names $\dot r^0_\rho$ and $\dot r^1_\rho$ which are fully random over $L^{P_{\iota}}$, where $\iota = \bar \iota(\rho)$, 
and $P_{\xi+1} = \am(P_\iota, P_\xi, f)$ where $f$ is a partial automorphism of $\ro(P_\xi)$ coming from the pair $\dot r^0_\rho$, $\dot r^1_\rho$.

Let $\bar q = \pi_{\xi+1} (q)$, so that $\bar q \in \am(P_{\iota}, P_\xi, f, \lambda_\xi)$, and for
brevity write $q_i$ instead of $\bar q(i)^P$ when $i \in \Int$.
Now for all $i\in\Int$, $q_i$ strongly rejects $\nu$ (this is precisely why we use this notion).
Using the induction hypothesis at each step, recursively pick a descending sequence $t_1 \geq t_2 \geq \hdots$ in $T$ such that
$t_0 \leq \tm$ and so that for each $n \in \omega$ and for
\[
i= (-1)^{n \bmod 2} \left\lceil \frac{n}{2} \right\rceil
\]
it holds that
\[
\emao[t_n\cdot q_i]{\xi}\colon T(t_n) \times \pmao[t_{n-1} \cdot q_i]{\xi} \to P_\xi (\leq t_n\cdot q_i).
\]
Let $t_\omega$ be a lower bound of $(t_n)_{n\in\omega}$ in $T$. 

Restrict $\emao[t_n\cdot q_i]{\xi}$ to obtain a dense embedding
\[
e^i\colon T(\leq t_\omega) \times \pmao[t_{n-1} \cdot q_i]{\xi} \to P_\xi (\leq t_\omega \cdot q_i).
\]
We write $P_i$ for $\pmao[t_{n-1} \cdot q_i]{\xi}$ and $e^i_2$ for the complete embedding
\[
e^i_2\colon P^i \to P_\xi (\leq t_\omega\cdot q_i)
\]
given by 
\[
p \mapsto e^i(t_\omega,p).
\]

As we did above for $\dot c$ and $\dot G^o_\xi$, by Lemma~\ref{l:names} we can chose $\tmao{\xi+1} \leq t_\omega$ and for each $i\in \Int$,
$P^i$-names $\dot r^{i,0}$ and $\dot r^{i,1}$ so that for each $j\in\{0,1\}$,
\[
\tmao{\xi+1}\cdot q_i \forces_{P_\xi} e^i_2(\dot r^{i,j}) = \dot r^j_\rho.
\]
Define the complete Boolean algebra $B^i_j$ to be $\ro(P^i)*\langle \dot r^{i,j} \rangle^{\dot P}$ where 
\[
\forces_{\pmao[q_i]{\iota}} \dot P=  P^i \colon \pmao[q_i]{\iota}
\]
Let $\pi_{i,j}$ be the canonical projection from 
\[
\pi_{i,j} \colon \ro(P^i) \to B^i_j
\]
for $i\in \Int$ and $j\in \{0,1\}$. 
Let
\[
f_i \colon B^i_0\to B^i_1
\]
be the isomorphism of the Boolean algebras such that the induced map on names sends $\dot r^{i,0}$ to $\dot r^{i,1}$ for each $i\in \Int$.
Write $\pi_i$ for the projection from $\ro(P^i)$ to $\ro(\pmao[q_i]{\iota})$---noting that its restriction to $P^i$ is the strong projection claimed to exist in Item~\ref{l:IH:proj} of Lemma~\ref{l:pnu:IH}. 

\medskip

Define %
\[
\blowup{P}^{\nu,q_i}_\xi = \{ (p,b^0,b^1)  \in P^i \times B^i_0 \times B^i_1 \setdef \pi_i (p \cdot b^0 \cdot b^1) =\pi_i (p)  \}.
\]
Let  $\Dam^i$ be the set of 
$p \in P^i$ such that %
for all $q \in P^i$, if $q \leqlo^{\lambda_\xi} p$ we have
\begin{equation}\label{lower:part:frozen:nice}
\forall (b_0,b_1) \in B^i_0 \times B^i_1 \quad \big(\pi_i(q)\cdot p \cdot b_0 \cdot b_1 \neq 0\big) \Rightarrow \big(q\cdot b_0 \cdot b_1 \neq 0\big)
\end{equation}
That is, $\Dam^i$ is defined just like $\Dam$ in Definition~\ref{am:gen:D}, but with $P$ replaced by $P^i$, $B_0$ by $B^i_0$ and $B_1$ by $B^i_1$.
\begin{claim}\label{c:D}
Let $\Dam = \Dam(P_\iota, P_\xi, f)$.
For all $i\in\Int$ and $p \in P^i$, the following are equivalent:
\begin{enumerate}[label=\textup{(\alph*)},ref=\alph*]
\item\label{item.forall} $\forall t \in T(\leq\tmao[q_i]{\xi}) \; \emao[q_i]{\xi}(t,p) \in \Dam$.
\item\label{item.exists} $\exists t \in T(\leq\tmao[q_i]{\xi}) \; \emao[q_i]{\xi}(t,p) \in \Dam$;
\item\label{item.p} $p \in \Dam^i$; 
\end{enumerate}
\end{claim} 
\begin{proof}[Proof of claim.] 
Fix $i\in \Int$ and write $\leqlo$ for for the notion of direct extension for $\lambda=\lambda_{\xi}$ from the stratification of $P_\xi$ and write $\leqlo_\nu$ for the analogous notion of direct extension for $\lambda=\lambda_\xi$ from the stratification of $P^i$.
The implication \eqref{item.forall}$\Rightarrow$\eqref{item.exists} is obviously trivial.

\medskip

For \eqref{item.exists}$\Rightarrow$\eqref{item.p}, suppose that $(t,p) \in T(\leq\tmao[q_i]{\xi})\times P^i$ and $e^i(t,p) \in \Dam$, and show $p \in \Dam^i$.
So let $p' \leqlo_{\nu} p$ and $b^{\nu,j} \in B^j_i$, for each $j\in\{0,1\}$  be given, satisfying 
$\pi(p') \cdot b^{\nu,0} \cdot b^{\nu,1} \neq 0$ in $\ro(P^i)$.
We must show $p' \cdot b^{\nu,0} \cdot b^{\nu,1} \neq 0$.
Let $b^j$ be the name which $b^{\nu,j}$ gives rise to via $e^i_2$, and
observe $e^i(t,p') \leqlo e^i(t,p)$.
Since $e^i(t,p) \in \Dam$ we have  $e^i(t,p') \cdot b^0 \cdot b^1 \neq 0$ and so we pick
$(t_r,r)$ such that $e^i(t_r,r) \leq e^i(t,p') \cdot b^0 \cdot b^1 \neq 0$.

For arbitrary $j\in\{0,1\}$, $e^i(t_r,r) \leq b^j$
means
\[
e^i(t_r,r) \forces_{P_\xi} \dot r^j \in b^j
\]
but since $\dot r^j$ and $b^j$ are just $P^i$-names embedded into $P_\xi$ via $e^i_2$, and the latter are names for Borel sets,
\[
e^i_2(r) \forces_{P_\xi} \dot r^j \in b^j
\]
using absoluteness of $\mathbf{\Pi}^1_1$ formulas between the $P^i$-extension and the $(T(\leq\tmao[q_i]{\xi}) \times P^i)$-extension.
So $r \leq  p' \cdot b^{\nu,0} \cdot b^{\nu,1}$, 
showing that $p' \cdot b^{\nu,0} \cdot b^{\nu,1} \neq 0$ and finishing the proof that $p \in \Dam^i$ and hence of the claim.
\renewcommand{\qedsymbol}{{\tiny  Claim~\ref{c:D}~}$\Box$}

\medskip

For \eqref{item.p}$\Rightarrow$\eqref{item.forall}, suppose that $(t,p) \in T(\leq\tmao[q_i]{\xi})\times P^i$ and $p \in \Dam^i$, and writing $p^*$ for $e^i(t,p)$, show $p^* \in \Dam$. 
So suppose we are given $r^* \leqlo p^*$ and $b^j \in B^j$, for each $j\in \{0,1\}$, such that $\pi(r^*) \cdot p^* \cdot b^0 \cdot b^1 \neq 0$; 
we must show that $r^*\cdot b^0 \cdot b^1 \neq 0$.

By Item~\ref{l:IH:surj}, find $(t_r,r)$ such that $e^i(t_r,r) = r^*$.
We may in addition demand that $r \leqlo_\nu p$ by Item~\ref{l:IH:surj}.

To witness the compatibility, find a condition $d^{*} \leq \pi(r^*) \cdot p^*\cdot  b^0 \cdot b^1$ in $P_\xi$, and find
$(t_{d}, d) \in T \times \pmao[q_i]{\xi}$ such that  $e^i(t_{d}, d) = d^{*}$.
Using Lemma~\ref{l:names}, find $t_* \in T$ stronger than $t_d$ and $\pmao[q_i]{\xi}$-names $b^{\nu,j}$ such that
$(t_*,r) \forces b^j = e^i_2(b^{\nu,j})$ for each $j\in\{0,1\}$.
We may chose these names so that $d \leq b^{\nu,0} \cdot b^{\nu,1}$. 

So finally, we have $r \leqlo_{\nu} p$ and $b^{\nu,j}$ for each $j\in\{0,1\}$, 
satisfying $\pi(r) \cdot b^{\nu,0}\cdot b^{\nu,1} \neq 0$ as witnessed by $d$.
Thus, since $p \in \Dam^i$, 
$r \cdot b^{\nu,0}\cdot b^{\nu,1} \neq 0$,
whence also $e^i(t_*,r) \cdot b^0 \cdot b^1 \neq 0$ and so,
as $e^i(t_*,r) \leq r^*$, we have shown $r^* \cdot b^0 \cdot b^1 \neq 0$. This concludes the proof that $p^*\in \Dam$.
\end{proof}

Finally, let $\pmao{\xi+1}$ be the set of $\bar p \colon\Int \to \blowup{P^i}$ such that the following conditions are met.
\begin{enumerate}
\item For all $i \in \Int$, $\pi_i(\bar p(i)^P) = \pi_i(\bar p(0)^P)$.
\item For all but finitely many $i\in\Int$, $\bar p(k)^P \leqlo^{\lambda_0} \pi_i(\bar p(k)^P)$.
\item For all $i \in \Int\setminus\{-1,0\}$, $f_i(\pi_{i,0}(\bar p(i)))=\pi_{i+1,1}(\bar p(i+1))$.
\item $\bar p(0) \in \pmao[q_0]{\xi}$, i.e., $\bar p^0(0)=\bar p^1(0)=1$ and 
\begin{align}
f_{-1}(\pi_{-1,0}(\bar p(-1)))& \geq  \pi_{0,1}(\bar p(0)),\\
f_0(\pi_{0,0}(\bar p(0)) & \leq \pi_{1,1}(\bar p(1)).
\end{align}
\item For $i \in \Int\setminus\{0\}$, $\bar p(i)^P \in \Dam^i$.
\end{enumerate}
Despite the notational complexity, the entire previous construction is parallel to Chapter~\ref{sec:amalgamation} (inserting indices ``$i$'' or ``$\nu, t_\omega\cdot q_i$'' where necessary).

\medskip

For $(t, \bar p^\nu) \in T(\leq \tmao{\xi+1}) \times \pmao{\xi+1}$ define $\emao{\xi+1}(t,\bar p^\nu)$ to be the $\Int$-sequence given by
\[
i \mapsto (e_i(t, \bar p^\nu(i)^P), e^i_2(\bar p^\nu(i)^0), e^i_2(\bar p^\nu(i)^1)). 
\]
It remains to verify the induction hypothesis:
\begin{claim}\label{c.IH.type-1}
Items~\ref{l:IH:first}--\ref{l:IH:last} of Lemma~\ref{l:pnu:IH} hold when $\xi$ is a type-1 amalgamation stage.
\end{claim}
\begin{proof}[Proof of claim.]
We verify Item~\ref{l:IH:surj}. 
Let $\bar p \in \am(P_\iota, P_\xi, f)$ and $\bar p \leq \tmao{\xi+1}\cdot \bar q$ (remember $\bar q =\pi_{\xi-1}(q)$).
Let $t_0 = \pi_0(\bar p(0))$.
As for each $i\in \Int\setminus\{0\}$ and each $j\in \{0,1\}$,
$\bar p(i)^j$ can be viewed as a name for a function from $\kappa^+$ into $V$ (e.g., viewing it as a name for a Borel code), 
As $T$ is $\sigma$-closed we can find $t$ and for each $i \in \Int$, $p^\nu_i \in P^i$ satisfying 
\[
e^i(t, p^\nu_i) = t \cdot \bar p(i)^P.
\]
and for each $i \in \Int\setminus\{0\}$, a pair of $P^i$-names $b^{i,0}$, $b^{i,1}$ such that
\[
\forall j\in \{0,1\} \; e^i(t, p^\nu_i) \forces  e^i_2(b^{i,j}) = \bar p(i)^j
\]
Let $\bar p^\nu$ be the sequence $i \mapsto (p^\nu_i, b^{i,0}, b^{i,j})$.
By construction and by Claim~\ref{c:D}, $\bar p^\nu \in \pmao{\xi+1}$ and $\emao{\xi+1}(t,\bar p^\nu) = t \cdot \bar p$.
The additional requirement in Item~\ref{l:IH:surj} concerning direct extension is proved similarly.

Item~\ref{l:IH:leqlo} for $\xi+1$ straightforwardly follows from the induction hypothesis.
Items~\ref{l:IH:order} and \ref{l:IH:proj} are almost immediate and are left to the reader.
Lastly, Item~\ref{l:IH:strat} (that $\pmao{\xi+1}$ is stratified) is proved verbatim as in Chapter~\ref{sec:amalgamation}.
\renewcommand{\qedsymbol}{{\tiny  Claim~\ref{c.IH.type-1}~}$\Box$}
\end{proof}

\medskip

In case $\xi \in E^3(\eta)$ for $\eta=\alpha^\zeta_\rho$ with $\rho>0$, that is, at type-2 amalgamation stages,
as usual the construction closely resembles the previous. We leave this to the reader.

\medskip

\textbf{Limit step.} Let $\theta<\kappa$ be a limit ordinal. Let $\tmao{\theta}$ be the greatest lower bound in $T$ of $(\tm)_{\xi < \theta}$ ($\theta < \kappa$ and $T$ is even $\kappa^+$-closed).
Let $\pmao{\theta}$ be the $\bar \lambda$-diagonal support limit of the iteration $(\pnu)_{\xi < \theta}$.
The map $\emao{\theta}$ is given its natural definition:
by induction hypothesis and Item~\ref{l:IH:proj}, $(\emb(t, \pi^{\nu,q}_\xi(p)))_{\xi<\theta}$ forms a thread; we define $\emao{\theta}(t,p)$ to be this thread. 
It clearly satisfies the $\lambda$-diagonal support condition, as $p$ did in $\pmao{\theta}$.

Item~\ref{l:IH:strat} (stratification) is proved verbatim as for $P_\xi$.
Items~\ref{l:IH:order},~\ref{l:IH:proj} and~\ref{l:IH:leqlo} follow by induction; we leave details to the reader. 
 
We give some details for Item~\ref{l:IH:surj}.
Suppose $p \in P_\theta(\leq \tmao{\theta})$.
Find by recursion on $\xi < \theta$ a descending sequence $(t_\xi)_{\xi < \theta}$ from $T$ and conditions $p^\nu_\xi \in \pnu$ such that for each $\xi < \theta$
\[
\emb(t_\xi,p^\nu_\xi) = t_\xi \cdot \pi_\xi(p).
\]
Let $t_\theta$ be the greatest lower bound in $T$ of $(t_\xi)_{\xi < \theta}$.
By Item~\ref{l:IH:proj},
$(p^\nu_\xi)_{\xi<\theta}$ represents a thread in the limit $\pmao{\theta}$.
It follows that, letting $p^\nu_\theta$ be this thread,
\[
\emao{\theta}(t_\theta, p^\nu_\theta) = t_\theta \cdot p.
\]
This finishes our definition of $\pnu$, $\tm$ and the dense embeddings $\emb$ and the proof of the lemma.
\renewcommand{\qedsymbol}{{\tiny  Lemma~\ref{l:names}~}$\Box$}
\end{proof}

We need one last lemma.
\begin{lem}\label{l.lastlemma}
Whenever $q \in P_\kappa$, $\nu \in J$ and $q$ strongly rejects $\nu$, the forcing $\pmao{\lh(q)}$ from the previous lemma does not add a branch through $T(\nu)$ (i.e., through $T$).
\end{lem}
\begin{proof}
Let $\theta$ denote $\lh(q)$ and assume to the contrary that $\dot B$ is a $\pmao{\theta}$-name for a branch through $T(\nu)$.
For each $\xi < \kappa^{++}$, let $\dot b_\xi$ be a $\pmao{\theta}$-name for the element of $\dot B$ at level $\xi$ in $T(\nu)$ and fix a $\pmao{\theta}$-name $\dot t_\xi$
for an immediate successor in $T(\nu)$ of $\dot b_\xi$ which does \emph{not} lie on the branch $\dot B$.
We have constructed a name for an antichain of size $\kappa^{++}$ in $\check T(\nu)$:
\[
\forces_{\pmao{\theta}} \forall \xi,\xi' <\kappa^{++} \; \big ( \xi \neq \xi' \Rightarrow \dot t_\xi \perp \dot t_{\xi'} \text{ in $\check T(\nu)$}\big)
\] 
Momentarily, let $\Clink^{\kappa^+}$ denote the linking relation of $\pmao{\theta}$ at $\lambda=\kappa^+$. \todo{Mention this in earlier chapter and refer here!}

We use distributivity of the ``upper part'' w.r.t.\ $\kappa^+$, i.e., the trees:
For each $\xi$, find a condition $p_\xi \in \dom(\Clink^{\kappa^+})$ and $t_\xi \in T(\nu)$ so that
\begin{equation}\label{e:t:tcheck}
p_\xi \forces_{\pmao{\theta}} \dot t_\xi = \check t_\xi.
\end{equation}
There is a set $I$ of size $\kappa^{++}$ such that $\Clink^{\kappa^+}(p_\xi)\cap \Clink^{\kappa^+}(p_{\xi'}) \neq \emptyset$ for all $\xi, \xi' \in I$.
For each $\xi \in I$, define $\bar t_\xi \in \bar T$ as follows: 
\begin{gather*}
\bar t_\xi \res (\kappa\setminus\{\nu\}) = \pi^{\nu,q}_0(p_\xi),\\
\bar t_\xi (\nu) = t_\xi.
\end{gather*}
\begin{claim}\label{c.antichain}\index{independent!Suslin trees}\index{Suslin trees!independent}
The set $\{ \bar t_\xi \setdef \xi \in I \}$ is an antichain of size $\kappa^{++}$ in $\bar T$.
\end{claim}
\begin{proof}[Proof of claim.]
Suppose  $\xi, \xi' \in I$ and $\xi \neq \xi'$; we show $\bar t_\xi$ and $\bar t_{\xi'}$ are incompatible in $\bar T$.
We are done if $\pi^{\nu,q}_0(p_\xi)$ and $\pi^{\nu,q}_0(p_{\xi'})$ are incompatible elements of $\bar T^\nu$.
If on the other hand $\pi^{\nu,q}_0(p_\xi)$ and $\pi^{\nu,q}_0(p_{\xi'})$ are compatible in $\bar T^\nu$, since 
$\Clink^{\kappa^+}(p_\xi)\cap \Clink^{\kappa^+}(p_{\xi'}) \neq \emptyset$, it must be the case that $p_\xi$ and $p_{\xi'}$ are compatible in $\pmao{\theta}$.
But then by \eqref{e:t:tcheck} and the corresponding fact for $\xi'$, $t_\xi$ and $t_{\xi'}$ are incompatible in $T(\nu)$.
Thus, $\bar t_\xi \perp \bar t_{\xi'}$  in $\bar T$. 
\renewcommand{\qedsymbol}{{\tiny  Claim~\ref{c.antichain}~}$\Box$}
\end{proof}
\noindent
As $\bar T$ has the $\kappa^{++}$-cc, we have reached a contradiction, proving the lemma.
\renewcommand{\qedsymbol}{{\tiny  Lemma~\ref{l.lastlemma}~}$\Box$}
\end{proof}

\medskip

Finally, we supply the last step in the proof that $\Gamma^0$ is $\Delta^1_3$.
\begin{cor}\label{c:no:unwanted:branches}
Lemma~\ref{no:unwanted:branches} holds: I.e., supposing $p \in P_\kappa$ and $(n,i,j) \in \omega^2\times 2$,
there is $\zeta<\kappa$ and $p' \in P_{\lh(p)}$ stronger than $p$ such that for any $\sigma \in {}^{<\kappa} 2$ with $\lh(\sigma) > \zeta$, 
\begin{equation}\label{e:no:unwanted:branches:repeat}
p' \forces_{P_{\lh(p)}}\text{``}(\sigma,n,i,j) \notin \dot I_{\lh(p)} \Rightarrow  \forall u \in \omega^\omega\;  L[u,r]\vDash T(\sigma,n,i,j) \text{ is Suslin.''}
\end{equation}
\end{cor}
\begin{proof}
Fix $p \in P_\kappa$ and $(n,i,j) \in\omega^2\times 2$, and let $\theta =\lh(p)$. 
Find $p' \in P_\theta$ stronger than $p$ such that the following holds:
For any $m,m' \in \omega$ and any pair of distinct index sequences  $\xi_0, \dots, \xi_m, k_1, \dots, k_m$ and $\xi'_0, \dots, \xi'_m, k'_1, \dots, k'_{m'}$ (as defined in Section~\ref{sec:reals:areas}),
\[
\Phi^{k_m}_{\xi_m} \circ \dots \Phi^{k_1}_{\xi_1} (p') (E^1(\xi_0)) \perp \Phi^{k'_m}_{\xi'_m} \circ \dots \Phi^{k'_1}_{\xi'_1} (p') (E^1(\xi'_0))
\]
in $\Add(\kappa)^L$ whenever both expressions are defined---i.e., whenever $E^1(\xi_0)$ is in the domain of $\Phi^{k_m}_{\xi_m} \circ \dots \Phi^{k_1}_{\xi_1} (p')$;
analogously for $E^1(\xi'_1)$. 

Define $\zeta$  as follows:
\[
\zeta = \sup \{ \lh(p'(\xi))+1 \setdef \xi \in E^1\cap \lh(p'), p' \in Z(p) \}.
\]

To show \eqref{e:no:unwanted:branches:repeat}, suppose $q_0 \leq p'$, $q_0\in P_\theta$ and we are given $\sigma \in {}^{<\kappa}2$  with $\lh(\sigma) > \zeta$.
Let $\nu = (\sigma,n,i,j)$.
By construction, there is at most one  pair $(\xi,\eta)$ such that $\xi < \theta$, $E^1(\eta) < \theta$, and $\sigma \is \Phi_\xi(q)(E^1(\eta))$.
Fixing such $(\eta,\xi)$, if it exists, we find $q \leq q_0$ in $P_\theta$ and $i' \in \omega$ such that  
$q \forces \Phi_\xi(r_\eta)(n) = \check i'$. If there is no such pair, just let $q = q_0$.

\medskip

Exactly one of the following holds:
\begin{enumerate}
\item For no  $\xi < \theta$ and $\eta$ such that $E^1(\eta) < \theta$ does it hold that $\sigma \is \Phi_\xi(q)(\eta)$;
\item For the unique $\xi < \theta$ and $\eta$ such that $E^1(\eta) < \theta$ and $\sigma \is \Phi_\xi(q)(E^1(\eta))$, $\eta$ is a successor and $j =0$, or otherwise  $\eta$ is a limit and $j=1$.
\item All of the above fail and $i' \neq i$;
\item All of the above fail; thus by definition of $\dot I_\xi$, $q \forces \nu \in \dot I_\xi$.
\end{enumerate}
In all but the last case, not only does $q \forces  \nu \notin \dot I_\xi$ hold, but in fact $q$ strongly rejects $\nu$.

Assume that one of the first three cases holds. 
As we have shown previously in this section, there is $t \in T(\nu)$ such that
forcing with $P_\theta(\leq t\cdot q)$ is the same as forcing with $T(\nu)(\leq t) \times \pnu$.
As $T(\nu)$ does not add reals and $\pnu$ does not add a branch through $T(\nu)$ by Lemma~\ref{l.lastlemma},
clearly
\[
t \cdot q \forces_{P_\theta} \forall u \in \omega^\omega\;  L[u]\vDash T(\nu) \text{ is Suslin}.
\]
Finally, no matter which of the above cases hold, we have found $t \cdot  q \leq q_0$ such that 
\[
t \cdot q \forces_{P_\theta}\text{``}\nu \in \dot I_\xi\text{''} \text{ or }  t\cdot q \forces \forall u \in \omega^\omega\;  L[u,r]\vDash T(\nu) \text{ is Suslin.}
\]
As $q_0$ was arbitrary below $p'$, \eqref{e:no:unwanted:branches:repeat} holds, proving Corollary~\ref{c:no:unwanted:branches} and Lemma~\ref{no:unwanted:branches} and thus proving that $\Gamma^0$ is $\Delta^1_3$. 
This finishes the proof of Theorem~\ref{p.t.main}.
\end{proof}

\backmatter
\bibliographystyle{amsplain}
\bibliography{friedman-schrittesser-projective}

\printindex
\printindex[notation]

\end{document}